\documentclass{article}[11pt]
\usepackage{db_missing_data}
\usepackage{mathtools}
\usepackage{amssymb}
\usepackage{amsthm}
\usepackage{multirow} 
\usepackage[margin=1in]{geometry}
\usepackage{hyperref} 
\hypersetup{
    colorlinks,
    linkcolor={blue!80!black},
    citecolor={green!50!black},
}
\usepackage{xcolor}
\usepackage[shortlabels]{enumitem}
\usepackage[normalem]{ulem}

\usepackage{cleveref}

\usepackage{enumitem}

\newtheoremstyle{myremark} 
    {\topsep}                    
    {\topsep}                    
    {\rm}                        
    {}                           
    {\bf}                        
    {.}                          
    {.5em}                       
    {}  
\newtheorem{theorem}{Theorem}
\newtheorem{lemma}{Lemma}
\newtheorem{definition}{Definition}[section]
\newtheorem{corollary}{Corollary}
\newtheorem{proposition}{Proposition}
\theoremstyle{myremark}
\newtheorem{remark}{Remark}[section]

\makeatletter
\long\def\@makecaption#1#2{
        \vskip 0.8ex
        \setbox\@tempboxa\hbox{\small {\bf #1:} #2}
        \parindent 1.5em  
        \dimen0=\hsize
        \advance\dimen0 by -3em
        \ifdim \wd\@tempboxa >\dimen0
                \hbox to \hsize{
                        \parindent 0em
                        \hfil 
                        \parbox{\dimen0}{\def\baselinestretch{0.96}\small
                                {\bf #1.} #2
                                } 
                        \hfil}
        \else \hbox to \hsize{\hfil \box\@tempboxa \hfil}
        \fi
        }
\makeatother


\begin{document}

\begin{center}
  {\bf{\Large{Challenges of the inconsistency regime: \\
        Novel debiasing methods for missing data models}}} \\

\vspace*{0.1in}

{\large{
\begin{tabular}{cc}
Michael Celentano$^{\star}$ & Martin J. Wainwright$^{\star,
  \ddagger, \dagger, \circ}$
\end{tabular}
}}

\vspace*{.2in}

\begin{tabular}{c}
Department of Statistics$^\star$, and \\ Department of Electrical
Engineering and Computer Sciences$^\ddagger$ \\ UC Berkeley, Berkeley,
CA \\
\end{tabular}

\vspace*{.2in}

\begin{tabular}{c}
  Department of Electrical Engineering and Computer
  Sciences$^\dagger$\\
  Department of Mathematics$^\circ$ \\
  LIDS, and Statistics and Data Science Center \\
  Massachusetts Institute of Technology, Cambridge, MA
\end{tabular}

\vspace*{0.1in}

\today

\vspace*{0.1in}

\begin{abstract}
  We study semi-parametric estimation of the population mean when data
  is observed missing at random (MAR) in the $n < p$ ``inconsistency
  regime'', in which neither the outcome model nor the
  propensity/missingness model can be estimated consistently.
  Consider a high-dimensional linear-GLM specification in which the
  number of confounders is proportional to the sample size.  In the
  case $n > p$, past work has developed theory for the classical AIPW
  estimator in this model and established its variance inflation and
  asymptotic normality when the outcome model is fit by ordinary least
  squares. Ordinary least squares is no longer feasible in the case $n
  < p$ studied here, and we also demonstrate that a number of
  classical debiasing procedures become inconsistent.  This challenge
  motivates our development and analysis of a novel procedure: we
  establish that it is consistent for the population mean under
  proportional asymptotics allowing for $n < p$, and also provide
  confidence intervals for the linear model coefficients. Providing
  such guarantees in the inconsistency regime requires a new debiasing
  approach that combines penalized $M$-estimates of both the outcome
  and propensity/missingness models in a non-standard way.
\end{abstract}

\end{center}

\section{Introduction}

In semi-parametric problems, the goal is to estimate a target
parameter in the presence of one or more ``nuisance'' components.
This paper studies the estimation of a population mean with data
missing at random (MAR) in a setting where the outcome and propensity
nuisance models cannot be estimated consistently.  We receive iid
observations $(y_i\action_i,\action_i,\bx_i)$, $i \in [n] \defn \{1,
\ldots, \numobs \}$, where $y_i$ is the outcome of interest,
$\action_i \in \{0,1\}$ is a binary missingness indicator, and $\bx_i
\in \reals^p$ are covariates.  The MAR assumption is that the outcome
and missingness indicator are conditionally independent given
covariates: $y_i \indep \action_i \mid \bx_i$.  The estimand of
interest is the population mean $\mu_{\out} \defn \E[y_i]$.

Many methods for estimating the population mean rely on
access to sufficiently accurate estimates of either the \emph{outcome
model} $\mu: \reals^p \rightarrow \reals$ or the \emph{propensity
model} $\pi: \reals^p \rightarrow (0,1)$, or both, where
\begin{equation}
    \mu(\bx) \defn \E[y\mid \bx], \quad \mbox{and} \quad \pi(\bx)
    \defn \P(\action = 1 \mid \bx).
\end{equation}
These methods, and their accompanying statistical guarantees, can be
roughly categorized as either \emph{model-agnostic} or
\emph{model-aware}.  Model-agnostic approaches assume that the
functions $\mu$ and/or $\pi$ can be estimated consistently, and at
sufficiently fast rates, but are agnostic to the particular structural
assumptions under which such guarantees hold, or to the particular
methods that achieve them.  There are at least three standard
approaches---outcome regression (a special case of the
$G$-formula~\cite{ROBINS19861393}), Inverse Probability Weighting
(IPW) (also known as the Horvitz-Thompson
estimator)~\cite{Hahn1998,hirano2003,horvitzThompson1952}, and
Augmented Inverse Probability Weighting
(AIPW)~\cite{bangRobins2005}---each of which can be wrapped around any
black-box method for estimating $\mu$ or $\pi$.  Letting $\hmu$
(respectively $\hpi$) be some estimate of $\mu$ (respectively of
$\pi$), these approaches compute the following quantities:
\begin{subequations}
\label{eq:classical-estimates}  
\begin{align}
\mbox{$\sG$-approach:} \quad & \hmu_{\out}^{\sG} \defn \frac{1}{n}
\sum_{i=1}^n \hmu(\bx_i) \\
  \mbox{$\IPW$-approach:} \quad &  
  \hmu_{\out}^{\IPW} \defn \frac{1}{n} \sum_{i=1}^n
  \frac{\action_i y_i}{\hpi(\bx_i)}, \quad \mbox{and} \\
  \mbox{$\AIPW$-approach:} \quad & \quad \hmu_{\out}^{\AIPW} \defn
  \frac{1}{n} \sum_{i=1}^n \hmu(\bx_i) + \frac{1}{n} \sum_{i=1}^n
  \frac{\action_i}{\hpi(\bx_i)} (y_i - \hmu(\bx_i)).
\end{align}
\end{subequations}
These procedures can be combined with sample splitting strategies that
reduce bias and allow for improved statistical
guarantees~\cite{chernozhukov2018,newey2018crossfitting}.  All three
enjoy consistency or $\sqrt{n}$-consistency guarantees under
appropriate consistency conditions on $\hmu$ or $\hpi$.  A notable
property of the AIPW estimator, which involves estimates of both
nuisance parameters, is that it achieves $\sqrt{n}$-consistency if
$\E[(\hmu(\bx)-\mu(\bx))^2]^{1/2}\E[(\hpi(\bx)-\pi(\bx))^2]^{1/2} =
o(n^{-1/2})$ and appropriate overlap conditions are satisfied
\cite{bickelKlassenRitovWellner1998,bangRobins2005,chernozhukov2018,kennedy2023semiparametric}.
More generally, the error in estimating the nuisance parameters
induces an error in the estimate of $\mu_\out$ that is, up to a
constant, the product of the size of the errors of the two nuisance
parameters individually.

When the model is known to satisfy additional structural conditions,
model-agnostic methods and their associated guarantees can become
suboptimal.  For example, when $\mu$ and $\pi$ are assumed to belong
to H\"older smoothness classes, estimators that are tailored to
particular function classes can, in some regimes, achieve errors
smaller than those given by optimal model-agnostic guarantees
(e.g.,~\cite{mukherjee2017semiparametric,robins2008higher,robinsLiMukherjeeTchetgen2017,vanDerVaart2014}).
Such modified methods and analyses can be termed as being
``model-aware''.  Recent work has established that the improvements
enjoyed by model-aware approaches relative to model-agnostic
approaches are fundamental: Balakrishnan et
al.~\cite{Balakrishnan:2023aa} show that model-agnostic guarantees
cannot be improved without additional structural assumptions.


\subsection{The inconsistency regime}

Both of these previously described lines of work involve methods that
are based on consistent estimators of one or both of the nuisance
functions.  In contrast, the focus of this paper is a fundamentally
different and more challenging setting---known as the
\emph{inconsistency regime}---in which neither of the nuisance
functions $\mu$ nor $\pi$ can be estimated consistently.  Focusing on
the regime in which the sample size $n$ is less than the dimension
$p$, we ask: is it possible (and if so, how) to obtain consistent
estimates of the population mean when consistent estimation of the
nuisance components is no longer possible?

\subsubsection{Investigation for GLM-based missing data models}

In order to bring focus to the previously posed question, we study a
model-aware procedure for a particular instantiation of the missing
data problem.  Suppose that the outcome variable $y_i \in \reals$ is
related to the covariate vector $\bx_i \in \reals^p$ via a linear
model, whereas the missingness indicators are related via a
generalized linear model (GLM).  These assumptions are formalized via
the equations
\begin{align}
\label{eq:model}
    y_i = \theta_{\out,0} + \< \bx_i , \btheta_\out \> + \eps_i,
    \qquad \mbox{and} \qquad \P(\action_i = 1 \mid \bx_i) =
    \pi(\theta_{\prop,0} + \< \bx_i , \btheta_\prop \>),
\end{align}
where $\eps_i \sim \normal(0,\sigma^2)$ and $\pi : \reals \rightarrow
(0,1)$ is a known link function satisfying certain regularity
conditions (see Assumption A1 below).  We also assume that
the features $\bx_i$ are uncentered jointly Gaussian $\bx_i \sim
\normal(\bmu_\sx,\bSigma)$, where the feature covariance $\bSigma$ is
either known to the statistician or can be estimated consistently in
operator norm.

We study this model in an asymptotic regime in which model-agnostic
guarantees fail dramatically: the sample size $n$ scales
proportionally with the dimensionality of the covariates $p$, and the
sparsity or other assumptions we make about $\btheta_\out$ and
$\btheta_\prop$ are fundamentally insufficient to guarantee consistent
estimation of either the outcome model or the propensity
model~\cite{Barbier_2019}.  We call such a regime an
\emph{inconsistency regime}.  Despite the inconsistency of nuisance
parameter estimates, we develop methods that are provably consistent
for the population mean.  When $n\E[\pi(\bx)] > (1+\epsilon) p$ for
some $\epsilon > 0$, previous work~\cite{yadlowskyYun2021,sur2022} has
established that both outcome regression and AIPW are
$\sqrt{n}$-consistent for the population mean, provided that the
outcome model is fit with ordinary least squares.  On the other hand,
in the complementary regime $n \E[\pi(\bx)] < p$, ordinary least
squares is infeasible, and, as discussed in the sequel, neither the
outcome regression estimate nor the AIPW estimate are consistent.  In
this challenging regime, we develop alternative estimators that are
provably consistent for the population mean, and we provide numerical
evidence that these estimators are in fact $\sqrt{n}$-consistent.
Like the AIPW estimator, the estimators we propose involve estimates
of both the outcome and propensity models, but we combine these
estimates in a non-standard way.  Unlike the previous works
\cite{yadlowskyYun2021,sur2022}, our estimate does not require sample
splitting or cross fitting.  Some previous work has studied conditions
under which sample splitting is not required, but is restricted to
consistency regimes \cite{chenSyrgkanisAustern2022}.


\subsubsection{Classical models fail in the inconsistency regime}

We begin by presenting the results of a simulation study that
recapitulates three known phenomena that motivate the present work.

\begin{figure}[h!]
  \begin{center}
    \begin{tabular}{ccc}
\includegraphics[width =
  0.45\linewidth]{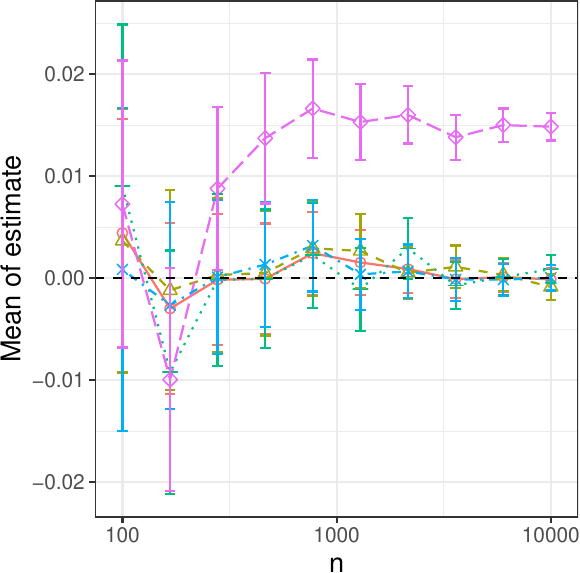}
&& 
\includegraphics[width =
  0.45\linewidth]{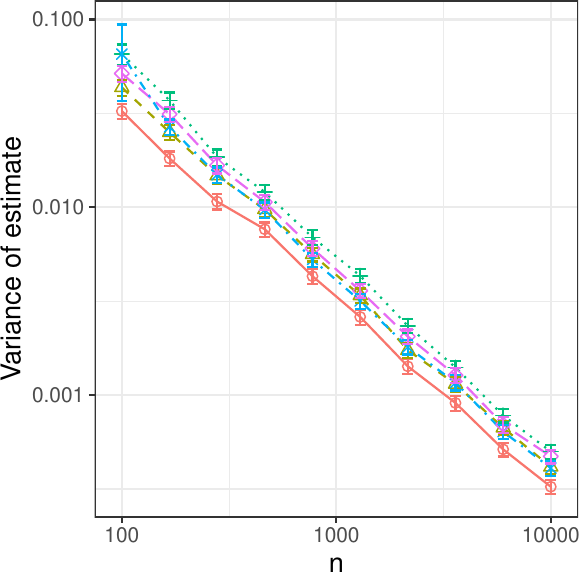}
    \end{tabular}
\includegraphics[width =
  0.8\linewidth]{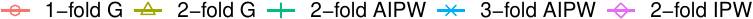}\\
\vspace*{0.05in}
\caption{Comparison of G-computation, AIPW, and IPW. We take
  $\theta_{\prop,0} = \theta_{\out,0} = 0$, $\btheta_\prop =
  \btheta_\out = \be_1$, $\bSigma = \id_p$, $\epsilon_i \iid
  \normal(0,1)$, $\mu_{\sx} = \bzero$, and link $\pi(\eta) =
  \tfrac{1}{10} + \tfrac{9}{10} \logit^{-1}(\eta)$.  The outcome model
  is fit by ordinary least squares and the propenisty model by
  logistic regression.  In this case, the population mean outcome is
  $\mu_\out \defn \E[y] = 0$.  For each value of $n$, $p$ is taken as
  the closest integer to $0.07n$. The mean and variance of these
  estimates are computed across 1000 replicates, with 95\% confidence
  intervals shown. See text for detailed description of cross-fitting
  strategies.}
\label{fig:variance-well-specified-outcome}
\end{center}
\end{figure}

First, as described in the papers~\cite{yadlowsky2022,sur2022}, when
the outcome model is fit with ordinary least squares, both
G-computation and AIPW are $\sqrt{n}$-consistent under proportional
asymptotics.  \Cref{fig:variance-well-specified-outcome} displays the
results of a simulation study in which
\begin{align*}
\theta_{\prop,0} = \theta_{\out,0} = 0, \quad \btheta_\prop =
\btheta_\out = \be_1, \quad \bSigma = \id_p, \quad \epsilon_i \iid
\normal(0,1), \quad \mu_{\sx} = \bzero, \quad \mbox{and} \quad
\pi(\eta) = \tfrac{1}{10} + \tfrac{9}{10} \logit^{-1}(\eta),
\end{align*}
whence the population mean outcome is $\mu_\out \defn \E[y] = 0$.
Because $\btheta_\prop = \btheta_\out$, the observed outcomes are
confounded, and the mean of $y$ conditional on its being observed is
biased.  For 10 values of the sample size $n$ spanning from $n = 100$
to $n = 1000$, we simulate from the model \eqref{eq:model} in which $p
= 0.07 n$ (or the closest integer), $\htheta_{\prop,0},\hbtheta_\prop$
is fit using logistic regression, and $\htheta_{\out,0},\hbtheta_\out$
is fit using ordinary least squares.  We then estimate $\mu_\out$
using either outcome regression ($G$-computation), AIPW, or IPW, as
in~\cref{eq:classical-estimates}, where
\begin{align*}
\hmu(\bx_i) = \htheta_{\out,0} + \< \bx_i , \hbtheta_\out \>, \quad
\mbox{and} \quad \hpi(\bx_i) = \tfrac{1}{10} + \tfrac{9}{10}
\logit^{-1}(\htheta_{\prop,0} + \< \bx_i , \hbtheta_\prop\>).
\end{align*}
We employ three cross-fitting strategies.  A $1$-fold cross-fit fits
the functions $\hmu$ and $\hpi$ and computes the averages in
display~\eqref{eq:classical-estimates} using all the data.  A 2-fold
cross-fit splits the data into disjoint sets $\cI_1$ and $\cI_2$ each
of size $n/2$, fits the functions $\hmu$ and $\hpi$ on $\cI_1$, and
computes the averages in display~\eqref{eq:classical-estimates} on
$\cI_2$.  It then does the same with the roles of $\cI_1$ and $\cI_2$
reversed and finally takes the averages of the two estimates of
$\mu_\out$.  A 3-fold cross fit splits the data into disjoint sets
$\cI_1$, $\cI_2$, $\cI_3$ of size $n/3$, fits $\hmu$ on $\cI_1$,
$\hpi$ on $\cI_2$, and computes the averages in
display~\eqref{eq:classical-estimates} on $\cI_3$.  It then does the
same for all 6 permutations of the roles $\cI_1$, $\cI_2$, and $\cI_3$
and finally takes the average of the estimates of $\mu_\out$ across
all permutations.  The 3-fold cross-fitting strategy was considered in
the paper~\cite{sur2022}.  Taking $p = 0.07 \, n$ guarantees that the
least-squares estimate and logistic regression estimate are
well-defined on all folds with high
probability~\cite{candesSur2020,wenpinYe2020}. The left panel
of~\Cref{fig:variance-well-specified-outcome} shows the average value
of each estimate computed over 1000 replicates at several sample
sizes, and the right panel in
\Cref{fig:variance-well-specified-outcome} displays the empirical
variance of these estimates.  In both cases, we plot error bars with
half-width $1.96$ times the standard error of the estimated mean and
variance across the $1000$ replicates.  Recalling that the true value
of the population mean is $\mu_\out= 0$, one can see that the IPW
estimate has a noticeable bias that does not decay as $n \rightarrow
\infty$.  On the other hand, one can see that both the $G$-computation
and AIPW estimates have no significant bias, and their variance decays
as $1/\numobs$.  The goal of this paper is to develop an estimator
with these same properties when $\numobs \E[\pi(\bx)] < p$.

Second, the success of $G$-computation and AIPW in the inconsistency
regime relies crucially on the use of ordinary least squares to fit
the outcome model.  In~\Cref{fig:variance-regularization}, we simulate
from the same model that underlies the results shown
in~\Cref{fig:variance-well-specified-outcome}, except that we use
ridge regression to fit the outcome model parameter $\btheta_\out$.
We see that 2-fold and 3-fold AIPW estimators have substantially
smaller bias than 1-fold and 2-fold $G$-computation estimators.
Nevertheless, all estimates are biased with a bias that does not decay
as $n \rightarrow \infty$.  Thus, the main challenge in the regime
$n\E[\pi(\bx)] < p$ is that ordinary least squares is not feasible.
\begin{figure}[h!]
  \begin{center}
    \begin{tabular}{cc}
\includegraphics[width =
  0.45\linewidth]{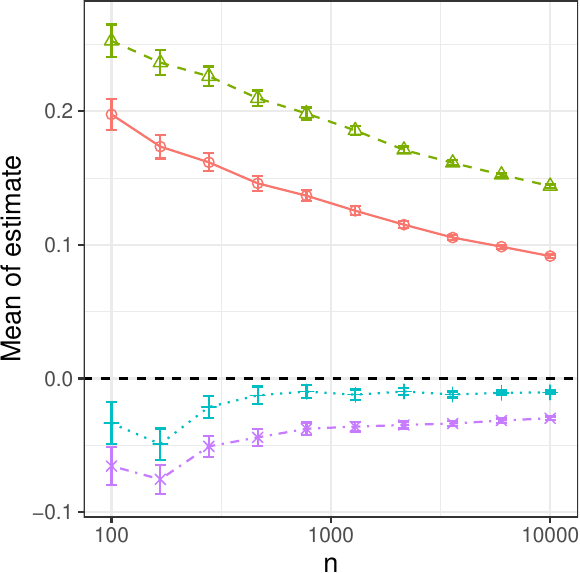} &
\includegraphics[width =
  0.45\linewidth]{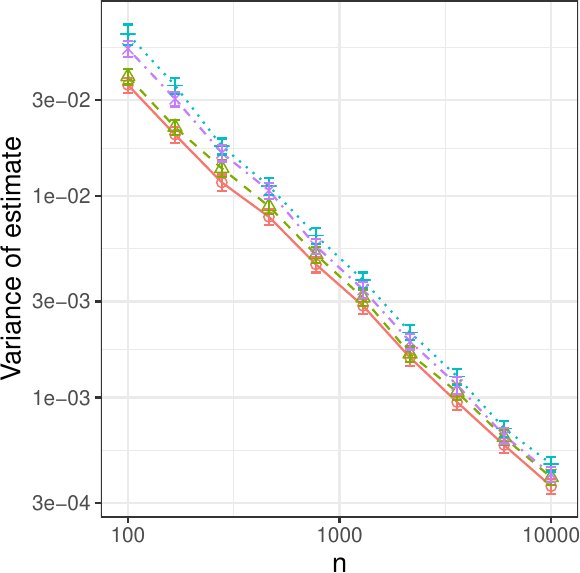}
    \end{tabular}
\includegraphics[width =
  0.5\linewidth]{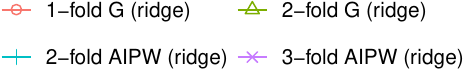} \\
\vspace*{0.1in}
\caption{Does AIPW require the outcome to be fit using Ordinary Least
  Squares?  Comparison of $G$-computation and AIPW with a
  well-specified linear outcome model.  We simulate from exactly the
  same setting as in \Cref{fig:variance-well-specified-outcome}.
  The outcome model is fit by ridge regression with regularization
  parameter chosen by cross-validation using the default parameters in
  \texttt{cv.glmnet} in \texttt{glmnet version 4.1.7}, \texttt{R
    version 4.2.2}, and the propensity model fit by logistic
  regression (without penalty).  The true value of the population mean
  is $\mu_\out \defn \E[y] = 0$.}
\label{fig:variance-regularization}
\end{center}
\end{figure}

Third, in the inconsistency regime, AIPW is less efficient than 1-fold
$G$-computation and is not protected against misspecification of the
outcome model.  Indeed, in the right-hand plot
of~\Cref{fig:variance-well-specified-outcome}, we see that 1-fold
$G$-computation has the smallest variance among all simulated
estimators, which is consistent with the results in the
paper~\cite{yadlowsky2022}.  In fact, even in the classical
semi-parametric setting, AIPW is often less efficient than
$G$-computation~\cite{kangShafer2007,robinsSuedLeiGomezRotnitzky2007}.
Classically, AIPW may be preferred in order to protect against
misspecification of the outcome model.  Unsurprisingly, because in the
current setting the propensity model is not estimated consistently,
this protection is not achieved.
In~\Cref{fig:outcome-misspecification} we simulate from the same
setting as
in~\Cref{fig:variance-well-specified-outcome,fig:variance-regularization},
except that the outcome model includes a term quadratic in the
features---viz.
\begin{align*}
y_i = \< \bx_i , \btheta_\out \> + (\< \bx_i , \btheta_\out \>^2 - 1)
+ \eps_i.
\end{align*}
Although both the 2-fold and 3-fold AIPW estimators have substantially
smaller bias than than those based on 1-fold and 2-fold outcome
regression, their bias does not decay as $n \rightarrow \infty$, so
that they are not actually consistent.  Thus, when $n\E[\pi(\bx)] >
(1+\epsilon)p$ in the inconsistency regime, it is not clear that
propensity modeling plays any useful role: the IPW estimator is
inconsistent (\Cref{fig:variance-well-specified-outcome}), and the
AIPW estimator incurs inflated variance relative to outcome regression
without protecting against outcome model misspecification.  In
contrast, the estimator that we develop for the regime $n\E[\pi(\bx)]
< p$ relies crucially on estimating the propensity model; we are
unaware of any consistent estimator that avoids propensity estimation
altogether.

\begin{figure}[h!]
\begin{center}
\includegraphics[width =
  0.45\linewidth]{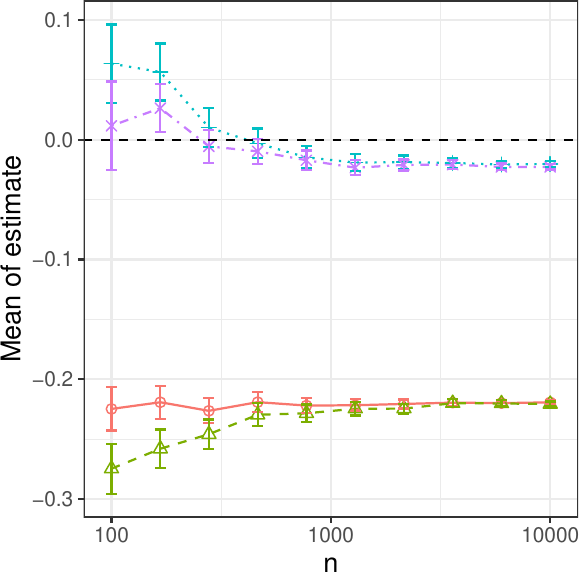}
\includegraphics[width =
  0.45\linewidth]{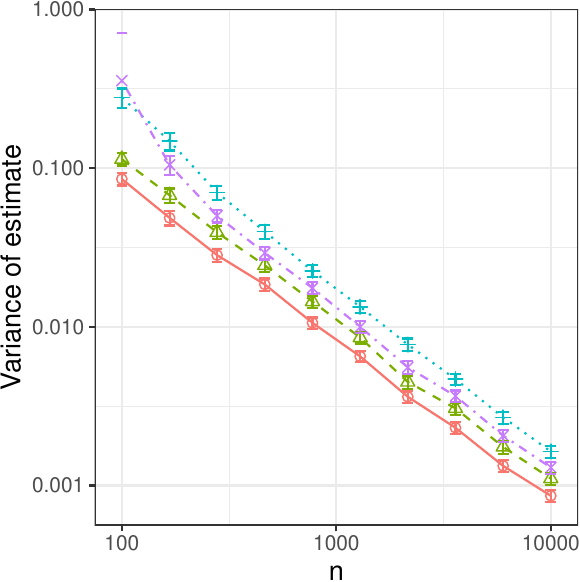}
\\ \includegraphics[width =
  0.675\linewidth]{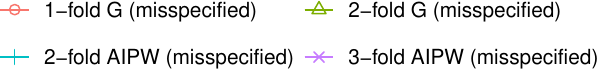}
\caption{Does AIPW protect against outcome misspecification?
  Comparison of $G$-computation and AIPW with a misspecified outcome
  model.  We simulate from the same setting as in
  \Cref{fig:variance-well-specified-outcome}, except generate outcomes
  according to $y_i = \< \bx_i , \btheta_\out \> + (\< \bx_i ,
  \btheta_\out \>^2 - 1) + \eps_i$.  The outcome model is fit by
  ordinary least squares and the propenisty model by logistic
  regression.  The true value of the population mean is $\mu_\out
  \defn \E[y] = 0$.}
\label{fig:outcome-misspecification}
\end{center} 
\end{figure}

\subsection{Related literature}

The problem of estimating population means from data missing at random
is a now classical problem in statistics, with the AIPW approach
incorporating both outcome and propensity modeling going back to the
1990s
(e.g,~\cite{robinsRotnitzky1995,robinsRotnitzkyZhou1994,robinsRotnitzky1995,scharfsteinRotnitzkyRobins1999}).
The double-robustness property of this and related estimators has been
explored in various
papers~\cite{robinsRotnitzky2001,laan2003unified,bangRobins2005,kangShafer2007,robinsSuedLeiGomezRotnitzky2007}.
There is also a long line of work that studies approaches based on
outcome modeling and propensity modeling
(e.g.,~\cite{ROBINS19861393,snowdenRoseMortimer2011,vansteelandtKeiding2011,horvitzThompson1952,rosenbaumRubin1983,Hahn1998,hirano2003,su2023estimated}).
Recently, there has been increased interest in the double-robustness
property and greater emphasis on the ability to achieve estimation
error guarantees with black-box methods
(e.g.,~\cite{chernozhukov2018,chernozhukov2021automatic,chernozhukovChetverikovDemirerDufloHansenNewey2017}).
Model-aware approaches in the context of H\"older smoothness classes
have also been
studied~\cite{robins2008higher,robinsLiMukherjeeTchetgen2017,newey2018crossfitting}.
Model-aware analysis of $G$-computation, AIPW, and IPW estimators in
the context of random design, linear outcome, generalized linear
propensity models was studied by~\cite{yadlowsky2022,sur2022}.
$G$-computation in the proportional regime with a linear outcome model
and potential heteroscedasticity was studied in the
papers~\cite{cattaneoJanssonNewey2018,cattaneo_jansson_newey_2018}.
There is a literature on minimax lower bounds, including of the
model-aware
variety~\cite{robinsTchetgenLiVanderVaart2009,mou2022offpolicy,mou2023kernelbased}
as well as the model-agnostic variety~\cite{Balakrishnan:2023aa}.
Barbier et al.~\cite{Barbier_2019} established the impossibility of
consistent nuisance parameter estimation under proportional
asymptotics.

Our methods build on a large literature developing exact asymptotic
characterization of convex procedures in generalized linear models;
for instance, see the
papers~\cite{bayatiMontanari2012,stojnic2013framework,Donoho:2016aa,Karoui2015OnTP,pmlr-v40-Thrampoulidis15,thrampoulidisAbbasiHassibi2018,surCandes2019,zhou2022,zhaoSurCandes2022,miolane2021,celentanoMontanariWei2020,montanari2023generalization},
as well as references therein.  Specifically, our methods are based on
debiasing constructions that were first introduced in the
papers~\cite{zhang2014confidence,buhlmann2013statistical,javanmard2014confidence},
and more recently, developed under proportional asymptotics in the
papers~\cite{javanmard2014hypothesis,javanmard2018debiasing,miolane2021,celentanoMontanariWei2020,bellec2019biasing,bellec2019debiasingIntervals}.
Our development builds upon our own past work in Celentano and
Montanari~\cite{celentano2021cad}, where a subset of the current
authors studied estimation of conditional covariances for
random-design linear regression models, and introduced ``correlation
adjustment'' for accounting for joint errors in estimating nuisance
parameters.  The methods in this paper require similarly motivated but
technically distinct forms of correlation adjustment.  Our proof is
based on the Convex Gaussian Min-Max theorem, developed by Stojnic
\cite{stojnic2013framework} as a sharpening of Gordon's Gaussian
comparison inequality \cite{gordon1985,gordon1988} for convex-concave
objectives.

\subsection{Outline of paper}

Let us provide an outline of the remainder of this paper.
In~\Cref{SecProblem}, we provide the construction of our proposed
estimator for outcome model, propensity model, and the population
mean.  \Cref{SecTheory} is devoted to our two main results:
\Cref{thm:pop-mean} establishes consistency of our proposed estimator
for the population mean, whereas~\Cref{thm:db-normality} establishes,
in a certain sense, the asymptotic normality and unbiasedness of
certain estimates we construct for the linear-model parameter
coefficients $\theta_{\out,j}$.  Our estimator requires four
estimating four adjustment factors, and our consistency and asymptotic
normality guarantees hold under a general consistency guarantee on
these adjustment factors.  In~\Cref{sec:consistency}, we provide one
such construction of these adjustment factors, and show they satisfy
the necessary consistency guarantee
(\Cref{prop:adjustment-concentration}).  In~\Cref{sec:simulations}, we
provide simulations that demonstrate the success of our estimator.
In~\Cref{SecProofs}, we provide a high-level overview of our proof
techniques.  Our proofs rely on an exact asymptotic characterization
for missing data models, which we also provide there.  This section is
intended only to provide a preview of our proof techniques.  Technical
details are deferred to the appendices.  We end
in~\Cref{SecDiscussion} with a concluding discussion.


\subsection{Notation}

For a square matrix $\bA \in \reals^{K \times K}$, we denote by
$\bA_k$ the $k\times k$ upper-left sub-matrix of $\bA$, by
$\bA_{\,\cdot\,,k}$ the $k^\text{th}$ column of $\bA$, and by
$\bA_{k,\,\cdot\,}$ the $k^\text{th}$ row of $\bA$.  For two vectors
$\ba,\bb \in \reals^N$, we denote the inner product between $\ba$ and
$\bb$ by $\< \ba , \bb \>$ and the Euclidean norm by $\|\ba\|$.  For
two matrices $\bA,\bB \in \reals^{N \times k}$, we denote $\llangle
\bA , \bB \rrangle = \bA^\top \bB \in \reals^{k \times k}$.  Note that
$\llangle \bA , \bB \rrangle_{\ell\ell'} = \< \ba_\ell ,
\bb_{\ell'}\>$, where $\ba_\ell$ is the $\ell^\text{th}$ column of
$\bA$, and $\bb_\ell$ is the $\ell^\text{th}$ column of $\bB$.  If
$\ba,\bb \in \reals^N$ are random vectors with finite second moments,
we denote $\< \ba , \bb \>_{\Ltwo} = \E[\< \ba, \bb \>]$ and $\| \ba
\|_{\Ltwo}^2 = \E[\| \ba \|^2]$.  Likewise, if $\bA,\bB \in \reals^{N
  \times k}$ are random matrices with finite second moments, we denote
$\llangle \bA , \bB \rrangle_{\Ltwo} = \E[\llangle \bA , \bB \rrangle]$.
Thus, in these cases, $\< \ba , \bb \>$ and $\llangle \bA , \bB
\rrangle$ are random quantities, and $\< \ba , \bb \>_{\Ltwo}$ and
$\llangle \bA , \bB \rrangle_{\Ltwo}$ are deterministic quantities.  The
operator norm of a matrix is denotes $\|\bA\|_{\op}$.  For a
positive-definite symmetric matrix $\bSigma$, we denote the
$\bSigma$-inner product $\< \ba , \bb \>_{\bSigma} = \< \ba , \bSigma
\bb \>$ and the $\bSigma$-norm $\| \ba \|_{\bSigma} = \sqrt{\< \ba ,
  \ba \>_{\bSigma}}$.

For a random vector with iid coordinates, we sometimes denote a scalar
random variable distributed from the distribution of each coordinate
by a non-boldface letter without a subscript. Thus, $d$ denotes a
random variable distributed according to $\action_i$, $y$ a random
variable distributed according to $y_i$.  We also denote a random
variable distributed according to $\bx_i$ by $\bx$.

Constants $C,c,C',c' > 0$ or with additional subscripts or
superscripts are reserved to refer to positive constants that depend
only on the constants appearing in Assumption A1 below.  They do not
depend on $n,p$ and may change at each appearance unless otherwise
specified.  We use $A \lesssim B$ to mean $A \leq C \, B$ for
some constant $C$ depending on Assumption A1, and define ``$\gtrsim$''
similarly.


\section{Debiasing in the inconsistency regime}
\label{SecProblem}

In this section, we describe the debiasing estimators that we analyze
in this paper.  Recall that the observed data set consists of an
i.i.d.\ collection of triples $\{(\action_i y_i, y_i, \bx_i)
\}_{i=1}^n$ obeying~\cref{eq:model}, where $\eps_i \sim
\normal(0,\sigma^2)$, $\pi : \reals \rightarrow (0, 1)$ is a known
link function, and $\bx_i \iid \normal(\bmu_{\sx}, \bSigma)$ for some
unknown mean vector $\bmu_{\sx} \in \reals^p$, and a known covariance
matrix $\bSigma \in \reals^{p\times p}$.

Our goal is to estimate the outcome mean $\mu_\out = \E[y]$.  Given
the assumed structure of our model, the tower property guarantees the
representation
\begin{align}
\label{EqnPopRep}  
  \mu_\out = \E \Big[ \E[y \mid \bx] \Big] \; = \; \E \Big[
    \theta_{\out,0} + \<\bx , \btheta_\out \> \Big],
\end{align}
which we exploit in developing our estimators.

\subsection{High-level overview}

We begin with a high-level description of the estimators analyzed in
this paper, and the ingredients in their construction.

\subsubsection{Three-stage approach}

A standard approach to estimating $\mu_\out$---known either as outcome
regression or as a special case of $G$-computation---is to replace the
population coefficients $(\theta_{\out,0}, \btheta_\out) \in \reals
\times \reals^p$ in~\cref{EqnPopRep} with empirical estimates
$(\htheta_{\out, 0}, \hbtheta_\out)$, and to replace the population
expectation over $\bx$ with an empirical expectation.  In our work,
due to the challenges of the inconsistency regime, an additional step
is required: we need to construct \emph{suitably debiased}
$(\htheta_{\out,0}^\de, \hbtheta_\out^\de)$ versions of the estimates,
and use them in the plug-in estimator
\begin{align}
\label{eq:db-G-computation}
    \hmu_\out^\de \defn \frac{1}{n} \sum_{i=1}^n \big(
    \htheta_{\out,0}^\de + \<\bx_i , \hbtheta_\out^\de \> \big),
\end{align}
\noindent All of the methods analyzed here involve the following three
steps:
\begin{itemize}
\item In all cases, we begin with \emph{base estimates}
  $(\htheta_{\out, 0}, \hbtheta_\out)$ obtained from penalized
  regression methods, as described in~\Cref{SecBaseEstimates}.
\item
  Let $\hbias_{\out, 0}$ and $\hbbias_\out$ be estimates of the biases
  of $\hbtheta_{\out,0}$ and $\hbtheta_\out$, respectively. Using
  these bias estimates, we form the \emph{debiased coefficient
  estimates}
\begin{align}
\label{eq:outcome-bias-correction}
  \htheta_{\out,0}^\de \defn \htheta_{\out,0} - \hbias_{\out, 0},
  \qquad \mbox{and} \qquad \hbtheta_\out^\de \defn \hbtheta_\out -
  \hbbias_\out.
\end{align}
Among a number of other ingredients, our bias estimates are based
upon a degrees-of-freedom adjustment, as described in~\Cref{sec:dof-adjustment}.
\item Finally, using these debiased estimates, we compute the
  \emph{plug-in estimator}~\eqref{eq:db-G-computation}.  To be clear,
  our methods do \emph{not} require sample splitting: the same data is
  used to compute the initial estimates, perform debiasing, and in the
  empirical expectation in the final
  estimate~\eqref{eq:db-G-computation}.
\end{itemize}

The main difficulty associated with these three step methods lies in
devising suitable estimates of the bias, and in analyzing them. In the
consistency regime, the biases of $\hbtheta_{\out,0}$ and
$\btheta_\out$ are sufficiently small that debiasing is not required
for consistency of $G$-computation.  As we discuss
in~\Cref{sec:procedure-definitions}, a na\"{i}ve application of
standard bias estimates--- even those designed for the inconsistency
regime when data is not missing---produce inconsistent estimates of
$\mu_\out$ when used as plug-ins. Accordingly, in this paper, we
propose and analyze a novel method for estimating the bias, as
described in~\Cref{SecUnknownProp}.  This method does not assume
knowledge of the propensity score.  This estimate uses the base
estimates described in~\Cref{SecBaseEstimates} and the
degrees-of-freedom adjustment described in~\Cref{sec:dof-adjustment}.

\subsubsection{Regularized estimators}
\label{SecBaseEstimates}

Both the outcome and propensity models are generalized linear models,
and we denote the linear predictors in these models by
\begin{equation}
\eta_{\out,i} \defn \theta_{\out,0} + \< \bx_i , \btheta_\out \>,
\qquad \mbox{and} \qquad \eta_{\prop,0} \defn \theta_{\prop,i} + \<
\bx_i , \btheta_\prop \>.
\end{equation}
Similarly, we use $\eta_\out \equiv \eta_\out(\bx)$ and $\eta_\prop
\equiv \eta_\prop(\bx)$ to denote the generic form of these linear
predictors with a generic covariate vector $\bx$.  Note that the
population means of these quantities are given by $\mu_\out \defn
\E[\eta_\out(\bx)]$ and $\mu_\prop \defn \E[\eta_\prop(\bx)]$.

\paragraph{Estimating the outcome model:}

Letting $w: \reals \rightarrow \reals_{>0}$ be a weight function,
define (with a slight abuse of notation) the weights $w_i = w(\theta_{\prop,0} + \<
\bx_i, \btheta_\prop\>)$ for $i \in [n]$.  Using these weights, we
estimate the pair $(\theta_{\out,0},\btheta_\out)$ using a penalized
form of weighted-least squares
\begin{equation}
\label{eq:outcome-fit}
    (\htheta_{\out,0},\hbtheta_\out) \defn \argmin_{(v_0,\bv) \in
  \reals \times \reals^p} \Big\{ \frac{1}{2n} \sum_{i=1}^n \action_i
w_i \big(y_i - v_0 - \< \bx_i , \bv \>\big)^2 + \Omega_\out(\bv)
\Big\},
\end{equation}
where $\Omega_\out: \reals^p \rightarrow \reals$ is a convex penalty
function.  For example, taking $\Omega_\out(\bv) = \lambda \| \bv
\|^2/2$ leads to a weighted form of ridge regression.  We only
establish results for fixed regularization parameter, which we
therefore absorb into the penalty $\Omega_\out$.  We require
$\Omega_\out$ to be strongly convex and smooth with constants of order
one; see~\Cref{SecAssumptions} below for more detail. In the case of
ridge regression, this corresponds to taking an order one $\lambda$.
In the inconsistency regime and under our choice of feature
normalization, the regularization parameter that minimizes prediction
error and is chosen by cross validation is of this order (see, for
example, the
papers~\cite{karoui2013asymptotic,miolane2021,hastieMontanariRossetTibshirani2022}).

Typically, we take $w \equiv 1$ (i.e., $w(\eta) = 1$ for all $\eta$),
so that this is just an unweighted penalized-least squares estimate.
However, we also consider oracle inverse probability weighting and
adjusted modified inverse probability weighting schemes, which
motivates writing the estimator in a more general form.  (Note that we
are ultimately interested in methods that have no prior knowledge of
the propensity model, in which case we always take $w \equiv 1$. Other
choices of the function $w$ are presented primarily for motivation and
illustrative purposes.)

\paragraph{Estimating the propensity model:}
We estimate the propensity model parameter using the penalized
M-estimator
\begin{align}
\label{eq:propensity-fit}
(\htheta_{\prop,0},\hbtheta_{\prop}) \defn \argmin_{(v_0,\bv) \in
  \reals \times \reals^p} \Big\{ \frac{1}{2 n} \sum_{i=1}^n
\ell_{\prop}\big(v_0 + \< \bx_i , \bv \> ; \action_i\big) +
\Omega_\prop(\bv) \Big\},
\end{align}
where $\ell_\prop: \reals \times \{0,1\}$ is a loss function that is
convex in its first argument, and $\Omega_\prop: \reals^p \rightarrow
\reals$ is another convex penalty function.  As above, the
regularization parameter is absorbed into the penalty $\Omega_\prop$,
and, in the case of ridge regularization, corresponds to taking
$\lambda = \Theta(1)$.  The Gaussian-GLM structure also allows us to
estimate the propensity model parameter by a moment method
\begin{equation}
    \hbtheta_\prop
        =
        \frac{1}{n_1}\sum_{i=1}^n \action_i \bx_i
        -
        \frac{1}{n}\sum_{i=1}^n \bx_i,
\end{equation}
where $n_1 = \sum_{i=1}^n \action_i$, with $\htheta_{\prop,0}$ then
determined by solving moment equations described
in~\Cref{sec:summary-stat-estimates}.  We establish results for both
the penalized M-estimator and moment-method estimator for the
propensity model.


\subsubsection{Degrees of freedom adjustments}
\label{sec:dof-adjustment}

Our estimates of the bias terms $\bias_{\out,0}$ and $\bias_\out$ rely
on adjustments for the degrees-of-freedom of the
estimators~\eqref{eq:outcome-fit} and~\eqref{eq:propensity-fit}.
Recall the loss function $\ell_{\out}(\eta;\eta_{\prop}, \action, y)
\defn \action w(\eta_\prop) (y - \eta)^2/2$ that underlies the outcome
estimator~\eqref{eq:outcome-fit} and, similarly, the loss
$\ell_\prop(\eta; \action)$ for the propensity
estimate~\eqref{eq:propensity-fit}.  The degrees-of-freedom
adjustments depend on the real number solutions $\hzeta_\out^\theta$
and $\hzeta_\out^\eta$ to the estimating equations
\begin{subequations}
\label{eq:dof-emp-def}
  \begin{equation}
\label{eq:dof-adjust-emp}    
\begin{gathered}
    \hzeta_\out^\theta = \frac{1}{n} \sum_{i=1}^n
    \frac{\ddot\ell_\out(\heta_{\out,i};\eta_{\prop,i},\action_i,y_i)}{\hzeta_\out^\eta
      \ddot \ell_\out(\heta_{\out,i};\eta_{\prop,i},\action_i,y_i)+1},
    \quad \mbox{and} \quad \hzeta_\out^\eta = \frac{1}{n} \Tr\Big(
    \bSigma \big(\hzeta_\out^\theta \bSigma + \nabla^2
    \Omega_\out(\hbtheta_\out)\big)^{-1} \Big),
\end{gathered}
\end{equation}
along with the real number solutions $\hzeta_\prop^\theta$ and
$\hzeta_\prop^\eta$ to the estimating equations
\begin{equation}
  \begin{gathered}
\label{eq:dof-adjust-action}        
    \hzeta_\prop^\theta = \frac{1}{n} \sum_{i=1}^n
    \frac{\ddot\ell_\prop(\heta_{\prop,i};\action_i)}{\hzeta_\prop^\eta
      \ddot \ell_\prop(\heta_{\prop,i};\action_i)+1}, \quad \mbox{and}
    \quad \hzeta_\prop^\eta = \frac{1}{n} \Tr\Big( \bSigma
    \big(\hzeta_\prop^\theta \bSigma + \nabla^2
    \Omega_\prop(\hbtheta_\prop)\big)^{-1} \Big).
\end{gathered}
\end{equation}
\end{subequations}
As shown in~\Cref{AppDOF}, these equations have unique positive
solutions.

The quantities $\hzeta_\out^\theta$ and $\hzeta_\prop^\theta$
correspond to degrees-of-freedom adjustment factors that have appeared
elsewhere in the debiasing
literature~\cite{javanmard2014hypothesis,miolane2021,celentanoMontanariWei2020,bellec2019biasing,bellec2019debiasingIntervals,bellec2022}.
For example, consider~\cref{eq:dof-adjust-emp} in the case that
$\action_i = 1$ for all $i$ and we fit the model with unpenalized
least squares ($n > p$) or the Lasso with least-squares loss.  In
these cases, $\ddot\ell_\out(\eta;\eta_{\prop,i},\action_i,y_i) = 1$,
whence the first equation in~\cref{eq:dof-adjust-emp} states
$\hzeta_\out^\theta = 1 / (\hzeta_\out^\eta + 1)$.  For least-squares,
the second equation in~\cref{eq:dof-adjust-emp} reads
$\hzeta_\out^\eta = p/(n\hzeta_\out^\theta)$.  Solving for
$\hzeta_\out^\eta$ and $\hzeta_\out^\theta$, we have $\hzeta_\out^\eta
= (p/n)/(1-p/n)$ and $\hzeta_\out^\theta = 1-p/n$.  In the case of the
Lasso, we interpret $\nabla^2 \lambda \| \hbtheta_\out \|_1 =
\diag(s_j)$, with $s_j = \infty$ if $\htheta_{\out,j} = 0$ and $0$
otherwise, and get $\hzeta_\out^\eta = \| \htheta_\out
\|_0/(n\hzeta_\out^\theta)$.  Solving gives $\hzeta_\out^\eta =
(\|\hbtheta_\out\|_0/n)/(1-\|\hbtheta_\out\|_0/n)$ and
$\hzeta_\out^\theta = 1-\|\hbtheta_\out\|_0/n$.  In both these cases
$\hzeta_\out^\theta = 1 - \dfhat_\out/n$, where $\dfhat_\out =
\Tr\Big(\frac{\de}{\de \by} \bX \hbtheta_\out \Big)$ is the
degrees-of-freedom in the Steinian sense~\cite{stein1981}.
Equation~\eqref{eq:dof-adjust-emp} generalizes this definition to a
wide class of loss functions and penalties and to linear models with
missing outcomes and binary GLMs.  To be clear, alternative
generalizations are possible~\cite{bellec2022}.  We make no claims
regarding the relative merits of the choice~\eqref{eq:dof-emp-def} and
these alternatives.  We have adopted the
definition~\eqref{eq:dof-emp-def} because is best suited to our proof
techniques.


\subsection{Two classes of debiasing procedures}
\label{sec:procedure-definitions}

With this high-level overview in place, we are equipped to describe
two classes of debiasing procedures, depending on whether or not the
propensity score is known.  To be clear, our primary contribution is
to develop an estimator that does \emph{not} require knowledge of the
propensity function, which we describe
in~\Cref{SecUnknownProp}. Nonetheless, in order to develop intuition,
it is helpful to analyze an estimator in which the bias estimates are
based on this knowledge, which we do in~\Cref{SecKnownProp} to follow.

Before presenting these debiasing procedures, it is worth emphasizing
the difficulty of achieving consistency for $\mu_\out$ using plug-ins
to the G-computation \eqref{eq:db-G-computation}.  There are various
natural estimators that we have found to fail.  For example, we
studied an unmodified version of the debiasing procedures presented in
the
papers~\cite{javanmard2014hypothesis,miolane2021,celentanoMontanariWei2020,bellec2019biasing,bellec2019debiasingIntervals,bellec2022}
(which, by necessity, is computed using only the units with observed
outcomes):
\begin{subequations}
\label{eq:db-naive}
  \begin{align}
    \hbias_{\out,0} & \defn \bmu_{\sx}^\top
    \frac{\bSigma^{-1}\bX^\top\big(\ba \odot \bw \odot (\by -
      \htheta_{\out,0}\ones - \bX
      \hbtheta_\out)\big)}{n\hzeta_\out^\theta}, \\
    \hbbias_{\out} & \defn - \frac{\bSigma^{-1}\bX^\top\big (\ba
      \odot \bw \odot (\by - \htheta_{\out,0}\ones - \bX
      \hbtheta_\out)\big)}{n\hzeta_\out^\theta}.
  \end{align}
\end{subequations}
We considered these bias estimates under two weighting schemes
in~\cref{eq:outcome-fit}.  The first takes weights $w_i = 1$ and the
second takes weights $w_i = 1 / \pi(\theta_{\prop,0} + \<
\bx_i,\btheta_\prop\>)$.  In Appendix~\ref{SecInconsistency}, we show
that under both weighting schemes and certain additional conditions,
these bias estimates lead to provably inconsistent estimates of
$\mu_\out$.  The former weighting scheme is our primary interest in
this paper because we assume no knowledge of the propensity model.
The latter is an oracle procedure inspired by M-estimation approaches
that reweight the loss by the inverse propensity score
(e.g.,~\cite{arkhangelsky2021doublerobust}).  It fails despite using
oracle knowledge of the propensity score in a natural way.  We also
considered using the base estimates~\eqref{eq:outcome-fit} as
plug-ins.  In Appendix~\ref{SecInconsistency}, we show that these lead
to inconsistent estimates of $\mu_\out$ as well.

In fact, the conditions under which we prove inconsistency are simple
enough to be summarized here.  It occurs as soon as the signal
strength $\| \btheta_\prop \|_2$ and its overlap with the outcome
model parameter $\< \btheta_\prop , \btheta_\out \>$ are positive and
bounded away from zero.  The signal strength lower bound states that
the missingness indicator is not independent (or nearly independent)
of the covariates.  The overlap lower bound states that the
missingness indicator is sufficiently confounded with the outcome.  Of
course, this type of confounding is exactly what motivates the
analysis of the current paper.  For simplicity, we only prove
inconsistency when ridge regression is used and features are
independent ($\bSigma = \id_p$), although we expect it to occur more
generally.  Precise statements can be found in
Appendix~\ref{SecInconsistency}.  


\subsubsection{Oracle augmented shifted-confounder weighting}
\label{SecKnownProp}

The bias of the base estimates can be estimated by a weighted sample
averages as follows:
\begin{equation}
\label{eq:db-with-offset-explicit-intro}
  \begin{aligned}
    \hbias_{\out,0} & \defn \frac{1}{\numobs} \sum_{i=1}^n \omega_{0,i}^\orc a_i
    (\htheta_{\out,0} + \< \bx_i , \hbtheta_\out \> - y_i), \qquad
    & \hbias_{\out} & \defn \frac{1}{\numobs} \sum_{i=1}^n \bomega_i^\orc a_i
    (\htheta_{\out,0} + \< \bx_i , \hbtheta_\out \> - y_i),
  \end{aligned}
\end{equation}
where 
\begin{equation}
    \omega_{0,i}^\orc
        \defn
        - \frac{\bmu_{\sx,\cfd}^\top \bSigma_{\cfd}^{-1} \bx_i }{\hzeta_\out^\theta},
    \qquad
    \bomega_i^\orc
        \defn
        \frac{\bSigma_{\cfd}^{-1} \bx_i }{\hzeta_\out^\theta},
\end{equation}
and $\bmu_{\sx,\cfd} = \E[\bx \mid \action = 1]$ and $\bSigma_{\cfd} =
\Var(\bx \mid d=1)$ are the mean and variance of the features
conditional on the outcome being observed, and $\hzeta_\out^\theta$ is
the previously described adjustment~\eqref{eq:dof-adjust-emp} for the
degrees-of-freedom.  In matrix form, these estimates take the form
\begin{equation}
\label{eq:orc-ASCW-matrix-form}
  \begin{aligned}
    \hbias_{\out,0} & = \bmu_{\sx,\cfd}^\top
    \frac{\bSigma_{\cfd}^{-1}\bX^\top\big(\ba \odot (\by -
      \htheta_{\out,0}\ones - \bX
      \hbtheta_\out)\big)}{n\hzeta_\out^\theta},
    \quad&
    \hbbias_{\out} & = - \frac{\bSigma_{\cfd}^{-1}\bX^\top\big (\ba
      \odot (\by - \htheta_{\out,0}\ones - \bX
      \hbtheta_\out)\big)}{n\hzeta_\out^\theta}.
  \end{aligned}
\end{equation}
Even if the confounder distribution is known, the weights
$\omega_{i,0}^\orc$, $\bomega_i^\orc$ depend on oracle knowledge of
the propensity model.  Indeed, an elementary computation shows that
\begin{equation}
\label{eq:cfd-mean-variance}
\begin{gathered}
    \bmu_{\sx,\cfd} = \bmu_{\sx} + \alpha_1
    \bSigma^{1/2}\btheta_\prop, \qquad \bSigma_{\cfd} = \bSigma +
    (\alpha_2 - \alpha_1^2) \bSigma^{1/2} \btheta_\prop
    \btheta_\prop^\top \bSigma^{1/2},
\end{gathered}
\end{equation}
where
\begin{equation}
\label{eq:alpha-12}
    \alpha_1 \defn \frac{\E[\pi'(\eta_\prop)]}{\E[\pi(\eta_\prop)]},
    \qquad \alpha_2 \defn
    \frac{\E[\pi''(\eta_\prop)]}{\E[\pi(\eta_\prop)]}.
\end{equation}
Thus, the conditional first and second moments of confounder
distribution are shifted by the missingness mechanism.  The weights
$\omega_{i,0}^\orc$, $\bomega_i^\orc$ account for this confounder
shift.  We refer to the correction of the base estimates with the bias
estimates \cref{eq:db-with-offset-explicit-intro} as the
\textit{oracle augmented shifted-confounder weighting}, or more
succinctly, \textit{oracle ASCW}.

The estimates \eqref{eq:db-with-offset-explicit-intro} are equivalent
to the estimates \eqref{eq:db-naive} with $\bmu_{\sx,\cfd}$ and
$\bSigma_{\cfd}$ in place of $\bmu_{\sx} = \E[\bx]$ and $\bSigma =
\Var(\bx)$, respectively.  Because the data used to fit
$(\htheta_{\out,0},\hbtheta_\out)$ only includes units with $\action =
1$, it is natural to replace the unconditional mean and variance of
the features with their conditional mean and variance.  For this
reason, equation~\eqref{eq:db-with-offset-explicit-intro} rather than
equation~\eqref{eq:db-naive} is arguably the natural generalization of
the debiasing methods in the
papers~\cite{javanmard2014hypothesis,bellec2019debiasingIntervals,celentanoMontanariWei2020}.


\subsubsection{Empirical shifted-confounder augmentation}
\label{SecUnknownProp}

When the confounder mean $\bmu_{\sx}$ and propensity model parameters
$\btheta_\prop, \alpha_1, \alpha_2$ are not known, one might consider
computing the shifted-confounder weights with plug-in estimates for
$\btheta_\prop, \alpha_1, \alpha_2$.  We shall see below that it is
possible to consistently estimate $\alpha_1$ and $\alpha_2$ in the
present setting.  Nevertheless, as we have already stated, consistent
estimation of $\btheta_\prop$ in the $\ell_2$-norm is
impossible~\cite{Barbier_2019}. Under proportional asymptotics with
inconsistent estimates of $\btheta_\out$, all existing guarantees (of
which we are aware) for the debiased estimate require that $\bSigma$
be estimated consistently in operator
norm~\cite{bellec2019debiasingIntervals}; such operator-norm
consistent estimation is possible only under very strong structural
assumptions on $\bSigma$ (e.g., see the
papers~\cite{bickelLevina2008,karoui2008,caiZhangZhou2010}).  In the
present setting, because $\ell_2$-norm consistent estimation of
$\btheta_\prop$ is not possible, operator norm consistent estimation
of $\bSigma_{\cfd}$ is not possible.  Without knowledge of
$\btheta_\prop$, a new approach is required.

Our approach is based on an expansion of the
estimates~\eqref{eq:orc-ASCW-matrix-form}.  Using the
Sherman-Morrison-Woodbury formula, we can write $\bSigma_{\cfd}^{-1} =
\bSigma^{-1} -
\sc_{\bSigma}\bSigma^{-1/2}\btheta_\prop\btheta_\prop^\top
\bSigma^{-1/2}$, where $\sc_{\bSigma} \defn (\alpha_2-\alpha_1^2)/(1 +
(\alpha_2-\alpha_1^2)\|\btheta_\prop\|_{\bSigma}^2)$.  With this
notation, the bias estimates
in~\cref{eq:db-with-offset-explicit-intro} can be expanded as
\begin{equation}
\label{eq:db-cfd-expanded}
\begin{gathered}
    \hbias_{\out,0} = \dbAdj_{01} +
    \dbAdj_{02}, \qquad 
    \hbias_\out =
    -\frac{\bSigma^{-1}\bX^\top\big (\ba \odot (\by -
      \htheta_{\out,0}\ones - \bX
      \hbtheta_\out)\big)}{n\hzeta_\out^\theta} 
    + 
    \dbAdj_1\,
    \bSigma^{-1/2}\btheta_\prop,
\end{gathered}
\end{equation}
where
\begin{equation}
\label{eq:dbAdj}
\begin{gathered}
    \dbAdj_{01} \defn  \frac{\big\< \bmu_{\sx,\cfd} ,
      \bSigma^{-1}\bX^\top\big(\ba \odot (\by - \htheta_{\out,0}\ones
      - \bX \hbtheta_\out)\big)\big\>}{n\hzeta_\out^\theta},
    \\ \dbAdj_{02} \defn -\sc_{\bSigma}
    \frac{\big\<\bmu_{\sx,\cfd},\bSigma^{-1/2}\btheta_\prop\big\>\big\<\bSigma^{-1/2}\btheta_\prop,\bX^\top\big(\ba
      \odot (\by - \htheta_{\out,0}\ones - \bX
      \hbtheta_\out)\big)\big\>}{n\hzeta_\out^\theta}, \\ \dbAdj_1
    \defn \sc_{\bSigma}
    \frac{\big\<\bSigma^{-1/2}\btheta_\prop,\bX^\top\big(\ba \odot
      (\by - \htheta_{\out,0}\ones - \bX
      \hbtheta_\out)\big)\big\>}{n\hzeta_\out^\theta}.
\end{gathered}
\end{equation}
Our main insight is that the debiasing procedure is successful
provided that we use as plug-ins: \textit{(i)} any consistent
estimates of the scalar quantities $\dbAdj_{01}, \dbAdj_{02},
\dbAdj_1$, and \textit{(ii)} an appropriately debiased (but not
necessarily consistent) estimate of the high-dimensional parameter
$\btheta_\prop$.  Explicitly, we use
\begin{equation}
\label{eq:SCA-bias-hat}
\begin{gathered}
    \hbias_{\out,0} \defn \widehat{\dbAdj}_{01} +
    \widehat{\dbAdj}_{02}, \\ 
    \hbias_\out =
    -\frac{\bSigma^{-1}\bX^\top\big (\ba \odot (\by -
      \htheta_{\out,0}\ones - \bX
      \hbtheta_\out)\big)}{n\hzeta_\out^\theta} 
      + 
      \widehat{\dbAdj}_1\,
    \bSigma^{-1/2}\hbtheta_\prop^\de.
\end{gathered}
\end{equation}
We provide consistency guarantees for $\hmu_\out$ and approximate
normality guarantees for $\hbtheta_\out^\de$ for two constructions of
the debiased estimate $\hbtheta_\prop^\de$ and any consistent
estimates $\widehat{\dbAdj}_{01}$, $\widehat{\dbAdj}_{02}$,
$\widehat{\dbAdj}_1$ of $\dbAdj_{01}$, $\dbAdj_{02}$, $\dbAdj_1$.
In~\Cref{sec:adjustment-construction}, we give particular
constructions of $\widehat{\dbAdj}_{01}$, $\widehat{\dbAdj}_{02}$, and
$\widehat{\dbAdj}_1$ that are consistent, although any construction
achieving consistency suffices.

We consider the following two estimates of $\hbtheta_\prop^\de$:
\begin{equation}
\label{eq:prop-db-option}
\begin{aligned}
    &\textbf{Moment method:} \qquad& \hbtheta_\prop^\de &=
  \frac{1}{\hbeta} \bSigma^{-1/2} \big( \hbmu_{\sx,\cfd}-\hbmu_{\sx}
  \big), \quad \mbox{and} \\
& \textbf{M-estimation:} \qquad& \hbtheta_\prop^\de &= \frac{1}{\hbeta}
  \Big( \hbtheta_\prop + \frac{\bSigma^{-1}\bX^\top \nabla_{\bmeta}
    \ell_\prop (\htheta_{\prop,0}\ones + \bX
    \hbtheta_\prop;\ba)}{n\hzeta_\prop^\theta} \Big),
\end{aligned}
\end{equation}
where $\hbtheta_\prop$ is the base estimate~\eqref{eq:propensity-fit}, $\hbmu_{\sx,\cfd} \defn \frac{1}{n_1} \sum_{i=1}^n a_i \bx_i$,
$\hbmu_{\sx} \defn \frac{1}{n} \sum_{i=1}^n \bx_i$,
and
$n_1 = \sum_{i=1}^n a_i$.
The factor $\hbeta$ corrects for the shrinkage that occurs in high-dimensional binary regression models \cite{surCandes2019,yadlowskyYun2021},
and its definition depends on whether the moment method or M-estimation is used to construct $\hbtheta_\prop^\de$ in \eqref{eq:prop-db-option}.

The first construction of $\hbtheta_\prop^\de$ is based on the
identity $\bmu_{\sx,\cfd} - \bmu_{\sx} = \alpha_1 \bSigma^{1/2}
\btheta_\prop$, which is a straightforward consequence of Gaussian
integration by parts.  In this case, we set $\hbeta$ to be any
consistent estimate of $\alpha_1$.  The construction of one such
estimate is given in \Cref{sec:adjustment-construction}.

The second construction of $\hbtheta_\prop^\de$ is based on prior work
on debiasing penalized binary-outcome regression developed
\cite{surCandes2019,yadlowskyYun2021,bellec2022}.  This prior work
establishes that the debiased estimate
$\hbtheta_\prop+\frac{\bSigma^{-1}\bX^\top \nabla_{\bmeta} \ell_\prop
  (\htheta_{\prop,0}\ones + \bX
  \hbtheta_\prop;\ba)}{n\hzeta_\prop^\theta}$ is centered not on the
true parameter $\btheta_\prop$, as it would be in linear outcome
models, but on a shrunken version of the true parameter $\beta
\btheta_\prop$.  These works also provide several methods for
consistently estimating this shrinkage factor.  Our method succeeds if
$\hbeta$ is any consistent estimate of this shrinkage factor.  The
construction of one such estimate is given
in~\Cref{sec:adjustment-construction}.


\section{Theoretical guarantees}
\label{SecTheory}

In this section, we state and discuss a number of theoretical results
associated with our estimators.  In \Cref{SecAssumptions}, we state
the assumptions under which our results hold.
In~\Cref{sec:consistency}, we state our consistency result for
estimation of the population mean $\mu_\out$.  In
\Cref{sec:consistency}, we state our result regarding the approximate
normality (in a certain sense) of the estimate $\hbtheta_\out^\de$.


\subsection{Assumptions}
\label{SecAssumptions}

Our consistency result holds as $n,p \rightarrow \infty$
simultaneously.  It holds for any sequence of models that satisfy
the following constraints, each of which are defined by a pair
of constants $C, c > 0$ that do not depend on $(n,p)$.
\begin{description}
    \item{\textbf{Assumption A1}}

    \begin{enumerate}[label=(\alph*)]
        \item The sampling rate $n/p$ lies in the interval $(c, C)$.
        \item The noise variance $\sigma^2$ in the linear model lies
          in the interval $(c, C)$.

        \item The feature covariance is strictly positive definite:
          $\bSigma \succ 0$.

        \item The link function $\pi: \reals \rightarrow (0,1)$ is
          twice-differentiable, strictly increasing, with first and
          second derivatives bounded in absolute value by $C$.
          Moreover, $1-c > \E[\pi(\theta_{\prop,0} + \< \bx ,
            \btheta_{\prop}\>)] > c > 0$.  Moreover, for $\eta > 0$,
          $1 - Ce^{-c\eta} > \pi(\eta)$ and $\pi(-\eta) >
          Ce^{-c\eta}$.

        \item The mean of the features has bounded
          $\bSigma^{-1}$-norm: $\| \bmu_{\sx} \|_{\bSigma^{-1}} < C$.

        \item The size of the signals and offsets are bounded above in
          the $\bSigma$-norm: $|\theta_{\out,0}|,|\theta_{\prop,0}| <
          C$, $\| \btheta_{\out} \|_{\bSigma}, \| \btheta_{\prop}
          \|_{\bSigma} < C$.

        \item The penalties are differentiable, strongly smooth, and
          strongly convex in the $\bSigma$-metric: $\nabla
          \Omega_{\out}(\bv)$ and $\nabla \Omega_{\prop}(\bv)$ exist
          for all $\bv$, and $\bv \mapsto
          \Omega_{\out}(\bSigma^{-1/2}\bv)$ and $\bv \mapsto
          \Omega_{\prop}(\bSigma^{-1/2}\bv)$ are $c$-strongly convex
          and $C$-strongly smooth.  The penalties have minimizers
          $\bv_\out$ and $\bv_\prop$ bounded by $\| \bv_\out
          \|_{\bSigma},\|\bv_\prop\|_{\bSigma} < C$.  Their Hessian is
          $C\sqrt{n}$-Lipschitz in Frobenius norm:

        \item The weight function is uniformly bounded above and
          below: $0 < c < w(h) < C < \infty$ for all $h$.  Moreover,
          $1/w(h)$ is differentiable and $C$-Lipschitz.

        \item The loss function $\ell_{\prop}$ satisfies the following
          properties:

        \begin{itemize}

            \item It is $c$-strongly convex and three-times
              differentiable in the linear predictor, with its second
              and third derivatives bounded in absolute value by $C$.

            \item It is non-negative.

            \item It is informative in the sense that $\partial_\eta
              \ell_\prop(\eta;0) - \partial_\eta \ell_\prop(\eta;1)
              \geq c > 0$ for all $\eta \in \reals$.

            \item It is bounded and has bounded derivatives at $\eta =
              0$: $\ell_\prop(0;0),\ell_\prop(0;1) < C$, $-C <
              \partial_\eta \ell_\prop(0;1) < -c < 0$ and $C >
              \partial_\eta \ell_\prop(0;0) \geq c > 0$.

        \end{itemize}

    \end{enumerate}

\end{description}
We sometimes refer to the list of constants appearing in Assumption A1
by $\cPmodel$.  The approximate normality result also holds for models
satisfying assumption A1, with approximation errors that vanish as
$n,p \rightarrow \infty$.

Assumption A1(a) requires $n$ and $p$ grow proportionally.  We expect
only a lower bound on $n/p$ is necessary, but our current proof
requires an upper bound as well.  Importantly, we allow $n/p < 1$, and
in fact, allow it to be arbitrarily small as long is it does not
vanish in the high dimensional limit.  The accuracy of $\hmu_\out^\de$
necessarily diverges as $\sigma^2$ diverges, justifying the upper
bound on $\sigma^2$.  The lower bound on $\sigma^2$ is likely an
artifact of the proof.  Assumption A1(d) imposes strict overlap on
average over the covariates $\bx$.  It implies that a strictly
positive fraction of the units has outcomes observed, and a strictly
positive fraction has outcomes unobserved.  It does not require strict
overlap conditional on the covariates $\bx$, but does constrain the
rate at which overlap decays as the linear predictor grows.  Together
with assumptions A1(e) and (f), it implies that the ``typical'' unit
satisfies a form a strict overlap.  The decay rate is satisfied by
popular link functions, including the logistic-link.  The upper bounds
on $|\theta_{\out,0}|,|\theta_{\prop,0}|,\| \btheta_{\out}
\|_{\bSigma},\| \btheta_{\prop}\|_{\bSigma}$ together with the lower
bound on $\sigma^2$ imply that the data is noisy: the fraction of
variation in the outcome explained by the covariates (i.e., $R^2$) is
necessarily bounded away from 1.  Our constraints permit (but do not
require) that it also be bounded away from 0.  Assumption A1(g)
requires strong convexity and strong smoothness of $\Omega_\out$ and
$\Omega_\prop$ in the $\bSigma$-metric.  Generalizations to penalties
that are not strongly smooth (e.g.,~elastic net) or strongly convex
(e.g.,~Lasso) are of substantial interest.  It is likely that
specialized analyses can generalize our results to these cases, as in
the
papers~\cite{miolane2021,celentanoMontanariWei2020,bellec2019debiasingIntervals}.
We make the current assumptions to facilitate our proofs.  When the
singular values of $\bSigma$ are all of order $O(1)$, assumption A1(g)
includes ridge regression with the choice of regularization parameter
that minimizes prediction error and which is chosen by cross
validation (see, for example,
\cite{karoui2013asymptotic,miolane2021,hastieMontanariRossetTibshirani2022}).
Bounds on the weight $w(h)$ is required to make Assumption A1(g)
meaningful.  If strict overlap is satisfied ($\pi(\eta) > c$ for all
$\eta$), then assumption A1(h) is satisfied by the inverse propensity
weights $w(h) = 1/\pi(h)$.  Assumption A1(i) includes the
least-squares loss.  Because of the strong convexity constraint, it
does not include the logistic loss.  We expect a generalization to
logistic loss is possible at the cost of additional technical
difficulty in the proofs.  Our primary interest is to show that
constructing a consistent estimate of $\mu_\out$ is possible, so we
did not pursue extending the result to logistic loss.

Throughout the paper, we always use $C,c > 0$, possibly with
subscripts or superscripts, to denote constants that depend only on
the constants in $\cPmodel$, even though we do not explicitly state
this fact on each occurrence.


\subsection{Consistency under proportional asymptotics}
\label{sec:consistency}
  
We can now state our main consistency result for the population mean.
Within our framework, we say that an estimate $\widehat{\dbAdj}$ is
\emph{consistent} for $\dbAdj$ if $\widehat{\dbAdj} - \dbAdj \gotop 0$
\mbox{as $(n, p) \rightarrow \infty$.}
\begin{theorem}
\label{thm:pop-mean}
Under Assumption A1, outcome regression \eqref{eq:db-G-computation} with empirical SCA plug-ins $(\htheta_{\out,0}^\de,\hbtheta_\out^\de)$ achieves consistency for the population mean
\begin{align}
|\hmu_\out^\de - \mu_\out | & \gotop 0 \qquad \mbox{as $(n, p)
  \rightarrow \infty$,}
\end{align}
provided the bias corrections \eqref{eq:SCA-bias-hat} are computed using consistent estimates of  $\dbAdj_{01}$, $\dbAdj_{02}$, 
$\dbAdj_1$ and either the moment method or M-estimation approach \eqref{eq:prop-db-option} for the debiased propensity parameter.
\end{theorem}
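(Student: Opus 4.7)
The plan is to prove consistency in two stages. In the first, I would establish consistency of the \emph{oracle ASCW} estimator, meaning the plug-in $\hmu_\out^\de$ computed from the explicit bias formulas \eqref{eq:db-with-offset-explicit-intro}/\eqref{eq:orc-ASCW-matrix-form} in which the true $\bmu_{\sx,\cfd},\bSigma_{\cfd},\btheta_\prop,\alpha_1,\alpha_2$ appear. In the second, I would show that replacing the oracle correction by the empirical SCA correction \eqref{eq:SCA-bias-hat} changes $\hmu_\out^\de$ by $o_p(1)$; this is where the hypothesized consistency of $\widehat{\dbAdj}_{01},\widehat{\dbAdj}_{02},\widehat{\dbAdj}_1$ and the specific structure of $\hbtheta_\prop^\de$ enter. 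Writing $\hbmu_\sx \defn n^{-1}\sum_i \bx_i$ and decomposing $\hmu_\out^\de - \mu_\out = (\htheta_{\out,0}^\de - \theta_{\out,0}) + \langle \bmu_\sx, \hbtheta_\out^\de - \btheta_\out\rangle + \langle \hbmu_\sx - \bmu_\sx, \hbtheta_\out^\de\rangle$ reduces the task to controlling two low-dimensional functionals of the debiased estimator plus a Gaussian-concentration remainder.

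For the first stage, I would invoke the exact asymptotic characterization (promised in \Cref{SecProofs}) derived by applying the Convex Gaussian Min-Max Theorem jointly to the outcome and propensity M-estimates \eqref{eq:outcome-fit}--\eqref{eq:propensity-fit}. The ASCW weights are tailored so that $\bSigma_{\cfd}^{-1}$ absorbs the selection-induced shift in the conditional law of $\bx_i \mid \action_i = 1$ (the subsample actually used to fit the outcome regression), and the factor $\hzeta_\out^\theta$ undoes the shrinkage from $\Omega_\out$. Consistency under the oracle then reduces to showing that two specific low-dimensional inner products of the form $\langle \bv, \bSigma_{\cfd}^{-1}\bX^\top(\ba \odot (\by - \htheta_{\out,0}\ones - \bX\hbtheta_\out))\rangle / (n\hzeta_\out^\theta)$, for $\bv \in \{\bmu_{\sx,\cfd}, \bmu_\sx\}$, concentrate on the respective population biases of $\htheta_{\out,0}$ and $\langle \bmu_\sx, \hbtheta_\out\rangle$. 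Both are delivered directly by the CGMT characterization. The remainder $\langle \hbmu_\sx - \bmu_\sx, \hbtheta_\out^\de\rangle$ must be handled by leave-one-out stability: since Cauchy--Schwarz alone gives only an $O_p(1)$ bound (because $\|\hbmu_\sx - \bmu_\sx\|_{\bSigma^{-1}} = O_p(\sqrt{p/n})$), I would exploit the uniform strong convexity of Assumption A1(g) to show that a leave-one-out version of $\hbtheta_\out^\de$ differs from $\hbtheta_\out^\de$ by $o_p(1)$ in $\bSigma$-norm while being independent of the removed $\bx_i$, yielding $\langle \hbmu_\sx - \bmu_\sx, \hbtheta_\out^\de\rangle = o_p(1)$.

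For the second stage, substituting expansion \eqref{eq:db-cfd-expanded} into the oracle correction and comparing term by term with \eqref{eq:SCA-bias-hat} shows that the intercept discrepancy equals $(\widehat{\dbAdj}_{01} - \dbAdj_{01}) + (\widehat{\dbAdj}_{02} - \dbAdj_{02}) = o_p(1)$ by hypothesis. The coefficient discrepancy equals $\widehat{\dbAdj}_1 \bSigma^{-1/2}\hbtheta_\prop^\de - \dbAdj_1 \bSigma^{-1/2}\btheta_\prop$; after contracting against $\hbmu_\sx$ and using $\widehat{\dbAdj}_1 \gotop \dbAdj_1$, it suffices to show $\langle \hbmu_\sx, \bSigma^{-1/2}(\hbtheta_\prop^\de - \btheta_\prop)\rangle = o_p(1)$. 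For the moment-method version, the Gaussian identity $\bmu_{\sx,\cfd} - \bmu_\sx = \alpha_1 \bSigma^{1/2}\btheta_\prop$ together with $\hbeta \gotop \alpha_1$ reduces this to an $O_p(n^{-1/2})$ combination of empirical-mean fluctuations of $\bx_i$ and $a_i \bx_i$ tested against a bounded direction. For the M-estimation version, I would adapt the debiased binary-regression asymptotics of \cite{surCandes2019,yadlowskyYun2021,bellec2022} to the joint outcome/propensity CGMT setting, which yield $\langle \bv, \hbtheta_\prop^\de - \btheta_\prop\rangle = o_p(1)$ for any direction $\bv$ of bounded $\bSigma$-norm whose dependence on the design is sufficiently weak.

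The hardest step is precisely the decoupling required whenever a data-dependent direction --- notably $\hbmu_\sx$, and implicitly any term coupling $\hbtheta_\out^\de$ with $\hbtheta_\prop^\de$ --- is contracted against an estimator built from the same design matrix $\bX$. The CGMT directly controls projections only along \emph{deterministic} directions, so the argument must either absorb the data-dependent direction into an enlarged CGMT with additional linear constraints, or bound the discrepancy through leave-one-out stability of the base M-estimates under the uniform strong convexity from Assumption A1(g). A secondary but nontrivial step is verifying that the implicit solutions $\hzeta_\out^\theta, \hzeta_\out^\eta, \hzeta_\prop^\theta, \hzeta_\prop^\eta$ of \eqref{eq:dof-emp-def} concentrate on the deterministic limits predicted by the CGMT fixed-point equations; this is routine but requires a careful continuity/contraction argument given the implicit definitions.
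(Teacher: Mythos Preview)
Your two-stage structure and the decomposition of $\hmu_{\out}^{\de} - \mu_{\out}$ match the paper. The gap is in the step you correctly flag as hardest: controlling $\langle \hbmu_{\sx} - \bmu_{\sx}, \hbtheta_{\out}^{\de}\rangle$ and, in the SCA stage, $\langle \hbmu_{\sx}, \bSigma^{-1/2}(\hbtheta_{\prop}^{\de} - \btheta_{\prop})\rangle$. You list two routes --- enlarge the CGMT to absorb the data-dependent direction, or leave-one-out stability --- and lean toward the second; but the leave-one-out route as you describe it does not close in the proportional regime. Strong convexity gives $\|\hbtheta_{\out} - \hbtheta_{\out}^{(-i)}\|_{\bSigma} = O_p(\|\bx_i\|/n) = O_p(n^{-1/2})$, which is indeed $o_p(1)$, but the perturbation is (to leading order) rank-one along $\bSigma^{-1}\bx_i$, so $\langle \bx_i - \bmu_{\sx}, \hbtheta_{\out}^{\de} - \hbtheta_{\out}^{\de,(-i)}\rangle$ picks up a systematic term of order $p/n \asymp 1$, and the averaged substitution error is $O_p(1)$, not $o_p(1)$. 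Your moment-method argument has the same issue: you treat $\hbmu_{\sx}$ as a bounded test direction, but it shares $O(\sqrt{p/n})$ fluctuations with $\hbmu_{\sx,\cfd} - \hbmu_{\sx}$, and the resulting inner product carries a diagonal bias of order $p/n$ whose cancellation is exactly the nontrivial fact to be established.

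The paper instead commits to the enlarged-CGMT route you mention but do not develop: it folds $\hbmu_{\sx}$ and $\hbmu_{\sx,\cfd}$ into the joint state evolution as two additional trivial ``estimators'' (with dual variables $-\ones/n$ and $-\ba/n$), so that the exact-asymptotics theorem characterizes $(\hbtheta_{\out},\hbtheta_{\prop},\hbmu_{\sx},\hbmu_{\sx,\cfd})$ simultaneously via a fixed-design model. The remainder then becomes the deterministic quantity $\langle \bg_{\sx}^f, \bg_{\out}^f\rangle_{\Ltwo}$, which is \emph{exactly} zero because the fixed-point equations force $\langle \hbi_{\sx}^f, \hbi_{\out}^f\rangle_{\Ltwo} = \langle \ones, \hbi_{\out}^f\rangle_{\Ltwo} = 0$ --- an algebraic consequence of fitting the outcome model with an intercept. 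The SCA remainder likewise reduces to $\langle \bg_{\sx}^f, \bg_{\circ}^f\rangle_{\Ltwo}$, which vanishes by $\langle \ones, \hbi_{\prop}^f\rangle_{\Ltwo} = 0$ (M-estimation) or by $S_{11} = S_{12}$ (moment method). So the decoupling is not a stability bound but an exact orthogonality encoded in the joint fixed-point system; this orthogonality, and the fact that it is produced by the intercept in the outcome fit, is the idea your proposal is missing.
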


\noindent Theorem \ref{thm:pop-mean} also applies to the oracle ASCW bias corrections \eqref{eq:db-cfd-expanded} because they use exact knowledge of the $\dbAdj$ quantities and propensity model parameter.
In~\Cref{sec:adjustment-construction}, we provide
constructions of $\widehat{\dbAdj}_{01}$, $\widehat{\dbAdj}_{02}$,
$\widehat{\dbAdj}_1$, and $\hbeta$ that are consistent under
assumption A1.  Thus, \Cref{thm:pop-mean} together with these
constructions provides a consistent estimate of the population mean
under a proportional asymptotics with $n/p$ possibly (but not
necessarily) less than 1.  Other constructions
$\widehat{\dbAdj}_{01}$, $\widehat{\dbAdj}_{02}$,
$\widehat{\dbAdj}_1$, and $\hbeta$ are likely possible.  We
prove~\Cref{thm:pop-mean} in~\Cref{sec:proof-main-theorems}.

The propensity model parameter plays a crucial role in the oracle ASCW
debiased estimate because it appears in $\bmu_{\sx,\cfd}$ and
$\bSigma_{\cfd}$ (see~\cref{eq:cfd-mean-variance}).  Estimation of the
propensity model parameter plays a crucial role in the oracle SCA
debiased estimate via $\hbtheta_\prop^\de$ and, as to be seen
in~\Cref{sec:adjustment-construction}, via the estimates
$\widehat{\dbAdj}_{01}$, $\widehat{\dbAdj}_{02}$ and
$\widehat{\dbAdj}_1$.  As we have summarized
in~\Cref{sec:procedure-definitions} and will state precisely in
Appendix~\ref{SecInconsistency}, some natural approaches that avoid
propensity modeling are provably inconsistent in relevant settings.
We are unaware of successful approaches that avoid propensity
modeling altogether.  This is in stark contrast to the setting $n >
p$, where G-computation with OLS plug-ins achieve consistency without
propensity modeling~\cite{yadlowsky2022,sur2022}.

An important feature of~\Cref{thm:pop-mean} is that, unlike the
approaches in past work~\cite{yadlowsky2022,sur2022}, it avoids sample
splitting: the G-computation~\eqref{eq:db-G-computation} averages over
the same data used to fit $\htheta_{\out,0}^\de$ and
$\hbtheta_\out^\de$ as well as the propensity estimate
$\hbtheta_\prop^\de$ that enters into the construction of the
empirical SCA estimate.  Cross-fitting, which cuts in half the amount
of data used to fit each nuisance parameter, can degrade the nuisance
error by a constant factor.  In consistency regimes in which the
classical semi-parametric efficiency lower bound can be achieved, the
nuisance errors have a negligible impact on the estimation error of
the population mean, whence this potential constant inflation has no
impact on asymptotic
variance~\cite{chernozhukov2018,newey2018crossfitting}.  In contrast,
in the present setting the nuisance errors may have a non-negligible
impact on the estimation error of the population mean, whence
cross-fitting may potentially impact efficiency.  In this and related
settings, estimators that avoid sample splitting sometimes outperform
estimators that use sample splitting in simulation
(see~\Cref{fig:variance-well-specified-outcome,fig:variance-regularization,fig:outcome-misspecification},
or, for example, the papers~\cite{guoWangCaiLi2019,maCaiLi2021}).  We
do not perform an efficiency comparison of various sample splitting
strategies (indeed, our current theory does not precisely quantify the
fluctuations of $\hmu_\out^\de$), but we believe it is important to
develop methods and theory for which sample splitting is not required.
Some previous work has studied conditions under which sample splitting
is not required, but this analysis is restricted to consistency
regimes~\cite{chenSyrgkanisAustern2022}.

We in fact prove a more quantitative form of consistency than stated
in~\Cref{thm:pop-mean}.  For the constructions of
$\widehat{\dbAdj}_{01}$, $\widehat{\dbAdj}_{02}$,
$\widehat{\dbAdj}_1$, and $\hbeta$ provided
in~\Cref{sec:adjustment-construction}, the proof
of~\Cref{thm:pop-mean} establishes a non-asymptotic exponential tail
bound on $\hmu_\out^\de - \mu_\out$ depending only on the constants
$C,c > 0$ appearing in Assumption A1 (see
Appendix~\ref{sec:db-proofs}).  Although this bound provides a more
quantitative statement than in~\Cref{thm:pop-mean}, we do not expect
that it is tight in the rate of convergence $\hmu_\out^\de - \mu_\out
\gotop 0$ or in its dependence on the constants in Assumption A1.
Identifying the correct rate of convergence or dependence on these
constants may be beyond the capacity of the Gaussian comparison proof
technique used here, which often fails to identify optimal
rates~\cite{celentanoMontanariWei2020}.  The simulations
in~\Cref{sec:simulations} suggest the rate of convergence is
$\hmu_\out^\de - \mu_\out = O_p(n^{-1/2})$ (ignoring dependence on
other constants).


\subsubsection{Normality of debiased estimates}
\label{sec:db-review}

A main focus of the debiasing literature is normal inference on the
linear model coefficients
(e.g.,~\cite{zhang2014confidence,buhlmann2013statistical,javanmard2014confidence,javanmard2014hypothesis,javanmard2018debiasing,miolane2021,celentanoMontanariWei2020,bellec2019biasing,bellec2019debiasingIntervals}).
We augment~\Cref{thm:pop-mean} by establishing the approximate
normality and unbiasedness (in a certain sense)
 of
$\hbtheta_\out^\de$
with a fully empirical standard error.
The coordinate-wise standard error is given by $\htau \bSigma_{j|-j}^{-1/2} $,
where $\htau^2 \defn \shat_\out^2 - 2\, \widehat{\dbAdj}_1\,
\shat_{\out\circ} + (\widehat{\dbAdj}_1)^2\, \shat_\circ^2$,
and $\shat_\out^2$, $\shat_{\out\circ}$, and $\shat_{\circ}^2$ are given by the correlation between certain empirical influence functions:
$
\shat_\out^2 \defn \frac{1}{\numobs} \| \hbi_\out \|^2 $, $
\shat_{\out\circ}^2 \defn \frac{1}{n\hbeta} \< \hbi_\out , \hbi_\circ \>
$, and $ \shat_{\circ}^2 \defn \frac{1}{n\hbeta^2} \| \hbi_\circ \|^2 $, where
$\hbi_\out = \frac{\ba \odot (\by - \htheta_{\out,0}\ones - \bX
  \hbtheta_\out)}{\hzeta_\out^\theta}$ and
    \begin{equation}
        \hbi_\circ =
            \begin{cases}
                \frac{\ba}{\hpi} - \ones \quad&\text{if the moment
                  method in equation~\eqref{eq:prop-db-option} is used},
                \\[5pt]
                -\frac{\nabla_{\bmeta}\ell_\prop(\htheta_{\prop,0}\ones
                  + \bX \hbtheta_\prop;\ba)}{\hzeta_\prop^\theta}
                \quad&\text{if M-estimation in
                  equation~\eqref{eq:prop-db-option} is used},
            \end{cases}
    \end{equation}
    with $\hpi = \frac{1}{\numobs} \sum_{i=1}^n \action_i$.
\begin{theorem}
\label{thm:db-normality}
Under Assumption A1, 
the empirical SCA 
estimates $(\htheta_{\out,0}^\de,\hbtheta_\out^\de)$ satisfy
\begin{equation}
  \begin{gathered}
    \htheta_{\out,0}^\de - \theta_{\out,0} \gotop 0 \quad
    \text{and} \quad \frac{1}{p} \sum_{j=1}^p \indic\Bigg\{ \frac{
      \sqrt{n} \big( \htheta_{\out,j}^\de - \theta_{\out,j} \big)
    }{ \htau \bSigma_{j|-j}^{-1/2} } \leq t \Bigg\} \gotop
    \P\big(\normal(0,1) \leq t\big) \qquad \mbox{as $(n, p)
      \rightarrow \infty$,}
  \end{gathered}
\end{equation}
provided the bias corrections \eqref{eq:SCA-bias-hat} are computed using consistent estimates of  $\dbAdj_{01}$, $\dbAdj_{02}$, 
$\dbAdj_1$ and either the moment method or M-estimation approach \eqref{eq:prop-db-option} for the debiased propensity parameter.
\end{theorem}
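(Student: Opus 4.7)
The plan is to establish the result by combining the representation of $\hbtheta_\out^\de - \btheta_\out$ as an approximately linear combination of influence functions projected by $\bX$ with a joint exact asymptotic characterization of the outcome and propensity M-estimators, similar to the one that already drives the proof of Theorem \ref{thm:pop-mean}. Concretely, I would begin by substituting the SCA bias correction \eqref{eq:SCA-bias-hat} into \eqref{eq:outcome-bias-correction} to obtain the key identity
\begin{equation*}
\hbtheta_\out^\de - \btheta_\out \;=\; \bigl(\hbtheta_\out - \btheta_\out\bigr) + \frac{\bSigma^{-1}\bX^\top\bigl(\ba\odot(\by - \htheta_{\out,0}\ones - \bX\hbtheta_\out)\bigr)}{n\hzeta_\out^\theta} \;-\; \widehat{\dbAdj}_1\,\bSigma^{-1/2}\hbtheta_\prop^\de,
\end{equation*}
and then rewrite the right-hand side in terms of the empirical influence functions $\hbi_\out$ and $\hbi_\circ$ appearing in the statement. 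The target representation to establish is $\hbtheta_\out^\de - \btheta_\out \approx n^{-1}\bSigma^{-1}\bX^\top(\hbi_\out - \widehat{\dbAdj}_1\hbi_\circ/\hbeta)$ up to an error that is negligible after projecting onto any fixed coordinate direction. For the moment method form of $\hbtheta_\prop^\de$, this follows by expanding $\hbmu_{\sx,\cfd}-\hbmu_{\sx}$ in terms of $\ba/\hpi - \ones$; for the $M$-estimation form, it follows from the KKT conditions of \eqref{eq:propensity-fit}, which turn $\bSigma^{-1/2}\hbtheta_\prop^\de$ into $n^{-1}\bSigma^{-1}\bX^\top\hbi_\circ/\hbeta$ plus a term absorbed into a shift of $\btheta_\prop$.

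Next I would invoke the exact asymptotic characterization from Section \ref{SecProofs} (the CGMT-based analysis jointly controlling the outcome and propensity M-estimators) to show that, conditionally on appropriately chosen low-dimensional summary statistics, the residual vector $\bi \defn \hbi_\out - \widehat{\dbAdj}_1\hbi_\circ/\hbeta$ is approximately a function of the Gaussian features only through low-dimensional projections along $\bSigma^{1/2}\btheta_\out$, $\bSigma^{1/2}\btheta_\prop$, and $\bmu_{\sx}$. On the orthogonal complement, Gaussianity of $\bX$ then implies that $\bSigma^{-1/2}\bX^\top\bi/\sqrt{n}$ behaves, coordinate by coordinate, like a mean-zero Gaussian with variance $\|\bi\|^2/n$. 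This is the step that produces the $\normal(0,1)$ limit after rescaling by $\htau\bSigma_{j|-j}^{-1/2}$: the correct conditional variance of $\htheta_{\out,j}^\de - \theta_{\out,j}$ at the $j$-th coordinate is $n^{-1}\bSigma_{j|-j}^{-1}\cdot n^{-1}\|\bi\|^2$, and by construction $\htau^2 = n^{-1}\|\hbi_\out\|^2 - 2\widehat{\dbAdj}_1 n^{-1}\<\hbi_\out,\hbi_\circ\>/\hbeta + \widehat{\dbAdj}_1^{\,2}n^{-1}\|\hbi_\circ\|^2/\hbeta^{2}$ is exactly $n^{-1}\|\bi\|^2$. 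Consistency of the adjustments $\widehat{\dbAdj}_1,\hbeta$ (from Section \ref{sec:adjustment-construction}) plus concentration of $\hbi_\out,\hbi_\circ$ (which follows from the same exact asymptotic characterization) then gives $\htau^2 \to \tau^2$ for a deterministic limit $\tau^2$.

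To upgrade the per-coordinate Gaussian statement to the empirical CDF convergence $\frac{1}{p}\sum_j \indic\{\cdot \leq t\}\gotop \Phi(t)$, I would use the standard pseudo-Lipschitz test-function framework that accompanies such CGMT arguments: the empirical CDF is a bounded Lipschitz functional of the rescaled coordinates of $\bSigma^{1/2}(\hbtheta_\out^\de - \btheta_\out)$, and the CGMT-based state evolution characterizes empirical averages of pseudo-Lipschitz functions of these coordinates in terms of a Gaussian law with the predicted variance. The consistency of $\htheta_{\out,0}^\de$ is a one-dimensional analog: after expanding the bias correction for the intercept using $\hbias_{\out,0} = \widehat{\dbAdj}_{01}+\widehat{\dbAdj}_{02}$, consistency of the $\widehat{\dbAdj}_{0\cdot}$ terms and the already-established concentration of $\htheta_{\out,0}$ around $\theta_{\out,0}$ (up to an $O(1)$ bias that is exactly what $\dbAdj_{01}+\dbAdj_{02}$ cancel) give the first conclusion.

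The main obstacle I anticipate is the joint CGMT analysis: Theorem \ref{thm:pop-mean} only needed scalar consistency of a single functional, whereas here the Gaussianity must be established while simultaneously controlling both the outcome and propensity M-estimators and showing that the residual $\bi$ is approximately orthogonal (relative to $\bSigma$) to the signal subspace, so that the conditional-on-summary-statistics fluctuations are genuinely Gaussian rather than contaminated by dependence on $\bX$ through $(\hbtheta_\out,\hbtheta_\prop)$. The technically delicate piece is verifying that the degrees-of-freedom adjustments $\hzeta_\out^\theta,\hzeta_\prop^\theta$ supply exactly the right rescaling so that the signal-direction components of the putative noise term vanish in the limit, which is what makes the Gaussian approximation hold uniformly across coordinates $j$.
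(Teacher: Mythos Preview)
Your proposal contains a genuine gap at its core step. The representation $\hbtheta_\out^\de - \btheta_\out \approx n^{-1}\bSigma^{-1}\bX^\top\bi$ with $\bi = \hbi_\out - \widehat{\dbAdj}_1\hbi_\circ/\hbeta$ does \emph{not} hold up to a coordinate-wise negligible error: expanding the SCA formula (take $\bSigma=\id_p$ and the $M$-estimation form of $\hbtheta_\prop^\de$) leaves a remainder that is a linear combination of $\hbtheta_\out-\btheta_\out$ and $\hbtheta_\prop$, each of $\ell_2$ norm of order one in the inconsistency regime, so $\sqrt{n}$ times a typical coordinate of the remainder is of order one, not $o(1)$. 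Neither term can be ``absorbed into a shift of $\btheta_\prop$'': $\hbtheta_\out-\btheta_\out$ is not aligned with $\btheta_\prop$, and $\hbtheta_\prop$ is itself a random high-dimensional vector, not a scalar multiple of $\btheta_\prop$. Separately, the claim that $\bi$ is ``a function of the Gaussian features only through low-dimensional projections'' is false---$\bi$ depends on the $M$-estimators, which depend on all of $\bX$. The CGMT analysis of \Cref{SecProofs} does not supply a conditional-distribution statement about $\bX^\top\bi$; it supplies concentration of pseudo-Lipschitz functionals around expectations in a fixed-design comparison model. What you sketch is closer to a leave-one-out or Stein-formula argument (as in \cite{bellec2019debiasingIntervals}), which is a different technique the paper explicitly does not use.

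The paper's route treats $\hbtheta_\out^\de$ as a whole and invokes \Cref{thm:exact-asymptotics-body} to show that for any order-$2$ pseudo-Lipschitz $\phi$, the quantity $\phi(\hbtheta_\out^\de)$ concentrates on $\E[\phi(\btheta_\out+\bg_\out^f-\dbAdj_1\,\bg_\circ^f)]$, where $\bg_\out^f$ and $\bg_\circ^f$ are \emph{by construction} jointly Gaussian in the fixed-design model with covariance read off from $\bS=n^{-1}\llangle\IFhat^f\rrangle_{\Ltwo}$ in \eqref{eq:regr-fixed-pt}. The elimination of any $\btheta_\prop$-direction bias is the algebraic identity $\bias_2(\zeta_\out^\eta)=0$ established in the oracle-ASCW step, not an orthogonality of $\bi$ to signal directions. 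Your identification $\htau^2=n^{-1}\|\bi\|^2$ is correct, and its concentration on $\tau^2$ follows from part~(d) of the exact-asymptotic theorem; the empirical-CDF statement is then obtained by sandwiching indicators between Lipschitz functions, with no conditional-on-$\bX$ Gaussianity argument.
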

\noindent As in the case of Theorem \ref{thm:pop-mean},
the oracle ASCW bias corrections \eqref{eq:db-cfd-expanded} satisfy the criteria of Theorem \ref{thm:db-normality} because they use exact knowledge of the $\dbAdj$ quantities.
Because they use the true propensity model parameter rather than propensity model estimates, the oracle ASCW standard error instead must take $\htau^2 = \shat_\out^2$.
We prove Theorem \ref{thm:db-normality} and describe the oracle ASCW standard error in~\Cref{sec:proof-main-theorems}. 

\Cref{thm:db-normality} establishes a form of
$\numobs^{-1/2}$-asymptotic normality that holds on average across
coordinates.  We note that these types of average-coordinate
guarantees are common in the literature on high-dimensional
asymptotics
(e.g.,~\cite{javanmard2014hypothesis,miolane2021,celentanoMontanariWei2020}).
The standard errors $\htau\bSigma_{j|-j}^{-1/2}/\sqrt{n}$ are fully
empirical (assuming knowledge of the feature variance $\bSigma$).
They agree with the standard errors one would get from OLS in a
low-dimensional asymptotics with outcome noise $\htau^2$.  In general,
this noise variance is larger than $\sigma^2$ by a constant factor
that does not decay as $n,p \rightarrow \infty$.
\Cref{thm:db-normality} implies that if one constructs confidence
intervals for each coordinates of the unknown parameter vector
$\btheta_\out$ based on normality theory and the empirical effective
noise level $\htau^2$, then the empirical coverage of these intervals
across coordinates is, with high probability, close to the nominal
level.  Usually, the statistician is also interested in a guarantee
that holds for a single, prespecified coordinate $\theta_{\out,j}$.
Using alternative proof techniques to those used here, Bellec and
Zhang~\cite{bellec2019debiasingIntervals} establish coordinate-wise
normality for debiased estimates in linear models with fully-observed
outcomes under a large class of penalties.  Whether these techniques
are able to establish similar results for the oracle ASCW or empirical
SCA debiased estimates, or appropriate modifications of them, is a
promising direction for future work.

\Cref{thm:db-normality} states that the oracle ASCW and empirical SCA
debiased estimates behave, in a certain sense, like OLS estimates.
Given the success of G-computation with OLS estimates as a plug-in, as
studied by Yadlowsky~\cite{yadlowsky2022} and Jiang et
al.~\cite{sur2022}, it is perhaps not surprising that $G$-computation
with these debiased estimates as a plug-in is consistent for the
population mean.

As with the convergence $\hmu_\out^\de - \mu_\out \gotop 0$
in~\Cref{thm:pop-mean}, we prove a more quantitative form of the
consistency and coverage guarantees of~\Cref{thm:db-normality}.  For
the constructions of $\widehat{\dbAdj}_{01}$, $\widehat{\dbAdj}_{02}$,
$\widehat{\dbAdj}_1$, and $\hbeta$ provided
in~\Cref{sec:adjustment-construction}, the proof
of~\Cref{thm:db-normality} establishes non-asymptotic exponential tail
bounds depending only on the constants $C,c > 0$ appearing in
Assumption A1 (see~\Cref{sec:proof-main-theorems}).  As before, we do
identifying optimal rates and dependence on constants is left to
future work.


\section{Explicit constructions of empirical adjustments}
\label{sec:adjustment-construction}

\Cref{thm:pop-mean,thm:db-normality} hold for any consistent estimates
of the $\dbAdj$-quantities, along with any consistent estimate of
$\alpha_1$ or of a shrinkage factor that we define below.  In this
section, we explicitly provide one such construction of such
consistent estimates.

\subsection{Estimating the signal strength, offset, and covariance spike}
\label{sec:summary-stat-estimates}

Our construction of consistent estimates for the $\dbAdj$-quantities
relies on estimates of certain model parameters, including:
\begin{itemize}
\item the probability of observing the outcome marginalized over the
  covariates $\barpi \defn \E[\pi(\bx)]$
\item the norm of the feature mean $\gamma_{\bmu} \defn \| \bmu_{\sx}
  \|_{\bSigma^{-1}}$
\item the propensity model signal strength $\gamma_\prop \defn \|
  \btheta_\prop \|_{\bSigma}$
\item the propensity model linear signal strength $\gamma_{\prop*}
  \defn \alpha_1\| \btheta_\prop \|_{\bSigma}$.
\item the propensity model offset $\mu_\prop \defn \< \bmu_\sx ,
  \btheta_\prop \>$
\item and the covariance spike prefactor $\sc_{\bSigma}$. (Recall that
  $\bSigma_{\cfd}^{-1} = \bSigma^{-1} -
  \sc_{\bSigma}\bSigma^{-1/2}\btheta_\prop\btheta_\prop^\top
  \bSigma^{-1/2}$, where $\sc_{\bSigma} \defn (\alpha_2-\alpha_1^2)/(1
  + (\alpha_2-\alpha_1^2)\|\btheta_\prop\|_{\bSigma}^2)$).
\end{itemize}
  The best linear predictor of $a$ given $\bx$ is $\< \bx , \alpha_1
  \btheta_\prop \>$, which is why we refer to $\gamma_{\prop*}$ as the
  ``propensity model linear signal strength.''  We estimate these
  parameters as follows.

First, we define the estimates
\begin{equation}
\begin{gathered}
\hpi \defn \frac{1}{n} \sum_{i=1}^n \action_i, \qquad \hgamma_{\bmu}^2
\defn \Big( \| \hbmu_\sx \|^2 - \frac{p}{n} \Big)_+, \quad \mbox{and}
\quad \hgamma_{\prop*}^2 \defn \Big( \| \hbmu_{\sx,\cfd} - \hbmu_\sx
\|^2 - \frac{p}{n} \frac{1-\hpi}{\hpi} \Big)_+.
\end{gathered}
\end{equation}
Then, we let $\hmu_\prop$ and $\hgamma_{\prop}$ be the unique
solutions (cf.~\Cref{lem:propensity-summary-stats}) to the fixed point
equations
\begin{equation}
\label{eq:prop-offset-and-signal-strenght-estimates}
    \E_{G \sim \normal(0,1)}[\pi(\hmu_\prop + \hgamma_\prop G)] =
    \hpi, \quad \mbox{and} \quad \E_{G \sim
      \normal(0,1)}[G\pi(\hmu_\prop + \hgamma_\prop G)] = \hpi
    \hgamma_{\prop*}.
\end{equation}
Finally, we define the estimates
\begin{equation}
\begin{gathered}
    \halpha_1 = \frac{\E[\pi'(\hmu_\prop + \hgamma_\prop
        G)]}{\E[\pi(\hmu_\prop + \hgamma_\prop G)]}, \qquad \halpha_2
    = \frac{\E[\pi''(\hmu_\prop + \hgamma_\prop G)]}{\E[\pi(\hmu_\prop
        + \hgamma_\prop G)]}, \quad \mbox{and} \quad \hsc_{\bSigma} =
    \frac{\halpha_2 - \halpha_1^2}{1 + (\halpha_2 -
      \halpha_2^2)\hgamma_\prop^2}.
\end{gathered}
\end{equation}
With these definitions, we have the following auxiliary result:
\begin{lemma}
\label{lem:propensity-summary-stats}
Under Assumption A1, with exponentially high probability, there are
unique solutions to the fixed point
relation~\eqref{eq:prop-offset-and-signal-strenght-estimates}.
Moreover, as $n,p \rightarrow \infty$, we have the consistency
relations:
    \begin{equation}
    \begin{gathered}
        \hpi - \barpi \gotop 0, \qquad \hgamma_{\bmu} - \| \bmu_\sx \|
        \gotop 0, \qquad \hgamma_{\prop*} - \alpha_1 \| \btheta_\prop
        \| \gotop 0, \qquad \hmu_\prop - \mu_\prop \gotop 0, \qquad
        \hgamma_\prop - \| \btheta_\prop \| \gotop 0, \\ \halpha_1 -
        \alpha_1 \gotop 0, \qquad \halpha_2 - \alpha_2 \gotop 0,
        \qquad \hsc_{\bSigma} - \frac{\alpha_2 - \alpha_1^2}{1 +
          (\alpha_2 - \alpha_1^2)\| \btheta_\prop\|^2} \gotop 0.
    \end{gathered}
    \end{equation}
\end{lemma}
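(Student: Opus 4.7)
The plan is to organize the consistency statements into three tiers and propagate concentration through them. Tier (i) consists of the explicit sample-based quantities $\hpi$, $\hgamma_{\bmu}$, and $\hgamma_{\prop*}$; tier (ii) the implicitly-defined pair $(\hmu_\prop, \hgamma_\prop)$; and tier (iii) the smooth transforms $\halpha_1, \halpha_2, \hsc_{\bSigma}$. For tier (i), I would apply Hoeffding's inequality to $\hpi$ (an average of iid Bernoulli variables with mean $\barpi$), and Gaussian Lipschitz concentration to $\|\hbmu_\sx\|^2$---so that the $p/n$ subtraction correctly cancels the Gaussian-quadratic bias $\Tr(\bSigma)/n$ under the model's trace normalization. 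A similar concentration argument applies to $\|\hbmu_{\sx,\cfd} - \hbmu_\sx\|^2$, after conditioning on the high-probability event $n_1/n \approx \barpi$ and evaluating the mean via Gaussian integration by parts applied to $\E[\pi(\eta_\prop)\bx]$, which yields $\E[a_i\bx_i] = \barpi\,\bmu_\sx + \barpi\alpha_1\,\bSigma\btheta_\prop$; this identifies the limit of $\hgamma_{\prop*}^2$ as $\alpha_1^2\|\btheta_\prop\|_{\bSigma}^2$ once the $p/n$ term absorbs the remaining quadratic fluctuation.

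For tier (ii), I would first verify that the true $(\mu_\prop, \gamma_\prop)$ solve the population analogue of~\eqref{eq:prop-offset-and-signal-strenght-estimates} with right-hand side $(\barpi,\barpi\alpha_1\gamma_\prop)$: the first equation is the definition of $\barpi$, and the second follows from Stein's identity, which gives $\E[G\pi(\mu_\prop + \gamma_\prop G)] = \gamma_\prop\E[\pi'(\mu_\prop + \gamma_\prop G)] = \gamma_\prop\alpha_1\barpi$ by the definition of $\alpha_1$. Uniqueness and sensitivity at the population point will come from computing the Jacobian of the map $F(\mu,\gamma) = (\E[\pi(\mu + \gamma G)],\,\E[G\pi(\mu + \gamma G)])$; applying Stein's identity again rewrites its entries in terms of Gaussian expectations of $\pi'$ and $\pi''$, and under A1(d) these can be shown to yield a uniformly positive-definite Jacobian on any compact region bounded away from $\gamma = 0$ that contains $(\mu_\prop,\gamma_\prop)$. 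A quantitative implicit function theorem, combined with the tier-(i) exponential concentration of the right-hand side of~\eqref{eq:prop-offset-and-signal-strenght-estimates}, then yields existence and uniqueness of $(\hmu_\prop,\hgamma_\prop)$ on a high-probability event along with the stated consistency.

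For tier (iii), the quantities $\halpha_1,\halpha_2,\hsc_{\bSigma}$ are continuous deterministic functions of $(\hmu_\prop,\hgamma_\prop)$ composed with Gaussian expectations of $\pi,\pi',\pi''$. Since A1(d) ensures these link derivatives are uniformly bounded with bounded derivatives, the induced maps are Lipschitz on any compact neighborhood of the population limits, so consistency follows from tier (ii) by the continuous mapping theorem; one extra check is that the denominator $1 + (\alpha_2 - \alpha_1^2)\gamma_\prop^2$ in $\hsc_{\bSigma}$ is bounded away from zero, which follows from the uniform bound $|\alpha_2 - \alpha_1^2| \lesssim 1$. The principal obstacle I anticipate is the quantitative invertibility of the Jacobian of $F$ uniformly over the model class: while strict monotonicity of $\pi$ gives $\partial_\mu F_1 > 0$ immediately, a uniform lower bound on the full Jacobian determinant requires careful use of the overlap and tail-decay conditions in A1(d)-(f) to control the Gaussian $\pi''$-integrals and to ensure that $\gamma_\prop$ stays bounded away from zero for nondegenerate models in the class.
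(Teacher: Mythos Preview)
Your proposal is correct and organizationally parallels the paper's proof, but the paper makes two choices that streamline tiers (i) and (ii) relative to yours.

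For tier (i), rather than arguing direct Gaussian-quadratic concentration for $\|\hbmu_\sx\|^2$ and $\|\hbmu_{\sx,\cfd}-\hbmu_\sx\|^2$, the paper routes these through the exact-asymptotics machinery (Theorem~\ref{thm:exact-asymptotics}): it identifies the fixed-design surrogates $\by_\sx^f$, $\by_{\sx,\cfd}^f$ and reads off the $p/n$ and $(p/n)(1-\barpi)/\barpi$ bias terms from the entries of $\bS = \llangle \IFhat^f\rrangle_{\Ltwo}/n$. Your direct approach works too, but you would have to carefully compute the second moment of $\hbmu_{\sx,\cfd}-\hbmu_\sx$ with random $n_1$ and the $a_i$--$\bx_i$ dependence, whereas the paper gets this for free from already-established infrastructure.

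For tier (ii), the paper recasts the system~\eqref{eq:prop-offset-and-signal-strenght-estimates} as the stationarity condition of the scalar objective $f(v_0,v_1)=\E_G[F(v_0+v_1G)]-\hpi v_0 - \hpi\hgamma_{\prop*}v_1$ with $F'=\pi$. Its Hessian is exactly the Jacobian you propose to compute, namely $\E_G\big[\pi'(v_0+v_1G)\begin{smallmatrix}1 & G\\ G & G^2\end{smallmatrix}\big]$, so the two approaches coincide at the analytic level. The convex-potential framing, however, delivers \emph{global} uniqueness immediately via strict convexity; your quantitative implicit function theorem gives only local uniqueness near $(\mu_\prop,\gamma_\prop)$, so you would need to add the observation that the symmetric positive-definite Jacobian is in fact a Hessian, recovering the paper's argument. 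Relatedly, your worry about $\gamma_\prop$ bounded away from zero is unnecessary: for any $(v_0,v_1)$ in a compact box (including $v_1=0$), restricting the expectation to $|G|\le 1$ already gives $\nabla^2 f \succeq c\,\id_2$ with $c$ depending only on $\min_{|\eta|\le 2C'}\pi'(\eta)>0$. Tier (iii) is handled identically in both.
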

\noindent We prove~\Cref{lem:propensity-summary-stats}
in~\Cref{sec:hbeta-dbAdj-consistency}.


\subsection{Construction of propensity renormalization factor}
\label{sec:prop-debiasing}

Both the moment method and M-estimation approach for constructing
$\hbtheta_\prop^\de$ in equation~\eqref{eq:prop-db-option} rely on a
renormalization factor $\hbeta$.  For the moment method approach,
\Cref{thm:pop-mean,thm:db-normality} hold provided $\hbeta - \alpha_1
\gotop 0$.  In this case, we take $\hbeta = \halpha_1$ as defined in
equation~\eqref{eq:prop-offset-and-signal-strenght-estimates}, with
consistency given by~\Cref{lem:propensity-summary-stats}.

For the M-estimation approach, we must take $\hbeta$ to be a
consistent estimate of the shrinkage factor $\beta$ describing the
shrinkage of $\hbtheta_\prop+\frac{\bSigma^{-1}\bX^\top
  \nabla_{\bmeta} \ell_\prop (\htheta_{\prop,0}\ones + \bX
  \hbtheta_\prop;\ba)}{n\hzeta_\prop^\theta}$ as an estimate of
$\hbtheta_\prop$.  This shrinkage factor is given by the solution to a
complicated set of non-linear equations that describes the estimator
\eqref{eq:propensity-fit}.  We describe these equations, which we call
\mbox{``the fixed point equations'',}
in~\Cref{sec:exact-asymptotics-body}.  In the unpenalized case,
multiple approaches have been developed to estimate this shrinkage
factor~\cite{surCandes2019,yadlowskyYun2021}.  Our argument is
inspired by the strategy given in the paper~\cite{yadlowskyYun2021},
which solves an empirical version of the fixed point equations.  It is
difficult to build intuition for this construction because the fixed
point equations themselves are difficult to understand.  Nevertheless,
the construction is fully explicit and easy to state.  We begin by
defining
\begin{align*}
    \hbmeta_\prop^{\loo} \defn \hbmeta_\prop + \hzeta_\prop^\eta
    \nabla_{\bmeta} \ell_\prop(\hbmeta_\prop;\ba), \qquad
    \bareta_\prop^\loo = \frac{1}{n} \sum_{i=1}^n
    \heta_{\prop,i}^{\loo}, \quad \mbox{and} \quad
    \shat_{\prop\widehat{\prop}} \defn \frac{1}{n \hpi \halpha_1}
    \sum_{i=1}^n \big(\heta_{\prop,i}^\loo - \bareta_\prop^\loo \big)
    \action_i,
\end{align*}
along with
\begin{align}
\label{eq:pen-logit-shrinage-est}
 \hbeta & \defn \E\Big[ \pi'(G_\prop) \Big(
   \ell_\prop'(\prox\big[\hzeta_\prop^\eta
     \ell_\prop(\,\cdot\,;1)(G_\prop^\loo);1) -
     \ell_\prop'(\prox\big[\hzeta_\prop^\eta
       \ell_\prop(\,\cdot\,;)(G_\prop^\loo);0) \big]) \Big) \Big],
\end{align}
where the expectation is taken over the pair
\begin{equation*}
  (G_\prop, G_\prop^\loo) \sim \normal \left(
  \begin{pmatrix}
    \hmu_\prop \\ \bareta_\prop^\loo
  \end{pmatrix},
  \begin{pmatrix}
    \hgamma_\prop^2 & \shat_{\prop\widehat{\prop}}
    \\ \shat_{\prop\widehat{\prop}} & \| \hbtheta_\prop \|^2
  \end{pmatrix}
  \right).
\end{equation*}

For both the moment method and $M$-estimation approaches for
constructing $\hbtheta_\prop^\de$ (cf.~\cref{eq:prop-db-option}),
asymptotic normality holds in a sense similar
to~\Cref{thm:db-normality}.  This is not the focus of the present
paper, but is proved in the course of establishing our main results.
See~\Cref{thm:exact-asymptotics-body}
in~\Cref{sec:exact-asymptotics-body} for precise statements.

\subsection{Construction of debiasing adjustments}
\label{sec:pop-cor-with-adjust}

We construct $\widehat{\dbAdj}_{01}$, $\widehat{\dbAdj}_{02}$, and
$\widehat{\dbAdj}_1$ using $\hbmu_{\sx,\cfd} = \frac{1}{n_1}\sum_{i=1}^n
\action_i \bx_i$, $\hbtheta_\prop^\de$, and $\hsc_{\bSigma}$ as
plug-ins in~\cref{eq:dbAdj}.  It should be noted that the estimates
$\hbmu_{\sx,\cfd}$ and $\hbtheta_\prop^\de$ are not
$\ell_2$-consistent for $\bmu_\sx$ and $\btheta_\prop$.  Moreover,
because they are fit using the same data, their errors are correlated
with each other and with $\bX^\top \big( \ba\odot (\by -
\htheta_{\out,0}\ones - \bX\hbtheta_\out)\big)$.  Thus, the plug-in
estimates have an order-one bias, which must itself be estimated and
removed.  We refer to such a correction as a ``correlation
adjustment,'' in keeping with the terminology of Celentano and
Montanari~\cite{celentano2021cad}.  Because the correlation between
$\hbtheta_\prop^\de$ and $\hbmu_{\sx,\cfd}$ and $\bX^\top
\big(\ba\odot (\by - \htheta_{\out,0}\ones - \bX\hbtheta_\out)\big)$
depends on the way $\hbtheta_\prop^\de$ is constructed, the
correlation adjustment depends on whether the moment method or
M-estimation approach~\eqref{eq:prop-db-option} is used to construct
$\hbtheta_\prop$.

In particular, we set 
\begin{equation}
\label{eq:hat-dbAjd}
\begin{gathered}
    \widehat{\dbAdj}_{01}
        \defn
        \frac{\big\< \hbmu_{\sx,\cfd} , \bSigma^{-1}\bX^\top\big(\ba \odot (\by - \htheta_{\out,0}\ones - \bX \hbtheta_\out)\big)\big\>}{n\hzeta_\out^\theta},
    \\
    \widehat{\dbAdj}_{02}
        \defn
        -
        \hsc_{\bSigma}
        \Big(
            \<\hbmu_{\sx,\cfd},\bSigma^{-1/2}\hbtheta_\prop^\de\>
            -
            \frac{\shat_{\sx\circ}p}{n}
        \Big)
        \Big(
            \frac{\<\bSigma^{-1/2}\hbtheta_\prop^\de,
            \bX^\top\big(\ba \odot (\by - \htheta_{\out,0}\ones - \bX \hbtheta_\out)\big)\>}{n\hzeta_\out^\theta}
            -
            \frac{\shat_{\out\circ}(p - n\hzeta_\out^\eta\hzeta_\out^\theta)}{n}
        \Big),
    \\
    \widehat{\dbAdj}_1
        \defn
        \hsc_{\bSigma}
        \Big(
            \frac{\<\bSigma^{-1/2}\hbtheta_\prop^\de,
            \bX^\top\big(\ba \odot (\by - \htheta_{\out,0}\ones - \bX \hbtheta_\out)\big)\>}{n\hzeta_\out^\theta}
            -
            \frac{\shat_{\out\circ}(p - n\hzeta_\out^\eta\hzeta_\out^\theta)}{n}
        \Big),
\end{gathered}
\end{equation}
where $\shat_{\out\circ}$ is as in \Cref{thm:db-normality} and $\shat_{\sx\circ} = \frac{1}{\numobs} \< \hbi_{\sx,\cfd}, \hbi_\circ\>$ where $\hbi_{\sx,\cfd} = \ba/\hpi$ and $\hpi,\hbi_\circ$ are as in \Cref{thm:db-normality}. 
Note that $\widehat{\dbAdj}_{01}$ does not include a correlation adjustment term.
This is because,
as our proof shows,
$\hbmu_{\sx,\cfd}$ is approximately uncorrelated with $\bSigma^{-1}\bX^\top\big(\ba \odot (\by -
\htheta_{\out,0}\ones - \bX \hbtheta_\out)\big)$ despite being fit on the same data.
This is a consequence of the
fact that we fit the outcome model with an offset.

The following auxiliary result certifies consistency of these
estimates:
\begin{proposition}
\label{prop:adjustment-concentration}
    The estimates $\widehat{\dbAdj}_{01}$, $\widehat{\dbAdj}_{02}$,
    $\widehat{\dbAdj}_1 $, and $\hbeta $ constructed
    in~\cref{eq:pen-logit-shrinage-est,eq:hat-dbAjd} are all
    consistent in probability.
\end{proposition}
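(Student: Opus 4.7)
The plan is to combine the exact asymptotic characterization of the joint law of the tuple $(\hbtheta_\out, \hbtheta_\prop, \hbmu_{\sx,\cfd}, \hbi_\out, \hbi_\circ, \hbi_{\sx,\cfd})$ furnished by \Cref{thm:exact-asymptotics-body}, with the summary-statistic consistency results of \Cref{lem:propensity-summary-stats}. For each of the four quantities I would decompose the plug-in into a population target plus an explicit high-dimensional bias, and then verify that the correlation adjustment (where present) cancels this bias up to $o_p(1)$.

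The easy part is $\hbeta$. Under the moment-method construction, $\hbeta = \halpha_1$, so consistency is immediate from \Cref{lem:propensity-summary-stats}. Under $M$-estimation, the expression \eqref{eq:pen-logit-shrinage-est} is a continuous functional of the scalars $(\hmu_\prop, \hgamma_\prop, \|\hbtheta_\prop\|, \bareta_\prop^\loo, \shat_{\prop\widehat{\prop}})$ through the bivariate Gaussian moment; each of these scalars concentrates on its population counterpart---the first two by \Cref{lem:propensity-summary-stats}, and the remaining ones as direct consequences of the exact asymptotic characterization of $\hbtheta_\prop$ and $\hbmeta_\prop^\loo$---so the continuous mapping theorem yields $\hbeta \gotop \beta$ for the shrinkage factor $\beta$ appearing in the fixed-point equations.

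The main work is on the three $\widehat{\dbAdj}$ quantities. For $\widehat{\dbAdj}_{01}$, the plug-in error comes from replacing $\bmu_{\sx,\cfd}$ by $\hbmu_{\sx,\cfd}$; however, because the outcome model is fit with an unpenalized intercept, the stationarity condition of \eqref{eq:outcome-fit} (with $w\equiv 1$) forces $\ones^\top \hbi_\out = 0$, and this kills the leading-order coupling between $\hbmu_{\sx,\cfd} - \bmu_{\sx,\cfd}$ and $\bSigma^{-1}\bX^\top \hbi_\out$ in the exact asymptotic description---explaining the absence of a correlation adjustment. For $\widehat{\dbAdj}_{02}$ and $\widehat{\dbAdj}_1$, the harder plug-in error is from $\hbtheta_\prop^\de$, which is \emph{not} $\ell_2$-consistent. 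Writing $\hbtheta_\prop^\de = \btheta_\prop + \bz$ with $\bz$ the order-one debiasing noise, one sees that $\<\bSigma^{-1/2}\hbtheta_\prop^\de, \bX^\top \hbi_\out\>/n$ equals $\<\bSigma^{-1/2}\btheta_\prop, \bX^\top \hbi_\out\>/n$ plus an explicit trace bias arising from the fact that $\bz$ and $\hbi_\out$ are fit on the same data. The exact asymptotic characterization reduces this trace bias to the product of (i) the scalar influence-function covariance $\shat_{\out\circ}$ and (ii) a deterministic trace factor, which via the stationarity conditions of \eqref{eq:propensity-fit} and \eqref{eq:outcome-fit} together with the definitions \eqref{eq:dof-emp-def} simplifies to $(p - n\hzeta_\out^\eta \hzeta_\out^\theta)/n$. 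This matches the correction in \eqref{eq:hat-dbAjd}. An analogous argument, using the influence function $\hbi_{\sx,\cfd} = \ba/\hpi$ for the fluctuation of $\hbmu_{\sx,\cfd}$, shows $\<\hbmu_{\sx,\cfd}, \bSigma^{-1/2}\hbtheta_\prop^\de\> = \<\bmu_{\sx,\cfd}, \bSigma^{-1/2}\btheta_\prop\> + \shat_{\sx\circ}\, p/n + o_p(1)$, producing the second adjustment.

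The main obstacle is deriving the trace identity yielding $p - n\hzeta_\out^\eta\hzeta_\out^\theta$ as the correction factor. This factor encodes the joint Jacobian response of the outcome and propensity penalized optimizers to perturbations of the covariates, and matching it cleanly with the degrees-of-freedom definitions \eqref{eq:dof-emp-def} requires the Convex Gaussian Min-Max Theorem reductions outlined in \Cref{SecProofs}---essentially the same mechanism that produces \eqref{eq:dof-emp-def} in the first place. Once this identity is in hand, the four consistency claims follow by composing it with \Cref{lem:propensity-summary-stats} and standard scalar concentration of the empirical inner products around their Gaussian surrogates supplied by \Cref{thm:exact-asymptotics-body}.
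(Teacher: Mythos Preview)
Your proposal is correct and tracks the paper's proof closely. Two clarifications are worth making. First, for $\widehat{\dbAdj}_{01}$, the intercept stationarity $\<\ones,\hbi_\out\>=0$ is not by itself what kills the coupling: the relevant fixed-design covariance is $\<\hbi_{\sx,\cfd}^f,\hbi_\out^f\>_{\Ltwo}$ with $\hbi_{\sx,\cfd}^f=\ba^f/\barpi$, and one additionally uses that $\hbi_{\out,i}^f$ vanishes on $\{a_i^f=0\}$, so that $\<\ba^f,\hbi_\out^f\>_{\Ltwo}=\<\ones,\hbi_\out^f\>_{\Ltwo}=0$ and hence $\bg_{\sx,\cfd}^f$ is independent of $\bg_\out^f$. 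Second, the trace factor you flag as the main obstacle is simpler than you suggest: once \Cref{thm:exact-asymptotics-body} has placed the computation in the fixed-design model, a single Gaussian integration by parts on $\<\bg_\circ^f,\by_\out^f-\hbtheta_\out^f\>_{\Ltwo}$ produces the per-coordinate covariance of $(\bg_\circ^f,\bg_\out^f)$ times the expected trace of the Jacobian of $\by_\out^f-\hbtheta_\out^f$ with respect to $\by_\out^f$, and the last fixed-point equation in \eqref{eq:regr-fixed-pt} identifies that trace with $p-n\zeta_\out^\eta\zeta_\out^\theta$; no further CGMT work is needed at this step.
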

\noindent We prove~\Cref{prop:adjustment-concentration}
in~\Cref{sec:hbeta-dbAdj-consistency}.


\section{Simulations}
\label{sec:simulations}

In~\Cref{fig:debiasing}, we display the performance of outcome
regression~\eqref{eq:db-G-computation} as a function of sample size
for several choices of plug-in.  For 10 values of $n$ ranging from 100
to 1000 equidistant on a log-scale, and set $p = 1.25 n$ (or the
closest integer) at each value.  We set $\theta_{\out,0} =
\theta_{\prop,0} = 0$, $\btheta_\out = \btheta_\prop = \be_1$, $\sigma
= .2$, $\pi(\eta) = \tfrac{1}{10} + \tfrac{9}{10}\logit^{-1}(\eta)$,
$\bmu_\sx = \bzero$, and $\bSigma = \id_p$.  At each value of $n$, we
generate 1000 replicates of the data and compute each of 5 different
plug-ins.  \textsf{Ridge} uses the solution to~\cref{eq:outcome-fit}
as a plug-in, with $w_i = 1$ and $\Omega_\out(\bv) = \|\bv\|^2/2$.
\textsf{Debiased ridge (naive)} uses the debiased ridge estimate with
bias estimates as given by the relation~\eqref{eq:db-naive} and $w_i =
1$ in equation~\eqref{eq:outcome-fit}.  \textsf{Debiased ridge (IPW)} uses
the debiased ridge estimate with bias estimates as given by
\eqref{eq:db-naive} and weights $w_i = 1/\pi(\bx_i)$ in
equation~\eqref{eq:outcome-fit}.  \textsf{Oracle ASCW} uses the oracle ASCW
debiased ridge estimates of~\Cref{SecKnownProp}.  \textsf{Empirical
  SCA} uses the empirical SCA debiased ridge estimates
of~\Cref{SecUnknownProp}.  We use the debiased estimate of the
propensity model parameter given by moment method in
~\eqref{eq:prop-db-option}; that is, $\hbtheta_\prop^\de =
(\hbmu_{\sx,\cfd}-\hbmu_\sx)/\hbeta$, with the corresponding
definitions of $\shat_\out$, $\shat_\prop$, $\shat_{\out\prop}$, and
$\shat_{\sx\prop}$.  In the left plot of~\Cref{fig:debiasing}, we show
the mean value of each outcome regression estimate across 1000
replicates with $95\%$ confidence bars based on normal approximation
for the empirical mean.  In the right plot of~\Cref{fig:debiasing}, we
show the empirical variance of each outcome regression estimate across
1000 replicates with $95\%$ confidence bars based on normal
approximation for the empirical variance.

Consistent with our theory, we see that \textsf{Ridge},
\textsf{Debiased ridge (naive)}, and \textsf{Debiased ridge (IPW)}
have non-negligible bias that does not appear to decay as $n$ grows.
On the other hand, both \textsf{Oracle ASCW} and the \textsf{Empirical
  SCA} have no apparent bias across the entire range of sample sizes.
We see that the variance of each estimate decays with $n$, apparently
at a $1/n$ rate.

We emphasize that oracle ASCW depends on oracle knowledge of the
propensity model, whereas empirical SCA relies on an inconsistent
estimate of the propensity model.  An interesting feature of the
variance plot on the right is that empirical SCA has smaller variance
than oracle ASCW.  We note that a similar phenomenon has been observed
the context of inverse propensity weighted
estimates~\cite{hirano2003}.

In~\Cref{fig:PE-w-lam,fig:debiasing}, we consider a fixed sample size
($n = 1000$) and several values of the regularization parameter.
In~\Cref{fig:PE-w-lam}, we plot the prediction error on a new sample
as a function of the regularization parameter for the ridge estimator
with $w_i = 1$ (\textsf{Ridge}) and $w_i = 1/\pi(\bx_i)$
(\textsf{Ridge (IPW)}). (We do not perform the debiasing correction).
The purpose of this plot is to demonstrate that the range of $\lambda$
we consider corresponds to the range in which the prediction error is
optimized.  It is also the range within which the cross-validated
choice of $\lambda$ is expected to lie.  In the bottom two plots, we
see that across this range of $\lambda$, \textsf{Ridge},
\textsf{Debiased ridge (naive)}, and \textsf{Debiased ridge (IPW)} are
strongly biased.  On the other hand, \textsf{Oracle ASCW} and
\textsf{Empirical SCA} have no apparent bias. Across this range of
regularization parameters, \textsf{Empirical SCA} has lower variance
than \textsf{Oracle ASCW}.

\begin{figure}[h!]
\centering
\includegraphics[width =
  0.45\linewidth]{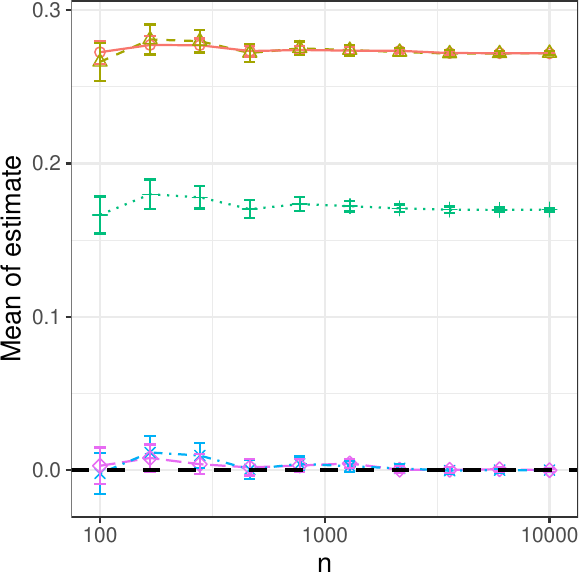}
\includegraphics[width =
  0.45\linewidth]{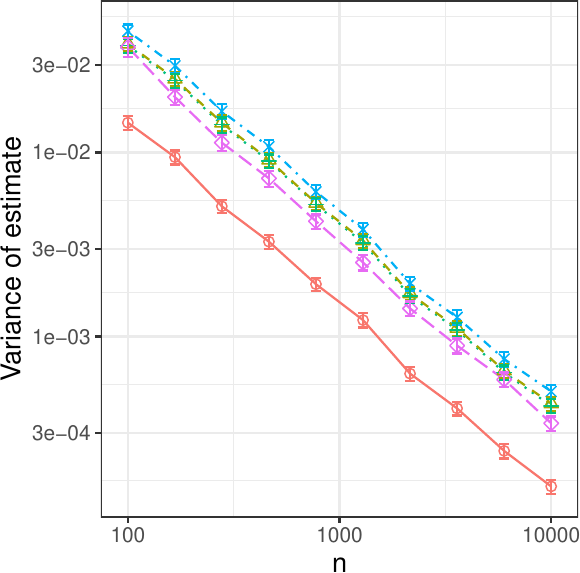}
\\
\includegraphics[width =
  0.675\linewidth]{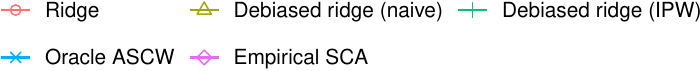}
\caption{Comparison of debiasing methods.  We set $\theta_{\out,0} =
  \theta_{\prop,0} = 0$, $\btheta_\out = \btheta_\prop = \be_1$,
  $\sigma = \frac{1}{5}$, $\pi(\eta) = \tfrac{1}{10} +
  \tfrac{9}{10}\logit^{-1}(\eta)$, $\bmu_\sx = \bzero$, and $\bSigma =
  \id_p$.  The outcome model is fit by ridge regression with
  regularization parameter $\lambda = 1$, and the propensity model by
  the moment method.  In this case, the population mean outcome is
  $\mu_\out \defn \E[y] = 0$.  For each value of $n$, $p$ is taken as
  the closest integer to $1.25n$.  The mean and variance of these
  estimates are computed across 1000 replicates, with 95\% confidence
  intervals shown. See the text for further details.}
\label{fig:debiasing}
\end{figure}

\begin{figure}[h!]
\centering \includegraphics[width =
  0.5\linewidth]{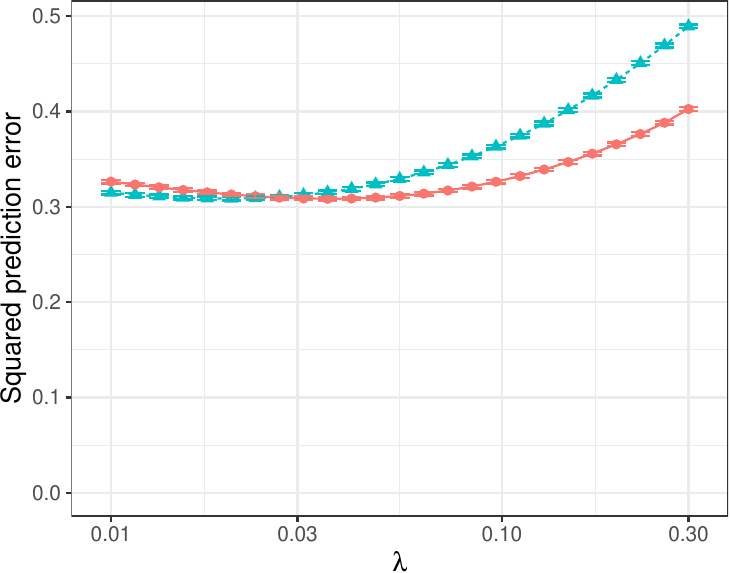}\\ \includegraphics[width
  = 0.25\linewidth]{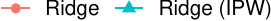}
\caption{Prediction error of ridge regression with $w_i = 1$
  (\textsf{Ridge}) and with $w_i = 1/\pi(\bx_i)$ (\textsf{Ridge
    (IPW)}) across a range of regularization parameters $\lambda$.  $n
  = 1000$ and we simulate from exactly the same setting as
  in~\Cref{fig:variance-well-specified-outcome}, except that the
  outcome model is fit by ridge regression for a range of
  regularization parameters as displayed in the figure.  For each
  replicate, the prediction error is $\| \hbtheta_\out - \btheta_\out
  \|^2$ because $\bx_i \sim \normal(\bzero,\id_p)$.}
\label{fig:PE-w-lam}
\end{figure}

\begin{figure}[h!]
\centering \includegraphics[width =
  0.45\linewidth]{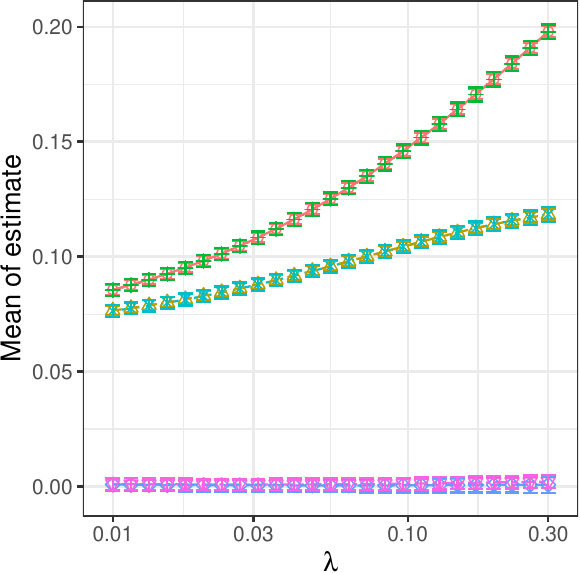}
\includegraphics[width =
  0.45\linewidth]{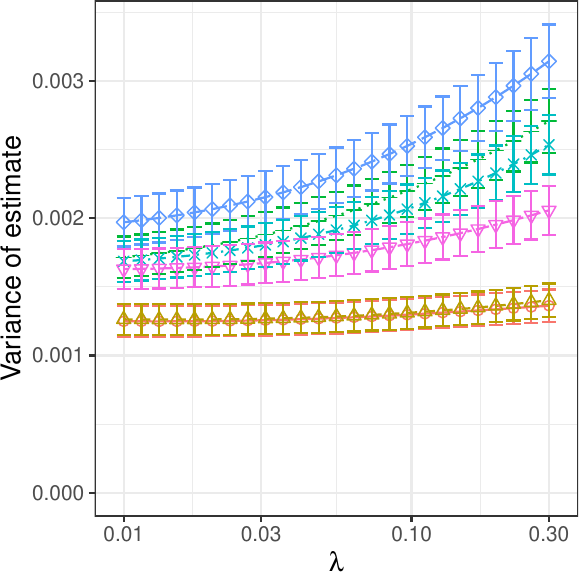}
\\ \includegraphics[width =
  0.6\linewidth]{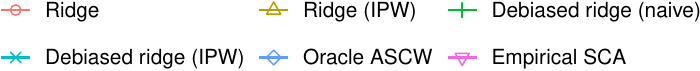}
\caption{Comparison of debiasing methods at $n = 1000$ across a range
  of regularization parameter $\lambda$.  We simulate from exactly the
  same setting as in~\Cref{fig:variance-well-specified-outcome},
  except that the outcome model is fit by ridge regression for a range
  of regularization parameters as displayed in the figure.  For each
  replicate, the prediction error is $\| \hbtheta_\out - \btheta_\out
  \|^2$ because $\bx_i \sim \normal(\bzero,\id_p)$.}
\label{fig:debiasing-w-lam}
\end{figure}


\section{Proofs}
\label{SecProofs}

The proofs of~\Cref{thm:pop-mean,thm:db-normality}, as well as that
of~\Cref{prop:adjustment-concentration}, are all based on an exact
asymptotic characterization of the joint behavior of the estimates
$(\htheta_{\out,0}, \hbtheta_\out)$ and $(\theta_{\prop,0},
\hbtheta_\prop)$, along with the mean estimates \mbox{$\hbmu_\sx =
  \frac{1}{\numobs} \sum_{i=1}^n \bx_i$} and \mbox{$\hbmu_{\sx,\cfd} =
  \frac{1}{n_1} \sum_{i=1}^n \action_i \bx_i$.}  We state the exact
asymptotic characterization here, and then provide a high-level
summary of its utility in proving these three results.  We defer many
technical aspects of the argument to the supplementary material.

\subsection{Exact asymptotics for missing data models}
\label{sec:exact-asymptotics-body}

The exact asymptotic characterization involves a comparison of the
estimates $\hbtheta_\out$ and $\hbtheta_\prop$ to analogous quantities
in what we call the \emph{fixed-design model}.  We first introduce
some notation and define some important quantities.  Then we describe
the fixed-design models.  The parameters of these models must be
chosen to solve a certain system of equations called \emph{the
fixed point equations}, which we present next.  Finally, we present
the exact asymptotic characterization.

\subsubsection{The random-design model}

We refer to the model~\eqref{eq:model} as the \textit{random-design
  model}.  The goal of exact asymptotic theory is to describe the joint
distribution of the following random-design quantities:
\begin{subequations}
\label{eq:rnd-des-objects}
\begin{align}
    \label{eq:rnd-des-lin-pred}
\mbox{\emph{Estimates:}} \qquad & \hbtheta_\prop \defn
\text{see~\cref{eq:outcome-fit}}, \qquad \hbtheta_\out \defn
\text{see~\cref{eq:propensity-fit}}.  \\
\mbox{\emph{True linear predictors:}} \qquad & \bmeta_\prop \defn
\ones \theta_{\prop,0} + \bX \btheta_\prop, \qquad \bmeta_\out \defn
\ones \theta_{\out,0} + \bX \btheta_\out 
\\
\mbox{\emph{Empirical covariate means:}} \qquad & \hbmu_{\sx,\cfd}
\defn \frac{1}{n_1} \sum_{i=1}^n \action_i\bx_i, \qquad \hbmu_\sx =
\frac{1}{\numobs} \sum_{i=1}^n \bx_i, \\
\mbox{\emph{Estimated linear predictors:}} \qquad & \hbmeta_\prop
\defn \ones \htheta_{\prop,0} + \bX \hbtheta_\prop, \qquad
\hbmeta_\out \defn \ones \htheta_{\out,0} + \bX \hbtheta_\out \\
\mbox{\emph{Empirical scores:}} \qquad & \hbpsi_\prop \defn
\nabla_{\bmeta} \ell_\prop\big(\hbmeta_\prop;\ba\big), \qquad
\hbpsi_\out \defn \underbrace{\nabla_{\bmeta}
  \ell_\out\big(\hbmeta_\out;\by \odot \ba,\ba,\bmeta_\prop\big)}_{ -
  \ba \odot \bw \odot (\by - \hbmeta_\out)},
\end{align}
\end{subequations}
where $n_1 \defn \sum_{i=1}^n a_i$.  The characterization of the
distribution of these quantities involves:
\begin{enumerate}[label=$\bullet$,itemsep=0em]
\item the \emph{empirical influence functions}
\begin{subequations}
  \begin{align}
    \hbi_{\sx} = \ones, \qquad \hbi_{\sx,\cfd} = \frac{1}{\hpi} \ba,
    \qquad \hbi_\prop = -\frac{\hbpsi_\prop}{\hzeta_\prop^\theta},
    \qquad \hbi_\out = -\frac{\hbpsi_\out}{\hzeta_\out^\theta},
  \end{align}
  where $\hpi \defn \frac{1}{n} \sum_{i=1}^n \action_i$.
\item the \emph{effective leave-one-out estimates}
\begin{align}
\label{eq:eff-loo-est}
    \hbtheta_\prop^\loo \defn \hbtheta_\prop -
    \frac{\bSigma^{-1}\bX^\top \nabla_{\bmeta}
      \ell_\prop(\hbmeta_\prop;\ba)}{n\hzeta_\prop^\theta}, \qquad
    \hbtheta_\out^\loo \defn \hbtheta_\out +
    \frac{\bSigma^{-1}\bX^\top \big(\ba \odot \bw \odot (\by -
      \hbmeta_\out)\big)}{n \hzeta_\out^\theta};
\end{align}
\item and the \emph{effective leave-one-out linear predictors}
\begin{align}
    \hbmeta_\prop^\loo \defn \hbmeta_\prop + \hzeta_\prop^\eta \nabla
    \ell_\prop\big(\hbmeta_\prop;\ba\big), \qquad \hbmeta_\out^\loo
    \defn \hbmeta_\out + \hzeta_\out^\eta \ba \odot \bw \odot (\by -
    \hbmeta_\out).
\end{align}
\end{subequations}
\end{enumerate}

\subsubsection{The fixed-design models}

The exact asymptotic characterization states that, in a certain sense,
the estimates \eqref{eq:rnd-des-objects} in the random-design model
\eqref{eq:model} behave like analogous estimates in two related
statistical models.  These models involve linear observations of
unknown parameters with a deterministic design matrix, so that we call
them the \textit{fixed-design models}.  The two fixed-design models
are defined on separate probability spaces, which we now define.

\paragraph{Fixed-design model for parameter estimation:}  This model contains
Gaussian observations of the feature means and outcome linear effects
\begin{equation}
\label{eq:fixed-design-param-outcomes}
\begin{gathered}
    \by_{\sx}^f = \bmu_{\sx} + \bg_{\sx}^f, \qquad \by_{\sx,\cfd}^f =
    \bmu_{\sx,\cfd} + \bg_{\sx,\cfd}^f, \\
    \by_\prop^f = \bSigma^{1/2} \big(\beta_{\prop\prop}
    \btheta_{\prop}\big) + \bg_{\prop}^f, \quad \mbox{and} \quad
    \by_\out^f = \bSigma^{1/2} \big( \beta_{\out\prop}\btheta_{\prop}
    + \btheta_{\out}\big) + \bg_{\out}^f,
\end{gathered}
\end{equation}
where $(g_{\sx,i}^f, g_{\sx,\cfd,i}^f,
g_{\prop,i}^f,g_{\out,i}^f)\stackrel{\mathrm{iid}}\sim
\normal(0,\bS/n)$, and $\bS \in \S_{\geq0}^4$, $\beta_{\prop\prop}$,
and $\beta_{\out\prop}$ are parameters to be determined below.  In
this model, we consider the estimates
\begin{equation}
\label{eq:param-fit-f}
\begin{gathered}
    \hbtheta_{\prop}^f = \argmin_{\bv} \Big\{
    \frac{\zeta_{\prop}^\theta}2 \big\| \by_{\prop}^f - \bSigma^{1/2}
    \bv \big\|^2 + \Omega_{\prop}(\bv) \Big\}, \quad \mbox{and} \quad
    \hbtheta_{\out}^f = \argmin_{\bv} \Big\{ \frac{\zeta_{\out}^\theta
    }2 \big\| \by_{\out}^f - \bSigma^{1/2} \bv \big\|^2 +
    \Omega_{\out}(\bv) \Big\},
\end{gathered}
\end{equation}
where $\zeta_{\prop}^\theta$, $\zeta_{\out}^\theta$ are parameters to
be determined below.


\paragraph{Fixed-design model for linear-predictor estimation:} It contains
linear predictors $(\bmeta_{\prop}^f, \bmeta_{\out}^f)$ with the same
distribution as the linear predictors in the random design
model~\eqref{eq:rnd-des-lin-pred}: $(\eta_{\prop,i}^f,
\eta_{\out,i}^f) \iid \normal\big((\mu_\prop, \mu_\out)^\top, \llangle
\bTheta \rrangle_{\bSigma}\big)$, where $\bTheta =
(\btheta_{\prop},\btheta_{\out})$, $\mu_\prop = \theta_{\prop,0} + \<
\bmu_{\sx},\btheta_\prop\>$ and $\theta_{\out,0} + \<
\bmu_{\sx},\btheta_\out\>$.  It also contains missingness indicator
and outcome variables with the same distribution as in the random
design model:
\begin{equation}
\label{eq:fixed-design-outcomes}
\begin{gathered}
    \ba^f 
    \text{ where }
    \P\big(\action_i^f = 1 \mid \eta_{\prop,i}^f\big)
        =
        \pi(\eta_{\prop,i}^f)
        =
        \pi_i^f
    \text{ independently for each $i$},
    \\
    \by^f
        =
        \bmeta_{\out}^f + \beps_\out^f,
    \qquad
    \bw^f
        =
        w(\bmeta_{\prop}^f),
    \qquad
    \beps_\out^f \eqnd \beps_\out
    \text{ independent of everything else.}
\end{gathered}
\end{equation}
Further,
it contains observations
$\hbmeta_\prop^{f,\loo},\hbmeta_\out^{f,\loo}$ correlated with the
linear predictors $(\bmeta_{\prop}^f,\bmeta_{\out}^f)$ via
\begin{equation}
\label{eq:fixed-design-linear-predictor-dist}
    \begin{pmatrix}
        \eta_{\prop,i}^f \\[3pt]
        \eta_{\out,i}^f \\[3pt]
        \heta_{\prop,i}^{f,\loo} \\[3pt]
        \heta_{\out,i}^{f,\loo} 
    \end{pmatrix}
        \stackrel{\mathrm{iid}}\sim
        \normal
        \left(
            \begin{pmatrix}
                \mu_{\prop}\\[3pt]
                \mu_{\out}\\[3pt]
                \hmu_{\prop}^f\\[3pt]
                \hmu_{\out}^f
            \end{pmatrix}
            ,\;
            \begin{pmatrix}
                \llangle \bTheta \rrangle_{\bSigma} & \llangle \bTheta , \hbTheta^f \rrangle_{\bSigma,L^2} \\[3pt]
                \llangle \hbTheta^f , \bTheta \rrangle_{\bSigma,L^2} & \llangle \hbTheta^f \rrangle_{\bSigma,L^2}\\[3pt]
            \end{pmatrix}
        \right),
\end{equation}
 and $\hbTheta^f = (\hbtheta_{\prop}^f,\hbtheta_{\out}^f)$,
where $\hmu_{\prop}^f,\hmu_{\out}^f$ are deterministic parameters to be determined below.
The covariance matrix depends on expectations in the fixed-design model for parameter estimation, 
so is a function of the parameters $\bS$, $\beta_{\prop\prop}$ and $\beta_{\out\prop}$, in addition to $\hmu_{\prop}^f$, $\hmu_{\out}^f$.
The \emph{estimated linear predictors} are
\begin{equation}
\label{eq:lin-predict-f}
\begin{gathered}
    \hbmeta_{\prop}^f
        =
        \argmin_{\bu}
        \Big\{
            \frac{1}2
            \big\|
                \hbmeta_{\prop}^{f,\loo}
                -
                \bu
            \big\|^2
            +
            \zeta_{\prop}^\eta
            \sum_{i=1} \ell_{\prop}(u_i;\action_i^f)
        \Big\},
    \\
    \hbmeta_{\out}^f
        =
        \argmin_{\bu}
        \Big\{
            \frac{1}2
            \big\|
                \hbmeta_{\out}^{f,\loo}
                -
                \bu
            \big\|^2
            +
            \frac{\zeta_{\out}^\eta}2
            \sum_{i=1}^n \action_i^f w_i^f ( y_i^f - u_i )^2
        \Big\},
\end{gathered}
\end{equation}
for parameters $\zeta_\out^\eta,\zeta_\out^\theta$ to be determined below.
The \emph{empirical scores} are
\begin{equation}
\label{eq:fixed-design-score}
    \hbpsi_{\prop}^f = \nabla_{\bmeta} \ell_{\prop} \big(\hbmeta_{\prop}^f;\ba^f\big),
    \qquad
    \hbpsi_{\out}^f = \nabla_{\bmeta} \ell_{\out} \big(\hbmeta_{\out}^f;\by^f,\ba^f,\bmeta_{\out}^f\big)
        =
        -
        \ba^f \odot \bw^f \odot (\by^f - \hbmeta_\out^f).
\end{equation}
The \emph{empirical influence functions} are
\begin{equation}
\label{eq:empirical-influence-function}
\begin{gathered}
    \hbi_{\sx}^f
        =
        \ones,
    \qquad
    \hbi_{\sx,\cfd}^f
        =
        \frac{1}{\barpi}\ba^f,
    \qquad
    \hbi_\prop^f
        =
        -\frac{\hbpsi_\prop^f}{\zeta_\prop^\theta},
    \qquad
    \hbi_\out^f
        =
        -\frac{\hbpsi_\out^f}{\zeta_\out^\theta},
\end{gathered}
\end{equation}
where $\barpi = \E[d]/n = \E\big[\pi(\eta_\prop^f)\big]$.
We denote by $\IFhat^f \in \reals^{n \times 4}$ the matrix with columns $\hbi_{\sx}^f$, $\hbi_{\sx,\cfd}^f$, $\hbi_{\prop}^f$ and $\hbi_{\out}^f$.


\subsubsection{The fixed point equations}

By construction, the two fixed-design models are determined by the
collection of parameters
\begin{align*}
\big \{ \bS, \; \beta_{\prop\prop}, \; \beta_{\out\prop}, \;
\zeta_{\prop}^\theta, \; \zeta_{\prop}^\eta, \; \zeta_{\out}^\theta,
\; \zeta_{\prop}^\theta, \; \hmu_\prop^f, \; \hmu_\out^f \big \}.
\end{align*}
The fixed-design models characterize the behavior of the random-design
model when these parameters are the unique solution to \textit{the
  fixed point equations}:
\begin{equation}
\label{eq:regr-fixed-pt}
\tag{FD-fixpt}
\begin{gathered}
    \< \ones , \hbi_{\prop}^f \>_{\Ltwo} = \< \ones , \hbi_{\out}^f
    \>_{\Ltwo} = 0, \\ \beta_{\prop\prop} = \frac{1}{n} \E\Big[
      \Tr\Big( \frac{\de \hbi_{\prop}^f}{\de \bmeta_{\prop}^f} \Big)
      \Big], \quad \beta_{\out\prop} = \frac{1}{n} \E\Big[ \Tr\Big(
      \frac{\de \hbi_{\out}^f}{\de \bmeta_{\prop}^f} \Big) \Big],
    \\ \bS = \frac{1}{n}\llangle \IFhat^f \rrangle_{\Ltwo}.
    \\ \zeta_{\prop}^\theta = \frac{1}{n} \E\Big[ \Tr \Big( \frac{\de
        \hbpsi_{\prop}^f}{\de \hbmeta_{\prop}^{f,\loo}} \Big) \Big],
    \qquad \zeta_{\out}^\theta = \frac{1}{n} \E\Big[ \Tr \Big(
      \frac{\de \hbpsi_{\out}^f}{\de \hbmeta_{\out}^{f,\loo}} \Big)
      \Big], \\ \zeta_{\prop}^\eta\zeta_{\prop}^\theta = \frac{1}{n}
    \E\Big[ \Tr\Big( \bSigma^{1/2} \frac{\de \hbtheta_{\prop}^f }{\de
        \by_{\prop}^f} \Big) \Big], \qquad
    \zeta_{\out}^\eta\zeta_{\out}^\theta = \frac{1}{n} \E\Big[
      \Tr\Big( \bSigma^{1/2} \frac{\de \hbtheta_{\out}^f }{\de
        \by_{\out}^f} \Big) \Big], \\ \text{and indices 3, 4 are
      innovative with respect to both $\bS$ and the covariance
      in~\cref{eq:fixed-design-linear-predictor-dist}.}
\end{gathered}
\end{equation}
The right-hand sides of these equations involve expectations taken
with respect to the distributions of the fixed-design models
determined by the parameters $\big \{ \bS, \; \beta_{\prop\prop}, \;
\beta_{\out\prop}, \; \zeta_{\prop}^\theta, \; \zeta_{\prop}^\eta, \;
\zeta_{\out}^\theta, \; \zeta_{\prop}^\theta, \; \hmu_\prop^f, \;
\hmu_\out^f \big \}$.  Thus, they are functions of these parameters.
In all cases, the derivatives do not exist in the classical sense, and
so need to be interpreted carefully.
(See~\Cref{sec:exact-asymptotics} for the correct interpretation).
There are 18 degrees of freedom in choosing the fixed point
parameters, and 18 independent scalar equations
in~\cref{eq:regr-fixed-pt}, accounting for symmetry of $\bS$.
In~\Cref{sec:exact-asymptotics}, we state a lemma asserting that the
fixed point equations have a unique solution.  It also provides bounds
on this solution that are useful in our proofs, and may be of
independent interest because they control the operating
characteristics of the estimators $\hbtheta_\out$ and
$\hbtheta_\prop$.


\subsubsection{Characterization}

We call a function $\phi: (\reals^N)^k \rightarrow \reals$ is order-$2$ pseudo-Lipschitz with constant $L$ if 
\begin{equation}
\begin{gathered}
  |\phi(\bzero,\ldots,\bzero)| \leq L, \\ \Big|
  \phi(\bx_1,\ldots,\bx_k) - \phi(\bx_1',\ldots,\bx_k') \Big| \leq L
  \Big( 1 + \sum_{\ell=1}^k \| \bx_\ell \| + \sum_{\ell=1}^k \|
  \bx_\ell' \| \Big) \sum_{\ell=1}^k \| \bx_\ell - \bx_\ell' \|.
\end{gathered}
\end{equation}
If the constant $L$ is not stated, then it should be taken as $L = 1$.
The following result guarantees that order-$2$ pseudo-Lipschitz
functions of the quantities~\eqref{eq:rnd-des-objects} concentrate on
the expectation of the analogous quantities in the fixed-design models
determined by the solution to the fixed point
equations~\eqref{eq:regr-fixed-pt}:
\begin{theorem}
\label{thm:exact-asymptotics-body}
    Under Assumption A1 and as $(n,p)\rightarrow \infty$, we have the
    following:
    \begin{enumerate}[(a)]
        \item 
        The estimated population means concentrate:
        \begin{equation}
            \htheta_{\prop,0} + \< \bmu_{\sx} , \hbtheta_\prop \>
                -
                \hmu_\prop^f
                \gotop 0,
            \qquad
            \htheta_{\out,0} + \< \bmu_{\sx} , \hbtheta_\out \>
                -
                \hmu_\out^f
                \gotop 0.
        \end{equation}

        \item 
        The effective-regularization terms concentrate:
        \begin{equation}
            \hzeta_{\prop}^\eta
                -
                \zeta_{\prop}^\eta
                \gotop 0,
            \qquad
            \hzeta_{\prop}^\theta
                -
                \zeta_{\prop}^\theta
                \gotop 0,
            \qquad
            \hzeta_{\out}^\eta
                -
                \zeta_{\out}^\eta
                \gotop 0,
            \qquad
            \hzeta_{\out}^\theta
                -
                \zeta_{\out}^\theta
                \gotop 0.
        \end{equation}

        \item 
        For any sequence of order-2 pseudo-Lipschitz function $\phi:(\reals^p)^6\rightarrow \reals$,
        \begin{equation}
            \phi\big(
                \hbtheta_{\prop},
                \hbtheta_{\out},
                \hbmu_{\sx},
                \hbmu_{\sx,\cfd},
                \hbtheta_{\prop}^{\loo},
                \hbtheta_{\out}^{\loo}
            \big)
            -
            \E\Big[
                \phi\Big(
                    \hbtheta_{\prop}^f,
                    \hbtheta_{\out}^f,
                    \bSigma^{-1/2} \by_{\sx}^f,
                    \bSigma^{-1/2} \by_{\sx,\cfd}^f,
                    \bSigma^{-1/2} \by_{\prop}^f,
                    \bSigma^{-1/2} \by_{\out}^f
                \Big)
            \Big]
            \gotop 0.
        \end{equation}
        
        \item 
        For any sequence of order-2 pseudo-Lipschitz functions $\phi:\reals^4 \rightarrow \reals$,
        \begin{equation}
            \frac{1}{n}
            \sum_{i=1}^n
            \phi\big(
                \eta_{\prop,i},
                \eta_{\out,i},
                \hpsi_{\prop,i},
                \hpsi_{\out,i}
            \big)
            -
            \E\Big[
                \phi\Big(
                    \eta_{\prop,i}^f,
                    \eta_{\out,i}^f,
                    \hpsi_{\prop,i}^f,
                    \hpsi_{\out,i}^f
                \Big)
            \Big]
            \gotop 0.
        \end{equation}

    \end{enumerate}

\end{theorem}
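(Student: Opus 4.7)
The plan is to prove the exact asymptotic characterization by a joint application of the Convex Gaussian Min-Max Theorem (CGMT) to the coupled outcome/propensity estimation problems, using the solution to \eqref{eq:regr-fixed-pt} to construct the comparison Gaussian process. I would begin by writing both penalized $M$-estimators \eqref{eq:outcome-fit}--\eqref{eq:propensity-fit} as a single joint saddle-point problem of the form
\begin{equation*}
\min_{\bV} \max_{\bU} \Big\{ \tfrac{1}{n}\langle \bU, \bX \bV \rangle - \cL^*(\bU) + \Omega(\bV) \Big\},
\end{equation*}
where $\bV = (\bv_\prop, \bv_\out) \in \reals^{p \times 2}$, $\bU \in \reals^{n \times 2}$ is the dual variable obtained from the Fenchel conjugates of $\ell_\prop$ and the weighted quadratic $\ell_\out$, and $\Omega$ bundles the two penalties together with the intercept variables. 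The weights $w_i = w(\eta_{\prop,i})$ depend on the \emph{true} $\btheta_\prop$ (not the estimate), so the coupling only enters through the shared Gaussian design $\bX$, making this a genuinely joint but loss-separable CGMT problem. I would restrict $(\bV,\bU)$ to a large compact set (justified a posteriori by the strong convexity/smoothness in A1(g), A1(i)) and invoke CGMT to replace $\bX$ by two independent Gaussian vectors, yielding an \emph{auxiliary optimization} whose saddle point has a deterministic limit.

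Next, I would show that the auxiliary optimization decouples into two $n$- and two $p$-dimensional convex problems whose unique saddle points match, in distribution, the fixed-design objects \eqref{eq:param-fit-f} and \eqref{eq:lin-predict-f}. Identifying the parameters of this decoupled problem with the solution of \eqref{eq:regr-fixed-pt} is the heart of the matter: the stationarity conditions in $\bU$ and $\bV$ directly reproduce the rows of \eqref{eq:regr-fixed-pt}, with $\zeta_\star^\theta, \zeta_\star^\eta$ playing the role of the scaling factors that absorb the Gaussian norms of the auxiliary vectors, and $\bS, \beta_{\prop\prop}, \beta_{\out\prop}, \hmu_\prop^f, \hmu_\out^f$ emerging from the marginal covariance of the Gaussian perturbations. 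The existence/uniqueness lemma for \eqref{eq:regr-fixed-pt} referenced in the paper then pins down the limit uniquely. Once the auxiliary problem concentrates, CGMT transfers concentration of any \emph{Lipschitz function} of the saddle point back to the original problem, establishing part (a) and the component of (c) involving $\hbtheta_\prop, \hbtheta_\out, \hbmu_\sx, \hbmu_{\sx,\cfd}$.

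To upgrade from Lipschitz to order-$2$ pseudo-Lipschitz test functions as required in (c) and (d), I would combine the CGMT output with the a priori $\bSigma$-norm bounds on $\hbtheta_\out, \hbtheta_\prop$ (from strong convexity of the penalty and boundedness of the data) to truncate arguments at scale $O(1)$ and convert pseudo-Lipschitz into Lipschitz on the truncated domain. For (d), the averages involve the \emph{empirical} linear predictors $\bmeta_\prop, \bmeta_\out$ and scores, which are $n$-dimensional and not immediate from the CGMT saddle point; here I would argue via a leave-one-out (or row-wise conditioning) representation that expresses $\hpsi_{\prop,i}, \hpsi_{\out,i}$ as smooth functions of $(\eta_{\prop,i}, \eta_{\out,i}, \heta_{\prop,i}^\loo, \heta_{\out,i}^\loo)$ through the proximal-operator identities implicit in \eqref{eq:lin-predict-f}, then use exchangeability of coordinates to reduce concentration of the empirical average to concentration of its expectation under the fixed-design law \eqref{eq:fixed-design-linear-predictor-dist}. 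The effective-regularization convergence in (b) follows by treating \eqref{eq:dof-emp-def} as a fixed-point system whose data inputs (the empirical Hessian traces and derivative averages) are covered by part (d), combined with a contraction/monotonicity argument yielding stability of the fixed point.

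The hard part will be handling the second-order quantities: the degrees-of-freedom traces $\Tr\big(\bSigma(\hzeta_\out^\theta\bSigma + \nabla^2\Omega_\out(\hbtheta_\out))^{-1}\big)$ and the effective leave-one-out estimates $\hbtheta_\out^\loo$, which involve $\bX^\top$ times a residual and so are not order-$2$ pseudo-Lipschitz functions of $\hbtheta_\out$ directly. For the Hessian trace I would use the $C\sqrt{n}$-Lipschitz-in-Frobenius Hessian hypothesis in A1(g) together with the concentration of $\hbtheta_\out$ in the $\bSigma$-norm to reduce the trace to its value at the deterministic limit and then apply Hanson--Wright-type bounds to $\bSigma^{1/2}(\zeta\bSigma + \nabla^2\Omega_\out)^{-1}\bSigma^{1/2}$. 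For $\hbtheta_\out^\loo$, the natural route is to verify that the CGMT auxiliary problem's dual variable $\bU$ reconstructs $\bSigma^{-1}\bX^\top(\ba \odot \bw \odot(\by - \hbmeta_\out))/(n\zeta_\out^\theta)$ up to $o_P(1)$ in $\bSigma$-norm, so that $\hbtheta_\out^\loo$ inherits the Gaussian shift appearing in $\by_\out^f$ in \eqref{eq:fixed-design-param-outcomes}; this is where the specific form of $\beta_{\out\prop}$ and the innovation condition (the last line of \eqref{eq:regr-fixed-pt}) are essential, and getting the cross-correlations right between the propensity and outcome sides is the most delicate bookkeeping.
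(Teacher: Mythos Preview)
Your CGMT strategy is the right backbone, but there is a genuine gap in the joint application you propose. You write that ``the coupling only enters through the shared Gaussian design $\bX$,'' but this understates the difficulty: the data $\ba, \by, \bw$ entering the conjugate loss on the $\bU$-side are themselves \emph{functions of} $\bX$ through the true linear predictors $\bX\btheta_\prop$ and $\bX\btheta_\out$. The standard CGMT requires the Gaussian matrix in the bilinear term to be independent of the remainder of the objective; that independence fails here, so one cannot simply ``invoke CGMT to replace $\bX$ by two independent Gaussian vectors.'' The comparison inequality must be applied only to the component of the design orthogonal to everything the data already depends on, and your proposal does not isolate that component.

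The paper's remedy is a \emph{sequential} (conditional) Gordon argument rather than a single joint one. It casts six saddle-point problems indexed $k=1,\ldots,6$: indices $1,2$ are trivial problems whose saddle points are $(\btheta_\prop,\btheta_\out)$ themselves, indices $3,4$ encode $\hbmu_\sx$ and $\hbmu_{\sx,\cfd}$, and indices $5,6$ are the two regressions. The proof inducts on $k$: having established concentration for $\ell<k$, one conditions on $(\bU_{k-1}^{\PO},\bV_{k-1}^{\PO})$ together with $\bA\bV_{k-1}^{\PO}$ and $\bA^\top\bU_{k-1}^{\PO}$ --- which in particular fixes $\ba,\by,\bw$ --- and applies Gordon only to the residual $\proj_{\bU_{k-1}^{\PO}}^\perp \bA \proj_{\bV_{k-1}^{\PO}}^\perp$. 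This simultaneously restores the independence CGMT needs and, because the mean estimates sit at earlier indices, automatically delivers the joint law of $(\hbmu_{\sx,\cfd},\hbtheta_\prop,\hbtheta_\out)$ that your plan treats only implicitly. Part (d) then falls out directly from concentration of the dual variables $\bu_k^{\PO}$ in this framework, without a separate leave-one-out argument. Your ideas for part (b) and for extracting $\hbtheta_\out^\loo$ via the KKT identity for $\nabla\Omega_\out(\hbtheta_\out)$ are correct and match the paper, but they sit downstream of this structural issue.
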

\noindent \Cref{thm:exact-asymptotics-body} is similar to statements
made elsewhere in the exact asymptotics
literature~\cite{bayatiMontanari2012,karoui2013asymptotic,thrampoulidisAbbasiHassibi2018,miolane2021,celentano2021cad},
but differs in several important respects.  First, rather than
characterize the behavior of a single regression estimate
$\hbtheta_\out$ or $\hbtheta_\prop$ marginally, it describes their
distribution jointly with each other and with the estimates of the
marginal and confounded means of the covariates $\hbmu_{\sx}$ and
$\hbmu_{\sx,\cfd}$.  This is essential to the analysis of the
empirical SCA estimate because non-negligible biases can result from
correlations between these quantities.  Indeed, estimating the
$\dbAdj$ terms requires correcting for these biases.  A similar joint
characterization was used in Celentano and
Montanari~\cite{celentano2021cad}, but was restricted to two linear
models rather than a linear and binary outcome model.  Substantial
technical novelty is required then to establish the existence and
uniqueness of and bounds on the solutions to the fixed-point equations
\eqref{eq:regr-fixed-pt}, as we see in Appendix
\ref{sec:fixed-point-parameter-analysis}.  Further, in the context of
binary outcome models, as far as we are aware, exact asymptotics has
been limited either to the case of separable penalties with iid
designs \cite{salehiAbbasiHassibi2019} or unpenalized regression with
correlated designs \cite{surCandes2019,zhaoSurCandes2022}.  We handle
non-separable penalties with arbitrary covariance (although, we should
note, Assumption A1 does not hold for the logistic loss, which is
beyond our current scope).  Further, we make no assumption that the
coordinates of $\btheta_\out$ or $\btheta_\prop$ are distributed iid
from some prior, which is common in the exact asymptotic literature
\cite{salehiAbbasiHassibi2019}.  Instead, as we will see in the
Appendix \ref{sec:exact-asymptotics}, our results take the form of
non-asymptotic concentration bounds which hold uniformly over the
class of parameter vectors defined by Assumption A1.  This implies
that the limits in Theorems \ref{thm:pop-mean} and
\ref{thm:db-normality} hold uniformly over this model class.  The
proof of~\Cref{thm:exact-asymptotics} is based on the Convex-Gaussian
min-max theorem in a conditional form.  This conditional form was also
used in the paper~\cite{celentano2021cad}, but our application of it
here must address several technical difficulties due to the novelties
listed above.  In~\Cref{sec:exact-asymptotics}, we provide a slightly
more quantitative bound for the concentration
in~\Cref{thm:exact-asymptotics-body}.


\subsection{Debiasing theorems from exact asymptotics}

\Cref{thm:exact-asymptotics-body} is the basis for the proofs
of~\Cref{thm:pop-mean,thm:db-normality} along
with~\Cref{prop:adjustment-concentration}.  It allows us to reduce the
consistency and coverage guarantees in the random design model to
corresponding statements in the fixed-design model.  In the
fixed-design model, these statements either hold by definition or as
algebraic relationships guaranteed by the fixed point
equations~\eqref{eq:regr-fixed-pt}.

Here we provide a very high-level and heuristic summary of some of
these arguments;
see~\Cref{sec:proof-main-theorems,sec:hbeta-dbAdj-consistency} for the
full technical details.  Throughout, we assume that $\bSigma = \id_p$.
(As we describe in~\Cref{sec:independent-covariates}, this condition
can be imposed without loss of generality.)

By~\cref{eq:orc-ASCW-matrix-form}, the oracle ASCW estimate can be written
as
\begin{align*}
\hbtheta_\out^\de & = \hbtheta_\out +
\frac{\bSigma_{\cfd}^{-1}\bX^\top \big(\ba \odot (\by -
  \htheta_{\out,0} \ones - \bX \hbtheta_\out)\big)}{n
  \hzeta_\out^\theta}
\end{align*}
Moreover, by the KKT conditions for~\cref{eq:outcome-fit}, we have the
relation
\begin{align*}
  \frac{1}{\numobs} \bX^\top \big(\ba \odot (\by - \htheta_{\out,0}
  \ones - \bX \hbtheta_\out)\big) & = \nabla \Omega_\out(\hbtheta_\out),
\end{align*}
and by the Sherman-Morrison-Woodbury identity, we have
$\bSigma_{\cfd}^{-1} = \id_p - \sc_{\bSigma}
\btheta_\prop\btheta_\prop^\top$.  Thus, as indicated
by~\Cref{thm:exact-asymptotics-body}, the object in the fixed-design
model corresponding to $\hbtheta_\out^\de$ is given by
\begin{align*}
\hbtheta_\out^f + \frac{1}{ \zeta_\out^\theta} \bSigma_{\cfd}^{-1}
\nabla \Omega_\out(\hbtheta_\out^f).
\end{align*}
Similarly, by the KKT conditions for~\cref{eq:param-fit-f} in the
fixed-design model, we have \mbox{$\frac{1}{\zeta_\out^\theta} \nabla
  \Omega_\out(\hbtheta_\out^f) = \by_\out^f - \hbtheta_\out^f$,} and
hence the equivalence
\begin{align*}
  \hbtheta_\out^f + \frac{1}{\zeta_\out^\theta} \bSigma_{\cfd}^{-1}
  \nabla \Omega_\out(\hbtheta_\out^f) & = \hbtheta_\out^f + (\id_p -
  \sc_{\bSigma}\btheta_\out\btheta_\out^\top)(\by_\out^f -
  \hbtheta_\out^f) = \beta_{\out\prop}\btheta_\prop + \btheta_\out +
  \bg_\out^f - \sc_{\bSigma}\<\btheta_\out , \by_\out^f -
  \hbtheta_\out^f \>.
\end{align*}
By Gaussian concentration of measure, the random variable
$\<\btheta_\out , \by_\out^f - \hbtheta_\out^f \>$ concentrates on its
expectation $\<\btheta_\out , \by_\out^f - \hbtheta_\out^f
\>_{\Ltwo}$.

Moreover, one can show (cf.~\Cref{sec:db-proofs}) that the fixed point
equations~\eqref{eq:regr-fixed-pt} imply the relation
\begin{align*}
  \beta_{\out\prop} = \sc_{\bSigma} \<\btheta_\out,\by_\out^f -
  \hbtheta_\out^f\>_{\Ltwo}.
\end{align*}
Thus, the quantity $\btheta_\out + \bg_\out^f$ in the fixed-design
model corresponds to $\hbtheta_\out^\de$.  This is an unbiased
Gaussian estimate of $\btheta_\out$ with coordinate-wise variance $\|
\hbi_\out^f \|_{\Ltwo}^2/n^2$.  \Cref{thm:exact-asymptotics-body}
allows us to argue that the empirical standard error $\htau^2/n = \|
\hbi_\out \|^2/n^2$ concentrates on $\| \hbi_\out^f \|_{\Ltwo}^2/n^2$.
Thus, the coverage guarantee of \Cref{thm:db-normality} holds for the
corresponding object in the fixed-design models.  Using a Lipschitz
approximations of the indicator functions,
\Cref{thm:exact-asymptotics-body} can be used to establish the same
holds for oracle ASCW debiased estimator. \\

We next observe that the population mean estimate can be written as
\begin{align*}
\hmu_{\out}^\de = \htheta_{\out,0}^\de + \< \hbmu_\sx ,
\hbtheta_\out^\de \>.
\end{align*}
Following the reasoning of the previous paragraph, the quantity $\<
\bmu_\sx + \bg_\sx^f , \btheta_\out + \bg_\out^f \>$ is the the
fixed-design object corresponding to $\< \hbmu_\sx , \hbtheta_\out^\de
\>$.  Using the fixed point equations~\eqref{eq:regr-fixed-pt}, we can
show that $\bg_\sx^f$ is uncorrelated with $\bg_\out^f$, so that the
fixed-design quantity $\< \bmu_\sx + \bg_\sx^f , \btheta_\out +
\bg_\out^f \>$ concentrates on $\< \bmu_\sx , \btheta_\out \>$.
\Cref{thm:exact-asymptotics-body} can be used that the same
holds for the random-design quantity $\< \hbmu_\sx , \hbtheta_\out^\de
\>$.  We also argue that $\htheta_{\out,0}^f$ concentrates on
$\theta_{\out,0}$, so that $\hmu_{\out}^\de$ concentrates on
$\theta_{\out,0} + \< \bmu_\sx , \btheta_\out \> = \mu_\out$.  This
argument leads to the guarantee in~\Cref{thm:pop-mean} for the oracle
ASCW estimates.

The analysis of the empirical SCA estimates and the $\dbAdj$-estimates
given in~\cref{eq:hat-dbAjd} are based on a similar strategy.
Estimating the $\dbAdj$-quantities requires correlation adjustments;
let us provide the basic intuition here.  For example, in order to
consistently estimate the term $\< \bmu_{\sx,\cfd} , \btheta_\prop \>$
in the second line of~\cref{eq:dbAdj}, we might consider the plug-in
estimate $\< \hbmu_{\sx,\cfd} , \hbtheta_\prop^\de \>$.  When we use
the moment-method to construct $\hbtheta_\prop^\de$
in~\cref{eq:prop-db-option}, this corresponds to the objective $\<
\bmu_{\sx,\cfd} + \bg_{\sx,\cfd}^f , \btheta_\prop + (\bg_{\sx,\cfd}^f
- \bg_\sx^f ) / \alpha_1 \>$ in the fixed-design models.  This
quantity is biased for $\< \bmu_{\sx,\cfd} , \btheta_\prop \>$ because
$\bg_{\sx,\cfd}^f$ and $\bg_{\sx,\cfd}^f - \bg_\sx^f$ are correlated.
The fixed point equations~\eqref{eq:regr-fixed-pt} give us an estimate
of this correlation in terms of the correlation of empirical influence
functions.  This allows us to estimate this bias consistently and
remove it, as in~\cref{eq:dbAdj}.  We refer to this procedure as a
correlation adjustment. \\

\noindent Complete technical details are given
in~\Cref{sec:proof-main-theorems,sec:hbeta-dbAdj-consistency}.


\section{Discussion}
\label{SecDiscussion}

This paper studied consistent estimation of the population mean of an outcome missing at random in an ``inconsistency regime;'' that is, in a regime in which consistent estimation of both the outcome and the propensity model are impossible.
We focused on a linear-GLM outcome-propensity specification.
G-computation, AIPW, and IPW enjoy model-agnostic guarantees that rely on consistent estimation of either the outcome model or propensity model, but they fail to be consistent for the population mean in our regime.
When $n > p$ by a constant fraction, both the G-computation and AIPW estimators are $\sqrt{n}$-consistent and asymptotically normal provided OLS is used to estimate the outcome model \cite{yadlowsky2022,sur2022}.
The primary contribution of this paper is to develop a method---\textit{empirical shifted-confounder augmentation (SCA)}---which is consistent for the outcome mean in the challenging regime that $n < p$ and hence least-squares is no longer possible.
The method can be viewed as G-computation with a carefully constructed plug-in estimate for the outcome model.
Our construction of the plug-in builds on previous methods for debiasing penalized regression estimators,
but, unlike these previous methods, must deal with bias introduced not only by regularization but also by the missingness mechanism.
In particular, missingness induces a distribution shift between the features conditional on missingness and unconditionally.
This distribution shift biases parameter estimates.
We show how, in the linear-GLM setting, an estimate of the propensity model parameter can be used to correct for this bias despite the estimate's inconsistency.
We moreover show that ``on average across coordinates'' the debiased outcome model parameter is asymptotically normal for the true parameter at a $\sqrt{n}$-rate,
and we provide an empirical standard error.
In the course of studying our method, we develop exact asymptotic theory for the joint distribution of the outcome and propensity model estimates, which may be of independent interest.

In the context of semiparametric estimation and inference more broadly,
the empirical SCA method is a step towards designing methodology that eliminates or reduces bias in moderate signal-to-noise regimes in which nuisance parameters cannot be estimated well and in which classical approaches fail.
There are several promising avenues for future work to address its current shortcomings and to determine whether these are fundamental.
\begin{itemize}

    \item We have not studied the rate of convergence of
      $\hmu_\out^\de$, although Figure \ref{fig:debiasing} indicates
      it may converge at a $\sqrt{n}$-rate.  Moreover, Theorem
      \ref{thm:db-normality} establishes normality of
      $\hbtheta_{\out}^\de$ ``on average across coordinates,'' but
      does not provide a normality guarantee for any fixed coordinate
      $\theta_{\out,j}$.  Similar shortcomings have been present in
      earlier work on exact asymptotics.  Bayati et
      al.~\cite{bayatiErdogduMontanari} and Celentano and
      Montanari~\cite{celentano2021cad} do not establish a rate for
      their estimates of the conditional variance or covariance in
      high-dimensional linear models.  Likewise, Miolane and
      Montanari~\cite{miolane2021} and Celentano et
      al.~\cite{celentanoMontanariWei2020} do not establish
      coordinate-wise normality for the debiased Lasso.  In both these
      cases, this is due to their reliance on approximate message
      passing or Convex Gaussian Min-Max Theorem based proof
      techniques.  Recently, Tan et al.~\cite{tan2022noise} and Bellec
      and Zhang~\cite{bellec2019debiasingIntervals} established
      $\sqrt{n}$-consistent estimation for conditional covariances and
      coordinate-wise normality of debiased estimates in
      high-dimensional linear models.  Their methods are based on
      Stein's formula and Gaussian Poincar\'e inequalities, and it is
      possible similar techniques could be applied fruitfully in the
      present context.

    \item Our results require that the unconditional covariance of the
      confounders is known.  It is likely straightforward to
      generalize our results to cases in which this covariance can be
      estimated consistently in operator norm.  In the
      high-dimensional regime, operator-norm consistency would require
      either strong structural assumptions
      \cite{bickelLevina2008,karoui2008,caiZhangZhou2010} or access to
      a much larger unlabeled data set.  Although this is a strong
      requirement, a large body of work assumes feature distributions
      are known, and in some applications this may be a reasonable
      assumption \cite{candesFanJansonLv2018}.  In contrast, when $n >
      (1+\epsilon)p$ and under mild conditions, $\sqrt{n}$-consistency
      is possible without any knowledge of the feature covariance
      \cite{yadlowsky2022}.  Thus, an important open research
      direction is to establish tight minimax lower bounds for
      estimation that are sensitive to varying levels of prior
      knowledge of the feature covariance $\bSigma$, and to determine
      whether these exhibit a phase transition at $n = p$.  It is
      interesting to note that a similar phenomenon occurs in the
      problem of conditional covariance estimation across two linear
      models.  In this setting, consistent estimation is possible when
      $n > (1+\epsilon)p$ without knowledge of the feature covariance,
      but existing approaches require operator-norm consistent
      estimation of $\bSigma$ when $n < p$ \cite{celentano2021cad}.

    \item Our results require that the features be jointly Gaussian.
    There is a growing body of work studying universality for regression models under proportional asymptotics \cite{bayatiLelargeMontanari2015,abbasiSalehiHassibi2019,montanariSaeed2022,huLu2023}.
    In particular, in the proportional regime, one can often generalize exact asymptotic results from Gaussian features to sub-Gaussian features or features of the form $\bx = \bSigma^{1/2} \bz$ where $\bz$ has independent sub-Gaussian entries.
    We thus expect that the Gaussianity assumption can be relaxed substantially.
    Rigorously investigating this possibility is left to future work.

\end{itemize}


\subsection*{Acknowledgements}
This work was partially supported by National Science Foundation
grants NSF-CCF-1955450 and NSF-DMS-2311072, as well as DOD-ONR Office
of Naval Research N00014-21-1-2842 to MJW.  MC is supported by the Miller Institute for Basic Research in Science, University of
California, Berkeley.

\bibliographystyle{my_alpha} 
\bibliography{db_missing_data}


\newpage

\appendix

\tableofcontents

\section{High probability approximations: conventions}
\label{sec:high-probability}

The concentration bounds that we establish in this paper hold with
exponentially high probability, with constants depending on
$\cPmodel$.  Let us introduce some shorthand notation for the the type
of concentration results that we provide.  For two scalar random
variables $A, B$, we use $A \mydoteq B$ to mean there exist constants
$C,c,c',r > 0$ depending only on $\cPmodel$ such that, for all
$\epsilon < c'$, $\P(|A - B| \geq \epsilon) \leq C e^{-cn\epsilon^r}$.
We use $A \lessdot B$ to mean there exist constants $C,c,c',r > 0$
depending only on $\cPmodel$ such that, for all $\epsilon < c'$, $\P(A
- B > \epsilon) \leq C e^{-cn\epsilon^r}$.  We equivalently write $B
\gtrdot A$.  Moreover, for two random vectors $\ba,\bb \in \reals^N$,
we use $\ba \mydoteq \bb$ to denote $\| \ba - \bb \| \mydoteq 0$.
Moreover, for two random matrices $\bA,\bB \in \reals^{N \times M}$ we
use $\bA \mydoteq \bB$ to denote $\| \bA - \bB \|_{\op} \mydoteq 0$.
It is straightforward to check that these relation have the following
properties: $A \mydoteq B$ if and only if $A \lessdot B$ and $B
\lessdot A$; if $A \mydoteq B$ and $B \mydoteq C$, then $A \mydoteq
C$; if $A \lessdot B$ and either $B \lessdot C$, then $A \lessdot C$.
We use these relationships freely and without comment throughout the
paper.  We also say an event occurs with ``exponentially high
probability'' if there exist constants $C,c > 0$ depending on
$\cPmodel$ such that the event occurs with probability at least $1 -
Ce^{-cn}$.  Note that an event that occurs with exponentially high
probability occurs with probability approaching 1 along any sequence
of models and estimators each satisfying assumption A1 and $n
\rightarrow \infty$.  Likewise, along any such sequence, vectors
$\ba_n$, $\bb_n$ satisfying $\ba_n \mydoteq \bb_n$ have $\| \ba_n -
\bb_n \| \gotop 0$.  Because Assumption A1 requires $C > n/p > c > 0$,
these high probability bounds are meaningful in a proportional
asymptotics.  We typically use this notation and terminology to
describe our concentration bounds, making explicit probability
statements only when required for clarity or rigor.

\section{Exact asymptotics for missing data models}
\label{sec:exact-asymptotics}

In this appendix, we provide additional details on the exact
asymptotic characterization that we sketched out
in~\Cref{sec:exact-asymptotics-body}.


\subsection{Definition of derivatives}

In all cases, the derivatives in the fixed point
equations~\eqref{eq:regr-fixed-pt} do not exist in the classical
sense.  Here we describe how they are to be interpreted.  We begin by
writing the scores explicitly as
\begin{equation}
\label{eq:score-explicit}
\begin{gathered}
    \hpsi_{\prop,i}^f = \ell_\prop'\big( \prox\big[
      \zeta_\prop^\eta\ell_\prop(\,\cdot\,;\action_i^f)\big]'(\heta_{\prop,i}^{f,\loo});
    \action_i^f\big), \qquad \hpsi_{\out,i}^f = -\frac{\zeta_\out^\eta
      w(\eta_{\prop,i}^f) \action_i^f}{1 + \zeta_\out^\eta
      w(\eta_{\prop,i}^f)} (y_i^f - \heta_{\out,i}^{f,\loo}).
\end{gathered}
\end{equation}
Here the reader should recall that for a lower semi-continuous,
proper, convex function $f: \reals^N \rightarrow \reals \cup
\{\infty\}$, the proximal operator is given by
\begin{equation}
\label{eq:prox-def}
    \prox[f](\by) \defn \argmin_{\bv \in \reals^N} \Big\{ \frac{1}2 \|
    \by - \bv \|^2 + f(\bv) \Big\}.
\end{equation}
(See the sources~\cite{bauschke2011convex,parikhBoyd2014} for more
background on proximal operators.)

The derivatives of the score are interpreted as
\begin{equation}
\label{eq:score-deriv-explicit}
\begin{gathered}
    \frac{\de \hpsi_{\prop,i}^f}{\de \eta_{\prop,j}^f}
        =
        \delta_{ij}
        \pi'(\eta_{\prop,j}^f)
        \Big(
            \ell_\prop'\big(\prox\big[\zeta_\prop^\eta\ell_\prop(\,\cdot\,;1)\big](\heta_{\prop,i}^{f,\loo});1\big)
            -
            \ell_\prop'\big(\prox\big[\zeta_\prop^\eta\ell_\prop(\,\cdot\,;0)\big](\heta_{\prop,i}^{f,\loo});0\big)
        \Big),
    \\
    \frac{\de \hpsi_{\prop,i}^f}{\de \heta_{\prop,j}^{f,\loo}}
        =
        \delta_{ij}
        \prox\big[\zeta_\prop^\eta\ell_\prop(\,\cdot\,;\action_i^f)\big]'(\heta_{\prop,i}^{f,\loo})
        =
        \delta_{ij}
        \frac{1}{\zeta_\prop^\eta \ddot\ell_\prop(\heta_{\prop,i}^f;\action_i^f)+1},
    \\
    \frac{\de \hpsi_{\out,i}^f}{\de \eta_{\prop,j}^f}
        =
        -\delta_{ij}
        \Big(
            \frac{\de}{\de \eta_{\prop,i}^f}
            \frac{\zeta_\out^\eta w(\eta_{\prop,i}^f)\pi(\eta_{\prop,i}^f)}{1 + \zeta_\out^\eta w(\eta_{\prop,i}^f)}
        \Big)
        (y_i^f - \heta_{\out,i}^{f,\loo}),
    \qquad
    \frac{\de \hpsi_{\out,i}^f}{\de \heta_{\out,j}^{f,\loo}}
        =
        \action_i^f
        \frac{\zeta_\out^\eta w(\eta_{\prop,i}^f)}{1 + \zeta_\out^\eta w(\eta_{\prop,i}^f)},
\end{gathered}
\end{equation}
Since the functions $\Omega_\prop$ and $\Omega_\out$ are twice
differentiable, both of the functions $\hbtheta_\prop^f$ and
$\hbtheta_\out^f$ are (classically) differentiable in $\by_\prop^f$
and $\by_\out^f$, with derivatives
\begin{equation}
\label{eq:param-est-deriv-fixed-design}
    \bSigma^{1/2} \frac{\de \hbtheta_{\prop}^f }{\de \by_{\prop}^f} =
    \zeta_\prop^\theta\bSigma^{1/2} \Big( \zeta_\prop^\theta \bSigma +
    \nabla^2 \Omega_\prop(\hbtheta_\prop^f) \Big)^{-1} \bSigma^{1/2},
    \quad \mbox{and} \quad \bSigma^{1/2} \frac{\de \hbtheta_{\out}^f
    }{\de \by_{\out}^f} = \zeta_\out^\theta\bSigma^{1/2} \Big(
    \zeta_\out^\theta \bSigma + \nabla^2 \Omega_\out(\hbtheta_\out^f)
    \Big)^{-1} \bSigma^{1/2}.
\end{equation}


\subsection{Existence/uniqueness of fixed points}

The following result certifies uniqueness of the fixed points defined
by the relation~\eqref{eq:regr-fixed-pt}, along with some of their
properties:
\begin{lemma}
\label{lem:regr-fixpt-exist-and-bounds}
    Under Assumption A1, equations \eqref{eq:regr-fixed-pt} have a
    unique solution.  Moreover, these solutions satisfy the following
    properties:
    \begin{itemize}
        \item \textbf{Bounded standard errors.} $0 < c < S_{\ell\ell}
          < C$ for $1 \leq \ell \leq 4$.

        \item \textbf{Bounded effective regularization.} $0 < c <
          \zeta_\out^\theta,\zeta_\out^\eta,\zeta_\prop^\theta,\zeta_\prop^\theta
          < C$.
        \item \textbf{Bounded bias.}
          $|\beta_{\prop\prop}|,|\beta_{\out\prop}| <C$, and
          $\beta_{\prop\prop} > c > 0$.
        \item \textbf{Bounded estimates.} $\| \hbtheta_\prop^f
          \|_{\Ltwo}$, $\|\hbtheta_\out^f \|_{\Ltwo} < C$.
    \end{itemize}
\end{lemma}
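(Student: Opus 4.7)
The plan is to exploit the hierarchical structure of the system: the propensity subsystem --- comprising $\zeta_\prop^\theta, \zeta_\prop^\eta, \beta_{\prop\prop}, \hmu_\prop^f$, and those entries of $\bS$ involving $\hbi_\sx^f, \hbi_{\sx,\cfd}^f, \hbi_\prop^f$ --- is closed, since the propensity model does not depend on the outcome, while the outcome equations couple to the propensity solution only through the weight $w(\eta_\prop^f)$ and the action $\action^f$. I would therefore solve the propensity subsystem first, then solve the outcome subsystem conditionally on the propensity solution; this reduces the 18-dimensional existence/uniqueness question to two decoupled subproblems of much lower effective dimension.

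For each subsystem, I would recast the fixed-point equations as the first-order stationarity conditions of a finite-dimensional convex--concave saddle-point problem, obtained by applying the Convex Gaussian Min-Max Theorem to the fixed-design problems \eqref{eq:param-fit-f} and \eqref{eq:lin-predict-f} and scalarizing the high-dimensional variables via $\bSigma$-rotation invariance. The strong convexity of $\Omega_\prop, \Omega_\out$ in the $\bSigma$-metric (A1(g)) and the strong convexity of $\ell_\prop$ (A1(i)) carry through the reduction to yield a strictly convex--concave scalar problem, which therefore has a unique saddle point, and hence the corresponding fixed-point parameters are unique. Existence follows from a compactness argument: A1(e), (f) bound the signal sizes so that the saddle parameters lie in a compact region, and Sion's minimax theorem applies.

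For the a priori bounds, I would chain the fixed-point identities against the assumptions in A1. The identity $S_{11} = \|\ones\|^2/n = 1$ is immediate, and $S_{22} = 1/\barpi$ is controlled by A1(d). Plugging \eqref{eq:score-deriv-explicit} and \eqref{eq:param-est-deriv-fixed-design} into the DoF-style identities in \eqref{eq:regr-fixed-pt}, and using the two-sided bounds on $\ddot\ell_\prop$ (A1(i)), on $w$ (A1(h)), and on $\nabla^2\Omega_{\prop/\out}$ in the $\bSigma$-metric (A1(g)), traps $\zeta_{\prop/\out}^{\theta/\eta}$ in an interval $[c,C]$; the upper bound on $n/p$ from A1(a) is essential here to lower bound the $\zeta^\eta$ parameters. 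The biases are explicit expectations via \eqref{eq:score-deriv-explicit}: the upper bounds come from $|\pi'| \leq C$ and Lipschitzness of $\ell_\prop'$, while the lower bound $\beta_{\prop\prop} > c > 0$ is exactly the informativeness clause of A1(i). Finally, $\|\hbtheta^f\|_{\Ltwo}$ is bounded by comparing $\hbtheta^f$ to the penalty minimizer $\bv_{\prop/\out}$ via strong convexity of $\Omega$ in the $\bSigma$-metric and the KKT conditions of \eqref{eq:param-fit-f}, using the now-controlled entries of $\bS$ to bound the effective noise scale; plugging back into the definitions of $\hbi_\prop^f, \hbi_\out^f$ then closes the loop by bounding $S_{33}, S_{44}$.

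The main obstacle is that A1(g) permits arbitrary non-separable penalties, so the CGMT reduction cannot proceed by coordinate-wise convex duality in the style of iid-prior analyses; one must instead carry the $\bSigma$-quadratic form through the reduction uniformly over the model class of A1. A second subtlety is the coupling of the subsystems through $w(\eta_\prop^f)$ and $\action^f$: the outcome equations involve expectations over the full joint law in \eqref{eq:fixed-design-linear-predictor-dist}, so the scalar problem for the outcome subsystem has a nonstandard structure, and one must exercise care when differentiating through the proximal operator in \eqref{eq:score-explicit} to verify the KKT-equivalence and the sign conditions needed for the monotonicity step that pins down uniqueness.
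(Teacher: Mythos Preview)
Your high-level architecture matches the paper's: a sequential decoupling (the paper indexes by $k=1,\ldots,6$ in a state-evolution reformulation \eqref{eq:fixpt-general}, with $k=5$ the propensity stage and $k=6$ the outcome stage, which is precisely your propensity-then-outcome split), a convex--concave saddle-point recasting of each stage's fixed-point equations, and bounds obtained by chaining the identities against A1. Several of your concrete observations are correct and appear essentially verbatim in the paper's argument: $S_{11}=1$, $S_{22}=1/\barpi$, the informativeness clause of A1(i) is exactly what forces $\beta_{\prop\prop}>c$, and strong convexity of $\Omega$ against the KKT conditions of \eqref{eq:param-fit-f} is what bounds $\|\hbtheta^f\|_{\Ltwo}$.

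The gap is in the saddle-point reduction itself. You propose a \emph{finite-dimensional} (``scalar'') problem obtained by ``scalarizing the high-dimensional variables via $\bSigma$-rotation invariance,'' but A1(g) admits arbitrary non-separable penalties subject only to strong convexity and smoothness in the $\bSigma$-metric, so $\E[\Omega_k(\bv)]$ is in general not a function of finitely many scalar statistics of $\bv$; there is no rotational symmetry to exploit, and the usual CGMT scalarization for iid priors and separable penalties does not go through. You flag this as ``the main obstacle'' but offer no mechanism to resolve it. The paper's resolution, which you do not anticipate, is to keep the optimization variables \emph{infinite-dimensional}: it defines a saddle-point objective $\AuxObj_k^{\Ltwo}$ on $L^2_{\su}\times\reals\times L^2_{\sv}$, Hilbert spaces of square-integrable random $n$- and $p$-vectors, and shows that solutions of \eqref{eq:fixpt-general} are in bijection with saddle points of this functional; strong convexity/concavity in $L^2$ then gives uniqueness, and coercivity plus a Hilbert-space minimization theorem gives existence. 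A secondary issue: the fixed-design problems \eqref{eq:param-fit-f} and \eqref{eq:lin-predict-f} are already deterministic --- they \emph{are} the auxiliary model --- so ``applying CGMT'' to them is a category error; CGMT enters only in relating the random-design primary optimization to the auxiliary, whereas the fixed-point existence/uniqueness is a purely variational argument involving no Gaussian comparison.
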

\noindent See~\Cref{sec:regr-fixpt-exist-unique-bound}.  for the
proof.

\subsection{Exact asymptotics}

Here we give a more quantitatively refined statement
of~\Cref{thm:exact-asymptotics-body}, in particular using the
high-probability approximation notions introduced
in~\Cref{sec:high-probability}.

\begin{theorem}
\label{thm:exact-asymptotics}
Under Assumption A1, we have:
\begin{enumerate}[(a)]
\item
The estimated population means concentrate:
\begin{equation}
  \htheta_{\prop,0} + \< \bmu_{\sx} , \hbtheta_\prop \> \mydoteq
  \hmu_\prop^f, \qquad \htheta_{\out,0} + \< \bmu_{\sx} ,
  \hbtheta_\out \> \mydoteq \hmu_\out^f.
\end{equation}

\item 
  The effective-regularization terms concentrate:
  \begin{equation}
    \hzeta_{\prop}^\eta \mydoteq \zeta_{\prop}^\eta, \qquad
    \hzeta_{\prop}^\theta \mydoteq \zeta_{\prop}^\theta, \qquad
    \hzeta_{\out}^\eta \mydoteq \zeta_{\out}^\eta, \qquad
    \hzeta_{\out}^\theta \mydoteq \zeta_{\out}^\theta.
  \end{equation}
  
\item 
For any order-2 pseudo-Lipschitz function $\phi: (\reals^p)^6
\rightarrow \reals$,
\begin{equation}
  \phi \big( \hbtheta_{\prop}, \hbtheta_{\out}, \hbmu_{\sx},
  \hbmu_{\sx,\cfd}, \hbtheta_{\prop}^{\loo}, \hbtheta_{\out}^{\loo}
  \big) \mydoteq \E\Big[ \phi\Big( \hbtheta_{\prop}^f,
    \hbtheta_{\out}^f, \bSigma^{-1/2} \by_{\sx}^f, \bSigma^{-1/2}
    \by_{\sx,\cfd}^f, \bSigma^{-1/2} \by_{\prop}^f, \bSigma^{-1/2}
    \by_{\out}^f \Big) \Big].
\end{equation}
        
\item 
  For any order-2 pseudo-Lipschitz function $\phi:\reals^4
  \rightarrow \reals$,
  \begin{equation}
    \frac{1}{n} \sum_{i=1}^n \phi\big( \eta_{\prop,i}, \eta_{\out,i},
    \hpsi_{\prop,i}, \hpsi_{\out,i} \big) \mydoteq \E\Big[ \phi\Big(
      \eta_{\prop,i}^f, \eta_{\out,i}^f, \hpsi_{\prop,i}^f,
      \hpsi_{\out,i}^f \Big) \Big].
  \end{equation}
\end{enumerate}
where in both locations, $\phi$ is an order-2 pseudo-Lipschitz
function (with the appropriate number of arguments).
\end{theorem}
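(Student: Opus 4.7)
The plan is to prove Theorem~\ref{thm:exact-asymptotics} via the Convex Gaussian Min-Max Theorem (CGMT) in its conditional form, following the template of~\cite{celentano2021cad} but extended to simultaneously treat a linear outcome model and a binary-GLM propensity model that share the same Gaussian design. The first step is to reduce to the isotropic case $\bSigma = \id_p$ via the substitution $\bx_i \mapsto \bSigma^{-1/2}\bx_i$ with matching rescalings of $\btheta_\out$, $\btheta_\prop$, and $\bmu_\sx$; Assumption A1(g) is stated in the $\bSigma$-metric precisely so that the transformed penalties remain strongly convex and smooth. Next I would recast the two base estimators~\eqref{eq:outcome-fit} and~\eqref{eq:propensity-fit} as a single joint min-max problem that is \emph{linear} in $\bX$, obtained by parameterizing the outcome fit by its estimation error (so that the residual reads $\beps_\out - \bX\bv_\out$) and by introducing auxiliary linear-predictor variables $\bu_\out, \bu_\prop$ together with matching dual multipliers enforcing $\bu_\out = \bX\bv_\out$ and $\bu_\prop = \bX\bv_\prop$. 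The sample averages $\hbmu_\sx$ and $\hbmu_{\sx,\cfd}$ enter as additional linear functionals of $\bX$ and are tracked in parallel.

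Conditional CGMT is then applied after conditioning on the projection of $\bX$ onto a fixed low-dimensional subspace containing $\ones$, $\btheta_\prop$, $\btheta_\out$, and the directions spanned by the dual maximizers; the residual randomness in $\bX$ is independent Gaussian and gets replaced by two independent Gaussian vectors, producing an Auxiliary Optimization. After sequentially performing the minimization/maximization over the Gaussian vectors, the linear-predictor variables, and the duals via Gaussian concentration, the AO collapses to a scalar saddle-point problem in a constant number of summary statistics whose stationarity conditions are exactly the fixed-point equations~\eqref{eq:regr-fixed-pt}. Lemma~\ref{lem:regr-fixpt-exist-and-bounds} identifies this saddle with the fixed-design parameters and bounds them uniformly away from $0$ and $\infty$. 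Part (c) then follows from the standard Lipschitz-localization form of CGMT: strong convexity from Assumption A1(g) forces $(\hbtheta_\out, \hbtheta_\prop)$ into exponentially small neighborhoods of $(\hbtheta_\out^f, \hbtheta_\prop^f)$, and any order-$2$ pseudo-Lipschitz test function concentrates at its fixed-design expectation. Part (a) is the special case $\phi(\bv) = \theta_0 + \langle \bmu_\sx, \bv \rangle$; part (b) is obtained by applying part (c) to the scalar functionals defining~\eqref{eq:dof-emp-def} together with continuity of the solutions of the resulting scalar fixed-point equations; part (d) is transferred from part (c) by expressing $\hbmeta$ and $\hbpsi$ through the proximal representation~\eqref{eq:score-explicit} as Lipschitz functions of the true linear predictors and the effective leave-one-out predictors.

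The hardest obstacle will be the \emph{joint} CGMT step: the linear and binary regressions are coupled through both the shared design and through the weight $\bw = w(\bmeta_\prop)$ appearing in the outcome loss. The conditioning must simultaneously preserve (i) the bias directions $\beta_{\prop\prop}\btheta_\prop$ and $\beta_{\out\prop}\btheta_\prop$ in the fixed-design means~\eqref{eq:fixed-design-param-outcomes}, (ii) the cross-covariances of $\bg_\sx^f$ with $\bg_\prop^f$ and $\bg_\out^f$ that arise from tying $\hbmu_\sx$ and $\hbmu_{\sx,\cfd}$ to the same design matrix, and (iii) the innovation constraints on indices $3$, $4$ of the covariance in~\eqref{eq:fixed-design-linear-predictor-dist}. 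A related difficulty is the absence of an iid prior on $\btheta_\out, \btheta_\prop$: the concentration bounds must hold uniformly over the Assumption A1 class, which is why Lemma~\ref{lem:regr-fixpt-exist-and-bounds} proves not merely existence and uniqueness but quantitative bounds on the fixed point, and why the entire argument must be cast non-asymptotically via the $\mydoteq$ calculus of~\Cref{sec:high-probability}.
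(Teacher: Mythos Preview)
Your high-level strategy---reduce to $\bSigma=\id_p$, cast the estimators as saddle-point problems linear in the design, apply conditional CGMT, and identify the auxiliary optimum with the fixed-point equations~\eqref{eq:regr-fixed-pt}---matches the paper. But the step ``recast the two base estimators as a \emph{single joint} min-max problem'' and apply CGMT once is a genuine gap. Gordon's inequality and its convex refinement compare a min-max with a \emph{single} bilinear form $\bu^\top\bA\bv$ to an auxiliary objective; a joint formulation of~\eqref{eq:outcome-fit} and~\eqref{eq:propensity-fit} carries two bilinear forms $\bu_\out^\top\bA\bv_\out+\bu_\prop^\top\bA\bv_\prop$ sharing the same $\bA$, and there is no Gordon comparison for that structure. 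The paper instead sets up six saddle-point problems (indices $k=1,2$ encode the true parameters $\btheta_\prop,\btheta_\out$; $k=3,4$ encode $\hbmu_\sx,\hbmu_{\sx,\cfd}$; $k=5,6$ are the actual fits) and proves the state evolution by induction on $k$: at step $k$ one conditions on the saddle points and linear images $\bU_{k-1}^{\PO},\bV_{k-1}^{\PO},\bA^\top\bU_{k-1}^{\PO},\bA\bV_{k-1}^{\PO}$ of the earlier steps, after which the residual $\proj_{\bU_{k-1}^{\PO}}^\perp\bA\proj_{\bV_{k-1}^{\PO}}^\perp$ is independent Gaussian and standard CGMT applies to the lone bilinear term $\bu_k^\top\bA\bv_k$. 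The conditioning subspace is therefore not ``fixed'' but determined by the random outputs of earlier steps; your phrase ``the directions spanned by the dual maximizers'' gestures at this, but those maximizers cannot be specified in advance of the very CGMT step that pins them down. Treating $\btheta_\prop$ as the trivial output of problem $k=1$ is also what makes $\ba=\ba(\bA\btheta_\prop,\beps_1,\beps_1')$ measurable with respect to the conditioning $\sigma$-algebra at steps $k\ge5$, resolving the nonlinear dependence you correctly flag as the hardest obstacle.

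Two smaller points. Part (d) does not follow from part (c): the quantities $(\eta_{\prop,i},\eta_{\out,i},\hpsi_{\prop,i},\hpsi_{\out,i})$ live in observation space and are controlled by the $\bu$-side concentration of the auxiliary objective, not by the $\bv$-side that yields part (c); the paper proves both simultaneously in its state-evolution theorem. Part (a) is not a special case of part (c) either, because it involves the intercepts $\htheta_{\prop,0},\htheta_{\out,0}$, which the paper controls via a separate concentration $v_{k,0}^{\PO}\mydoteq\nu_{k,0}$. Finally, the auxiliary objective does not ``collapse to a scalar saddle-point problem'': it remains in $\reals^n\times\reals\times\reals^p$, and the argument shows that its saddle point is close to an explicitly constructed approximate saddle point built from the state evolution (the infinite-dimensional Hilbert-space saddle problem you may have in mind is used only to prove existence and uniqueness of the fixed-point parameters, not in the CGMT step itself).
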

\noindent See~\Cref{sec:exact-asymptotics-proof} for the proof.


\subsection{Reduction to independent covariates}
\label{sec:independent-covariates}

Without loss of generality, we can assume that $\bSigma = \id_p$.
Indeed, if one replaces $\bX$ by $\bX \bSigma^{-1/2}$, $\btheta_\prop$
with $\bSigma^{1/2} \btheta_\prop$, $\btheta_\out$ with $\bSigma^{1/2}
\btheta_\out$, $\bmu_{\sx}$ with $\bSigma^{-1/2} \btheta_\out$, $\bv
\mapsto \Omega_\out(\bv)$ with $\bv \mapsto
\Omega_\out(\bSigma^{-1/2}\bv)$, and $\bv \mapsto \Omega_\prop(\bv)$
with $\bv \mapsto \Omega_\prop(\bSigma^{-1/2}\bv)$, one can see that
Assumption A1 holding before these replacements is equivalent to
Assumption A1 holding after these replacements if $\bSigma$ is
replaced with $\id_p$.  Moreover, the conclusion
of~\Cref{thm:exact-asymptotics} as well as all other results in the
paper (\Cref{thm:pop-mean,thm:db-normality}, as well as
\Cref{lem:propensity-summary-stats}) are equivalent to the same
results applied with $\bSigma = \id_p$ to the model arrived at after
these replacements.  Thus in the remainder of remainder of the
Appendix, we assume that $\bSigma = \id_p$.


\section{The Cholesky decomposition}
\label{sec:cholesky}

The proof of~\Cref{thm:exact-asymptotics} is greatly facilitated by
the Cholesky decomposition.  The main ideas are most easily understood
if one assumes all matrices are invertible, and all Cholesky
decompositions are defined in the standard way \cite[Corollary
  7.2.9]{horn13}.  This section is devoted to the technical details
required to relax this invertibility assumption.  Accordingly, for the
first reading, we suggest the reader may skip this section and operate
under the simplifying assumption of invertibility.

In order to define the Cholesky decomposition for possibly
non-invertible matrices, we need to adopt certain conventions so as to
resolve its non-uniqueness, which we describe in this section.

\subsection{Definition of the decomposition}

Consider $\bK \in \S_{\geq0}^k$ and $(X_1, \ldots, X_k) \sim
\normal(0, \bK)$.  For each index $\ell \in [k]$, we have the
decomposition
\begin{equation}
\label{eq:seq-regr}
X_\ell = X_\ell^\| + \alpha_\ell X_\ell^\perp,
\end{equation}
where $X_\ell^\|$ is a linear combination of $\{X_{\ell'}\}_{\ell' < \ell}$
and
$\alpha_\ell X_\ell^\perp$ is independent of $\{X_{\ell'}\}_{\ell' < \ell}$.
If $\rank(\bK_\ell) = \rank(\bK_{\ell-1}) + 1$, we call the index $\ell$ \textit{innovative}.
In this case, $\alpha_\ell X_\ell^\perp \neq 0$,
and we take $\alpha_\ell > 0$ and $\| X_\ell^\perp \|_{\Ltwo} = 1$.
If $\rank(\bK_\ell) = \rank(\bK_{\ell-1})$, we call the index $\ell$ \textit{predictable}.
In the case, $\alpha_\ell X_\ell^\perp = 0$,
and we take $\alpha_\ell = 0$ and $ X_\ell^\perp = 0$.
One of these two cases always occurs.
With these conventions, the decomposition in the previous display is unique.

This construction uniquely determines a matrix $\bL \in \reals^{k \times k}$ defined by
\begin{equation}
    L_{\ell \ell'} = \< X_\ell , X_{\ell'}^\perp \>_{\Ltwo}
    \quad
    \text{for $\ell,\ell' \in [k]$.}
\end{equation}
We show that $\bK = \bL\bL^\top$, and $\bL$ is the unique such choice
that is lower-triangular and ``innovation-compatible'' with $\bK$:
\begin{definition}
\label{def:predictable-innovative}
 A lower-triangular matrix $\bL \in \reals^{k \times k}$
 \emph{innovation-compatible} with $\bK$ if the $\ell^\text{th}$
 column of $\bL$ is $\bzero$ for all indices $\ell$ predictable with
 respect to $\bK$.
\end{definition}
\begin{lemma}[Cholesky decomposition]
\label{lem:cholesky}
    The matrix $\bL$ is lower-triangular, has non-negative entries on
    its diagonal, is innovation-compatible with respect to $\bK$, and
    satisfies $\bK = \bL\bL^\top$.  The $\ell^\text{th}$ entry on its
    diagonal is 0 if and only if $\ell$ is predictable with respect to
    $\bK$.  The sub-matrix $\bL_\ell$ depends on $\bK$ only via
    $\bK_\ell$.  Moreover,
    \begin{equation}
    \label{eq:original-from-perp}
        X_\ell = \sum_{\ell'=1}^\ell L_{\ell \ell'} X_{\ell'}^\perp.
    \end{equation}
\end{lemma}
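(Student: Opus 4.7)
I will verify the four properties of $\bL$ in turn (lower-triangular, nonnegative diagonal, innovation-compatibility, $\bK=\bL\bL^\top$), and then establish the two remaining claims (diagonal zeros track predictability, and $\bL_\ell$ depends only on $\bK_\ell$). The representation~\eqref{eq:original-from-perp} will be the workhorse, and I will establish it by induction on $\ell$; given it, the factorization $\bK=\bL\bL^\top$ reduces to a bilinearity computation using the orthogonality of the residuals $\{X_{\ell'}^\perp\}$.

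\textbf{Lower-triangularity and the diagonal.} For $\ell'>\ell$, the decomposition~\eqref{eq:seq-regr} makes $\alpha_{\ell'}X_{\ell'}^\perp$ independent of $\{X_{\ell''}\}_{\ell''<\ell'}$, hence independent of $X_\ell$; in the predictable case $X_{\ell'}^\perp=0$ directly. Since all variables are centered Gaussian, $L_{\ell\ell'}=\langle X_\ell,X_{\ell'}^\perp\rangle_{L^2}=0$. For the diagonal, writing $X_\ell=X_\ell^\|+\alpha_\ell X_\ell^\perp$ and noting that $X_\ell^\|$ lies in the $L^2$-span of $\{X_{\ell''}\}_{\ell''<\ell}$ (which is orthogonal to $X_\ell^\perp$), I get $L_{\ell\ell}=\alpha_\ell\|X_\ell^\perp\|_{L^2}^2$. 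The conventions give $L_{\ell\ell}=\alpha_\ell>0$ for innovative $\ell$ and $L_{\ell\ell}=0$ for predictable $\ell$, which simultaneously yields nonnegativity and the ``$L_{\ell\ell}=0$ iff predictable'' claim. Innovation-compatibility is immediate: if $\ell'$ is predictable then $X_{\ell'}^\perp=0$, so column $\ell'$ of $\bL$ is identically zero.

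\textbf{The expansion~\eqref{eq:original-from-perp} and factorization.} I will prove $X_\ell=\sum_{\ell'=1}^\ell L_{\ell\ell'}X_{\ell'}^\perp$ by induction on $\ell$. The base case $\ell=1$ holds because $X_1=\alpha_1 X_1^\perp=L_{11}X_1^\perp$ under both conventions. For the inductive step, by hypothesis $X_\ell^\|$ lies in the $L^2$-span of $\{X_{\ell''}^\perp\}_{\ell''<\ell}$, so $X_\ell^\|=\sum_{\ell''<\ell}c_{\ell''}X_{\ell''}^\perp$ for some coefficients. Taking inner product against an innovative $X_{\ell''}^\perp$ (which has unit norm and is orthogonal to the other $X_{\ell'''}^\perp$ for $\ell'''<\ell$ as well as to $X_\ell^\perp$) recovers $c_{\ell''}=L_{\ell\ell''}$. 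For predictable $\ell''$, both $X_{\ell''}^\perp=0$ and $L_{\ell\ell''}=0$, so such terms can be absorbed without changing the sum. Adding the residual $\alpha_\ell X_\ell^\perp=L_{\ell\ell}X_\ell^\perp$ completes the induction. With~\eqref{eq:original-from-perp} in hand,
\begin{equation*}
K_{\ell\ell'}=\langle X_\ell,X_{\ell'}\rangle_{L^2}=\sum_{\ell'',\ell'''}L_{\ell\ell''}L_{\ell'\ell'''}\langle X_{\ell''}^\perp,X_{\ell'''}^\perp\rangle_{L^2}=\sum_{\ell''}L_{\ell\ell''}L_{\ell'\ell''}=(\bL\bL^\top)_{\ell\ell'},
\end{equation*}
since the cross terms vanish (predictable indices contribute $0$, innovative indices give a Kronecker delta because innovative $X_{\ell''}^\perp$ is independent of innovative $X_{\ell'''}^\perp$ for $\ell''<\ell'''$).

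\textbf{Locality and the main obstacle.} The claim that $\bL_\ell$ depends on $\bK$ only through $\bK_\ell$ follows because the joint distribution of $(X_1,\ldots,X_\ell)$, and hence the residuals $X_1^\perp,\ldots,X_\ell^\perp$ and the inner products $L_{\ell''\ell'''}$ for $\ell'',\ell'''\leq\ell$, depend only on $\bK_\ell$. The step I expect to require the most care is checking in the inductive argument that the predictable-index conventions ($\alpha_\ell=0$, $X_\ell^\perp=0$, and the corresponding zero column of $\bL$) are internally consistent, so that the nonuniqueness of the coefficients $c_{\ell''}$ when some $X_{\ell''}^\perp=0$ does not break the identification $c_{\ell''}=L_{\ell\ell''}$. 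This is handled cleanly by the Definition~\ref{def:predictable-innovative} convention, which forces $L_{\ell\ell''}=0$ exactly when the freedom in $c_{\ell''}$ is unconstrained by inner products.
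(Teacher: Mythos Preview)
Your proof is correct and follows essentially the same approach as the paper's: orthogonality of $X_{\ell'}^\perp$ to earlier variables gives lower-triangularity; $L_{\ell\ell}=\alpha_\ell\|X_\ell^\perp\|_{L^2}^2$ gives the diagonal claims; the expansion~\eqref{eq:original-from-perp} is established by induction and inner-product identification of coefficients (with predictable indices absorbed harmlessly); and $\bK=\bL\bL^\top$ follows by bilinearity and orthonormality of the innovative residuals. The only cosmetic difference is that the paper phrases the induction as ``each $X_\ell$ is a linear combination of $\{X_{\ell'}^\perp\}_{\ell'\le\ell}$'' and then identifies all coefficients at once, whereas you split off $X_\ell^\|$ and the residual term separately---but the substance is identical.
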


\begin{proof}[Proof of~\Cref{lem:cholesky}]
 Because the quantity $X_{\ell'}^\perp$ is independent of $X_\ell$ for
 $\ell' > \ell$, we have $L_{\ell \ell'} = \< X_\ell , X_{\ell'}^\perp
 \>_{\Ltwo} = 0$ for $\ell' > \ell$, so $\bL$ is lower-triangular.
 Because $X_\ell^\perp$ is independent of $X_\ell^\|$, $L_{\ell \ell}
 = \<X_\ell, X_\ell^{\perp}\>_{\Ltwo} = \alpha_\ell \| X_\ell^\perp
 \|_{\Ltwo}^2 \geq 0$ because we have chosen that $\alpha_\ell \geq
 0$.  Thus, $\bL$ has non-negative entries on its diagonal.  If
 $\ell'$ is predictable, then $X_{\ell'}^\perp = 0$, so that $L_{\ell
   \ell'} = \< X_\ell , X_{\ell'}^\perp \>_{\Ltwo} = 0$.  Thus, $\bL$
 is innovation compatible with respect to $\bK$.  $\alpha_\ell \|
 X_\ell^\perp \|_{\Ltwo}^2 = 0$ if and only if $\ell$ is predictable,
 whence the $\ell^\text{th}$ diagonal entry of $\bL$ is 0 if and only
 if $\ell$ is predictable.  The sub-matrix $\bL_\ell$ depends on $\bK$
 only via $\bK_\ell$ because its construction relies only on the
 random variables $(X_1, \ldots, X_\ell) \sim \normal(0,\bK_\ell)$.

 Using the fact that $X_\ell^\|$ can be written as a linear
 combination of $\{X_{\ell'}\}_{\ell' < \ell}$ and using $X_1 =
 \alpha_{11} X_1^\perp$ as a base case, by induction each $X_\ell$ can
 be written as a linear combination
 \begin{equation}
   X_\ell = \sum_{\ell'=1}^\ell \alpha_{\ell \ell'} X_{\ell'}^\perp.
    \end{equation}
    Taking the inner product with $X_{\ell''}^\perp$ on both sides and
    using that $\< X_{\ell'}^\perp , X_{\ell''}^\perp \>_{\Ltwo} = 0$
    when $\ell' \neq \ell''$, we have $\alpha_{\ell \ell'} \|
    X_{\ell'}^\perp \|^2 = \<X_{\ell},X_{\ell'}^\perp\> = L_{\ell
      \ell'}$.  If $\ell'$ is innovative, this gives $\alpha_{\ell
      \ell'} = L_{\ell \ell'}$.  If $\ell'$ is predictable, then
    $X_{\ell'} = 0$, so that we can set $\alpha_{\ell \ell'} =
    L_{\ell\ell'}$ without affecting the sum.
    equation~\eqref{eq:original-from-perp} follows.  It implies
    $K_{\ell\ell'} = \< X_\ell , X_{\ell'} \>_{\Ltwo} = \sum_{j=1}^k
    L_{\ell j} L_{\ell' j} = (\bL\bL^\top)_{\ell \ell'}$, whence $\bK
    = \bL\bL^\top$.
\end{proof}

We next define the Cholesky pseudo-inverse $\bL^\ddagger$, which
behaves, in certain ways, like the inverse of $\bL$ even when $\bL$ is
singular.  By equation~\eqref{eq:seq-regr}, we have the inclusion
$X_\ell^\perp \in \spn\{X_{\ell'}\}_{\ell' \leq \ell}$.  Because
$X_{\ell'+1}$ is linearly independent of $\{ X_{\ell''} \}_{\ell''
  \leq \ell'}$ if and only if $\ell'+1$ is innovative, $\{X_{\ell'}:
\ell' \leq \ell,\, \text{$\ell'$ innovative}\}$ is a basis for
$\spn\{X_{\ell'}\}_{\ell' \leq \ell}$.  We can thus uniquely write
$X_\ell^\perp$ as a linear combination
\begin{equation}
\label{eq:perp-from-innovatives}
    X_\ell^\perp = \sum_{\substack{\ell' \leq \ell \\ \text{$\ell'$
          innovative}}} L_{\ell \ell'}^\ddagger X_{\ell'},
\end{equation}
which we take to define $L_{\ell \ell'}^\ddagger$ for innovative
$\ell' \leq \ell$.  Define $L_{\ell \ell'}^\ddagger = 0$ for $\ell'$
predictable or $\ell' > \ell$.  Then $\bL^\ddagger$ is
lower-triangular, innovation-compatible with $\bK$, and
\begin{equation}
\label{eq:perp-from-cholinv}
    X_\ell^\perp = \sum_{\ell' = 1}^k L_{\ell \ell'}^\ddagger
    X_{\ell'}.
\end{equation}
The Cholesky pseudo-inverse has the following properties.
\begin{lemma}[Cholesky pseudo-inverse]
\label{lem:cholesky-pseudo-inverse}
    The matrix $\bL^\ddagger \in \reals^{k \times k}$
    is lower-triangular, has non-negative entries on its diagonal, and is innovation-compatible with $\bK$.
    If $\ell$ is predictable, then the $\ell^{\text{th}}$ row of $\bL^\ddagger$ is 0.
    The sub-matrix $\bL_\ell^\ddagger$ depends on $\bK$ only via $\bK_\ell$.
\end{lemma}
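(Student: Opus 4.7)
The plan is to verify each claim directly from the construction, following the same bookkeeping template used in the proof of Lemma~\ref{lem:cholesky}. The lower-triangularity and innovation-compatibility are essentially built into the definition: $L_{\ell\ell'}^\ddagger = 0$ is imposed by fiat whenever $\ell' > \ell$ (lower triangularity) and whenever $\ell'$ is predictable (innovation compatibility, per~\Cref{def:predictable-innovative}). So nothing substantive is required for these two conclusions.

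For the diagonal non-negativity, I would split on whether $\ell$ is predictable or innovative. In the predictable case $L_{\ell\ell}^\ddagger = 0$ by definition. In the innovative case, I would start from the sequential-regression decomposition $X_\ell = X_\ell^\| + \alpha_\ell X_\ell^\perp$ with $\alpha_\ell > 0$ and $X_\ell^\| \in \spn\{X_{\ell'}\}_{\ell'<\ell}$. Rearranging yields $X_\ell^\perp = \alpha_\ell^{-1}(X_\ell - X_\ell^\|)$, and expressing $X_\ell^\|$ in the basis $\{X_{\ell'} : \ell' < \ell,\ \ell'\text{ innovative}\}$ of $\spn\{X_{\ell'}\}_{\ell'<\ell}$ then identifies $L_{\ell\ell}^\ddagger = \alpha_\ell^{-1} > 0$ as the coefficient of $X_\ell$ in the expansion~\eqref{eq:perp-from-innovatives}, which is unique by the linear independence of the innovative $X_{\ell'}$'s.

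For the vanishing-row property, if $\ell$ is predictable then $X_\ell^\perp = 0$, and the defining equation $0 = X_\ell^\perp = \sum_{\ell'\leq\ell,\ \text{innovative}} L_{\ell\ell'}^\ddagger X_{\ell'}$ together with the same linear independence forces $L_{\ell\ell'}^\ddagger = 0$ for every innovative $\ell' \leq \ell$; combined with the convention for predictable $\ell'$ and for $\ell' > \ell$, the whole $\ell^{\text{th}}$ row is zero. Finally, to see that $\bL_\ell^\ddagger$ depends on $\bK$ only through $\bK_\ell$, I would observe that the entries $L_{\ell'\ell''}^\ddagger$ with $\ell',\ell'' \leq \ell$ are determined through~\eqref{eq:perp-from-innovatives} by the joint law of $(X_1,\ldots,X_\ell) \sim \normal(0,\bK_\ell)$, while the innovative/predictable status of each $\ell' \leq \ell$ is determined by $\rank(\bK_{\ell'})$ for $\ell' \leq \ell$, which is itself a function of $\bK_\ell$.

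There is no real obstacle here; the statement is a direct unwrapping of the construction, and the only point requiring care is invoking linear independence of the innovative $X_{\ell'}$'s to pin down uniqueness in the predictable case for the vanishing-row property.
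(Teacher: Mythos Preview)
Your proposal is correct and follows essentially the same approach as the paper. The only minor variation is in verifying diagonal positivity in the innovative case: you read off $L_{\ell\ell}^\ddagger = \alpha_\ell^{-1}$ directly from rearranging $X_\ell = X_\ell^\| + \alpha_\ell X_\ell^\perp$, while the paper takes the inner product of~\eqref{eq:perp-from-cholinv} with $X_\ell^\perp$ to obtain $L_{\ell\ell}^\ddagger L_{\ell\ell} = 1$; both are equally direct.
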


\begin{proof}[Proof of~\Cref{lem:cholesky-pseudo-inverse}]
We have already shown that $\bL^\ddagger$ is lower triangular and
innovation-compatible with $\bK$.  Because
$\<X_\ell^\perp,X_{\ell'}\>_{\Ltwo} = 0$ for $\ell' < \ell$ and
$L_{\ell \ell'}^\ddagger = 0$ for $\ell' > \ell$, taking the inner
product of equation~\eqref{eq:perp-from-cholinv} with $X_\ell^\perp$
gives $\| X_\ell^\perp \|_{\Ltwo}^2 = L_{\ell \ell}^\ddagger \< X_\ell
, X_\ell^\perp \>_{\Ltwo} = L_{\ell \ell}^\ddagger L_{\ell \ell} \|
X_\ell^\perp \|_{\Ltwo}^2$.  If $\ell$ is predictable, we have
$L_{\ell\ell}^\ddagger = 0$, and if $\ell$ is innovative, we must have
$L_{\ell\ell}^\ddagger > 0$ because $\| X_\ell^\perp \|_{\Ltwo}^2 = 1$
and $L_{\ell\ell} \geq 0$.  Thus, $\bL^\ddagger$ has non-negative
entries on its diagonal.  If $\ell$ is predictable, then $X_\ell^\perp
= 0$, whence equation~\eqref{eq:perp-from-innovatives} is solved by
$L_{\ell \ell'}^\ddagger = 0$ for all $\ell' \leq \ell$ and $\ell'$
innovative.  Because the decomposition in
equation~\eqref{eq:perp-from-innovatives} is unique, the
$\ell^\text{th}$ row of $\bL^\ddagger$ is 0 if $\ell$ is predictable.
The sub-matrix $\bL_\ell^\ddagger$ depends on $\bK$ only via
$\bK_\ell$ because its construction relies only on the random
variables $(X_1,\ldots,X_\ell) \sim \normal(0,\bK_\ell)$.
\end{proof}

Define the pseudo-inverse $\bK^\ddagger \defn (\bL^\ddagger)^\top
\bL^\ddagger$, as well as the $k$-dimensional matrices
\begin{equation}
\begin{gathered}
    \id_{\bK} \be_\ell = \id_{\bK}^\perp \be_\ell = \bzero \quad \text{if $\ell$ is predictable},
    \\
    \id_{\bK} \bx = \bx \quad \text{if $\bx \in \range(\bK)$}
    \qquad\text{and}\qquad
    \id_{\bK}^\perp \be_\ell = \be_\ell \quad \text{if $\ell$ is innovative}.
\end{gathered}
\end{equation}
The next lemma establishes the sense in which Cholesky pseudo-inverse
$\bL^\ddagger$ behaves like an inverse of $\bL$, and the
pseudo-inverse $\bK^\ddagger$ behaves like an inverse of $\bK$.

\begin{lemma}[Pseudo-inverse identities]
\label{lem:cholesky-inverse-identities}
    The Cholesky pseudo-inverse of $\bL$ satisfies
    \begin{equation}
        \bL\bL^{\ddagger} = \id_{\bK}, 
        \qquad
        \bL^{\ddagger} \bL = \id_{\bK}^\perp,
        \qquad
        \bL \id_{\bK}^\perp = \id_{\bK} \bL = \bL,
        \qquad
        \bL^\ddagger \id_{\bK} = \id_{\bK}^\perp \bL^\ddagger = \bL^\ddagger.
    \end{equation} 
    The pseudo-inverse of $\bK$ satisfies
    \begin{equation}
    \label{eq:K-pseudo-inverse}
        \bK\bK^\ddagger = \id_{\bK},
        \qquad
        \bK^\ddagger\bK = \id_{\bK}^\top.
    \end{equation}
    The matrix $\id_{\bK}^\perp$ is diagonal, so $\id_{\bK}^\perp = (\id_{\bK}^\perp)^\top$.
    The matrix $\id_{\bK}$ is the unique matrix with the properties that $\id_{\bK}^\top \be_\ell = \be_\ell$ if $\ell$ is innovative,
    and $\id_{\bK}^\top \bx = \bzero$ if $\bx \in \range(\bK)^\perp$.
\end{lemma}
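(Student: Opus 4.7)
The plan is to leverage the two random-vector identities already available from Lemmas \ref{lem:cholesky} and \ref{lem:cholesky-pseudo-inverse}: equation \eqref{eq:original-from-perp} gives $\bX = \bL \bX^\perp$, and equation \eqref{eq:perp-from-cholinv} gives $\bX^\perp = \bL^\ddagger \bX$, where $\bX = (X_1,\ldots,X_k)^\top \sim \normal(0,\bK)$ and $\bX^\perp = (X_1^\perp,\ldots,X_k^\perp)^\top$. By construction, $X_\ell^\perp$ vanishes when $\ell$ is predictable and the non-vanishing $X_\ell^\perp$ are orthonormal, so $\Cov(\bX^\perp)$ is the diagonal matrix with $1$'s at innovative positions and $0$'s at predictable positions. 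This immediately establishes that $\id_{\bK}^\perp$ is diagonal (hence symmetric) and equals $\Cov(\bX^\perp)$.

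Next I would dispose of the four ``compatibility'' identities in the third and fourth displayed equations. Since $\bL$ has zero columns at predictable indices (Lemma \ref{lem:cholesky}), applying $\bL \id_{\bK}^\perp$ and $\bL$ to each $\be_\ell$ yields the same result in both the predictable and innovative cases, giving $\bL \id_{\bK}^\perp = \bL$. Symmetrically, since $\bL^\ddagger$ has zero rows at predictable indices (Lemma \ref{lem:cholesky-pseudo-inverse}), $\id_{\bK}^\perp \bL^\ddagger = \bL^\ddagger$. For $\id_{\bK} \bL = \bL$, note that each column of $\bL$ is either zero or belongs to $\range(\bL) = \range(\bK)$, on which $\id_{\bK}$ acts as the identity. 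For $\bL^\ddagger \id_{\bK} = \bL^\ddagger$, the vector $\id_{\bK}\be_\ell - \be_\ell$ lies in $\spn\{\be_j : j \text{ predictable}\}$ for every $\ell$ (directly from the defining properties of $\id_{\bK}$), and this subspace is annihilated by $\bL^\ddagger$.

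The core step is $\bL \bL^\ddagger = \id_{\bK}$ and $\bL^\ddagger \bL = \id_{\bK}^\perp$. Combining the two random-vector identities gives $\bL \bL^\ddagger \bX = \bX$ almost surely, hence $(\bL \bL^\ddagger - \bI)\bK(\bL \bL^\ddagger - \bI)^\top = 0$, which forces $\bL \bL^\ddagger - \bI$ to vanish on $\range(\bK)$. Since $\bL^\ddagger$ has zero columns at predictable indices, $\bL \bL^\ddagger$ also has zero columns there, so $\bL \bL^\ddagger$ satisfies both defining properties of $\id_{\bK}$. To conclude equality, I would verify that these two properties determine $\id_{\bK}$ uniquely by showing the decomposition $\reals^k = \range(\bK) \oplus \spn\{\be_\ell : \ell \text{ predictable}\}$; trivial intersection follows from an induction on the innovative indices using the positive-diagonal entries $L_{\ell\ell} > 0$ at innovative $\ell$, and dimensions add to $k$. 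The analogous argument applied to $\bL^\ddagger \bL \bX^\perp = \bX^\perp$ with $\Cov(\bX^\perp) = \id_{\bK}^\perp$ yields $\bL^\ddagger \bL \be_\ell = \be_\ell$ for innovative $\ell$, while $\bL \be_\ell = 0$ for predictable $\ell$ handles the remaining case, giving $\bL^\ddagger \bL = \id_{\bK}^\perp$.

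The identities for $\bK \bK^\ddagger$ and $\bK^\ddagger \bK$ follow by direct computation: $\bK \bK^\ddagger = \bL \bL^\top (\bL^\ddagger)^\top \bL^\ddagger = \bL (\bL^\ddagger \bL)^\top \bL^\ddagger = \bL \id_{\bK}^\perp \bL^\ddagger = \bL \bL^\ddagger = \id_{\bK}$, using the diagonality of $\id_{\bK}^\perp$ and $\bL \id_{\bK}^\perp = \bL$; analogously $\bK^\ddagger \bK = (\bL \bL^\ddagger)^\top = \id_{\bK}^\top$. Finally, for the uniqueness characterization, the stated conditions on $\id_{\bK}^\top$ translate by transposition to the following conditions on $\id_{\bK}$: its $\ell^\text{th}$ row is $\be_\ell^\top$ at each innovative $\ell$ and its range is contained in $\range(\bK)$. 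For any candidate matrix, each column lies in $\range(\bK) = \range(\bL)$ and can thus be written $\bL \bw$ with $\bw$ supported on innovative indices; the entries at innovative rows produce a lower-triangular system in $\bw$ with positive diagonal $L_{\ell\ell}$, which determines $\bw$ and hence the column uniquely. The main technical obstacle, as above, is the complementarity of $\range(\bK)$ with $\spn\{\be_\ell : \ell \text{ predictable}\}$ and the corresponding uniqueness claim; once that is in hand, every other identity reduces to bookkeeping with the column- and row-vanishing properties of $\bL$ and $\bL^\ddagger$.
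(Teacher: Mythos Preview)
Your proposal is correct and follows essentially the same approach as the paper: both proofs hinge on the random-vector identities $\bX = \bL\bX^\perp$ and $\bX^\perp = \bL^\ddagger\bX$ together with the spanning decomposition $\reals^k = \range(\bK) \oplus \spn\{\be_\ell : \ell \text{ predictable}\}$, and both use the column- and row-vanishing properties of $\bL$ and $\bL^\ddagger$ at predictable indices to handle the remaining compatibility identities. Your version is slightly more explicit in a few places (the covariance computation, the direct-sum verification, and the uniqueness characterization), but the structure and key ideas are the same.
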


\begin{proof}[Proof of~\Cref{lem:cholesky-inverse-identities}]
    If $\ell$ is predictable, then the $\ell^{\text{th}}$ column of
    both $\bL$ and $\bL^\ddagger$ are 0 by innovation compatibility,
    whence $\bL\bL^{\ddagger}\be_\ell = \bL \bzero = \bzero$ and
    $\bL^{\ddagger}\bL\be_\ell = \bL^\ddagger \bzero = \bzero$.  By
    equations~\eqref{eq:original-from-perp} and
    \eqref{eq:perp-from-cholinv}, almost surely
    \begin{equation}
        X_\ell
            =
            \sum_{\ell' = 1}^k L_{\ell\ell'} X_{\ell'}^\perp
            =
            \sum_{\ell' = 1}^k L_{\ell\ell'} \sum_{\ell'' = 1}^k L_{\ell' \ell''}^\ddagger X_{\ell''},
        \quad
        \text{whence }
        \begin{pmatrix}
            X_1 \\[1pt]
            \vdots \\[1pt]
            X_k
        \end{pmatrix}
        =
        \bL\bL^\ddagger
        \begin{pmatrix}
            X_1 \\[1pt]
            \vdots \\[1pt]
            X_k
        \end{pmatrix}
        =
        \bL\bL^\ddagger\bL
        \begin{pmatrix}
            X_1^\perp \\[1pt]
            \vdots \\[1pt]
            X_k^\perp
        \end{pmatrix}.
    \end{equation}
    The support of $(X_1,\ldots,X_k)^\top$ is exactly $\range(\bK)$, whence $\bL\bL^\ddagger \bx = \bx$ for all $\bx \in \range(\bK)$.
    Note that $\{ \be_\ell : \text{$\ell$ predictable} \} \cup \range(\bK)$ span $\reals^k$ because $\range(\bK) = \range(\bL)$,
    and $\bL$ is lower triangular with 0's on the diagonal only in predictable indices.
    Thus, $\bL\bL^\ddagger$ and $\id_{\bK}$ act by matrix multiplication equivalently on a set spanning $\reals^k$, so are the same.
    Likewise,
    almost surely
    \begin{equation}
        X_\ell^\perp
            =
            \sum_{\ell' = 1}^k L_{\ell\ell'}^\ddagger X_{\ell'}
            =
            \sum_{\ell' = 1}^k L_{\ell\ell'}^\ddagger \sum_{\ell'' = 1}^k L_{\ell' \ell''} X_{\ell''}^\perp,
        \quad
        \text{whence }
        \begin{pmatrix}
            X_1^\perp \\[1pt]
            \vdots \\[1pt]
            X_k^\perp
        \end{pmatrix}
        =
        \bL^\ddagger\bL
        \begin{pmatrix}
            X_1^\perp \\[1pt]
            \vdots \\[1pt]
            X_k^\perp
        \end{pmatrix}
        =
        \bL^\ddagger\bL\bL^\ddagger
        \begin{pmatrix}
            X_1 \\[1pt]
            \vdots \\[1pt]
            X_k
        \end{pmatrix}.
    \end{equation}
    The support of $(X_1^\perp,\ldots,X_k^\perp)^\top$ is exactly
    $\spn\{\be_\ell : \text{$\ell$ innovative}\}$, whence
    $\bL^\ddagger\bL\be_\ell = \be_\ell$ for all $\ell$ innovative.
    Thus, $\bL^\ddagger\bL$ and $\id_{\bK}^\perp$ act by matrix
    multiplication equivalently on a set spanning $\reals^k$, so are
    the same.  For all predictable $\ell$, $\bL\id_{\bK}^\perp
    \be_\ell = \id_{\bK} \bL \be_\ell = \bL \be_\ell = \bzero$ because
    $\id_{\bK}^\perp \be_\ell = \bL \be_\ell = \bzero$.  For
    innovative $\ell$, $\bL \id_{\bK}^\perp \be_\ell = \bL \be_\ell$.
    Thus $\bL\id_{\bK}^\perp = \bL$.  Further, $\bL \be_\ell \in
    \range(\bK)$, whence $\id_{\bK} \bL \be_\ell = \bL \be_\ell$.
    Thus, $\bL = \id_{\bK} \bL$.  For all predicable $\ell$,
    $\bL^\ddagger \id_{\bK} \be_\ell = \id_{\bK}^\perp \bL^\ddagger
    \be_\ell = \bL^\ddagger \be_\ell = \bzero$ because $\id_{\bK}
    \be_\ell = \bL^\ddagger \be_\ell = \bzero$.  For $\bx \in
    \range(\bK)$, $\bL^\ddagger \id_{\bK} \bx = \bL^\ddagger \bx$.
    Thus, $\bL^\ddagger \id_{\bK}$ and $\bL^\ddagger$ act equivalently
    by matrix multiplication on a basis, so are the same Further,
    $\bL^\ddagger \bx \in \spn\{\be_\ell : \text{$\ell$
      innovative}\}$, whence $\id_{\bK}^\perp \bL^\ddagger \bx =
    \bL^\ddagger \bx$.  Thus $\bL^\ddagger$ and $\id_{\bK}^\perp
    \bL^\ddagger$ act equivalently by matrix multiplication on a
    basis, so are the same.  The identities $\bK\bK^\ddagger =
    \id_{\bK}$ and $\bK^\ddagger\bK = \id_{\bK}^\top$ are consequences
    of the above identities.

    By its definition, $\id_{\bK}^\perp$ is the diagonal matrix that
    has entries $(\id_{\bK}^\perp)_{\ell\ell} = 1$ for innovative
    $\ell$ and $(\id_{\bK}^\perp)_{\ell\ell} = 0$ for predictable
    $\ell$.  Note that for any $\ell$ innovative, $\ell'$ predictable,
    $\bx \in \range(\bK)$, and $\bx' \in \range(\bK)^\perp$, we have
    $\be_{\ell'}^\top \id_{\bK}^\top \be_\ell = \bzero^\top \be_\ell =
    0$, $\bx^\top \id_{\bK}^\top \be_\ell = \bx^\top \be_\ell =
    x_\ell$, $\be_{\ell'}^\top \id_{\bK}^\top \bx' = \bzero^\top \bx'
    = 0$, $\bx^\top \id_{\bK}^\top \bx' = \bx^\top \bx' = 0$.  Because
    $\{ \be_{\ell'} : \text{$\ell'$ predictable} \} \cup \range(\bK)$
    spans $\reals^k$, we conclude $\id_{\bK}^\top \be_\ell = \be_\ell$
    and $\id_{\bK}^\top \bx = \bzero$.
\end{proof}

\subsection{Explicit construction of the Cholesky decomposition}
\label{sec:chol-explicit}

At times we will need to quantify the continuity of $\bL$, $\bL^\ddagger$, and $\bK^\ddagger$ in $\bK$.
This is most easily done by explicit computation,
which is carried out here for reference.

Let $\sL: \S_{\geq 0}^k \rightarrow \reals^{k \times k}$,
$\sL^\ddagger:\S_{\geq 0}^k \rightarrow \reals^{k \times k}$,
and $\sK^\ddagger:\S_{\geq 0}^k \rightarrow \S_{\geq 0}^k$
be the function which takes $\bK \in \S_{\geq 0}^k$ to the Cholesky matrix $\bL$,
Cholesky pseudo-inverse $\bL^\ddagger$,
and pseudo-inverse $\bK^\ddagger$.
\begin{lemma}[Explicit Cholesky decomposition]
\label{lem:cholesky-explicit}
    The functions $\sL$ and $\sL^\ddagger$ are defined inductively in $\ell$ by
    \begin{equation}
    \label{eq:L-explicit}
    \begin{gathered}
        \sL(\bK)_{\ell \ell'}
            =
            \begin{cases}
                \sum_{\ell''=1}^{\ell'} \sL^\ddagger(\bK)_{\ell' \ell''} K_{\ell\ell''}
                \quad &\text{if $\ell' < \ell$},
            \\[3pt]
                \sqrt{
                    K_{\ell\ell} - \sum_{j = 1}^{\ell-1} \sL(\bK)_{\ell \ell'}^2
                }
                \quad &\text{if $\ell' = \ell$},                        
            \end{cases}
        \\[5pt]
        \sL^\ddagger(\bK)_{\ell\ell'}
            =
            \begin{cases}
                -\frac{1}{\sL(\bK)_{\ell\ell}} \sum_{\ell''=\ell'}^{\ell-1} \sL(\bK)_{\ell \ell''}\sL^\ddagger(\bK)_{\ell''\ell'}
                \quad &\text{if $\ell' < \ell$},
            \\[3pt]
                \frac{1}{\sL(\bK)_{\ell\ell}} \quad&\text{if $\ell' = \ell$ and $\ell$ is innovative},
            \\[3pt]
                0\quad &\text{if $\ell$ is predictable}.
            \end{cases}
    \end{gathered}
    \end{equation}
    Moreover, $\sK^\ddagger(\bK) = \sL^\ddagger(\bK)^\top \sL^\ddagger(\bK)$.
\end{lemma}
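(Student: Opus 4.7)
The plan is to verify the formulas inductively on $\ell$, exploiting the defining relations of $\bL$ and $\bL^\ddagger$ from the preceding lemmas, namely
\begin{equation*}
L_{\ell\ell'} = \< X_\ell, X_{\ell'}^\perp\>_{\Ltwo}, \qquad X_\ell = \sum_{\ell'=1}^\ell L_{\ell\ell'} X_{\ell'}^\perp, \qquad X_\ell^\perp = \sum_{\ell'=1}^\ell L^\ddagger_{\ell\ell'} X_{\ell'}.
\end{equation*}
Sub-matrices $\bL_\ell$ and $\bL_\ell^\ddagger$ depend only on $\bK_\ell$ (\Cref{lem:cholesky,lem:cholesky-pseudo-inverse}), so induction on $\ell$ is legitimate. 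The base case $\ell=1$ is immediate: if $1$ is innovative then $X_1^\perp = X_1/\sqrt{K_{11}}$, giving $L_{11}=\sqrt{K_{11}}$ and $L_{11}^\ddagger = 1/L_{11}$; if $1$ is predictable, then $K_{11}=0$, and both $L_{11}$ and $L_{11}^\ddagger$ vanish, matching the formulas.

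For the inductive step, assume the formulas hold for all indices below $\ell$. For $\ell' < \ell$, expanding $X_{\ell'}^\perp$ in the basis $\{X_{\ell''}\}_{\ell''\leq \ell'}$ via the inductive formula for $\sL^\ddagger$ gives
\begin{equation*}
L_{\ell\ell'} = \<X_\ell, X_{\ell'}^\perp\>_{\Ltwo} = \sum_{\ell''=1}^{\ell'} L^\ddagger_{\ell'\ell''}\, \<X_\ell, X_{\ell''}\>_{\Ltwo} = \sum_{\ell''=1}^{\ell'} L^\ddagger_{\ell'\ell''}\, K_{\ell\ell''},
\end{equation*}
which is the stated formula. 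For the diagonal entry, write the orthogonal decomposition $X_\ell = X_\ell^\| + \alpha_\ell X_\ell^\perp$, observing that $X_\ell^\| = \sum_{\ell'<\ell} L_{\ell\ell'} X_{\ell'}^\perp$ because the innovative $X_{\ell'}^\perp$ form an orthonormal basis of $\spn\{X_{\ell''}\}_{\ell''<\ell}$ while predictable $X_{\ell'}^\perp$ vanish. By Pythagoras, $\alpha_\ell^2 = K_{\ell\ell} - \sum_{\ell'<\ell} L_{\ell\ell'}^2$, and since $L_{\ell\ell} = \alpha_\ell$ in both the innovative case (where $\|X_\ell^\perp\|_{\Ltwo}^2 = 1$) and the predictable case (where both sides are $0$), the stated square-root formula follows.

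For $\sL^\ddagger$, if $\ell$ is innovative then $X_\ell^\perp = \alpha_\ell^{-1}(X_\ell - X_\ell^\|) = L_{\ell\ell}^{-1}\bigl(X_\ell - \sum_{\ell'<\ell} L_{\ell\ell'} X_{\ell'}^\perp\bigr)$; substituting the inductive expansion $X_{\ell'}^\perp = \sum_{\ell''\leq \ell'} L^\ddagger_{\ell'\ell''} X_{\ell''}$ and collecting coefficients of each $X_{\ell''}$ produces exactly the recursions for $L^\ddagger_{\ell\ell}$ and $L^\ddagger_{\ell\ell'}$ with $\ell' < \ell$. If $\ell$ is predictable, \Cref{lem:cholesky-pseudo-inverse} already tells us the $\ell^{\text{th}}$ row of $\bL^\ddagger$ vanishes, matching the stated convention. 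The final identity $\sK^\ddagger = (\sL^\ddagger)^\top \sL^\ddagger$ is simply the definition of $\bK^\ddagger$ introduced before \Cref{lem:cholesky-inverse-identities}.

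Nothing in this argument is conceptually deep; the only care needed is in the predictable case, where $\alpha_\ell=0$ forbids literal division by $L_{\ell\ell}$. That is handled by the convention $L^\ddagger_{\ell,\cdot}=\bzero$ and the check that the recursion's right-hand side is well-defined precisely when $\ell$ is innovative. No other obstruction arises.
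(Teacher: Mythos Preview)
Your proposal is correct and takes essentially the same approach as the paper: induction on $\ell$, using the expansion $X_{\ell'}^\perp = \sum_{\ell''\le\ell'} L^\ddagger_{\ell'\ell''} X_{\ell''}$ to derive the off-diagonal entries of $\sL$, Pythagoras for the diagonal, and then substituting back to extract the $\sL^\ddagger$ recursion. The paper's proof is essentially identical in structure and detail, with the minor difference that it computes the diagonal as $\|X_\ell - \sum_{\ell'<\ell} L_{\ell\ell'} X_{\ell'}^\perp\|_{\Ltwo}$ directly rather than invoking the name ``Pythagoras.''
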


\begin{proof}[Proof of~\Cref{lem:cholesky-explicit}]
    First, the base case $\ell = 1$.
    We have $\sL(\bK)_{11} = \| X_1^\perp \|_{\Ltwo} = \| X_1 \|_{\Ltwo} = K_{11}^{1/2}$.
    Because $X_1^\perp = X_1 / \| X_1 \|_{\Ltwo}$ if $X_1 \neq 0$ and $L_{11}^\ddagger = 0$ otherwise,
    we have
    $\sL^\ddagger(\bK)_{11} = \frac{1}{\sL_{11}(\bK)}$ if 1 is innovative and 0 if 1 is predictable.

    Next, assume we have defined $\sL(\bK)_{\ell'\ell''}$ and
    $\sL^\ddagger(\bK)_{\ell'\ell''}$ for $\ell'' \leq \ell' < \ell$.
    Then, for $\ell' < \ell$, $ \sL(\bK)_{\ell\ell'} = \< X_\ell ,
    X_{\ell'}^\perp \>_{\Ltwo} = \Big\< X_\ell ,
    \sum_{\ell''=1}^{\ell'} \sL^\ddagger(\bK)_{\ell' \ell''}
    X_{\ell''} \Big\>_{\Ltwo} = \sum_{\ell''=1}^{\ell'}
    \sL^\ddagger(\bK)_{\ell' \ell''} K_{\ell\ell''} $, and $
    \sL(\bK)_{\ell\ell} = \Big\| X_\ell - \sum_{\ell'=1}^{\ell-1}
    \<X_\ell,X_{\ell'}^\perp\>_{\Ltwo}X_{\ell'}^{\perp} \Big\|_{\Ltwo}
    = \sqrt{ K_{\ell\ell} - \sum_{j = 1}^{\ell-1} \sL(\bK)_{\ell
        \ell'}^2 } $.  Moreover,
 \begin{equation}
   X_k^\perp = \frac{X_\ell - \sum_{\ell'=1}^{\ell-1}
     \<X_\ell,X_{\ell'}^\perp\>_{\Ltwo}X_{\ell'}^{\perp}}{\|X_\ell
     - \sum_{\ell'=1}^{\ell-1}
     \<X_\ell,X_{\ell'}^\perp\>_{\Ltwo}X_{\ell'}^{\perp}\|_{\Ltwo}}
   = \frac{X_\ell - \sum_{\ell'=1}^{\ell-1}
     \sL(\bK)_{\ell\ell'}\sum_{\ell''=1}^{\ell'}
     \sL^\ddagger(\bK)_{\ell'\ell''}
     X_{\ell''}}{\sL(\bK)_{\ell\ell}},
 \end{equation}
 whence the form given for $\sL^\ddagger(\bK)_{\ell\ell'}$ holds by
 comparison with equation~\eqref{eq:perp-from-cholinv}.
\end{proof}


\section{Reduction to Gordon standard form}

The first step in proving~\Cref{thm:exact-asymptotics} is to rewrite
the optimization problems~\eqref{eq:propensity-fit}
and~\eqref{eq:outcome-fit} in a min-max form that is more immediately
amenable to the application of the sequential Gordon inequality.  We
likewise rewrite the fixed point equations~\eqref{eq:regr-fixed-pt} in
an equivalent form that is easier to work with when applying the
sequential Gordon inequality.  Without loss of generality, we assume
that $\bSigma = \id_p$ throughout our proof.
(See~\Cref{sec:independent-covariates} for justification.)

\subsection{Reindexing and simplification to independent covariates}

From Assumption~A1, the function $\pi$ is differentiable and strictly
increasing.  Define the limits \mbox{$\pi_\infty \defn \lim_{\eta
    \rightarrow \infty} \pi(\eta)$} and \mbox{$\pi_{-\infty} =
  \lim_{\eta \rightarrow -\infty} \pi(\eta)$.}  There is a probability
distribution on $\reals$ with density with respect to Lebesgue measure
given by $\pi'(\,\cdot\,)/(\pi_\infty - \pi_{-\infty})$.  Let
$\beps_\prop$ have components $\eps_{\prop,i}$ distributed iid from
this measure.  Let $\beps_\prop'$ be independent of $\beps_\prop$ have
iid components $\eps_{\prop,i}' \in \{0,1,\circ\}$ with probabilities
$\P(\eps_{\prop,i}' = 0) = \pi_{-\infty}$, $\P(\eps_{\prop,i}' = 1) =
1 - \pi_\infty$, and $\P(\eps_{\prop,i}' = \circ) = \pi_\infty -
\pi_{-\infty}$.  We can realize the model~\eqref{eq:model} by setting
\begin{equation}
\label{eq:prop-functional-form}
 \action_i = \indic\{ \eps_{\prop,i}' = 1\} + \indic\{ \eps_{\prop,i}'
 = \circ\} \indic\{ \eps_{1,i} \leq \theta_{\prop,0} + \< \bx_i ,
 \btheta_{\prop} \> , \}.
\end{equation}

The proof of~\Cref{thm:exact-asymptotics} has an inductive
structure. For the purposes of the proof, it is convenient to use an
alternative notation in which random-design quantities are indexed by
the order in which they appear in this induction. Define
\begin{equation}
\begin{aligned}
    (\theta_{1,0},\btheta_1) &\defn (\theta_{\prop,0},\btheta_\prop),
    \quad
    &
    (\beps_1,\beps_1') &\defn (\beps_\prop,\beps_\prop'),
    &
    (\theta_{2,0},\btheta_2) &\defn (\theta_{\out,0},\btheta_\out),
    \quad
    & 
    \beps_2 &\defn \beps_{\out},
  \\ 
  \by_1 &\defn \ba, 
  \quad
  &
  \bmeta_1 &\defn \bmeta_\prop,
  \quad
  & \by_2 &\defn \by, 
  &\quad
    \bmeta_2 &\defn \bmeta_\out.
\end{aligned}
\end{equation}
We will sometimes go back and forth between this notation and that in equation~\eqref{eq:rnd-des-objects}, and will attempt to do so
in a way that minimizes potential confusion.  


\subsection{Min-max formulation of primary optimization}

Our
first step is to rewrite the optimizations in
equations~\eqref{eq:propensity-fit} and \eqref{eq:outcome-fit} as saddle point problems of the
form:
\begin{equation}
\label{eq:min-max}
\begin{gathered}
    \min_{\substack{v_0 \in \reals \\ \bv \in \reals^p}} 
    \max_{\bu \in \reals^n}
        \Big\{
            \bu^\top \bA \bv 
                + 
                \phi_k(\bu;v_0,\bv;)
        \Big\},
\end{gathered}
\end{equation}
where $\bA = \bX - \ones \bmu_{\sx}^\top$ is the mean-centered version of $\bX$,
with entries $A_{ij} \iid \normal(0,1)$.
It is also convenient to define the true parameters $(\theta_{\prop,0},\btheta_\prop)$ and $(\theta_{\out,0},\btheta_\out)$ as well as the mean and confounded mean estimates $\hbmu_{\sx}$, $\hbmu_{\sx,\cfd}$ as solutions to saddle point problems.
Thus,
define
\begin{equation}
\label{eq:objectives}
\begin{gathered}
    \phi_1(\bu;v_0,\bv) \defn
        -
        \indic\{\bu = \bzero \}
        + 
        \indic\{ (v_0,\bv) = (\theta_{1,0},\btheta_1)\},
    \\
    \phi_2(\bu;v_0,\bv) \defn
        -
        \indic\{ \bu = \bzero \}
        + 
        \indic\{ (v_0,\bv) = (\theta_{2,0},\btheta_2) \},
    \\
    \phi_3(\bu;v_0,\bv) \defn
        -\indic\{ \bu = -\ones /n \}
        +\indic\{ (v_0,\bv) = (0,\bzero) \},
    \\
    \phi_4(\bu;v_0,\bv) \defn
        -\indic\{ \bu = -\ba /n \}
        +\indic\{ (v_0,\bv) = (0,\bzero) \},
    \\
    \phi_5(\bu;v_0,\bv) \defn
        \< \bu , \ones \>(v_0 + \< \bmu_{\sx},\bv\>)
        - \ell_5^*(\bu;\bw,\by_1,\by_2)
        + \Omega_5(\bv),
    \\
    \phi_6(\bu;v_0,\bv) \defn
        \<\bu , \ones \>(v_0 + \< \bmu_{\sx},\bv\>)
        -\ell_6^*(\bu;\bw,\by_1,\by_2)
        + \Omega_6(\bv),
\end{gathered}
\end{equation}
where 
\begin{equation}
\label{eq:ellk*-def}
\begin{gathered}
    \ell_5^*(\bu;\bw,\by_1,\by_2)
        \defn
        \frac{1}{n}
        \sum_{i=1}^n \ell_{\prop}^*(nu_i;y_{1,i}),
    \qquad
    \ell_6^*(\bu;\bw,\by_1,\by_2)
        \defn
        \< \bu , \by_2 \> 
        +
        \frac{n}2
        \sum_{i=1}^n w_i^{-1} \frac{u_i^2}{y_{1,i}},
    \\
    \Omega_5(\bv) \defn \Omega_{\prop}(\bv),
    \qquad
    \Omega_6(\bv) \defn \Omega_{\out}(\bv),
\end{gathered}
\end{equation}
and we adopt the convention that $u\mapsto u^2/0$ is the convex indicator function $\indic\{u = 0\}$.
Here $\ell_\prop^*$ is the Fenchel-Legendre conjugate of $\ell_\prop$ in its first argument :
\begin{equation}
    \ell_\prop^*(nu_i;y_{1,i})
        \defn
        \sup_{\eta \in \reals}
        \Big\{
            nu_i\eta - \ell_\prop(\eta;y_{1,i})
        \Big\}.
\end{equation}
The min-max problems in equation~\eqref{eq:min-max} are referred to as the \emph{primary optimizations},
and the objectives as the \emph{primary objectives}.
We denote the saddle point of the $k^\text{th}$ primary optimization by $(\bu_k^{\PO};v_{k,0}^{\PO},\bv_k^{\PO})$,
where ``$\PO$'' denotes ``primary optimization.''
By Fenchel-Legendre duality,
\begin{equation}
\label{eq:po-to-regr-cov}
\begin{aligned}
    (v_{1,0}^{\PO},\bv_1^{\PO}) &= (\theta_{\prop,0},\btheta_\prop),
        \qquad&
        (v_{2,0}^{\PO},\bv_2^{\PO}) &= (\theta_{\prop,0},\btheta_\out),
    \\
    (v_{3,0}^{\PO},\bv_3^{\PO}) &= (0,\bzero),
        \qquad&
        (v_{4,0}^{\PO},\bv_4^{\PO}) &= (0,\bzero),
    \\
    (v_{5,0}^{\PO},\bv_5^{\PO}) &= (\htheta_{\prop,0},\hbtheta_\prop),
    \qquad&
    (v_{6,0}^{\PO},\bv_6^{\PO}) &= (\htheta_{\out,0},\hbtheta_\out),
\end{aligned}
\end{equation}
and
\begin{equation}
\label{eq:uPO-identity}
\begin{aligned}
    \bu_1^{\PO} &= \bzero,
        \quad&
        \bu_2^{\PO} &= \bzero,
    \\
    \bu_2^{\PO} &= -\ones/n,
        \quad&
        \bu_2^{\PO} &= -\ba/n = -\by_1/n,
    \\
    \bu_5^{\PO} &= \frac{1}{n} \ell_{\prop}'\big(\hbmeta_\prop;\ba\big)
        = \frac{1}{n} \hbpsi_{\prop},
    \quad&
    \bu_6^{\PO} &= -\frac{1}{n} \ba \odot \bw \odot \big(\by - \hbmeta_\out \big)
        = 
        \frac{1}{n} \hbpsi_{\out},
\end{aligned}
\end{equation}
where $\ell_\prop'$ is applied coordinate-wise.
\Cref{thm:exact-asymptotics} follows from an exact asymptotic
characterization of $(v_{1,0}^{\PO},\bv_1^{\PO})$,
$(v_{2,0}^{\PO},\bv_2^{\PO})$, $(v_{3,0}^{\PO},\bv_3^{\PO})$,
$(v_{4,0}^{\PO},\bv_4^{\PO})$, $(v_{5,0}^{\PO},\bv_5^{\PO})$,
$(v_{6,0}^{\PO},\bv_6^{\PO})$ and $\bu_3^{\PO}$, $\bu_4^{\PO}$,
$\bu_5^{\PO}$, $\bu_6^{\PO}$.

\subsection{Definition of state evolution}

Similar to~\Cref{thm:exact-asymptotics}, the exact asymptotic
characterization of $(v_{1,0}^{\PO},\bv_1^{\PO})$,
$(v_{2,0}^{\PO},\bv_2^{\PO})$, $(v_{3,0}^{\PO},\bv_3^{\PO})$,
$(v_{4,0}^{\PO},\bv_4^{\PO})$, $(v_{5,0}^{\PO},\bv_5^{\PO})$,
$(v_{6,0}^{\PO},\bv_6^{\PO})$ and $\bu_3^{\PO}$, $\bu_4^{\PO}$,
$\bu_5^{\PO}$, $\bu_6^{\PO}$ states that these behave like analogous
quantities in two models on distinct probability spaces.  We call
these models \textit{state evolution}.  One probability space contains
Gaussian randomness $\bG^{\SE} \in \reals^{p \times 6}$ distributed
$\bG^{\SE} \sim \normal(0,\bK_g \otimes \id_p)$.  The other contains
Gaussian randomness $\bH^{\SE} \in \reals^{n \times 6}$ distributed
$\bH^{\SE} \sim \normal(0,\bK_h \otimes \id_n)$ and randomness
$\beps_1^\SE$, ${\beps_1^\SE}'$, and $\beps_2^\SE$ with the same
distribution as $\beps_1$, $\beps_1'$, and $\beps_2$, respectively.
The covariances $\bK_g \in \S_{\geq 0}^6$ and $\bK_h \in \S_{\geq
  0}^6$ are defined below.  The superscript ``$\SE$'' denotes ``state
evolution.''  On these probability spaces, we define $\bu_i^{\SE}$,
$\bv_k^{\SE}$, $k \leq 6$ by
\begin{align}
\label{eq:SE-opt}
\begin{gathered}
    \bu_k^{\SE}
        =
        \argmin_{\bu}
        \Big\{
            \frac{\zeta_{kk}^u}2\|\bu\|^2
            +
            \sum_{\ell = 1}^{k-1} \zeta_{k\ell}^u \<\bu_\ell^{\SE},\bu\>
            -
            \<\bh_k^{\SE},\bu\>
            +
            \phi_{k,u}(\bu;\bH_{k-1}^{\SE};\beps_1^{\SE},{\beps_1^{\SE}}',\beps_2^{\SE})
        \Big\},
    \\
    \bv_k^{\SE}
        =
        \argmin_{\bv}
        \Big\{
            \frac{\zeta_{kk}^v}2\|\bv\|^2
            +
            \sum_{\ell = 1}^{k-1} \zeta_{k\ell}^v \<\bv_\ell^{\SE},\bv\>
            -
            \<\bg_k^{\SE},\bv\>
            +
            \phi_{k,v}(\bv;\bG_{k-1}^{\SE})
        \Big\},
\end{gathered}
\end{align}
where $\phi_{k,u}$ and $\phi_{k,v}$ are given by
\begin{subequations}
\label{eq:SE-penalties}
\begin{equation}
\begin{gathered}
\begin{aligned}
    \phi_{1,u}(\bu)
        &= 
        \indic\{\bu = \bzero\},
    \qquad&
    \phi_{2,u}(\bu;\bH_1)
        &= 
        \indic\{\bu = \bzero\},
    \\
    \phi_{3,u}(\bu;\bH_2)
        &= 
        \indic\{\bu = -\ones/n\},
    \qquad&
    \phi_{4,u}(\bu;\bH_3)
        &= 
        \indic\{\bu = -\ba/n\},
\end{aligned}
\\
\begin{aligned}
    \phi_{5,u}(\bu;\bH_4;\beps_1,\beps_2)
        &= 
        -
        \<\bu,\ones\>(\nu_{5,0}+\nu_{5,\sx})
        +
        \ell_5^*(\bu;\bw,\by_1,\by_2),
    \\
    \phi_{6,u}(\bu;\bH_5;\beps_1,\beps_2)
        &= 
        - 
        \< \bu,\ones\>(\nu_{6,0} + \nu_{6  ,\sx})
        +
        \ell_6^*(\bu;\bw,\by_1,\by_2),
\end{aligned}
\end{gathered}
\end{equation}
and
\begin{align}
\begin{gathered}
    \phi_{1,v}(\bv) 
        = 
        \indic\{ \bv = \btheta_1 \},
    \quad
    \phi_{2,v}(\bv;\bG_1) 
        =
        \indic\{ \bv = \btheta_2 \},
    \quad
    \phi_{3,v}(\bv) 
        =
        \indic\{ \bv = \bzero \},
    \quad
    \phi_{4,v}(\bv;\bG_1) 
        =
        \indic\{ \bv = \bzero \},
    \\
    \phi_{5,v}(\bv;\bG_2)
        =
        \Omega_5(\bv),
    \qquad
    \phi_{6,v}(\bv;\bG_3)
        =
        \Omega_6(\bv),
\end{gathered}
\end{align}
\end{subequations}
where $\by_1,\by_2,\bw$ on the right-hand sides of the previous displays are interpreted as functions of $\bH_k,\beps_1,{\beps_1}',\beps_2$ given by
\begin{equation}
\label{eq:yw-func-of-Heps}
\begin{gathered}
    y_{1,i} = 
        \indic\{ \eps_{1,i}' = 1\} 
        +
        \indic\{ \eps_{1,i}' = \circ\}
        \indic\{ \eps_{1,i} \leq \theta_{1,0} + \< \bmu_{\sx},\btheta_1\> + h_{1,i}\},
    \qquad
    \by_2 = (\theta_{2,0} + \< \bmu_{\sx} , \btheta_2 \>) \ones + \bh_2 + \beps_2, 
    \\
    \bw \defn w\big((\theta_{1,0} + \< \bmu_{\sx},\btheta_1\>)\ones + \bh_1\big),
\end{gathered}
\end{equation}
and the parameters $\{\zeta_{k\ell}^u\}_{1 \leq \ell \leq k \leq 6}$,
$\{ \zeta_{k\ell}^v \}_{1 \leq \ell \leq k \leq 6}$,
$\{\nu_{k,0}\}_{5\leq k \leq 6}$, $\{\nu_{k,\sx}\}_{5\leq k \leq 6}$
are defined below.  The objective
$\phi_{k,u}(\bu;\bH_{k-1}^{\SE};\beps_1^{\SE},{\beps_1^{\SE}}',\beps_2^{\SE})$
depends on the auxiliary noise
$\beps_1^{\SE},{\beps_1^{\SE}}',\beps_2^{\SE}$ and the parameters
listed in the previous sentence, but for notational simplicity this
dependence will often be left implicit.  We denote by $\bZ_u$ and
$\bZ_v$ the lower triangular matrices with $[\bZ_u]_{k \ell} =
\zeta_{k\ell}^u$ and $[\bZ_v]_{k \ell} = \zeta_{k\ell}^v$.

The state evolution distribution is determined by the parameters $\bK_g$, $\bK_h$, $\bZ_u$, $\bZ_v$, $\{\nu_{k,0}\}_{5\leq k \leq 6}$, $\{\nu_{k,\sx}\}_{5\leq k \leq 6}$.
The state evolution characterizes the distribution of
$(v_{1,0}^{\PO},\bv_1^{\PO})$,
$(v_{2,0}^{\PO},\bv_2^{\PO})$,
$(v_{3,0}^{\PO},\bv_3^{\PO})$,
$(v_{4,0}^{\PO},\bv_4^{\PO})$,
$(v_{5,0}^{\PO},\bv_5^{\PO})$, 
$(v_{6,0}^{\PO},\bv_6^{\PO})$
and
$\bu_3^{\PO}$, $\bu_4^{\PO}$, $\bu_5^{\PO}$, $\bu_6^{\PO}$ when these parameters are the unique solution to the fixed point equations:
\begin{equation}
\label{eq:fixpt-general}
\tag{SE-fixpt}
\begin{gathered}
    \bK_g 
        = 
        \llangle \bU^{\SE} \rrangle_{\Ltwo},
        \qquad 
        \bK_h 
        = 
        \llangle \bV^{\SE} \rrangle_{\Ltwo},
    \\
    \bK_h \bZ_v^\top
        = 
        \llangle \bH^{\SE} , \bU^{\SE} \rrangle_{\Ltwo},
        \qquad 
        \bK_g \bZ_u^\top
        = 
        \llangle \bG^{\SE} , \bV^{\SE} \rrangle_{\Ltwo},
    \\
        \nu_{5,\sx} = \<\bmu_{\sx},\bv_5^{\SE}\>_{\Ltwo},
        \qquad 
        \nu_{6,\sx} = \<\bmu_{\sx},\bv_6^{\SE}\>_{\Ltwo},
    \\
        \<\ones,\bu_5^{\SE}\>_{\Ltwo} = \<\ones,\bu_6^{\SE}\>_{\Ltwo} = 0,
    \\
    \text{where $\bZ_v$ is lower-triangular and innovation-compatible with $\bK_h$} 
    \\
    \text{and $\bZ_u$ is lower-triangular and innovation-compatible with $\bK_g$}.
\end{gathered}
\end{equation}
The right-hand sides of these equations involve expectations taken with respect to the state evolution distribution,
so they are functions of these parameters $\bK_g$, $\bK_h$, $\bZ_u$, $\bZ_v$, $\{\nu_{k,0}\}_{5\leq k \leq 6}$, $\{\nu_{k,\sx}\}_{5\leq k \leq 6}$.
The next two lemmas shows that the fixed point equations have a unique solution
and establishes bounds on their solution.
\begin{lemma}[Existence and uniqueness of fixed point parameters]
\label{lem:fixed-pt-soln}
    Under Assumption A1,
    the fixed point equations \eqref{eq:fixpt-general} have a unique solution.
\end{lemma}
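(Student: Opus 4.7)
\bigskip

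\noindent\emph{Proof plan for~\Cref{lem:fixed-pt-soln}.}
The plan is to construct the unique solution inductively, one row at a time, exploiting the sequential/triangular structure of the optimizations in~\eqref{eq:SE-opt} and the innovation-compatibility constraints on $\bZ_u$ and $\bZ_v$. At step $k$, the relevant unknowns are the $k$-th rows $[\bK_g]_{k,1:k}$, $[\bK_h]_{k,1:k}$, $[\bZ_u]_{k,1:k}$, $[\bZ_v]_{k,1:k}$, together with $\nu_{k,0}, \nu_{k,\sx}$ when $k \in \{5,6\}$. Because $\bu_k^{\SE}$ and $\bv_k^{\SE}$ depend only on $\bh_k^{\SE}, \bg_k^{\SE}$ and on quantities indexed by $\ell<k$, once the first $k-1$ rows of every parameter have been pinned down uniquely, the $k$-th row decouples into a self-consistent system that can be treated in isolation.

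For $k\in\{1,2,3,4\}$ the penalties $\phi_{k,u}, \phi_{k,v}$ in~\eqref{eq:SE-penalties} are hard indicators, so $\bu_k^{\SE}$ and $\bv_k^{\SE}$ are deterministic functions of $\bH_{k-1}^{\SE}, \beps_1^{\SE}, {\beps_1^{\SE}}', \beps_2^{\SE}$, independent of any $\zeta_{k\cdot}^u, \zeta_{k\cdot}^v$. Thus the entries $[\bK_g]_{k,\ell} = \langle \bu_k^{\SE}, \bu_\ell^{\SE}\rangle_{L^2}$ and $[\bK_h]_{k,\ell} = \langle \bv_k^{\SE}, \bv_\ell^{\SE}\rangle_{L^2}$ are given explicitly, and the rows of $\bZ_u, \bZ_v$ are then read off from the linear systems $\bK_h \bZ_v^\top = \langle \bH^{\SE}, \bU^{\SE}\rangle_{L^2}$, $\bK_g \bZ_u^\top = \langle \bG^{\SE}, \bV^{\SE}\rangle_{L^2}$ using the lower-triangular innovation-compatible structure guaranteed by~\Cref{lem:cholesky,lem:cholesky-pseudo-inverse}. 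Existence and uniqueness of these four rows is therefore automatic.

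The substantive steps are $k=5$ and $k=6$. Here $\phi_{k,u}$ is proper convex (with a quadratic term $\tfrac{\zeta_{kk}^u}{2}\|\bu\|^2$) and $\phi_{k,v}(\cdot;\bG_{k-1}^{\SE}) = \Omega_k$ is strongly convex by Assumption A1(g); hence, for any admissible choice of parameters, the two optimizations in~\eqref{eq:SE-opt} have unique minimizers. I would package the $k$-th row unknowns into a finite-dimensional vector $\theta_k$ and cast the fixed-point relations in~\eqref{eq:fixpt-general} restricted to row $k$ as $\Psi_k(\theta_k) = \theta_k$, where $\Psi_k$ is built from Gaussian expectations of the unique optimizers. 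A priori bounds of the type collected in~\Cref{lem:regr-fixpt-exist-and-bounds} (bounded standard errors, effective regularizations, biases, and estimates) confine the search to a compact convex set, and strong convexity plus the weight and link regularity of Assumptions A1(d,h,i) yield continuity of $\Psi_k$ on this set. Existence then follows from Brouwer.

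The main obstacle is uniqueness for $k\in\{5,6\}$, because $\Psi_k$ couples several scalar equations through non-linear Gaussian expectations that need not be contractive under a naive norm. My plan is to handle uniqueness variationally: identify a deterministic finite-dimensional saddle-point problem of the Gordon/CGMT auxiliary type whose KKT conditions reproduce exactly the row-$k$ fixed-point equations (this is the same philosophy used in \cite{thrampoulidisAbbasiHassibi2018,miolane2021,celentanoMontanariWei2020,celentano2021cad}). By Moreau--Fenchel duality, the $\bu_k$-optimization in~\eqref{eq:SE-opt} is conjugate to a Gaussian regression against $\bv_k$, and the penalty $\Omega_k$ contributes strong convexity in $\bv_k$ while the quadratic term in $\bu_k$ (together with the strong convexity of $\ell_\prop$ and the uniform bounds on $w$) contributes strong concavity in $\bu_k$. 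Reducing these to their Gaussian summary statistics on the compact region provided by~\Cref{lem:regr-fixpt-exist-and-bounds} produces a jointly strongly concave-convex saddle whose unique saddle point must coincide with any fixed point of $\Psi_k$; uniqueness of $\theta_k$ follows. The coupling between the propensity and outcome models makes the saddle surface more intricate than in prior single-model analyses, and separating innovative from predictable components via the Cholesky framework of~\Cref{sec:cholesky} is what allows the variational formulation to be carried out cleanly. Combining all six inductive steps then yields a unique global solution of~\eqref{eq:fixpt-general}.
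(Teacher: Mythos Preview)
Your inductive scaffolding matches the paper exactly: prove a Hypothesis~\FP{k} row by row, dispatch $k\le 4$ by direct computation, and concentrate effort on $k=5,6$. Where your plan diverges from the paper is in how the $k=5,6$ step is executed, and this divergence is not cosmetic.

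You propose to package the row-$k$ unknowns into a finite-dimensional $\theta_k$, obtain existence by Brouwer on a compact set furnished by a~priori bounds ``of the type collected in~\Cref{lem:regr-fixpt-exist-and-bounds}'', and obtain uniqueness from a \emph{finite-dimensional} Gordon-style saddle whose KKT conditions reproduce the row-$k$ fixed-point equations. There are two concrete problems with this. First, \Cref{lem:regr-fixpt-exist-and-bounds} is itself proved in the paper \emph{from} \Cref{lem:fixed-pt-soln} and \Cref{lem:fixed-pt-bound} (see~\Cref{sec:regr-fixpt-exist-unique-bound}), so invoking it here is circular; you would need to establish the bounds by some other route before Brouwer can be applied. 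Second, and more fundamentally, the standard CGMT reduction to a finite-dimensional saddle over Gaussian summary statistics relies on the penalty being separable (or rotationally invariant), so that $\bv_k^{\SE}$ is characterized by a few scalars like $\|\bv_k\|$ and $\langle\bv_k,\btheta_\ell\rangle$. Assumption~A1(g) allows arbitrary strongly-convex smooth $\Omega_k$, so $\bv_k^{\SE}$ is a genuinely $p$-dimensional nonlinear function of $\bg_k^{\SE}$ and no such reduction is available. Your variational plan therefore lacks a concrete object to which strong concave--convexity could be applied.

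The paper's resolution is to lift the saddle-point problem to an \emph{infinite-dimensional} Hilbert space: it defines $\AuxObj_k^{L^2}$ on $L_\su^2\times\reals\times L_\sv^2$, where $L_\su^2$ and $L_\sv^2$ are the $L^2$ spaces of $n$- and $p$-dimensional random vectors on the state-evolution probability spaces (\Cref{eq:def-AuxObj-L2}). In this formulation the optimization variables are the random vectors $\bu,\bv$ themselves, not their moments; strong convexity in $\bv$ comes directly from $\E[\Omega_k(\bv)]$ and strong concavity in $\bu$ from $\E[\ell_k^*]$, so no reduction is needed. The proof then proceeds by (i) showing any fixed point yields a saddle of $\AuxObj_k^{L^2}$, (ii) proving existence and uniqueness of the saddle in $(\buhat,\bvhat)$ via coercivity and strong monotonicity in Hilbert space, (iii) showing any saddle yields a fixed point, and (iv) combining these for existence and uniqueness. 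A separate monotonicity argument is needed at the end for uniqueness of $\nu_{k,0}$, since the saddle value $\vhat_0$ is not a~priori unique; your proposal does not address this either.
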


\begin{lemma}[Bounds on fixed point parameters]
\label{lem:fixed-pt-bound}
    Under Assumption A1,
    the solution to the fixed point equations \eqref{eq:fixpt-general} satisfies the following bounds.

    \begin{itemize}

        \item Covariance bounds: $|L_{g,k\ell}| \lesssim 1/\sqrt{n}$, $|L_{h,k\ell}| \lesssim 1$ for all $\ell \leq k$, $|K_{g,k\ell}| \lesssim 1 / n$, and $|K_{h,k\ell}| \lesssim 1$.

        \item Non-trivial innovations: 
        $L_{g,kk} \asymp 1/\sqrt{n}$ for $k = 3,4,5,6$, and and $L_{h,kk} \asymp 1$ for $k = 5,6$.

        \item Bounds on effective regularization: 
        $\zeta_{k\ell}^v \lesssim 1$ and $\zeta_{k\ell}^u \lesssim n$ for all $\ell \leq k$ and all $k$,
        and $\zeta_{kk}^v \asymp 1$, $\zeta_{kk}^u \asymp n$ for $k = 5,6$.
        Moreover, $-\zeta_{51}^v \gtrsim 1$.

        \item Bounds on Cholesky pseudo-inverse: $(\bL_g^\ddagger)_{k\ell} \lesssim \sqrt{n}$ and $(\bL_h^\ddagger)_{k\ell} \lesssim 1$ for all $\ell \leq k$.

        \item Bounds on offset: $|\nu_{5,0}| \leq C$ and $|\nu_{6,0}| \leq C$.

        \item Bounds on mean-effects: $|\nu_{k,\sx}| \leq C$.

    \end{itemize}
    Moreover,
    the solutions have the following form:
    \begin{equation}
    \label{eq:Z-form}
        \bZ_v
            =
            \begin{pmatrix}
                0 & 0 & 0 & 0 & 0 & 0 \\[5pt]
                0 & 0 & 0 & 0 & 0 & 0 \\[5pt]
                0 & 0 & 0 & 0 & 0 & 0 \\[5pt]
                -\barpi \alpha_1 \indic\{\btheta_1 \neq \bzero\} & 0 & 0 & 0 & 0 & 0 \\[5pt]
                \zeta_{51}^v & 0 & 0 & 0 & \zeta_{55}^v & 0 \\[5pt]
                \zeta_{61}^v & \zeta_{62}^v & 0 & 0 & 0 & \zeta_{66}^v
            \end{pmatrix},
        \qquad
        \bZ_u
            =
            \begin{pmatrix}
                0 & 0 & 0 & 0 & 0 & 0 \\[5pt]
                0 & 0 & 0 & 0 & 0 & 0 \\[5pt]
                0 & 0 & 0 & 0 & 0 & 0 \\[5pt]
                0 & 0 & 0 & 0 & 0 & 0 \\[5pt]
                0 & 0 & 0 & 0 & \zeta_{55}^u & 0 \\[5pt]
                0 & 0 & 0 & 0 & 0 & \zeta_{66}^u
            \end{pmatrix},
    \end{equation}
    and
    \begin{equation}
    \label{eq:K-form}
        \bK_g
            =
            \begin{pmatrix}
                0 & 0 & 0 & 0 & 0 & 0 \\[5pt]
                0 & 0 & 0 & 0 & 0 & 0 \\[5pt]
                0 & 0 & 1/n & \barpi/n & K_{g,35} & K_{g,36} \\[5pt]
                0 & 0 & \barpi/n & \barpi/n & K_{g,45} & K_{g,46} \\[5pt]
                0 & 0 & K_{g,53} & K_{g,54} & K_{g,55} & K_{g,56} \\[5pt]
                0 & 0 & K_{g,63} & K_{g,64} & K_{g,65} & K_{g,66}
            \end{pmatrix},
        \qquad
        \bK_h
            =
            \begin{pmatrix}
                \llangle \bTheta \rrangle & \bzero_{2 \times 2} & \bK_{h,1:2,5:6} \\[5pt]
                \bzero_{2 \times 2} & \bzero_{2 \times 2} & \bzero_{2 \times 2} \\[5pt]
                \bK_{h,5:6,1:2} & \bzero_{2 \times 2} & \bK_{h,5:6,5:6}
            \end{pmatrix}.
    \end{equation}
\end{lemma}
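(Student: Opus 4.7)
The plan is to prove all claims by a single induction on the stage index $k = 1, \ldots, 6$, establishing at each stage the values of $\bv_k^{\SE}, \bu_k^{\SE}$, the new rows of $\bK_g, \bK_h, \bZ_u, \bZ_v$, and the bounds that the lemma asserts for these new entries. Since Lemma \ref{lem:fixed-pt-soln} already furnishes the existence and uniqueness of a fixed point, I may focus purely on deriving the bounds and the explicit sparsity pattern. The induction splits cleanly into two regimes: stages $k \in \{1,2,3,4\}$, where both $\phi_{k,v}$ and $\phi_{k,u}$ are hard indicator constraints and so the optimizers are known in closed form, and stages $k \in \{5,6\}$, where the genuine regularized $M$-estimation occurs.

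For $k = 1,2$, the indicators force $\bv_k^{\SE} = \btheta_k$ and $\bu_k^{\SE} = \bzero$; for $k = 3,4$, they force $\bv_k^{\SE} = \bzero$ and $\bu_3^{\SE} = -\ones/n$, $\bu_4^{\SE} = -\by_1/n$. Computing the $L^2$ inner products directly yields the upper-left block of $\bK_g$ (with $K_{g,33} = 1/n$, $K_{g,34} = K_{g,44} = \barpi/n$) and of $\bK_h$ (with $\bK_{h,1:2,1:2} = \llangle \bTheta \rrangle$ and $\bK_{h,3:4,\cdot} = 0$). The corresponding Cholesky factors $L_{g,33}, L_{g,44} \asymp 1/\sqrt{n}$ follow because the innovation of $\bu_4^{\SE}$ relative to $\bu_3^{\SE}$ in $L^2$ equals $\sqrt{\barpi(1-\barpi)/n}$. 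The zero rows of $\bZ_u$ and the first four rows of $\bZ_v$ (except entry $(4,1)$) are forced by $\bu_k^{\SE}, \bv_k^{\SE} \in \{\bzero, \text{const}\}$ and the fixed-point identity $\bK_h \bZ_v^\top = \llangle \bH^{\SE}, \bU^{\SE} \rrangle_{\Ltwo}$ together with innovation-compatibility. The entry $\zeta_{41}^v = -\barpi \alpha_1 \indic\{\btheta_1 \neq \bzero\}$ is computed by evaluating $\E[h_{1,i}\, a_i]$ via Stein's lemma: conditional on $\bh_1^{\SE}$, $a_i$ is Bernoulli with parameter $\pi(\theta_{1,0} + \langle \bmu_\sx, \btheta_1\rangle + h_{1,i})$, so $\E[h_{1,i} a_i] = \|\btheta_1\|^2\, \E[\pi'(\eta_1^{\SE})] = \|\btheta_1\|^2 \barpi \alpha_1$, and solving $K_{h,11} \zeta_{41}^v = -\|\btheta_1\|^2 \barpi \alpha_1$ gives the stated value.

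For $k = 5, 6$ I would use the strong convexity of $\Omega_\prop, \Omega_\out$ in the $\bSigma$-metric and of $\ell_\prop$ (Assumption A1(g)(i)) together with the quadratic term $(\zeta_{kk}^v/2)\|\bv\|^2$ in the SE optimization to argue that $\bv_k^{\SE}$ and $\bu_k^{\SE}$ are unique and uniformly $L^2$-bounded in terms of the size of the Gaussian inputs $\bg_k^{\SE}, \bh_k^{\SE}$, the preceding iterates $\bv_\ell^{\SE}, \bu_\ell^{\SE}$ (bounded by induction hypothesis), and the offsets $\nu_{k,0}, \nu_{k,\sx}$. Feeding these $L^2$ bounds into the fixed-point identities $\bK_g = \llangle \bU^{\SE} \rrangle_{\Ltwo}$, $\bK_h = \llangle \bV^{\SE} \rrangle_{\Ltwo}$ produces the covariance bounds $|K_{g,k\ell}| \lesssim 1/n$ and $|K_{h,k\ell}| \lesssim 1$; the identities $\bK_h \bZ_v^\top = \llangle \bH^{\SE}, \bU^{\SE} \rrangle_{\Ltwo}$ and $\bK_g \bZ_u^\top = \llangle \bG^{\SE}, \bV^{\SE} \rrangle_{\Ltwo}$, combined with invertibility of $\bK_g, \bK_h$ on the relevant innovative subspaces, then yield the bounds on $\zeta^u, \zeta^v$. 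The sparsity pattern asserted for $\bZ_v$ and $\bZ_u$ in rows 5, 6 follows from a second application of Stein's lemma: each off-diagonal $\zeta_{k\ell}^v$ equals a covariance between $\bh_\ell^{\SE}$ and a score $\bu_k^{\SE}$ that, by the KKT conditions for \eqref{eq:SE-opt}, depends on $\bh_\ell^{\SE}$ only through the $\ba$- or $\by$-dependence; cross-correlations of $\bv_5^{\SE}, \bv_6^{\SE}$ with $\bg_3^{\SE}, \bg_4^{\SE}$ vanish by centering ($\<\ones, \bu_k^{\SE}\>_{\Ltwo} = 0$) and the fact that $\bg_3^{\SE}, \bg_4^{\SE}$ are proportional to $\ones$-direction innovations. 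Finally, the bounds $|\nu_{k,0}|, |\nu_{k,\sx}| \lesssim 1$ come from the defining equations $\nu_{k,\sx} = \<\bmu_\sx, \bv_k^{\SE}\>_{\Ltwo}$ via Cauchy-Schwarz using A1(e) and the $L^2$ bound on $\bv_k^{\SE}$, plus $\<\ones, \bu_k^{\SE}\>_{\Ltwo} = 0$.

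The main obstacle will be the non-trivial innovation lower bounds $L_{h,kk} \asymp 1$ for $k = 5,6$ and the lower bound $-\zeta_{51}^v \gtrsim 1$. For $L_{h,55}, L_{h,66}$, I would argue by contradiction: if the innovation of $\bv_k^{\SE}$ orthogonal to its linear projection onto $\{\bv_\ell^{\SE}\}_{\ell<k}$ were $o_p(1)$ in $L^2$, then the strongly convex optimization defining $\bv_k^{\SE}$ would be driven almost entirely by $\bg_k^{\SE}$ and the linear combination of prior $\bv_\ell^{\SE}$; combining this with the strong convexity of $\Omega_k$ (and the strict positivity of Assumption A1(i) for $\ell_\prop$), one shows that $\bg_k^{\SE}$ itself must have $L^2$ norm tending to zero, contradicting $K_{g,kk} \gtrsim 1/n$ established at the earlier stages. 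The lower bound $-\zeta_{51}^v \gtrsim 1$ uses that $\alpha_1 > 0$ is bounded away from $0$ by A1(d) (strict monotonicity and non-degeneracy of $\pi$), and $\barpi \geq c$ by A1(d). Once lower bounds on diagonal Cholesky entries are in hand, the bounds $(\bL_g^\ddagger)_{k\ell} \lesssim \sqrt{n}$ and $(\bL_h^\ddagger)_{k\ell} \lesssim 1$ follow from the explicit recursion in equation \eqref{eq:L-explicit} and an induction on $\ell$.
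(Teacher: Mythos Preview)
Your approach for stages $k \leq 4$ is correct and matches the paper. For $k = 5, 6$, however, there is a fundamental circularity your plan does not resolve. You propose to bound $\|\bv_k^{\SE}\|_{\Ltwo}$ from the optimization \eqref{eq:SE-opt} in terms of $\|\bg_k^{\SE}\|_{\Ltwo}$ and the coefficients $\zeta_{k\ell}^v$, and likewise $\|\bu_k^{\SE}\|_{\Ltwo}$ in terms of $\|\bh_k^{\SE}\|_{\Ltwo}$ and $\zeta_{k\ell}^u$. But the fixed-point equations force $\|\bg_k^{\SE}\|_{\Ltwo}^2 = p\,K_{g,kk} = p\,\|\bu_k^{\SE}\|_{\Ltwo}^2$ and $\|\bh_k^{\SE}\|_{\Ltwo}^2 = n\,\|\bv_k^{\SE}\|_{\Ltwo}^2$, so chaining the two inequalities yields $\|\bv_k^{\SE}\| \lesssim 1 + \sqrt{p/n}\,\|\bv_k^{\SE}\|$, which fails to close precisely in the regime $p > n$ of interest; the $\zeta_{k\ell}^v, \zeta_{k\ell}^u$ are themselves unknowns of the fixed point and cannot be invoked before being bounded. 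The paper breaks this circularity by using that $(\bu_k^{\SE}; \nu_{k,0}, \bv_k^{\SE})$ is a saddle point of the single convex--concave Hilbert-space objective $\AuxObj_k^{\Ltwo}$ (established in the proof of Lemma~\ref{lem:fixed-pt-soln}): evaluating the inner max at $\bu = \bzero$ and the inner min at $\bv = \bzero$ gives $-C + c\|\bv_k^{\SE}\|_{\Ltwo}^2 \leq \AuxObj_k^{\Ltwo}(\bu_k^{\SE};\nu_{k,0},\bv_k^{\SE}) \leq C - cn\|\bu_k^{\SE}\|_{\Ltwo}^2$, which bounds both norms simultaneously without reference to $\bg_k^{\SE}, \bh_k^{\SE}$ or the $\zeta$ parameters. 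The bound $|\nu_{k,0}| \leq C$ is obtained by the same mechanism, plugging in test vectors $\bu_\pm$ with $\langle \ones, \bu_+\rangle_{\Ltwo} > c$ and $\langle \ones, \bu_-\rangle_{\Ltwo} < -c$; the constraint $\langle \ones, \bu_k^{\SE}\rangle_{\Ltwo} = 0$ determines $\nu_{k,0}$ but does not by itself bound it.

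Two further specific gaps. First, the vanishing $\zeta_{65}^v = \zeta_{65}^u = 0$ is not covered by your Stein/centering sparsity argument, which handles only columns $3,4$ of $\bZ_u$. The paper's argument is that, by the explicit formulas in Lemma~\ref{lem:eff-reg-explicit}, $\hzeta_{65}^u$ is proportional to $\zeta_{65}^v$ and $\hzeta_{65}^v$ is proportional to $\zeta_{65}^u$; composing these, nonvanishing would force a certain product of expectations to equal $1$, but that product is shown term by term to be strictly less than $1$. Second, your contradiction argument for the innovation lower bounds appeals to ``$K_{g,kk} \gtrsim 1/n$ established at earlier stages'', but for $k = 5,6$ this is the current-stage quantity $\|\bu_k^{\SE}\|_{\Ltwo}^2$ and cannot be assumed. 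The paper instead derives coupled inequalities relating $\|\proj_{\bV_{k-1}^{\SE}}^\perp \bv_k^{\SE}\|_{\Ltwo}$ and $\sqrt{n}\,\|\proj_{\bU_{k-1}^{\SE}}^\perp \bu_k^{\SE}\|_{\Ltwo}$ in both directions, using conditional-variance lower bounds on $\nabla \ell_k$ that exploit the outcome noise $\beps_2^{\SE}$ (for $k=6$) and the informativeness assumption $\partial_\eta \ell_\prop(\eta;0) - \partial_\eta \ell_\prop(\eta;1) \geq c$ (for $k=5$); the latter, not $\alpha_1 > 0$, is also what drives the lower bound on $-\zeta_{51}^v$.
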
   
\noindent We prove~\Cref{lem:fixed-pt-soln,lem:fixed-pt-bound}
in~\Cref{sec:fix-pt-exist-proof,sec:fix-pt-bound-proof} respectively.
Unless otherwise specified, we will henceforth assume that the
parameters $\bK_g$, $\bK_h$, $\bZ_u$, $\bZ_v$, $\{\nu_{k,0}\}_{5\leq k
  \leq 6}$, $\{\nu_{k,\sx}\}_{5\leq k \leq 6}$ are taken to be the
solution to equations~\eqref{eq:fixpt-general}, and that
$(\bu_1^{\SE},\bu_2^{\SE},\bu_3^{\SE},\bu_4^{\SE},\bu_5^{\SE},\bu_6^{\SE})$,
$(\bv_1^{\SE},\bv_2^{\SE},\bv_3^{\SE},\bv_4^{\SE},\bv_5^{\SE},\bv_6^{\SE})$
have the state evolution distribution corresponding to these
parameters.

\begin{remark}[Fenchel-Legendre dual of state evolution optimization]
    We will frequently work with the Fenchel-Legendre dual of equation~\eqref{eq:SE-opt},
    which we state here for later reference.
    In particular,
    for $k = 5,6$, the first line of equation~\eqref{eq:SE-opt} is equivalent to
    \begin{equation}
    \label{eq:fenchel-legendre}
    \begin{gathered}
        n\bu_5^{\SE}
        =
        \nabla \ell_a\Big((\nu_{5,0} + \nu_{5,\sx})\ones + \bh_5^\SE - \sum_{\ell=1}^5 \zeta_{5\ell}^u \bu_\ell^\SE ;\by_1^{\SE}\Big),
        \\
        n\bu_6^{\SE}
        =
        -\by_1^{\SE} 
        \odot
        w(\bh_1^\SE)
        \odot
        \Big(
            \bh_2^\SE + \beps_2^\SE
            -
            \Big(
                (\nu_{6,0} + \nu_{6,\sx})\ones + \bh_6^{\SE} - \sum_{\ell=1}^6 \zeta_{5\ell}^u \bu_\ell^{\SE}
            \Big)
        \Big).
    \end{gathered}
    \end{equation}
\end{remark}

\subsection{State evolution describes the primary optimization}

Define the \emph{debiasing noise} for each optimization in equation~\eqref{eq:min-max} via:
\begin{equation}
\label{eq:loo-noise-def}
    \bg_k^{\PO} = \sum_{\ell=1}^k \zeta_{k\ell}^v \bv_\ell^{\PO} - \bA^\top \bu_k^{\PO},
    \qquad 
    \bh_k^{\PO} = \sum_{\ell=1}^k \zeta_{k\ell}^u \bu_\ell^{\PO} + \bA \bv_k^{\PO}.
\end{equation}
We define
\begin{equation}
\label{eq:perp-from-orig}
\begin{gathered}
    \bU^{\PO,\perp} = \bU^{\PO}\bL_g^{\ddagger\top},
        \qquad
        \bV^{\PO,\perp} = \bV^{\PO}\bL_h^{\ddagger\top},
        \qquad
        \bG^{\PO,\perp} = \bG^{\PO}\bL_g^{\ddagger\top},
        \qquad
        \bH^{\PO,\perp} = \bH^{\PO}\bL_h^{\ddagger\top},
    \\
    \bU^{\SE,\perp} = \bU^{\SE}\bL_g^{\ddagger\top},
        \qquad
        \bV^{\SE,\perp} = \bV^{\SE}\bL_h^{\ddagger\top},
        \qquad
        \bG^{\SE,\perp} = \bG^{\SE}\bL_g^{\ddagger\top},
        \qquad
        \bH^{\SE,\perp} = \bH^{\SE}\bL_h^{\ddagger\top}.
\end{gathered}
\end{equation}
whence we also have
\begin{equation}
\label{eq:orig-from-perp}
\begin{gathered}
    \bU^{\PO} = \bU^{\PO,\perp}\bL_g^{\top},
        \qquad
        \bV^{\PO} = \bV^{\PO,\perp}\bL_h^{\top},
        \qquad
        \bG^{\PO} = \bG^{\PO,\perp}\bL_g^{\top},
        \qquad
        \bH^{\PO} = \bH^{\PO,\perp}\bL_h^{\top},
    \\
    \bU^{\SE} = \bU^{\SE,\perp}\bL_g^{\top},
        \qquad
        \bV^{\SE} = \bV^{\SE,\perp}\bL_h^{\top},
        \qquad
        \bG^{\SE} = \bG^{\SE,\perp}\bL_g^{\top},
        \qquad
        \bH^{\SE} = \bH^{\SE,\perp}\bL_h^{\top}.
\end{gathered}
\end{equation}
Order-$2$ pseudo-Lipschitz functions of primary-optimization quantities concentrate on the expectation of the analogous state evolution quantities:
\begin{theorem}
\label{thm:se}
    Under Assumption A1, we have the following.

    \begin{itemize}

        \item \textbf{Concentration of parameter estimates.}
        For any function $\phi: (\reals^p)^{6 + 6} \rightarrow \reals$ which is order-$2$ pseudo-Lipschitz,
        we have
        \begin{equation}
        \label{eq:se-conc}
        \begin{gathered}
            \phi\Big(
                    \bV^{\PO,\perp},
                    \frac{1}{\sqrt{n}}\bG^{\PO,\perp}
                \Big)
                \mydoteq 
                \E\Big[
                \phi\Big(
                    \bV^{\SE,\perp},
                    \frac{1}{\sqrt{n}}\bG^{\SE,\perp}
                \Big)
            \Big].
        \end{gathered}
        \end{equation}

        \item \textbf{Concentration of score estimates.}
        For any function $\phi: \reals^{6 + 6 +1} \rightarrow \reals$ which is order-2 pseudo-Lipschitz,
        \begin{equation}
        \label{eq:se-conc-u}
            \frac{1}{n}
            \sum_{i=1}^n
                \phi\Big(
                    (nu_{\ell,i}^{\PO})_{\ell=1}^6,
                    (h_{\ell,i}^{\PO})_{\ell=1}^5,
                    \eps_{2,i}
                \Big)
            \mydoteq
            \E\Big[
                \phi\Big(
                    (nu_{\ell,i}^{\SE})_{\ell=1}^6,
                    (h_{\ell,i}^{\SE})_{\ell=1}^5,
                    \eps_{2,i}^{\SE}
                \Big)
            \Big].
        \end{equation}

        \item \textbf{Concentration of offset.} For $\ell = 5,6$,
        \begin{equation}
            v_{\ell,0}^{\PO} \mydoteq \nu_{\ell,0}.
        \end{equation}

    \end{itemize}
\end{theorem}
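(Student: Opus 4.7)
The plan is to establish Theorem~\ref{thm:se} by a sequential application of the conditional Convex Gaussian Min-Max Theorem (CGMT) to the six primary optimizations in~\eqref{eq:min-max}, processed in the order $k = 1, \ldots, 6$. Inductively, at step $k$ I would assume that the saddle points $(\bu_\ell^{\PO}, v_{\ell, 0}^{\PO}, \bv_\ell^{\PO})$ for $\ell < k$ have already been shown to concentrate on their state-evolution counterparts, and then condition on the portion of the Gaussian design $\bA$ already determined by those saddles. Because $\bA^\top \bu_\ell^{\PO}$ and $\bA \bv_\ell^{\PO}$ are linear in $\bA$, with values pinned down (up to the debiasing noises $\bg_\ell^{\PO}, \bh_\ell^{\PO}$ of~\eqref{eq:loo-noise-def}) by the previous stages, Gaussian conditioning yields a decomposition $\bA = \bA_\| + \bA_\perp$, where $\bA_\|$ is a deterministic function of the prior saddles and $\bA_\perp$ is an independent Gaussian residual on the orthogonal complement.

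For the base cases $k = 1, 2, 3, 4$, the objectives~\eqref{eq:objectives} are hard-constraint indicators, so $(\bu_k^{\PO}, \bv_k^{\PO})$ collapses to the deterministic or explicit linear-in-$\bA$ values read off from~\eqref{eq:po-to-regr-cov}--\eqref{eq:uPO-identity}, and one verifies directly that the resulting second moments match the block structure of $\bK_g, \bK_h$ in~\eqref{eq:K-form}. The Cholesky pseudo-inverse machinery of Section~\ref{sec:cholesky} is essential here: the orthogonalized quantities $\bV^{\PO, \perp}, \bG^{\PO, \perp}$ of~\eqref{eq:perp-from-orig} kill predictable coordinates and leave only unit-variance Gaussian innovations in innovative coordinates, which is what makes the left-hand side of~\eqref{eq:se-conc} well-defined despite the rank deficiency of the first four blocks of $\bK_g, \bK_h$.

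The substantive steps are $k = 5, 6$. At each, CGMT applied to the residual min-max over $\bu^\top \bA_\perp \bv + \phi_k(\bu; v_0, \bv) + (\text{projection terms inherited from prior stages})$ produces an auxiliary optimization in which $\bu^\top \bA_\perp \bv$ is replaced by linear terms in two fresh independent Gaussian vectors, which one identifies with $\bg_k^{\SE}$ and $\bh_k^{\SE}$. The coefficients $\zeta_{k \ell}^u, \zeta_{k \ell}^v$ of Lemma~\ref{lem:fixed-pt-bound} arise as the prefactors obtained by expanding the projection of $\bA \bv_k^{\PO}$ and $\bA^\top \bu_k^{\PO}$ in the Cholesky basis of prior directions, and the fixed-point equations~\eqref{eq:fixpt-general} are precisely the self-consistency conditions under which the law of the AO optimizer coincides with that of $(\bu_k^{\SE}, \bv_k^{\SE})$ defined by~\eqref{eq:SE-opt}. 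Concentration for order-2 pseudo-Lipschitz test functions is obtained by combining the CGMT value-agreement with: (a) $\Theta(1)$-strong convexity of $\phi_{k,v}$ and $\Theta(1/n)$-strong concavity of $\phi_{k,u}$ (guaranteed by Assumptions A1(g,i) and the bounds on $\zeta_{kk}^u, \zeta_{kk}^v$ in Lemma~\ref{lem:fixed-pt-bound}), which upgrade value-agreement to optimizer-agreement on a localized neighborhood of the SE optimum; and (b) Gaussian concentration of measure applied to the Lipschitz dependence of the SE optimizer on $(\bg_k^{\SE}, \bh_k^{\SE})$. The coordinate-wise score concentration~\eqref{eq:se-conc-u} then follows by composing the parameter concentration with the explicit Fenchel-dual representations in~\eqref{eq:fenchel-legendre}, which express $n \bu_k^{\SE}$ as a per-coordinate smooth function of $\bh_k^{\SE}$ and earlier stage outputs; the offset concentration $v_{k,0}^{\PO} \mydoteq \nu_{k,0}$ drops out of the stationarity condition $\langle \ones, \bu_k^{\SE} \rangle_{L^2} = 0$ enforced by the fixed-point equations.

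The main technical obstacles will be: (i)~making the conditional CGMT valid at each step despite the earlier optimizations being indicator-constrained and hence not strongly convex -- this is resolved by the innovation/Cholesky formalism of Section~\ref{sec:cholesky}, which formalizes conditioning onto a possibly degenerate Gaussian subspace; (ii)~threading the induction so that all three concentration claims are propagated in tandem, since parameter concentration is required to define the effective noises and offsets at subsequent stages; and (iii)~converting the CGMT comparison of optimal values into concentration of the optimizer, which I would handle by the standard restrictive/non-restrictive localization argument applied separately in $\bv$ (using strong convexity of $\Omega_5, \Omega_6$) and in $\bu$ (using strong convexity of $\ell_\prop$ and smoothness/boundedness of the weights $\bw$). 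Quantitative control uniformly in $n$ is supplied throughout by the compactness and well-conditioning bounds in Lemma~\ref{lem:fixed-pt-bound}.
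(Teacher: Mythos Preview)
Your proposal is correct and matches the paper's proof essentially step for step: induction on $k$ with the trivial base cases $k \le 4$ verified by direct computation (the paper's Lemma~\ref{lem:base-case}), then for $k = 5, 6$ the sequential/conditional CGMT (Lemma~\ref{lem:seq-gordon}) applied after Gaussian conditioning on prior saddles, an auxiliary objective built from fresh Gaussians $\bxi_g, \bxi_h$, and the restrictive/non-restrictive localization via strong convexity/concavity (Lemmas~\ref{lem:AuxObj-curvature} and~\ref{lem:local-stability}). One minor slip: the strong concavity in $\bu$ is of order $\Theta(n)$, not $\Theta(1/n)$, since $\ell_k^*$ acts on $n u_i$ and $\|\bu\| \asymp 1/\sqrt{n}$; this is precisely the scaling needed for the localization step to yield an $O(1)$ neighborhood in $\sqrt{n}\,\|\bu - \bu_k^{\AO}\|$.
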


\begin{remark}
We expect that the relation~\eqref{eq:se-conc-u} holds also if
$(h_{\ell,i}^{\PO})_{\ell=1}^5$ is replaced by
$(h_{\ell,i}^{\PO})_{\ell=1}^6$ and $(h_{\ell,i}^{\SE})_{\ell=1}^5$ is
replaced by $(h_{\ell,i}^{\SE})_{\ell=1}^6$.  Due to the lack of
differentiability of $\ell_6^*$ when $y_{1,i} = 0$ (see
equation~\eqref{eq:ellk*-def}), establishing this involves technical
arguments which we do not pursue because they are not necessary for
the debiasing results.
\end{remark}

\section{State evolution identities}
\label{sec:reg-explicit}

Using Gaussian integration by parts,
we get explicit expressions for the effective regularization parameters $\zeta_{k\ell}^u$ and $\zeta_{k\ell}^v$ for $k = 5,6$.
\begin{lemma}
\label{lem:eff-reg-explicit}

    \begin{enumerate}[(a)]

    \item 
        Consider a state evolution $(\bu_k^{\SE})_k$, $(\bv_k^{\SE})_k$, $(\bg_k^{\SE})_k$, $(\bv_k^{\SE})_k$, $\beps_1^\SE$, ${\beps_1^{\SE}}'$, $\beps_2^\SE$ 
        at parameters
        $(\bK_g,\allowbreak \bK_h,\allowbreak \bZ_u,\allowbreak \bZ_v,\allowbreak \{\nu_{k,0}\}_{5\leq k \leq 6},\allowbreak \{\nu_{k,\sx}\}_{5\leq k \leq 6})$ (not necessarily a solution to the fixed point equations \eqref{eq:fixpt-general}).
        That is, we assume $\bG^{\SE} \sim \normal(0,\bK_g \otimes \id_p)$, $\bH^{\SE} \sim \normal(0,\bK_h \otimes \id_n)$, and equation~\eqref{eq:SE-opt}.
        For $\action \in \{0,1\}$, 
        let
        $u_{5,i}^{\SE,a} \defn \argmin_{u \in \reals}\Big\{\frac{\zeta_{55}^u}2u^2 - (\nu_{5,0} + \nu_{5,\sx} + h_{5,i}^{\SE}) + \ell_\prop^*(nu;a)\Big\}$
        and
        $\heta_{5,i}^{\SE,a} \defn \nu_{5,0} + \nu_{5,\sx}+h_{5,i}^{\SE}-\zeta_{55}^u u_{5,i}^{\SE,a}$.
        Let $\hbZ_u,\hbZ_v \in \reals^{6\times6}$ with $\hzeta_{41}^v = -\barpi \alpha_1$,
        \begin{equation}
        \label{eq:zeta-hat-56-u}
        \begin{gathered}
            \hzeta_{55}^u
                \defn
                \E\Big[
                    \Tr\Big(
                    \Big(
                        \zeta_{55}^v \id_p
                        +
                        \nabla^2 \Omega_5(\bv_5^{\SE})
                    \Big)^{-1}
                    \Big)
                \Big],
                \quad
                \hzeta_{66}^u
                \defn
                \E\Big[
                    \Tr\Big(
                    \Big(
                        \zeta_{66}^v \id_p
                        +
                        \nabla^2 \Omega_6(\bv_6^{\SE})
                    \Big)^{-1}
                    \Big)
                \Big],
            \\
            \hzeta_{65}^u
                \defn
                \zeta_{65}^v
                \E\Big[
                \Tr\Big(
                    \Big(
                        \zeta_{66}^v \id_p
                        +
                        \nabla^2 \Omega_6(\bv_6^{\SE})
                    \Big)^{-1}
                    \Big(
                        \zeta_{55}^v \id_p
                        +
                        \nabla^2 \Omega_5(\bv_5^{\SE})
                    \Big)^{-1}
                \Big)
                \Big],
        \end{gathered}
        \end{equation}
        and
        \begin{equation}
        \label{eq:zeta-hat-56-v}
        \begin{gathered}
            \hzeta_{51}^v
                \defn
                \E[\pi'(h_{1,i}^{\SE})(u_{5,i}^{\SE,1} - u_{5,i}^{\SE,0})],
            \qquad
            \hzeta_{55}^v
                \defn
                \E\Big[
                    \frac{1-\pi(h_{1,i}^{\SE})}{\zeta_{55}^u + n/\ell_\prop''(\heta_{5,i}^{\SE,0};0)}
                    +
                    \frac{\pi(h_{1,i}^{\SE})}{\zeta_{55}^u + n/\ell_\prop''(\heta_{5,i}^{\SE,1};1)}
                \Big],
            \\
            \hzeta_{61}^v
                \defn
                n\E\Big[
                    \Big(
                        \frac{\de}{\de h_{1,i}^{\SE}}
                        \frac{\pi(h_{1,i}^{\SE})w(h_{1,i}^{\SE})}{\zeta_{66}^u w(h_{1,i}^{\SE}) + n}
                    \Big)
                    \big(\nu_{6,0} + \nu_{6,\sx} - \mu_\out + h_{6,i}^{\SE} - h_{2,i}^{\SE} - \eps_{2,i}^{\SE} - \zeta_{65}^u u_{5,i}^{\SE,1}\big)
                \Big],
            \\
            \hzeta_{66}^v
                \defn
                -\hzeta_{62}^v
                \defn
                n\E\Big[
                    \frac{\pi(h_{1,i}^{\SE})}{\zeta_{66}^u + n/w(h_{1,i}^{\SE})}            
                \Big],
            \quad
            \hzeta_{65}^v
                \defn
                -n\zeta_{65}^u
                \E\Big[
                    \frac{\pi(h_{1,i}^{\SE})}{(\zeta_{55}^u + n/\ell_\prop''(\heta_{5,i}^{\SE,1};1))(\zeta_{66}^u + n/w(h_{1,i}^{\SE}))}
                \Big],
        \end{gathered}
        \end{equation}
        and all other entries are set to 0.
        Then
        \begin{equation}
        \label{eq:IBP-general}
            \llangle \bG^\SE , \bV^\SE \rrangle = \bK_g \hbZ_u^\top,
            \qquad
            \text{and if $\zeta_{k\ell}^u = 0$ for $\ell \leq 4$, then}
            \quad
            \llangle \bH^\SE , \bU^\SE \rrangle = \bK_h \hbZ_v^\top.
        \end{equation}

        \item 
        If $(\bK_g,\allowbreak \bK_h,\allowbreak \bZ_u,\allowbreak \bZ_v,\allowbreak \{\nu_{k,0}\}_{5\leq k \leq 6},\allowbreak \{\nu_{k,\sx}\}_{5\leq k \leq 6})$ is a solution to the fixed point equations \eqref{eq:fixpt-general},
        we further have 
        $\bZ_u^\top = \id_{\bK_g}^\top \hbZ_u^\top$ and $\bZ_v^\top = \id_{\bK_h}^\top \hbZ_v^\top$,
        which we display for future reference:
        \begin{equation}
        \label{eq:zeta56-uv-explicit}
            \bZ_u^\top
                =
                \id_{\bK_g}^\top
                \begin{pmatrix} 
                    0 & 0 & 0 & 0 & 0 & 0
                    \\ 
                    0 & 0 & 0 & 0 & 0 & 0
                    \\ 
                    0 & 0 & 0 & 0 & 0 & 0
                    \\ 
                    0 & 0 & 0 & 0 & 0 & 0
                    \\ 
                    0 & 0 & 0 & 0 & \hzeta_{55}^u & \hzeta_{65}^u
                    \\ 
                    0 & 0 & 0 & 0 & 0 & \hzeta_{66}^u 
                \end{pmatrix},
            \qquad
            \bZ_v^\top
                =
                \id_{\bK_h}^\top
                \begin{pmatrix} 
                    0 & 0 & 0 & -\barpi \alpha_1 & \hzeta_{51}^v & \hzeta_{61}^v
                    \\ 
                    0 & 0 & 0 & 0 & 0 & \hzeta_{62}^v
                    \\ 
                    0 & 0 & 0 & 0 & 0 & 0
                    \\ 
                    0 & 0 & 0 & 0 & 0 & 0
                    \\ 
                    0 & 0 & 0 & 0 & \hzeta_{55}^v & \hzeta_{65}^v
                    \\ 
                    0 & 0 & 0 & 0 & 0 & \hzeta_{66}^v 
                \end{pmatrix}.
        \end{equation}
        For $\zeta_{51}^v,\zeta_{61}^v,\zeta_{62}^v$,
        we have the explicit expressions
        \begin{equation}
            (\zeta_{51}^v,\zeta_{61}^v,\zeta_{62}^v)
            =
            \begin{cases}
                (0,0,0) \quad&\text{if } \btheta_1 = \btheta_2 = \bzero,
                \\
                \big(\hzeta_{51}^v,\hzeta_{61}^v + \frac{\<\btheta_2,\btheta_1\>}{\|\btheta_1\|^2}\hzeta_{62},0\big) \quad&\text{if } \btheta_1 \neq \bzero,\, \btheta_2 \propto \btheta_1,
                \\
                (\indic_{\btheta_1\neq \bzero}\hzeta_{51}^v,\indic_{\btheta_1\neq \bzero}\hzeta_{61}^v,\hzeta_{62}^v) \quad&\text{if } \btheta_2 \not\propto \btheta_1.
            \end{cases}
        \end{equation}
        In all cases, 
        $\zeta_{51}^v \btheta_1 = \hzeta_{51}^v \btheta_1$
        and
        $
        \zeta_{61}^v \btheta_1 + \zeta_{62}^v \btheta_2
        =
        \hzeta_{61}^v \btheta_1 + \hzeta_{62}^v \btheta_2
        =
        \hzeta_{61}^v \btheta_1 - \hzeta_{66}^v \btheta_2
        $.

    \end{enumerate}   
    
\end{lemma}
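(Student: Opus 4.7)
My plan is to prove part~(a) by a direct application of Gaussian integration by parts (Stein's lemma) to the expectations $\llangle \bG^\SE, \bV^\SE \rrangle_{\Ltwo}$ and $\llangle \bH^\SE, \bU^\SE \rrangle_{\Ltwo}$, and then deduce part~(b) from the fixed point equations \eqref{eq:fixpt-general} together with the Cholesky pseudo-inverse identities from~\Cref{lem:cholesky-inverse-identities}. The key structural observation is that, by the sequential definition in \eqref{eq:SE-opt}, the variable $\bv_k^\SE$ depends only on $\bg_1^\SE, \dots, \bg_k^\SE$ (and on $\beps_\cdot^\SE$ quantities independent of $\bG^\SE$), and similarly $\bu_k^\SE$ depends only on $\bh_1^\SE, \dots, \bh_k^\SE$ and the independent noises. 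Thus Stein yields
\[
\E\bigl[\langle \bg_k^\SE, \bv_\ell^\SE\rangle\bigr] = \sum_{m \le \ell} K_{g,km}\, \E\bigl[\Tr\bigl(\partial \bv_\ell^\SE / \partial \bg_m^\SE\bigr)\bigr],
\]
with an analogous formula on the $\bh$--$\bu$ side; the result is a matrix identity of the form $\llangle \bG^\SE, \bV^\SE \rrangle = \bK_g \hbZ_u^\top$ once the derivatives are identified with the entries of $\hbZ_u$ (and similarly $\hbZ_v$).

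For the $\bv$-side, the KKT conditions from \eqref{eq:SE-opt} read $\zeta_{kk}^v \bv_k^\SE + \sum_{\ell < k} \zeta_{k\ell}^v \bv_\ell^\SE + \nabla\Omega_k(\bv_k^\SE) = \bg_k^\SE$ for $k = 5,6$. Implicit differentiation gives $\partial \bv_5^\SE/\partial \bg_5^\SE = (\zeta_{55}^v \id + \nabla^2 \Omega_5(\bv_5^\SE))^{-1}$, and for $\bv_6^\SE$ the chain rule through $\bv_5^\SE$ yields the product form appearing in $\hzeta_{65}^u$. Contributions from $\bv_\ell^\SE$ with $\ell \le 4$ vanish because those variables are deterministic. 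This directly produces \eqref{eq:zeta-hat-56-u}. On the $\bu$-side, we work with the Fenchel--Legendre form \eqref{eq:fenchel-legendre}: the coordinate-wise KKT $\zeta_{kk}^u u_{k,i}^\SE + (\ell_k^*)'(n u_{k,i}^\SE; \cdot) = h_{k,i}^\SE + (\text{offset}) - \sum_{m<k} \zeta_{km}^u u_{m,i}^\SE$ gives via implicit differentiation the diagonal Jacobians with respect to $h_{5,i}^\SE$ and $h_{6,i}^\SE$ underlying $\hzeta_{55}^v, \hzeta_{65}^v, \hzeta_{66}^v, \hzeta_{62}^v$.

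The delicate part is the derivative with respect to $\bh_1^\SE$, since $\bu_5^\SE$ and $\bu_6^\SE$ depend on $\bh_1^\SE$ only through the discontinuous indicator $y_{1,i}^\SE$ (and, for $\bu_6^\SE$, through the weights $w(\bh_1^\SE)$, which is classically differentiable). I handle the non-smooth dependence by splitting $u_{5,i}^\SE = (1 - y_{1,i}^\SE)\, u_{5,i}^{\SE,0} + y_{1,i}^\SE\, u_{5,i}^{\SE,1}$, where each $u_{5,i}^{\SE,a}$ is smooth in $h_{5,i}^\SE$ and $h_{1,i}^\SE$-free. Conditioning on $\eps_{1,i}'$ and using that, conditionally on $\eps_{1,i}' = \circ$, $\eps_{1,i}$ has density proportional to $\pi'$ by construction \eqref{eq:prop-functional-form}, a direct computation (or equivalently a mollification argument) yields the weak derivative
\[
\partial u_{5,i}^\SE / \partial h_{1,i}^\SE \;=\; \pi'(h_{1,i}^\SE)\bigl(u_{5,i}^{\SE,1} - u_{5,i}^{\SE,0}\bigr),
\]
which is exactly the integrand in $\hzeta_{51}^v$. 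The analogous decomposition of $\bu_6^\SE$ produces $\hzeta_{61}^v$; the bounded Lipschitz dependence $w$ and assumption~A1(h) on $1/w$ make all the relevant densities integrable, so Stein applies rigorously. This discontinuous-indicator step is the main technical obstacle; the rest is bookkeeping.

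For part~(b), at a fixed point the equations \eqref{eq:fixpt-general} read $\bK_g \bZ_u^\top = \llangle \bG^\SE, \bV^\SE \rrangle = \bK_g \hbZ_u^\top$ and $\bK_h \bZ_v^\top = \bK_h \hbZ_v^\top$. Multiplying on the left by $\bK_g^\ddagger$ (respectively $\bK_h^\ddagger$) and invoking the identity $\bK^\ddagger \bK = \id_{\bK}^\top$ from \Cref{lem:cholesky-inverse-identities}, together with innovation-compatibility (which forces the $\ell$-th row of $\bZ_u^\top$ to vanish for predictable $\ell$, so that $\bZ_u^\top = \id_{\bK_g}^\top \bZ_u^\top$), yields $\bZ_u^\top = \id_{\bK_g}^\top \hbZ_u^\top$ and $\bZ_v^\top = \id_{\bK_h}^\top \hbZ_v^\top$. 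The case analysis for $(\zeta_{51}^v, \zeta_{61}^v, \zeta_{62}^v)$ then reduces to identifying which of indices 1 and 2 are innovative with respect to $\bK_h$. Using the block form of $\bK_h$ in \eqref{eq:K-form} together with $[\bK_h]_{11} = \|\btheta_1\|^2$ and $[\bK_h]_{22} = \|\btheta_2\|^2$ and $[\bK_h]_{12} = \<\btheta_1,\btheta_2\>$, index 1 is predictable iff $\btheta_1 = \bzero$, and index 2 is predictable iff $\btheta_2 \in \spn(\btheta_1)$; tabulating the three resulting cases and noting that when $\btheta_2 = c\,\btheta_1$ the contribution $\hzeta_{62}^v \btheta_2$ collapses into the $\btheta_1$ direction yields the stated formula. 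The final identities $\zeta_{51}^v \btheta_1 = \hzeta_{51}^v \btheta_1$ and $\zeta_{61}^v \btheta_1 + \zeta_{62}^v \btheta_2 = \hzeta_{61}^v \btheta_1 + \hzeta_{62}^v \btheta_2$ follow immediately in each case, using $\hzeta_{66}^v = -\hzeta_{62}^v$ from \eqref{eq:zeta-hat-56-v} for the second one.
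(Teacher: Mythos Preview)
Your proposal is correct and follows essentially the same approach as the paper: Gaussian integration by parts on the KKT conditions (via implicit differentiation) for part~(a), and the Cholesky pseudo-inverse identity $\bK^\ddagger\bK = \id_{\bK}^\top$ together with innovation-compatibility for part~(b). The paper handles the discontinuous $\bh_1^\SE$-dependence slightly more directly than your weak-derivative/mollification framing: it first takes the inner conditional expectation $\E[\,\cdot\mid \bh_1^\SE,\bh_5^\SE]$, which replaces $y_{1,i}^\SE$ by $\pi(h_{1,i}^\SE)$ and leaves a smooth Gaussian integrand to which Stein applies classically, so the term $\pi'(h_{1,i}^\SE)(u_{5,i}^{\SE,1}-u_{5,i}^{\SE,0})$ appears without any distributional-derivative machinery.
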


\begin{remark}
    The proof of~\Cref{lem:eff-reg-explicit} uses only the fixed point
    equation~\eqref{eq:fixpt-general} and the definition of the state
    evolution.  It assumes neither~\Cref{lem:fixed-pt-soln}
    nor~\Cref{lem:fixed-pt-bound}.  This fact is germane
    because~\Cref{lem:eff-reg-explicit} is used in the proof
    of~\Cref{lem:fixed-pt-bound}.
\end{remark}

\begin{proof}[Proof of~\Cref{lem:eff-reg-explicit}(a)]
 Using the relations $\bv_3^{\SE} = \bv_4^{\SE} = \bzero$, the KKT
 conditions for the optimization problem~\eqref{eq:SE-opt} imply that
 $\bv_5^{\SE}$ is the unique solution to $\zeta_{55}^v \bv_5^{\SE} +
 \zeta_{51}^v \btheta_1 + \zeta_{52}^v \btheta_2 - \bg_5^{\SE} +
 \nabla \Omega_5(\bv_5^{\SE}), $ whence $\bv_5^{\SE}$ can be written
 as a function only of $\bg_5^{\SE}$, which we denote by
 $f_5^v(\bg_5^{\SE})$.  By Assumption A1, $\Omega_5$ is
 twice-differentiable and $c$-strongly convex.  Thus, taking the
 derivative of the preceding display, we see that $f_5^v$ is
 differentiable, and
 \begin{align*}
\frac{\de f_5^v}{\de \bg_5^{\SE}} (\bg_5^{\SE}) = \Big( \zeta_{55}^v
\id_p + \nabla^2 \Omega_5(\bv_5^{\SE}) \Big)^{-1}.
 \end{align*}
Likewise, the KKT conditions for the problem~\eqref{eq:SE-opt} imply
that $\bv_6^{\SE}$ is the unique solution to $ \zeta_{66}^v
\bv_5^{\SE} + \zeta_{61}^v \btheta_1 + \zeta_{62}^v \btheta_2 +
\zeta_{65}^v f_5^v(\bg_5^{\SE}) - \bg_6^{\SE} + \nabla
\Omega_6(\bv_6^{\SE}), $ whence $\bv_6^{\SE}$ can be written as a
function only of $\bg_5^{\SE},\bg_6^{\SE}$, which we denote by
$f_6^v(\bg_5^{\SE},\bg_6^{\SE})$.  As before, $f_6^v$ is
differentiable.  Its derivatives are given by
\begin{align*}
\frac{\de f_6^v}{\de \bg_6^{\SE}} (\bg_5^{\SE},\bg_6^{\SE}) & = \Big(
\zeta_{66}^v \id_p + \nabla^2 \Omega_6(\bv_6^{\SE}) \Big)^{-1}, and \\
\frac{\de f_6^v}{\de \bg_5^{\SE}} (\bg_5^{\SE},\bg_6^{\SE}) & =
\zeta_{65}^v \Big( \zeta_{66}^v \id_p + \nabla^2 \Omega_6(\bv_6^{\SE})
\Big)^{-1} \Big( \zeta_{55}^v \id_p + \nabla^2 \Omega_5(\bv_5^{\SE})
\Big)^{-1}.
\end{align*}
The fixed point equations~\eqref{eq:fixpt-general} (in the first
equality) and Gaussian integration by parts (in the second equality)
establish\footnote{This formula is valid even if $K_{g,55} = 0$ or
$K_{g,66} = 0$, in which case $\bg_5^{\SE} = \bzero$ or $\bg_6^{\SE} =
\bzero$} the first equation in the relation~\eqref{eq:IBP-general}.

From~\cref{eq:zeta56-uv-explicit}, we have $\zeta_{5\ell}^u = 0$ for
$\ell \leq 4$.  Moreover, using the definition of $\bu_5^{\SE}$
(see~\cref{eq:SE-opt}), we can write
\begin{align*}
u_{5,i}^{\SE} = (1 - y_{1,i}^{\SE})u_{5,i}^{\SE,0} + y_{1,i}^{\SE}
u_{5,i}^{\SE,1}.
\end{align*}
Note that $u_{5,i}^{\SE,a}$ is the unique solution to the subgradient
inclusion
\begin{align*}
\nu_{5,0} + \nu_{5,\sx} + h_{5,i}^{\SE} - \zeta_{55}^u u_{5,i}^{\SE,a}
\in \partial \ell_\prop^*(nu_{5,i}^{\SE};a).
\end{align*}
Thus, the quantity $u_{5,i}^{\SE,a}$ can be written only as a function
of $h_{5,i}^{\SE}$, which we denote by $f_5^{u,a}(\bh_5^{\SE})$.  By
the relation~\eqref{eq:fenchel-legendre} and the fact that the
function $\ell_\prop$ is twice-differentiable and strongly convex in
its first argument (Assumption A1), we have
\begin{equation}
  \frac{\de f_5^{u,a}(\bh_5^{\SE})_i}{\de h_{5,i}^{\SE}} =
  \frac{1}{\zeta_{55}^u + n/\ell_\prop''(\heta_{5,i}^{\SE,a};a)}.
\end{equation}
Then, for any $\ell$,
\begin{equation}
  \label{eq:hl-corr-u5}
  \begin{aligned}
    \< \bh_\ell^{\SE} , \bu_5^{\SE} \>_{\Ltwo} &= \< \bh_\ell^{\SE} ,
    (1-y_{1,i}^{\SE})\bu_5^{\SE,0} + y_{1,i}^{\SE}\bu_5^{\SE,1}
    \>_{\Ltwo} = \E[ \E[\< \bh_\ell^{\SE} ,
        (1-y_{1,i}^{\SE})\bu_5^{\SE,0} + y_{1,i}^{\SE}\bu_5^{\SE,1} \>
        \mid \bh_1^{\SE} , \bh_5^{\SE}] ] \\ &= \E[\< \bh_\ell^{\SE} ,
      (1-\pi(\bh_1^{\SE}))\bu_5^{\SE,0} +
      \pi(\bh_1^{\SE})\bu_5^{\SE,1} \>] = K_{h,\ell1} \hzeta_{51}^v +
    K_{h,\ell5} \hzeta_{55}^v,
  \end{aligned}
\end{equation}
where the last equality uses Gaussian integration by parts.

Now using $\zeta_{6\ell}^u = 0$ for $\ell \leq 4$ and that
$w(\,\cdot\,)$ is bounded above by $C$ (Assumption A1), the KKT
conditions for~\cref{eq:SE-opt} imply that
\begin{equation}
  \begin{aligned}
    u_{6,i}^{\SE} &= y_{1,i}^{\SE}\cdot \frac{\nu_{6,0} + \nu_{6,\sx}
      + h_{6,i}^{\SE} - y_{2,i}^{\SE} - \zeta_{65}^u
      u_{5,i}^{\SE}}{\zeta_{66}^u + n(w_i^{\SE})^{-1}} =
    y_{1,i}^{\SE}\cdot \frac{\nu_{6,0} + \nu_{6,\sx} - \mu_\out +
      h_{6,i}^{\SE} - h_{2,i}^{\SE} - \eps_{2,i}^{\SE} - \zeta_{65}^u
      u_{5,i}^{\SE,1}}{\zeta_{66}^u + n/w(h_{1,i}^{\SE})}.
  \end{aligned}
\end{equation}
Then, for any $\ell$
\begin{equation}
  \label{eq:hl-corr-u6}
  \begin{aligned}
    \< \bh_\ell^{\SE} , \bu_6^{\SE} \>_{\Ltwo} & = \E[ \E[\<
        \bh_\ell^{\SE} , \bu_6^{\SE} \> | \bh_1^{\SE} , \bh_2^{\SE} ,
        \bh_5^{\SE} , \bh_6^{\SE}] ] = n \E\Big[ h_{\ell,i}^{\SE}
      \pi(h_{1,i}^{\SE}) \frac{\nu_{6,0} + \nu_{6,\sx} - \mu_\out +
        h_{6,i}^{\SE} - h_{2,i}^{\SE} - \eps_{2,i}^{\SE} -
        \zeta_{65}^u u_{5,i}^{\SE,1}}{\zeta_{66}^u +
        n/w(h_{1,i}^{\SE})} \Big] \\ &= K_{h,\ell1} \hzeta_{61}^v +
    K_{h,\ell2} \hzeta_{62}^v + K_{h,\ell5} \hzeta_{65}^v +
    K_{h,\ell6} \hzeta_{66}^v,
  \end{aligned}
\end{equation}
where the last equality uses Gaussian integration by parts.  Using
these computations and Gaussian integration by parts for the remaining
terms leads to the second term in the relation~\eqref{eq:IBP-general}.
\end{proof}

\begin{proof}[Proof of~\Cref{lem:eff-reg-explicit}(b)]
 When $\bZ_u$ is innovation compatible with $\bK_g$ and $\bZ_v$ is
 innovation compatible with $\bK_h$, we get $\id_{\bK_g}^\top
 \bZ_u^\top = \bZ_u^\top$ and $\id_{\bK_h}^\top \bZ_v^\top =
 \bZ_v^\top$ (see~\Cref{lem:cholesky-inverse-identities}).  The first
 identity in equation~\eqref{eq:zeta56-uv-explicit} follows then from part
 (a).  This implies $\zeta_{k\ell}^u = 0$ for $\ell \leq 4$, whence
 the second equation in equation~\eqref{eq:zeta56-uv-explicit} also follows
 from part (a).  If $\btheta_1 = \btheta_2 = \bzero$, then both
 indices 1 and 2 are predictable, $[\id_{\bK_h}^\top]_{1:2,1:2} =
 \bzero$, and $\zeta_{51}^v = \zeta_{61}^v = \zeta_{62}^v = 0$.  If
 $\btheta_1 \neq \bzero$ and $\btheta_2 \propto \btheta_1$, then index
 1 is innovative, index 2 is predictable,
 $[\id_{\bK_h}^\top]_{1:2,1:2} = \begin{pmatrix} 1 & 0
   \\ L_{h,11}^{-1}L_{h,12} & 0 \end{pmatrix}$, and $\zeta_{51} =
 \hzeta_{51}^v$, $\zeta_{61}^v = \hzeta_{61}^v + L_{h,11}^{-1}L_{h,12}
 \hzeta_{62}^v$, and $\zeta_{62}^v = 0$.  Otherwise, if $\btheta_2
 \not\propto \btheta_1$, then index 2 is innovative,
 $[\id_{\bK_h}^\top]_{1:2,1:2} = \diag(\indic_{\btheta_1 \neq
   \bzero},1)$, and $\zeta_{51}^v = \indic_{\btheta_1 \neq \bzero}
 \hzeta_{51}^v$, $\zeta_{61}^v = \indic_{\btheta_1 \neq \bzero}
 \hzeta_{61}^v$, and $\zeta_{62}^v = \hzeta_{62}^v$.  The identities $
 \zeta_{61}^v \btheta_1 + \zeta_{62}^v \btheta_2 = \hzeta_{61}^v
 \btheta_1 + \hzeta_{62}^v \btheta_2 = \hzeta_{61}^v \btheta_1 -
 \hzeta_{66}^v \btheta_2 $ and $\zeta_{51}^v \btheta_1 = \hzeta_{51}^v
 \btheta_1$ are verified by checking each case individually.
\end{proof}

\section{Regression exact asymptotics follows from state evolution}
\label{sec:exact-asymptotics-proof}

In this section, we show that~\Cref{lem:regr-fixpt-exist-and-bounds}
and~\Cref{thm:exact-asymptotics}
imply~\Cref{lem:fixed-pt-soln,lem:fixed-pt-bound} along
with~\Cref{thm:se}.

\subsection{A construction of fixed-design models from state evolution}

Our first step is to show that any state evolution that solves
fixed point equations~\eqref{eq:fixpt-general} has embedded on the
same probability spaces a solution the fixed point
equations~\eqref{eq:regr-fixed-pt}.

\begin{lemma}
\label{lem:se-to-fixed-des}
Consider any solution $(\bK_g, \bK_h, \bZ_u, \bZ_v,
\{\nu_{k,0}\}_{5\leq k \leq 6}, \{\nu_{k,\sx}\}_{5\leq k \leq 6})$ to
the fixed point equations~\eqref{eq:fixpt-general}
satisfying~\Cref{lem:fixed-pt-bound}, and let $(\bu_k^{\SE})_k$,
$(\bv_k^{\SE})_k$, $(\bg_k^{\SE})_k$, $(\bv_k^{\SE})_k$,
$\beps_1^\SE$, ${\beps_1^{\SE}}'$, $\beps_2^\SE$ have distribution
given by the state evolution corresponding to these parameters.

Then there exists on the same probability spaces random variables
$(\bg_{\sx}^f, \bg_{\sx,\cfd}^f, \bg_{\prop}^f, \bg_\out^f)$,
$(\by_{\sx}^f, \by_{\sx,\cfd}^f, \by_{\prop}^f, \by_\out^f)$,
$(\hbtheta_\prop^f, \hbtheta_\out^f)$, $(\bmeta_\prop^f,
\bmeta_\out^f, \hbmeta_\out^{f,\loo}, \hbmeta_\out^{f,\loo},
\hbmeta_\prop^f, \hbmeta_\out^f)$ with distribution given by the
fixed-design models with parameters $(\bS,\allowbreak
\beta_{\prop\prop}, \allowbreak \beta_{\out\prop}, \allowbreak
\zeta_\prop^\theta, \allowbreak \zeta_\prop^\eta, \allowbreak
\zeta_\out^\theta, \allowbreak \zeta_\out^\theta, \allowbreak
\hmu_\prop^f, \allowbreak \hmu_\out^f)$ that solve the fixed point
equations~\eqref{eq:regr-fixed-pt}.
\end{lemma}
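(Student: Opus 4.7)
The plan is to construct the fixed-design objects by direct algebraic identification with the state-evolution objects, and then verify that the fixed-point equations \eqref{eq:regr-fixed-pt} are automatically satisfied as consequences of \eqref{eq:fixpt-general} combined with \Cref{lem:eff-reg-explicit}. Recall that by \Cref{sec:independent-covariates} we may assume $\bSigma = \id_p$, which simplifies the matching considerably. The broad strategy has three layers: first, choose the fixed-design parameters $(\bS, \beta_{\prop\prop}, \beta_{\out\prop}, \zeta_\prop^\theta, \zeta_\prop^\eta, \zeta_\out^\theta, \zeta_\out^\eta, \hmu_\prop^f, \hmu_\out^f)$ from the state-evolution parameters; second, define the Gaussian randomness in both fixed-design models as explicit affine transformations of $\bG^{\SE}$, $\bH^{\SE}$, $\beps_1^{\SE}$, ${\beps_1^{\SE}}'$, $\beps_2^{\SE}$; third, check that the resulting fixed-design optimizations reduce to the state-evolution optimizations and that the $\bS$, $\beta_{\prop\prop}$, $\beta_{\out\prop}$, and degrees-of-freedom identities hold.

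For the parameter identification, the structural form \eqref{eq:zeta56-uv-explicit} in \Cref{lem:eff-reg-explicit} tells me what to do. I will set $\zeta_\prop^\theta \defn \zeta_{55}^v$ and $\zeta_\out^\theta \defn \zeta_{66}^v$, and read off $\zeta_\prop^\eta, \zeta_\out^\eta$ from the definitions \eqref{eq:eff-loo-est}--\eqref{eq:SE-opt} by matching scales (the $1/n$ factors coming from the distinct normalizations of the PO and SE objectives). I will set $\hmu_\prop^f \defn \nu_{5,0} + \nu_{5,\sx}$, $\hmu_\out^f \defn \nu_{6,0} + \nu_{6,\sx}$, and read $\beta_{\prop\prop}, \beta_{\out\prop}$ from the coefficients appearing in $\bZ_v$ that couple indices $5,6$ to indices $1,2$; specifically, $\zeta_\prop^\theta \beta_{\prop\prop} = -\zeta_{51}^v$ and $\zeta_\out^\theta$ times the appropriate combination recovers $\beta_{\out\prop}$ as in \Cref{lem:eff-reg-explicit}(b). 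The entries of $\bS$ are chosen so that the FD noise vector has covariance $\bS/n$ matching the joint second moments of $\bg_3^{\SE}, \bg_4^{\SE}, \bg_5^{\SE}, \bg_6^{\SE}$ after rescaling by the corresponding $\zeta^\theta$ factors; equivalently, $\bS$ is a rescaled block of $n\bK_g$.

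Having chosen parameters, I construct the FD Gaussian noises $(\bg_\sx^f, \bg_{\sx,\cfd}^f, \bg_\prop^f, \bg_\out^f)$ as the explicit linear combination of $(\bg_3^{\SE}, \bg_4^{\SE}, \bg_5^{\SE}, \bg_6^{\SE})$ dictated by these rescalings (noting $\bg_1^\SE = \bg_2^\SE = 0$ since the first two columns of $\bK_g$ vanish by \eqref{eq:K-form}), define $\by_\sx^f, \by_{\sx,\cfd}^f, \by_\prop^f, \by_\out^f$ per \eqref{eq:fixed-design-param-outcomes}, and set $\hbtheta_\prop^f \defn \bv_5^{\SE}$, $\hbtheta_\out^f \defn \bv_6^{\SE}$. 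The KKT conditions for the SE optimizations \eqref{eq:SE-opt} for $\bv_5^\SE, \bv_6^\SE$ then coincide term-by-term with those of the FD optimizations \eqref{eq:param-fit-f} by construction. For the linear-predictor model, I set $\bmeta_\prop^f \defn \mu_\prop \ones + \bh_1^{\SE}$, $\bmeta_\out^f \defn \mu_\out \ones + \bh_2^{\SE}$, $\ba^f \defn \by_1^{\SE}$, $\beps_\out^f \defn \beps_2^\SE$, $\by^f \defn \bmeta_\out^f + \beps_\out^f$, $\bw^f \defn w(\bmeta_\prop^f)$, and take $\hbmeta_\prop^{f,\loo}, \hbmeta_\out^{f,\loo}$ to be affine functions of $\bh_5^\SE, \bh_6^\SE$ (shifted by the corresponding $\hmu^f$) so that the FD optimizations \eqref{eq:lin-predict-f} match the $\bu_5^\SE, \bu_6^\SE$ optimizations in Fenchel-dual form \eqref{eq:fenchel-legendre}. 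The covariance structure \eqref{eq:fixed-design-linear-predictor-dist} matches the joint Gaussian structure of $(\bh_1^\SE, \bh_2^\SE, \bh_5^\SE, \bh_6^\SE)$ given the $\bK_h$ entries specified in \eqref{eq:K-form}, using the $\hbTheta^f$ second-moment matrix computed from $\bv_5^\SE, \bv_6^\SE$.

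The main obstacle will be step three: verifying each equation of \eqref{eq:regr-fixed-pt} follows from \eqref{eq:fixpt-general}. The zero-mean conditions $\<\ones, \hbi_\prop^f\>_{\Ltwo} = \<\ones, \hbi_\out^f\>_{\Ltwo} = 0$ transport directly from the SE conditions $\<\ones, \bu_5^\SE\>_{\Ltwo} = \<\ones, \bu_6^\SE\>_{\Ltwo} = 0$. The $\bS$-identity reduces to the $\bK_g$ and $\llangle \bU^\SE \rrangle_{\Ltwo} = \bK_g$ equation after the rescalings above. The most delicate identities are those involving derivatives: $\beta_{\prop\prop}, \beta_{\out\prop}$ and the $\zeta^\eta \zeta^\theta$ identities expressed as traces of $\frac{\partial \hbtheta^f}{\partial \by^f}$ and $\frac{\partial \hbpsi^f}{\partial \hbmeta^{f,\loo}}$. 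For these I will invoke \Cref{lem:eff-reg-explicit}: the explicit formulas \eqref{eq:zeta-hat-56-u}--\eqref{eq:zeta-hat-56-v}, already expressed as expectations of the relevant second derivatives of $\Omega$ and $\ell_\prop$, match verbatim (after the rescalings) the Gaussian-integration-by-parts expressions of the trace identities in \eqref{eq:regr-fixed-pt}, using \eqref{eq:param-est-deriv-fixed-design} and \eqref{eq:score-deriv-explicit}. The innovation-compatibility clauses in \eqref{eq:regr-fixed-pt} correspond to the innovation-compatibility of $\bZ_u, \bZ_v$ in \eqref{eq:fixpt-general}, which is preserved under the embedding. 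Once all equations are checked, the constructed $(\hbtheta_\prop^f, \hbtheta_\out^f, \hbmeta_\prop^f, \hbmeta_\out^f, \ldots)$ live on the SE probability spaces and satisfy \eqref{eq:regr-fixed-pt}, completing the proof.
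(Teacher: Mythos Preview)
Your proposal is correct and follows essentially the same approach as the paper: an explicit parameter dictionary (the paper sets $\zeta_\prop^\theta=\zeta_{55}^v$, $\zeta_\out^\theta=\zeta_{66}^v$, $\zeta_\prop^\eta=\zeta_{55}^u/n$, $\zeta_\out^\eta=\zeta_{66}^u/n$, $\beta_{\prop\prop}=-\hzeta_{51}^v/\zeta_{55}^v$, $\beta_{\out\prop}=-\hzeta_{61}^v/\zeta_{66}^v$, and $\bS$ as a diagonally-rescaled block of $n\bK_{g,3:6,3:6}$), an explicit random-variable dictionary ($\bg_\sx^f=\bg_3^\SE$, $\bg_{\sx,\cfd}^f=\bg_4^\SE/\barpi$, $\bg_\prop^f=\bg_5^\SE/\zeta_{55}^v$, $\bg_\out^f=\bg_6^\SE/\zeta_{66}^v$, $\hbtheta_\prop^f=\bv_5^\SE$, $\hbtheta_\out^f=\bv_6^\SE$, etc.), then a two-step verification that the fixed-design distributions and the fixed-point equations \eqref{eq:regr-fixed-pt} hold. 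One small caution: when identifying $\beta_{\prop\prop},\beta_{\out\prop}$ you should use the hatted quantities $\hzeta_{51}^v,\hzeta_{61}^v$ from \Cref{lem:eff-reg-explicit} rather than $\zeta_{51}^v,\zeta_{61}^v$ directly, since these can differ in the degenerate cases $\btheta_1=\bzero$ or $\btheta_2\propto\btheta_1$ (the paper resolves this via the identity $\zeta_{61}^v\btheta_1+\zeta_{62}^v\btheta_2=\hzeta_{61}^v\btheta_1-\hzeta_{66}^v\btheta_2$ from \Cref{lem:eff-reg-explicit}(b) when matching the KKT conditions).
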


\begin{proof}[Proof of~\Cref{lem:se-to-fixed-des}]
We first provide the construction of the solutions to the fixed-point equations \eqref{eq:regr-fixed-pt} from the solutions to \eqref{eq:fixpt-general}.
With $\hzeta_{51}^v,\hzeta_{61}^v,\hzeta_{62}^v$ as
in~\Cref{lem:eff-reg-explicit} and $ \ba^f = \by_1^\SE $, $ \by^f =
\by_2^\SE $, $ \bw^f = w\big((\theta_{\prop,0} + \< \bmu_{\sx} ,
\btheta_{\prop} \>)\ones + \bh_1^{\SE}\big) $, we can set
    \begin{equation}
    \label{eq:from-uv-to-theta-eta-psi}
    \begin{gathered}
            \bS = n
                \begin{pmatrix}
                    1 & 0 & 0 & 0 
                    \\
                    0 & \barpi & 0 & 0
                    \\
                    0 & 0 & \zeta_{55}^v & 0
                    \\
                    0 & 0 & 0 & \zeta_{66}^v
                \end{pmatrix}^{-1}
                \bK_{g,3:6,3:6}
                \begin{pmatrix}
                    1 & 0 & 0 & 0 
                    \\
                    0 & \barpi & 0 & 0
                    \\
                    0 & 0 & \zeta_{55}^v & 0
                    \\
                    0 & 0 & 0 & \zeta_{66}^v
                \end{pmatrix}^{-1},
        \\
            \beta_{\prop\prop}
                =
                -\hzeta_{51}^v / \zeta_{55}^v,
            \qquad
            \beta_{\out\prop}
                =
                -\hzeta_{61}^v / \zeta_{66}^v,
        \\
            \zeta_\prop^\eta
                =
                \zeta_{55}^u/n,
            \qquad
            \zeta_\out^\eta
                =
                \zeta_{66}^u/n,
            \qquad
            \zeta_\prop^\theta
                =
                \zeta_{55}^v,
            \qquad
            \zeta_\out^\theta
                =
                \zeta_{66}^v,
        \\
            \hmu_\prop^f
                =
                \nu_{5,0} + \< \bmu_{\sx} , \bv_5^{\SE}\>_{\Ltwo},
            \qquad
            \hmu_\prop^f
                =
                \nu_{6,0} + \< \bmu_{\sx} , \bv_6^{\SE} \>_{\Ltwo},
    \end{gathered}
    \end{equation}
    and
    \begin{equation}
    \label{eq:fix-des-from-se}
    \begin{aligned}
    &
    \begin{aligned}
            \bg_{\sx}^f 
                &=
                \bg_3^{\SE},
            \quad&
            \bg_{\sx,\cfd}^f
                &=
                \bg_4^{\SE}/\barpi,
            \\
            \by_{\sx}^f
                &=
                \bmu_{\sx} + \bg_3^{\SE},
            \quad&
            \by_{\sx,\cfd}^f
                &=
                \bmu_{\sx,\cfd} + \bg_4^{\SE}/\barpi,
            \\
            \hbtheta_\prop^f
                &=
                \bv_5^{\SE},
            \quad&
            \hbtheta_\out^f
                &=
                \bv_6^{\SE},
    \end{aligned}
    \quad&&
    \begin{aligned}
            \bg_\prop^f
                &=
                \bg_5^{\SE}/\zeta_{33}^v,
            \quad&
            \bg_\out^f
                &=
                \bg_6^{\SE}/\zeta_{66}^v,
        \\
            \by_\prop^f
                &=
                \frac{-\zeta_{51}^v\btheta_\prop + \bg_5^{\SE}}{\zeta_{55}^v},
            \quad&
            \by_\out^f
                &=
                \frac{-\hzeta_{61}^v\btheta_\prop - \hzeta_{62}^v \btheta_\out + \bg_6^{\SE}}{\zeta_{66}^v},
        \\
            \bmeta_\prop^f
                &=
                \mu_\prop\ones + \bh_1^{\SE},
            \quad&
            \bmeta_\out^f
                &=
                \mu_\out\ones + \bh_2^{\SE},
    \end{aligned}
    \\
    &
        \hbmeta_\prop^{f,\loo}
            =
            (\nu_{5,0} + \nu_{5,\sx})\ones + \bh_5^{\SE},
        \qquad
        &&\hbmeta_\out^{f,\loo}
            =
            (\nu_{6,0} + \nu_{6,\sx})\ones + \bh_6^{\SE},
    \\
        &\hbmeta_\prop^f
            =
                (\nu_{5,0} + \nu_{5,\sx})\ones + \bh_5^{\SE}
                -
                \zeta_{55}^u\bu_5^{\SE},
        \qquad&&
        \hbmeta_\out^f
            =
                (\nu_{6,0} + \nu_{6,\sx})\ones + \bh_6^{\SE}
                -
                \zeta_{66}^u\bu_6^{\SE}.
    \end{aligned}
    \end{equation}
    We prove that this construction solves \eqref{eq:regr-fixed-pt} in two steps.  \\

    \noindent \textbf{Step 1: The random variables in
      equation~\eqref{eq:fix-des-from-se} are distributed from the
      fixed-design models with
      parameters~\eqref{eq:from-uv-to-theta-eta-psi}.} That is, we
    check equations~\eqref{eq:fixed-design-param-outcomes},
    \eqref{eq:param-fit-f}, \eqref{eq:fixed-design-outcomes},
    \eqref{eq:fixed-design-linear-predictor-dist}, and
    \eqref{eq:lin-predict-f}, and that
    $(g_{\sx,i}^f,g_{\sx,\cfd,i}^f,g_{\prop,i}^f,g_{\out,i}^f) \iid
    \normal(\bzero,\bS/n)$.

    equation~\eqref{eq:fixed-design-param-outcomes} is true by construction.
    The KKT conditions for the second line of equation~\eqref{eq:SE-opt} and for equation~\eqref{eq:param-fit-f} are
    \begin{equation}
    \label{eq:se-v-KKT}
    \begin{gathered}
        \zeta_{55}^v(\bg_5^{\SE}/\zeta_{55}^v - \zeta_{51}^v / \zeta_{55}^v\bv_1^{\SE} - \hbtheta_{\prop}^f) \in \partial \Omega_\prop(\bv_5^{\SE}),
        \\
        \zeta_{66}^v(\bg_6^{\SE}/\zeta_{66}^v - \zeta_{61}^v / \zeta_{66}^v\bv_1^{\SE} - \zeta_{62}^v / \zeta_{66}^v \bv_2^{\SE} - \hbtheta_{\out}^f) \in \partial \Omega_\out(\bv_6^{\SE}),
    \end{gathered}
    \qquad\text{and}\qquad
    \begin{gathered}
        \zeta_\prop^\theta(\by_\prop^f - \hbtheta_\prop^f) \in \partial \Omega_\prop(\hbtheta_\prop^f),
        \\
        \zeta_\out^\theta(\by_\out^f - \hbtheta_\out^f) \in \partial \Omega_\out(\hbtheta_\out^f),
    \end{gathered}
    \end{equation}
respectively.  By~\Cref{lem:se-to-fixed-des}, we have
\begin{align*}
\zeta_{61}^v \bv_1^\SE + \zeta_{62}^v \bv_2^\SE = \hzeta_{61}^v
\btheta_1 - \hzeta_{66}^v \btheta_2, \quad \mbox{and} \quad
\zeta_{51}^v \bv_1^\SE = \hzeta_{51}^v \btheta_1.
 \end{align*}
    
By~\Cref{lem:fixed-pt-bound}, index 6 is innovative with respect
to $\bK_h$, whence equation~\eqref{eq:zeta56-uv-explicit} implies
$\hzeta_{66}^v = \zeta_{66}^v$.  Thus, under assignments
\eqref{eq:from-uv-to-theta-eta-psi} and \eqref{eq:fix-des-from-se},
the two sets of KKT conditions in the previous display are equivalent,
whence equation~\eqref{eq:param-fit-f} is satisfied.
Equation~\eqref{eq:fixed-design-outcomes} follows from our
construction of $\ba^f,\by^f,\bw^f$ above.  Because $\bK_h = \llangle
\bV^{\SE} \rrangle_{\Ltwo}$ and $\bv_1^{\SE} = \btheta_\prop$,
$\bv_2^{\SE} = \btheta_\out$, $\bv_5^{\SE} = \hbtheta_\prop^f$,
$\bv_6^{\SE} = \hbtheta_\out^f$, we conclude that $\bmeta_\prop^f$,
$\bmeta_\out^f$, $\hbmeta_\prop^{f,\loo}$, $\hbmeta_\out^{f,\loo}$,
$\hmu_\prop^f$, $\hmu_\out^f$ as we have defined them satisfy
equation~\eqref{eq:fixed-design-linear-predictor-dist}.  Using that
$\zeta_{k\ell} = 0 $ for $\ell \leq 4$, under assignments
\eqref{eq:from-uv-to-theta-eta-psi} and \eqref{eq:fix-des-from-se},
the KKT conditions for equation~\eqref{eq:lin-predict-f} are
equivalent to the KKT conditions \eqref{eq:fenchel-legendre}.
equation~\eqref{eq:lin-predict-f} follows.  That
$(g_{\sx,i}^f,g_{\sx,\cfd,i}^f,g_{\prop,i}^f,g_{\out,i}^f) \iid
\normal(\bzero,\bS/n)$ holds by the definition of $\bg_{\sx}^f$,
$\bg_{\sx,\cfd}^f$, $\bg_\prop^f$, $\bg_\out^f$, $\bS$, and the fact
that $(g_{3,i}^{\SE},g_{4,i}^{\SE},g_{5,i}^{\SE},g_{6,i}^{\SE}) \iid
\normal(\bzero,\bK_{g,3:6,3:6})$.  \\

    \noindent \textbf{Step 2: The fixed point equations~\eqref{eq:regr-fixed-pt} are satisfied by parameters \eqref{eq:from-uv-to-theta-eta-psi}.} 

    By equations~\eqref{eq:fixed-design-score} and
    \eqref{eq:empirical-influence-function}, $\hbi_\sx^f = \ones =
    -n\bu_3^{\SE}$, $\hbi_{\sx,\cfd}^f = -n\bu_4^{\SE}/\barpi$,
    $\hbi_\prop^f = -\hbpsi_\prop^f/\zeta_\prop^\theta =
    -n\bu_5^{\SE}/\zeta_\prop^\theta$, and $\hbi_\out^f =
    -\hbpsi_\out^f/\zeta_\out^\theta =
    -n\bu_6^{\SE}/\zeta_\out^\theta$.  Thus, the first line of
    equation~\eqref{eq:regr-fixed-pt} is equivalent to the fourth line
    of equation~\eqref{eq:fixpt-general}.  Comparing the derivative
    identities \eqref{eq:score-deriv-explicit} with the expressions
    for $\hzeta_{k\ell}^v$ in~\Cref{lem:eff-reg-explicit}, the second
    line of equation~\eqref{eq:regr-fixed-pt} follows.  The third line
    of equation~\eqref{eq:regr-fixed-pt} is equivalent to $\bK_g =
    \llangle \bU^{\SE} \rrangle$ using the expressions for
    $\hbi_{\sx}^f$, $\hbi_{\sx,\cfd}^f$, $\hbi_\prop^f$, $\hbi_\out^f$
    above.  Because $5,6$ are innovative with respect to both
    $\bK_g,\bK_h$ (\Cref{lem:fixed-pt-bound});
    \Cref{lem:eff-reg-explicit} implies $\zeta_{kk}^u = \hzeta_{kk}^u$
    and $\zeta_{kk}^v = \hzeta_{kk}^v$ for $k=5,6$.  Then, comparing
    the derivative identities~\eqref{eq:score-deriv-explicit} with the
    expressions for $\hzeta_{k\ell}^v,\hzeta_{k\ell}^u$
    in~\Cref{lem:eff-reg-explicit} implies the fourth and fifth lines
    of~\cref{eq:regr-fixed-pt}.  Indices 3 and 4 are innovative with
    respect to $\bS$ and the covariance
    in~\cref{eq:fixed-design-linear-predictor-dist} because indices 5
    and 6 are innovative with respect to $\bK_g$, $\bK_h$
    by~\Cref{lem:fixed-pt-bound}.
\end{proof}


\subsection{A construction of state evolution from fixed-design models}

Similarly, any fixed-design model that solves the fixed point
equations~\eqref{eq:fixpt-general} has embedded on the same
probability spaces a solution the fixed point
equations~\eqref{eq:regr-fixed-pt}.

\begin{lemma}
\label{lem:fixed-des-to-se}
    Consider any solution
    $(\bS,\allowbreak\beta_{\prop\prop},\allowbreak\beta_{\out\prop},\allowbreak\zeta_\prop^\theta,\allowbreak\zeta_\prop^\eta,\allowbreak\zeta_\out^\theta,\allowbreak\zeta_\out^\theta,\allowbreak\hmu_\prop^f,\allowbreak\hmu_\out^f)$
    to the fixed point equations \eqref{eq:regr-fixed-pt}, and let
    $(\bg_{\sx}^f,\bg_{\sx,\cfd}^f,\bg_{\prop}^f,\bg_\out^f)$,
    $(\by_{\sx}^f,\by_{\sx,\cfd}^f,\by_{\prop}^f,\by_\out^f)$,
    $(\hbtheta_\prop^f,\hbtheta_\out^f)$,
    $(\bmeta_\prop^f,\bmeta_\out^f,\hbmeta_\out^{f,\loo},\hbmeta_\out^{f,\loo},\hbmeta_\prop^f,\hbmeta_\out^f)$
    have distribution given by the fixed-design models corresponding
    to these parameters.

    Then there exists on the same probability spaces random variables
    $(\bu_k^{\SE})_k$, $(\bv_k^{\SE})_k$, $(\bg_k^{\SE})_k$,
    $(\bv_k^{\SE})_k$, $\beps_1^\SE$, ${\beps_1^{\SE}}'$,
    $\beps_2^\SE$ with distribution given by the state evolution with
    parameters $(\bK_g, \bK_h, \bZ_u, \bZ_v, \{\nu_{k,0}\}_{5\leq k
      \leq 6}, \{\nu_{k,\sx}\}_{5\leq k \leq 6})$ that solve the
    fixed point equations \eqref{eq:regr-fixed-pt}.
    
    When the mapping of~\Cref{lem:se-to-fixed-des} is applied to these
    random variables and parameters, we obtain
    $(\bS,\allowbreak\beta_{\prop\prop},\allowbreak\beta_{\out\prop},\allowbreak\zeta_\prop^\theta,\allowbreak\zeta_\prop^\eta,\allowbreak\zeta_\out^\theta,\allowbreak\zeta_\out^\theta,\allowbreak\hmu_\prop^f,\allowbreak\hmu_\out^f)$
    and $(\bg_{\sx}^f,\bg_{\sx,\cfd}^f,\bg_{\prop}^f,\bg_\out^f)$,
    $(\by_{\sx}^f,\by_{\sx,\cfd}^f,\by_{\prop}^f,\by_\out^f)$,
    $(\hbtheta_\prop^f,\hbtheta_\out^f)$,
    $(\bmeta_\prop^f,\bmeta_\out^f,\hbmeta_\out^{f,\loo},\hbmeta_\out^{f,\loo},\hbmeta_\prop^f,\hbmeta_\out^f)$.
\end{lemma}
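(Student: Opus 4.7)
The plan is to invert the correspondence implicit in the construction used in the proof of~\Cref{lem:se-to-fixed-des}. Given a solution to the fixed point equations~\eqref{eq:regr-fixed-pt} and its associated fixed-design random variables, I will read the formulas~\eqref{eq:from-uv-to-theta-eta-psi} and~\eqref{eq:fix-des-from-se} backwards to define state evolution parameters and random variables on the same probability spaces, and then verify that the resulting objects satisfy both the optimization characterization~\eqref{eq:SE-opt} and the fixed point equations~\eqref{eq:fixpt-general}.

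For the parameters, I would set $\zeta_{55}^u = n\zeta_\prop^\eta$, $\zeta_{66}^u = n\zeta_\out^\eta$, $\zeta_{55}^v = \zeta_\prop^\theta$, $\zeta_{66}^v = \zeta_\out^\theta$, and recover $\bK_{g,3:6,3:6}$ by inverting the $\bS$-formula in~\eqref{eq:from-uv-to-theta-eta-psi}, padding the first two rows and columns of $\bK_g$ with zeros to match the form in~\eqref{eq:K-form}. Set $\nu_{5,\sx} = \<\bmu_{\sx},\hbtheta_\prop^f\>_{\Ltwo}$, $\nu_{6,\sx} = \<\bmu_{\sx},\hbtheta_\out^f\>_{\Ltwo}$, $\nu_{5,0} = \hmu_\prop^f - \nu_{5,\sx}$, $\nu_{6,0} = \hmu_\out^f - \nu_{6,\sx}$. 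Take all remaining entries of $\bZ_u$ to be zero, and define the non-trivial entries of $\bZ_v$ via the explicit formulas of~\Cref{lem:eff-reg-explicit}(b) evaluated at the constructed state evolution. For the random variables, set $\bv_1^\SE = \btheta_\prop$, $\bv_2^\SE = \btheta_\out$, $\bv_3^\SE = \bv_4^\SE = \bzero$, $\bv_5^\SE = \hbtheta_\prop^f$, $\bv_6^\SE = \hbtheta_\out^f$; $\bu_1^\SE = \bu_2^\SE = \bzero$, $\bu_3^\SE = -\ones/n$, $\bu_4^\SE = -\ba^f/n$, $\bu_5^\SE = \hbpsi_\prop^f/n$, $\bu_6^\SE = \hbpsi_\out^f/n$; $\bg_1^\SE = \bg_2^\SE = \bzero$, $\bg_3^\SE = \bg_\sx^f$, $\bg_4^\SE = \barpi\,\bg_{\sx,\cfd}^f$, $\bg_5^\SE = \zeta_\prop^\theta\bg_\prop^f$, $\bg_6^\SE = \zeta_\out^\theta\bg_\out^f$; and $\bh_1^\SE = \bmeta_\prop^f - \mu_\prop\ones$, $\bh_2^\SE = \bmeta_\out^f - \mu_\out\ones$, $\bh_3^\SE = \bh_4^\SE = \bzero$, $\bh_5^\SE = \hbmeta_\prop^{f,\loo} - \hmu_\prop^f\ones$, $\bh_6^\SE = \hbmeta_\out^{f,\loo} - \hmu_\out^f\ones$. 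Finally, define $\bK_h$ as the per-row covariance of $\bH^\SE$ under this construction.

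Three verifications are needed. First, $\bG^\SE$ and $\bH^\SE$ as defined are jointly Gaussian with the correct row-wise iid structure: for $\bG^\SE$ this follows from the joint Gaussianity $(g_{\sx,i}^f, g_{\sx,\cfd,i}^f, g_{\prop,i}^f, g_{\out,i}^f) \iid \normal(\bzero, \bS/n)$ assumed in~\eqref{eq:fixed-design-param-outcomes}; for $\bH^\SE$ it follows from~\eqref{eq:fixed-design-linear-predictor-dist}. Second, the optimizations~\eqref{eq:SE-opt} for $k \in \{1,2,3,4\}$ are immediate from the indicator objectives in~\eqref{eq:SE-penalties}, while for $k \in \{5,6\}$ they reduce---via Fenchel--Legendre duality and the same KKT manipulation used in Step~1 of the proof of~\Cref{lem:se-to-fixed-des}---to the KKT systems for~\eqref{eq:param-fit-f} and~\eqref{eq:lin-predict-f}, which hold by hypothesis. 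Third, the fixed point equations~\eqref{eq:fixpt-general} must be checked: the $\bK_g$, $\bK_h$ equations hold by construction; the orthogonality $\<\ones, \bu_5^\SE\>_{\Ltwo} = \<\ones, \bu_6^\SE\>_{\Ltwo} = 0$ is the first line of~\eqref{eq:regr-fixed-pt}; and the derivative identities for $\bZ_u, \bZ_v$ follow by combining~\Cref{lem:eff-reg-explicit} with the derivative-based lines of~\eqref{eq:regr-fixed-pt} and the explicit formulas~\eqref{eq:score-deriv-explicit} and~\eqref{eq:param-est-deriv-fixed-design}.

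The main obstacle is matching the explicit effective-regularization identities in~\Cref{lem:eff-reg-explicit}(b) with the derivative-based fixed point equations in~\eqref{eq:regr-fixed-pt}; this matching is essentially the reverse of Step~2 in the proof of~\Cref{lem:se-to-fixed-des}, and requires care with innovation-compatibility, since the distinction between $\hzeta_{k\ell}^v$ and $\zeta_{k\ell}^v$ at predictable indices must be shown to be inconsequential after multiplication by the relevant $\btheta$-vectors (as catalogued in the case analysis of~\Cref{lem:eff-reg-explicit}(b)). The round-trip assertion---that the mapping of~\Cref{lem:se-to-fixed-des} applied to the constructed state evolution recovers the original fixed-design data---then holds by construction, since every formula in~\eqref{eq:from-uv-to-theta-eta-psi} and~\eqref{eq:fix-des-from-se} has been used in its inverted form.
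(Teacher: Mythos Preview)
Your proposal is correct and takes essentially the same approach as the paper: invert the formulas~\eqref{eq:from-uv-to-theta-eta-psi} and~\eqref{eq:fix-des-from-se} to define the state evolution parameters and random variables, then verify in two steps that (i) the constructed objects have the state evolution distribution (via matching KKT conditions for~\eqref{eq:SE-opt} with those for~\eqref{eq:param-fit-f} and~\eqref{eq:lin-predict-f}) and (ii) the fixed point equations~\eqref{eq:fixpt-general} hold (via~\Cref{lem:eff-reg-explicit}(a) and the derivative identities~\eqref{eq:score-deriv-explicit},~\eqref{eq:param-est-deriv-fixed-design}). The only cosmetic difference is that the paper writes $\hbZ_v$ directly in terms of the fixed-design parameters $\beta_{\prop\prop}, \beta_{\out\prop}, \zeta_\prop^\theta, \zeta_\out^\theta$ and then sets $\bZ_v^\top = \id_{\bK_h}^\top \hbZ_v^\top$, whereas you invoke~\Cref{lem:eff-reg-explicit}(b); since part~(b) is stated as a \emph{consequence} of already having an SE-fixpt solution, it is cleaner to phrase your construction as the paper does---define $\hbZ_v$ explicitly, multiply by $\id_{\bK_h}^\top$---and then use part~(a) for the verification.
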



\subsubsection{Proof of~\Cref{lem:fixed-des-to-se}}

The construction is as follows.  Let $\bD =
\diag(1,\barpi,\zeta_\prop^\theta,\zeta_\out^\theta)$, $\hzeta_{51}^v
= - \beta_{\prop\prop} \zeta_\prop^\theta$, $\hzeta_{61}^v = -
\beta_{\out\prop} \zeta_\out^\theta$, and $\hzeta_{62}^v = -
\zeta_\out^\theta$.  Then set
    \begin{equation}
    \begin{gathered}
        \bK_g
            =
            \begin{pmatrix}
                \bzero_{2\times2}
                &
                \bzero_{2\times4}
            \\
                \bzero_{4\times2}
                &  
                \frac{1}{n}
                \bD
                \bS
                \bD
            \end{pmatrix},
        \qquad
        \bK_h
            =
            \begin{pmatrix}
                \llangle \bTheta \rrangle & \bzero_{2 \times 2} & \llangle \bTheta , \hbTheta^f \rrangle_{\Ltwo} \\[5pt]
                \bzero_{2 \times 2} & \bzero_{2 \times 2} & \bzero_{2 \times 2} \\[5pt]
                \llangle \hbTheta^f , \bTheta \rrangle_{\Ltwo} & \bzero_{2 \times 2} & \llangle \hbTheta^f \rrangle_{\Ltwo}
            \end{pmatrix},
        \\
        \nu_{5,0} = \hmu_\prop^f - \< \bmu_{\sx} , \hbtheta_\prop^f\>_{\Ltwo},
        \qquad
        \nu_{6,0} = \hmu_\out^f - \< \bmu_{\sx} , \hbtheta_\out^f\>_{\Ltwo},
        \qquad
        \nu_{5,\sx} = \< \bmu_{\sx} , \hbtheta_\prop^f\>_{\Ltwo},
        \qquad
        \nu_{6,\sx} = \< \bmu_{\sx} , \hbtheta_\out^f\>_{\Ltwo}.
    \end{gathered}
    \end{equation}
    Then, with these definitions, set $\bZ_v^\top=\id_{\bK_h}^\top\hbZ_v^\top$ and 
    $\bZ_u=\diag(0,0,0,0,n\zeta_\prop^\eta,n\zeta_\out^\eta)$, where
    \begin{equation}
    \begin{gathered}
        \hbZ_v
        =
        \begin{pmatrix}
            0 & 0 & 0 & 0 & 0 & 0 \\[5pt]
            0 & 0 & 0 & 0 & 0 & 0 \\[5pt]
            0 & 0 & 0 & 0 & 0 & 0 \\[5pt]
            -\barpi \alpha_1 & 0 & 0 & 0 & 0 & 0 \\[5pt]
            -\beta_{\prop\prop}\zeta_{\prop}^\theta & 0 & 0 & 0 & \zeta_{\prop}^\theta & 0 \\[5pt]
            -\beta_{\out\prop}^\theta\zeta_\out^\theta & -\zeta_\out^\theta & 0 & 0 & 0 & \zeta_{\out}^\theta
        \end{pmatrix}.
    \end{gathered}
    \end{equation}
We construct a state evolution with these parameters by setting
$\by_1^{\SE} = \ba^f$, $\by_2^\SE = \by^f$, $\bw^\SE =
w(\bmeta_\prop^f)$, and
\begin{equation}
  \begin{gathered}
    \bg_1^{\SE} = \bzero, \quad \bg_2^{\SE} = \bzero, \quad
   \bg_3^{\SE} = \bg_{\sx}^f, \quad \bg_4^{\SE} =
    \barpi\bg_{\sx,\cfd}^f, \quad \bg_5^{\SE} =
    \zeta_{\prop}^\theta\bg_\prop^f, \quad \bg_6^{\SE} =
    \zeta_{\out}^\theta \bg_\out^f, \\
\bh_1^{\SE} = \bmeta_\prop^f -
\mu_\prop \ones, \quad \bh_2^{\SE} = \bmeta_\out^f - \mu_\out
\ones, \quad \bh_3^{\SE} = \bzero, \quad \bh_4^{\SE} = \bzero,
\quad \bh_5^{\SE} = \hbmeta_\prop^{f,\loo} - \hmu_\prop^f \ones,
\quad \bh_6^{\SE} = \hbmeta_\out^{f,\loo} - \hmu_\out^f \ones,
    \\
\bu_1^{\SE} = \bzero, \quad \bu_2^{\SE} = \bzero, \quad \bu_3^{\SE} =
\frac{\ones}{n}, \quad \bu_4^{\SE} = \frac{\ba^f}{n}, \quad
\bu_5^{\SE} = \frac{\hbpsi_\prop^f}{n}, \quad \bu_6^{\SE} =
\frac{\hbpsi_\out^f}{n}, \\
\bv_1^{\SE} = \btheta_\prop, \quad \bv_2^{\SE} = \btheta_\out,
\quad \bv_3^{\SE} = \bzero, \quad \bv_4^{\SE} = \bzero, \quad
\bv_5^{\SE} = \hbtheta_\prop^f, \quad \bv_6^{\SE} =
\hbtheta_\out^f.
\end{gathered}
\end{equation}

    As for~\Cref{lem:se-to-fixed-des}, we prove the lemma in two
    steps.  \\

    \noindent \textbf{Step 1: The random variables just defined are
      distributed from the state evolution with parameters
      $(\bS,\allowbreak\beta_{\prop\prop},\allowbreak\beta_{\out\prop},\allowbreak\zeta_\prop^\theta,\allowbreak\zeta_\prop^\eta,\allowbreak\zeta_\out^\theta,\allowbreak\zeta_\out^\theta,\allowbreak\hmu_\prop^f,\allowbreak\hmu_\out^f)$.}
    That is, we check $\bG^{\SE} \sim \normal(0,\bK_g \otimes \id_p)$,
    $\bH^{\SE} \sim \normal(0,\bK_h \otimes \id_n)$,
    and~\cref{eq:SE-opt}.

    The fact that $\bG^{\SE} \sim \normal(0,\bK_g \otimes \id_p)$ and
    $\bH^{\SE} \sim \normal(0,\bK_h \otimes \id_n)$ follows from the
    fact that
    $(g_{\sx,i}^f,g_{\sx,\cfd,i}^f,g_{\prop,i}^f,g_{\out,i}^f) \iid
    \normal(0, \bS/n)$ and~\cref{eq:fixpt-general} as well as out
    definition of $\bK_g,\bK_h$ above.  That $\bu_1^{\SE}$,
    $\bu_2^{\SE}$, $\bu_3^{\SE}$, and $\bu_4^{\SE}$
    satisfy~\cref{eq:SE-opt} follows from the definition of
    $\phi_{k,u}$ for $k = 1,2,3,4$.  By the KKT conditions
    for~\cref{eq:lin-predict-f}, we have $\hbmeta_\prop^f =
    \hbmeta_\prop^{f,\loo} - \zeta_\prop^\eta \hbpsi_\prop^f =
    \hmu_\prop^f \ones + \bh_5^\SE - n \zeta_\prop^\eta \bu_5^{\SE}$
    and, likewise, $\hbmeta_\out^f = \hmu_\out^f \ones + \bh_6^{\SE} -
    n \zeta_\out^\eta \bu_6^{\SE}$.  Thus, under the change of
    variables above, the KKT conditions for~\cref{eq:lin-predict-f}
    are equivalent to the KKT conditions~\eqref{eq:fenchel-legendre}.
    The first line of equation~\eqref{eq:SE-opt} follows.  That
    $\bv_1^{\SE}$, $\bv_2^{\SE}$, $\bv_3^{\SE}$, and $\bv_4^{\SE}$
    satisfy equation~\eqref{eq:SE-opt} follows from the definition of
    $\phi_{k,v}$ for $k = 1,2,3,4$.  Because indices 3 and 4 are
    innovative with respect to the covariance in
    equation~\eqref{eq:fixed-design-linear-predictor-dist}, we have
    that indices 5 and 6 are innovative with respect to $\bK_h$, so
    $\zeta_{55}^v = \hzeta_{55}^v$ and $\zeta_{66}^v = \hzeta_{66}^v$.
    Further, by the same computation from the proof
    of~\Cref{lem:eff-reg-explicit}, we have that $\zeta_{51}^v
    \btheta_1 = \hzeta_{51}^v \btheta_1$ and $ \zeta_{61}^v \btheta_1
    + \zeta_{62}^v \btheta_2 = \hzeta_{61}^v \btheta_1 + \hzeta_{62}^v
    \btheta_2 = \hzeta_{61}^v \btheta_1 - \hzeta_{66}^v \btheta_2 $.
    Thus, we see the KKT conditions for
    equation~\eqref{eq:param-fit-f} are equivalent to the KKT
    conditions for the the second line of equation~\eqref{eq:SE-opt}.
    The second line of equation~\eqref{eq:SE-opt} follows.  \\

    \noindent \textbf{Step 2: The fixed point
      equations~\eqref{eq:fixpt-general} are satisfied by these
      parameters.}
    
    The first line of \eqref{eq:fixpt-general} follows from the
    covariance in equation~\eqref{eq:fixed-design-score} and the third
    line of \eqref{eq:regr-fixed-pt}.  Using the derivative identities
    \eqref{eq:score-deriv-explicit} and the final two lines of
    equation~\eqref{eq:regr-fixed-pt}, \Cref{lem:eff-reg-explicit}(a)
    implies that $\llangle \bG^\SE , \bV^\SE \rrangle = \bK_g
    \bZ_u^\top$ and $\llangle \bH^\SE , \bU^\SE \rrangle = \bK_h
    \hbZ_v^\top$ for $\hbZ_v$ and $\bZ_u$ as defined above.  Because
    $\bK_h \id_{\bK_h}^\top = \bK_h$
    (\Cref{lem:cholesky-inverse-identities}), we have $\llangle
    \bH^\SE , \bU^\SE \rrangle = \bK_h \bZ_v^\top$.  The identities
    $\nu_{5,\sx} = \< \bmu_{\sx} , \bv_5^{\SE} \>_{\Ltwo}$ and
    $\nu_{6,\sx} = \< \bmu_{\sx},\bv_6^{\SE}\>_{\Ltwo}$ hold by
    construction..  Because $\bu_5^{\SE}$ is a constant times
    $\hbpsi_\prop^f$ and $\bu_6^{\SE}$ is a constant times
    $\hbpsi_\out^f$, the equations $\< \ones , \bu_5^\SE \>_{\Ltwo} =
    \< \ones , \bu_6^{\SE} \>_{\Ltwo} = 0$ follow from the equations
    $\< \ones ,\hbi_\prop^f \>_{\Ltwo} = \< \ones , \hbi_\out^f
    \>_{\Ltwo} = 0$.  The innovation compatibility of $\bZ_v$ with
    $\bK_h$ holds by construction (because we multiply by
    $\id_{\bK_h}^\top$).  Indices 5 and 6 are innovative with respect
    to $\bK_g$ because indices 3 and 4 are innovative with respect to
    $\bS$, whence $\bZ_u$ is innovation compatible with $\bK_g$.
    Thus, we have verified the fixed point
    relations~\eqref{eq:fixpt-general}.

Using the relations
\begin{align*}
  \hbmeta_\prop^f = \hmu_\prop^f \ones + \bh_5^\SE - n
  \zeta_\prop^\eta \bu_5^{\SE}, \quad \mbox{and} \quad \hbmeta_\out^f
  = \hmu_\out^f \ones + \bh_6^{\SE} - n \zeta_\out^\eta \bu_6^{\SE}
\end{align*}
straightforward but tedious algebra shows that when the mapping
of~\Cref{lem:se-to-fixed-des} is applied to these random variables and
parameters, we obtain $(\bS, \allowbreak \beta_{\prop\prop},
\allowbreak \beta_{\out\prop}, \allowbreak \zeta_\prop^\theta,
\allowbreak \zeta_\prop^\eta, \allowbreak \zeta_\out^\theta,
\allowbreak\zeta_\out^\theta,\allowbreak
\hmu_\prop^f,\allowbreak\hmu_\out^f)$ and $(\bg_{\sx}^f,
\bg_{\sx,\cfd}^f, \bg_{\prop}^f, \bg_\out^f)$, $(\by_{\sx}^f,
\by_{\sx,\cfd}^f, \by_{\prop}^f, \by_\out^f)$, $(\hbtheta_\prop^f,
\hbtheta_\out^f)$, $(\bmeta_\prop^f, \bmeta_\out^f,
\hbmeta_\out^{f,\loo},\hbmeta_\out^{f,\loo},\hbmeta_\prop^f,\hbmeta_\out^f)$.

\subsection{Proof of~\Cref{lem:regr-fixpt-exist-and-bounds}}
\label{sec:regr-fixpt-exist-unique-bound}

\Cref{lem:fixed-pt-soln} guarantees existence of solutions to the
fixed point equations~\eqref{eq:fixpt-general}.  Moreover, these
satisfy the bounds of~\Cref{lem:fixed-pt-bound}, and
via~\Cref{lem:se-to-fixed-des}, we can use them to construct a
solution to the fixed point equations~\eqref{eq:regr-fixed-pt}.  Thus,
we have established existence.

For any solution so constructed, the asserted bounds on the standard
errors, effective regularization, bias, and estimates follow from the
bounds in~\Cref{lem:fixed-pt-bound} pushed through the construction
equations~\eqref{eq:from-uv-to-theta-eta-psi}
and~\eqref{eq:fix-des-from-se}.

\Cref{lem:fixed-des-to-se} ensures uniqueness, because the its mapping
inverts the mapping of~\Cref{lem:fixed-des-to-se}, distinct solutions
to the fixed point equations~\eqref{eq:regr-fixed-pt} must map to
distinct solutions to the fixed point
equations~\eqref{eq:fixpt-general}.  But~\Cref{lem:fixed-pt-soln}
ensures that this cannot happen.


\subsection{Proof of~\Cref{thm:exact-asymptotics}}

As justified in~\Cref{sec:independent-covariates}, we can assume
without loss of generality that $\bSigma = \id_p$.  \\

\noindent \textbf{Proof of part (a).}  By the change of variables in
equations~\eqref{eq:from-uv-to-theta-eta-psi}
and~\eqref{eq:po-to-regr-cov}, we have that this concentration is
equivalent to $v_{k,0}^{\PO} + \< \bmu_{\sx},\bv_k^{\PO}\> \mydoteq
\nu_{k,0} + \< \bmu_{\sx},\bv_k^{\SE} \>_{\Ltwo}$, which follows
from~\Cref{thm:se}.  \\

\noindent \textbf{Proof of part (b).}  We defer the proof of part (b)
to~\Cref{AppDOF}.  \\

\noindent \textbf{Proof of part (c).}  Recall that $\bX = \ones
\bmu_\sx^\top + \bA$, whence
\begin{align*}
\hbmu_\sx & = \frac{1}{n} \sum_{i=1}^n \bx_i = \bX^\top \ones / n =
\bmu_\sx + \bA^\top \ones / n = \bmu_\sx - \bA^\top \bu_3^{\PO} =
\bmu_\sx + \bg_3^{\PO}, \quad \mbox{and} \\
\hbmu_{\sx,\cfd} & = \frac{1}{n_1} \sum_{i=1}^n \action_i \bx_i =
\bX^\top \ba / \< \ones ,\ba \> = \bmu_\sx - \bA^\top \bu_4^{\PO} /
(-\< \ones , \bu_4^{\PO} \>) \mydoteq \bmu_\sx + \alpha_1 \btheta_1 +
\bg_4^{\PO}/\barpi,
\end{align*}
where we have used equations~\eqref{eq:uPO-identity}
and~\eqref{eq:loo-noise-def} and the fact that $\< \ones ,\bu_4^{\PO}
\> \mydoteq -\barpi$ by~\Cref{thm:se}.  By~\cref{eq:po-to-regr-cov},
we have $\hbtheta_\prop = \bv_5^{\PO}$ and $\hbtheta_\out =
\bv_6^{\PO}$.  By equations~\eqref{eq:from-uv-to-theta-eta-psi}
and~\eqref{eq:eff-loo-est} and the concentration of the effective
regularization terms (part (b)), we have
\begin{equation}
  \hbtheta_\prop^{\loo} \mydoteq \bv_5^{\PO} + \frac{\nabla
    \Omega_5(\bv_5^{\PO})}{\zeta_{55}^v}, \qquad \hbtheta_\out^{\loo}
  \mydoteq \bv_6^{\PO} + \frac{\nabla
    \Omega_6(\bv_6^{\PO})}{\zeta_{66}^v}.
    \end{equation}
Thus,~\Cref{thm:se} together with~\cref{eq:Z-form} and using the KKT
conditions for equations~\eqref{eq:min-max} and \eqref{eq:SE-opt},
\begin{equation}
\begin{aligned}
  \phi\Big( \hbtheta_{\prop}, \hbtheta_{\out}, &\hbmu_{\sx},
  \hbmu_{\sx,\cfd}, \hbtheta_{\prop}^{\loo}, \hbtheta_{\out}^{\loo}
  \Big) \mydoteq \phi\Big( \bv_5^{\PO}, \bv_6^{\PO}, \bmu_\sx +
  \bg_3^\PO, \bmu_{\sx,\cfd} + \bg_4^{\PO}/\barpi, \bv_5^{\PO} +
  \frac{\nabla \Omega_5(\bv_5^{\PO})}{\zeta_{55}^v}, \bv_6^{\PO} +
  \frac{\nabla \Omega_6(\bv_6^{\PO})}{\zeta_{66}^v} \Big) \\
& \mydoteq \E\Big[ \phi\Big( \bv_5^{\SE}, \bv_6^{\SE}, \bmu_\sx +
    \bg_3^\SE, \bmu_{\sx,\cfd} + \bg_4^{\SE}/\barpi, \bv_5^{\SE} +
    \frac{\nabla \Omega_5(\bv_5^{\SE})}{\zeta_{55}^v}, \bv_6^{\SE} +
    \frac{\nabla \Omega_6(\bv_6^{\SE})}{\zeta_{66}^v} \Big) \Big]
  \\
& = \E\Big[ \phi\Big( \bv_5^{\SE}, \bv_6^{\SE}, \bmu_\sx + \bg_3^\SE,
    \bmu_{\sx,\cfd} + \bg_4^{\SE}/\barpi, \bv_5^{\SE} -
    \frac{\zeta_{51}^v \bv_1^{\SE} + \zeta_{55}^v\bv_5^{\SE} -
      \bg_5^{\SE}}{\zeta_{55}^v}, \bv_6^{\SE} - \frac{\zeta_{61}^v
      \bv_1^{\SE} - \zeta_{66}^v\bv_2^{\SE} + \zeta_{66}^v \bv_6^{\SE}
      - \bg_6^{\SE}}{\zeta_{66}^v} \Big) \Big] \\
& = \E\Big[ \phi\Big( \hbtheta_\prop^f, \hbtheta_\out^f, \by_\sx^f,
    \by_{\sx,\cfd}^f, \by_\out^f, \by_\prop^f \Big) \Big],
\end{aligned}
\end{equation}
where in the last line we have used the change of variables in
equations~\eqref{eq:from-uv-to-theta-eta-psi}
and~\eqref{eq:fix-des-from-se}.  \\

\noindent \textbf{Proof of part (d).}  By~\cref{eq:loo-noise-def}, we
have
\begin{align*}
\bmeta_\prop & = \mu_\prop\ones + \bA \bv_1^{\PO} = \mu_\prop \ones +
\bh_1^{\PO} - \zeta_{11}^u \bu_1^{\PO} = \mu_\prop \ones +
\bh_1^{\PO}, \quad \mbox{and} \\
\bmeta_\out &  = \mu_\out\ones + \bA \bv_2^{\PO} =
\mu_\out \ones + \bh_2^{\PO} - \zeta_{21}^u \bu_1^{\PO} - \zeta_{22}^u
\bu_2^{\PO} = \mu_\out \ones + \bh_2^{\PO}
\end{align*}
As we noted in~\cref{eq:uPO-identity}, we have $\hbpsi_\prop = n
\bu_5^{\PO}$ and $\hbpsi_\out = n \bu_6^{\PO}$.  Further, by the
change of variables \eqref{eq:fix-des-from-se}, $\bmeta_\prop^f =
\mu_\prop\ones + \bh_1^{\SE}$, $\bmeta_\out^f = \mu_\out \ones +
\bh_2^{\SE}$, $\hbpsi_\prop^f = n \bu_5^{\SE}$, and $\hbpsi_\out^f = n
\bu_6^{\SE}$.  Thus, part (d) is a consequence of~\Cref{thm:se}.

\section{Proof of~\Cref{thm:se}}

We prove the state evolution in \Cref{thm:se} used an inductive
argument based on the induction hypothesis:
\begin{description}
    \item[Hypothesis (k)] \Cref{thm:se} holds for the first $k$
      optimization problems; that is, with $\bV^{\PO,\perp}$ replaced
      by $\bV_k^{\PO,\perp}$, $\bG^{\PO,\perp}$ replaced by
      $\bG_k^{\PO,\perp}$, $(nu_{\ell,i}^{\PO})_{\ell=1}^6$,
      $(nu_{\ell,i}^{\SE})_{\ell=1}^6$,
      $(h_{\ell,i}^{\PO})_{\ell=1}^5$, and
      $(h_{\ell,i}^{\SE})_{\ell=1}^5$ replaced by
      $(nu_{\ell,i}^{\PO})_{\ell=1}^k$,
      $(nu_{\ell,i}^{\SE})_{\ell=1}^k$,
      $(h_{\ell,i}^{\PO})_{\ell=1}^{k \wedge 5}$, and
      $(h_{\ell,i}^{\SE})_{\ell=1}^{k \wedge 5}$, and with $\ell \in
      \{5,6\} \cap [k]$ in the third bullet point.
\end{description}

\noindent We take as a base case is $k = 4$, because it does not
require Gaussian comparison techniques to prove.

\begin{lemma}[Base case]
\label{lem:base-case}
    Hypothesis (k) holds for $k = 4$.
\end{lemma}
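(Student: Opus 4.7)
The base case is genuinely easier than the inductive step because the first four primary optimizations have explicit saddle points that are either deterministic or depend only on the missingness vector $\ba$. The plan is to read off those saddle points, match them to the corresponding state-evolution quantities, and then check the three concentration statements of~\Cref{thm:se} by direct arguments based on Gaussian concentration and Hanson--Wright, without recourse to the Convex Gaussian Min-Max Theorem. Specifically, from~\eqref{eq:objectives}--\eqref{eq:uPO-identity} we have $\bv_1^{\PO} = \btheta_1$, $\bv_2^{\PO} = \btheta_2$, $\bv_3^{\PO} = \bv_4^{\PO} = \bzero$, $\bu_1^{\PO} = \bu_2^{\PO} = \bzero$, $\bu_3^{\PO} = -\ones/n$, and $\bu_4^{\PO} = -\ba/n$; and the state-evolution problems~\eqref{eq:SE-opt} with the penalties~\eqref{eq:SE-penalties} give the same values for $\bv_k^{\SE}$, $\bu_k^{\SE}$, $k \leq 4$ (using that $\by_1^{\SE}$ is distributed like $\ba$).

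The second step is to compute the debiasing noises via~\eqref{eq:loo-noise-def} using the sparsity of the first four rows/columns of $\bZ_u, \bZ_v$ from~\eqref{eq:Z-form}. This yields $\bg_1^{\PO} = \bg_2^{\PO} = \bzero$, $\bg_3^{\PO} = \bA^\top\ones/n$, $\bg_4^{\PO} = \bA^\top\ba/n$, $\bh_1^{\PO} = \bA\btheta_1$, $\bh_2^{\PO} = \bA\btheta_2$, $\bh_3^{\PO} = \bh_4^{\PO} = \bzero$. Since $\bA$ has i.i.d.\ $\normal(0,1)$ entries and $\ba$ depends on $\bA$ only through $\bh_1^{\PO} = \bA\btheta_1$, one can decompose $\bA$ into components parallel and orthogonal to $\btheta_1/\|\btheta_1\|$; the orthogonal part is independent of $\ba$ and supplies Gaussian noise in $\bg_3^{\PO}, \bg_4^{\PO}$, while the parallel part contributes deterministic functionals of $\bh_1^{\PO}$ that concentrate by standard scalar Gaussian concentration. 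A direct conditional-variance computation shows that $(\bg_3^{\PO}, \bg_4^{\PO})$, conditionally on $\ba$, is Gaussian with covariance $\frac{1}{n^2}\bigl(\begin{smallmatrix}n & n_1 \\ n_1 & n_1\end{smallmatrix}\bigr)\otimes \id_p$, and since $n_1/n \mydoteq \barpi$ by Hoeffding's inequality, this matches $\bK_{g,3:4,3:4}\otimes \id_p$ up to $\mydoteq$; similarly $(\bh_1^{\PO},\bh_2^{\PO})$ is exactly $\normal(\bzero, \bK_{h,1:2,1:2} \otimes \id_n)$.

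With these identifications in hand, the three parts of~\Cref{thm:se} follow: the perpendicular versions $\bV_4^{\PO,\perp}$ and $\bG_4^{\PO,\perp}$ are deterministic linear combinations (via $\bL_h^\ddagger$ and $\bL_g^\ddagger$, bounded by~\Cref{lem:fixed-pt-bound}) of the quantities above, so the first concentration reduces to order-$2$ pseudo-Lipschitz concentration for $\bA^\top\ones/\sqrt{n}$ and $\bA^\top\ba/\sqrt{n}$, the latter via Hanson--Wright after conditioning on $\ba$; the score-concentration statement~\eqref{eq:se-conc-u} involves an empirical average over i.i.d.\ coordinates $(h_{1,i}^{\PO}, h_{2,i}^{\PO}, a_i, \eps_{2,i})$ whose joint law on each $i$ matches the state-evolution law, and concentration follows from Bernstein for sub-exponential functionals of jointly Gaussian and bounded inputs; and the offset condition is vacuous because $\{5,6\}\cap[4]=\varnothing$. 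The only mildly delicate step is handling $\bg_4^{\PO}$, where the $\ba$-dependence of the Gaussian coefficients forces us to condition and to absorb the $n_1/n \mydoteq \barpi$ fluctuation into the $\mydoteq$ bound; all other estimates are immediate.
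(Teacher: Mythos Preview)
Your plan matches the paper's proof: read off the explicit saddle points for $k\le4$, decompose $\bA$ along $\btheta_1$ to isolate the $\ba$-dependence, and reduce both concentration statements to Bernstein-type bounds for i.i.d.\ sub-Gamma summands. Two computational slips should be corrected before execution.

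First, the upper-left $4\times4$ block of $\bZ_v$ is not entirely zero: from~\eqref{eq:Z-form} one has $\zeta_{41}^v=-\barpi\alpha_1\indic\{\btheta_1\neq\bzero\}$, so~\eqref{eq:loo-noise-def} gives $\bg_4^{\PO}=-\barpi\alpha_1\btheta_1+\bA^\top\ba/n$, not $\bA^\top\ba/n$. This shift is exactly the centering needed, since Gaussian integration by parts yields $\E[h_{1,i}^{\PO}a_i]=\|\btheta_1\|^2\barpi\alpha_1$, so that $\bA^\top\ba/n$ has mean $\barpi\alpha_1\btheta_1$ along $\btheta_1$.

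Second, the claim that $(\bg_3^{\PO},\bg_4^{\PO})$ is conditionally Gaussian given $\ba$ with covariance $\tfrac{1}{n^2}\bigl(\begin{smallmatrix}n&n_1\\ n_1&n_1\end{smallmatrix}\bigr)\otimes\id_p$ is not correct as stated: $\ba$ does not determine $\bh_1^{\PO}$, so conditioning on $\ba$ alone does not decouple the parallel part of $\bA$. Condition instead on $\bh_1^{\PO}$ and $(\beps_1,\beps_1')$ (which together determine $\ba$); then $\proj_{\btheta_1}^\perp\bA^\top$ is conditionally Gaussian with covariance carrying $\proj_{\btheta_1}^\perp$ rather than $\id_p$, while the parallel contributions $\tfrac{\btheta_1}{\|\btheta_1\|^2}\langle\bh_1^{\PO},\cdot\rangle/n$ are conditionally deterministic scalars. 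The paper patches the rank-one defect by replacing those scalars with independent Gaussians $(Z_1,Z_2)$ of matching $2\times2$ covariance $\hbK$, after which the full vector is conditionally $\normal(\bzero,\hbK\otimes\id_p)$ with $\hbK\mydoteq\bK_{g,3:4,3:4}$; your sketch of the parallel/orthogonal decomposition already contains this idea, so the fix is routine.
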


\begin{proof}[Proof of~\Cref{lem:base-case}]
By equations~\eqref{eq:objectives} and~\eqref{eq:SE-penalties}, we
have the relations $\bu_1^{\PO} = \bu_2^{\PO} = \bzero $, $
\bu_3^{\PO} = \frac{\ones}{n} $, $ \bu_4^{\PO} = \frac{\ba}{n} =
\frac{\by_1}{n} $, $ \bv_1^{\PO} = \btheta_1,\; \bv_2^{\PO} =
\btheta_2 $, $ \bv_3^{\PO} = \bv_4^{\PO} = \bzero $, and $ \bu_1^{\SE}
= \bu_2^{\SE} = \bzero $, $ \bu_3^{\SE} = \frac{\ones}{n} $, $
\bu_4^{\SE} = \frac{\ba^{\SE}}{n} $, $ \bv_1^{\SE} = \btheta_1,\;
\bv_2^{\SE} = \btheta_2 $, $ \bv_3^{\SE} = \bv_4^{\SE} = \bzero $.
Using the definition of the leave-one-out noise and the constraints on
$\bZ_v,\bZ_u$ in~\cref{eq:Z-form},
  \begin{equation}
    \label{eq:gh-1to4}
    \begin{gathered}
      \bg_1^{\PO} = - \bA^\top \bu_1^{\PO} = \bzero, \;\; \bg_2^{\PO}
      = - \bA^\top \bu_1^{\PO} = \bzero, \;\; \bg_3^{\PO} = \bA^\top
      \ones/n, \;\; \bg_4^{\PO} = -\barpi \alpha_1 \btheta_1 +
      \bA^\top \by_1/n, \\ \bh_1^{\PO} = \bA \bv_1^{\PO} = \bA
      \btheta_1, \;\; \bh_2^{\PO} = \bA \bv_2^{\PO} = \bA \btheta_2,
      \;\; \bh_3^{\PO} = \bA \bv_3^{\PO} = \bzero, \;\; \bh_4^{\PO} =
      \bA \bv_4^{\PO} = \bzero.
    \end{gathered}
  \end{equation}
 We see that $(\bh_1^{\PO}, \bh_2^{\PO}, \bh_3^{\PO}, \bh_4^{\PO})
 \sim \normal(0,\bK_{h,4} \otimes \id_n)$, with $\bK_h$ as
 in~\cref{eq:K-form}.  Moreover, by their definitions,
 $(u_{\ell,i}^{\PO,\perp})_{\ell=1}^4$,
 $(h_{\ell,i}^{\PO,\perp})_{\ell=1}^4$, $\eps_{1,i}$, $\eps_{1,i}'$,
 $\eps_{2,i}$ has exactly the same distribution as
 $(u_{\ell,i}^{\SE,\perp})_{\ell=1}^4$,
 $(h_{\ell,i}^{\SE,\perp})_{\ell=1}^4$, $\eps_{1,i}^{\SE}$,
 ${\eps_{1,i}^{\SE}}'$, $\eps_{2,i}^{\SE}$.  They are independent
 across $i$ and $C$-sub-Gaussian by Assumption A1.  Thus, because
 $\phi$ is order-2 pseudo-Lipschitz, the left-hand side of
 equation~\eqref{eq:se-conc-u} has independent terms which are sub-Gamma
 with variance and scale parameters bounded by $C$ (see, for example,
 \cite[Proposition G.5]{miolane2021}).  The asserted concentration
 holds by Bernstein's inequality \cite[Section
   2.8]{boucheronLugosiMassart2013}.

 We write $ \bA = \bA \proj_{\btheta_1} + \bA \proj_{\btheta_1}^\perp
 = \frac{\bh_1^{\PO} \btheta_1^\top}{\|\btheta_1\|^2} + \bA
 \proj_{\btheta_1}^\perp $.  Note that $\ba$ depends on $\bA$ only via
 $\bh_1^{\PO}$, and $\bA \proj_{\btheta_1}^\perp$ is independent of
 $\bh_1^{\PO}$.  We can write
 \begin{equation}
   \begin{gathered}
     \bg_3^{\PO} = \frac{\btheta_1}{\|\btheta_1\|^2}
     \frac{\<\bh_1^{\PO},\ones\>}{n} + \proj_{\btheta_1}^\perp
     \bA^\top \ones/n, \qquad \bg_4^{\PO} = -\barpi\alpha_1\btheta_1 +
     \frac{\btheta_1}{\|\btheta_1\|^2} \frac{\<\bh_1^{\PO},\by_1\>}{n}
     + \frac{\proj_{\btheta_1}^\perp \bA^\top \by_1}{n}.
    \end{gathered}
    \end{equation}
 Because $\bh_1^{\PO} \sim \normal(0,\|\btheta_1\|^2\id_n)$, we have
 that $\< \bh_1^{\PO} , \ones \>/n \sim \normal(0,\|\btheta_1\|^2/n)$.
 Thus, with probability at least $Ce^{-cn\epsilon^2}$,
 $\Big\|\frac{\btheta_1}{\|\btheta_1\|^2}
 \frac{\<\bh_1^{\PO},\ones\>}{n}\Big\| \leq \epsilon$.  Moreover,
 $h_{1,i}^{\PO}y_{1,i}$ is $\|\btheta_1\|^2$-sub-Gaussian with, by
 equations~\eqref{eq:model}, \eqref{eq:alpha-12}, and Gaussian integration
 by parts, expectation $\E[h_{1,i}^{\PO}y_{1,i}] = \| \btheta_1\|^2
 \barpi \alpha_1$.  Thus, for $Z_2 \sim \normal(0,1)$, we have by
 sub-Gaussian concentration that with probability at least
 $Ce^{-cn\epsilon^2}$,
 $\Big\|\frac{\btheta_1}{\|\btheta_1\|^2}\frac{\<\bh_1^{\PO},\by_1\>}{n}
 - \barpi \alpha_1 \btheta_1 - \frac{Z_2 \btheta_1}{\sqrt{n}} \Big\|
 \leq \epsilon$.  Thus, for any order-2 pseudo-Lipschitz function
 $\phi$,
\begin{equation}
\phi \big( \bg_3^{\PO},\bg_4^{\PO} \big) \mydoteq \phi\Big( \frac{Z_1
  \btheta_1}{\sqrt{n}} + \frac{\proj_{\btheta_1}^\perp \bA^\top
  \ones}{n}, \frac{Z_2\btheta_1}{\sqrt{n}} +
\frac{\proj_{\btheta_1}^\perp \bA^\top \by_1}{n} \Big),
\end{equation}
where, conditionally on the realization of $\by_1$, $Z_1, Z_2$ are
drawn from $\normal\Big(0, \hbK \Big)$ independently of everything
else (except for $\by_1$, with dependence through the conditional
covariance as shown), where
\begin{equation}
  \hbK \defn \frac{1}{n} \begin{pmatrix} 1 & \< \ones ,\by_1 \>/n
    \\ \< \ones ,\by_1 \>/n & \< \ones ,\by_1 \>/n \end{pmatrix}.
\end{equation}
We see that, conditionally on $\by_1^{\PO}$, the arguments to the
function on the right-hand side are Gaussian vectors with covariance
$\hbK \otimes \id_p$.  By Gaussian concentration of Lipschitz
functions and the fact that with exponentially high probability the
arguments to $\phi$ on the right-hand sides of $\ell_2$ norm bounded
by $C$,
\begin{equation}
  \phi\Big( \frac{Z_1 \btheta_1}{\sqrt{n}} +
  \frac{\proj_{\btheta_1}^\perp \bA^\top \ones}{n},
  \frac{Z_2\btheta_1}{\sqrt{n}} + \frac{\proj_{\btheta_1}^\perp
    \bA^\top \by_1}{n} \Big) \mydoteq \E_{(\tilde\bg_3,\tilde\bg_4)\sim
    \normal(0,\hbK)} \big[ \phi(\tilde\bg_3,\tilde\bg_4) \big].
\end{equation}
Note the right-hand side is random because it depends on $\hbK$.  But,
because by sub-Gaussian concentration $\frac{1}{n}\sum_{i=1}^n y_{1,i}
\mydoteq \barpi$, we in fact have by~\cref{eq:K-form}, that
$\E_{(\tilde\bg_3, \tilde\bg_4) \sim \normal(0,\hbK)} \big[\phi(
  \tilde\bg_3, \tilde\bg_4)\big] \mydoteq \E\big[\phi(\bg_3^{\SE},
  \bg_4^{\SE})\big]$.  Because $\bg_3^{\PO}$, $\bg_4^{\PO}$ are the
only random parts of $(\bv_\ell)_{\ell=1}^4$, $(\bg_\ell)_{\ell=1}^4$,
we have established~\cref{eq:se-conc} in the case $k = 4$.  This
completes the proof of the base case.
\end{proof}

\noindent Our next lemma captures the induction step required to
complete the proof of~\Cref{thm:exact-asymptotics}.

\begin{lemma}[Induction step]
\label{lem:inductive-step}
For $k = 5, 6$, Hypothesis (k-1) implies Hypothesis (k).
\end{lemma}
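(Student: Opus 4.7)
The plan is to establish the induction step via a conditional form of the Convex Gaussian Min-Max Theorem (CGMT), following the strategy used by Celentano and Montanari~\cite{celentano2021cad}. The goal is to reduce the $k$-th primary optimization~\eqref{eq:min-max} to a Gordon-style auxiliary optimization (AO) whose structure matches the $k$-th state evolution step~\eqref{eq:SE-opt}, and then to read off concentration of each quantity in Hypothesis~(k).

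First, I would condition on the $\sigma$-algebra generated by $(\bv_\ell^{\PO},\bu_\ell^{\PO})_{\ell\le k-1}$. Because the primary objectives $\phi_1,\ldots,\phi_{k-1}$ couple to $\bA$ only through the bilinear forms $\bu^\top\bA\bv$, the KKT conditions identify this conditioning with conditioning on the matrices $\bA\bV_{k-1}^{\PO}$ and $\bA^\top\bU_{k-1}^{\PO}$. Gaussian regression then splits $\bA=\bA^{\|}+\bA^{\perp}$, where $\bA^{\|}$ is a deterministic linear function of the conditioning data and $\bA^{\perp}$ is an independent centered Gaussian supported on the tensor product of the orthogonal complements of $\mathrm{col}(\bV_{k-1}^{\PO})$ and $\mathrm{col}(\bU_{k-1}^{\PO})$. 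Substituting this decomposition into~\eqref{eq:min-max}, the $\bA^{\|}$ contribution produces precisely the linear terms $\sum_{\ell<k}\zeta_{k\ell}^v\<\bv_\ell^{\PO},\bv\>$ and $\sum_{\ell<k}\zeta_{k\ell}^u\<\bu_\ell^{\PO},\bu\>$ appearing in~\eqref{eq:SE-opt}, with coefficients arising from the Cholesky pseudo-inverses of the Gram matrices $\llangle \bU_{k-1}^{\PO}\rrangle$ and $\llangle \bV_{k-1}^{\PO}\rrangle$; by Hypothesis~(k-1) and the explicit formulas of~\Cref{sec:chol-explicit}, these concentrate at the state evolution coefficients $\zeta_{k\ell}^u,\zeta_{k\ell}^v$ determined by~\eqref{eq:fixpt-general}.

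Second, I would apply the sequential convex-concave CGMT to the remaining bilinear term $\bu^\top\bA^{\perp}\bv$. Using strong convexity and smoothness of $\Omega_k$ in the $\bSigma$-metric (Assumption A1(g)), strong convexity of $\ell_\prop$ (A1(i)), and the boundedness estimates of~\Cref{lem:fixed-pt-bound}, the feasible set can be restricted to a ball of radius $O(1)$ without loss, so CGMT produces matching upper and lower bounds on the primary optimal value by a Gordon auxiliary problem in which $\bA^{\perp}$ is replaced by independent Gaussian vectors playing the role of $\bg_k^{\SE}$ and $\bh_k^{\SE}$ projected through $\id_{\bK_g}^\perp$, $\id_{\bK_h}^\perp$. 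The resulting AO has exactly the form of~\eqref{eq:SE-opt}, and the fixed point equations~\eqref{eq:fixpt-general} are precisely the self-consistency relations needed for its Gram matrices and cross-covariances to match the state evolution. Strong convexity of the AO, with the lower bounds $\zeta_{kk}^u,\zeta_{kk}^v\asymp 1$ from~\Cref{lem:fixed-pt-bound}, then upgrades concentration of the optimal value to concentration of the AO minimizer/maximizer, and a standard CGMT transfer argument (applied to Lipschitz perturbations of the objective by order-2 pseudo-Lipschitz test functions) lifts this to concentration of $(\bv_k^{\PO},\bu_k^{\PO})$. The debiasing-noise concentration for $\bg_k^{\PO},\bh_k^{\PO}$ then follows from their definition~\eqref{eq:loo-noise-def}, while the offset concentration $v_{k,0}^{\PO}\mydoteq\nu_{k,0}$ comes from the first-order optimality condition in $v_0$, namely $\<\ones,\bu_k^{\PO}\>\mydoteq 0$, together with the offset-fixing identity $\nu_{k,\sx}=\<\bmu_{\sx},\bv_k^{\SE}\>_{\Ltwo}$ built into~\eqref{eq:fixpt-general}.

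The main obstacle is the handling of the non-separable penalties $\Omega_5,\Omega_6$ jointly with the Fenchel--Legendre conjugate loss $\ell_5^*=\ell_\prop^*$ whose domain depends on $\by_1$: textbook CGMT is easiest with compact, separable structure, and here neither holds directly. Two technical points will require care. First, passing between concentration of the orthogonalized coordinates $\bV_k^{\PO,\perp},\bG_k^{\PO,\perp}$ and of their un-orthogonalized counterparts requires quantitative Lipschitz continuity of the Cholesky and Cholesky-pseudo-inverse maps on neighborhoods of $\bK_g,\bK_h$; this will be derived from the explicit recursion in~\Cref{lem:cholesky-explicit} together with the innovative-entry lower bounds $L_{g,kk}\asymp 1/\sqrt{n}$, $L_{h,kk}\asymp 1$ from~\Cref{lem:fixed-pt-bound}. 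Second, the objective $\phi_{k,u}$ depends on the random realization of $\bH_{k-1}^{\SE}$ through $\bw$ and $\by_1$, so applying conditional CGMT requires uniformity over a shell of realizations of the conditioning data; this will be obtained by combining a net argument on bounded compact sets with Hypothesis~(k-1) and the sub-Gaussian tail controls already established in the base case, \Cref{lem:base-case}.
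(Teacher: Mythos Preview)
Your overall plan—condition on the first $k-1$ saddle points, decompose $\bA=\bA^{\|}+\bA^{\perp}$, apply CGMT to the fresh Gaussian piece, then upgrade value concentration to optimizer concentration via strong convexity—matches the paper's strategy. But there is a real gap in your second paragraph: the auxiliary objective produced by Gordon does \emph{not} have the form of~\eqref{eq:SE-opt}. Gordon replaces $\bu^\top\bA^{\perp}\bv$ by $-\|\proj_{\bU_{k-1}^{\PO}}^\perp\bu\|\,\<\bxi_g,\bv\>+\|\proj_{\bV_{k-1}^{\PO}}^\perp\bv\|\,\<\bxi_h,\bu\>$, which is nonlinear in both variables (norms of projections appear as coefficients). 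The resulting $\AuxObj_k$ is a genuinely coupled min-max, whereas~\eqref{eq:SE-opt} consists of two \emph{separate} strongly convex minimizations. There is no scalarization that collapses the former to the latter, because $\phi_k$ is not a function of norms alone.

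The paper closes this gap by treating the SE solution $(\bu_k^{\AO},\nu_{k,0},\bv_k^{\AO})$, defined via~\eqref{eq:AO-separated-opt} on the auxiliary randomness $(\bxi_g,\bxi_h)$, as only a \emph{candidate} saddle point of $\AuxObj_k$, and then verifying approximate stationarity (Lemma~\ref{lem:approximate-stationarity}) and strong curvature (Lemma~\ref{lem:AuxObj-curvature}) at that candidate. The $\zeta_{k\ell}^u,\zeta_{k\ell}^v$ terms do not come ``directly from $\bA^{\|}$'' as you write; rather, the $\bA^{\|}$ part is shown to approximate $-\bT_g^\top+\bT_h$ with $\bT_g=\bG_{k-1}^{\PO}\bK_g^\ddagger\bU_{k-1}^{\PO\top}$ and $\bT_h=\bH_{k-1}^{\PO}\bK_h^\ddagger\bV_{k-1}^{\PO\top}$, and the $\zeta$ coefficients emerge only when you \emph{differentiate} the Gordon terms at the candidate and invoke the fixed-point identities (Lemma~\ref{lem:gordon-term-deriv}). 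This is where the second line of~\eqref{eq:fixpt-general} is actually used, together with the Cholesky pseudo-inverse relations, and it is the main technical work you are missing. Once approximate stationarity is in hand, a local-stability argument (Lemma~\ref{lem:local-stability}) comparing $\AuxObj_k$-values on and off the exceptional sets $E_u(\epsilon),E_v(\epsilon)$—rather than the ``Lipschitz perturbation'' device you propose—transfers concentration to the primary optimizers via the sequential Gordon inequality. Your obstacle list is reasonable, but the nonseparability of $\Omega_k$ is not the hard part; the decoupling of the AO into SE form is.
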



\section{The induction step: Proof of~\Cref{lem:inductive-step}}

The proof of~\Cref{lem:inductive-step} is based upon the sequential
Gordon inequality, introduced in past work~\cite{celentano2021cad} due
to a subset of the current authors. This involves introducing a
certain auxiliary objective whose saddle points behave, under the
induction hypothesis, like the saddle points of the
problem~\eqref{eq:min-max}.  The sequential Gordon inequality
establishes the essential relationship between the auxiliary objective
and the min-max problem~\eqref{eq:min-max}.

We first introduce the auxiliary objective objective.  Then,
in~\Cref{sec:aux-vectors}, we introduce some random vectors whose
behavior is central to our proof.  In~\Cref{sec:conc-aux-vectors}, we
establish some properties of these random vectors.  These properties,
together with the Convex Gaussian Min-Max Theorem (Gordon's
inequality) implies the sequential Gordon inequality, which we state
and prove in~\Cref{sec:seq-gordon}.
In~\Cref{sec:properties-aux-obj,sec:local-stability}, we establish
several additional properties of the auxiliary objective.
In~\Cref{sec:inductive-step-proof}, we combine these properties with
the sequential Gordon inequality to prove the induction step
(cf.~\Cref{lem:inductive-step}).


\subsection{The auxiliary objective}

Let $\bxi_h \sim \normal(0,\id_n)$ and $\bxi_g \sim \normal(0,\id_p)$
be independent of each other, and all other randomness in the problem.
Introducing the shorthand $\bT_g \defn \bG_{k-1}^{\PO} \bK_g^\ddagger
\bU_{k-1}^{\PO\top}$ and $\bT_h \defn \bH_{k-1}^{\PO} \bK_h^\ddagger
\bV_{k-1}^{\PO\top}$, we define the functions
\begin{align}
\label{eq:gordon-gh-matrix}
\bg(\bu) & \defn \bT_g\bu + \big\| \proj_{\bU_{k-1}^{\PO}}^\perp \bu
\big\|\bxi_g, \quad \mbox{and} \\
\bh(\bv) & \defn \bT_h\bv + \| \proj_{\bV_{k-1}^{\PO}}^\perp \bv
\|\bxi_h.
\end{align}
The proof of~\Cref{lem:inductive-step} relies on a careful analysis of
an auxiliary saddle point problem defined by the objective function
\begin{equation}
\label{eq:def-AuxObj}
    \AuxObj_k(\bu;v_0,\bv) \defn - \< \bg(\bu) , \bv \> + \< \bh(\bv)
    , \bu \> + \phi_k(\bu;v_0,\bv).
\end{equation}
Our first step in analyzing the auxiliary objective involves finding
an approximate saddle point $(\bu_k^{\AO};\nu_{k,0},\bv_k^{\AO})$.  We
first construct the approximate saddle point and then study its
properties.
\begin{lemma}[Convexity and concavity of $\AuxObj_k$]
\label{lem:AuxObj-convex-concave}
    For $\< \bxi_g , \bv \> \geq 0$, the function $\bu \mapsto
    \AuxObj_k(\bu;v_0,\bv)$ is concave.  For $\< \bxi_h , \bu \> \geq
    0$, the function $(v_0,\bv) \mapsto \AuxObj_k(\bu;v_0,\bv)$ is
    convex.
\end{lemma}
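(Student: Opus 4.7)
The plan is to decompose the auxiliary objective into pieces whose convexity/concavity properties are transparent, and to observe that the only potentially problematic pieces are scalar multiples of a norm whose sign is controlled precisely by the hypotheses $\<\bxi_g,\bv\>\geq 0$ and $\<\bxi_h,\bu\>\geq 0$. Expanding the definitions gives
\begin{equation*}
    \AuxObj_k(\bu;v_0,\bv)
    = -\bv^\top\bT_g\bu \;-\; \big\|\proj_{\bU_{k-1}^{\PO}}^\perp \bu\big\|\<\bxi_g,\bv\>
      \;+\; \bu^\top \bT_h \bv \;+\; \big\|\proj_{\bV_{k-1}^{\PO}}^\perp \bv\big\|\<\bxi_h,\bu\>
      \;+\; \phi_k(\bu;v_0,\bv).
\end{equation*}
The two bilinear terms $-\bv^\top\bT_g\bu$ and $\bu^\top\bT_h\bv$ are linear in each argument separately, hence both convex and concave in $\bu$ for fixed $(v_0,\bv)$ and vice versa.

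For concavity in $\bu$ at fixed $(v_0,\bv)$ with $\<\bxi_g,\bv\>\geq 0$, the first obstacle term $-\|\proj_{\bU_{k-1}^{\PO}}^\perp \bu\|\<\bxi_g,\bv\>$ is the negative of a nonnegative multiple of the convex function $\bu\mapsto \|\proj_{\bU_{k-1}^{\PO}}^\perp \bu\|$, hence concave. The second obstacle term $\|\proj_{\bV_{k-1}^{\PO}}^\perp \bv\|\<\bxi_h,\bu\>$ is linear in $\bu$. It remains to check that $\bu\mapsto\phi_k(\bu;v_0,\bv)$ is concave, which is immediate from the definitions in \eqref{eq:objectives}: for $k\leq 4$ the $\bu$-dependence is the concave indicator $-\indic\{\bu=\bu^\star\}$ (taking values $0$ and $-\infty$), while for $k=5,6$ the $\bu$-dependence is $\<\bu,\ones\>(v_0+\<\bmu_{\sx},\bv\>)-\ell_k^*(\bu;\bw,\by_1,\by_2)$, which is linear minus the convex Fenchel conjugate $\ell_k^*(\,\cdot\,;\bw,\by_1,\by_2)$.

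The convexity claim in $(v_0,\bv)$ is symmetric. Fixing $\bu$ with $\<\bxi_h,\bu\>\geq 0$, the term $\|\proj_{\bV_{k-1}^{\PO}}^\perp \bv\|\<\bxi_h,\bu\>$ is a nonnegative multiple of the convex function $\bv\mapsto \|\proj_{\bV_{k-1}^{\PO}}^\perp \bv\|$ and the term $-\|\proj_{\bU_{k-1}^{\PO}}^\perp \bu\|\<\bxi_g,\bv\>$ is linear in $\bv$. Convexity of $(v_0,\bv)\mapsto \phi_k(\bu;v_0,\bv)$ again follows from \eqref{eq:objectives}: for $k\leq 4$ it is the indicator of a singleton, and for $k=5,6$ it equals a linear function of $(v_0,\bv)$ plus the convex penalty $\Omega_k(\bv)$ (which is strongly convex under Assumption~A1(g)). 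Summing, the full expression is convex in $(v_0,\bv)$, completing the argument.

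No step is substantive—the main (minor) obstacle is simply bookkeeping the sign in front of each norm term and verifying case-by-case that each $\phi_k$ has the right concave/convex structure in its two arguments.
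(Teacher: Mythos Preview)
Your proof is correct and follows essentially the same approach as the paper: decompose $\AuxObj_k$ into the Gordon terms $-\<\bg(\bu),\bv\>$, $\<\bh(\bv),\bu\>$, and the penalty $\phi_k$, and check each piece separately, with the sign hypotheses on $\<\bxi_g,\bv\>$ and $\<\bxi_h,\bu\>$ controlling the norm terms. You are simply more explicit than the paper's terse version in expanding the definitions and in handling the $k\leq 4$ cases.
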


\begin{proof}
From their definitions~\eqref{eq:objectives}, the function
$\phi_k(\bu;v_0,\bv)$ are convex in $\bu$ and concave in $(v_0,\bv)$.
By inspection, the function $(\bu,\bv) \mapsto \< \bh(\bv) , \bu \>$
is linear in $\bu$ and, for $\< \bxi_h , \bu \> \geq 0$, convex in
$\bv$.  Similarly, the function $(\bu,\bv) \mapsto -\< \bg(\bu) , \bv
\>$ is linear in $\bv$ and, for $\< \bxi_g , \bv \> \geq 0$, concave
in $\bu$.
\end{proof}

\begin{remark}
An equivalent, and sometimes easier to work with, representation of
the functions $\bg$ and $\bh$ is given by
\begin{equation}
   \label{eq:gordon-gh-explicit}
   \begin{gathered}
     \bg(\bu)  = \sum_{\ell=1}^{k-1} \bg_\ell^{\PO,\perp} \<
     \bu_\ell^{\PO,\perp} , \bu \> + \big\|
     \proj_{\bU_{k-1}^{\PO}}^\perp \bu \big\|\bxi_g, \\
     \bh(\bu) = \sum_{\ell=1}^{k-1} \bh_\ell^{\PO,\perp} \<
     \bv_\ell^{\PO,\perp} , \bv \> + \big\|
     \proj_{\bV_{k-1}^{\PO}}^\perp \bv \big\|\bxi_h.
   \end{gathered}
\end{equation}
\end{remark}


\subsection{The auxiliary vectors}
\label{sec:aux-vectors}

In this section, we construct an approximate saddle point of
$\AuxObj_k$.  We defer the study of most of its properties (and, in
particular, the sense in which it is an approximate saddle point) to
later sections.

Recalling the definitions~\eqref{eq:perp-from-orig} of
$\bg_\ell^{\PO,\perp}$ and $\bh_\ell^{\PO,\perp}$, we define
\begin{equation}
\label{eq:GHk-ao}
    \bG_k^{\AO,\perp}
        =
        \begin{pmatrix}
            \vert & & \vert & \vert \\[5pt]
            \bg_1^{\PO,\perp} & \cdots & \bg_{k-1}^{\PO,\perp} & \bxi_g\\[5pt]
            \vert & & \vert & \vert 
        \end{pmatrix},
    \qquad 
    \bH_k^{\AO,\perp}
        =
        \begin{pmatrix}
            \vert & & \vert & \vert \\[5pt]
            \bh_1^{\PO,\perp} & \cdots & \bh_{k-1}^{\PO,\perp} & \bxi_h\\[5pt]
            \vert & & \vert & \vert 
        \end{pmatrix},
\end{equation}
and let 
\begin{equation}
\label{eq:GHk-ao-perp}
    \bG_k^{\AO}
        =
        \bG_k^{\AO,\perp} \bL_{g,k}^\top,
    \qquad 
    \bH_k^{\AO}
        =
        \bH_k^{\AO,\perp} \bL_{h,k}^\top.
\end{equation}
Here ``\textsf{ao}'' stands for ``auxiliary objective.''  Using
equation~\eqref{eq:perp-from-orig} and the fact that $k = 5,6$ are
innovative with respect to both $\bK_g$ and $\bK_h$
by~\Cref{lem:fixed-pt-bound} (so that $L_{g,kk}^\ddagger > 0$ and
$L_{h,kk}^\ddagger > 0$), we may also write $\bG_k^{\AO,\perp}$ and
$\bH_k^{\AO,\perp}$ in terms of $\bG_k^{\AO}$ and $\bH_k^{\AO}$:
\begin{equation}
\label{eq:GHk-ao-perp-from-std}
\bG_k^{\AO,\perp} = \bG_k^{\AO} \bL_{g,k}^{\ddagger\top}, \qquad
\bH_k^{\AO,\perp} = \bH_k^{\AO} \bL_{h,k}^{\ddagger\top}.
\end{equation}
These definitions also imply that
\begin{equation}
  \begin{gathered}
    \text{for $\ell < k$,} \quad \bg_\ell^{\AO} = \bg_\ell^{\PO},
    \quad \bh_\ell^{\AO} = \bh_\ell^{\PO}, \\ \bg_k^{\AO} = \sum_{\ell
      = 1}^{k-1} L_{g,k\ell} \bg_\ell^{\PO,\perp} + L_{g,kk} \bxi_g,
    \quad \bh_k^{\AO} = \sum_{\ell = 1}^{k-1} L_{h,k\ell}
    \bh_\ell^{\PO,\perp} + L_{h,kk} \bxi_h.
\end{gathered}
\end{equation}
We further define $\bu_\ell^{\AO} = \bu_\ell^{\PO}$ and
$\bv_\ell^{\AO} = \bv_\ell^{\PO}$ for $\ell < k$, and define
$\bu_k^{\AO}$ and $\bv_k^{\AO}$ as in equation~\eqref{eq:SE-opt} with
$\bH_k^{\AO}$ in place of $\bH_k^{\SE}$ and $\bG_k^{\AO}$ in place of
$\bG_k^{\SE}$.  Explicitly,
\begin{equation}
\label{eq:AO-separated-opt}
\begin{aligned}
    \bu_k^{\AO} &= \argmin_{\bu} \Big\{ \frac{\zeta_{kk}^u}2\|\bu\|^2
    + \sum_{\ell=1}^{k-1} \zeta_{k\ell}^u \<\bu_\ell^{\AO},\bu\> -
    \<\bh_k^{\AO},\bu\> +
    \phi_{k,u}(\bu;\bH_{k-1}^{\AO};\beps_1,\beps_1',\beps_2) \Big\},
    \\ \bv_k^{\AO} &= \argmin_{\bv} \Big\{
    \frac{\zeta_{kk}^v}2\|\bv\|^2 + \sum_{\ell=1}^{k-1}
    \zeta_{k\ell}^v \<\bv_\ell^{\AO},\bv\> - \<\bg_k^{\AO},\bv\> +
    \phi_{k,v}(\bv;\bG_{k-1}^{\AO}) \Big\}.
\end{aligned}
\end{equation}
Finally, define
\begin{equation}
\label{eq:UVk-ao-perp}
    \bU_k^{\AO,\perp} = \bU_k^{\AO} \bL_{g,k}^{\ddagger\top}, \qquad
    \bV_k^{\AO,\perp} = \bV_k^{\AO} \bL_{h,k}^{\ddagger\top}.
\end{equation}
Under this definition, we also have
\begin{equation}
\label{eq:UVk-ao}
    \bU_k^{\AO} = \bU_k^{\AO,\perp} \bL_{g,k}^{\top}, \qquad
    \bV_k^{\AO} = \bV_k^{\AO,\perp} \bL_{h,k}^{\top}.
\end{equation}
Indeed, by~\Cref{lem:cholesky-inverse-identities}, $\bU_k^{\AO,\perp}
\bL_{g,k}^\top = \bU_k^{\AO}\bL_{g,k}^{\ddagger\top}\bL_{g,k}^\top =
\bU_k^{\AO} \id_{\bK_{g,k}}^\top$ and $\bV_k^{\AO,\perp}
\bL_{h,k}^\top = \bV_k^{\AO} \id_{\bK_{h,k}}^\top$.  By the definition
of $\id_{\bK_{g,k}}$, $\bU_k^{\AO} \id_{\bK_{g,k}}^\top$ is equal to
$\bU_k^{\AO}$ provided the rows of $\bU_k^{\AO}$ are in
$\range(\bK_{g,k})$.  By~\Cref{lem:fixed-pt-bound}, $\range(\bK_{g,k})
= \spn\{\be_\ell : 2 < \ell \leq k\}$, and by
equation~\eqref{eq:AO-separated-opt}, $\bu_\ell^{\AO} = \bu_\ell^{\PO}
= \bzero$ for $\ell = 1,2$, whence indeed the rows of $\bU_k^{\AO}$
are in $\range(\bK_{g,k})$.  Similarly, $\bV_k^{\AO}
\id_{\bK_{h,k}}^\top$ is equal to $\bV_k^{\AO}$ provided the rows of
$\bV_k^{\AO}$ are in $\range(\bK_{h,k})$.
By~\Cref{lem:fixed-pt-bound}, $\ell = 5,6$ are innovative with respect
to $\bK_{h,k}$, whence the standard bases $\be_5,\be_5 \in
\range(\bK_{h,k})$.  Also, $\bv_3^{\AO} = \bv_4^{\AO} = \bzero$,
whence we only need to check that the rows of $\bV_2^{\AO}$ are in
$\range(\bK_{h,2})$.  Recall $\bV_2^{\AO} = \bV_2^{\PO} = \bTheta$ and
$\bK_{h,2} = \llangle \bTheta \rrangle$ by~\Cref{lem:fixed-pt-bound},
whence the rows of $\bV_2^{\AO}$ are indeed in $\range(\bK_{h,2})$.
We have thus established equation~\eqref{eq:UVk-ao}.

Comparing equation~\eqref{eq:UVk-ao-perp} with
equation~\eqref{eq:perp-from-orig}, we also see that $\bU_{k-1}^{\AO,
  \perp} = \bU_{k-1}^{\PO, \perp}$ and $\bV_{k-1}^{\AO, \perp} =
\bV_{k-1}^{\PO, \perp}$, and sometime interchange these in our
calculations.


\subsection{Concentration of statistics of auxiliary vectors}
\label{sec:conc-aux-vectors}

We first show concentration of order-2 pseudo-Lipschitz functions of
the auxiliary vectors.

\begin{lemma}
\label{lem:hypothesis-k-ao}
For $k = 5,6$, Hypothesis (k-1) implies that for $\phi:
(\reals^p)^{2k} \rightarrow \reals$ (in the first line) or for $\phi:
\reals^{2k+1} \rightarrow \reals$ (in second line) which is
pseudo-Lipschitz of order 2,
\begin{equation}
\label{eq:ao-conc}
\begin{gathered}
  \phi\Big( \bV_k^{\AO}, \bG_k^{\AO} \Big) \mydoteq \E\Big[ \phi\Big(
    \bV_k^{\SE}, \bG_k^{\SE} \Big) \Big], \\ \frac{1}{n} \sum_{i=1}^n
  \phi\Big( (nu_{\ell,i}^{\AO})_{\ell=1}^K,
  (h_{\ell,i}^{\AO})_{\ell=1}^K, \eps_{2,i} \Big) \mydoteq \E\Big[
    \phi\Big( (nu_{\ell,i}^{\SE})_{\ell=1}^K,
    (h_{\ell,i}^{\SE})_{\ell=1}^K, \eps_{2,i}^{\SE} \Big) \Big].
    \end{gathered}
    \end{equation}
\end{lemma}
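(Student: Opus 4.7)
The plan is to show that $\bV_k^{\AO}$ and $\bG_k^{\AO}$ are built from level-$(k-1)$ PO quantities---already controlled by Hypothesis~(k-1)---together with a single fresh Gaussian innovation $(\bxi_g,\bxi_h)$ passed through a strongly convex optimization. Concentration will then follow by combining Gaussian concentration in $(\bxi_g,\bxi_h)$ with Hypothesis~(k-1) for the rest.

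First I would note that for $\ell<k$ the AO and PO quantities coincide ($\bu_\ell^{\AO}=\bu_\ell^{\PO}$, $\bv_\ell^{\AO}=\bv_\ell^{\PO}$), so by the entry bounds $|L_{g,k\ell}|\lesssim 1/\sqrt n$ and $|L_{h,k\ell}|\lesssim 1$ from Lemma~\ref{lem:fixed-pt-bound}, the first $k-1$ columns of $\bV_k^{\AO}$ and $\bG_k^{\AO}$ are controlled by Hypothesis~(k-1). For the $k$-th column, $\bg_k^{\AO}=\sum_{\ell<k}L_{g,k\ell}\bg_\ell^{\PO,\perp}+L_{g,kk}\bxi_g$ and analogously $\bh_k^{\AO}$. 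Conditional on the $\sigma$-field $\mathcal{F}_{k-1}$ generated by all PO objects at levels $\le k-1$ and by $(\beps_1,\beps_1',\beps_2)$, the law of $(\bg_k^{\AO},\bh_k^{\AO})$ matches the law of $(\bg_k^{\SE},\bh_k^{\SE})$ under the state evolution with $(\bG_{k-1}^{\SE,\perp},\bH_{k-1}^{\SE,\perp})$ set to $(\bG_{k-1}^{\PO,\perp},\bH_{k-1}^{\PO,\perp})$: both are obtained by applying the same Cholesky factors to the previous $\perp$-columns together with a fresh standard-Gaussian innovation in the $k$-th slot, and for $k\in\{5,6\}$ that innovation is nondegenerate since $5,6$ are innovative in $\bK_g$ and $\bK_h$ by Lemma~\ref{lem:fixed-pt-bound}. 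Feeding this into the AO optimization~\eqref{eq:AO-separated-opt} shows that the conditional law of $(\bv_k^{\AO},\bu_k^{\AO})$ given $\mathcal{F}_{k-1}$ agrees with the SE conditional law of $(\bv_k^{\SE},\bu_k^{\SE})$.

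Next, strong convexity of~\eqref{eq:AO-separated-opt}---from Assumption~A1(g) for $\Omega_k$, Assumption~A1(i) for $\ell_\prop^*$, and Assumption~A1(h) for the weighted-least-squares branch---together with $\zeta_{kk}^v\asymp 1$ and $\zeta_{kk}^u\asymp n$ (Lemma~\ref{lem:fixed-pt-bound}), makes $\bv_k^{\AO}$ $O(1)$-Lipschitz in $\bg_k^{\AO}$ and $n\bu_k^{\AO}$ $O(1)$-Lipschitz in $\bh_k^{\AO}$. Composed with $L_{g,kk},L_{h,kk}\asymp 1/\sqrt n$, the maps $\bxi_g\mapsto\bv_k^{\AO}$ and $\bxi_h\mapsto n\bu_k^{\AO}$ are $O(1/\sqrt n)$-Lipschitz on the high-probability event $\mathcal{E}$ that all ambient norms are $O(1)$. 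Since an order-$2$ pseudo-Lipschitz test function $\phi$ is itself $O(1)$-Lipschitz on $\mathcal{E}$, $\phi(\bV_k^{\AO},\bG_k^{\AO})$ is $O(1/\sqrt n)$-Lipschitz in $(\bxi_g,\bxi_h)$ on $\mathcal{E}$, and Gaussian concentration gives
\[\phi(\bV_k^{\AO},\bG_k^{\AO})\;\mydoteq\;\E\!\big[\phi(\bV_k^{\AO},\bG_k^{\AO})\,\big|\,\mathcal{F}_{k-1}\big]\;=\;\Psi\!\left(\bV_{k-1}^{\AO},\bG_{k-1}^{\AO,\perp}\right)\]
for some deterministic map $\Psi$. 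By the distribution-matching from the previous paragraph, $\E[\phi(\bV_k^{\SE},\bG_k^{\SE})]=\E[\Psi(\bV_{k-1}^{\SE},\bG_{k-1}^{\SE,\perp})]$, and propagating the Lipschitz bounds through the Gaussian expectation shows $\Psi$ is itself order-$2$ pseudo-Lipschitz with $O(1)$ constant, so applying Hypothesis~(k-1) to $\Psi$ and chaining the two $\mydoteq$'s yields the first display. For the second display I would run the same template with Bernstein's inequality in place of Gaussian concentration: conditional on $\mathcal{F}_{k-1}$ the $n$ summands are independent and sub-Gamma with $O(1)$ parameters (the new $h_{k,i}^{\AO}$ are i.i.d. Gaussian with $O(1)$ variance, and $n u_{k,i}^{\AO}$ is a Lipschitz function of $h_{k,i}^{\AO}$ and the per-coordinate bounded data), so the sample average concentrates exponentially on its conditional mean, which matches the SE expectation by the same Hypothesis~(k-1) pseudo-Lipschitz argument.

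\emph{Main obstacle.} The subtle technical point is verifying that the conditional-expectation map $\Psi$ is pseudo-Lipschitz in its arguments uniformly in $n,p$. This requires combining the strong-convexity Lipschitz bounds on the optimizer with the uniform-order controls on $\zeta_{kk}^v,\zeta_{kk}^u,L_{g,k\ell},L_{h,k\ell}$ from Lemma~\ref{lem:fixed-pt-bound}, and absorbing the low-probability bad event $\mathcal{E}^c$ via a standard truncation-into-exponential-tail step. The innovation structure encoded by equations~\eqref{eq:Z-form}--\eqref{eq:K-form} is what rules out degenerate cases (e.g., a predictable coordinate at level $k$) that would otherwise break these Lipschitz estimates.
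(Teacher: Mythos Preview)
Your treatment of the first line is essentially the paper's: express $(\bV_k^{\AO},\bG_k^{\AO})$ as a Lipschitz function of $(\bV_{k-1}^{\PO,\perp},\bG_{k-1}^{\PO,\perp}/\sqrt n,\bxi_g/\sqrt n)$, apply Gaussian concentration in $\bxi_g$ to pass to the conditional mean, then invoke Hypothesis~(k-1) on that conditional mean and identify the limit via $\bxi_g\eqnd\bg_k^{\SE,\perp}$.

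For the second line there is a genuine gap. You need the conditional-mean map $\Psi$ to be order-2 pseudo-Lipschitz in the level-$(k-1)$ coordinates so that Hypothesis~(k-1) applies; it is not. For $k=6$ one has
\[
nu_{6,i}^{\AO} \;=\; (-nu_{4,i}^{\AO})\cdot\frac{\nu_{6,0}+\nu_{6,\sx}+h_{6,i}^{\AO}-h_{2,i}^{\AO}-\eps_{2,i}}{\zeta_{66}^u/n + 1/w(h_{1,i}^{\AO})},
\]
a product of the bounded factor $y_{1,i}=-nu_{4,i}^{\AO}\in\{0,1\}$ with a factor linear in the \emph{unbounded} Gaussians $h_{2,i}^{\AO},h_{6,i}^{\AO},\eps_{2,i}$. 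Its derivative in $nu_{4,i}^{\AO}$ (and in $h_{1,i}^{\AO}$) grows linearly in those Gaussians, so after composing with an order-2 pseudo-Lipschitz $\phi$ the variation in $nu_{4,i}^{\AO}$ grows \emph{quadratically} in the other coordinates---one order too many for order-2 pseudo-Lipschitz. The same bilinear obstruction arises for $k=5$ (the derivative of $nu_{5,i}^{\AO}$ in $nu_{4,i}^{\AO}$ diverges linearly in $h_{5,i}^{\AO}$). Your phrase ``per-coordinate bounded data'' does not hold here, and your truncation remark targets the large-norm event $\mathcal E^c$, which is a different issue from this structural non-Lipschitzness.

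The paper's remedy is a truncation of a different kind: replace the unbounded numerator by its truncation at level $M$, so the truncated $nu_{k,i}^{\AO}$ becomes globally $CM$-Lipschitz in all the level-$(k-1)$ coordinates; apply your two-step scheme to $\phi/M^2$ (now genuinely order-2 pseudo-Lipschitz with $O(1)$ constant); bound the truncation error by $Ce^{-cM^2}$ using Gaussian tails on $h_{2,i}^{\AO},h_{6,i}^{\AO},\eps_{2,i}$; and finally take $M=\sqrt{\log(1/\epsilon)}$ to absorb the error into the $\mydoteq$ rate.
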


\begin{proof}[Proof of~\Cref{lem:hypothesis-k-ao}]
  We prove the two lines of equation~\eqref{eq:ao-conc} one at at
  time.  Throughout the proof, we freely invoke the bounds on the
  problem parameters or fixed point parameters (e.g., $L_{g,k\ell}$,
  $L_{h,k\ell}$, etc.) from~\Cref{lem:fixed-pt-bound} and Assumption
  A1 without citing the lemma or assumption each time.  \\

  \noindent \textbf{First line of equation~\eqref{eq:ao-conc}.}  Denote the
  functions which maps $\bG_{k-1}^{\PO,\perp}$, $\bxi_g$ to
  $\bG_k^{\AO}$ via equation~\eqref{eq:GHk-ao-perp} and the function
  which maps $\bV_{k-1}^{\PO,\perp}$, $\bG_{k-1}^{\PO,\perp}$,
  $\bxi_g$ to $\bV_k^{\AO}$ via equations~\eqref{eq:AO-separated-opt}
  and \eqref{eq:UVk-ao} by
  \begin{equation}
    \label{eq:sG-func}
    \bG_k^{\AO} = \sG_k \Big( \frac{1}{\sqrt{n}}
    \bG_{k-1}^{\AO,\perp}, \frac{1}{\sqrt{n}} \bxi_g \Big), \qquad
    \bV_k^{\AO} = \sV_k \Big( \bV_{k-1}^{\AO,\perp},
    \frac{1}{\sqrt{n}} \bG_{k-1}^{\AO,\perp}, \frac{1}{\sqrt{n}} \bxi_g
    \Big),
  \end{equation}
  respectively.  The function $\sG_k$ is $C$-Lipschitz in its
  arguments because $L_{g,\ell\ell'} \lesssim 1 / \sqrt{n}$ and
  $L_{h,\ell\ell'} \lesssim 1$.  Because $L_{h,\ell\ell'} \lesssim 1$,
  equation~\eqref{eq:UVk-ao} implies that for $\ell \leq k-1$,
  $\bv_{\ell}^{\AO}$ is $C$-Lipschitz in
  $\big(\bv_{\ell'}^{\AO,\perp}\big)_{\ell'=1}^\ell$.  Because
  $\phi_{k,v}$ does not depend on $\bG_{k-1}^{\AO}$ (see
  equation~\eqref{eq:SE-penalties}), because proximal operators are
  1-Lipschitz \cite[Proposition 12.27]{bauschke2011convex}, and
  because $\zeta_{k\ell}^v \lesssim 1$ for $\ell \leq k$ and
  $\zeta_{kk}^v \asymp 1$, the definition of $\bv_k^{\AO}$
  (equation~\eqref{eq:AO-separated-opt}) implies it is $C$-Lipschitz
  in $\bV_{k-1}^{\AO,\perp}$, $\bG_{k-1}^{\AO,\perp}/\sqrt{n}$, and
  $\bxi_g/\sqrt{n}$.  Thus, $\sV_k$ is $C$-Lipschitz in its arguments.

These Lipschitz properties imply, by Hypothesis (k-1), that
\begin{equation}
  \label{eq:v-ao-conc}
  \begin{aligned}
    \phi\Big( \bV_k^{\AO}, \bG_k^{\AO} \Big) &= \phi\Big( \sV_k \Big(
    \bV_{k-1}^{\AO,\perp}, \frac{1}{\sqrt{n}} \bG_{k-1}^{\AO,\perp},
    \frac{1}{\sqrt{n}} \bxi_g \Big), \sG_k \Big( \frac{1}{\sqrt{n}}
    \bG_{k-1}^{\AO,\perp}, \frac{1}{\sqrt{n}} \bxi_g \Big) \Big)
    \\ &\mydoteq \E\Big[ \phi\Big( \sV_k \Big( \bV_{k-1}^{\AO,\perp},
      \frac{1}{\sqrt{n}} \bG_{k-1}^{\AO,\perp}, \frac{1}{\sqrt{n}}
      \bxi_g \Big), \sG_k \Big( \frac{1}{\sqrt{n}}
      \bG_{k-1}^{\AO,\perp}, \frac{1}{\sqrt{n}} \bxi_g \Big) \Big)
      \Bigm| \bV_{k-1}^{\AO,\perp},\bG_{k-1}^{\AO,\perp} \Big]
    \\ &\mydoteq \E\Big[ \phi\Big( \sV_k \Big( \bV_{k-1}^{\SE,\perp},
      \frac{1}{\sqrt{n}} \bG_{k-1}^{\SE,\perp}, \frac{1}{\sqrt{n}}
      \bxi_g \Big), \sG_k \Big( \frac{1}{\sqrt{n}}
      \bG_{k-1}^{\SE,\perp}, \frac{1}{\sqrt{n}} \bxi_g \Big) \Big)
      \Big].
    \end{aligned}
    \end{equation}
In the first approximate equality we use that conditionally on
$\bV_{k-1}^{\AO,\perp}$ and $\bG_{k-1}^{\AO,\perp}$, the quantity
$\phi\Big(\bV_k^{\AO},\bG_k^{\AO}/\sqrt{n}\Big)$ is conditionally
sub-Gamma with variance parameter $C(\|\bV_{k-1}^{\AO}\|_{\sF}^2 + \|
\bG_{k-1}^{\AO}\|_{\sF}^2)/n$ and scale parameter $C/n$ (see, for
example, \cite[Proposition G.5]{miolane2021}), and that
$\|\bV_{k-1}^{\AO}\|_{\sF}^2 + \| \bG_{k-1}^{\AO}\|_{\sF}^2 \mydoteq
\E[\|\bV_{k-1}^{\SE}\|_{\sF}^2 + \| \bG_{k-1}^{\SE}\|_{\sF}^2]
\lessdot C$ by Hypothesis (k-1).  The second approximate equality
holds by Hypothesis (k-1).

Then note that
\begin{equation}
  \begin{gathered}
    \sG_k \Big( \frac{1}{\sqrt{n}} \bG_{k-1}^{\SE,\perp},
    \frac{1}{\sqrt{n}} \bxi_g \Big) \underset{\emph{(i)}}\eqnd \sG_k
    \Big( \frac{1}{\sqrt{n}} \bG_{k-1}^{\SE,\perp}, \frac{1}{\sqrt{n}}
    \bg_k^{\SE,\perp} \Big) \underset{\emph{(ii)}}= \bG_k^{\SE}.
    \end{gathered}
    \end{equation}
Recall $\bg_k^{\SE,\perp} = \bG_k^{\SE} (\bL_{g,k}^\ddagger)^\top$
where $\bL_{g,k}^\ddagger$ is the Cholesky pseudo-inverse of
$\bK_{g,k}$ and $\bG_k^{\SE} \sim \normal(0,\bK_{g,k}\otimes \id_p)$.
Thus, distributional equality \emph{(i)} in the first line holds
because $\bxi_g \eqnd \bg_k^{\SE,\perp} \sim \normal(0,\id_p)$
independent of everything else, where we have used that index $k$ is
innovative with respect to $\bK_{g,k}$.  Distributional equality
\emph{(i)} in the second line holds similarly.  Equality \emph{(ii)}
holds by comparing equations~\eqref{eq:orig-from-perp} and
\eqref{eq:GHk-ao-perp}.  Similarly,
\begin{equation}
  \begin{gathered}
    \sV_k \Big( \bV_{k-1}^{\SE,\perp}, \frac{1}{\sqrt{n}}
    \bG_{k-1}^{\SE,\perp}, \frac{1}{\sqrt{n}} \bxi_g \Big) \eqnd \sV_k
    \Big( \bV_{k-1}^{\SE,\perp}, \frac{1}{\sqrt{n}}
    \bG_{k-1}^{\SE,\perp}, \frac{1}{\sqrt{n}} \bg_k^{\SE,\perp} \Big) =
    \bV_k^{\SE}
\end{gathered}
\end{equation}
The distributional equality holds by the same reason as before.  The
equality holds by comparing equations~\eqref{eq:SE-opt}
and~\eqref{eq:orig-from-perp} with
equations~\eqref{eq:AO-separated-opt}, \eqref{eq:UVk-ao},
and~\eqref{eq:GHk-ao-perp}.  Thus, equation~\eqref{eq:v-ao-conc}
implies the first line of equation~\eqref{eq:ao-conc}.  \\

\noindent \textbf{Second line of equation~\eqref{eq:ao-conc}.}  To
establish the second line of equation~\eqref{eq:ao-conc}, we use
explicit expressions for for $\bu_k^{\AO}$ in the case that $k =
5$ and $k = 6$, treating the two cases separately.

First, we establish a fact to be used in both cases.  We denote the
functions which map $(h_{\ell,i}^{\AO})_{\ell=1}^{k-1}$, $\xi_{h,i}$
to $(h_{\ell,i}^{\AO})_{\ell=1}^k$ via equation~\eqref{eq:GHk-ao-perp}
and $(u_{\ell,i}^{\AO,\perp})_{\ell=1}^{k-1}$ to
$(u_{\ell,i}^{\AO})_{\ell=1}^{k-1}$ via
equation~\eqref{eq:UVk-ao-perp} by
\begin{equation}
  (h_{\ell,i}^{\AO})_{k=1}^k = \sh_k \big(
  (h_{\ell,i}^{\AO,\perp})_{\ell=1}^{k-1}, \xi_{h,i} \big), \qquad
  (u_{\ell,i}^{\AO})_{k=1}^{k-1} = \su_{k-1} \big(
  (nu_{\ell,i}^{\AO,\perp})_{\ell=1}^{k-1} \big),
\end{equation}
respectively.  The functions $\sh_k$ and $n\su_{k-1}$ are
$C$-Lipschitz in their arguments by equation~\eqref{eq:GHk-ao-perp}
because $L_{h, \ell \ell'} \lesssim 1$ and $L_{g, \ell \ell'} \lesssim
1/\sqrt{n}$.

Now we specialize to the case $k = 6$.  Rearranging
equation~\eqref{eq:fenchel-legendre} and using $-nu_{4,i}^{\AO} =
y_{1,i} \in \{0,1\}$ and $\zeta_{6\ell}^u = 0$ for $\ell \leq 5$,
\begin{equation}
  \label{eq:u6ao-explicit}
  u_{6i}^{\AO} = - \frac{\nu_{6,0} + \nu_{6,\sx} + h_{6,i}^{\AO} -
    h_{2,i}^{\AO} - \eps_{2,i}}{\zeta_{66}^u + n
    w^{-1}(h_{1,i}^{\AO})} nu_{4,i}^{\AO}.
\end{equation}
This function is not Lipschitz in $h_{1,i}^{\AO}$, $h_{2,i}^{\AO}$,
$h_{6,i}^{\AO}$, $\eps_{2,i}$, $nu_{4,i}^{\AO}$ because the derivative
with respect to $nu_{4,i}^{\AO}$ and with respect to $h_{1,i}^{\AO}$
diverges as $h_{2,i}^{\AO}$, $h_{6,i}^{\AO}$, $\eps_{2,i}$ diverge.
We thus must resort to a truncation argument.  In particular, for $M >
1$, define $\Trunc^M(x) = (x \wedge M) \vee (-M)$ and define
$\subar_6^M$ by
\begin{equation}
  \label{eq:ubar-fnc}
  \subar_6^M (h_{1,i},h_{2,i},h_{6,i},nu_{4,i},\eps_{2,i}) \defn (-n
  u_{4,i}\wedge 1)\,\frac{\Trunc^M(\nu_{6,0} + \nu_{6,\sx} - h_{2,i} -
    \eps_{2,i} + h_{6,i})}{\zeta_{66}^u + n/w(h_{1,i})}.
    \end{equation}
We have that $n\subar_6^M$ is $CM$-Lipschitz because $w(\,\cdot\,)$ is
bounded above by $C$ and below by $c$ and has derivative bounded by
$C$ and $|\nu_{6,0}|$, $|\nu_{6,\sx}|$, $\zeta_{66}^u/n$ are bounded
above by $C$.  Denote $\ubar_6^{\AO}(M) \defn \subar_6^M (h_{1,i}^\AO,
h_{2,i}^\AO, h_{6,i}^\AO, n u_{4,i}^\AO, \eps_{2,i})$.  Then
$\phi/M^2$ is $C$ order-$2$ pseudo-Lipschitz, so by the exact same
logic we used in equation~\eqref{eq:v-ao-conc}, we have
\begin{equation}
  \label{eq:u-ao-conc}
  \begin{aligned}
    &\frac{1}{nM^2} \sum_{i=1}^n
    \phi\big((nu_{\ell,i}^{\AO})_{\ell=1}^5,n\ubar_6^{\AO}(M),(h_{\ell,i}^{\AO})_{\ell
      = 1}^6\big) \\ &\qquad\qquad= \frac{1}{nM^2} \sum_{i=1}^n
    \phi\big( \su_5 \big( (nu_{\ell,i}^{\AO,\perp})_{\ell=1}^5 \big),
    \subar_6^M
    (h_{1,i}^{\AO},h_{2,i}^{\AO},h_{6,i}^{\AO},nu_{4,i}^{\AO},\eps_{2,i}),
    \sh_k \big( (h_{\ell,i}^{\AO,\perp})_{\ell=1}^5, \xi_{h,i} \big)
    \big) \\
    & \qquad\qquad\mydoteq \frac{1}{M^2} \E\Big[ \phi\big( \su_5 \big(
      (nu_{\ell,i}^{\AO,\perp})_{\ell=1}^5 \big), \subar_6^M
      (h_{1,i}^{\AO},h_{2,i}^{\AO},h_{6,i}^{\AO},nu_{4,i}^{\AO},\eps_{2,i}),
      \sh_k \big( (h_{\ell,i}^{\AO,\perp})_{\ell=1}^5, \xi_{h,i} \big)
      \big) \Bigm| (u_{\ell,i}^{\AO})_{\ell=1}^5,
      (h_{\ell,i}^{\AO})_{\ell=1}^5 \Big] \\
    & \qquad \qquad \mydoteq \frac{1}{M^2} \E\Big[ \phi\big( \su_5
      \big( (nu_{\ell,i}^{\SE,\perp})_{\ell=1}^5 \big), \subar_6^M
      (h_{1,i}^{\SE},h_{2,i}^{\SE},h_{6,i}^{\SE},nu_{4,i}^{\SE},\eps_{2,i}),
      \sh_k \big( (h_{\ell,i}^{\SE,\perp})_{\ell=1}^5, \xi_{h,i} \big)
      \big) \Big].
  \end{aligned}
\end{equation}
If we denote $\ubar_6^{\SE}(M)\defn\subar_6^M
(h_{1,i}^\SE,h_{2,i}^\SE,h_{6,i}^\SE,nu_{4,i}^\SE,\eps_{2,i})$, then
we can recognize the right-hand side of the preceding display as $
\frac{1}{M^2} \E \Big[ \phi\big( (nu_{\ell,i}^{\SE,\perp})_{\ell=1}^5,
  \ubar_{6,i}^{\SE}(M), (h_{\ell,i}^{\SE})_{\ell=1}^6 \big) \Big].  $
For $nu_{4,i} \in \{-1,0\}$, $\big|\subar_6^M (h_{1,i}, h_{2,i},
h_{6,i}, nu_{4,i},\eps_{2,i}) - \subar_6 (h_{1,i}, h_{2,i}, h_{6,i},
nu_{4,i},\eps_{2,i})\big| \leq C(1+|h_{2,i}| + |h_{6,i}| +
|\eps_{2,i}| - M)_+$, where $\subar_6$ is defined as $\subar_6^M$ with
the minimum with 1 and $M$ removed.  Thus, standard Gaussian tail
bounds give
\begin{equation}
  \Big| \frac{1}{M^2} \E\Big[ \phi\big(
    (nu_{\ell,i}^{\SE,\perp})_{\ell=1}^5, \ubar_{6,i}^{\SE}(M),
    (h_{\ell,i}^{\SE})_{\ell=1}^6 \big) \Big] - \frac{1}{M^2} \E\Big[
    \phi\big( (nu_{\ell,i}^{\SE,\perp})_{\ell=1}^5, u_{6,i}^{\SE},
    (h_{\ell,i}^{\SE})_{\ell=1}^6 \big) \Big] \Big| \leq Ce^{-cM^2},
\end{equation}
 and
 \begin{equation}
   \begin{aligned}
 &\Big| \frac{1}{nM^2} \sum_{i=1}^n \phi
     \big((nu_{\ell,i}^{\AO})_{\ell=1}^5,n\ubar_{6,i}^{\AO}(M),(h_{\ell,i}^{\AO})_{\ell
       = 1}^6\big) - \frac{1}{nM^2} \sum_{i=1}^n
     \phi\big((nu_{\ell,i}^{\AO})_{\ell=1}^5,nu_{6,i}^{\AO},(h_{\ell,i}^{\AO})_{\ell
       = 1}^6\big) \Big| \\ &\qquad\qquad \leq \frac{C}{nM^2}
     \sum_{i=1}^n \Big( 1 + \sum_{\ell=1}^5|nu_{\ell,i}^\AO| +
     \sum_{\ell=1}^6|h_{\ell,i}^\AO| + |\eps_{2,i}| \Big) \Big( 1 +
     |h_{2,i}^{\AO}| + |h_{6,i}^\AO| + |\eps_{2,i}| - M \Big)_+ \\
& \qquad\qquad \mydoteq \frac{C}{M^2} \E\Big[ \Big( 1 +
       \sum_{\ell=1}^5 |nu_{\ell,i}^\SE| + \sum_{\ell=1}^6
       |h_{\ell,i}^\SE| + |\eps_{2,i}| \Big) \Big( 1 + |h_{2,i}^{\SE}|
       + |h_{6,i}^\SE| + |\eps_{2,i}^{\SE}| - M \Big)_+ \Big] \leq C
     e^{-c n M^2}.
   \end{aligned}
 \end{equation}
 Combining the previous three displays,
 \begin{equation}
   \begin{aligned}
     &\Big| \frac{1}{n M^2} \sum_{i=1}^n
     \phi\big((nu_{\ell,i}^{\AO})_{\ell=1}^5,nu_k^{\AO},(h_{\ell,i}^{\AO})_{\ell
       = 1}^6\big) - \frac{1}{M^2} \E\Big[ \phi\big(
       (nu_{\ell,i}^{\SE,\perp})_{\ell=1}^5, u_{6,i}^{\SE},
       (h_{\ell,i}^{\SE})_{\ell=1}^6 \big) \Big] \Big| \lessdot
     Ce^{-cM^2}.
    \end{aligned}
    \end{equation}
    That is, for all $\epsilon > 0$, 
    the probability that the left-hand side exceeds $\epsilon + Ce^{-cM^2}$ is bounded above by $C'e^{-c'n\epsilon^r}$ for some $C,c,C',c',r>0$ depending only on $\cPmodel$.
    Taking $M = \sqrt{\log(1/\epsilon)}$,
    \begin{equation}
            \frac{1}{n}
            \sum_{i=1}^n 
            \phi\big((nu_{\ell,i}^{\AO})_{\ell=1}^5,nu_k^{\AO},(h_{\ell,i}^{\AO})_{\ell = 1}^6\big)
            \mydoteq
            \E\Big[
                \phi\big(
                    (nu_{\ell,i}^{\SE,\perp})_{\ell=1}^5,
                    u_{6,i}^{\SE},
                    (h_{\ell,i}^{\SE})_{\ell=1}^6
                \big)
            \Big].
    \end{equation}
    The proof of the second line of equation~\eqref{eq:ao-conc} in the case $k = 6$ is complete.

    Now we turn to the case $k = 5$.
    By the definition of $\bu_5^{\AO}$ (equation~\eqref{eq:AO-separated-opt}),
    \begin{equation}
        u_{5,i}^{\AO}
            =
            -
            \prox
            \Big[
                \frac{\ell_{\prop}^*(n\,\cdot\,;1)}{n\zeta_{55}^u}
            \Big]
            \Big(
                \frac{\nu_{5,0} + \nu_{5,\sx} + h_{5,i}^{\AO}}{\zeta_{55}^u}
            \Big)
            nu_{4,i}^{\AO}
            +
            \prox
            \Big[
                \frac{\ell_{\prop}^*(n\,\cdot\,;0)}{n\zeta_{55}^u}
            \Big]
            \Big(
                \frac{\nu_{5,0} + \nu_{5,\sx} + h_{5,i}^{\AO}}{\zeta_{55}^u}
            \Big)
            (1+nu_{4,i}^{\AO}).
    \end{equation}
    Because $\zeta_{55}^u \gtrsim n$ and proximal operators are 1-Lipschitz \cite[Proposition 12.27]{bauschke2011convex},
    $nu_{5,i}^{\AO}$ is $C$-Lipschitz in $h_{5,i}^{\AO}$.
    The derivative with respect to $nu_{4,i}^{\AO}$ may diverge linearly in $h_{4,i}^{\AO}$.
    Thus, as in the case $k = 6$, we must resort to a truncation argument.
    We do this by replacing $\nu_{5,0} + \nu_{5,\sx} + h_{5,i}^{\AO}$ with $\Trunc^M(\nu_{5,0} + \nu_{5,\sx} + h_{5,i}^{\AO})$ in the preceding display,
    and following, line for line, the argument used for $k = 6$.

    Thus, the proof of the lemma is complete.
\end{proof}
\noindent \Cref{lem:hypothesis-k-ao} has several useful corollaries.

\begin{corollary}[Concentration of auxiliary second moments]
\label{cor:conc-aux-2m}
    For $k = 5,6$,
    under Hypothesis (k-1), 
    \begin{equation}
    \label{eq:second-moment-conc}
    \begin{aligned}
        \llangle \bG_k^{\AO} \rrangle
            &\mydoteq
            p\bK_{g,k},
        \quad&\quad
        \frac{1}{n}\llangle \bH_k^{\AO} \rrangle
            &\mydoteq
            \bK_{h,k},
        \\
        n\llangle \bU_k^{\AO} \rrangle 
            &\mydoteq
            n\bK_{g,k},
        \quad&\quad
        \llangle \bV_k^{\AO} \rrangle
            &\mydoteq
            \bK_{h,k},
        \\
        \llangle \bG_k^{\AO} , \bV_k^{\AO} \rrangle
            &\mydoteq
            \bK_{g,k}\bZ_{u,k}^\top,
        \quad&\quad
        \llangle \bH_k^{\AO} , \bU_k^{\AO} \rrangle
            &\mydoteq 
            \bK_{h,k} \bZ_{v,k}^\top.
    \end{aligned}
    \end{equation}
\end{corollary}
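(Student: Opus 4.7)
The plan is to deduce each of the six claimed concentrations by applying \Cref{lem:hypothesis-k-ao} to an appropriately chosen order-$2$ pseudo-Lipschitz test function, and then to identify the resulting state-evolution expectation with the stated matrix using the fixed point equations \eqref{eq:fixpt-general}.

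First, I would handle the three ``$g$-side'' entries $\llangle \bG_k^{\AO} \rrangle$, $\llangle \bV_k^{\AO} \rrangle$, and $\llangle \bG_k^{\AO}, \bV_k^{\AO} \rrangle$ by applying the first line of \eqref{eq:ao-conc}. For each pair $\ell,\ell' \leq k$ I would use one of the three bilinear test functions
\[
\phi^{gg}_{\ell\ell'}(\bV,\bG) \;=\; \<\bg_\ell,\bg_{\ell'}\>,
\quad
\phi^{vv}_{\ell\ell'}(\bV,\bG) \;=\; \<\bv_\ell,\bv_{\ell'}\>,
\quad
\phi^{gv}_{\ell\ell'}(\bV,\bG) \;=\; \<\bg_\ell,\bv_{\ell'}\>,
\]
each of which is $1$-order-$2$ pseudo-Lipschitz by the elementary bound $|\<\bx,\by\>-\<\bx',\by'\>|\leq (\|\bx\|+\|\by'\|)(\|\bx-\bx'\|+\|\by-\by'\|)$ and the fact that they vanish at the origin. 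This yields, for each pair $(\ell,\ell')$, the concentration on the corresponding entries of $\E[\llangle \bG_k^{\SE}\rrangle]$, $\E[\llangle \bV_k^{\SE}\rrangle]$, and $\E[\llangle \bG_k^{\SE},\bV_k^{\SE}\rrangle]$. The right-hand sides are then identified using that $\bG_k^{\SE}\sim \normal(0,\bK_{g,k}\otimes \id_p)$ gives $\E[\llangle \bG_k^{\SE}\rrangle]=p\bK_{g,k}$, and via the fixed point identities $\bK_h = \llangle \bV^{\SE}\rrangle_{\Ltwo}$ and $\bK_g \bZ_u^\top = \llangle \bG^{\SE},\bV^{\SE}\rrangle_{\Ltwo}$ in \eqref{eq:fixpt-general}. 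Passing to the leading $k\times k$ sub-matrix is immediate because all quantities depend only on columns $1,\ldots,k$.

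Next, I would handle the three ``$h$-side'' entries $\llangle \bH_k^{\AO}\rrangle$, $\llangle \bU_k^{\AO}\rrangle$, and $\llangle \bH_k^{\AO},\bU_k^{\AO}\rrangle$ via the second (per-coordinate averaged) line of \eqref{eq:ao-conc}. For each $(\ell,\ell')$ I would take
\[
\psi^{hh}_{\ell\ell'} = h_{\ell,i}h_{\ell',i},
\qquad
\psi^{uu}_{\ell\ell'} = (nu_{\ell,i})(nu_{\ell',i}),
\qquad
\psi^{hu}_{\ell\ell'} = h_{\ell,i}(nu_{\ell',i}),
\]
again order-$2$ pseudo-Lipschitz in their scalar arguments. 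Summing over $i$ and using that $\frac{1}{n}\sum_i h_{\ell,i}h_{\ell',i}=\frac{1}{n}[\llangle \bH_k\rrangle]_{\ell\ell'}$, $\frac{1}{n}\sum_i (nu_{\ell,i})(nu_{\ell',i})=n[\llangle \bU_k\rrangle]_{\ell\ell'}$, and analogously for the cross term, the lemma gives concentration on the corresponding entries of $\frac{1}{n}\E[\llangle \bH_k^{\SE}\rrangle]$, $n\E[\llangle \bU_k^{\SE}\rrangle]$, and $\E[\llangle \bH_k^{\SE},\bU_k^{\SE}\rrangle]$. These are identified with the claimed right-hand sides by the fixed point equations $\bK_h=\llangle \bV^{\SE}\rrangle_{\Ltwo}$ (whence $\frac{1}{n}\E[\llangle \bH_k^{\SE}\rrangle]=\bK_{h,k}$ since $\bH^{\SE}\sim\normal(0,\bK_h\otimes\id_n)$), $\bK_g=\llangle \bU^{\SE}\rrangle_{\Ltwo}$, and $\bK_h\bZ_v^\top=\llangle \bH^{\SE},\bU^{\SE}\rrangle_{\Ltwo}$.

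The main obstacle is not analytical but bookkeeping: the concentrations in \Cref{lem:hypothesis-k-ao} are stated for the per-entry quantities, so one must track the scalings carefully (the $p$, $1/n$, $n$, and $1$ prefactors on the six claims arise from the differing ambient dimensions of the columns and from the $n$-rescaling of the $u$ variables in the second line of \eqref{eq:ao-conc}). The only subtle point is that the second-line test functions must involve $nu$, not $u$, which is why the claim for $\bU$ is stated as $n\llangle \bU_k^{\AO}\rrangle \mydoteq n\bK_{g,k}$; and for the cross term $\llangle \bH_k^{\AO},\bU_k^{\AO}\rrangle$, the factors of $n$ in the test function and in the $\frac{1}{n}\sum_i$ average cancel cleanly, producing precisely $\bK_{h,k}\bZ_{v,k}^\top$ via the fixed point identity. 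Once these scalings are lined up, the proof is immediate.
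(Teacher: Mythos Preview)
Your proposal is correct and follows essentially the same approach as the paper's proof, which simply observes that the entries of the second-moment matrices are order-$2$ pseudo-Lipschitz functions of the relevant arguments, invokes \Cref{lem:hypothesis-k-ao}, and then identifies the limits via the fixed point equations~\eqref{eq:fixpt-general}. Your version is more explicit about the choice of test functions and the bookkeeping of the $p$, $1/n$, and $n$ prefactors, but the underlying argument is identical.
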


\begin{proof}[Proof of~\Cref{cor:conc-aux-2m}]
    The entries of the second-moment functions $\llangle \cdot , \cdot
    \rrangle$ and $\llangle \cdot \rrangle$ are the empirical averages
    order-2 pseudo-Lipschitz functions applied to each coordinate $i$.
    Thus, this is a consequence of~\Cref{lem:hypothesis-k-ao} and the
    fixed point equations~\eqref{eq:fixpt-general}.
\end{proof}

\begin{corollary}[Concentration of Cholesky decomposition]
\label{cor:chol-conc}
    For $k =5,6$, under Hypothesis (k-1)
    \begin{equation}
    \begin{aligned}
        \sL\big(n\llangle \bU_k^{\AO} \rrangle\big)
            &\mydoteq
            \sqrt{n} \bL_{g,k},
        \qquad
        &\sL^{\ddagger}\big(n\llangle \bU_k^{\AO} \rrangle\big)
            &\mydoteq
            \bL_{g,k}^{\ddagger}/\sqrt{n},
        \\
        \sL\big(\llangle \bV_k^{\AO} \rrangle\big)
            &\mydoteq
            \bL_{h,k},
        &\qquad
        \sL^{\ddagger}\big(\llangle \bV_k^{\AO} \rrangle\big)
            &\mydoteq
            \bL_{h,k}^{\ddagger}.
    \end{aligned}
    \end{equation}
    We also have
    \begin{equation}
    \begin{aligned}
        \sK^\ddagger\big(n\llangle \bU_k^{\AO} \rrangle\big)
            &\mydoteq
            \bK_{g,k}^\ddagger/n,
        \qquad
        &\sK^\ddagger\big(\llangle \bV_k^{\AO} \rrangle\big)
            &\mydoteq
            \bK_{h,k}^\ddagger.
    \end{aligned}
    \end{equation}
\end{corollary}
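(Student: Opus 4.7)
The plan is to deduce the concentration of the Cholesky factor and pseudo-inverse from the operator-norm concentration of the second-moment matrices provided by \Cref{cor:conc-aux-2m}, by invoking the explicit recursive formulas of \Cref{lem:cholesky-explicit} combined with the lower bounds on innovative diagonal entries established in \Cref{lem:fixed-pt-bound}.

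First I would reduce to the innovative sub-block. By construction, rows $1,2$ of $\bU_k^{\AO}$ vanish identically (since $\bu_1^{\AO}=\bu_2^{\AO}=\bzero$), and rows $1,2,3,4$ of $\bV_k^{\AO}$ are deterministic, matching the pattern displayed in \eqref{eq:K-form}. Hence the predictable blocks of $n\llangle \bU_k^{\AO}\rrangle$ and $\llangle \bV_k^{\AO}\rrangle$ agree exactly with the corresponding blocks of $n\bK_{g,k}$ and $\bK_{h,k}$ respectively, and the predictable/innovative index pattern of the empirical matrix is preserved with probability one. On the innovative indices, \Cref{lem:fixed-pt-bound} guarantees that $(\sqrt{n}\bL_{g,k})_{\ell\ell}\asymp 1$ for $\ell\in\{3,4,5,6\}$ and $(\bL_{h,k})_{\ell\ell}\asymp 1$ for $\ell\in\{5,6\}$. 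Working on the exponentially high probability event from \Cref{cor:conc-aux-2m} on which $\|n\llangle\bU_k^{\AO}\rrangle - n\bK_{g,k}\|_{\op}\leq\epsilon$ and $\|\llangle \bV_k^{\AO}\rrangle-\bK_{h,k}\|_{\op}\leq\epsilon$ for $\epsilon$ smaller than a constant depending only on $\cPmodel$, these lower bounds persist for the empirical diagonal entries, so the empirical and population innovation patterns agree.

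Next I would argue inductively on $\ell$ using the recursive formulas \eqref{eq:L-explicit}. At predictable $\ell$ both sides vanish by the observation above. At innovative $\ell$, the recursion involves (i) the square-root $\sqrt{K_{\ell\ell}-\sum_{j<\ell}\sL(\bK)_{\ell j}^2}$ taken of a quantity of size $\asymp 1$ (at the rescaled level), and (ii) division by this square root. Both operations are locally Lipschitz with constants depending only on $\cPmodel$, so the inductive hypothesis that the rescaled Cholesky entries computed on the empirical matrix differ from the population ones by $O(\epsilon)$ propagates through the recursion, establishing the first four concentration statements. Finally, the identity $\sK^\ddagger(\bK)=\sL^\ddagger(\bK)^\top\sL^\ddagger(\bK)$ combined with the just-proved concentration of $\sL^\ddagger$ and the a~priori operator-norm bounds $\|\bL_{g,k}^\ddagger\|_{\op}\lesssim\sqrt{n}$, $\|\bL_{h,k}^\ddagger\|_{\op}\lesssim 1$ from \Cref{lem:fixed-pt-bound} yield the last two concentration statements by a triangle inequality.

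The main technical obstacle is bookkeeping the factors of $\sqrt{n}$ on the $g$-side: the claim matches empirical entries of size $O(\sqrt{n})$ in $\sL(n\llangle\bU_k^{\AO}\rrangle)$ against population entries $\sqrt{n}\bL_{g,k}$ of the same size, and likewise entries of size $O(1/\sqrt{n})$ in $\sL^\ddagger(n\llangle\bU_k^{\AO}\rrangle)$ against $\bL_{g,k}^\ddagger/\sqrt{n}$. The cleanest way to manage this is to apply \Cref{lem:cholesky-explicit} to the rescaled matrices $n\llangle\bU_k^{\AO}\rrangle$ and $n\bK_{g,k}$ throughout and then use the scaling identities $\sL(\alpha\bK)=\sqrt{\alpha}\,\sL(\bK)$ and $\sL^\ddagger(\alpha\bK)=\sL^\ddagger(\bK)/\sqrt{\alpha}$, so that the recursion operates at a uniform $\cPmodel$-dependent scale and the Lipschitz stability estimates are uniform in $n$.
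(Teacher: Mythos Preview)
Your proposal is correct and follows essentially the same approach as the paper: induct on $\ell$ using the explicit recursive formulas of \Cref{lem:cholesky-explicit}, feeding in the second-moment concentration of \Cref{cor:conc-aux-2m} and the lower bounds on the innovative diagonals from \Cref{lem:fixed-pt-bound}, after first disposing of the upper-left block where the empirical and population Gram matrices agree deterministically. Two small terminological corrections: the deterministic entries are \emph{columns} of $\bU_k^{\AO}$ and $\bV_k^{\AO}$, not rows; and the agreement of innovative/predictable patterns on the $\bU$-side (specifically at index $4$, which depends on $\ba$) holds only with exponentially high probability, not with probability one---though you effectively correct this by immediately restricting to the high-probability event from \Cref{cor:conc-aux-2m}.
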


\begin{proof}[Proof of \Cref{cor:chol-conc}]
    We have $n \llangle \bU_4^{\AO} \rrangle_2 \mydoteq n \bK_{g,4} = \bzero$ and $\llangle \bV_4^{\AO} \rrangle_2 = \bK_{h,2} = \llangle \bTheta \rrangle$ using equation~\eqref{eq:K-form} and the explicit expression $(\bu_1^\AO,\bu_2^\AO,\bu_3^\AO,\bu_4^\AO) = (\bzero,\bzero,-\ones/n,-\by_1/n)$ and $(\bv_1^\AO,\bv_2^\AO,\bv_3^\AO,\bv_4^\AO) = (\btheta_1,\btheta_2,\bzero,\bzero)$.
    Thus, we have equality of the upper-left $4 \times 4$ sub-matrices in the lemma.
    
We check the high-probability approximation for entries $(\ell,\ell')$
with $\ell' \leq \ell$ and $\ell \leq k$.  We induct on $\ell$.  We
use the explicit expressions for the Cholesky decomposition
in~\Cref{lem:cholesky} and the bounds on the fixed point parameters
in~\Cref{lem:fixed-pt-bound} without explicitly citing them each time.

Assume the result has been shown for the upper-left
$(\ell-1)\times(\ell-1)$ submatrices.  For $\ell' < \ell$, the formula
$\sL(\bK)_{\ell \ell'} = \sum_{\ell''=1}^{\ell'}
\sL^\ddagger(\bK)_{\ell' \ell''} K_{\ell\ell''}$ for both $\bK = n
\llangle \bU_k^{\AO} \rrangle$ and $\bK = \llangle \bV_k^{\AO}
\rrangle$ implies $\sL(n \llangle \bU_k^{\AO} \rrangle)_{\ell \ell'}
\mydoteq \sqrt{n} L_{g,\ell\ell'}$ and $\sL(\llangle \bV_k^{\AO}
\rrangle)_{\ell \ell'} \mydoteq L_{h,\ell\ell'}$ because, by the
inductive hypothesis and \Cref{cor:conc-aux-2m}, we have
$\sL(n \llangle \bU_k^{\AO}\rrangle)_{\ell'\ell''}^\ddagger \mydoteq
L_{g,\ell'\ell''}^\ddagger/\sqrt{n}$, $n \llangle
\bU_k^{\AO}\rrangle_{\ell\ell''}^\ddagger \mydoteq n K_{g,\ell\ell'}$,
$\sL^\ddagger( \llangle \bV_k^{\AO}\rrangle)_{\ell'\ell''} \mydoteq
L_{h,\ell'\ell''}^\ddagger$, and $\llangle
\bV_k^{\AO}\rrangle_{\ell\ell''}^\ddagger \mydoteq K_{h,\ell\ell'}$,
and because $L_{g,\ell'\ell''}^\ddagger/\sqrt{n}$, $n
K_{g,\ell\ell'}$, $L_{h,\ell'\ell''}^\ddagger$, and $K_{h,\ell\ell'}$
are all $\lesssim 1$.  Similarly, the formula $\sL(\bK)_{\ell \ell} =
\sqrt{K_{\ell\ell} - \sum_{j=1}^{\ell - 1} \sL(\bK)^2_{\ell \ell'}}$
for both $\bK = n \llangle \bU_k^{\AO} \rrangle$ and $\bK = \llangle
\bV_k^{\AO} \rrangle$ implies $\sL(n \llangle \bU_k^{\AO}
\rrangle)_{\ell \ell} \mydoteq \sqrt{n} L_{g,\ell\ell}$ and
$\sL(\llangle \bV_k^{\AO} \rrangle)_{\ell \ell} \mydoteq
L_{h,\ell\ell}$.
    
Because $\ell > 4$, $\ell$ is innovative with respect to both $n\bK_g$
and $ \bK_h $.  Moreover, $\sqrt{n} L_{g,\ell\ell} \gtrsim 1$ and
$L_{h,\ell \ell} \gtrsim 1$, whence, by the high-probability
approximation in the previous paragraph, with exponentially high
probability (depending only on $\cPmodel$), $\ell$ is innovative with
respect to both $n \llangle \bU_k^{\AO} \rrangle$ and $\llangle
\bV_k^{\AO} \rrangle$.  Using the identities
$\sL^{\ddagger}(\bK)_{\ell\ell'} = -\frac{1}{\sL(\bK)_{\ell\ell}}
\sum_{\ell''=\ell'}^{\ell-1} \sL(\bK)_{\ell
  \ell''}\sL^\ddagger(\bK)_{\ell''\ell'}$, $\ell' < \ell$ and
$\sL^{\ddagger}(\bK)_{\ell\ell} = -\frac{1}{\sL(\bK)_{\ell\ell}}$, which
hold on this high-probability event, the inductive hypothesis and
bounds on the fixed point parameters establish the desired
high-probability approximation for $\sL^\ddagger(n \llangle
\bU_k^{\AO} \rrangle)$ and $\sL^\ddagger(\llangle \bV_k^{\AO}
\rrangle)$.
    
The second display in the lemma is a consequence of the first.
\end{proof}

\begin{corollary}[Bounds on auxiliary objective vectors]
\label{cor:AO-bounds}
    For $k =5,6$,
    under Hypothesis (k-1),
    for $\ell,\ell' \leq k$,
    \begin{equation}
    \begin{aligned}
        \sqrt{n} \| \bu_\ell^{\AO} \| 
            &\lessdot
            C,
        \qquad
        &\| \bu_\ell^{\AO,\perp} \| 
            &\lessdot
            C,
        \qquad
        &
        \| \bg_\ell^{\AO} \| 
            &\lessdot
            C,
        \qquad
        &\| \bg_\ell^{\AO,\perp} \|/\sqrt{n} 
            &\lessdot
            C,
        \\
        \| \bv_\ell^{\AO} \|
            &\lessdot
            C,
        \qquad
        &\| \bv_\ell^{\AO,\perp} \|
            &\lessdot
            C,
        \qquad
        &\| \bh_\ell^{\AO} \| / \sqrt{n}
            &\lessdot
            C,
        \qquad
        &\| \bh_\ell^{\AO,\perp} \| / \sqrt{n}
            &\lessdot
            C,
        \\
        &&
        \< \bg_k^{\AO,\perp} , \bv_k^{\AO} \> / \sqrt{n}
            &\gtrdot 
            c,
        &
        \< \bh_k^{\AO,\perp} , \bu_k^{\AO} \>
            &\gtrdot 
            c.
    \end{aligned}
    \end{equation}
\end{corollary}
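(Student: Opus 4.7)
The plan is to derive all eight norm bounds from the concentration of second-moment matrices established in Corollary \ref{cor:conc-aux-2m}, combined with the bounds on fixed-point parameters and Cholesky factors from Lemma \ref{lem:fixed-pt-bound}. Since $\|\bv\|^2$ sits on the diagonal of $\llangle \bv \rrangle$, Corollary \ref{cor:conc-aux-2m} yields $\| \bu_\ell^{\AO} \|^2 \mydoteq K_{g,\ell\ell}$, $\| \bv_\ell^{\AO} \|^2 \mydoteq K_{h,\ell\ell}$, $\| \bg_\ell^{\AO} \|^2 \mydoteq p K_{g,\ell\ell}$, and $\| \bh_\ell^{\AO} \|^2 \mydoteq n K_{h,\ell\ell}$. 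Substituting $K_{g,\ell\ell} \lesssim 1/n$ and $K_{h,\ell\ell} \lesssim 1$ from Lemma \ref{lem:fixed-pt-bound}, and using $p \lesssim n$ from Assumption A1(a), will immediately produce the four ``non-perp'' bounds.

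For the ``perp'' versions, I will expand $\bu_\ell^{\AO,\perp} = \sum_{j \leq \ell} L_{g,\ell j}^\ddagger \, \bu_j^{\AO}$ using \eqref{eq:UVk-ao-perp}, and analogously for $\bv_\ell^{\AO,\perp}$, $\bg_\ell^{\AO,\perp}$, $\bh_\ell^{\AO,\perp}$ via \eqref{eq:GHk-ao-perp-from-std}. Applying the triangle inequality with the pseudo-inverse bounds $|L_{g,\ell j}^\ddagger| \lesssim \sqrt{n}$ and $|L_{h,\ell j}^\ddagger| \lesssim 1$ from Lemma \ref{lem:fixed-pt-bound}, together with the already-established bounds on $\bu_j^{\AO}$, $\bv_j^{\AO}$, $\bg_j^{\AO}$, $\bh_j^{\AO}$, will then yield the four remaining norm bounds at the correct scale (for example, $\| \bu_\ell^{\AO,\perp} \| \lessdot \sqrt{n} \cdot (C/\sqrt{n}) = C$).

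The substantive step, which I expect to be the main obstacle, is the two inner-product lower bounds. Writing $\bh_k^{\AO,\perp} = \sum_{j \leq k} L_{h,kj}^\ddagger \bh_j^{\AO}$ and using $\llangle \bH_k^{\AO}, \bU_k^{\AO} \rrangle \mydoteq \bK_{h,k} \bZ_{v,k}^\top$ from Corollary \ref{cor:conc-aux-2m}, I obtain
\begin{equation*}
  \< \bh_k^{\AO,\perp}, \bu_k^{\AO} \> \mydoteq \bigl( \bL_{h,k}^\ddagger \bK_{h,k} \bZ_{v,k}^\top \bigr)_{kk}.
\end{equation*}
Applying the identity $\bL_h^\ddagger \bK_h = \id_{\bK_h}^\perp \bL_h^\top$ from Lemma \ref{lem:cholesky-inverse-identities}, together with the fact that index $k$ is innovative with respect to $\bK_h$ by Lemma \ref{lem:fixed-pt-bound} (so $\id_{\bK_h}^\perp \be_k = \be_k$), the right-hand side collapses to $(\bL_h^\top \bZ_v^\top)_{kk}$; since $\bL_h$ is lower triangular and $\bZ_v$ is lower triangular, the only nonzero term in the relevant sum is $L_{h,kk} \, \zeta_{kk}^v$. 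Because $L_{h,kk} \asymp 1$ and $\zeta_{kk}^v \asymp 1$ for $k = 5, 6$ (Lemma \ref{lem:fixed-pt-bound}), this delivers $\< \bh_k^{\AO,\perp}, \bu_k^{\AO} \> \gtrdot c$. An entirely parallel computation for $\< \bg_k^{\AO,\perp}, \bv_k^{\AO} \>$, replacing $(\bL_h, \bK_h, \bZ_v)$ by $(\bL_g, \bK_g, \bZ_u)$ and using $L_{g,kk} \asymp 1/\sqrt{n}$ and $\zeta_{kk}^u \asymp n$, yields the scale $\sqrt{n}$, hence $\< \bg_k^{\AO,\perp}, \bv_k^{\AO} \> / \sqrt{n} \gtrdot c$. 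The main subtlety is in carefully tracking the innovation-compatibility of $\bZ_v, \bZ_u$ with $\bK_h, \bK_g$ to ensure that the $(k,k)$ entry extracts only the leading diagonal term and no lower-order triangular terms survive.
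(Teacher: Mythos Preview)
Your proposal is correct and follows essentially the same approach as the paper's proof: deducing the norm bounds from the diagonals of the concentrated second-moment matrices in Corollary~\ref{cor:conc-aux-2m}, passing to the perp versions via the Cholesky pseudo-inverse bounds in Lemma~\ref{lem:fixed-pt-bound}, and reducing the two inner-product lower bounds to $L_{h,kk}\zeta_{kk}^v$ and $L_{g,kk}\zeta_{kk}^u/\sqrt{n}$ via the Cholesky identities of Lemma~\ref{lem:cholesky-inverse-identities}. The only cosmetic difference is that the paper uses the cleaner identity $\bL^\ddagger \bK = \bL^\top$ (from $\id_{\bK}^\perp \bL^\top = \bL^\top$) rather than invoking innovativeness of index $k$ explicitly, but your route arrives at the same endpoint.
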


\begin{proof}[Proof of~\Cref{cor:AO-bounds}]
    The bounds on $\sqrt{n} \| \bu_\ell^{\AO} \|$, $\| \bv_\ell^{\AO}
    \|$, $\| \bg_\ell^{\AO} \| $, and $\| \bh_\ell^{\AO,\perp} \| /
    \sqrt{n}$ hold by the first-two lines of~\Cref{cor:conc-aux-2m}
    and the bounds on $\bK_g$ and $\bK_h$
    in~\Cref{lem:fixed-pt-bound}, and using that $p/n < C$.  We then
    get the bounds $\| \bu_\ell^{\AO,\perp} \|$, $\|
    \bg_\ell^{\AO,\perp} \|/\sqrt{n}$, $\| \bv_\ell^{\AO,\perp} \|$,
    and $\| \bh_\ell^{\AO,\perp} \| / \sqrt{n}$ as consequences, using
    their definition in equations~\eqref{eq:GHk-ao-perp-from-std} and
    \eqref{eq:UVk-ao-perp} and the bounds on the entries of
    $\bL_{g,k}^{\ddagger\top}/\sqrt{n}$ and $\bL_{h,k}^{\ddagger\top}$
    in \Cref{lem:fixed-pt-bound}.  We get the lower bound on $\<
    \bg_k^{\AO,\perp} , \bv_k^{\AO} \> / \sqrt{n}$ by observing
    \begin{equation}
    \begin{aligned}
        \< \bg_k^{\AO,\perp} , \bv_k^{\AO} \> / \sqrt{n} 
            &\underset{\emph{(i)}}= 
            \llangle \bG_k^{\AO} \bL_{g,k}^{\ddagger\top}, \bV_k^{\AO} \rrangle_{kk} / \sqrt{n}
            = 
            [\bL_{g,k}^\ddagger\llangle \bG_k^{\AO} , \bV_k^{\AO} \rrangle]_{kk}/ \sqrt{n} 
        \\
            &\underset{\emph{(ii)}}\mydoteq 
            [\bL_{g,k}^\ddagger\bK_{g,k}\bZ_{u,k}^\top]_{kk}/\sqrt{n} 
            \underset{\emph{(iii)}}=
            [\bL_{g,k}^\top\bZ_{u,k}^\top]_{kk} / \sqrt{n} 
            =
            L_{g,kk} \zeta_{kk}^u / \sqrt{n}
            \underset{\emph{(iv)}}\asymp
            1,
    \end{aligned}
    \end{equation}
    where equality \emph{(i)} holds by
    equation~\eqref{eq:GHk-ao-perp-from-std}, approximate equality
    \emph{(ii)} holds by~\Cref{cor:conc-aux-2m} and the bounds on
    $\bL_{g,k}^\ddagger$ and $\bZ_{u,k}$ in~\Cref{lem:fixed-pt-bound},
    equality \emph{(iii)} holds
    by~\Cref{lem:cholesky-inverse-identities}, and the approximation
    \emph{(iv)} holds by~\Cref{lem:fixed-pt-bound}.  The lower bound
    on $\< \bh_k^{\AO,\perp},\bu_k^{\AO} \>$ holds similarly.
\end{proof}

\begin{corollary}
\label{cor:emp-innovation}
For $k \geq 3$, under Hypothesis (k-1) we have
\begin{equation}
  \label{eq:uk-vk-perp-approx}
  \frac{\proj_{\bU_{k-1}^{\AO}}^\perp
    \bu_k^{\AO}}{\|\proj_{\bU_{k-1}^{\AO}}^\perp \bu_k^{\AO}\|}
  \mydoteq \bu_k^{\AO,\perp}, \qquad
  \frac{\proj_{\bV_{k-1}^{\AO}}^\perp
    \bv_k^{\AO}}{\|\proj_{\bV_{k-1}^{\AO}}^\perp \bv_k^{\AO}\|}
  \mydoteq \bv_k^{\AO,\perp},
    \end{equation}
where, recall, $\bu_k^{\AO,\perp}$ and $\bv_k^{\AO,\perp}$ are defined
by equations~\eqref{eq:UVk-ao-perp}.
\end{corollary}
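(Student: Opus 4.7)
The plan is to recognize that each normalized empirical projection on the left-hand side is exactly the last column of the matrix of Gram--Schmidt orthonormalizations of $\bU_k^{\AO}$ (resp.~$\bV_k^{\AO}$), which, by the algebraic content of \Cref{lem:cholesky,lem:cholesky-pseudo-inverse,lem:cholesky-inverse-identities}, can be written as a Cholesky pseudo-inverse of the empirical Gram matrix acting on the sample columns. Transferring this empirical Cholesky pseudo-inverse to its population counterpart via \Cref{cor:chol-conc} then yields the claim.

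Concretely, set $\hat\bL^u \defn \sL\big(n\llangle \bU_k^{\AO}\rrangle\big)$ with pseudo-inverse $\hat\bL^{u,\ddagger} \defn \sL^\ddagger\big(n\llangle \bU_k^{\AO}\rrangle\big)$, and analogously $\hat\bL^v,\hat\bL^{v,\ddagger}$ from $\llangle \bV_k^{\AO}\rrangle$. Applying the Cholesky--Gram--Schmidt correspondence to the columns of $\sqrt{n}\bU_k^{\AO}$ (whose Gram matrix is $n\llangle \bU_k^{\AO}\rrangle$) and to the columns of $\bV_k^{\AO}$, we get the identities
\begin{equation*}
    \frac{\proj_{\bU_{k-1}^{\AO}}^\perp \bu_k^{\AO}}{\|\proj_{\bU_{k-1}^{\AO}}^\perp \bu_k^{\AO}\|} = \sum_{\ell=1}^k \sqrt{n}\,\hat L^{u,\ddagger}_{k\ell}\,\bu_\ell^{\AO}, \qquad \frac{\proj_{\bV_{k-1}^{\AO}}^\perp \bv_k^{\AO}}{\|\proj_{\bV_{k-1}^{\AO}}^\perp \bv_k^{\AO}\|} = \sum_{\ell=1}^k \hat L^{v,\ddagger}_{k\ell}\,\bv_\ell^{\AO},
\end{equation*}
valid whenever index $k$ is empirically innovative (i.e.~$\hat L^u_{kk}>0$, resp.~$\hat L^v_{kk}>0$). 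On the other hand, unpacking \eqref{eq:UVk-ao-perp} gives $\bu_k^{\AO,\perp}=\sum_\ell L_{g,k\ell}^{\ddagger}\bu_\ell^{\AO}$ and $\bv_k^{\AO,\perp}=\sum_\ell L_{h,k\ell}^{\ddagger}\bv_\ell^{\AO}$, so the two sides of each claim differ by $\sum_\ell\big(\sqrt{n}\hat L^{u,\ddagger}_{k\ell}-L_{g,k\ell}^\ddagger\big)\bu_\ell^{\AO}$ and $\sum_\ell\big(\hat L^{v,\ddagger}_{k\ell}-L_{h,k\ell}^\ddagger\big)\bv_\ell^{\AO}$ respectively. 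By \Cref{cor:chol-conc}, $\sqrt{n}\hat L^{u,\ddagger}_{k\ell}\mydoteq L_{g,k\ell}^\ddagger$ and $\hat L^{v,\ddagger}_{k\ell}\mydoteq L_{h,k\ell}^\ddagger$, and combined with the norm bounds $\sqrt{n}\|\bu_\ell^{\AO}\|,\|\bv_\ell^{\AO}\|\lessdot C$ from \Cref{cor:AO-bounds}, both differences are $\mydoteq 0$ in $\ell_2$ norm.

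To close the argument, note that for $k\in\{5,6\}$ index $k$ is innovative with respect to both $\bK_{g,k}$ and $\bK_{h,k}$ by \Cref{lem:fixed-pt-bound}, which further supplies $L_{g,kk}\asymp 1/\sqrt{n}$ and $L_{h,kk}\asymp 1$; \Cref{cor:chol-conc} then guarantees that the denominators $\hat L^u_{kk}/\sqrt{n}=\|\proj_{\bU_{k-1}^{\AO}}^\perp\bu_k^{\AO}\|$ and $\hat L^v_{kk}=\|\proj_{\bV_{k-1}^{\AO}}^\perp\bv_k^{\AO}\|$ stay bounded below with exponentially high probability, so both normalizations are well-defined. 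For $k\in\{3,4\}$ index $k$ is innovative with respect to $\bK_{g,k}$, so the $\bu$-claim is covered by the above; on the $\bv$-side one has $\bv_3^{\AO}=\bv_4^{\AO}=\bzero$ directly from \eqref{eq:objectives}, and $\bv_k^{\AO,\perp}=\bzero$ since the $k$th row of $\bL_{h,k}^\ddagger$ vanishes at a predictable index (\Cref{lem:cholesky-pseudo-inverse}), so both sides equal $\bzero$ under the convention $\bzero/0\defn\bzero$. The main obstacle is the bookkeeping in the second paragraph: making the identification of the empirical Gram--Schmidt output with $\hat\bL^{u,\ddagger},\hat\bL^{v,\ddagger}$ carefully enough to use \Cref{cor:chol-conc}, and tracking scales so that the $\sqrt{n}$ factor on the $\bu$-side is placed consistently with the scaling $\sqrt{n}\|\bu_\ell^{\AO}\|\lessdot C$.
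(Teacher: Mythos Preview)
Your proposal is correct and follows essentially the same approach as the paper: write the normalized empirical projection as the $k$th column of $\bU_k^{\AO}$ (resp.\ $\bV_k^{\AO}$) times the transpose of the Cholesky pseudo-inverse of the empirical Gram matrix, then replace that pseudo-inverse by its population analogue via \Cref{cor:chol-conc} and absorb the error using the norm bounds on $\bu_\ell^{\AO},\bv_\ell^{\AO}$. One minor point of care: the intermediate claim ``$\sqrt{n}\hat L^{u,\ddagger}_{k\ell}\mydoteq L_{g,k\ell}^\ddagger$'' does not follow literally from \Cref{cor:chol-conc} (which gives $\hat L^{u,\ddagger}_{k\ell}\mydoteq L_{g,k\ell}^\ddagger/\sqrt{n}$, and multiplying a $\mydoteq 0$ quantity by $\sqrt{n}$ is not automatic), but your argument is still sound because you combine it with $\|\bu_\ell^{\AO}\|\lessdot C/\sqrt{n}$, so the product of the coefficient error with $\|\bu_\ell^{\AO}\|$ is indeed $\mydoteq 0$; this is exactly how the paper handles it, pairing $\sqrt{n}\bU_k^{\AO}$ with $\hat\bL^{u,\ddagger}$ directly.
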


\begin{proof}[Proof of~\Cref{cor:emp-innovation}]
  By equation~\eqref{eq:UVk-ao-perp},
    \begin{equation}
      \begin{gathered}
        \frac{\proj_{\bU_{k-1}^{\AO}}^\perp
          \bu_k^{\AO}}{\|\proj_{\bU_{k-1}^{\AO}}^\perp \bu_k^{\AO}\|}
        = \Big[ \sqrt{n} \bU_k^{\AO} \big(\sL^\ddagger(n \llangle
          \bU_k^{\AO} \rrangle)\big)^\top \Big]_{\,\cdot\,,k} \mydoteq
        \Big[ \sqrt{n} \bU_k^{\AO}
          \big(\bL_{g,k}^\ddagger/\sqrt{n}\big)^\top
          \Big]_{\,\cdot\,,k} = \bu_k^{\AO,\perp},
        \\ \frac{\proj_{\bV_{k-1}^{\AO}}^\perp
          \bv_k^{\AO}}{\|\proj_{\bV_{k-1}^{\AO}}^\perp \bv_k^{\AO}\|}
        = \Big[ \bV_k^{\AO} \big(\sL^\ddagger(\llangle \bV_k^{\AO}
          \rrangle)\big)^\top \Big]_{\,\cdot\,,k} \mydoteq \Big[
          \bV_k^{\AO} \big(\bL_{h,k}^\ddagger\big)^\top
          \Big]_{\,\cdot\,,k} = \bv_k^{\AO,\perp},
    \end{gathered}
    \end{equation}
    where in each line the approximate equality
    uses~\Cref{cor:chol-conc} and the fact that $\sqrt{n}
    \|\bu_\ell^{\AO}\| \lessdot C$ and $\| \bv_\ell^{\AO} \| \lessdot
    C$ for all $\ell$ and some $\cPmodel$-dependent $C > 0$
    by~\Cref{cor:conc-aux-2m}.
\end{proof}

\subsection{The sequential Gordon inequality}
\label{sec:seq-gordon}

\noindent The key tool for showing Hypothesis (k-1) implies Hypothesis
(k) is the sequential Gordon inequality.

We call the objective in equation~\eqref{eq:min-max} the \emph{primary
objective}, and denote it by $\PriObj_k(\bu;v_0,\bv)$.

\begin{lemma}[Sequential Gordon inequality]
\label{lem:seq-gordon}
Assume Hypothesis (k-1).  Let $S_u =
S_u(\bU_{k-1}^{\PO},\bH_{k-1}^{\PO},\beps_1,\beps_2)$ be a random,
compact subset of $\reals^n$ which is, with probability 1, contained
in a ball $\ball_2(\bzero,C/\sqrt{n})$, and let $S_v =
S_v(\bV_{k-1}^{\PO},\bG_{k-1}^{\PO})$ be a random, compact subset of
$\reals^{p + 1}$ that is, with probability 1, contained in the ball
$\ball_2(\bzero,C)$, where $C$ is any $\cPmodel$-dependent constant.
Then
\begin{equation}
  \min_{(v_0,\bv) \in S_v} \max_{\bu \in S_u} \;
  \PriObj_k(\bu;v_0,\bv) \gtrdot \min_{(v_0,\bv) \in S_v} \max_{\bu
    \in S_u} \; \AuxObj_k(\bu;v_0,\bv),
    \end{equation}
with the reverse inequality if the minimization and maximization
are reversed.  If $S_u$ and $S_v$ are almost-surely convex, we
also have
\begin{equation}
  \min_{\bv \in S_v} \max_{\bu \in S_u} \;
  \PriObj_k(\bu;v_0,\bv) \lessdot \min_{\bv \in S_v} \max_{\bu
    \in S_u} \; \AuxObj_k(\bu;v_0,\bv),
\end{equation}
with the reverse inequality if the minimization and maximization are
reversed.
\end{lemma}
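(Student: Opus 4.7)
The proof proceeds by conditioning on the $\sigma$-algebra $\cF_{k-1}$ generated by all $\PO$ quantities through step $k-1$ together with the auxiliary noise, then applying the Convex Gaussian Min-Max Theorem (CGMT) to the residual of $\bA$. The leave-one-out identities in equation~\eqref{eq:loo-noise-def} rearrange to $\bA \bV_{k-1}^{\PO} = \bH_{k-1}^{\PO} - \bU_{k-1}^{\PO} \bZ_{u,k-1}^\top$ and $\bA^\top \bU_{k-1}^{\PO} = \bV_{k-1}^{\PO} \bZ_{v,k-1}^\top - \bG_{k-1}^{\PO}$, so $\cF_{k-1}$ determines the action of $\bA$ on $\spn(\bV_{k-1}^{\PO})$ and of $\bA^\top$ on $\spn(\bU_{k-1}^{\PO})$. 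Gaussian conditioning then gives $\bA = \bT + \proj_{\bU_{k-1}^{\PO}}^\perp \tilde{\bA} \proj_{\bV_{k-1}^{\PO}}^\perp$ with $\bT = \proj_{\bU_{k-1}^{\PO}} \bA + \bA \proj_{\bV_{k-1}^{\PO}} - \proj_{\bU_{k-1}^{\PO}} \bA \proj_{\bV_{k-1}^{\PO}}$ being $\cF_{k-1}$-measurable, and $\tilde{\bA}$ an independent iid $\normal(0,1)$ matrix. Substituting gives $\bu^\top \bA \bv = \bu^\top \bT \bv + (\proj_{\bU_{k-1}^{\PO}}^\perp \bu)^\top \tilde{\bA} (\proj_{\bV_{k-1}^{\PO}}^\perp \bv)$.

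I would then show that the deterministic part of $\PriObj_k$, namely $\bu^\top \bT \bv$, is uniformly well-approximated over $S_u \times S_v$ by the deterministic part of $\AuxObj_k$, namely $-\langle \bT_g \bu, \bv\rangle + \langle \bT_h \bv, \bu\rangle$. Substituting the leave-one-out identities into $\bT_g = \bG_{k-1}^{\PO}\bK_g^\ddagger \bU_{k-1}^{\PO\top}$ and $\bT_h = \bH_{k-1}^{\PO}\bK_h^\ddagger \bV_{k-1}^{\PO\top}$ expresses both sides in terms of $\proj_{\bU_{k-1}^{\PO}} \bA$, $\bA \proj_{\bV_{k-1}^{\PO}}$, and the Gram matrices $\llangle \bU_{k-1}^{\PO} \rrangle$, $\llangle \bV_{k-1}^{\PO} \rrangle$; the second-moment concentrations of \Cref{cor:conc-aux-2m,cor:chol-conc} (available by the induction hypothesis) then give $\bU_{k-1}^{\PO} \bK_g^\ddagger \bU_{k-1}^{\PO\top} \mydoteq \proj_{\bU_{k-1}^{\PO}}$ and analogously for $\bV$, closing the match.

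Next, conditionally on $\cF_{k-1}$ and on the high-probability event where the Step~2 approximation holds, I apply the classical CGMT to the bilinear form $(\proj_{\bU_{k-1}^{\PO}}^\perp \bu)^\top \tilde{\bA} (\proj_{\bV_{k-1}^{\PO}}^\perp \bv)$. This bounds the conditional tail of $\min_{\bv} \max_{\bu} \PriObj_k$ by that of the same min-max with this bilinear form replaced by the Gordon expression $\|\proj_{\bU_{k-1}^{\PO}}^\perp \bu\| \bxi_g^\top \proj_{\bV_{k-1}^{\PO}}^\perp \bv + \|\proj_{\bV_{k-1}^{\PO}}^\perp \bv\| \bxi_h^\top \proj_{\bU_{k-1}^{\PO}}^\perp \bu$. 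The discrepancies between these projected Gaussian terms and the unprojected forms $\bxi_g^\top \bv$, $\bxi_h^\top \bu$ used in $\AuxObj_k$ are $\|\proj_{\bU_{k-1}^{\PO}}^\perp \bu\| \langle \proj_{\bV_{k-1}^{\PO}} \bxi_g, \bv\rangle$ and $\|\proj_{\bV_{k-1}^{\PO}}^\perp \bv\| \langle \proj_{\bU_{k-1}^{\PO}} \bxi_h, \bu\rangle$, each of order $O(1/\sqrt n)$ uniformly on $S_u \times S_v$ because the two projectors have rank $k-1 = O(1)$ (so $\|\proj_{\bV_{k-1}^{\PO}} \bxi_g\|, \|\proj_{\bU_{k-1}^{\PO}} \bxi_h\| = O(1)$ with exponentially high probability) while $\|\bu\| \lesssim 1/\sqrt n$ and $\|\bv\| \lesssim 1$ by the assumed set sizes. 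These are absorbed by the $\mydoteq$ relation. The one-sided tail form of CGMT (no convexity required) yields the first two inequalities; under almost-sure convexity of $S_u, S_v$ and the convex-concave structure of $\phi_k$ established in \Cref{lem:AuxObj-convex-concave}, the convex CGMT produces the reverse inequalities.

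\textbf{Main obstacle.} The principal technical step is the verification that $\bu^\top \bT \bv$ matches the auxiliary deterministic part uniformly on $S_u \times S_v$ to $\mydoteq$ precision. This requires carefully expanding $\bT_g$ and $\bT_h$ in terms of $\bA$ via the leave-one-out identities, invoking the Cholesky-pseudoinverse concentration of \Cref{cor:chol-conc} (which in turn relies on the induction hypothesis via \Cref{cor:conc-aux-2m}), and tracking the operator-norm errors through the projection structure. The secondary subtlety of reconciling projected and unprojected Gordon Gaussians reduces, as noted, to a $1/\sqrt n$ bookkeeping correction and does not present a genuine obstruction.
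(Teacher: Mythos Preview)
Your proposal is correct and follows essentially the same approach as the paper: decompose $\bA$ into a part $\bA^\|$ determined by $(\bU_{k-1}^{\PO},\bV_{k-1}^{\PO},\bG_{k-1}^{\PO},\bH_{k-1}^{\PO})$ plus an independent Gaussian residual $\proj_{\bU_{k-1}^{\PO}}^\perp\tilde\bA\,\proj_{\bV_{k-1}^{\PO}}^\perp$, apply the classical CGMT conditionally to the residual, show via the leave-one-out identities and the second-moment concentrations of \Cref{cor:conc-aux-2m,cor:chol-conc} that $\bA^\|$ matches $-\bT_g^\top+\bT_h$ up to $\mydoteq$ in operator norm, and absorb the projected/unprojected Gordon discrepancies using the bounded rank of the projectors. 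The only place where the paper is slightly more explicit than your outline is in justifying the conditional independence of the residual: because $\bU_{k-1}^{\PO},\bV_{k-1}^{\PO}$ are themselves functions of $\bA$, the paper argues via the KKT equivalence $\cE_1=\cE_2$ rather than invoking ``Gaussian conditioning'' directly, but your reference to the leave-one-out identities determining the relevant linear constraints captures the same point.
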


\begin{proof}[Proof of~\Cref{lem:seq-gordon}]
Let $\bA^\| \defn \bA - \proj_{\bU_{k-1}^{\PO}}^\perp \bA
\proj_{\bV_{k-1}^{\PO}}^\perp$.  The proof of~\Cref{lem:seq-gordon} is
based on following two facts, to be proved in the sequel:
\begin{enumerate}
\item We have the following equality of distributions:
  \begin{equation}
    \label{eq:X-to-Xtilde}
            (\bU_{k-1}^{\PO},\;\bV_{k-1}^{\PO},\;\bG_{k-1}^{\PO},\;\bH_{k-1}^{\PO},\;\bA)
    \eqnd
    (\bU_{k-1}^{\PO},\;\bV_{k-1}^{\PO},\;\bG_{k-1}^{\PO},\;\bH_{k-1}^{\PO},\;\bA^\|
    + \proj_{\bU_{k-1}^{\PO}}^\perp \bAtilde
    \proj_{\bV_{k-1}^{\PO}}^\perp),
  \end{equation}
  where $\bAtilde$ is independent of everything else.
\item We have the approximation
  \begin{equation}
    \label{eq:approx-gordon-terms}
    \frac{1}{\sqrt{n}} \big\| \bA^\| - (-\bT_g^\top + \bT_h)
    \big\|_{\op} \mydoteq 0.
  \end{equation}
\end{enumerate}

First we show how these two facts imply the result.  Applying the
standard Gordon inequality (see, for example, Theorem 5.1 in the
paper~\cite{miolane2021} or Theorem 3 in the
paper~\cite{pmlr-v40-Thrampoulidis15}) to the matrix $\bAtilde$
conditionally on $\bA^\|, \bU_{k-1}^{\PO}, \bV_{k-1}^{\PO},
\bG_{k-1}^{\PO}, \bH_{k-1}^{\PO}$, and then marginalizing over
$\bA^\|, \bU_{k-1}^{\PO}, \bV_{k-1}^{\PO}, \bG_{k-1}^{\PO},
\bH_{k-1}^{\PO}$, we have
\begin{equation}
  \label{eq:seq-gordon-exact}
  \begin{aligned}
    &\P\Big( \min_{\bv \in S_v} \max_{\bu \in S_u} \bu^\top \bA \bv +
    \phi(\bu;v_0,\bv) \leq t \Big) \\ &\qquad\leq 2\P\Big( \min_{\bv
      \in S_v} \max_{\bu \in S_u} \bu^\top \bA^\| \bv - \|
    \proj_{\bU_{k-1}^{\PO}}^\perp \bu \| \< \bxi_g,
    \proj_{\bV_{k-1}^{\PO}}^\perp \bv \> + \|
    \proj_{\bV_{k-1}^{\PO}}^\perp \bv \| \< \bxi_h,
    \proj_{\bU_{k-1}^{\PO}}^\perp \bu \> + \phi(\bu;v_0,\bv) \leq t
    \Big).
  \end{aligned}
\end{equation}
Because $\|\bv \| \leq C$ for all $(v_0,\bv) \in S_v$ and $\| \bu \|
\leq C / \sqrt{n}$ for all $\bu \in S_u$,
\begin{equation}
  \begin{gathered}
    \sup_{\substack{\bv \in S_v\\ \bu \in S_u}} \big| \bu^\top \bA^\|
    \bv - \bu^\top(-\bT_g^\top + \bT_h)\bv \big| \leq
    \frac{C}{\sqrt{n}} \big\| \bA^\| - (-\bT_g^\top + \bT_h)
    \big\|_{\op} \mydoteq 0, \\ \sup_{\substack{\bv \in S_v \\ \bu \in
        S_u}} \big| \| \proj_{\bU_{k-1}^{\PO}}^\perp \bu \| \< \bxi_g,
    \proj_{\bV_{k-1}^{\PO}}^\perp \bv \> - \|
    \proj_{\bU_{k-1}^{\PO}}^\perp \bu \| \< \bxi_g, \bv \> \big| \leq
    \frac{C}{\sqrt{n}} \|\proj_{\bV_{k-1}^{\PO}} \bxi_g \| \mydoteq 0,
    \\
\sup_{\substack{\bv \in S_v \\ \bu \in S_u}} \big| \|
    \proj_{\bV_{k-1}^{\PO}}^\perp \bv \| \< \bxi_h,
    \proj_{\bU_{k-1}^{\PO}}^\perp \bu \> - \|
    \proj_{\bV_{k-1}^{\PO}}^\perp \bv \| \< \bxi_h, \bu \> \big| \leq
    \frac{C}{\sqrt{n}} \|\proj_{\bU_{k-1}^{\PO}} \bxi_h \| \mydoteq 0,
\end{gathered}
\end{equation}
where the final two lines use the distributional relations
$\|\proj_{\bV_{k-1}^{\PO}} \bxi_g \|^2 \sim
\chi_{\rank(\bV_{k-1}^{\PO})}^2$ and $\|\proj_{\bU_{k-1}^{\PO}} \bxi_h
\|^2 \sim \chi_{\rank(\bU_{k-1}^{\PO})}^2$, and $\bV_{k-1}^{\PO}$ and
$\bU_{k-1}^{\PO}$ have rank bounded by $k-1$, where $k$ is a finite
constant.  The first display in the lemma follows from the previous
two displays.  The second display in the lemma follows by an analogous
argument, except that when $S_v$ and $S_u$ are convex, the standard
Gordon inequality (see, for example, Theorem 5.1 in the
paper~\cite{miolane2021} or Theorem 3 in the
paper~\cite{pmlr-v40-Thrampoulidis15}) yields the
claim~\eqref{eq:seq-gordon-exact} with $\leq t$ replaced by $\geq t$.

We now turn to proving the claims~\eqref{eq:X-to-Xtilde}
and~\eqref{eq:approx-gordon-terms}.  Beginning with the
claim~\eqref{eq:X-to-Xtilde}, consider any deterministic quantities
$\bU_{k-1}, \allowbreak \bV_{k-1}, \allowbreak \{v_{0,\ell}\}_{\ell
  \leq k-1},, \allowbreak \bM_{k-1},\allowbreak \bN_{k-1}, \allowbreak
\be_1, \allowbreak \be_1'$ and $\be_2$ such that
\begin{equation}
    \label{eq:deterministic-subgradient-identity}
    \begin{gathered}
      \ba_\ell \in \partial_{\bv} \phi_k(\bu_\ell; v_{0,\ell},
      \bv_\ell; \be_1, \be_1', \be_2), \quad \bb_\ell \in
      \partial_{\bu}\Big(-\phi_k(\bu_\ell; v_{0,\ell}, \bv_\ell;
      \be_1, \be_1', \be_2)\Big), \quad 0 \in \partial_{v_0}
      \phi_k(\bu_\ell;v_{0,\ell},\bv_\ell;\be_1,\be_1',\be_2)
    \end{gathered}
\end{equation}
for all $\ell \leq k - 1$.  Then the event that
\begin{equation}
  \label{eq:soln-event}
  \cE_1 \defn \Bigg\{
  \begin{gathered}
    \bU_{k-1}^{\PO} = \bU_{k-1},\quad \bA^\top \bU_{k-1}^{\PO} =
    \bM_{k-1},\quad \bV_{k-1}^{\PO} = \bV_{k-1},\quad \bA
    \bV_{k-1}^{\PO} = \bN_{k-1}, \\ \beps_1 = \be_1,\quad
    \beps_1' = \be_1',\quad \beps_2 = \be_2,\quad
    v_{0,\ell}^{\PO} = v_{0,\ell} \text{ for $\ell \leq k-1$},
  \end{gathered}
  \Bigg\}
\end{equation}
is equivalent to the event that
\begin{equation}
    \label{eq:lin-constraint-event}
    \cE_2 \defn \Big\{ \bA^\top \bU_{k-1} = \bM_{k-1},\quad \bA
    \bV_{k-1} = \bN_{k-1},\quad \beps_1 = \be_1,\quad \beps_1' =
    \be_1',\quad \beps_2 = \be_2 \Big\}.
\end{equation}
Indeed, the inclusion of $\cE_1$ within $\cE_2$ is immediate.  The
opposite inclusion of $\cE_2$ within $\cE_w$ follows by the KKT
conditions for the saddle point problem~\eqref{eq:min-max} and the
identities~\eqref{eq:deterministic-subgradient-identity}.  Writing
\begin{align*}
\bA^\| = \bA \proj_{\bV_{k-1}^{\PO}} + \proj_{\bU_{k-1}^{\PO}} \bA -
\proj_{\bU_{k-1}^{\PO}} \bA \proj_{\bV_{k-1}^{\PO}},
\end{align*}
we see that $\bA^\|$ is a function of $\bU_{k-1}^{\PO}$, $\bA^\top
\bU_{k-1}^{\PO}$, $\bV_{k-1}^{\PO}$, $\bA \bV_{k-1}^{\PO}$, which we
denote by $f$.  Then $ \bA = f\big(\bU_{k-1}^{\PO},\allowbreak
\bA^\top \bU_{k-1}^{\PO},\allowbreak \bV_{k-1}^{\PO},\allowbreak \bA
\bV_{k-1}^{\PO}\big) + \proj_{\bU_{k-1}^{\PO}}^\perp \bA
\proj_{\bV_{k-1}^{\PO}}^\perp $, so that
    \begin{equation}
    \begin{aligned}
        \Law(\bA \bigm| \cE_1)
            &=
            \Law(\bA \bigm| \cE_2)
            =
            \Law(f(\bU_{k-1},\bM_{k-1},\bV_{k-1},\bN_{k-1}) + \proj_{\bU_{k-1}}^\perp \bA \proj_{\bV_{k-1}}^\perp \bigm| \cE_2)
        \\
            &=
            \Law(f(\bU_{k-1},\bM_{k-1},\bV_{k-1},\bN_{k-1}) + \proj_{\bU_{k-1}}^\perp \bAtilde \proj_{\bV_{k-1}}^\perp ),
    \end{aligned}
    \end{equation}
    where the final equality uses that $\proj_{\bU_{k-1}}^\perp \bA
    \proj_{\bV_{k-1}}^\perp$ is independent of $\bA^\top \bU_{k-1}$
    and $\bA \bV_{k-1}$ (for fixed $\bU_{k-1}$, $\bV_{k-1}$), which is
    easily checked because $\bA$ has iid standard Gaussian entries.
    Note that for all realizations of the matrix $\bA$ and the noise
    $\beps_1,\beps_2$, there is some collection of quantities
    $\bU_{k-1}\allowbreak\bV_{k-1}\allowbreak\{v_{0,\ell}\}_{\ell \leq
      k-1},\allowbreak\bM_{k-1}\allowbreak\bN_{k-1}\allowbreak\be_1\allowbreak\be_2$
    for which the event $\cE_1$ has occurred.  Thus, marginalizing the
    previous display over the realization of the matrix $\bA$ and
    noise $\beps_1,\beps_1',\beps_2$ gives~\cref{eq:X-to-Xtilde}.

Next, we prove~\cref{eq:approx-gordon-terms}.
By~\cref{eq:loo-noise-def}, we have $\bG_{k-1}^{\PO} = \bV_{k-1}^{\PO}
\bZ_{v,k-1}^\top - \bA^\top \bU_{k-1}^{\PO}$ and $\bH_{k-1}^{\PO} =
\bU_{k-1}^{\PO} \bZ_{u,k-1}^\top + \bA \bV_{k-1}^{\PO}$.  Thus,
\begin{equation}
  \begin{aligned}
    -\bT_g^\top + \bT_h &= - \bU_{k-1}^{\PO} \bK_{g,k-1}^\ddagger
    \bZ_{v,k-1}\bV_{k-1}^{\PO\top} + \bU_{k-1}^{\PO}
    \bK_{g,k-1}^\ddagger \bU_{k-1}^{\PO\top} \bA \\ &\qquad\qquad+
    \bU_{k-1}^{\PO} \bZ_{u,k-1}^\top \bK_{h,k-1}^\ddagger
    \bV_{k-1}^{\PO\top} + \bA \bV_{k-1}^{\PO}
    \bK_{h,k-1}^{\ddagger}\bV_{k-1}^{\PO\top}.
  \end{aligned}
\end{equation}
We have
\begin{equation}
  \begin{aligned}
    \frac{1}{\sqrt{n}}\bU_{k-1}^{\PO}
    \bZ_{u,k-1}^\top\bK_{h,k-1}^\ddagger&\bV_{k-1}^{\PO\top}
    \underset{\emph{(i)}}= \frac{1}{\sqrt{n}}\bU_{k-1}^{\PO}
    \bK_{g,k-1}^\ddagger \bK_{g,k-1} \bZ_{u,k-1}^\top
    \bK_{h,k-1}^\ddagger\bV_{k-1}^{\PO\top}
    \\ &\underset{\emph{(ii)}}\mydoteq
    \frac{1}{\sqrt{n}}\bU_{k-1}^{\PO} \bK_{g,k-1}^\ddagger
    \llangle \bG_{k-1}^{\PO} , \bV_{k-1}^{\PO} \rrangle
    \bK_{h,k-1}^\ddagger\bV_{k-1}^{\PO\top}
    \\ &\underset{\emph{(iii)}}= \frac{1}{\sqrt{n}}\bU_{k-1}^{\PO}
    \bK_{g,k-1}^\ddagger \llangle \bV_{k-1}^{\PO} \bZ_{v,k-1}^\top
    - \bA^\top \bU_{k-1}^{\PO} , \bV_{k-1}^{\PO} \rrangle
    \bK_{h,k-1}^\ddagger\bV_{k-1}^{\PO\top}
    \\ &\underset{\emph{(iv)}}\mydoteq \frac{1}{\sqrt{n}} \Big(
    \bU_{k-1}^{\PO} \bK_{g,k-1}^\ddagger
    \bZ_{v,k-1}\bV_{k-1}^{\PO\top} - \bU_{k-1}^{\PO}
    \bK_{g,k-1}^\ddagger \bU_{k-1}^{\PO\top} \bA \bV_{k-1}^{\PO}
        \bK_{h,k-1}^\ddagger \bV_{k-1}^{\PO\top} \Big).
  \end{aligned}
\end{equation}
Equation \emph{(i)} holds because $\bK_{g,k-1}^\ddagger \bK_{g,k-1}
\bZ_{u,k-1}^\top = \id_{\bK_{g,k-1}}^\top \bZ_{u,k-1}^\top =
\bZ_{u,k-1}^\top$ by~\Cref{lem:cholesky-inverse-identities} and the
innovation compatibility of $\bZ_{u,k-1}$ with $\bK_{g,k-1}$ (the
columns of $\bZ_{u,k-1}^\top$ lie in the span of the standard basis
vectors corresponding to innovative indices).  Approximation
\emph{(ii)} holds by~\Cref{cor:conc-aux-2m}, the bounds on
$\bU_{k-1}^{\PO}$, $\bV_{k-1}^{\PO}$ in~\Cref{cor:AO-bounds}, and the
bounds on $\bK_{g,k-1}^\ddagger$, $\bK_{h,k-1}^\ddagger$
in~\Cref{lem:fixed-pt-bound}.  Equation \emph{(iii)} holds by the
definition of $\bG_{k-1}^{\PO}$.  Approximation \emph{(iv)} holds
because, by~\Cref{cor:conc-aux-2m}, $\llangle \bV_{k-1}^{\PO} \rrangle
\mydoteq \bK_{h,k-1}$, and,
using~\Cref{lem:cholesky-inverse-identities} and the innovation
compatibility of $\bZ_{v,k-1}$ with respect to $\bK_{h,k-1}$,
$\bZ_{v,k-1} \bK_{h,k-1} \bK_{h,k-1}^\ddagger = \bZ_{v,k-1}
\id_{\bK_{h,k-1}} = \bZ_{v,k-1}$.  Combining the previous two
displays, we get
\begin{equation}
  \frac{1}{\sqrt{n}}(-\bT_g^\top + \bT_h) \mydoteq \frac{1}{\sqrt{n}}
  \Big( \bU_{k-1}^{\PO} \bK_{g,k-1}^\ddagger \bU_{k-1}^{\PO\top} \bA +
  \bA \bV_{k-1}^{\PO} \bK_{h,k-1}^{\ddagger}\bV_{k-1}^{\PO\top} -
  \bU_{k-1}^{\PO} \bK_{g,k-1}^\ddagger \bU_{k-1}^{\PO\top} \bA
  \bV_{k-1}^{\PO} \bK_{h,k-1}^\ddagger \bV_{k-1}^{\PO\top} \Big).
\end{equation}
Now, using~\Cref{cor:chol-conc}, we have $\bU_{k-1}^{\PO}
\bK_{g,k-1}^\ddagger \bU_{k-1}^{\PO\top} \mydoteq \bU_{k-1}^{\PO}
\sK^\ddagger(\llangle \bU_{k-1}^{\PO}\rrangle) \bU_{k-1}^{\PO\top} =
\proj_{\bU_{k-1}^{\PO}}$ and likewise, $\bV_{k-1}^{\PO}
\bK_{h,k-1}^\ddagger \bV_{k-1}^{\PO\top} \mydoteq \bV_{k-1}^{\PO}
\sK^\ddagger(\llangle \bV_{k-1}^{\PO}\rrangle) \bV_{k-1}^{\PO\top} =
\proj_{\bV_{k-1}^{\PO}}$.  Moreover, $\| \bA \|_{\op} /\sqrt{n}
\lessdot C$ \cite[Corollary 5.35]{vershynin_2012}, whence
\begin{equation}
\frac{1}{\sqrt{n}}(-\bT_g^\top + \bT_h) \mydoteq
\frac{1}{\sqrt{n}}(\proj_{\bU_{k-1}^{\PO}} \bA + \bA
\proj_{\bV_{k-1}^{\PO}} - \proj_{\bU_{k-1}^{\PO}} \bA
\proj_{\bV_{k-1}^{\PO}}) = \frac{1}{\sqrt{n}} \bA^\|,
\end{equation}
as desired.
\end{proof}


\subsection{Properties of the auxiliary objective}
\label{sec:properties-aux-obj}

Throughout the proofs in this section, we freely invoke the bounds on
the problem parameters or fixed point parameters
from~\Cref{lem:fixed-pt-bound} and Assumption A1 without citing the
lemma or assumption each time.

\begin{lemma}
\label{lem:lk*-conc}
For $k = 5,6$, under Hypothesis $(k-1)$, we have
\begin{equation}
  \ell_k^*(\bu_k^{\AO}; \bw, \by_1, \by_2) \mydoteq
  \E[\ell_k^*(\bu_k^{\SE}; \bw^{\SE}, \by_1^{\SE}, \by_2^{\SE})].
\end{equation}
\end{lemma}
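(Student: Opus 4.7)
The overall strategy is to rewrite $\ell_k^*(\bu_k^{\AO};\bw,\by_1,\by_2)$ as an empirical average $\frac{1}{n}\sum_i F_k(nu_{4,i}^{\AO},nu_{5,i}^{\AO},nu_{6,i}^{\AO},h_{1,i}^{\AO},h_{2,i}^{\AO},h_{k,i}^{\AO},\eps_{2,i})$ of an (approximately) order-2 pseudo-Lipschitz function, and then invoke the second line of \eqref{eq:ao-conc} in Lemma \ref{lem:hypothesis-k-ao}. Once concentration on $\E[F_k(\cdots^{\SE})]$ is obtained, the same rewriting carried out in the state-evolution model---using the KKT conditions \eqref{eq:fenchel-legendre} for $\bu_5^{\SE}$, $\bu_6^{\SE}$---identifies $\E[F_k(\cdots^{\SE})]$ with $\E[\ell_k^*(\bu_k^{\SE};\bw^{\SE},\by_1^{\SE},\by_2^{\SE})]$, yielding the claim. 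Throughout we freely use $y_{1,i}=-nu_{4,i}^{\AO}\in\{0,1\}$, $w_i=w(\mu_\prop+h_{1,i}^{\AO})$, and $y_{2,i}=\mu_\out+h_{2,i}^{\AO}+\eps_{2,i}$ to realize $(\by_1,\by_2,\bw)$ as functions of the auxiliary variables.

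\textbf{Case $k=5$.} By the KKT relation \eqref{eq:fenchel-legendre} specialized to the auxiliary objective, setting $\hat\eta_{5,i}^{\AO}\defn \nu_{5,0}+\nu_{5,\sx}+h_{5,i}^{\AO}-\zeta_{55}^u u_{5,i}^{\AO}$ gives $nu_{5,i}^{\AO}=\ell_\prop'(\hat\eta_{5,i}^{\AO};y_{1,i})$, whence by Fenchel--Legendre duality
\[
\ell_\prop^*(nu_{5,i}^{\AO};y_{1,i}) \;=\; nu_{5,i}^{\AO}\cdot\hat\eta_{5,i}^{\AO}\;-\;\ell_\prop(\hat\eta_{5,i}^{\AO};y_{1,i}).
\]
Both $nu_{5,i}^{\AO}$ and $\hat\eta_{5,i}^{\AO}$ are $C$-Lipschitz functions of $h_{5,i}^{\AO}$ (separately for each value of $y_{1,i}\in\{0,1\}$) because $\zeta_{55}^u\asymp n$ and proximal operators are $1$-Lipschitz; Assumption~A1 moreover gives $|\ell_\prop(\eta;a)|\leq C(1+\eta^2)$. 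Thus $F_5$ is order-2 pseudo-Lipschitz in $(nu_{4,i}^{\AO},h_{5,i}^{\AO})$, and \eqref{eq:ao-conc} delivers $\ell_5^*(\bu_5^{\AO};\bw,\by_1,\by_2)\mydoteq \E[F_5(\cdots^{\SE})]$. Applying the same Fenchel identity in the state-evolution model identifies the right-hand side with $\E[\ell_\prop^*(nu_{5,i}^{\SE};y_{1,i}^{\SE})]=\E[\ell_5^*(\bu_5^{\SE};\bw^{\SE},\by_1^{\SE},\by_2^{\SE})]$.

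\textbf{Case $k=6$.} Using the explicit formula \eqref{eq:u6ao-explicit} and the convention $u^2/0=\indic\{u=0\}$, we may write
\[
\ell_6^*(\bu_6^{\AO};\bw,\by_1,\by_2)
\;=\;
\frac{1}{n}\sum_{i=1}^n\bigg( nu_{6,i}^{\AO}\,y_{2,i} \;+\; \frac{(nu_{6,i}^{\AO})^2}{2\,w(h_{1,i}^{\AO})\,y_{1,i}}\bigg),
\]
where each summand vanishes whenever $y_{1,i}=0$. As in the proof of the second line of \eqref{eq:ao-conc} in Lemma \ref{lem:hypothesis-k-ao}, the map $(h_1,h_2,h_6,nu_4,\eps_2)\mapsto nu_6$ is cubic rather than order-2 pseudo-Lipschitz. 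We handle this with the same truncation device: replacing $\nu_{6,0}+\nu_{6,\sx}+h_{6,i}^{\AO}-h_{2,i}^{\AO}-\eps_{2,i}$ by $\Trunc^M(\cdot)$ in the formulas for $nu_{6,i}^{\AO}$ and $y_{2,i}$ yields a $CM^2$-order-2-pseudo-Lipschitz function $F_6^M$ of the auxiliary variables; Lemma \ref{lem:hypothesis-k-ao} applied to $F_6^M/M^2$ yields concentration; and the truncation error is bounded via Gaussian tail bounds on $(h_{2,i}^{\AO},h_{6,i}^{\AO},\eps_{2,i})$ (whose law is controlled by Hypothesis $(k-1)$), then taking $M\to\infty$ (concretely $M=\sqrt{\log(1/\epsilon)}$) gives the claim. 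The right-hand side is again identified with $\E[\ell_6^*(\bu_6^{\SE};\cdots)]$ using the analogous identity \eqref{eq:fenchel-legendre} in the state-evolution model.

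\textbf{Main obstacle.} The substantive difficulty is entirely in $k=6$: because $u_{6,i}^{\AO}$ is, via \eqref{eq:u6ao-explicit}, a bounded-denominator multiple of $(h_{6,i}^{\AO}-h_{2,i}^{\AO}-\eps_{2,i})\cdot nu_{4,i}^{\AO}$, the products $nu_{6,i}^{\AO}y_{2,i}$ and $(nu_{6,i}^{\AO})^2/(wy_1)$ are cubic in Gaussian inputs and fall outside the order-2 pseudo-Lipschitz hypothesis of Lemma \ref{lem:hypothesis-k-ao}. The truncation-and-sub-Gaussian-tail argument already developed for the $k=6$ case of \eqref{eq:ao-conc} adapts verbatim, so no new Gaussian-comparison work is required beyond Hypothesis $(k-1)$.
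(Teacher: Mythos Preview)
Your proposal is correct and takes essentially the same approach as the paper: write $\ell_k^*$ coordinate-wise and invoke Lemma~\ref{lem:hypothesis-k-ao}, handling the super-quadratic growth for $k=6$ by the same truncation device used there. The only cosmetic differences are that for $k=5$ the paper bounds $|\ell_\prop^*(\eta;1)-\ell_\prop^*(\eta;0)|\leq C(1+|\eta|)$ directly (rather than via your Fenchel identity) to get the order-2 pseudo-Lipschitz property in $nu_{4,i}$, and for $k=6$ it first peels off the bilinear term $\langle\bu_6^{\AO},\bh_2^{\AO}+\beps_2\rangle$ before truncating the quadratic remainder.
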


\begin{proof}[Proof of~\Cref{lem:lk*-conc}]
We handle $k = 5$ and $k = 6$ separately.  First, we study $k = 5$.
Because $\ell_\prop(\eta;\action)$ is strongly convex and smooth
(Assumption A1) in $\eta$, so too is $\ell_\prop^*(nu;\action)$ in
$nu$.  Moreover, $\ell_\prop^*(0;\action) = - \sup_\eta \{
\ell_\prop(\eta;\action) \}$ and
$\partial_\eta\ell_\prop^*(\eta;\action)|_{\eta=0} = \argmin_\eta \{
\ell_\prop(\eta;\action)$.  From Assumption A1, we can conclude that
all of these quantities are bounded above in absolute value by $C$.
This implies that $|\ell_\prop^*(\eta;1) - \ell_\prop^*(\eta;0)| \leq
C(1+|\eta|)$.  In particular, $\ell_5^*(\bu_6;\bw,-n\bu_5,\by_2)$ is a
$C$ order-2 pseudo-Lipschitz function of $n\bu_6$ and $n\bu_5$, whence
the result for $k = 5$ holds by~\Cref{lem:hypothesis-k-ao}.

Next, we study $k = 6$.  Recalling the expression for
$u_{6,i}^\AO$ in equation~\eqref{eq:u6ao-explicit}, we may write
\begin{equation}
  \begin{gathered}
  \ell_6^*(\bu_6^{\AO};\bw,\by_1,\by_2) = \<\bu_6^{\AO},\bh_2^{\AO} +
  \beps_2\> - nu_{4,i}^{\PO}\,\frac{(\nu_{6,0} + \nu_{6,\sx} -
    h_{2,i}^{\PO} - \eps_{2,i} +
    h_{6,i}^{\PO})^2}{\Big(\frac{\zeta_{66}^u}{n}
    \sqrt{w(h_{1,i}^{\PO})} +
    \frac{1}{\sqrt{w(h_{1,i}^{\PO})}}\Big)^2},
    \end{gathered}
    \end{equation}
where we use that $-nu_{4,i}^{\AO} = y_{1,i}^{\AO}$ and that
$y_{1,i}^{\AO} \in \{0,1\}$.  By~\Cref{lem:hypothesis-k-ao}, we have
that $\< \bu_6^{\AO} , \bh_2^{\AO} + \beps_2 \> \mydoteq \<
\bu_6^{\SE} , \bh_2^{\SE} + \beps_2^{\SE} \>_{\Ltwo}$.  Because the
derivatives with respect to $h_{1,i}^{\AO}$ and $nu_{4_i}^{\AO}$ grow
quadratically in $h_{2,i}^{\AO}$, $\eps_{6,i}$, and $h_{6,i}^{\AO}$,
its concentration requires a truncation argument.  For
\begin{equation}
  (h_{1,i}^{\AO},h_{2,i}^{\AO},h_{6,i}^{\AO},nu_{4,i}^{\AO},\eps_i)
  \mapsto (-nu_{4,i}^{\AO}\wedge 1)\,\frac{(\nu_{6,0} + \nu_{6,\sx} -
    h_{2,i}^{\AO} - \eps_{2,i} + h_{6,i}^{\AO})^2\wedge
    M^2}{\Big(\frac{\zeta_{66}^u}{n} \sqrt{w(h_{1,i}^{\AO})} +
    \frac{1}{\sqrt{w(h_{1,i}^{\AO})}}\Big)^2}
\end{equation}
is $CM^2$-Lipschitz, because $c < w(\,\cdot\,) < C$ has derivative
bounded by $C$ and $|\nu_{6,0}|$, $|\nu_{6,\sx}|$,
$\zeta_{66}^u/n$ are bounded above by $C$.
By~\Cref{lem:hypothesis-k-ao},
\begin{equation}
  \begin{aligned}
    &\frac{1}{2nM^2} \sum_{i=1}^n (-nu_{4,i}^{\AO}\wedge
    1)\,\frac{(\nu_{6,0} + \nu_{6,\sx} - h_{2,i}^{\AO} -
      \eps_{2,i} + h_{6,i}^{\AO})^2\wedge
      M^2}{\Big(\frac{\zeta_{66}^u}{n} \sqrt{w(h_{1,i}^{\AO})} +
      \frac{1}{\sqrt{w(h_{1,i}^{\AO})}}\Big)^2}
    \\ &\qquad\qquad\mydoteq \frac{1}{2M^2} \E\Big[
      (-nu_{4,i}^{\SE}\vee 1)\,\frac{(\nu_{6,0} + \nu_{6,\sx} -
        h_{2,i}^{\SE} - \eps_{2,i} + h_{6,i}^{\SE})^2\wedge
        M^2}{\Big(\frac{\zeta_{66}^u}{n} \sqrt{w(h_{1,i}^{\SE})} +
        \frac{1}{\sqrt{w(h_{1,i}^{\SE})}}\Big)^2} \Big]
    \\ &\qquad\qquad= \frac{1}{2M^2} \E\Big[
      -nu_{4,i}^{\SE}\,\frac{(\nu_{6,0} + \nu_{6,\sx} -
        h_{2,i}^{\SE} - \eps_{2,i} +
        h_{6,i}^{\SE})^2}{\Big(\frac{\zeta_{66}^u}{n}
        \sqrt{w(h_{1,i}^{\SE})} +
        \frac{1}{\sqrt{w(h_{1,i}^{\SE})}}\Big)^2} \Big] +
    \epsilon(M),
  \end{aligned}
\end{equation}
where $|\epsilon(M)|\leq Ce^{-cM^2}$ by standard Gaussian tail bounds
and the fact that $w(h_{1,i}^{\SE})$ is bounded above and below and
$-nu_{4,i}^{\SE} = \in \{0,1\}$.  Moreover,
\begin{equation}
  \begin{aligned}
    &\Big| \frac{1}{2nM^2} \sum_{i=1}^n (-nu_{4,i}^{\AO}\wedge
    1)\,\frac{(\nu_{6,0} + \nu_{6,\sx} - h_{2,i}^{\AO} - \eps_{2,i} +
      h_{6,i}^{\AO})^2\wedge M^2}{\Big(\frac{\zeta_{66}^u}{n}
      \sqrt{w(h_{1,i}^{\AO})} +
      \frac{1}{\sqrt{w(h_{1,i}^{\AO})}}\Big)^2}
    \\ &\qquad\qquad\qquad\qquad\qquad\qquad- \frac{1}{2nM^2}
    \sum_{i=1}^n (-nu_{4,i}^{\AO})\,\frac{(\nu_{6,0} + \nu_{6,\sx} -
      h_{2,i}^{\AO} - \eps_{2,i} +
      h_{6,i}^{\AO})^2}{\Big(\frac{\zeta_{66}^u}{n}
      \sqrt{w(h_{1,i}^{\AO})} +
      \frac{1}{\sqrt{w(h_{1,i}^{\AO})}}\Big)^2} \Big|
    \\ &\qquad\qquad\leq \frac{C}{2nM^2} \sum_{i=1}^n (C +
    |h_{2,i}^{\AO}| + |\eps_{2,i}| + |h_{6,i}^{\AO}| - M)_+^2
    \\ &\qquad\qquad\mydoteq \frac{C}{2M^2} \E\big[(C +
      |h_{2,i}^{\SE}| + |\eps_{2,i}| + |h_{6,i}^{\SE}| - M)_+^2\big]
    \Big) \leq Ce^{-cM^2},
    \end{aligned}
\end{equation}
where in the first inequality we have
applied~\Cref{lem:fixed-pt-bound}, in the approximate equality we have
applied~\Cref{lem:hypothesis-k-ao}, and in the second inequality we
have applied standard Gaussian tail bounds.  Combining the previous
two displays, we get
\begin{equation}
  \begin{aligned}
    &\Big| \frac{1}{2n(M \vee M^2)} \sum_{i=1}^n
    (-nu_{4,i}^{\PO})\,\frac{(\nu_{6,0} + \nu_{6,\sx} - h_{2,i}^{\PO}
      - \eps_{2,i} + h_{6,i}^{\PO})^2}{\Big(\frac{\zeta_{66}^u}{n}
      \sqrt{w(h_{1,i}^{\PO})} +
      \frac{1}{\sqrt{w(h_{1,i}^{\PO})}}\Big)^2}
    \\
& \qquad\qquad\qquad\qquad\qquad- \frac{1}{2(M \vee M^2)} \E\Big[
      (-nu_{4,i}^{\SE})\,\frac{(\nu_{6,0} + \nu_{6,\sx} -
        h_{2,i}^{\SE} - \eps_{2,i} +
        h_{6,i}^{\SE})^2}{\Big(\frac{\zeta_{66}^u}{n}
        \sqrt{w(h_{1,i}^{\SE})} +
        \frac{1}{\sqrt{w(h_{1,i}^{\SE})}}\Big)^2} \Big] \Big| \lessdot
    Ce^{-cM^2}.
  \end{aligned}
\end{equation}
That is, for all $\epsilon > 0$, the probability that the left-hand
side exceeds $\epsilon + Ce^{-cM^2}$ is bounded above by
$C'e^{-c'n\epsilon^r}$ for some $C,c,C',c',r > 0$ depending only on
$\cPmodel$.  Then, taking $M = \sqrt{\log(1/\epsilon)}$, we get
\begin{equation}
  \frac{1}{2n} \sum_{i=1}^n (-nu_{4,i}^{\PO})\,\frac{(\nu_{6,0} +
    \nu_{6,\sx} - h_{2,i}^{\PO} - \eps_{2,i} +
    h_{6,i}^{\PO})^2}{\Big(\frac{\zeta_{66}^u}{n}
    \sqrt{w(h_{1,i}^{\PO})} +
    \frac{1}{\sqrt{w(h_{1,i}^{\PO})}}\Big)^2} \mydoteq \E\Big[
    (-nu_{4,i}^{\SE})\,\frac{(\nu_{6,0} + \nu_{6,\sx} - h_{2,i}^{\SE}
      - \eps_{2,i} + h_{6,i}^{\SE})^2}{\Big(\frac{\zeta_{66}^u}{n}
      \sqrt{w(h_{1,i}^{\SE})} +
      \frac{1}{\sqrt{w(h_{1,i}^{\SE})}}\Big)^2} \Big].
    \end{equation}
Because also $\< \bu_6^{\AO} , \bh_2^{\AO} + \beps_2 \> \mydoteq \<
\bu_6^{\SE} , \bh_2^{\SE} + \beps_2^{\SE} \>_{\Ltwo}$, the proof of
the lemma in the case $k = 6$ is complete.
\end{proof}

\begin{lemma}[Value of $\AuxObj_k$ at $(\bu_k^{\AO};\nu_{k,0},\bv_k^{\AO})$]
\label{lem:AuxObj-conc}
For $k = 5,6$, under Hypothesis (k-1) we have
\begin{equation}
  \AuxObj_k(\bu_k^{\AO};\nu_{k,0},\bv_k^{\AO}) \mydoteq \ell_k,
\end{equation}
where
\begin{equation}
  \ell_k \defn - \< \bg_k^{\SE} , \bv_k^{\SE} \>_{\Ltwo} + \<
  \bh_k^{\SE} , \bu_k^{\SE} \>_{\Ltwo} -
  \E[\ell_k^*(\bu_k^{\SE};\bw^{\SE},\by_1^{\SE},\by_2^{\SE})] +
  \E[\Omega_k(\bv_k^{\SE})].
    \end{equation}
\end{lemma}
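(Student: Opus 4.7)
The plan is to evaluate $\AuxObj_k(\bu_k^{\AO};\nu_{k,0},\bv_k^{\AO})$ term by term using~\eqref{eq:def-AuxObj}--\eqref{eq:objectives}: for $k\in\{5,6\}$, the objective decomposes as
\begin{equation*}
-\<\bg(\bu_k^{\AO}),\bv_k^{\AO}\> + \<\bh(\bv_k^{\AO}),\bu_k^{\AO}\> + \<\bu_k^{\AO},\ones\>\bigl(\nu_{k,0}+\<\bmu_{\sx},\bv_k^{\AO}\>\bigr) - \ell_k^*(\bu_k^{\AO};\bw,\by_1,\by_2) + \Omega_k(\bv_k^{\AO}),
\end{equation*}
and each of the five pieces should concentrate on its counterpart in $\ell_k$ (the offset term should concentrate on $0$).

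Three of these are handled by tools already in hand. The $\ell_k^*$ piece is~\Cref{lem:lk*-conc} verbatim. For $\Omega_k(\bv_k^{\AO})$, Assumption~A1(g) makes $\Omega_k$ order-$2$ pseudo-Lipschitz on any bounded ball, and since $\|\bv_k^{\AO}\|,\|\bv_k^{\SE}\|\lessdot C$ by~\Cref{cor:AO-bounds} and~\Cref{lem:fixed-pt-bound}, the first line of~\eqref{eq:ao-conc} in~\Cref{lem:hypothesis-k-ao} yields $\Omega_k(\bv_k^{\AO})\mydoteq\E[\Omega_k(\bv_k^{\SE})]$. For the offset, applying the second line of~\eqref{eq:ao-conc} with $\phi(nu_{k,i})=nu_{k,i}$ gives $\<\ones,\bu_k^{\AO}\>\mydoteq\E[\<\ones,\bu_k^{\SE}\>]$, which vanishes by the fixed point relation $\<\ones,\bu_k^{\SE}\>_{\Ltwo}=0$. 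Multiplying by the bounded scalar $\nu_{k,0}+\<\bmu_{\sx},\bv_k^{\AO}\>$ shows the whole offset term is $\mydoteq 0$.

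The substantive step is the pair of Gordon bilinear terms. The plan is to establish the chain
\begin{equation*}
\<\bg(\bu_k^{\AO}),\bv_k^{\AO}\>\;\mydoteq\;\<\bg_k^{\AO},\bv_k^{\AO}\>\;\mydoteq\;\<\bg_k^{\SE},\bv_k^{\SE}\>_{\Ltwo},
\end{equation*}
together with the analogous chain for $\bh$. The second approximation is read off directly from~\Cref{cor:conc-aux-2m}: the $(k,k)$ entry of $\llangle\bG_k^{\AO},\bV_k^{\AO}\rrangle\mydoteq\bK_{g,k}\bZ_{u,k}^\top$ matches $\<\bg_k^{\SE},\bv_k^{\SE}\>_{\Ltwo}$ via the fixed point equation. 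For the first approximation, use~\eqref{eq:gordon-gh-explicit} to write $\bg(\bu_k^{\AO})=\bG_k^{\AO,\perp}\bc_u$ where $c_{u,\ell}=\<\bu_\ell^{\AO,\perp},\bu_k^{\AO}\>$ for $\ell<k$ and $c_{u,k}=\|\proj_{\bU_{k-1}^{\PO}}^\perp\bu_k^{\AO}\|$, while $\bg_k^{\AO}=\bG_k^{\AO,\perp}\,\bL_{g,k}^\top\be_k$; the task reduces to showing $\bc_u$ is close enough to $\bL_{g,k}^\top\be_k$ that pairing with $\bv_k^{\AO}$ produces negligible error. For $\ell<k$, expand $\bu_\ell^{\AO,\perp}=\sum_{\ell'\le\ell}L_{g,\ell\ell'}^\ddagger\bu_{\ell'}^{\AO}$, apply~\Cref{cor:conc-aux-2m} to each resulting inner product, and use the Cholesky identity $\bL_g^\ddagger\bK_g=\bL_g^\top$ from~\Cref{lem:cholesky-inverse-identities} to get $c_{u,\ell}\mydoteq L_{g,k\ell}$. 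For $\ell=k$, recognize $c_{u,k}^2$ as the Schur complement of $\llangle\bU_k^{\AO}\rrangle$, which concentrates on the Schur complement of $\bK_{g,k}$ and equals $L_{g,kk}^2$ by construction of the Cholesky factor. A symmetric argument using $\bH_k^{\AO,\perp}$, $\bV_k^{\AO,\perp}$, and $\bL_{h,k}$ handles $\<\bh(\bv_k^{\AO}),\bu_k^{\AO}\>\mydoteq\<\bh_k^{\SE},\bu_k^{\SE}\>_{\Ltwo}$.

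The main obstacle is quantitative: the columns of $\bG_k^{\AO,\perp}$ and $\bH_k^{\AO,\perp}$ have norms of order $\sqrt{n}$ by~\Cref{cor:AO-bounds}, so the pairing $\<\bG_k^{\AO,\perp}(\bc_u-\bL_{g,k}^\top\be_k),\bv_k^{\AO}\>$ amplifies any error in $\bc_u$ by $\sqrt{n}$. The facts that make the matching survive this amplification are (i) that index $k\in\{5,6\}$ is innovative with respect to both $\bK_g$ and $\bK_h$ (\Cref{lem:fixed-pt-bound}), so $L_{g,kk}$ and $L_{h,kk}$ are bounded below at their respective scales and the Cholesky pseudo-inverses are stably defined on the relevant submatrices; and (ii) the exponential-in-$n$ concentration rates encoded by $\mydoteq$ in~\Cref{cor:conc-aux-2m,cor:chol-conc}, which, together with the per-column bounds of~\Cref{cor:AO-bounds}, are sharp enough to drive the amplified errors to zero. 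Summing the five concentrations then yields $\AuxObj_k(\bu_k^{\AO};\nu_{k,0},\bv_k^{\AO})\mydoteq\ell_k$.
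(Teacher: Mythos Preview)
Your proposal is correct and takes essentially the same approach as the paper: both decompose $\AuxObj_k$ into the same five pieces, dispatch the $\ell_k^*$ and $\Omega_k$ terms via~\Cref{lem:lk*-conc} and~\Cref{lem:hypothesis-k-ao}, and handle the two Gordon bilinear terms by rewriting them through the Cholesky structure and invoking~\Cref{cor:conc-aux-2m,cor:chol-conc,cor:AO-bounds}. The only cosmetic difference is that the paper writes $\<\bg(\bu_k^{\AO}),\bv_k^{\AO}\>$ directly as the matrix expression $(\bK_{g,k})_{k,:}\bK_{g,k}^\ddagger(\bK_{g,k}\bZ_{u,k}^\top)_{:,k}$ and concentrates in one shot (also invoking~\Cref{cor:emp-innovation} for the $k$th summand), whereas you insert the intermediate $\<\bg_k^{\AO},\bv_k^{\AO}\>$ and concentrate in two steps; you are also more explicit than the paper about why the offset term $\<\bu_k^{\AO},\ones\>(\nu_{k,0}+\<\bmu_{\sx},\bv_k^{\AO}\>)$ vanishes.
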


\begin{proof}[Proof of~\Cref{lem:AuxObj-conc}]
By the definition of $\phi_k$ and $\AuxObj_k$
(equations~\eqref{eq:objectives} and \eqref{eq:def-AuxObj}),
\begin{equation}
\begin{aligned}
\AuxObj_k(\bu_k^{\AO};\nu_{k,0},\bv_k^{\AO}) &= - \< \bg(\bu_k^{\AO})
, \bv_k^{\AO} \> + \< \bh(\bv_k^{\AO}) , \bu_k^{\AO} \> + \<
\bu_k^{\AO} , \ones \>(\nu_{k,0} + \< \bmu_{\sx} , \bv_k^{\AO} \>)
\\ &\qquad- \ell_k^*(\bu_k^{\AO};\bw,\by_1,\by_2) +
\Omega_k(\bv_k^{\AO}).
\end{aligned}
\end{equation}
Using the definition of $\bg(\,\cdot\,)$ (see
equation~\eqref{eq:gordon-gh-explicit}), we have
\begin{equation}
\begin{aligned}
\< \bg(\bu_k^{\AO}) , \bv_k^{\AO} \> &= \sum_{\ell=1}^{k-1} \<
\bu_\ell^{\AO,\perp},\bu_k^{\AO}\>\<\bg_\ell^{\PO,\perp},\bv_k^{\AO}\>
+ \| \proj_{\bU_{k-1}^{\AO}}^\perp \bu_k^{\AO} \| \< \bxi_g ,
\bv_k^{\AO} \> \\ &\mydoteq \sum_{\ell=1}^k \<
\bu_\ell^{\AO,\perp},\bv_k^{\AO}\>\<\bg_\ell^{\PO,\perp},\bv_k^{\AO}\>
= \llangle \bu_k^{\AO} , \bU_k^{\AO} \bL_{g,k}^{\ddagger\top} \rrangle
\llangle \bG_k^{\AO} \bL_{g,k}^{\ddagger\top} , \bv_k^{\AO} \rrangle
\\ &\mydoteq (\bK_{g,k})_{k,1:k} \bK_{g,k}^\ddagger
(\bK_{g,k}\bZ_{u,k}^\top)_{1:k,k} = (\id_{\bK_{g,k}} \bK_{g,k}
\bZ_{u,k}^\top)_{kk} = (\bK_{g,k} \bZ_{u,k}^\top)_{kk} = \<
\bg_k^{\SE} , \bv_k^{\SE} \>_{\Ltwo},
\end{aligned}
\end{equation}
where the approximate equality in the second line holds
by~\Cref{cor:emp-innovation} and the high-probability bounds on $\|
\proj_{\bU_{k-1}^{\AO}}^\perp \bu_k^{\AO} \|$ and $ \< \bxi_g ,
\bu_k^{\AO} \>$ implied by~\Cref{cor:conc-aux-2m,cor:chol-conc}; the
equality in the second line holds by equations~\eqref{eq:GHk-ao-perp}
and~\eqref{eq:UVk-ao-perp}; the approximate equality in the third line
holds by~\Cref{cor:conc-aux-2m}, the fixed point
relation~\eqref{eq:fixpt-general}; and the remaining equalities in the
third line use equation~\eqref{eq:K-pseudo-inverse}, the definition of
$\id_{\bK}$ from~\Cref{lem:cholesky-inverse-identities}, and the fixed
point equations~\eqref{eq:fixpt-general}.  We have used bounds on the
fixed point parameters throughout.  By a completely analogous argument
we can show that $\< \bh(\bv_k^{\AO}) , \bu_k^{\AO} \>_{\Ltwo}
\mydoteq \< \bh_k^{\SE} , \bu_k^{\SE} \>_{\Ltwo}$.  Combining these
facts with~\Cref{lem:lk*-conc} and the fact $\Omega_k(\bv_k^{\AO}) \mydoteq \E[\Omega_k(\bv_k^{\SE})]$ by ~\Cref{lem:hypothesis-k-ao}
gives~\Cref{lem:AuxObj-conc}.
\end{proof}

\begin{lemma}[Approximate stationarity of $\AuxObj_k$ at $(\bu_k^{\AO};\nu_{k,0},\bv_k^{\AO})$]
\label{lem:approximate-stationarity}
For $k = 5,6$, under Hypothesis (k-1) we have
\begin{equation}
  \begin{gathered}
    \frac{\de\,}{\de v_0}\AuxObj_k(\bu_k^{\AO};\nu_{k,0},\bv_k^{\AO})
    \mydoteq 0, \qquad \frac{\de\,}{\de
      \bv}\AuxObj_k(\bu_k^{\AO};\nu_{k,0},\bv_k^{\AO}) \mydoteq
    \bzero.
    \end{gathered}
\end{equation}
Moreover, there exists a random vector $\bdelta_u$ such that
\begin{equation}
  \bdelta_u \in \partial_{\bu} \Big(
  -\AuxObj_k(\bu_k^{\AO};\nu_{k,0},\bv_k^{\AO}) \Big) \text{
    almost surely, and } \frac{1}{\sqrt{n}}\bdelta_u \mydoteq
      \bzero.
\end{equation}
\end{lemma}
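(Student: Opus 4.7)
My plan is to exploit that $(\bu_k^{\AO},\bv_k^{\AO})$ is constructed, by \eqref{eq:AO-separated-opt}, to satisfy by design the KKT conditions for the two \emph{separated} state-evolution problems, and to show that the gap to being a saddle point of the \emph{joint} auxiliary objective $\AuxObj_k$ is a sum of terms whose coefficients concentrate to $0$ via \Cref{cor:conc-aux-2m,cor:chol-conc,cor:emp-innovation} and which then cancel via the state-evolution fixed-point identities \eqref{eq:fixpt-general}.

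For the offset derivative I would compute $\partial_{v_0}\AuxObj_k=\langle\ones,\bu_k^{\AO}\rangle$ and apply \Cref{lem:hypothesis-k-ao} with the order-$2$ pseudo-Lipschitz scalar $\phi(y)=y$ applied to $y = nu_{k,i}^{\AO}$; the fixed-point equation $\langle\ones,\bu_k^{\SE}\rangle_{\Ltwo}=0$ forces $\E[u_{k,i}^{\SE}]=0$ and yields $\langle\ones,\bu_k^{\AO}\rangle\mydoteq 0$. For the parameter derivative, \Cref{cor:AO-bounds} gives $\|\proj_{\bV_{k-1}^{\PO}}^\perp\bv_k^{\AO}\|\gtrdot c$, so the derivative exists classically. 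After substituting $\nabla\Omega_k(\bv_k^{\AO})=\bg_k^{\AO}-\zeta_{kk}^v\bv_k^{\AO}-\sum_{\ell<k}\zeta_{k\ell}^v\bv_\ell^{\AO}$ from the KKT condition for $\bv_k^{\AO}$, the derivative decomposes into three pieces: a \emph{Gordon-noise discrepancy} $\bg_k^{\AO}-\bg(\bu_k^{\AO})$, a \emph{Gordon-coupling term} $\bT_h^\top\bu_k^{\AO}+\langle\bxi_h,\bu_k^{\AO}\rangle\,\proj_{\bV_{k-1}^{\PO}}^\perp\bv_k^{\AO}/\|\proj_{\bV_{k-1}^{\PO}}^\perp\bv_k^{\AO}\|-\sum_{\ell\leq k}\zeta_{k\ell}^v\bv_\ell^{\AO}$, and a \emph{mean-drift} piece $\langle\ones,\bu_k^{\AO}\rangle\bmu_\sx$. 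The last is controlled by the offset argument together with $\|\bmu_\sx\|\leq C$. The discrepancy piece I would bound by expanding in the orthogonal basis \eqref{eq:gordon-gh-explicit} and using \Cref{cor:conc-aux-2m,cor:chol-conc} to show each coefficient $L_{g,k\ell}-\langle\bu_\ell^{\PO,\perp},\bu_k^{\AO}\rangle$ and $L_{g,kk}-\|\proj_{\bU_{k-1}^{\PO}}^\perp\bu_k^{\AO}\|$ concentrates fast enough that, after multiplication by $\|\bg_\ell^{\PO,\perp}\|,\|\bxi_g\|\lessdot\sqrt{n}$, the result is still $\mydoteq 0$.

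The heart of the argument is the Gordon-coupling piece. Writing $\bT_h^\top=\bV_{k-1}^{\AO}\bK_{h,k-1}^\ddagger\bH_{k-1}^{\PO\top}$ and applying \Cref{lem:hypothesis-k-ao} to $\phi(h,u)=hu$ gives $\langle\bh_\ell^{\PO},\bu_k^{\AO}\rangle\mydoteq[\bK_h\bZ_v^\top]_{\ell k}$, via the fixed-point identity $\bK_h\bZ_v^\top=\llangle\bH^\SE,\bU^\SE\rrangle_{\Ltwo}$. Combined with $\bK_{h,k-1}^\ddagger\bK_{h,k-1}=\id_{\bK_{h,k-1}}^\top$, the innovation compatibility of $\bZ_v$ with $\bK_h$, and the projection decomposition of $\bv_k^{\AO}$ together with $\proj_{\bV_{k-1}^{\PO}}^\perp\bv_k^{\AO}\mydoteq L_{h,kk}\bv_k^{\AO,\perp}$ (from \Cref{cor:emp-innovation,cor:chol-conc}), this yields $\bT_h^\top\bu_k^{\AO}\mydoteq\sum_{\ell<k}\zeta_{k\ell}^v\bv_\ell^{\AO}+\zeta_{kk}^v\bv_k^{\AO}-\zeta_{kk}^v L_{h,kk}\bv_k^{\AO,\perp}$. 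In parallel, expanding $\bxi_h=(\bh_k^{\AO}-\sum_{\ell<k}L_{h,k\ell}\bh_\ell^{\PO,\perp})/L_{h,kk}$ and applying \Cref{lem:hypothesis-k-ao} a second time gives $\langle\bxi_h,\bu_k^{\AO}\rangle\mydoteq\zeta_{kk}^v L_{h,kk}$ after a short Cholesky simplification using $[\bK_h]_{\ell k}=\sum_{m\leq\ell}L_{h,\ell m}L_{h,km}$. The residual $\bv_k^{\AO,\perp}$-components then cancel exactly, so the Gordon-coupling piece is $\mydoteq\bzero$.

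For the subgradient in the $\bu$-direction, I would pick the particular $\bxi^*=\bh_k^{\AO}+(\nu_{k,0}+\nu_{k,\sx})\ones-\zeta_{kk}^u\bu_k^{\AO}-\sum_{\ell<k}\zeta_{k\ell}^u\bu_\ell^{\AO}\in\partial\ell_k^*(\bu_k^{\AO};\bw,\by_1,\by_2)$ that is forced by the KKT condition for $\bu_k^{\AO}$, and set $\bdelta_u=\partial_\bu\langle\bg(\bu_k^{\AO}),\bv_k^{\AO}\rangle-\bh(\bv_k^{\AO})-(\nu_{k,0}+\langle\bmu_\sx,\bv_k^{\AO}\rangle)\ones+\bxi^*$, which lies in $\partial_\bu(-\AuxObj_k)$ by construction. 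A computation symmetric to the one above (with the roles of $\bg,\bh$ and $\bu,\bv$ swapped) gives $\partial_\bu\langle\bg(\bu_k^{\AO}),\bv_k^{\AO}\rangle\mydoteq\sum_{\ell\leq k}\zeta_{k\ell}^u\bu_\ell^{\AO}$, so $\bdelta_u\mydoteq -\bh(\bv_k^{\AO})+\bh_k^{\AO}+(\nu_{k,\sx}-\langle\bmu_\sx,\bv_k^{\AO}\rangle)\ones$. Dividing by $\sqrt{n}$ absorbs the $\|\bh_\ell^{\PO,\perp}\|,\|\bxi_h\|\asymp\sqrt{n}$ scaling in the first difference, and $\langle\bmu_\sx,\bv_k^{\AO}\rangle\mydoteq\nu_{k,\sx}$ by \Cref{lem:hypothesis-k-ao} applied to the order-$2$ pseudo-Lipschitz $\phi(\bv)=\langle\bmu_\sx,\bv\rangle$ (using $\|\bmu_\sx\|\leq C$ from Assumption A1 with $\bSigma=\id_p$), yielding $\bdelta_u/\sqrt{n}\mydoteq\bzero$. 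The main obstacle is verifying the exact cancellation of the $\bv_k^{\AO,\perp}$- and $\bu_k^{\AO,\perp}$-components described above: the noise coefficients $\langle\bxi_h,\bu_k^{\AO}\rangle$ and $\langle\bxi_g,\bv_k^{\AO}\rangle$ must match $\zeta_{kk}^v L_{h,kk}$ and $\zeta_{kk}^u L_{g,kk}$ respectively, not just up to $o(1)$ error; this identity is forced by the fixed-point relation \eqref{eq:fixpt-general} together with innovation compatibility, but its verification at the level of gradients (rather than the scalar identity used in \Cref{lem:AuxObj-conc}) requires careful tracking of Cholesky sub-blocks and of the interaction between $\id_{\bK_h}^\top$ and $\bZ_v^\top$.
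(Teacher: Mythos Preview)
Your proposal is correct and follows essentially the same strategy as the paper: compute the derivative of each constituent of $\AuxObj_k$, invoke the KKT conditions of the separated problems \eqref{eq:AO-separated-opt} to reduce to residual terms, and kill those via \Cref{lem:hypothesis-k-ao}, \Cref{cor:conc-aux-2m}, and the fixed-point identities \eqref{eq:fixpt-general}. The paper organizes this into two auxiliary results, \Cref{lem:gordon-term-deriv} and \Cref{lem:deriv-penalty}, and then assembles them in a short proof of \Cref{lem:approximate-stationarity}; your inline decomposition into ``Gordon-noise discrepancy,'' ``Gordon-coupling,'' and ``mean-drift'' pieces is an equivalent partition.

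The one place where the paper's route is materially simpler is precisely the ``main obstacle'' you flag. Rather than handling $\bT_h^\top\bu_k^{\AO}$ and $\langle\bxi_h,\bu_k^{\AO}\rangle\,\proj_{\bV_{k-1}^{\PO}}^\perp\bv_k^{\AO}/\|\proj_{\bV_{k-1}^{\PO}}^\perp\bv_k^{\AO}\|$ as separate $(k-1)$-dimensional and rank-one pieces and then verifying a cancellation of $\bv_k^{\AO,\perp}$-components, the paper applies \Cref{cor:emp-innovation} \emph{immediately} in the gradient formula to replace $\proj_{\bV_{k-1}^{\PO}}^\perp\bv_k^{\AO}/\|\proj_{\bV_{k-1}^{\PO}}^\perp\bv_k^{\AO}\|$ by $\bv_k^{\AO,\perp}$. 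Since $\bxi_h=\bh_k^{\AO,\perp}$, the sum over $\ell<k$ and the $\ell=k$ term merge into a single $k$-dimensional expression $\bV_k^{\AO}\bK_{h,k}^\ddagger\bH_k^{\AO\top}\bu_k^{\AO}$, which by \Cref{cor:conc-aux-2m} and $\bK_{h,k}^\ddagger\bK_{h,k}\bZ_{v,k}^\top=\id_{\bK_{h,k}}^\top\bZ_{v,k}^\top=\bZ_{v,k}^\top$ collapses directly to $\sum_{\ell\leq k}\zeta_{k\ell}^v\bv_\ell^{\AO}$ with no residual to cancel. Your split works (the computation you sketch does yield $\bT_h^\top\bu_k^{\AO}\mydoteq\sum_{\ell<k}\zeta_{k\ell}^v\bv_\ell^{\AO}+\zeta_{kk}^v\sum_{\ell<k}L_{h,k\ell}\bv_\ell^{\AO,\perp}$ and $\langle\bxi_h,\bu_k^{\AO}\rangle\mydoteq\zeta_{kk}^v L_{h,kk}$, which recombine correctly), but the paper's merged form avoids the Cholesky sub-block bookkeeping entirely.
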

\noindent The proof of~ \Cref{lem:approximate-stationarity} relies on
an approximation for the derivative of each term in $\AuxObj_k$.  The
next two lemmas give these term-by-term approximations.  We first
state these two lemmas, then show that they
imply~\Cref{lem:approximate-stationarity}, and then prove the two
lemmas.

\begin{lemma}[Derivatives of Gordon terms]
\label{lem:gordon-term-deriv}
    For $k =5,6$, under Hypothesis (k-1) we have
    \begin{equation}
    \begin{aligned}
        \frac{1}{\sqrt{n}}\frac{\de}{\de \bu}\< \bg(\bu_k^{\AO}),
        \bv_k^{\AO} \> &\mydoteq \frac{1}{\sqrt{n}} \sum_{\ell}
        \zeta_{k\ell}^u \bu_\ell^{\AO}, \;\; & \;\; \frac{\de}{\de
          \bv}\< \bg(\bu_k^{\AO}), \bv_k^{\AO} \> &\mydoteq
        \bg_k^{\AO}, \\ \frac{1}{\sqrt{n}}\frac{\de}{\de \bu}\<
        \bh(\bv_k^{\AO}), \bu_k^{\AO} \> &\mydoteq
        \frac{1}{\sqrt{n}}\bh_k^{\AO}, \;\; & \;\; \frac{\de}{\de
          \bv}\< \bh(\bv_k^{\AO}), \bu_k^{\AO} \> &\mydoteq
        \sum_{\ell} \zeta_{k\ell}^v \bv_\ell^{\AO}.
    \end{aligned}
    \end{equation}
\end{lemma}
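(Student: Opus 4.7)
The plan is to differentiate $\bg(\cdot)$ and $\bh(\cdot)$ directly in their explicit form \eqref{eq:gordon-gh-explicit} and then rewrite each resulting expression so that it can be recognized as an inner product against an empirical second-moment matrix to which Corollary \ref{cor:conc-aux-2m} applies. The two key structural identities are that, by construction \eqref{eq:GHk-ao}, $\bxi_g = \bg_k^{\AO,\perp}$ and $\bxi_h = \bh_k^{\AO,\perp}$, and that by \eqref{eq:UVk-ao-perp} and \eqref{eq:GHk-ao-perp-from-std} one has $\bU_k^{\AO,\perp} = \bU_k^\AO \bL_{g,k}^{\ddagger\top}$ (and analogously for $\bG, \bV, \bH$). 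Together with Corollary \ref{cor:emp-innovation}, which replaces the normalized residual of $\bu_k^\AO$ (resp.\ $\bv_k^\AO$) off $\bU_{k-1}^\AO$ (resp.\ $\bV_{k-1}^\AO$) by the auxiliary innovation $\bu_k^{\AO,\perp}$ (resp.\ $\bv_k^{\AO,\perp}$), this lets us fold the boundary term of the derivative into the sum and extend it all the way to $\ell = k$.

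Concretely, for the first identity the plan is to differentiate to obtain
\begin{equation*}
\tfrac{\de}{\de\bu}\langle \bg(\bu_k^\AO), \bv_k^\AO\rangle
= \sum_{\ell=1}^{k-1} \bu_\ell^{\AO,\perp}\langle \bg_\ell^{\AO,\perp}, \bv_k^\AO\rangle
+ \tfrac{\proj_{\bU_{k-1}^{\AO}}^\perp\bu_k^\AO}{\|\proj_{\bU_{k-1}^{\AO}}^\perp\bu_k^\AO\|}\langle \bxi_g, \bv_k^\AO\rangle,
\end{equation*}
apply Corollary \ref{cor:emp-innovation} and the identification $\bxi_g = \bg_k^{\AO,\perp}$ to rewrite this as $\bU_k^{\AO,\perp}(\bG_k^{\AO,\perp})^\top \bv_k^\AO = \bU_k^\AO\, \bK_{g,k}^\ddagger (\bG_k^\AO)^\top \bv_k^\AO$ (using $\bL_g^{\ddagger\top}\bL_g^\ddagger = \bK_g^\ddagger$), then approximate $(\bG_k^\AO)^\top \bv_k^\AO \mydoteq \bK_{g,k}(\bZ_{u,k}^\top)_{\cdot,k}$ via Corollary \ref{cor:conc-aux-2m}, and finally invoke $\bK_{g,k}^\ddagger \bK_{g,k}\bZ_{u,k}^\top = \id_{\bK_{g,k}}^\top\bZ_{u,k}^\top = \bZ_{u,k}^\top$, where the last equality uses the innovation compatibility of $\bZ_u$ recorded in Lemma \ref{lem:fixed-pt-bound}. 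This collapses everything to $\sum_{\ell=1}^k \zeta_{k\ell}^u\bu_\ell^\AO$. The third identity is symmetric with the roles of $(\bg,\bu,\bZ_u,\bK_g)$ and $(\bh,\bv,\bZ_v,\bK_h)$ swapped. For the second and fourth identities, differentiation simply returns $\bg(\bu_k^\AO)$ and $\bh(\bv_k^\AO)$; expanding these along the same $\bu_\ell^{\AO,\perp}$ (resp.\ $\bv_\ell^{\AO,\perp}$) basis and using that $\llangle \bU_k^\AO, \bU_k^{\AO,\perp}\rrangle \mydoteq \bK_{g,k}(\bL_{g,k}^\ddagger)^\top = \bL_{g,k}\id_{\bK_{g,k}}^\perp$ yields $\langle \bu_\ell^{\AO,\perp},\bu_k^\AO\rangle \mydoteq L_{g,k\ell}$, so that $\bg(\bu_k^\AO) \mydoteq \sum_{\ell=1}^k L_{g,k\ell}\bg_\ell^{\AO,\perp} = \bg_k^\AO$ by the definition \eqref{eq:GHk-ao-perp} of $\bg_k^\AO$ via the Cholesky matrix.

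The main obstacle is controlling error propagation in the chain of $\mydoteq$ substitutions so that the final bound holds in $\ell_2$ at the scale stated. Each multiplicative factor that appears (entries of $\bK_{g,k}, \bK_{h,k}, \bL_{g,k}^\ddagger, \bL_{h,k}^\ddagger, \bZ_u, \bZ_v$, and norms of $\bu_\ell^\AO,\bv_\ell^\AO,\bg_\ell^\AO,\bh_\ell^\AO$) is bounded by Lemma \ref{lem:fixed-pt-bound} and Corollary \ref{cor:AO-bounds}, with the natural scales differing by $\sqrt{n}$ between $\bu$-space and $\bv$-space, which is precisely what accounts for the explicit $1/\sqrt{n}$ prefactors in the first and third identities. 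Since all constants depend only on $\cPmodel$ and each sum has at most $k \leq 6$ terms, the bookkeeping is tedious but uniform; the key check is that Corollary \ref{cor:conc-aux-2m} delivers \emph{absolute} (not relative) $\mydoteq$ approximations to $\llangle \bG_k^\AO,\bV_k^\AO\rrangle$ and $\llangle \bH_k^\AO,\bU_k^\AO\rrangle$ at the right scale, so that multiplication by the (bounded) matrices $\bK_{g,k}^\ddagger, \bK_{h,k}^\ddagger$ and $\bU_k^\AO, \bV_k^\AO$ preserves the high-probability approximation.
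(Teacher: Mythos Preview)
Your proposal is correct and follows essentially the same approach as the paper: differentiate \eqref{eq:gordon-gh-explicit}, use Corollary~\ref{cor:emp-innovation} together with the identification $\bxi_g = \bg_k^{\AO,\perp}$, $\bxi_h = \bh_k^{\AO,\perp}$ to absorb the boundary term into the $\ell = k$ summand, rewrite the resulting sums as matrix products via \eqref{eq:UVk-ao-perp} and \eqref{eq:GHk-ao-perp-from-std}, and then invoke Corollary~\ref{cor:conc-aux-2m} and the Cholesky pseudo-inverse identities of Lemma~\ref{lem:cholesky-inverse-identities} together with the innovation compatibility recorded in Lemma~\ref{lem:fixed-pt-bound}. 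The error bookkeeping you describe is exactly what the paper does.

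Two small corrections. First, you have swapped the labels of the third and fourth identities: the map $\bu \mapsto \langle \bh(\bv_k^\AO),\bu\rangle$ is linear in $\bu$, so the third identity is the ``easy'' one (its derivative is simply $\bh(\bv_k^\AO)$, analogous to the second), whereas the fourth identity requires differentiating through $\bh(\cdot)$ and is the one symmetric to the first under the swap $(\bg,\bu,\bZ_u,\bK_g)\leftrightarrow(\bh,\bv,\bZ_v,\bK_h)$. Your arguments are correct once the labels are exchanged. Second, the paper begins by noting (via $\sqrt{n}L_{g,kk}\gtrsim 1$, $L_{h,kk}\gtrsim 1$ and Corollary~\ref{cor:chol-conc}) that $\proj_{\bU_{k-1}^\AO}^\perp\bu_k^\AO \neq \bzero$ and $\proj_{\bV_{k-1}^\AO}^\perp\bv_k^\AO \neq \bzero$ with exponentially high probability, which is needed for the derivatives of $\|\proj^\perp\cdot\|$ to exist; you implicitly use this but do not state it.
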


\begin{lemma}[Derivatives of penalty terms]
\label{lem:deriv-penalty}
    For $k \geq 5,6$, under Hypothesis (k-1), we have the relations
    \begin{equation}
      \begin{gathered}
        \frac{\de\,}{\de v_0}
        \phi_k(\bu_k^{\AO};\nu_{k,0},\bv_k^{\AO}) \mydoteq 0, \qquad
        \frac{\de\,}{\de\bv} \phi_k(\bu_k^{\AO};\nu_{k,0},\bv_k^{\AO})
        \mydoteq \frac{\de\,}{\de\bv}
        \phi_{k,v}(\bv_k^{\AO};\bG_{k-1}^{\AO}),
    \end{gathered}
    \end{equation}
    provided that $\phi_{k,v}$ is defined using solutions to the fixed
    point equations~\eqref{eq:fixpt-general}.

    Moreover, there exists a random vector $\bdelta \in \reals^n$ such
    that $\bdelta/\sqrt{n} \mydoteq \bzero$ and almost surely the
    following occurs: for all $\bdelta \in \partial_{\bu}
    \phi_{k,u}(\bu_k^{\AO};\bH_{k-1}^{\AO};\beps_1,\beps_1',\beps_2)$,
    also $\bdelta + \bdelta \in \partial_{\bu} \big( -\phi_k
    (\bu_k^{\AO}; \nu_{k,0}, \bv_k^{\AO}) \big)$.  (This holds
    provided $\phi_{k,v}$ is defined using the parameters that solve
    the fixed point equations~\eqref{eq:fixpt-general}).
\end{lemma}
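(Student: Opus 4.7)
The key structural observation I would exploit is the identity
$$\phi_k(\bu;v_0,\bv) = \<\bu,\ones\>\bigl(v_0 + \<\bmu_{\sx},\bv\> - \nu_{k,0} - \nu_{k,\sx}\bigr) - \phi_{k,u}(\bu;\bH_{k-1};\beps_1,\beps_1',\beps_2) + \phi_{k,v}(\bv;\bG_{k-1}),$$
which I would verify by expanding the definitions in~\eqref{eq:objectives} and~\eqref{eq:SE-penalties} and using $\ell_k^*(\bu;\bw,\by_1,\by_2) = \phi_{k,u}(\bu) + \<\bu,\ones\>(\nu_{k,0}+\nu_{k,\sx})$. Thus $\phi_k$ differs from $-\phi_{k,u} + \phi_{k,v}$ only by a smooth bilinear remainder in $(\bu,v_0,\bv)$, and every derivative/subgradient comparison reduces to controlling that remainder.

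Direct differentiation of the bilinear term gives $\partial_{v_0}\phi_k = \<\bu,\ones\>$ and $\partial_\bv \phi_k = \<\bu,\ones\>\bmu_{\sx} + \partial_\bv \phi_{k,v}$, with $\partial_\bv \phi_{k,v} = \nabla\Omega_k(\bv)$ a singleton by Assumption A1(g). Evaluated at $(\bu_k^\AO;\nu_{k,0},\bv_k^\AO)$, both derivative claims reduce to $\<\bu_k^\AO,\ones\> \mydoteq 0$, since $\|\bmu_{\sx}\| \lesssim 1$ by Assumption A1(e) with $\bSigma = \id_p$. To establish this, I would apply~\Cref{lem:hypothesis-k-ao} to the order-2 pseudo-Lipschitz function $(nu_\ell)_\ell \mapsto nu_k$, obtaining $\<\bu_k^\AO,\ones\> = \frac{1}{n}\sum_{i=1}^n nu_{k,i}^\AO \mydoteq n\,\E[u_{k,1}^\SE] = \<\ones,\bu_k^\SE\>_{\Ltwo}$, which vanishes by the defining fixed point equation~\eqref{eq:fixpt-general}.

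For the subdifferential claim in $\bu$, the same decomposition yields
$$\partial_\bu\bigl(-\phi_k(\bu;v_0,\bv)\bigr) = -\bigl(v_0 + \<\bmu_{\sx},\bv\> - \nu_{k,0} - \nu_{k,\sx}\bigr)\ones + \partial_\bu \phi_{k,u}(\bu;\bH_{k-1};\beps_1,\beps_1',\beps_2),$$
where additivity of subdifferentials is automatic since the discrepancy term is globally smooth and linear in $\bu$. Evaluating at $v_0 = \nu_{k,0}$, $\bv = \bv_k^\AO$, I would take
$$\bdelta \;\defn\; \bigl(\nu_{k,\sx} - \<\bmu_{\sx},\bv_k^\AO\>\bigr)\ones,$$
so that $\bdelta + \btau \in \partial_\bu\bigl(-\phi_k(\bu_k^\AO;\nu_{k,0},\bv_k^\AO)\bigr)$ pointwise in $\omega$ for every $\btau \in \partial_\bu \phi_{k,u}(\bu_k^\AO;\bH_{k-1}^\AO;\beps_1,\beps_1',\beps_2)$. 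Since $\|\bdelta\|/\sqrt{n} = |\nu_{k,\sx} - \<\bmu_{\sx},\bv_k^\AO\>|$, I would invoke~\Cref{lem:hypothesis-k-ao} on the order-2 pseudo-Lipschitz function $\bv \mapsto \<\bmu_{\sx},\bv\>$ (with constant $\lesssim \|\bmu_{\sx}\| \lesssim 1$), giving $\<\bmu_{\sx},\bv_k^\AO\> \mydoteq \<\bmu_{\sx},\bv_k^\SE\>_{\Ltwo} = \nu_{k,\sx}$, where the last equality is the remaining relevant identity in~\eqref{eq:fixpt-general}.

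The lemma is essentially an algebraic reshuffling combined with two applications of~\Cref{lem:hypothesis-k-ao} under the induction hypothesis, and no step presents a serious obstacle. The only mild subtlety is checking that $\partial_\bu$ passes additively through the bilinear discrepancy (handled by its smoothness in $\bu$); all of the real analytical work has been absorbed into~\Cref{lem:hypothesis-k-ao}, while the fixed point equations~\eqref{eq:fixpt-general} have been calibrated precisely so that the two state-evolution limits $\<\ones,\bu_k^\SE\>_{\Ltwo}$ and $\<\bmu_{\sx},\bv_k^\SE\>_{\Ltwo} - \nu_{k,\sx}$ both vanish.
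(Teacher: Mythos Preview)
Your proposal is correct and follows essentially the same approach as the paper: both reduce the three claims to $\<\bu_k^{\AO},\ones\> \mydoteq 0$ and $\<\bmu_\sx,\bv_k^{\AO}\> \mydoteq \nu_{k,\sx}$ via \Cref{lem:hypothesis-k-ao} and the fixed point equations, and both take the same offset vector $\bdelta$ (up to an immaterial sign). The one point your decomposition identity leaves implicit---which the paper checks explicitly---is that the $\bw,\by_1,\by_2$ entering $\phi_k$ (computed from the data $\bX$) coincide with those entering $\phi_{k,u}(\cdot;\bH_{k-1}^{\AO};\ldots)$ (computed via \eqref{eq:yw-func-of-Heps}), which holds precisely because $\bh_1^{\AO}=\bA\btheta_1$ and $\bh_2^{\AO}=\bA\btheta_2$.
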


\begin{remark}
  We have a more complicated derivative guarantee for the derivative
  with respect to $\bu$ because, when $y_{1,i} = 0$, $\ell_6^*$ is the
  convex indicator of $u_{6,i} = 0$, which is not differentiable.
\end{remark}

\begin{proof}[Proof of~\Cref{lem:approximate-stationarity}]
    By~\Cref{lem:gordon-term-deriv,lem:deriv-penalty} and the
    definition of $\AuxObj_k$ (see~\cref{eq:def-AuxObj}), we have
\begin{align*}
    \frac{\de\,}{\de \bv}\AuxObj_k(\bu_k^{\AO};\nu_{k,0},\bv_k^{\AO})
    \mydoteq -\bg_k^{\AO} + \sum_{\ell=1}^k \zeta_{k\ell}^u
    \bv_\ell^{\AO} + \frac{\de\,}{\de \bv}
    \phi_{k,v}(\bv_k^{\AO};\bG_{k-1}^{\AO}) = \bzero,
\end{align*}
where the final equality holds by the KKT conditions for the
problem~\eqref{eq:AO-separated-opt}.  Because $-\< \bg(\bu),\bv\>$ and
$\< \bh(\bv),\bu\>$ have no dependence on $v_0$,
\Cref{lem:deriv-penalty} and the definition of $\AuxObj_k$ imply that
\begin{align*}
\frac{\de\,}{\de v_0}\AuxObj_k(\bu_k^{\AO};\nu_{k,0},\bv_k^{\AO}) =
\frac{\de\,}{\de v_0} \phi_k(\bu_k^{\AO};\nu_{k,0},\bv_k^{\AO})
\mydoteq 0.
\end{align*}
By the KKT conditions for the problem~\eqref{eq:AO-separated-opt},
we have
\begin{align*}
\bh_k^{\AO} - \sum_{\ell=1}^k \zeta_{k\ell}^u \bu_\ell^{\AO} \in
\partial_{\bu}\phi_{k,u}(\bu_k^{\AO};\bH_{k-1}^{\AO};(\beps_j)_j).
\end{align*}
Taking $\bdelta$ as in~Cref{lem:deriv-penalty} yields
\begin{align*}
\bh_k^{\AO} - \sum_{\ell=1}^k \zeta_{k\ell}^u \bu_\ell^{\AO} + \bdelta
\in \partial_{\bu} \Big( -\phi_k(\bu_k^{\AO};\nu_{k,0},\bv_k^{\AO})
\Big).
\end{align*}
Then, by~\Cref{lem:gordon-term-deriv,lem:deriv-penalty} and the
definition of $\AuxObj_k$, we have
\begin{equation}
  \bzero \mydoteq \frac{1}{\sqrt{n}} \Big( \frac{\de}{\de \bu}\<
  \bg(\bu_k^{\AO}), \bv_k^{\AO} \> - \frac{\de}{\de \bu}\<
  \bh(\bv_k^{\AO}), \bu_k^{\AO} \> + \bh_k^{\AO} - \sum_{\ell=1}^k
  \zeta_{k\ell}^u \bu_\ell^{\AO} + \bdelta \Big) \in \partial_{\bu}
  \Big( -\AuxObj_k(\bu_k^{\AO};\nu_{k,0},\bv_k^{\AO}) \Big),
    \end{equation}
which completes the proof.
\end{proof}

\begin{proof}[Proof of~\Cref{lem:gordon-term-deriv}]
Because $\sqrt{n} L_{g,kk} \gtrsim 1$ and $L_{h,kk} \gtrsim 1$,
\Cref{cor:chol-conc} implies that with exponentially high probability
$\proj_{\bU_{k-1}^{\AO}}^\perp \bu_k^{\AO} \neq \bzero$ and
$\proj_{\bV_{k-1}^{\AO}}^\perp \bv_k^{\AO} \neq \bzero$.  On this
event, both $\bg(\bu)$ and $\bh(\bv)$, as defined in
equation~\eqref{eq:gordon-gh-explicit}, are differentiable at
$\bu_k^{\AO}$ and $\bv_k^{\AO}$.  We compute
\begin{equation}
\begin{aligned}
 \frac{1}{\sqrt{n}}\frac{\de}{\de \bu}\< \bg(\bu_k^{\AO}), \bv_k^{\AO}
 \> &= \frac{1}{\sqrt{n}}\sum_{\ell=1}^{k-1} \bu_\ell^{\AO,\perp} \<
 \bg_\ell^{\AO,\perp} , \bv_k^{\AO} \> +
 \frac{1}{\sqrt{n}}\frac{\proj_{\bU_{k-1}^{\AO}}^\perp
   \bu_k^{\AO}}{\|\proj_{\bU_{k-1}^{\AO}}^\perp \bu_k^{\AO}\|}
 \<\bg_k^{\AO,\perp},\bv_k^{\AO}\> \\ &\underset{\emph{(i)}}\mydoteq
 \frac{1}{\sqrt{n}}\sum_{\ell=1}^k \bu_\ell^{\AO,\perp} \<
 \bg_\ell^{\AO,\perp} , \bv_k^{\AO} \> \underset{\emph{(ii)}}=
 \frac{1}{\sqrt{n}}\bU_k^{\AO} \bK_{g,k}^{\ddagger} (\bG_k^{\AO})^\top
 \bv_k^{\AO} \\ &\underset{\emph{(iii)}}\mydoteq
 \frac{1}{\sqrt{n}}\bU_k^{\AO} \bK_{g,k}^{\ddagger}
      [\bK_{g,k}\bZ_{u,k}^\top]_{\,\cdot\,,k} \underset{\emph{(iv)}}=
      \frac{1}{\sqrt{n}}[\bU_k^{\AO} \id_{\bK_{g,k}}^\top
        \bZ_{u,k}^\top]_{\,\cdot\,,k} \underset{\emph{(v)}}=
      \frac{1}{\sqrt{n}}[\bU_k^{\AO}\bZ_{u,k}^\top]_{\,\cdot\,,k}.
    \end{aligned}
    \end{equation}
Approximate equality \emph{(i)} uses that $\<
\bg_k^{\AO,\perp},\bv_k^{\AO}\>/\sqrt{n} \lessdot C$ by
Cauchy--Schwartz and that $\|\bg_\ell^{\AO,\perp}\|/\sqrt{n} \lessdot
C$ and $\| \bv_k^{\AO} \| \lessdot C$ by~\Cref{cor:AO-bounds}.
Equality \emph{(ii)} uses equations~\eqref{eq:GHk-ao-perp} and
\eqref{eq:UVk-ao-perp}.  Approximate equality \emph{(iii)} uses
\Cref{cor:conc-aux-2m} and the fact that $\sqrt{n}\| \bH_k^{\AO}
\|_{\op} \lessdot C$ and $\|\bK_{g,k}^\ddagger\|_{\op}/n \lessdot C$
by~\Cref{cor:AO-bounds}.  Equality \emph{(iv)}
uses~\Cref{lem:cholesky-inverse-identities}.  Equality \emph{(v)} uses
the definition of $\id_{\bK_{g,k}}^\top$ and the innovation
compatibility of $\bZ_{u,k}$ with $\bK_{g,k}$ (the columns of
$\bZ_{u,k}^\top$ lie in the span of the standard basis vectors
corresponding to innovative indices).  Bounds are the fixed point
parameters are used throughout.  This gives the first approximate
equality in the lemma.
    
Again using the expression for $\bg$ and $\bh$ in
equation~\eqref{eq:gordon-gh-explicit}, we compute $ \frac{\de}{\de
  \bv}\< \bg(\bu_k^{\AO}), \bv_k^{\AO} \> = \sum_{\ell=1}^{k-1}
\bg_\ell^{\AO,\perp} \< \bu_\ell^{\AO,\perp} , \bu_k^{\AO} \> + \|
\proj_{\bU_{k-1}^{\AO}}^\perp \bu_k^{\AO} \| \bxi_g.  $ We have $
\sqrt{n}\< \bu_\ell^{\AO,\perp} , \bu_k^{\AO} \> =
\sqrt{n}\big[\bL_{g,k}^\ddagger\bU_k^{\AO\top}\bu_k^{\AO}\big]_\ell
\mydoteq \sqrt{n}\big[\bL_{g,k}^\ddagger\bK_{g,k}\big]_{\ell k} =
\sqrt{n}L_{g,k\ell}, $ where the first equality holds by
equation~\eqref{eq:UVk-ao-perp}; the approximate equality holds
by~\Cref{cor:conc-aux-2m}, and the second equality holds
by~\Cref{lem:cholesky-inverse-identities} (in particular, that
$\bL_{g,k}^\ddagger \bL_{g,k} \bL_{g,k}^\top = \id_{\bK_{g,k}}^\perp
\bL_{g,k}^\top = (\id_{\bK_{g,k}}^\perp)^\top\bL_{g,k}^\top =
\bL_{g,k}^\top$).  By~\Cref{cor:chol-conc}, $ \sqrt{n} \|
\proj_{\bU_{k-1}^{\AO}}^\perp \bu_k^{\AO} \| = \sL(n \llangle
\bU_k^{\AO} \rrangle)_{kk} \mydoteq \sqrt{n} L_{g,kk}.  $ Because
$\|\bg_\ell^{\AO,\perp}\|/\sqrt{n} \lessdot C$ and
$\|\bxi_g\|/\sqrt{n} \lessdot C$ by~\Cref{cor:AO-bounds}, these
computations imply that \mbox{$\frac{\de}{\de \bv}\< \bg(\bu_k^{\AO}),
  \bv_k^{\AO} \> \mydoteq \sum_{\ell=1}^k
  L_{g,k\ell}\bg_\ell^{\AO,\perp} = \bg_k^{\AO}$,} where the equality
uses equation~\eqref{eq:GHk-ao-perp}.  This gives the second equation
in the first line of the lemma.

  The second line of the lemma follows similarly.  For completeness,
  we write out the computations, but do not explain the justification
  for each line, as they are analogous to the justifications above.
  We have
  \begin{equation}
    \begin{aligned}
      \frac{\de}{\de \bv}\< \bh(\bv_k^{\AO}), \bu_k^{\AO} \>
      &\mydoteq
      \sum_{\ell=1}^k\bv_\ell^{\AO,\perp}\<\bh_\ell^{\AO,\perp},\bu_k^{\AO}\>
      =
      \bV_k^{\AO}\bK_{h,k}^\ddagger\bH_k^{\AO\top}\bu_k^{\AO}
      \mydoteq
      \bV_k^{\AO}\bK_{h,k}^\ddagger[\bK_{h,k}\bZ_v^\top]_{\,\cdot\,,k}
      =
      [\bV_k^{\AO}\bZ_v^\top]_{\,\cdot\,,k},
    \end{aligned}
  \end{equation}
  which gives the second equation in the second line of the lemma.
  Further, we have
  \begin{align*}
\frac{1}{\sqrt{n}}\frac{\de}{\de \bu}\< \bh(\bv_k^{\AO}), \bu_k^{\AO}
\> =
\sum_{\ell=1}^k\bh_\ell^{\AO,\perp}\<\bv_\ell^{\AO,\perp},\bv_k^{\AO}\>
+ \|\proj_{\bV_{k-1}^{\AO}}^\perp \bv_k^{\AO}\|\bxi_h.
  \end{align*}
Because $\< \bv_\ell^{\AO,\perp},\bv_k^{\AO}\> \mydoteq L_{h,k\ell}$
and $\|\proj_{\bV_{k-1}^{\AO}}^\perp \bv_k^{\AO} \| \mydoteq
L_{h,kk}$, we get the first equation in the second line of the lemma.
\end{proof}

\begin{proof}[Proof of~\Cref{lem:deriv-penalty}]
By explicit differentiation of equation~\eqref{eq:objectives} for $k =
3,4$, $ \frac{\de\,}{\de v_0}
\phi_k(\bu_k^{\AO};\nu_{k,0},\bv_k^{\AO}) = \< \bu_k^{\AO} , \ones \>
\mydoteq \< \bu_k^{\SE} , \ones \>_{\Ltwo} = 0, $ where the
approximate equality holds by~\Cref{lem:hypothesis-k-ao} and the final
equality holds by the fixed point equations~\eqref{eq:fixpt-general}.
We conclude the first approximate equality in the first display of the
lemma.

By Assumption A1, the function $\Omega_k$ is differentiable, whence by
explicit differentiation of equation~\eqref{eq:objectives} for $k =
3,4$, $ \frac{\de}{\de \bv} \phi_k(\bu_k^{\AO};\nu_{k,0},\bv_k^{\AO})
= \< \bu_k^{\AO} , \ones \> \bmu_{\sx} + \nabla \Omega_k(\bv_k^{\AO})
\mydoteq \nabla \Omega_k(\bv_k^{\AO}), $ where the approximate
equality uses that $\< \bu_k^{\AO} , \ones \> \mydoteq 0$ and $\|
\bmu_{\sx} \| \lesssim C$.  By the fixed point equations, $\nu_{k,\su}
= 0$, whence $\frac{\de\,}{\de\bv}
\phi_{k,v}(\bv_k^{\AO};\bG_{k-1}^{\AO}) = \nabla
\Omega_k(\bv_k^{\AO})$.  We conclude the second approximate equality
in the first display of the lemma.

Recall from equations~\eqref{eq:model}
and~\eqref{eq:prop-functional-form} that $y_{1,i} = \indic\{
\eps_{1,i}' = 1\} + \indic\{\eps_{1,i}' = \circ\} \indic\{\eps_{1,i}
\leq \theta_{1,0} + \bX \btheta_1 \}$, and $\by_2 = \theta_{2,0} \ones
+ \bX \btheta_2 + \beps_2$.  Recalling that $\bX = \bA + \ones
\bmu_{\sx}^\top$, $\bw = w(\theta_{1,0}\ones + \bX \btheta_1)$
(equation~\eqref{eq:outcome-fit}), and, by
equation~\eqref{eq:gh-1to4}, that $\bh_1^{\AO} = \bh_1^{\PO} = \bA
\btheta_1$ and $\bh_2^{\AO} = \bh_2^{\PO} = \bA \btheta_2$, we see
that
\begin{equation}
  \begin{gathered}
    \bw = w((\theta_{1,0} + \< \bmu_{\sx},\btheta_1\>) + \bh_1^{\AO}),
    \\ y_{1,i} = \indic\{ \eps_{1,i}' = 1\} + \indic\{\eps_{1,i}' =
    \circ\} \indic\{\eps_{1,i} \leq \theta_{1,0} + \bX \btheta_1 \},
    \quad \by_2 = (\mu_2 + \< \bmu_{\sx},\btheta_2\>)\ones +
    \bh_2^{\AO} + \beps_2.
    \end{gathered}
    \end{equation}
By explicit computation using equation~\eqref{eq:objectives}, we see
that $\bdelta \in
\partial_{\bu}\big(-\phi_k(\bu_k^{\AO};\nu_{k,0},\bv_k^{\AO})\big)$ if
and only if
    \begin{equation}
      (\nu_{k,0} + \< \bmu_{\sx} , \bv_k^{\AO}\> )\ones - \bdelta
      \in
      \partial_{\bu} \ell_k^*\big(n\bu_k^{\AO}; \bw,\by_1,\by_2 \big).
    \end{equation}
    By comparison of the second-to-last display with
    equations~\eqref{eq:SE-penalties} and~\eqref{eq:yw-func-of-Heps},
    $\bdelta' \in
    \partial_{\bu}\phi_{k,u}(\bu_k^{\AO};\bH_{k-1}^{\AO},\beps_1,\beps_2)$
    if and only if
    \begin{equation}
        (\nu_{k,0} + \nu_{k,\sx} )\ones - \bdelta' \in \partial_{\bu}
      \ell_k^*\big(n\bu_k^{\AO}; \bw,\by_1,\by_2 \big).
    \end{equation}
    Because the sub-differential on the right-hand side of the
    previous two displays are the same, we see that we can take
    $\bdelta = (\< \bmu_{\sx} , \bv_k^{\AO}\> - \nu_{k,\sx} )\ones$.
    By~\Cref{lem:hypothesis-k-ao} and because $\<
    \bmu_{\sx},\bv_k^{\SE}\>_{\Ltwo} = \nu_{k,\sx}$ by the fixed point
    equations \eqref{eq:fixpt-general}, we get that $\< \bmu_{\sx} ,
    \bv_k^{\AO}\> - \nu_{k,\sx} \mydoteq 0$.  Because $\| \ones \| /
    \sqrt{n} = 1$, we conclude that $\bdelta / \sqrt{n} \mydoteq
    \bzero$, as desired.
\end{proof}

\begin{lemma}[Curvature of auxiliary objective]
\label{lem:AuxObj-curvature}
For $k = 5,6$, under Hypothesis (k-1), with exponentially high
probability $\bu \mapsto \AuxObj_k(\bu;\nu_{0,k},\bv_k^{\AO})$ is
$cn$-strongly concave and $\bv \mapsto
\AuxObj_k(\bu_k^{\AO};\nu_{0,k},\bv)$ is $c$-strongly convex.
\end{lemma}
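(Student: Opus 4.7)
The plan is to decompose $\AuxObj_k(\bu;\nu_{k,0},\bv_k^{\AO})$ and $\AuxObj_k(\bu_k^{\AO};\nu_{k,0},\bv)$ into pieces and track the curvature contribution of each. When $\bv$ is fixed at $\bv_k^{\AO}$, the only $\bu$-dependent terms with nontrivial curvature are the non-smooth Gordon term $-\|\proj_{\bU_{k-1}^{\PO}}^\perp\bu\|\<\bxi_g,\bv_k^{\AO}\>$ and the conjugate loss term $-\ell_k^*(\bu;\bw,\by_1,\by_2)$; every other term is linear in $\bu$. Analogously, in the $\bv$-direction the only nontrivial curvature contributions come from $\|\proj_{\bV_{k-1}^{\PO}}^\perp\bv\|\<\bxi_h,\bu_k^{\AO}\>$ and $\Omega_k(\bv)$.

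The first step is to control the signs of the coefficients on the Gordon norm terms, since these are what make~\Cref{lem:AuxObj-convex-concave} applicable. By the definition~\eqref{eq:GHk-ao}, we have $\bxi_g=\bg_k^{\AO,\perp}$ and $\bxi_h=\bh_k^{\AO,\perp}$, and~\Cref{cor:AO-bounds} yields $\<\bg_k^{\AO,\perp},\bv_k^{\AO}\>/\sqrt{n}\gtrdot c$ and $\<\bh_k^{\AO,\perp},\bu_k^{\AO}\>\gtrdot c$ under Hypothesis (k-1). Consequently, on an exponentially high probability event (with constants depending only on $\cPmodel$), both $\<\bxi_g,\bv_k^{\AO}\>$ and $\<\bxi_h,\bu_k^{\AO}\>$ are strictly positive. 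On this event,~\Cref{lem:AuxObj-convex-concave} gives that the two maps in question are respectively (weakly) concave and (weakly) convex, so the non-smooth Gordon terms cannot \emph{subtract} curvature; the norm pieces contribute nonnegative curvature in the convex-analytic sense.

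The second step is to extract the promised strong curvature from the remaining pieces. For the $\bu$-direction, we consider $k=5$ and $k=6$ separately. When $k=5$, Assumption A1(i) makes $\ell_\prop(\,\cdot\,;\action)$ at most $C$-smooth in the linear predictor, so its Fenchel conjugate $\ell_\prop^*(\,\cdot\,;\action)$ is $1/C$-strongly convex, and $\ell_5^*(\bu;\bw,\by_1,\by_2)=\frac{1}{n}\sum_i\ell_\prop^*(nu_i;y_{1,i})$ has Hessian lower bounded by $(n/C)\id_n$, giving $cn$-strong convexity. When $k=6$, the effective domain of $\bu\mapsto\ell_6^*(\bu;\bw,\by_1,\by_2)$ is the subspace $\{\bu\in\reals^n:u_i=0\text{ whenever }y_{1,i}=0\}$, on which the function reduces to $\<\bu,\by_2\>+\frac{n}{2}\sum_{i:y_{1,i}=1}w_i^{-1}u_i^2$; since Assumption A1(h) gives $w_i\leq C$, the restriction is $(n/C)$-strongly convex, hence $cn$-strongly convex on the whole of $\reals^n$ in the convex-analytic sense (with value $+\infty$ off the effective domain). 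In both cases $-\ell_k^*$ is $cn$-strongly concave, and adding the concave remaining terms preserves $cn$-strong concavity. For the $\bv$-direction, Assumption A1(g) together with our reduction to $\bSigma=\id_p$ (\Cref{sec:independent-covariates}) makes $\Omega_k$ $c$-strongly convex, and adding the remaining convex terms preserves $c$-strong convexity.

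The main obstacle is the interplay between the non-smooth Gordon norm terms and the strong curvature terms. Absent a sign guarantee, a kink of the form $+c'\|\proj^\perp\bu\|$ could in principle destroy strong concavity even in the presence of strong quadratic curvature (as in the simple example $-\frac{1}{2}x^2+|x|$, which has an upward-pointing kink at the origin). This is precisely why the positivity guarantees of~\Cref{cor:AO-bounds} are essential; once the signs are locked down, the remainder is the classical additivity principle that strong convexity is preserved under addition of convex perturbations.
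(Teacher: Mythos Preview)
Your proposal is correct and follows essentially the same approach as the paper: both invoke \Cref{cor:AO-bounds} (identifying $\bxi_g=\bg_k^{\AO,\perp}$, $\bxi_h=\bh_k^{\AO,\perp}$) to secure the sign conditions on the Gordon norm terms, then extract the strong curvature from the conjugate loss $-\ell_k^*$ (via strong smoothness of $\ell_\prop$ for $k=5$ and the weight bound for $k=6$) and from $\Omega_k$ respectively. Your treatment of the $k=6$ effective-domain issue is a bit more explicit than the paper's, and you correctly note that it is the \emph{upper} bound $w_i\leq C$ (equivalently $w_i^{-1}\geq c$) that yields the $cn$-strong convexity of $\ell_6^*$; the paper's phrasing ``$w_i$ is bounded below'' at that step appears to be a slip.
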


\begin{proof}[Proof of~\Cref{lem:AuxObj-curvature}]
    By Assumption A1 $\ell_{\prop}$ is $C$-strongly smooth, whence by
    Fenchel-Legendre duality, $\ell_{\prop}^*$ is $1/C$-strongly
    convex.  By the definition of $\phi_5$ and $\ell_5^*$ in
    equations~\eqref{eq:objectives} and \eqref{eq:ellk*-def},
    $\phi_5(\bu;v_0,\bv)$ is $cn$-strongly concave in $\bu$.
    Moreover, $\ell_6^*$ is $cn$-strongly concave in $\bu$ by
    assumption, using that $w_i$ is bounded below by $c > 0$ due to
    Assumption A1.  By the definition of $\bg(\bu)$ and $\bh(\bv)$ in
    equation~\eqref{eq:gordon-gh-explicit}, we see that
    $-\<\bg(\bu),\bv_k^{\AO}\> + \< \bh(\bv_k^{\AO}),\bu\>$ is concave
    in $\bu$ provided $\<\bxi_g,\bv_k^{\AO}\> \geq 0$.  This occurs
    with exponentially high-probability by~\Cref{cor:AO-bounds}
    (recalling that $\bxi_g = \bg_k^{\AO,\perp}$, see
    equation~\eqref{eq:GHk-ao}).  Thus with exponentially
    high-probability, $\bu \mapsto
    \AuxObj_k(\bu;\nu_{0,k},\bv_k^{\AO})$ is $cn$-strongly concave.

    Likewise, by Assumption A1, $\Omega_k$ is $c$-strong-convex,
    whence by the definition of $\phi_k$ in
    equation~\eqref{eq:objectives}, $\phi_k$ is $c$-strongly convex in
    $\bv$.  By the definition of $\bg(\bu)$ and $\bh(\bv)$ in
    equation~\eqref{eq:gordon-gh-explicit}, we see that
    $-\<\bg(\bu_k^{\AO}),\bv\> + \< \bh(\bv),\bu_k^{\AO}\>$ is convex
    in $\bv$ provided $\<\bxi_h,\bu_k^{\AO}\> \geq 0$.  This occurs
    with exponentially high-probability by~\Cref{cor:AO-bounds},
    recalling that $\bxi_h = \bh_k^{\AO,\perp}$, see
    equation~\eqref{eq:GHk-ao}.  Thus with exponentially
    high-probability, $\bv \mapsto
    \AuxObj_k(\bu_k^{\AO};\nu_{0,k},\bv)$ is $c$-strongly concave.
\end{proof}

\subsection{Local stability of Gordon's objective}
\label{sec:local-stability}

Let $\phi_u : \reals^{2k+1} \rightarrow \reals$ and $\phi_v:
(\reals^p)^{2k}$ be any order-2 pseudo-Lipschitz functions.  Define
the random sets (which depend on $\bU_{k-1}^{\AO}$, $\bH_{k-1}^{\AO}$,
$\beps_2$, $\bV_{k-1}^{\AO}$, and $\bG_{k-1}^{\AO}$)
\begin{equation}
\begin{gathered}
  E_u(\epsilon) \defn \Big\{ \bu \in \reals^n : \Big| \frac{1}{n}
  \sum_{i=1}^n \phi\Big( (nu_{\ell,i}^{\AO,\perp})_{\ell=1}^{k-1},
  nu_i, (h_{\ell,i}^{\AO,\perp})_{\ell=1}^{k-1}, \eps_{2,i} \Big) -
  \E\Big[ \phi\Big( (nu_{\ell,i}^{\SE,\perp})_{\ell=1}^{k-1}, nu_i,
    (h_{\ell,i}^{\SE,\perp})_{\ell=1}^{k-1}, \eps_{2,i}^{\SE} \Big)
    \Big] \Big| < \epsilon \Big\}, \\ E_v(\epsilon) \defn \Big\{ \bv
  \in \reals^p : \Big| \phi_v
  \Big(\bV_{k-1}^{\AO,\perp},\bv,\frac{1}{\sqrt{n}}\bG_{k-1}^{\AO,\perp}\Big)
  - \E
  \Big[\phi\Big(\bV_{k-1}^{\SE,\perp},\bv,\frac{1}{\sqrt{n}}\bG_{k-1}^{\SE,\perp}\Big)\Big]
  \Big| < \epsilon \Big\}.
\end{gathered}
\end{equation}

\begin{lemma}[Local stability]
\label{lem:local-stability}
Define $\ell_k$ as in \Cref{lem:AuxObj-conc}.  For $k \geq 5,6$,
under Hypothesis (k-1), we have for sufficiently large constant $C$
depending only on $\cPmodel$ that
\begin{equation}
  \label{eq:max-min-AO-conc}
  \max_{\|\bu\| \leq C/\sqrt{n}} \; \min_{\substack{|v_0| \leq C
      \\ \|\bv\| \leq C}} \; \AuxObj_k(\bu;v_0,\bv) \mydoteq \ell_k,
\end{equation}
and
\begin{equation}
  \label{eq:min-max-AO-conc}
  \min_{\substack{|v_0| \leq C \\ \|\bv\| \leq C}} \max_{\|\bu\| \leq
    C/\sqrt{n}} \; \AuxObj_k(\bu;v_0,\bv) \mydoteq \min_{\|\bv\| \leq
    C} \max_{\|\bu\| \leq C/\sqrt{n}} \; \AuxObj_k(\bu;\nu_{k,0},\bv)
  \mydoteq \ell_k.
\end{equation}
Further, for sufficiently large $C$ and appropriately chosen $c,c'
> 0$ depending only on $\cPmodel$, and any $\epsilon < c'$
\begin{equation}
  \label{eq:AO-local-stability}
  \max_{\substack{\bu \in E_u^c(\epsilon)\\ \| \bu \| \leq
      C/\sqrt{n}}}\; \min_{\substack{|v_0| \leq C \\ \|\bv\| \leq C}}
  \; \AuxObj_k(\bu;v_0,\bv) \lessdot \ell_k - c\epsilon^2, \qquad
  \min_{\substack{\bv \in E_v^c(\epsilon) \\ |v_0| \leq C \\ \| \bv \|
      \leq C }} \; \max_{\|\bu\| \leq C/\sqrt{n}} \;
  \AuxObj_k(\bu;\nu_{k,0},\bv) \gtrdot \ell_k + c\epsilon^2.
\end{equation}
\end{lemma}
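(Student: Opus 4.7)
The plan is to certify that $(\bu_k^{\AO};\nu_{k,0},\bv_k^{\AO})$ is an approximate saddle of $\AuxObj_k$ on the constrained domain, and to then leverage the strong convex-concavity established in Lemma \ref{lem:AuxObj-curvature} to obtain quadratic growth away from the saddle. First, I would fix $C$ large enough that $\bu_k^{\AO}$ and $(\nu_{k,0},\bv_k^{\AO})$ lie strictly in the interior of $\ball_2(\bzero,C/\sqrt{n})$ and $[-C,C]\times\ball_2(\bzero,C)$, which is possible by Corollary \ref{cor:AO-bounds}. The same corollary supplies $\<\bxi_g,\bv_k^{\AO}\>/\sqrt{n}\gtrdot c$ and $\<\bxi_h,\bu_k^{\AO}\>\gtrdot c$, so that Lemma \ref{lem:AuxObj-convex-concave} delivers joint concave-convexity of $\AuxObj_k$ on a neighborhood of the purported saddle.

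For \eqref{eq:max-min-AO-conc} and \eqref{eq:min-max-AO-conc}, I would combine the stationarity of Lemma \ref{lem:approximate-stationarity} with the curvature of Lemma \ref{lem:AuxObj-curvature}. Using $c$-strong convexity in $\bv$ together with $\frac{\de}{\de\bv}\AuxObj_k(\bu_k^{\AO};\nu_{k,0},\bv_k^{\AO})\mydoteq\bzero$, and linearity in $v_0$ with slope $\<\bu_k^{\AO},\ones\>\mydoteq 0$, one obtains $\min_{v_0,\bv}\AuxObj_k(\bu_k^{\AO};v_0,\bv)\mydoteq\AuxObj_k(\bu_k^{\AO};\nu_{k,0},\bv_k^{\AO})\mydoteq\ell_k$, where the second $\mydoteq$ is Lemma \ref{lem:AuxObj-conc}. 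Symmetrically, $cn$-strong concavity in $\bu$ with the subgradient bound $\bdelta_u/\sqrt{n}\mydoteq\bzero$ from Lemma \ref{lem:approximate-stationarity} yields $\max_{\|\bu\|\leq C/\sqrt{n}}\AuxObj_k(\bu;\nu_{k,0},\bv_k^{\AO})\mydoteq\ell_k$. Sandwiching via $\max\min\leq\min\max$ then produces both identities; the inner equation in \eqref{eq:min-max-AO-conc} follows because, once $\bu$ is pinned near $\bu_k^{\AO}$, the $v_0$-dependence of $\AuxObj_k$ is essentially flat on $[-C,C]$.

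For the local stability bounds \eqref{eq:AO-local-stability}, I would convert set membership to an $\ell_2$-distance lower bound. Since $\phi_u$ is order-$2$ pseudo-Lipschitz and $\|\bu\|,\|\bu_k^{\AO}\|\leq C/\sqrt{n}$, a Cauchy--Schwarz estimate together with $\bu_k^{\AO}\in E_u(\epsilon/2)$ (which follows for small $\epsilon$ from Lemma \ref{lem:hypothesis-k-ao}) forces $\sqrt{n}\,\|\bu-\bu_k^{\AO}\|\geq c\epsilon$ whenever $\bu\in E_u^c(\epsilon)$. Expanding
\begin{equation}
\AuxObj_k(\bu;\nu_{k,0},\bv_k^{\AO})\leq\AuxObj_k(\bu_k^{\AO};\nu_{k,0},\bv_k^{\AO})+\<\bdelta_u,\bu-\bu_k^{\AO}\>-\tfrac{cn}{2}\|\bu-\bu_k^{\AO}\|^2,
\end{equation}
bounding the cross term by $|\<\bdelta_u,\bu-\bu_k^{\AO}\>|\leq 2C\,\|\bdelta_u/\sqrt{n}\|\mydoteq 0$, and using that the minimum over $(v_0,\bv)$ only decreases the left side, yields the first half of \eqref{eq:AO-local-stability}. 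The second half is analogous, using $c$-strong convexity in $\bv$ and the pseudo-Lipschitz estimate $\|\bv-\bv_k^{\AO}\|\geq c\epsilon$ on $E_v^c(\epsilon)$.

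The principal obstacle is ensuring that the $\mydoteq 0$ residuals---from approximate stationarity and from the two-sided pseudo-Lipschitz comparisons---do not swamp the quadratic gain $c\epsilon^2$. This is resolved by the exponential concentration built into $\mydoteq$: for any fixed $\epsilon<c'$ with $c'$ depending on $\cPmodel$, each residual can be made smaller than a fixed fraction of $\epsilon^2$ on an event of exponentially high probability in $n$, which is exactly the form of statement required by the conclusion.
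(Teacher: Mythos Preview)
Your proposal is correct and follows essentially the same route as the paper: establish that $(\bu_k^{\AO};\nu_{k,0},\bv_k^{\AO})$ is an approximate stationary point via Lemma~\ref{lem:approximate-stationarity}, combine this with the value concentration of Lemma~\ref{lem:AuxObj-conc} and the strong curvature of Lemma~\ref{lem:AuxObj-curvature} to sandwich both the max--min and min--max values at $\ell_k$, and then use the pseudo-Lipschitz property to convert $\bu\in E_u^c(\epsilon)$ (respectively $\bv\in E_v^c(\epsilon)$) into the distance lower bound $\sqrt{n}\,\|\bu-\bu_k^{\AO}\|\geq c\epsilon$ (respectively $\|\bv-\bv_k^{\AO}\|\geq c\epsilon$), after which strong concavity/convexity delivers the $c\epsilon^2$ gap. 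The paper carries out each of these steps in the same order with the same ingredients; your use of weak duality $\max\min\leq\min\max$ to close the sandwich is a mild streamlining of the paper's separate four-direction argument. One small point: your justification of the middle identity in \eqref{eq:min-max-AO-conc} (``the $v_0$-dependence is essentially flat'') is imprecise, since the inner maximizer in $\bu$ varies with $(v_0,\bv)$; the paper instead bounds both $\min_{v_0,\bv}\max_{\bu}$ and $\min_{\bv}\max_{\bu}\AuxObj_k(\bu;\nu_{k,0},\bv)$ directly between $\min_{v_0,\bv}\AuxObj_k(\bu_k^{\AO};v_0,\bv)\gtrdot\ell_k$ and $\max_{\bu}\AuxObj_k(\bu;\nu_{k,0},\bv_k^{\AO})\lessdot\ell_k$, which is cleaner and avoids reasoning about the inner argmax.
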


\begin{proof}[Proof of~\Cref{lem:local-stability}]
First, let $\bdelta_u$ be as in~\Cref{lem:approximate-stationarity}.
Then there exists $C > 0$ such that
\begin{equation}
    \begin{aligned}
      &\max_{\|\bu\| \leq C/\sqrt{n}} \; \min_{\substack{|v_0| \leq C
          \\ \|\bv\| \leq C}} \; \AuxObj_k(\bu;v_0,\bv)
      \underset{\emph{(i)}}\lessdot \max_{\|\bu\| \leq C/\sqrt{n}} \;
      \AuxObj_k(\bu;\nu_{k,0},\bv_k^{\AO}) \\ &\qquad
      \underset{\emph{(ii)}}\lessdot \max_{\|\bu\| \leq C/\sqrt{n}}
      \left\{ \AuxObj_k(\bu_k^{\AO};\nu_{k,0},\bv_k^{\AO}) +
      \bdelta_u^\top(\bu - \bu_k^{\AO}) \right\} \\ &\qquad \leq
      \AuxObj_k(\bu_k^{\AO};\nu_{k,0},\bv_k^{\AO}) +
      \frac{C}{\sqrt{n}}\|\bdelta_u\| \underset{\emph{(iii)}}\lessdot
      \ell_k.
    \end{aligned}
\end{equation}
    Inequality $\emph{(i)}$ holds because on the events $|\nu_{0,k}|
    \leq C$ and $\| \bv_k^{\AO} \| < C$, which~\Cref{cor:AO-bounds}
    shows have exponentially high probability, the inequality holds
    exactly (that is, with ``$\lessdot$'' replaced by ``$\leq$'').
    Inequality \emph{(ii)} holds because $\bdelta_u \in
    \partial_{\bu}\big(-\AuxObj_k(\bu_k^{\AO};\nu_{k,0},\bv_k^{\AO})\big)$
    and by~\Cref{lem:AuxObj-convex-concave}, on the event $\< \bxi_g ,
    \bv_k^{\AO} \> \geq 0$, $\AuxObj_k$ is concave in $\bu$, and
    by~\Cref{cor:AO-bounds}, $\< \bxi_g , \bv_k^{\AO} \> \geq 0$ with
    exponentially high probability (recall $\bxi_g =
    \bg_k^{\AO,\perp}$).  Inequality \emph{(iii)} holds
    by~\Cref{lem:AuxObj-conc} and because $\| \bdelta_u \| / \sqrt{n}
    \mydoteq 0$ by~\Cref{lem:approximate-stationarity}.

 To establish the reverse inequality, let $\bdelta_v =
 \frac{\de\,}{\de \bv}\AuxObj_k(\bu_k^{\AO};\nu_{k,0},\bv_k^{\AO})$
 and $\delta_0 = \frac{\de\,}{\de
   v_0}\AuxObj_k(\bu_k^{\AO};\nu_{k,0},\bv_k^{\AO})$.  We follow an
 very similar argument as above
    \begin{equation}
    \begin{aligned}
      &\max_{\|\bu\| \leq C/\sqrt{n}} \; \min_{\substack{|v_0| \leq C
          \\ \|\bv\| \leq C}} \; \AuxObj_k(\bu;v_0,\bv)
      \underset{\emph{(i)}}\gtrdot \min_{\substack{|v_0| \leq C
          \\ \|\bv\| \leq C}} \; \AuxObj_k(\bu_k^{\AO};v_0,\bv)
      \\ &\qquad \underset{\emph{(ii)}}\gtrdot \min_{\substack{|v_0|
          \leq C \\ \|\bv\| \leq C}} \Big\{
      \AuxObj_k(\bu_k^{\AO};\nu_{k,0},\bv_k^{\AO}) +
      \bdelta_v^\top(\bv - \bv_k^{\AO}) + \delta_0(v_0 - \nu_{k,0})
      \Big\} \\ &\qquad\gtrdot
      \AuxObj_k(\bu_k^{\AO};\nu_{k,0},\bv_k^{\AO}) - C\|\bdelta_v\| -
      C|\delta_0| \underset{\emph{(iii)}}\gtrdot \ell_k.
    \end{aligned}
    \end{equation}
    Inequality $\emph{(i)}$ holds because on the event
    $\|\bu_k^{\AO}\| \leq C/\sqrt{n}$, which~\Cref{lem:fixed-pt-bound}
    and~\Cref{cor:AO-bounds} show have exponentially high probability,
    the inequality holds exactly (that is, with ``$\gtrdot$'' replaced
    by ``$\geq$'').  Inequality \emph{(ii)} holds because
    by~\Cref{lem:AuxObj-convex-concave}, on the event $\< \bxi_h ,
    \bu_k^{\AO} \> \geq 0$, $\AuxObj_k$ is convex in $(v_0,\bv)$, and
    by~\Cref{cor:AO-bounds}, $\< \bxi_h , \bu_k^{\AO} \> \geq 0$ with
    exponentially high probability (recall $\bxi_g =
    \bg_k^{\AO,\perp}$).  Inequality \emph{(iii)} holds
    by~\Cref{lem:AuxObj-conc} and because $\|\bdelta_v\| \mydoteq 0$
    and $|\delta_0| \mydoteq 0$
    by~\Cref{lem:approximate-stationarity}.

    Equations \eqref{eq:min-max-AO-conc} follows by a completely
    analogous argument, which we present for completeness.  There
    exists $C$ such that
    \begin{equation}
    \begin{aligned}
        &\min_{\|\bv\| \leq C} \; \max_{\|\bu\| \leq C/\sqrt{n}} \;
      \AuxObj_k(\bu;\nu_{k,0},\bv) \geq \min_{\substack{|v_0| \leq C
          \\ \|\bv\| \leq C}} \; \max_{\|\bu\| \leq C/\sqrt{n}} \;
      \AuxObj_k(\bu;v_0,\bv) \geq \min_{\substack{|v_0| \leq C
          \\ \|\bv\| \leq C}} \; \AuxObj_k(\bu_k^{\AO};v_0,\bv)
      \\ &\qquad\gtrdot \min_{\substack{|v_0| \leq C \\ \|\bv\| \leq
          C}} \Big\{ \AuxObj_k(\bu_k^{\AO};v_0,\bv) + \bdelta_v^\top
      (\bv - \bv_k^{\AO}) + \delta_0(v_0 - \nu_{k,0}) \Big\}
      \\ &\qquad\gtrdot \AuxObj_k(\bu_k^{\AO};\nu_{k,0},\bv_k^{\AO}) -
      C\|\bdelta_v\| - C|\delta_0| \gtrdot \ell_k,
    \end{aligned}
    \end{equation}
    where the inequalities and approximate inequalities hold by similar
    arguments as before.  Similarly,
    \begin{equation}
    \begin{aligned}
        &\min_{\substack{|v_0| \leq C \\ \|\bv\| \leq C}} \;
      \max_{\|\bu\| \leq C/\sqrt{n}} \; \AuxObj_k(\bu;v_0,\bv) \leq
      \min_{\|\bv\| \leq C} \; \max_{\|\bu\| \leq C/\sqrt{n}} \;
      \AuxObj_k(\bu;\nu_{k,0},\bv) \leq \max_{\|\bu\| \leq C/\sqrt{n}}
      \; \AuxObj_k(\bu;\nu_{k,0},\bv_k^{\AO}) \\ &\qquad\lessdot
      \max_{\|\bu\| \leq C/\sqrt{n}} \left\{
      \AuxObj_k(\bu_k^{\AO};\nu_{k,0},\bv_k^{\AO}) + \bdelta_u^\top
      (\bu - \bu_k^{\AO}) \right\} \leq
      \AuxObj_k(\bu_k^{\AO};\nu_{k,0},\bv_k^{\AO}) +
      \frac{C}{\sqrt{n}}\| \bdelta_u \| \lessdot \ell_k,
    \end{aligned}
    \end{equation}
    where the inequalities and approximate inequalities hold by similar
    arguments as before. Combining the previous two displays gives
    equation~\eqref{eq:min-max-AO-conc}.

 Now we prove equation~\eqref{eq:AO-local-stability}.
 By~\Cref{lem:hypothesis-k-ao}, for sufficiently small $c' > 0$ and
 any fixed $\epsilon < c'$, $\bu_k^{\AO} \in E_u(\epsilon/2)$ with
 probability at least $1 - Ce^{-cn\epsilon^r}$.  When $\| \bu_k^{\AO}
 \| \leq C/\sqrt{n}$, which occurs with exponentially high
 probability, we have for every $\bu \in E_u^c(\epsilon)$ that $\| \bu
 - \bu_k^{\AO} \| \geq \epsilon/(2\sqrt{n})$.  Thus,
    \begin{equation}
    \begin{aligned}
        \max_{\substack{\bu \in E_u^c(\epsilon)\\ \| \bu \| \leq
            C/\sqrt{n}}}\; \min_{\substack{|v_0| \leq C \\ \|\bv\|
            \leq C}} \; \AuxObj_k&(\bu;v_0,\bv) \leq
        \max_{\substack{\bu \in E_u^c(\epsilon)\\ \|\bu\|\leq
            C/\sqrt{n}}} \AuxObj_k(\bu;\nu_{k,0},\bv_k^{\AO})
        \\ &\underset{\emph{(i)}}\lessdot \max_{\substack{\bu \in
            E_u^c(\epsilon)\\ \| \bu \| \leq C/\sqrt{n}}} \Big\{
        \AuxObj_k(\bu_k^{\AO};\nu_{k,0},\bv_k^{\AO}) +
        \|\bdelta_u\|\,\|\bu - \bu_k^{\AO}\| - \frac{cn}2 \| \bu -
        \bu_k^{\AO} \|^2 \Big\} \\ &\underset{\emph{(ii)}}\lessdot
        \max_{\epsilon/2 \leq x \leq C} \Big\{ \ell_k^* +
        \frac{c\epsilon}{8}\,x - \frac{c}2 x^2 \Big\} = \ell_k^* -
        \frac{3c\epsilon^2}{16}.
    \end{aligned}
    \end{equation}
    Inequality $\emph{(i)}$ holds because on the event
    $\|\bu_k^{\AO}\| \leq C/\sqrt{n}$, which~\Cref{cor:AO-bounds}
    shows has exponentially high probability, the inequality holds
    exactly (that is, with ``$\gtrdot$'' replaced by ``$\geq$''), and
    because $\bu \mapsto \AuxObj_k(\bu;\nu_{k,0},\bv_k^{\AO})$ is
    $cn$-strongly concave with exponentially high-probability
    by~\Cref{lem:AuxObj-curvature}.  Inequality \emph{(ii)} holds
    because by~\Cref{lem:approximate-stationarity} and the argument
    preceding the display, with exponentially high probability, $\|
    \bdelta_u \| /\sqrt{n} \leq c\epsilon/8$ and $\| \bu - \bu_k^{\AO}
    \| \geq \epsilon/(2 \sqrt{n})$ for all $\bu_k^{\AO} \in
    E_u(\epsilon/2)$.  The first bound in
    equation~\eqref{eq:AO-local-stability} follows.

    The second bound in equation~\eqref{eq:AO-local-stability} holds
    by a similar argument.  Indeed, by Hypothesis (k-1), for
    sufficiently small $c' > 0$ and any $\epsilon < c'$, $\bv_k^{\AO}
    \in E_v(\epsilon/2)$ with exponentially high probability.  On this
    event, for every $\bv \in E_v^c(\epsilon)$ we have $\| \bv -
    \bv_k^{\AO} \| \geq \epsilon/2$.  We thus have
    \begin{equation}
    \begin{aligned}
        \min_{\substack{\bv \in E_v^c(\epsilon)\\|v_0| \leq C \\ \|
            \bv \| \leq C}}\; \max_{\|\bu \| \leq C / \sqrt{n}}\; \;
        &\AuxObj_k(\bu;v_0,\bv) \geq \min_{\substack{\bv \in
            E_v^c(\epsilon)\\|v_0| \leq C \\ \| \bv \| \leq C}}\;
        \AuxObj_k(\bu;v_0,\bv_k^{\AO})
        \\ &\underset{\emph{(i)}}\gtrdot \min_{\substack{\bv \in
            E_v^c(\epsilon)\\ |v_0| \leq C \\ \| \bv \| \leq C }}\;
        \Big\{ \AuxObj_k(\bu_k^{\AO};\nu_{k,0},\bv_k^{\AO}) -
        \|\bdelta_v\|\,\|\bv - \bv_k^{\AO}\| - |\delta_0|\,|v_0 -
        \nu_{0,k}| + \frac{c}2 \| \bv - \bv_k^{\AO} \|^2 \Big\}
        \\ &\underset{\emph{(ii)}}\gtrdot \max_{\epsilon/2 \leq x \leq
          C} \Big\{ \ell_k^* - \frac{c\epsilon}{8}\,x -
        \frac{c\epsilon^2}{16} + \frac{c}2 x^2 \Big\} = \ell_k^* +
        \frac{c\epsilon^2}{8}.
    \end{aligned}
    \end{equation}
    Inequality $\emph{(i)}$ holds because on the event
    $\|\bv_k^{\AO}\| \leq C$, which~\Cref{cor:AO-bounds} shows has
    exponentially high probability, the inequality holds exactly (that
    is, with ``$\gtrdot$'' replaced by ``$\geq$''), and because $\bv
    \mapsto \AuxObj_k(\bu_k^{\AO};\nu_{k,0},\bv)$ is $c$-strongly
    convex with exponentially high-probability
    by~\Cref{lem:AuxObj-curvature}.  Inequality \emph{(ii)} holds
    because by~\Cref{lem:approximate-stationarity} and the argument
    preceding the display, with exponentially high probability, $\|
    \bdelta_v \| \leq c\epsilon/8$, $|\delta_0| \leq
    c\epsilon^2/(16C)$, and $\| \bv - \bv_k^{\AO} \| \geq \epsilon/2$
    for all $\bv_k^{\AO} \in E_v(\epsilon/2)$.  The first bound in
    equation~\eqref{eq:AO-local-stability} follows.
\end{proof}

\subsection{Proof of~\Cref{lem:inductive-step}}
\label{sec:inductive-step-proof}

Recall that $\PriObj_k(\bu;v_0,\bv) \defn \bu^\top \bA \bv +
\phi_k(\bu;v_0,\bv)$.


\subsubsection{Crude bounds on primary optimization saddle point}

\begin{lemma}
\label{lem:PriObj-properties}
For $k = 5,6$, the primary objective has the following properties.
\begin{enumerate}[(a)]
\item For any fixed $v_0,\bv$, the function $\bu \mapsto
  \PriObj_k(\bu;v_0,\bv)$ is $cn$-strongly concave in $\bu$.  Thus
  also $\bu \mapsto \min_{(v_0,\bv) \in S}\PriObj_k(\bu;v_0,\bv)$ is
  $cn$-strongly concave in $\bu$ for any set $S$.

\item For any constant $M$ depending on $\cPmodel$, with exponentially
  high-probability the function $(v_0,\bv) \mapsto \max_{\bu \in
    \reals^n}\PriObj_k(\bu;v_0,\bv)$ is $c$-strongly convex on
  $\max\{|v_0|,\| \bv \|\} \leq M$, where $c$ depends only on
  $\cPmodel$ and $M$.

\item With exponentially high probability, $\| \bu_k^{\PO} \| \leq C /
  \sqrt{n}$, $|v_{k,0}^{\PO}| \leq C$, $\| \bv_k^{\PO} \| \leq C$.  On
  this event,
  \begin{equation}
          \PriObj_k(\bu_k^{\PO};v_{k,0}^{\PO},\bv_k^{\PO}) =
          \min_{\substack{v_0 \in \reals\\\bv \in \reals^p}} \;
          \max_{\bu \in \reals^n} \PriObj_k(\bu;v_0,\bv) =
          \min_{\substack{|v_0| \leq C \\ \|\bv\| \leq C}} \;
          \max_{\|\bu\| \leq C/\sqrt{n}} \PriObj_k(\bu;v_0,\bv),
  \end{equation}
  and in both places the order of minimization and maximization may be
  exchanged.

\item For any $C > 0$ depending only on $\cPmodel$, there exists $C' >
  0$ depending only on $\cPmodel$ and $C$ such that with exponentially
  high probability
  \begin{equation}
    \begin{aligned}
      \text{for any $|v_0| \leq C$,}\quad &\min_{\substack{\bv
          \in \reals^p}} \; \max_{\bu \in \reals^n}
      \PriObj_k(\bu;v_0,\bv) = \min_{\substack{\|\bv\| \leq C'}}
      \; \max_{\|\bu\| \leq C'/\sqrt{n}} \PriObj_k(\bu;v_0,\bv).
    \end{aligned}
  \end{equation} 

\end{enumerate}
\end{lemma}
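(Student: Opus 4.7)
My plan is to prove (a) directly from the definitions of the primary objective, (b) by Fenchel--Legendre duality combined with a Hessian lower bound, and (c)--(d) by a coerciveness argument together with Sion's minimax theorem.

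For part (a), since $\PriObj_k(\bu;v_0,\bv) = \bu^\top\bA\bv + \phi_k(\bu;v_0,\bv)$ and the bilinear term is linear in $\bu$, all curvature in $\bu$ comes from $-\ell_k^*(\bu;\bw,\by_1,\by_2)$. For $k=5$, $\ell_5^*(\bu) = \frac{1}{n}\sum_i \ell_\prop^*(nu_i;y_{1,i})$; since $\ell_\prop$ is $C$-strongly smooth by Assumption A1(i), each $\ell_\prop^*(\,\cdot\,;a)$ is $(1/C)$-strongly convex in its first argument, and the factor $n^2$ from the chain rule minus the $1/n$ prefactor gives $cn$-strong convexity in $\bu$. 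For $k=6$, the quadratic $\frac{n}{2}\sum_i w_i^{-1}u_i^2/y_{1,i}$ is itself $cn$-strongly convex on its effective domain (where $u_i = 0$ whenever $y_{1,i} = 0$), using $w_i \leq C$ from Assumption A1(h). Taking the min over $(v_0,\bv)$ preserves $\bu$-concavity as an infimum of concave functions.

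For part (b), I would first write $\max_\bu \PriObj_k$ explicitly via Fenchel--Legendre duality: for $k=5$ it equals $\frac{1}{n}\sum_i \ell_\prop(v_0+\<\bx_i,\bv\>;y_{1,i}) + \Omega_\prop(\bv)$, and for $k=6$ it equals $\frac{1}{2n}\sum_i y_{1,i}w_i (y_{2,i}-v_0-\<\bx_i,\bv\>)^2 + \Omega_\out(\bv)$. The Hessian in $(v_0,\bv)$ then decomposes as $\bH_{\rm loss} + \bH_\Omega$, where $\bH_{\rm loss} = \frac{1}{n}\sum_i \alpha_i [1;\bx_i][1;\bx_i]^\top$ with $\alpha_i \in [c,C]$ for $k=5$ and $\alpha_i = y_{1,i}w_i \geq 0$ for $k=6$, and $\bH_\Omega$ is block-diagonal with a $c\id_p$ lower bound on the $\bv$-block from Assumption A1(g). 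For any unit direction $(t,\bs)$,
\begin{equation*}
    (t,\bs)^\top\bH(t,\bs) \;\geq\; c\|\bs\|^2 + \frac{c'}{n}\sum_i (t+\<\bx_i,\bs\>)^2 \;\geq\; c\|\bs\|^2 + c'(t+\<\bar{\bx},\bs\>)^2,
\end{equation*}
where $\bar{\bx} = \frac{1}{n}\sum_i \bx_i$ and $c'>0$ is deterministic for $k=5$; for $k=6$ it requires $\frac{1}{n}\sum_i y_{1,i}w_i \gtrdot c$, which follows from $\barpi \geq c$ (Assumption A1(d)) and boundedness of $w$ via Bernstein's inequality. Since $\|\bar{\bx}\| \mydoteq \|\bmu_{\sx}\| \leq C$ by Assumption A1(e), a $2\times 2$ eigenvalue calculation on the resulting quadratic form in $(t,\<\bar\bx,\bs\>,\bs_\perp)$ yields a positive minimum eigenvalue depending only on $\cPmodel$, giving joint $c$-strong convexity on $\{\max(|v_0|,\|\bv\|) \leq M\}$ with exponentially high probability.

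For part (c), I would derive \emph{a priori} bounds on the saddle point before invoking Sion's theorem. Evaluating $\max_\bu \PriObj_k$ at the true parameters $(v_0,\bv) = (\theta_{k,0},\btheta_k)$ gives, via Assumption A1(f),(i) and sub-Gaussian concentration of the empirical loss, an upper bound of the minimum value by some constant $C_0$ with exponentially high probability. Combined with the joint strong convexity of part (b), this forces $|v_{k,0}^{\PO}|, \|\bv_k^{\PO}\| \lessdot C$. The bound $\|\bu_k^{\PO}\| \lessdot C/\sqrt{n}$ then follows from the Fenchel dual identities~\eqref{eq:uPO-identity}: for $k=5$ via bounded score derivatives (Assumption A1(i)) applied to $\hbmeta_\prop = \htheta_{\prop,0}\ones + \bX\hbtheta_\prop$ together with $\|\bX\|_{\op}/\sqrt{n} \lessdot C$, and for $k=6$ from the explicit residual formula plus bounded $\bw$ and $\|\by\|/\sqrt{n}$. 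Since $(\bu_k^{\PO};v_{k,0}^{\PO},\bv_k^{\PO})$ then lies in the interior of $\ball_2(\bzero,C/\sqrt{n}) \times [-C,C] \times \ball_2(\bzero,C)$, it remains a saddle point of the restricted problem, and convex--concave Sion's minimax theorem on these compact convex sets yields the equality of min--max and max--min. Part (d) follows by the same scheme with $v_0 \in [-C,C]$ held fixed: strong convexity of $\Omega_k$ and coerciveness of the loss bound the inner $\bv$-minimizer uniformly in $v_0$, and part (a) bounds the $\bu$-maximizer. I expect the main obstacle to be the Schur-complement analysis in part (b), specifically arguing that the minimum-eigenvalue bound on $\bH_{\rm loss}+\bH_\Omega$ is uniform in $(v_0,\bv)$ over the compact set on a single high-probability event; the remaining steps reduce to standard Fenchel duality, concentration, and minimax theorems.
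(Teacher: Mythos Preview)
Your proposal is essentially correct and covers parts (a), (c), (d) by the same ideas as the paper: strong concavity from the conjugate loss, a priori bounds on the saddle via a reference point plus strong convexity, and a minimax theorem (the paper cites Rockafellar's Lemma~36.2 rather than Sion, but this is cosmetic).

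For part (b) you take a genuinely different route. The paper argues directionally: for a unit direction $(\delta_0,\bdelta)$ it splits into the case $|\delta_0|\geq 2M\|\bdelta\|$, where it uses $\|\bA\|_{\op}/\sqrt{n}\leq M$ together with a Markov-type coordinate-counting argument to find $\gtrsim cn$ indices on which both $\action_i=1$ and $|\delta_0+\langle\bx_i,\bdelta\rangle|\geq c$, and the case $|\delta_0|<2M\|\bdelta\|$, where the penalty alone supplies curvature. Your Hessian/Jensen approach is cleaner in that it avoids the case split and the index-counting, and delivers the bound through a single $2\times2$ eigenvalue computation; the paper's approach, on the other hand, never needs to control the norm of a sample-mean vector.

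Two small repairs to your (b) sketch. First, the claim $\|\bar\bx\|\mydoteq\|\bmu_{\sx}\|$ is false in the proportional regime: since $\bar\bx=\bmu_{\sx}+\bA^\top\ones/n$ one has $\|\bar\bx\|^2\mydoteq\|\bmu_{\sx}\|^2+p/n$. This does not break your argument because you only need $\|\bar\bx\|\lessdot C$, but the constant in your eigenvalue bound must be allowed to depend on $p/n$ (which is fine under Assumption~A1(a)). Second, for $k=6$ the inequality $\frac{1}{n}\sum_i y_{1,i}w_i(t+\langle\bx_i,\bs\rangle)^2\geq \frac{c'}{n}\sum_i(t+\langle\bx_i,\bs\rangle)^2$ is not implied by $\frac{1}{n}\sum_i y_{1,i}w_i\gtrdot c$; you cannot pull out a uniform lower constant when some $\alpha_i=0$. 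The fix is to apply Cauchy--Schwarz (or Jensen) with the \emph{weights} $\alpha_i=y_{1,i}w_i$, obtaining $\frac{1}{n}\sum_i\alpha_i(t+\langle\bx_i,\bs\rangle)^2\geq \bar\alpha\,(t+\langle\tilde\bx,\bs\rangle)^2$ with $\bar\alpha=\frac{1}{n}\sum_i\alpha_i$ and $\tilde\bx=(n\bar\alpha)^{-1}\sum_i\alpha_i\bx_i$. You then need $\|\tilde\bx\|\lessdot C$, which follows from the same concentration as for $\hbmu_{\sx,\cfd}$. With these adjustments your argument goes through.
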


\begin{proof}[Proof of~\Cref{lem:PriObj-properties}]
We prove each item, one at a time.
\begin{enumerate}[(a)]
\item We showed that $\phi_k$ is $cn$-strongly concave in $\bu$ in the
  proof of~\Cref{lem:AuxObj-curvature}.  The function $\bu \mapsto
  \bu^\top \bA \bv$ is linear, whence the result follows.
\item By~\cite[Corollary 5.35]{vershynin_2012}, there exists $M > 1$
  such that $\| \bA \|_{\op} / \sqrt{n} \leq M$ with exponentially
  high-probability.  Throughout the proof of item 2, the value of $M$
  stays constant but $C',c,c' > 0$ may change at each appearance and
  depend on $M$ and $\cPmodel$.  We will show that for any
  $|v_0|,\|\bv\| \leq C'$ and $\delta_0^2 + \| \bdelta \|^2 = 1$, the
  function $t \mapsto \max_{\bu \in \reals^n}\PriObj_k(\bu;v_0 + t
  \delta_0,\bv + t\bdelta)$ is $c$-strongly convex for $t \in [0,1]$.

  Note that $\max_{\bu \in \reals^n}\PriObj_k(\bu;v_0,\bv)$ are
  exactly the objectives given in equations~\eqref{eq:propensity-fit}
  (for $k = 5$) and~\eqref{eq:outcome-fit} (for $k = 6$).  With
  exponentially high-probability, there is a set $S_1 \subset [n]$,
  $|S_1| \geq c'n$ such that for all $i \in S_1$, $\eta \mapsto
  \action_i w(\< \bx_i , \btheta_{\prop} \>)(y_i-\eta)^2/2$ is
  $c$-strongly convex.  Indeed, because the weight $w$ is
  lower-bounded by A1, the former holds whenever $\action_i = 1$,
  which occurs in at least $c'n$ coordinates with exponentially high
  probability because $\action_i \iid \Bernoulli(\E[\pi(\mu_{\prop} +
    \< \bx_i,\btheta_{\prop}\>)])$ and $\E[\pi(\mu_{\prop} + \<
    \bx_i,\btheta_{\prop}\>)] > c$ by Assumption A1.
  
  Now, to show the function $t \mapsto \max_{\bu \in
    \reals^n}\PriObj_k(\bu;v_0 + t \delta_0,\bv + t\bdelta)$ is
  $c$-strongly convex for $t \in [0,1]$, consider first the case
  $|\delta_0| \geq 2M \| \bdelta \|$, or equivalently, $|\delta_0|^2
  \geq 4M^2/(1 + 4M^2)$.  On the event $\| \bA \|_{\op} / \sqrt{n}
  \leq M$, we have $C' \geq \|\delta_0 \ones + \bA \bdelta\|/\sqrt{n}
  \geq |\delta_0| - \| \bA \bdelta \| / \sqrt{n} \geq (1 -
  1/(2M))|\delta_0| \geq c > 0$.  Using Markov's inequality across
  coordinates, there is a set $S_2 \subset [n]$, $|S_2| \geq
  (1-c'/2)n$ (using the same $c'$ as above) such that for all $i \in
  S_2$ both $|\delta_0 + \< \bx_i , \bdelta \>| \geq c > 0$ and $|v_0
  + t\delta_0 + \< \bx_i , \bv + t \bdelta \> | \leq C'$ for all $t
  \in [0,1]$.  Note that $|S_1 \cap S_2| \geq c'/2$, whence $t \mapsto
  \frac{1}{n}\sum_{i \in S_1 \cap S_2}\action_i w(\<
  \bx_i,\btheta_{\prop}\>)(y_i - v_0 - t\delta_0 - \< \bx_i , \bv + t
  \bdelta \>)^2$ (in the case $k = 5$) and $t \mapsto
  \frac{1}{n}\sum_{i \in S_1 \cap S_2}\ell_{\prop}(v_0 + t\delta_0 +
  \< \bx_i , \bv + t \bdelta \> ; \action_i)$ (in the case $k =6$) is
  $c$-strongly convex for $t \in [0,1]$.  Because the remaining terms
  in equations~\eqref{eq:propensity-fit} and \eqref{eq:outcome-fit} are are
  convex, we conclude $t \mapsto \max_{\bu \in
    \reals^n}\PriObj_k(\bu;v_0 + t \delta_0,\bv + t\bdelta)$ is
  $c$-strongly convex for $t \in [0,1]$.

  Consider, alternatively, the case that $|\delta_0| < 2M \| \bdelta
  \|$, or equivalently, $\|\bdelta\|^2 \geq 1/(1 + 4M^2)$.  Then,
  because $\Omega_{\prop}$ is $c$-strongly convex by Assumption A1, $t
  \mapsto \Omega_\prop(\bv + t \bdelta)$ is $c$-strongly convex.
  Thus, $t \mapsto \max_{\bu \in \reals^n}\PriObj_k(\bu;v_0 + t
  \delta_0,\bv + t\bdelta)$ is $c$-strongly convex for $t \in [0,1]$
  in this case as well.
\item Let $\bv^*$ be the minimizer of $\Omega_k$, which has bounded
  $\ell_2$-norm by Assumption A1.  Moreover, because $\eta \mapsto
  \ell_{\prop}(\eta;\action_i)$ and $\eta\mapsto \action_i w(\< \bx_i
  , \btheta_{\prop}\>)(y_i - \eta)^2$ have $C$-Lipschitz gradient
  which is bounded by $C(1 + |y_i|)$ at $\eta = 0$, we conclude that
  $(v_0,\bv) \mapsto \max_{\bu \in \reals^n}\PriObj_k(\bu;v_0,\bv)$
  has gradient bounded by $C$ with exponentially high probability at
  $v_0 = 0, \bv = \bv^*$.  The bounds on $|v_{k,0}^{\PO}|$ and
  $\|\bv_k^{\PO}\|$ then hold by item 2.  The bounds on $\| \bu_k
  ^{\PO} \|$ then hold by equations~\eqref{eq:uPO-identity} and the
  Lipschitz continuity of the gradient of loss in Assumption A1.  Then
  the display holds by \cite[Lemma 36.2]{rockafellar-1970a}.

\item This holds by the same argument as in the previous item, where
  we do not optimize over $v_0$ in equations~\eqref{eq:propensity-fit} and
  \eqref{eq:outcome-fit}.
\end{enumerate}
\end{proof}


\subsubsection{Refined bounds on primary optimization saddle point}

By~\Cref{lem:PriObj-properties}, the sequential Gordon inequality
(\Cref{lem:seq-gordon}), and~\Cref{lem:local-stability}, for
sufficiently large $C > 0$ depending on $\cPmodel$
\begin{equation}
\label{eq:PO-saddle-value-conc}
    \PriObj_k(\bu_k^{\PO};v_{k,0}^{\PO},\bv_k^{\PO}) =
    \min_{\substack{v_0 \in \reals\\\bv \in \reals^p}} \; \max_{\bu
      \in \reals^n} \PriObj_k(\bu;v_0,\bv) = \min_{\substack{|v_0|
        \leq C \\ \|\bv\| \leq C}} \; \max_{\|\bu\| \leq C/\sqrt{n}}
    \PriObj_k(\bu;v_0,\bv) \mydoteq \ell_k,
\end{equation}
and
\begin{equation}
    \min_{\bv \in \reals^p} \; \max_{\bu \in \reals^n}
    \PriObj_k(\bu;\nu_{0,k},\bv) = \min_{\|\bv\| \leq C} \;
    \max_{\|\bu\| \leq C/\sqrt{n}} \PriObj_k(\bu;\nu_{0,k},\bv) \mydoteq
    \ell_k,
\end{equation}
where we have used item (c) of~\Cref{lem:PriObj-properties} in the
first display, and item (d) of~\Cref{lem:PriObj-properties} in the
second display.  In particular, for any $\epsilon < c'$, with
probability at least $1 - Ce^{-cn\epsilon^r}$, $\min_{\bv \in
  \reals^p} \max_{\bu \in \reals^n} \PriObj_k(\bu; \nu_{0,k},\bv)$ is
within $\epsilon$ of the minimum of the function $v_0 \mapsto
\min_{\bv \in \reals^p} \max_{\bu \in \reals^n}
\PriObj_k(\bu;v_0,\bv)$ over $[-C,C]$.
By~\Cref{lem:PriObj-properties}(b), the function $(v_0, \bv) \mapsto
\max_{\bu \in \reals^n}\PriObj_k(\bu;v_0,\bv)$ is $c$-strongly convex
over the set defined by the constraints $|v_0|\leq C$ and $\| \bv \|
\leq C$ with exponentially high probability.  When this occurs, the
interval $I \subset [-C,C]$ on which $\min_{\bv \in \reals^p}
\allowbreak \max_{\bu \in \reals^n} \allowbreak \PriObj_k(\bu;
\allowbreak v_0,\bv)$ is within $\epsilon$ of its minimum has length
at most $2 \sqrt{2\epsilon/c}$.  Thus, with probability at least $1 -
Ce^{-c n \epsilon^r}$, $|v_{k,0}^{\PO} - \nu_{0,k}| \leq
2\sqrt{2\epsilon/c}$.  That is, $v_{k,0}^{\PO} \mydoteq \nu_{0,k}$, as
desired.

Next, taking $C$ large enough so that $|v_{k,0}^{\PO} | \leq C$ and
$\| \bv_k^{\PO} \| \leq C/2$ (which is possible
by~\Cref{lem:PriObj-properties}, item 3) and such
that~\cref{eq:AO-local-stability} holds, we have
\begin{equation}
  \min_{\substack{\bv \in E_v^c(\epsilon) \\ |v_0| \leq C \\ \| \bv \|
      \leq C }} \; \max_{ \bu \in \reals^n } \PriObj_k(\bu;v_0,\bv)
  \geq \min_{\substack{\bv \in E_v^c(\epsilon) \\ |v_0| \leq C \\ \|
      \bv \| \leq C }} \; \max_{\|\bu\| \leq C'/\sqrt{n}} \;
  \PriObj_k(\bu;v_0,\bv) \gtrdot \ell_k + c\epsilon^2.
\end{equation}
Likewise, taking $C$ large enough so that $|\bu_k^{\PO} | \leq
C/(2\sqrt{n})$ (which is possible by~\Cref{lem:PriObj-properties},
item 3) and such that~\cref{eq:AO-local-stability} holds, we have
\begin{equation}
    \max_{ \substack{ \bu \in E_u^c(\epsilon) \\ \| \bu \| \leq
        C/\sqrt{n}} }\; \min_{\substack{v_0 \in \reals \\ \bv \in
        \reals^p }} \; \PriObj_k(\bu;v_0,\bv) \leq \max_{ \substack{
        \bu \in E_u^c(\epsilon) \\ \| \bu \| \leq C/\sqrt{n}} }\;
    \min_{\substack{|v_0| \leq C' \\ \|\bv\| \leq C' }} \;
    \PriObj_k(\bu;v_0,\bv) \lessdot \ell_k - c\epsilon^2,
\end{equation}
where recall that we may exchange the order of minimization and
maximization in equation~\eqref{eq:PO-saddle-value-conc}.  Combined with
the relation~\eqref{eq:PO-saddle-value-conc}, we find that probability
at least $1 - Ce^{-cn\epsilon^r}$, $\bu_k^{\PO} \in E_u(\epsilon)$ and
$\bv_k^{\PO} \in E_v(\epsilon)$.  That is,
\begin{equation}
\label{eq:conc-without-gh}
\begin{gathered}
    \frac{1}{n} \sum_{i=1}^n \phi_u\Big(
    (nu_{\ell,i}^{\PO,\perp})_{\ell=1}^{k-1}, nu_{k,i}^{\PO},
    (h_{\ell,i}^{\PO,\perp})_{\ell=1}^{k-1}, \eps_{2,i} \Big) \mydoteq
    \E\Big[ \E\Big[ \phi_u\Big(
        (nu_{\ell,i}^{\SE,\perp})_{\ell=1}^{k-1}, nu_{k,i}^{\SE},
        (h_{\ell,i}^{\SE,\perp})_{\ell=1}^{k-1}, \eps_{2,i}^{\SE}
        \Big) \Big],
      \\ \phi_v\Big(\bV_{k-1}^{\PO,\perp},\bv_k^{\PO},\frac{1}{\sqrt{n}}\bG_{k-1}^{\PO,\perp}\Big)
      \mydoteq
      \E\Big[\phi_v\Big(\bV_{k-1}^{\SE,\perp},\bv_k^{\SE},\frac{1}{\sqrt{n}}\bG_{k-1}^{\SE,\perp}\Big)\Big].
\end{gathered}
\end{equation}
By the KKT conditions for the problem~\eqref{eq:min-max}, the
definitions of $\phi_k$ in equation~

\eqref{eq:objectives}, and the definition of $\bg_k^{\PO}$ (see
equation~\eqref{eq:loo-noise-def}) we have $\< \bu_k^{\PO},\ones\> = 0$ and
\begin{equation}
    \bg_k^{\PO} = \sum_{\ell=1}^k \zeta_{k\ell}^v \bv_\ell^{\PO} -
    \bA^\top \bu_k^{\PO} = \sum_{\ell=1}^k \zeta_{k\ell}^v
    \bv_\ell^{\PO} + \< \bu_k^{\PO},\ones\> \bmu_{\sx} + \nabla
    \Omega_k(\bv_k^{\PO}) = \sum_{\ell=1}^k \zeta_{k\ell}^v
    \bv_\ell^{\PO} + \nabla \Omega_k(\bv_k^{\PO}).
\end{equation}
Because $\nabla \Omega_k$ is $C$-Lipschitz by Assumption A1, the
second line of equation~\eqref{eq:conc-without-gh} holds also if we allow
$\phi_v$ to also be a function of $\bg_k^{\PO}$.

Now we show that, in the case $k = 5$, the first line
of~\cref{eq:conc-without-gh} holds also if we allow $\phi_u$ to also
be a function of $h_{5,i}^{\PO}$.
Observe that because $\ell_\prop$ is $c$-strongly convex and twice differentiable,
$\nabla_{\bu} \ell_5^*(\bu_k^{\PO};\bw,\by_1,\by_2)$ exists and is $C$-Lipschitz (see its definition in equation~\eqref{eq:ellk*-def}).
Thus, using the KKT conditions for equation~\eqref{eq:min-max} and the definition of $\bh_5^{\PO}$ (equation~\eqref{eq:loo-noise-def}),
we have
\begin{equation}
    \bh_5^{\PO}
        =    
        \sum_{\ell=1}^5 \zeta_{5\ell}^u \bu_\ell^{\PO} + \bA \bv_5^{\PO}
        =
        \sum_{\ell=1}^5 \zeta_{5\ell}^u \bu_\ell^{\PO}
        -
        \ones(v_{0,5}^{\PO} + \< \bmu_{\sx} , \bv_5^{\PO} \>)
        +
        \nabla_{\bu}\ell_5^*(\bu_5^{\PO};\bw,\by_1,\by_2),
\end{equation}
whence equation~\eqref{eq:conc-without-gh} holds if we allow $\phi_u$ to also be a function of $h_{5,i}^{\PO}$.

equation~\eqref{eq:se-conc} holds because, for $k = 5,6$,
$\bv_k^{\PO,\perp}$, $\bg_k^{\PO,\perp}$ are $C$-Lipschitz function of
$\bV_{k-1}^{\PO}$, $\bG_{k-1}^{\PO}$, and $\bv_k^{\PO}$; and
equation~\eqref{eq:se-conc-u} holds because $nu_k^{\PO,\perp}$ and (in the
case $k = 5$) $h_{\ell,k}^{\PO,\perp}$ are $C$-Lipschitz functions of
$(n u_{\ell,i}^{\PO,\perp})_{\ell=1}^{k-1}$,
$(h_{\ell,i}^{\PO,\perp})_{\ell=1}^{k-1}$, and $nu_{k,i}^{\PO}$
(see~\cref{eq:perp-from-orig}).


\section{Analysis of fixed point parameters}
\label{sec:fixed-point-parameter-analysis}

This section is devoted to the proofs
of~\Cref{lem:fixed-pt-soln,lem:fixed-pt-bound},
in~\Cref{sec:fix-pt-exist-proof,sec:fix-pt-bound-proof}, respectively.


\subsection{Proof of~\Cref{lem:fixed-pt-soln}}
\label{sec:fix-pt-exist-proof}

Our proof proceeds via induction on $k$, based on the following
induction hypothesis:
\begin{description}
 \item[Hypothesis \FP{k}] There exists a unique solution $\bK_{g,k}$,
   $\bK_{h,k}$, $\bZ_{u,k}$, $\bZ_{v,k}$, $\{\nu_{\ell,0}\}_{\ell \leq
   k}$, $\{\nu_{\ell,\sx}\}_{\ell \leq k}$ to the equations
    \begin{equation}
    \label{eq:fixpt-inductive}
    \tag{SE-fixpt-$k$}
    \begin{gathered}
        \bK_{g,k} = \llangle \bU_k^{\SE} \rrangle_{\Ltwo}, \qquad
        \bK_{h,k} = \llangle \bV_k^{\SE} \rrangle_{\Ltwo},
        \\ \bK_{h,k} \bZ_{v,k}^\top = \llangle \bH_k^{\SE} ,
        \bU_k^{\SE} \rrangle_{\Ltwo}, \qquad \bK_{g,k} \bZ_{u,k}^\top
        = \llangle \bG_k^{\SE} , \bV_k^{\SE} \rrangle_{\Ltwo},
        \\
        \text{if $k \geq 5$,} \quad \nu_{\ell,\sx} =
        \<\bmu_{\sx},\bv_\ell^{\SE}\>_{\Ltwo}, \qquad \nu_{\ell,\su} =
        \<\ones,\bu_\ell^{\SE}\>_{\Ltwo}, \qquad
        \<\ones,\bu_\ell^{\SE}\>_{\Ltwo} = 0, \quad \text{for $5 \leq
          \ell \leq k$.}
    \end{gathered}
    \end{equation}
 where $\bZ_{v,k}$ is lower-triangular and innovation-compatible with
 $\bK_{h,k}$ and $\bZ_{u,k}$ is lower-triangular and innovation
 compatible with $\bK_{g,k}$.
\end{description}

\noindent Equation~\eqref{eq:fixpt-inductive} is a subset of the fixed
point equations~\eqref{eq:fixpt-general}, and involves a subset of the
fixed point parameters.  Note that~\Cref{lem:fixed-pt-soln} is
equivalent to Hypothesis \FP{6}.  Our strategy is to first check
Hypotheis \FP{4} directly, and then prove Hypothesis \FP{6} by
induction.

\subsubsection{Base case: $\mathbf{k = 4}$}
\label{sec:fixpt-existence-base-case}

Using equations~\eqref{eq:SE-opt} and~\eqref{eq:SE-penalties}, we find
that the equalities
\begin{align*}
  \bu_1^{\SE} = \bu_2^{\SE} = \bzero, \quad \bu_3^{\SE} = -\ones/n,
  \quad \bu_4^{\SE} = -\by_1^{\SE}/n, \quad \mbox{and} \\ \bv_1^{\SE}
  = \btheta_1, \quad \bv_2^{\SE} = \btheta_2, \quad \bv_3^{\SE} = \bzero,
 \quad \bv_4^{\SE} = \bzero
\end{align*}
all hold regardless of the choice of $\bK_g$, $\bK_h$, $\bZ_u$, and
$\bZ_v$.  Thus, the first line of equation~\eqref{eq:fixpt-inductive}
takes the form
\begin{equation}
  \bK_{g,4} =
        \begin{pmatrix}
            \bzero_{2 \times 2} & \bzero_{2 \times 2} \\ \bzero_{2
              \times 2} &
                \begin{matrix}
                    1/n & \barpi/n \\ \barpi/n & \barpi/n
                \end{matrix}
        \end{pmatrix},
    \qquad \bK_{h,4} =
        \begin{pmatrix}
            \llangle \bTheta \rrangle & \bzero_{2 \times 2}
            \\ \bzero_{2 \times 2} & \bzero_{2 \times 2}
        \end{pmatrix},
\end{equation}
where $\bTheta \in \reals^{p \times 2}$ is the matrix with columns
$\btheta_1$ and $\btheta_2$.  Here, in computing the entries of
$\bK_{g,k}$, we have used that $\P(y_{1,i}^{\SE} = 1|h_{1,i}^{\SE}) =
\pi(\theta_{1,0} + \< \bmu_{\sx},\btheta_1\> + h_{1,i}^{\SE})$ and
that $h_{1,i}^{\SE} \sim \normal(0,\|\btheta_1\|^2)$.  Thus, the first
line of the display~\eqref{eq:fixpt-inductive} uniquely determines
$\bK_{g,k}$ and $\bK_{h,k}$.  Moreover, this implies that $\bg_1^{\SE}
= \bg_2^{\SE} = \bzero$ and $\bh_3^{\SE} = \bh_4^{\SE} = 0$.  Then
\begin{equation}
    \llangle \bH_4^\SE,\bU_4^\SE\rrangle_{\Ltwo} = \bK_{h,4}
        \begin{pmatrix}
                \begin{matrix}
                    \vert \\ \bzero_{4\times3} \\ \vert
                \end{matrix}
                & 
                \begin{matrix} -\barpi \alpha_1 \\ 0 \\ 0 \\ 0\end{matrix} 
        \end{pmatrix},
    \qquad \llangle \bG_4^{\SE} , \bH_4^{\SE} \rrangle_{\Ltwo} =
    \bzero_{4\times 4}.
\end{equation}
Using~\Cref{lem:cholesky} and because $\barpi \neq 1$, we have the
equivalence $\id_{\bK_{g,4}} = \diag(0,0,1,1)$ and
\begin{align*}
  \id_{\bK_{h,4}} =
\begin{pmatrix}
  \indic_{\btheta_1 \neq \bzero} & 0
  \\ \frac{\<\btheta_2,\btheta_1\>}{\|\btheta_1\|^2}\indic_{\btheta_1
    \neq \bzero,\btheta_2 \propto \btheta_1} & \indic_{\btheta_2 \not
    \propto \btheta_1}
\end{pmatrix}.
\end{align*}
Multiplying the left and right-hand sides of the second line of the
display~\eqref{eq:fixpt-inductive} by the matrices
$\bK_{h,4}^\ddagger$ and $\bK_{g,4}^\ddagger$, respectively, gives
\begin{equation}
    \id_{\bK_{h,4}}^\top\bZ_{v,4}^\top =
        \begin{pmatrix}
            \begin{matrix}
                \vert \\ \bzero_{4\times3} \\ \vert
            \end{matrix}
            &
            \begin{matrix} -\barpi \alpha_1 \indic_{\btheta_1 \neq \bzero} \\ 0 \\ 0 \\ 0\end{matrix} 
        \end{pmatrix},
    \qquad \id_{\bK_{g,4}}^\top \bZ_{u,4}^\top = \bzero_{4\times4}.
\end{equation}
By innovation compatibility, $\id_{\bK_{h,4}}^\top \bZ_{v,4}^\top =
\bZ_{v,4}^\top$ and $\id_{\bK_{g,4}}^\top \bZ_{u,4}^\top =
\bZ_{u,4}^\top$.

The proof of the base case is complete.

\subsubsection{Induction step: Hilbert space problem}

Assume that Hypothesis \FP{k-1} has been established, and let
$\bK_{g,k-1}$, $\bK_{h,k-1}$, $\bZ_{v,k-1}$, $\bZ_{u,k-1}$,
$\{\nu_{\ell,\sx}\}_{5 \leq \ell \leq k-1}$, $\{\nu_{\ell,0}\}_{5 \leq
  \ell \leq k-1}$ be the unique solution to
equation~\eqref{eq:fixpt-inductive} under the appropriate
innovation-compatibility constraints.  Because the fixed point
equations \eqref{eq:fixpt-inductive} at index $k-1$ are a subset of
those at index $k$, Hypothesis \FP{k-1} implies that the fixed point
equations \eqref{eq:fixpt-inductive} at index $k$ uniquely determines
$\bK_{g,k-1}$, $\bK_{h,k-1}$, $\bZ_{v,k-1}$, $\bZ_{u,k-1}$,
$\{\nu_{\ell,\sx}\}_{5 \leq \ell \leq k-1}$, $\{\nu_{\ell,0}\}_{5 \leq
  \ell \leq k-1}$.  It is our task to show that the fixed point
equations \eqref{eq:fixpt-inductive} at index $k$ also uniquely
determine the final row of $\bK_{g,k}$, $\bK_{h,k}$, $\bZ_{u,k}$,
$\bZ_{v,k}$, and $\nu_{k,0}$ and $\nu_{k,\sx}$.  Our strategy for
doing so is to establish a correspondence between the solutions to
equation~\eqref{eq:fixpt-inductive} and the saddle points of a certain
convex-concave saddle point problem on an infinite dimensional Hilbert
space.  Via this connection, we can reduce the proof of Hypothesis
\FP{k} to a proof of the existence and uniqueness of such saddle
points, for which we can draw on techniques from convex analysis.  The
purpose of this subsection is to introduce the infinite dimensional
saddle point problem.

The saddle point objective that we define closely resembles the saddle
point objective~\eqref{eq:def-AuxObj} in the proof
of~\Cref{lem:inductive-step}.  Whereas the proof
of~\Cref{lem:inductive-step} involves a random objective $\AuxObj_k :
\reals^n \times \reals \times \reals^p$, the proof of the induction
step involves a deterministic objective $\AuxObj_k^{\Ltwo} : L_{\su}^2
\times \reals \times L_{\sv}^2 \rightarrow \reals$, where $L_{\su}^2$
and $L_{\sv}^2$ are infinite-dimensional Hilbert spaces of random $n$-
and $p$-dimensional vectors, respectively.

In particular, consider a probability space $P_{\su}$ containing the
random vectors $(\bh_1^{\SE},\ldots,\bh_{k-1}^{\SE}) \sim
\normal(\bzero,\bK_{h,k-1} \otimes \id_n)$, the independent noise
random vectors $(\beps_1^{\SE},\beps_2^{\SE})$, the first $k-1$
solutions $(\bu_1^{\SE},\ldots,\bu_{k-1}^{\SE})$ to~\cref{eq:SE-opt}
(defined with parameters solving to the fixed point equations up to
iteration $k-1$), and auxiliary Gaussian noise $\bxi_h \sim
\normal(\bzero,\id_n)$ independent of everything else.  Let
$L_{\su}^2$ be the space of square-integrable random $n$-dimensional
vectors $\bu$ defined on $P_{\su}$.  Similarly, consider a probability
space $P_{\sv}$ containing the random vectors
$(\bg_1^{\SE},\ldots,\bg_{k-1}^{\SE}) \sim \normal(\bzero,\bK_{g,k-1}
\otimes \id_n)$, the first $k-1$ solutions
$(\bv_1^{\SE},\ldots,\bv_{k-1}^{\SE})$ to equation~\eqref{eq:SE-opt}
(defined with parameters solving to the fixed point equations up to
iteration $k-1$), and auxiliary Gaussian noise $\bxi_g \sim
\normal(\bzero,\id_n)$ independent of everything else.  Let
$L_{\sv}^2$ be the space of square-integrable random $p$-dimensional
vectors $\bv$ defined on $P_{\sv}$.

In the sequel, we consider functions $f:L_\su^2 \rightarrow \reals$
(or $L_\sv^2 \rightarrow \reals$), and $f(\bu)$ will denote $f$
applied to the random vector $\bu \in L_\su^2$.  (This should not be
confused with evaluating a function $\tilde f: \reals^n \rightarrow
\reals$ at the realization of the random variable $\bu$. To avoid
confusion, we sometimes write the latter as $\tilde f(\bu(\omega))$
where $\omega$ denotes an element of the sample space $P_\su$.)

Define a mappings $\bg^{\Ltwo}: L_{\su}^2 \rightarrow L_{\sv}^2$,
$\bh^{\Ltwo}: L_{\sv}^2 \rightarrow L_{\su}^2$ by
\begin{equation}
\label{eq:g-L2}
    \bg^{\Ltwo}(\bu) \defn \sum_{\ell=1}^{k-1} \bg_\ell^\perp
    \<\bu_\ell^\perp,\bu\>_{\Ltwo} + \big\|
    \proj_{\bU_{k-1}^\SE}^\perp \bu \big\|_{\Ltwo} \bxi_g, \qquad
    \bh^{\Ltwo}(\bv) \defn \sum_{\ell = 1}^{k-1} \bh_\ell^\perp
    \<\bv_\ell^\perp,\bv\>_{\Ltwo} + \big\|
    \proj_{\bV_{k-1}^\SE}^\perp \bv \big\|_{\Ltwo}\bxi_h,
\end{equation}
where $\proj_{\bU_{k-1}^\SE}^\perp$ is the projection in $L_{\su}^2$
onto the space orthogonal to the span of $\bu_1,\ldots,\bu_{k-1}$ in
$L_{\su}^2$ and likewise for $\proj_{\bV_{k-1}^\SE}^\perp$; that is,
\begin{equation*}
\proj_{\bU_{k-1}^\SE}^\perp \bu = \bu - \sum_{\ell = 1}^{k-1}
\bu_\ell^\perp \<\bu_\ell^\perp,\bu\>_{\Ltwo}, \qquad
\proj_{\bV_{k-1}^\SE}^\perp \bv = \bv - \sum_{\ell = 1}^{k-1}
\bv_\ell^\perp \<\bv_\ell^\perp,\bv\>_{\Ltwo}.
\end{equation*}
For $k = 5, 6$, define the functions $\phi_k^{\Ltwo}: L_{\su}^2 \times
\reals \times L_{\sv}^2 \rightarrow \reals$ by
\begin{equation}
\label{eq:phi-L2}
\begin{gathered}
 \phi_5^{\Ltwo}(\bu;v_0,\bv) \defn \<\bu,\ones\>_{\Ltwo}(v_0 +
 \<\bmu_{\sx},\bv\>_{\Ltwo}) - \frac{1}{n}\sum_{i=1}^n
 \E\big[\ell_{\prop}^*(nu_i;y_{1,i}^{\SE})\big] +
 \E\big[\Omega_{\prop}(\bv)\big], \\
\phi_6^{\Ltwo}(\bu;v_0,\bv) \defn \<\bu,\by_2^{\SE}\>_{\Ltwo} +
\<\bu,\ones\>_{\Ltwo}(v_0 + \<\bmu_{\sx},\bv\>_{\Ltwo}) -
\frac{n}{2}\sum_{i=1}^n \E\Big[ (w_i^{\SE})^{-1}
  \frac{u_i^2}{y_{1,i}^{\SE}} \Big] + \E\big[\Omega_{\out}(\bv)\big],
\end{gathered}
\end{equation}
where we recall the convention that $u \mapsto u^2/0$ is the convex
indicator function equal to zero when $u=0$ and $\infty$ otherwise.
Because the function $u_i \mapsto \ell_\prop^*(n u_i;y_{1,i}^{\SE})$
is convex, and $(w_i^{\SE})^{-1}u_i^2/y_{1,i}^{\SE}$ is non-negative,
the expectations of these quantities are well-defined but possibly
infinite for $\bu \in L_\su^2$.  Finally, define a function
$\AuxObj_k^{\Ltwo}: L_{\su}^2 \times \reals \times L_{\sv}^2
\rightarrow \reals$ via
\begin{equation}
\label{eq:def-AuxObj-L2}
    \AuxObj_k^{\Ltwo}(\bu;v_0,\bv) \defn
    -\<\bg^{\Ltwo}(\bu),\bv\>_{\Ltwo} +
    \<\bh^{\Ltwo}(\bv),\bu\>_{\Ltwo} + \phi_k^{\Ltwo}(\bu;v_0,\bv).
\end{equation}
The definition $\bg^{\Ltwo}(\bu)$ and $\bh^{\Ltwo}(\bv)$ resembles the
definition of $\bg(\bu)$ and $\bh(\bv)$
in~\cref{eq:gordon-gh-explicit}, and the definition of
$\phi_k^{\Ltwo}$ resembles to the definition of $\phi_k$
in~\cref{eq:objectives}.  Nevertheless, we emphasize that arguments to
$\bg^{\Ltwo}$, $\bh^{\Ltwo}$, $\phi_k^{\Ltwo}$, $\AuxObj_k^{\Ltwo}$
are on completely different spaces than the arguments to $\bg$, $\bh$,
$\phi_k$, $\AuxObj_k$.  The former take arguments on a Hilbert space
of random vectors; the latter take arguments on Euclidean space.

In the sequel, we show that the solutions to the fixed point
equations~\eqref{eq:fixpt-inductive} are related to the KKT conditions
for the saddle point problem
\begin{equation}
\label{eq:min-max-L2}
\tag{Aux-$\Ltwo$} \min_{\substack{v_0 \in
    \reals\\ \<\bv,\bxi_g\>_{\Ltwo} \geq 0}}
\max_{\<\bu,\bxi_h\>_{\Ltwo} \geq 0} \AuxObj_k^{\Ltwo}(\bu;v_0,\bv).
\end{equation}
The sets $\{ \bv \in L_{\sv}^2 : \< \bv , \bxi_g \>_{\Ltwo} \geq 0\}$
and $\{ \bu \in L_{\su}^2 : \< \bu , \bxi_h \>_{\Ltwo} \geq 0\}$ are
closed and convex.  It is easy to check that the mapping $(v_0,\bv)
\mapsto \AuxObj_k^{\Ltwo}(\bu;v_0,\bv)$ is convex if $\< \bxi_h ,
\bu\>_{\Ltwo} \geq 0$ (note, in particular, that the function $\bv
\mapsto \| \proj_{\bV_{k-1}^{\SE}}^\perp \bv \|_{\Ltwo} \< \bxi_h ,
\bu \>_{\Ltwo}$ is then convex, which is not the case for $\<
\bxi_h,\bu\>_{\Ltwo} < 0$).  Likewise, the function $\bu \mapsto
\AuxObj_k^{\Ltwo}(\bu;v_0,\bv)$ is concave if $\< \bxi_g ,
\bv\>_{\Ltwo} \geq 0$, Thus, equation~\eqref{eq:min-max-L2} is a
convex-concave saddle point problem.

\subsubsection{Outline of proof of induction step}

Our proof of the existence and uniqueness of the fixed point
solutions~\eqref{eq:fixpt-inductive} follows three steps:
\begin{description}
\item [Step 1.] Given a fixed point~\eqref{eq:fixpt-inductive}, we
  show how to construct a saddle point for the
  problem~\eqref{eq:min-max-L2}.
\item[Step 2a.] We show that the problem~\eqref{eq:min-max-L2} has at
  least one saddle point $(\buhat;\vhat_0,\bvhat)$, and the values of
  $\buhat$ and $\bvhat$ are unique.
    \item[Step 2b.] Given a saddle point for the
      problem~\eqref{eq:min-max-L2}, we show how to construct a
      fixed point~\eqref{eq:fixpt-inductive}.

    \item[Step 3.] We show that the previous steps imply that
      equation~\eqref{eq:fixpt-inductive} has a unique solution.
\end{description}

\subsubsection{Derivatives of the auxiliary objective on Hilbert space}
\label{sec:derivative-aux-Hilbert}

The connection between the fixed point
equation~\eqref{eq:fixpt-inductive} and the KKT conditions of the
objective~\eqref{eq:min-max-L2} rely on several subgradient
identities, which we state here as lemmas.  Their proofs are
straightforward, and so omitted.  We adopt the convention that
$\bdelta / \| \bdelta \|_{\Ltwo} = \bzero$ if $\bdelta = \bzero$.  \\

\begin{lemma}[Derivatives of Gordon terms]
\label{lem:L2-deriv-gordon}
If $\< \bxi_g , \bv \>_{\Ltwo} \geq 0$, then the function $f(\bu, \bv)
\defn \< \bg^{\Ltwo}(\bu) , \bv \>_{\Ltwo}$ is convex in $\bu$ and
linear in $\bv$.  Moreover,
\begin{equation}
  \label{eq:L2-deriv-gordon-u}
  \begin{gathered}
    \sum_{\ell=1}^{k-1} \bu_\ell^{\SE,\perp}\< \bg_\ell^{\SE,\perp} ,
    \bv \>_{\Ltwo} + \frac{\proj_{\bU_{k-1}^{\SE}}^\perp
      \bu}{\|\proj_{\bU_{k-1}^{\SE}}^\perp \bu\|_{\Ltwo}} \< \bxi_g ,
    \bv \>_{\Ltwo} \in \partial_{\bu} f(\bu, \bv), \\ \partial_{\bv}
    f(\bu, \bv) = \bg^{\Ltwo}(\bu).
    \end{gathered}
    \end{equation}
 If either $\< \bxi_g,\bv \>_{\Ltwo} = 0$ or
 $\proj_{\bU_{k-1}^{\SE}}^\perp \bu \neq \bzero$, then $f$ is
 differentiable with respect to $\bu$ at $(\bu,\bv)$, with gradient
 given by the expression in the first line of the previous display.
 Likewise,
    \begin{equation}
    \begin{gathered}
        \sum_{\ell=1}^{k-1} \bv_\ell^{\SE,\perp}\<
        \bh_\ell^{\SE,\perp} , \bu \>_{\Ltwo} +
        \frac{\proj_{\bV_{k-1}^{\SE}}^\perp
          \bv}{\|\proj_{\bV_{k-1}^{\SE}}^\perp \bv\|_{\Ltwo}} \<
        \bxi_h , \bu \>_{\Ltwo} \in
        \partial_{\bv}\big\<\bh^{\Ltwo}(\bv),\bu\big\>_{\Ltwo},
        \\ \partial_{\bu}\big\<\bh^{\Ltwo}(\bv),\bu\big\>_{\Ltwo} =
        \bh^{\Ltwo}(\bv),
    \end{gathered}
    \end{equation}
    and if either $\< \bxi_h,\bu \>_{\Ltwo} = 0$ or
    $\proj_{\bV_{k-1}^{\SE}}^\perp \bv \neq \bzero$, then in fact
    $\big\<\bh^{\Ltwo}(\bv),\by\big\>_{\Ltwo}$ is differentiable with
    respect to $\bv$ at $(\bu,\bv)$, with gradient given by the
    expression in the first line of the previous display.
\end{lemma}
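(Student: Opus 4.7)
The plan is to verify the claims for $f(\bu,\bv) \defn \<\bg^{\Ltwo}(\bu),\bv\>_{\Ltwo}$ by explicit expansion; the companion identities for $\<\bh^{\Ltwo}(\bv),\bu\>_{\Ltwo}$ then follow by the obvious symmetry (swap $\bu \leftrightarrow \bv$, $\bxi_g \leftrightarrow \bxi_h$, and $\bU_{k-1}^{\SE} \leftrightarrow \bV_{k-1}^{\SE}$). Substituting the definition of $\bg^{\Ltwo}$ from equation~\eqref{eq:g-L2}, I would write
\begin{equation*}
    f(\bu,\bv)
    =
    \underbrace{\sum_{\ell=1}^{k-1}\<\bg_\ell^{\SE,\perp},\bv\>_{\Ltwo}\<\bu_\ell^{\SE,\perp},\bu\>_{\Ltwo}}_{f_1(\bu,\bv)}
    +
    \underbrace{\|\proj_{\bU_{k-1}^{\SE}}^\perp \bu\|_{\Ltwo}\<\bxi_g,\bv\>_{\Ltwo}}_{f_2(\bu,\bv)}.
\end{equation*}
The term $f_1$ is bilinear and $f_2$ is a product of the convex, positively homogeneous function $\bu \mapsto \|\proj_{\bU_{k-1}^{\SE}}^\perp \bu\|_{\Ltwo}$ with the linear functional $\bv \mapsto \<\bxi_g,\bv\>_{\Ltwo}$.

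For the convexity/linearity claim: $f_1$ is clearly convex (in fact linear) in $\bu$ and linear in $\bv$. The function $f_2$ is linear in $\bv$ by inspection, and for $\<\bxi_g,\bv\>_{\Ltwo} \geq 0$ is a nonnegative scalar multiple of a norm of a linear image of $\bu$, hence convex in $\bu$. Summing gives the first assertion.

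For the subgradient identity, I would apply linearity and the standard subdifferential calculus on Hilbert space. The gradient of $f_1$ in $\bu$ is $\sum_{\ell=1}^{k-1}\bu_\ell^{\SE,\perp}\<\bg_\ell^{\SE,\perp},\bv\>_{\Ltwo}$, and the gradient in $\bv$ is $\sum_{\ell=1}^{k-1}\bg_\ell^{\SE,\perp}\<\bu_\ell^{\SE,\perp},\bu\>_{\Ltwo}$, exactly the $\bg^{\Ltwo}(\bu)$ contribution minus the $\bxi_g$ piece. For $f_2$ in $\bu$: when $\proj_{\bU_{k-1}^{\SE}}^\perp \bu \neq \bzero$ the norm is Fr\'echet differentiable at $\bu$ with derivative $\proj_{\bU_{k-1}^{\SE}}^\perp \bu / \|\proj_{\bU_{k-1}^{\SE}}^\perp \bu\|_{\Ltwo}$ (using self-adjointness and idempotence of the projection and the $L^2$ chain rule), while at $\proj_{\bU_{k-1}^{\SE}}^\perp \bu = \bzero$ the subdifferential is the unit ball of $\range(\proj_{\bU_{k-1}^{\SE}}^\perp)$, which under the paper's convention $\bzero/0 = \bzero$ contains the stated element. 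Multiplying by the nonnegative scalar $\<\bxi_g,\bv\>_{\Ltwo}$ preserves the subdifferential via the positive-homogeneity rule. In $\bv$, $f_2$ is linear with gradient $\|\proj_{\bU_{k-1}^{\SE}}^\perp \bu\|_{\Ltwo}\,\bxi_g$, which reassembles with the $f_1$ gradient into $\bg^{\Ltwo}(\bu)$, yielding the second identity.

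Finally, for the differentiability clause: when $\proj_{\bU_{k-1}^{\SE}}^\perp \bu \neq \bzero$, $f_2$ is Fr\'echet differentiable at $\bu$ (norm differentiable away from the origin) and so is $f$; when $\<\bxi_g,\bv\>_{\Ltwo}=0$, we have $f(\,\cdot\,,\bv) = f_1(\,\cdot\,,\bv)$ which is linear, hence differentiable everywhere, and the formula in the first line reduces correctly under the $\bzero/0 = \bzero$ convention. The only delicate point — and the one I would double-check carefully — is the interaction of the $\bzero/0$ convention with the subdifferential chain rule at the singular configuration $\proj_{\bU_{k-1}^{\SE}}^\perp \bu = \bzero$ and $\<\bxi_g,\bv\>_{\Ltwo}>0$, but there the claim is only that the stated vector lies in $\partial_{\bu} f$, which is immediate because $\bzero$ is always in the subdifferential of a nonnegative convex function at its minimizer.
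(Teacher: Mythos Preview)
Your proof is correct and matches the approach the paper implicitly has in mind: the paper declares the result ``straightforward, and so omitted,'' and your explicit expansion into the bilinear piece $f_1$ and the seminorm piece $f_2$, followed by standard Hilbert-space subdifferential calculus, is exactly the natural argument. There is no gap to flag.
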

\noindent The fixed point equations~\eqref{eq:fixpt-inductive} depend
implicitly on the optimizations in display~\eqref{eq:SE-opt}, which
depend, via $\phi_{k,u}$, implicitly on the parameters $\nu_{k,0}$ and
$\nu_{k,\sx}$.  It is useful to make this dependence explicit by
writing $\phi_{k,u}(\bu; \bH_{k-1}^{\SE}; \nu_{k,0},\nu_{k,\sx})$.

\begin{lemma}[Derivatives of penalty terms]
\label{lem:L2-deriv-pen}
We have
\begin{subequations}
\begin{equation}
  \label{eq:L2-pen-deriv-u}
  \bdelta \in \partial_{\bu}\big(-\phi_k^{\Ltwo}(\bu;v_0,\bv)\big)
  \;\; \text{$\Longleftrightarrow$ for almost all $\omega$,} \;\;
  \bdelta(\omega) \in
  \partial_{\bu}\phi_{k,u}\big(\bu(\omega);\bH_{k-1}^{\SE}(\omega);v_0,\<\bmu_{\sx},\bv\>_{\Ltwo}\big),
\end{equation}
and likewise,
\begin{equation}
  \label{eq:L2-pen-deriv-v}
  \bdelta \in \partial_{\bv}\phi_k^{\Ltwo}(\bu;v_0,\bv) \;\;
  \text{$\Longleftrightarrow$ for almost all $\omega$,} \;\;
  \bdelta(\omega) \in
  \partial_{\bv}\phi_{k,v}\big(\bv(\omega);\bG_{k-1}^{\SE}(\omega);\<\ones,\bu\>_{\Ltwo}\big).
\end{equation}
\end{subequations}
We emphasize that in each line, the first subgradient occurs on
infinite dimensional Hilbert space, and the second on
finite-dimensional Euclidean space.  Finally, we have
\begin{align*}
  \partial_{v_0} \phi_k^{\Ltwo}(\bu;v_0,\bv) = \< \ones , \bu
  \>_{\Ltwo}.
\end{align*}
\end{lemma}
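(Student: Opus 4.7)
My plan is to decompose $\phi_k^{\Ltwo}$ into its constituent terms and apply standard subdifferential calculus on Hilbert spaces, reducing the integral-functional parts to pointwise subgradients via Rockafellar's theorem on subgradients of integral functionals (Rockafellar, ``Integrals which are convex functionals''). Writing
\begin{equation*}
    \phi_5^{\Ltwo}(\bu;v_0,\bv) = \< \bu,\ones\>_{\Ltwo}(v_0+\<\bmu_{\sx},\bv\>_{\Ltwo}) - \frac{1}{n}\sum_{i=1}^n \E\big[\ell_{\prop}^*(n u_i;y_{1,i}^{\SE})\big] + \E\big[\Omega_{\prop}(\bv)\big],
\end{equation*}
(and analogously for $k=6$ with $\tfrac{n}{2}\sum_i \E[(w_i^{\SE})^{-1}u_i^2/y_{1,i}^{\SE}]$ replacing the middle term and an additional linear term $\<\bu,\by_2^{\SE}\>_{\Ltwo}$), the derivative with respect to $v_0$ is immediate, because $\phi_k^{\Ltwo}$ is affine in $v_0$ with slope $\<\bu,\ones\>_{\Ltwo}$.

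For the $\bu$-subgradient of $-\phi_k^{\Ltwo}$, the sum rule and bilinearity yield the constant contribution $-(v_0+\<\bmu_{\sx},\bv\>_{\Ltwo})\ones$ from the first term. For the integral term, I would set $f_i(\omega,u_i) := \ell_{\prop}^*(nu_i;y_{1,i}^{\SE}(\omega))$ for $k=5$ (resp.\ $f_i(\omega,u_i) := \tfrac{n}{2}(w_i^{\SE}(\omega))^{-1}u_i^2/y_{1,i}^{\SE}(\omega)$ for $k=6$), verify that each $f_i$ is a normal convex integrand, and invoke Rockafellar's theorem to identify $\bdelta \in \partial_{\bu} F$, where $F(\bu) = \sum_i \E[f_i(\omega,u_i(\omega))]$, as precisely those $\bdelta \in L_{\su}^2$ with $\delta_i(\omega) \in \partial_{u_i} f_i(\omega,u_i(\omega))$ a.s.\ for each $i$. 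Comparing coordinate-wise against the finite-dimensional subdifferential of $\phi_{k,u}$ computed from equation~\eqref{eq:SE-penalties}, and identifying the parameters $\nu_{k,0}\leftrightarrow v_0$ and $\nu_{k,\sx}\leftrightarrow \<\bmu_{\sx},\bv\>_{\Ltwo}$, yields the equivalence~\eqref{eq:L2-pen-deriv-u}. The $\bv$-subgradient is handled identically, with the bilinear term contributing $\<\bu,\ones\>_{\Ltwo}\bmu_{\sx}$ and the integral functional $\E[\Omega_k(\bv)]$ contributing $\{\nabla \Omega_k(\bv(\omega))\}$ at each $\omega$ (using that $\Omega_k$ is smooth by Assumption A1(g)), giving~\eqref{eq:L2-pen-deriv-v}.

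The main technical point is the $k=6$ case, where the integrand $u \mapsto \tfrac{n}{2}w_i^{-1}u^2/y_{1,i}$ is extended-real-valued, equal to $\indic\{u=0\}$ when $y_{1,i}=0$ under the stated convention. I would verify the normal-integrand hypotheses (measurability in $\omega$, lower semicontinuity and convexity in $u$) and check that the functional is proper on $L_{\su}^2$ (for instance, finite at $\bu=\bzero$), which together meet the hypotheses of Rockafellar's characterization. For the subgradient sum rule, the bilinear and linear terms are continuous on all of $L_{\su}^2$ and $L_{\sv}^2$, so Moreau--Rockafellar applies without further qualification and delivers equality rather than mere containment. Modulo these verifications, the remaining work is routine matching of subdifferential expressions.
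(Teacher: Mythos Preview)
Your approach is correct and is the natural way to prove this lemma. The paper does not actually provide a proof: in \Cref{sec:derivative-aux-Hilbert} it states that the proofs of \Cref{lem:L2-deriv-gordon} and \Cref{lem:L2-deriv-pen} ``are straightforward, and so omitted.'' Your proposal fills in precisely the details one would expect---decomposing $\phi_k^{\Ltwo}$ into its bilinear and integral-functional parts, invoking Rockafellar's characterization of subgradients of integral functionals to pass from the $L^2$ subdifferential to pointwise subdifferentials, and handling the extended-real-valued integrand for $k=6$ via the normal-integrand framework. The parameter identifications $\nu_{k,0}\leftrightarrow v_0$, $\nu_{k,\sx}\leftrightarrow \<\bmu_{\sx},\bv\>_{\Ltwo}$ (and implicitly $\nu_{k,\su}\leftrightarrow \<\ones,\bu\>_{\Ltwo}$ on the $\bv$ side) match the convention the paper adopts just before the lemma statement.
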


\subsubsection{Step 1: From fixed points to saddle points}

We show how to construct a saddle point for the optimization
problem~\eqref{eq:min-max-L2} based on a solution to the fixed point
equation~\eqref{eq:fixpt-inductive}.

We first claim that any solution to the fixed point
equations~\eqref{eq:fixpt-inductive} must satisfy $\zeta_{kk}^v \geq
0$ and $\zeta_{kk}^u \geq 0$.  First consider $\zeta_{kk}^v$.  Recall
that we may write $\bh_k^{\SE} = \sum_{\ell = 1}^k L_{g,k\ell}
\bh_\ell^{\SE,\perp}$, whence we can write the objective in the first
line in~\cref{eq:SE-opt} as $\sR(\bu)- L_{g,kk}\< \bh_k^{\SE,\perp} ,
\bu \>$, where $\sR(\bu)$ depends implicitly on $\{ \bu_\ell^{\SE}
\}_{\ell \leq k-1}$, $\{ \bh_\ell^{\SE} \}_{\ell \leq k-1}$, but does
not depend on $\bh_k^{\SE,\perp}$.  Consider the optimization
problem~\eqref{eq:SE-opt} for two values of $\bh_k^{\SE,\perp}$ and
$\bhtilde_k^{\SE,\perp}$, holding everything else constant, and denote
the respective optimizers $\bu_k^\SE$ and $\butilde_k^{\SE}$.  By the
optimality of $\bu_k^\SE$ and $\butilde_k^{\SE}$,
\begin{equation}
\begin{aligned}
    \sR(\bu_k^{\SE})
        -
        L_{g,kk}\<\bh_k^{\SE,\perp},\bu_k^{\SE}\>
        &\leq 
        \sR(\butilde_k^{\SE})
        -
        L_{g,kk}\<\bh_k^{\SE,\perp},\butilde_k^{\SE}\>
        =
        \sR(\butilde_k^{\SE})
        -
        L_{g,kk}\<\bhtilde_k^{\SE,\perp},\butilde_k^{\SE}\>
        +
        L_{g,kk}\<\bhtilde_k^{\SE,\perp} - \bh_k^{\SE,\perp}, \butilde_k^{\SE}\>
    \\
        &\leq
        \sR(\bu_k^{\SE})
        -
        L_{g,kk}\<\bhtilde_k^{\SE,\perp},\bu_k^{\SE}\>
        +
        L_{g,kk}\<\bhtilde_k^{\SE,\perp} - \bh_k^{\SE,\perp}, \butilde_k^{\SE}\>.
\end{aligned}
\end{equation}
We see that $L_{g,kk}\<\bhtilde_k^{\SE,\perp} - \bh_k^{\SE,\perp},
\butilde_k^{\SE} - \bu_k^{\SE}\> \geq 0$.  Because
$\bhtilde_k^{\SE,\perp}$ is independent of $\bu_k^{\SE}$,
$\bh_k^{\SE,\perp}$ is independent of $\butilde_k^{\SE}$, and $\<
\bhtilde_k^{\SE,\perp} , \butilde_k^{\SE} \>_{\Ltwo} = \<
\bh_k^{\SE,\perp} , \bu_k^{\SE} \>_{\Ltwo}$, taking expectations gives
$L_{g,kk}\< \bh_k^{\SE,\perp} , \bu_k^{\SE} \>_{\Ltwo} \geq 0$.  If
$k$ is innovative with respect to $\bK_{g,k}$, then $L_{g,kk} > 0$
(see~\Cref{lem:cholesky}), whence $\< \bh_k^{\SE,\perp} , \bu_k^{\SE}
\>_{\Ltwo}\geq 0$.  Otherwise, if $k$ is predictable with respect to
$\bK_{g,k}$, then $\bh_k^{\SE,\perp} = \bzero$, whence $\<
\bh_k^{\SE,\perp} , \bu_k^{\SE} \>_{\Ltwo}\geq 0$ in this case as
well.  Multiplying the first equation in the second line of
equation~\eqref{eq:fixpt-inductive} by $\bL_{g,k}^\ddagger$, and
recalling that $\bH_k^{\SE,\perp} = \bH_k^{\SE}
(\bL_g^\ddagger)^\top$, we get that $\bL_{g,k}^\top \bZ_{v,k}^\top =
\llangle \bH_k^{\SE,\perp} , \bU_k^{\SE} \rrangle_{\Ltwo}$.  Because
$\bL_{g,k}$ is lower-diagonal, the $k^\text{th}$ row and column of
this equation is $L_{g,kk}\zeta_{kk}^v = \< \bh_k^{\SE,\perp} ,
\bu_k^{\SE} \>_{\Ltwo} \geq 0$.  If $k$ is innovative with respect to
$\bK_{g,k}$, then $L_{g,kk} > 0$ (see~\Cref{lem:cholesky}), whence
$\zeta_{kk}^v \geq 0$.  If $k$ is predictable with respect to
$\bK_{g,k}$, then $\zeta_{kk}^v = 0$ by innovation compatibility.
Thus, in all cases, $\zeta_{kk}^v \geq 0$.  The inequality
$\zeta_{kk}^u \geq 0$ follows by exactly the same argument, applied to
the second optimization in~\Cref{eq:SE-opt} and the second equation in
the second line of~\Cref{eq:fixpt-inductive}.

Recall the Hilbert space $L_{\su}^2$ already comes endowed with random
variables $\bh_1^{\SE},\ldots,\bh_{k-1}^{\SE}$ and
$\bu_1^{\SE},\ldots,\bu_{k-1}^{\SE}$ and $\beps_1^{\SE}$,
$\beps_2^{\SE}$ with joint distribution given by the state evolution
up to iteration $k-1$.  We then define $\bh_k^{\SE} = \sum_{\ell =
  1}^{k-1} L_{h,k\ell} \bh_\ell^{\SE,\perp} + L_{h,kk} \bxi_h$, so
that $\bH_k^{\SE} \sim \normal(\bzero,\bK_{h,k} \otimes \id_n)$ is
then embedded in the Hilbert space $L_{\su}^2$.  We then define
$\bu_k^{\SE}$ via equation~\eqref{eq:SE-opt} with this choice of
$\bH_k^{\SE}$, so that $\bU_k^{\SE}$ is also embedded on the Hilbert
space $L_{\su}^2$ with distribution given by the state evolution.  We
similarly define $\bg_k^{\SE} = \sum_{\ell = 1}^{k-1} L_{g,k\ell}
\bg_\ell^{\SE,\perp} + L_{g,kk} \bxi_g$ and $\bv_k^{\SE}$ via
equation~\eqref{eq:SE-opt}, so that $\bG_k^{\SE} \sim
\normal(\bzero,\bK_{g,k} \otimes \id_p)$ and $\bV_k^{\SE}$ are embedded
on the Hilbert space $L_{\sv}^2$ with distribution given by the state
evolution.  We claim that $(\bu_k^{\SE};\nu_{k,0},\bv_k^{\SE})$ is a
saddle point of equation~\eqref{eq:min-max-L2}, which we now show.

First, the point $(\bu_k^{\SE};\nu_{k,0},\bv_k^{\SE})$ is feasible.
Note that $\bh_k^{\SE,\perp} = L_{g,kk} \bxi_h$.  We showed above that
$\< \bh_k^{\SE,\perp} , \bu_k^{\SE} \>_{\Ltwo} \geq 0$.  In the case
that $L_{g,kk} > 0$, this implies that $\< \bxi_h , \bu_k^{\SE}
\>_{\Ltwo} \geq 0$.  In the case that $L_{g,kk} = 0$, we see from
equation~\eqref{eq:SE-opt} that $\bxi_h$ is independent of $\bu_k^{\SE}$,
whence $\< \bxi_h , \bu_k^{\SE} \>_{\Ltwo} = 0$.  By an analogous
argument, $\< \bxi_g , \bv_k^{\SE} \>_{\Ltwo} \geq 0$.  Thus,
$(\bu_k^{\SE};\nu_{k,0},\bv_k^{\SE})$ is feasible, as claimed.  In
particular, this implies $(v_0,\bv) \mapsto
\AuxObj_k^{\Ltwo}(\bu_k^{\SE};v_0,\bv)$ is convex, and $\bu \mapsto
\AuxObj_k^{\Ltwo}(\bu;\nu_{k,0},\bv_k^{\SE})$ is concave (see
discussion following equation~\eqref{eq:min-max-L2}).

Next, by equation~\eqref{eq:SE-opt}, there exists random vector $\bdelta$
such that for almost every $\omega$,
\begin{equation}
    \bh_k^{\SE}(\omega) - \sum_{\ell=1}^k \zeta_{k\ell}^u \bu_\ell^{\SE}(\omega) 
        \in 
        \partial_{\bu} \phi_{k,u}(\bu_k^{\SE}(\omega);\bH_{k-1}^{\SE}(\omega);\nu_{k,0},\nu_{k,\sx})
        =
        \partial_{\bu} \phi_{k,u}(\bu_k^{\SE}(\omega);\bH_{k-1}^{\SE}(\omega);\nu_{k,0},\< \bmu_{\sx} , \bv_k^{\SE} \>_{\Ltwo}),
\end{equation}
where the equality holds by the last line of the fixed point equations
\eqref{eq:fixpt-inductive}.  Thus, by equation~\eqref{eq:L2-pen-deriv-u},
\begin{equation}
    \bh_k^{\SE} - \sum_{\ell = 1}^k \zeta_{k\ell}^u \bu_\ell^{\SE} \in \partial_{\bu}\big(- \phi_k^{\Ltwo}(\bu_k^{\SE};\nu_{k,0},\bv_k^{\SE})\big).
\end{equation}
By the first equation in the first line 
and the second equation in the second line of equation~\eqref{eq:fixpt-inductive},
\begin{equation}
\label{eq:gor-deriv-to-se-deriv}
\begin{aligned}
    \sum_{\ell=1}^{k-1} \bu_\ell^{\SE,\perp}\< \bg_\ell^{\SE,\perp} , \bv_k^{\SE} \>_{\Ltwo}
        +
        \frac{\proj_{\bU_{k-1}^{\SE}}^\perp \bu_k^{\SE}}{\|\proj_{\bU_{k-1}^{\SE}}^\perp \bu_k^{\SE}\|_{\Ltwo}}
        \< \bxi_g , \bv \>_{\Ltwo}
        &=
        \bU_k^{\SE} (\bL_{u,k}^\ddagger)^\top\E[\bL_{u,k}^\ddagger \bG_k^{\SE,\top} \bv_k^{\SE}]
    \\
        &=
        \bU_k^{\SE} \bK_{g,k}^\ddagger [\bK_{g,k} \bZ_{u,k}^\top]_{\,\cdot\,,k}
        = 
        \sum_{\ell=1}^k \zeta_{k\ell}^u \bu_\ell^{\SE},
\end{aligned}
\end{equation}
where the last equation uses $\bK_{g,k}^\ddagger \bK_{g,k} = \id_{\bK_{g,k}}^\top$ and the innovation compatibility of $\bZ_{u,k}$ and $\bK_{g,k}$ (see \Cref{lem:cholesky-inverse-identities}).
Moreover, by the second equation in the first line of equation~\eqref{eq:fixpt-inductive},
$\bh^{\Ltwo}(\bv_k^{\SE}) = \bh_k^{\SE}$.
Thus, by equation~\eqref{eq:L2-deriv-gordon-u},
\begin{equation}
    \sum_{\ell=1}^k \zeta_{k\ell}^u \bu_\ell^{\SE} - \bh_k^{\SE}
        \in
        \partial_{\bu}
        \Big(
            \< \bg^{\Ltwo}(\bu_k^{\SE}) , \bv_k^{\SE} \>_{\Ltwo}
            -
            \< \bh^{\Ltwo}(\bv_k^{\SE}) , \bu_k^{\SE} \>_{\Ltwo}
        \Big).
\end{equation}
Combining this display with the third-to-last display shows that $\bzero \in \partial_{\bu}\big(- \AuxObj_k^{\Ltwo}(\bu_k;\nu_{k,0},\bv_k^{\SE}) \big) $.
An analogous argument shows that $\bzero \in \partial_{\bv} \AuxObj_k^{\Ltwo}(\bu_k;\nu_{k,0},\bv_k^{\SE})$.
Further, $\partial_{v_0} \AuxObj_k^{\Ltwo}(\bu_k;\nu_{k,0},\bv_k^{\SE}) = \< \bu_k^{\SE}, \ones \>_{\Ltwo}$ (see equation~\eqref{eq:phi-L2}),
which, by the last line of the fixed point equations \eqref{eq:fixpt-inductive} is equal to 0.
Thus $(\bu_k^{\SE};\nu_{k,0},\bv_k^{\SE})$ is a saddle point of equation~\eqref{eq:min-max-L2}, as claimed.

\subsubsection{Step 2a: Existence/uniqueness of saddle points}
\label{sec:L2-saddle-exist-unique}

Now we show that equation~\eqref{eq:min-max-L2} has at least one
saddle point $(\buhat;\vhat_0,\bvhat)$, and the value of $\buhat$ and
$\bvhat$ is unique.  We isolate the dependence of $\AuxObj_k^{\Ltwo}$
on $v_0$ by decomposing
\begin{equation}
    \AuxObj_k^{\Ltwo}(\bu;v_0,\bv)
        =
        \<\bu,\ones\>_{\Ltwo}v_0
        +
        \overline{\AuxObj}_k^{\Ltwo}(\bu;\bv),
\end{equation}
where this equation defines $\overline{\AuxObj}_k^{\Ltwo}$.
We complete Step 2a in several steps.

\begin{enumerate}

    \item
    \emph{When restricted to the convex domains~\eqref{eq:min-max-L2},
    the function $\overline{\AuxObj}_k^{\Ltwo}$ (hence
    $\AuxObj_k^{\Ltwo}$) is strongly convex and lower semi-continuous
    in $\bv$ and strongly concave and upper semi-continuous in $\bu$.}

    Weak convexity-concavity and lower/upper semi-continuity are
    clear (see discussion following equation~\eqref{eq:min-max-L2}).
    Strong convexity is due to the terms $\E[\Omega_k(\bv)]$.  Strong
    concavity is due to the terms $-\frac{1}{n}\sum_{i=1}^n
    \E\big[\ell_{\prop}^*(nu_i;y_{1,i}^{\SE})\big]$ and
    $-\frac{n}{2}\sum_{i=1}^n\E\Big[(w_i^{\SE})^{-1}\frac{u_i^2}{y_{1,i}^{\SE}}\Big]$
    for $k = 5,6$, respectively.  In making these assertions, we use
    the strong-convexity of $\Omega_k$, the strong-smoothness of
    $\ell_\prop$, and the upper bound on the weight function $w$
    (cf. Assumption A1).

    \item 
    \emph{The function $(v_0,\bv) \mapsto \max_{\<\bu,\bxi_h\>_{\Ltwo}
      \geq 0} \AuxObj_k^{\Ltwo}(\bu;v_0,\bv)$ is lower semi-continuous
    and convex, and is strongly convex in $\bv$.}
    
    This property follows from its definition as a supremum of lower
    semi-continuous and convex functions.

    \item 
    \emph{The function $(v_0,\bv) \mapsto \max_{\<\bu,\bxi_h\>_{\Ltwo}
      \geq 0} \AuxObj_k^{\Ltwo}(\bu;v_0,\bv)$ is coercive; that is, it
    diverges to infinity if either $|v_0| \rightarrow \infty$ or $\|
    \bv \|_{\Ltwo} \rightarrow \infty$.}

    To establish this property, we define two elements $\bu^+,\bu^-
    \in L_{\su}^2$ satisfying $\<\bu^+,\ones\>_{\Ltwo} \geq 0$ and
    $\<\bu^-,\ones\>_{\Ltwo} \leq 0$ and
    \begin{equation}
        \lim_{\substack{v_0 \rightarrow
            \infty\\\|\bv\|_{\Ltwo}\rightarrow
            \infty}}\AuxObj_k^{\Ltwo}(\bu^+;v_0,\bv) = \infty \quad
        \text{and} \quad \lim_{\substack{v_0 \rightarrow
            -\infty\\\|\bv\|_{\Ltwo}\rightarrow
            \infty}}\AuxObj_k^{\Ltwo}(\bu^-;v_0,\bv) = \infty.
    \end{equation}
    The coercivity of $(v_0,\bv) \mapsto \max_{\<\bu,\bxi_h\>_{\Ltwo}
      \geq 0} \AuxObj_k^{\Ltwo}(\bu;v_0,\bv)$ then follows because
    \begin{equation}
        \max_{\<\bu,\bxi_h\>_{\Ltwo} \geq 0} \AuxObj_k^{\Ltwo}(\bu;v_0,\bv)
            \geq 
            \max\Big\{
                \AuxObj_k^{\Ltwo}(\bu^+;v_0,\bv),
                \AuxObj_k^{\Ltwo}(\bu^-;v_0,\bv)
            \Big\}.
    \end{equation}
    Define $u_i^+(\omega) =\ones\{y_{1,i}^{\SE}(\omega) \neq 0\}$, and
    note that $\bu^+$ is in $L_{\su}^2$ because it is bounded.
    Because $\bu^+$ is independent of $\bxi_h$, $\< \bu^+ , \bxi_h
    \>_{\Ltwo} = 0$, whence it is in the domain of optimization.
    Further, we have $\AuxObj_k^{\Ltwo}(\bu^+;v_0,\bv) > -\infty$.
    Indeed, for $k = 5$, this is clear because $\AuxObj_k^{\Ltwo}$ is
    finite everywhere.
    For $k = 6$, the only term which might be infinite is
    $\frac{n}{2}\sum_{i=1}^n\E\Big[(w_i^{\SE})^{-1}\frac{u_i^{{+}2}}{y_{1,i}^{\SE}}\Big]$,
    but we have constructed $\bu^+$ so that this term is bounded by
    $\frac{n}{2}\sum_{i=1}^n\E\big[(w_i^{\SE})^{-1}\big] < \infty$ (by
    the lower bound on the weight function, Assumption A1).
    Finally, $\<\bu^+,\ones\>_{\Ltwo} v_0 = v_0 \sum_{i=1}^n
    \P(y_{1,i}^{\SE} \neq 0)$.  Because $\P(y_{1,i}^{\SE} \neq 0) >
    0$, we get that
    \begin{equation}
        \AuxObj_k^{\Ltwo}(\bu^+;v_0,\bv) 
            \geq 
            v_0 \sum_{i=1}^n \P(y_{1,i}^{\SE} \neq 0)
            +
            \overline{\AuxObj}_k^{\Ltwo}(\bu;\bv).
    \end{equation}
    Because $\overline{\AuxObj}_k^{\Ltwo}$ is strongly convex in $\bv$,
    we see that this diverges if $v_0 \rightarrow \infty$ or $\| \bv \|_{\Ltwo} \rightarrow \infty$.
    Setting $\bu^- = - \bu^+$ and following the same argument, we get that $\AuxObj_k^{\Ltwo}(\bu^+;v_0,\bv) $ diverges if $v_0 \rightarrow -\infty$ or $\| \bv \|_{\Ltwo} \rightarrow \infty$.
    Thus, coercivity is established.
    
    \item 
    \emph{There exists a saddle point of $\AuxObj_k^{\Ltwo}$
    restricted to the domains~\eqref{eq:min-max-L2}.}

    Using the lower semi-continuity, convexity, and coercivity of
    $(v_0,\bv) \mapsto \max_{\<\bu,\bxi_h\>_{\Ltwo} \geq 0}
    \AuxObj_k^{\Ltwo}(\bu;v_0,\bv)$, Theorem 11.9 in the
    book~\cite{bauschke2011convex} implies that this function has a
    minimizer on the closed domain $\< \bv , \bxi_g \>_{\Ltwo} \geq
    0$.  Let $(\vhat_0,\bvhat_0)$ be one such minimizer.  Because $\bu
    \mapsto \AuxObj_k^{\Ltwo}(\bu; \vhat_0,\bvhat)$ is strongly
    concave in $\bu$ (see above), this function is maximized by a
    unique $\bu$, which we call $\buhat$.  Then, $(\buhat;
    \vhat_0,\bvhat)$ is a saddle point.

    \item 
    \emph{The value of $\buhat$ and $\bvhat$ at the saddle point of $\AuxObj_k^{\Ltwo}$ is unique.}

Consider any saddle point $(\buhat; \vhat_0, \bvhat)$.  Because
$\AuxObj_k^{\Ltwo}$ has a saddle point, we may exchange the order of
minimization and maximization, so that $\buhat$ is a maximizer of $\bu
\mapsto \allowbreak \min_{v_0 \in \reals, \<\bv,\bxi_g\>_{\Ltwo} \geq
  0} \AuxObj_k^{\Ltwo}(\bu;v_0,\bv)$.  Because
$\AuxObj_k^{\Ltwo}(\bu;v_0,\bv)$ is strongly-concave in $\bu$, so too
is this function, whence $\buhat$ is unique.

Next, by the KKT conditions, we must have at any saddle point
$(\buhat;\vhat_0,\bvhat)$
\begin{equation}
  \begin{gathered}
    \vhat_0 \ones \in -\partial_{\bu}
    \overline{\AuxObj}_k^{\Ltwo}(\buhat;\bvhat) +
    \cN_{\{\<\bu,\bxi_h\>_{\Ltwo}\geq 0\}}(\buhat), \\ \bzero \in
    \partial_{\bv} \overline{\AuxObj}_k^{\Ltwo}(\buhat;\bvhat) +
    \cN_{\{\<\bv,\bxi_g\>_{\Ltwo}\geq 0\}}(\bvhat),
    \end{gathered}
    \end{equation}
where $\cN_{\{\<\bu,\bxi_h\>_{\Ltwo}\geq 0\}}(\buhat)$ denotes the
normal cone to the set $\{\<\bu,\bxi_h\>_{\Ltwo}\geq 0\}$ at $\buhat$,
and analogously for $\cN_{\{\<\bv,\bxi_g\>_{\Ltwo}\geq 0\}}(\bvhat)$.
Because $\overline{\AuxObj}_k^{\Ltwo}(\bu;\bv)$ is strongly-concave in
$\bu$ and strongly convex in $\bv$, the correspondence on the
right-hand side of the preceding display is strongly monotone (see
Definition 22.1 in the book~\cite{bauschke2011convex}).  Thus, for
some $c > 0$ and any two saddle points $(\buhat; \vhat_0, \bvhat)$,
$(\buhat; \vhat_0', \bvhat')$, we must have
\begin{equation}
  \<\vhat_0' \ones - \vhat_0 \ones,\buhat-\buhat\>_{\Ltwo} + \< \bzero
  - \bzero, \bvhat' - \bvhat \>_{\Ltwo} \geq
  c(\|\buhat-\buhat\|_{\Ltwo}^2 + \| \bvhat' - \bvhat \|_{\Ltwo}^2),
\end{equation}
whence $\bvhat' = \bvhat$.
\end{enumerate}

This completes Step 2a.


\subsubsection{Step 2b: From saddle points to fixed points}

Consider a saddle point $(\buhat, \vhat_0,\bvhat)$ of
problem~\eqref{eq:min-max-L2}.  Now define $\bu_k^{\SE} = \buhat$,
$\bv_k^{\SE} = \bvhat$, $\nu_{k,0} = \vhat_0$, $\bg_k^{\SE} =
\bg^{\Ltwo}(\buhat)$ and $\bh_k^{\SE} = \bh^{\Ltwo}(\bvhat)$.  Define
the parameters $\bK_{g,k}$, $\bK_{h,k}$, $\{\nu_{\ell,\sx}\}_{\ell
  \leq k}$ via equation~\eqref{eq:fixpt-inductive} with these choices of
$\bu_k^{\SE}$, $\bv_k^{\SE}$, $\bg_k^{\SE}$, and $\bh_k^{\SE}$.  Then
define $\bZ_{v,k}^\top = \bK_{h,k}^\ddagger \llangle \bH_k^{\SE} ,
\bU_k^{\SE}\rrangle_{\Ltwo}$ and $\bK_{g,k}^\ddagger \llangle
\bG_k^{\SE} \bV_k^{\SE} \rrangle$.  In order to show that this gives a
solution to the fixed point equations, we must show that \emph{(1)}
under these choices of $\bg_k^{\SE}$ and $\bh_k^{\SE}$, we have
$\bG_k^{\SE} \sim \normal(\bzero,\bK_{g,k} \otimes \id_p)$ and
$\bH_k^{\SE} \sim \normal(\bzero,\bK_{h,k} \otimes \id_n)$, \emph{(2)}
$\< \ones , \bu_k^{\SE} \>_{\Ltwo} = 0$, \emph{(3)} the second line of
equation~\eqref{eq:fixpt-inductive} holds, and $\bZ_{v,k}$, $\bZ_{u,k}$
satisfy the appropriate innovation compatibility constraints, and
\emph{(4)} for almost all $\omega$, equation~\eqref{eq:SE-opt} is satisfied
with these choices of $\bu_k^{\SE}$, $\bv_k^{\SE}$, $\bg_k^{\SE}$, and
$\bh_k^{\SE}$.

The distributional properties $\bG_k^{\SE} \sim
\normal(\bzero,\bK_{g,k} \otimes \id_p)$ and $\bH_k^{\SE} \sim
\normal(\bzero,\bK_{h,k} \otimes \id_n)$ follow from the definition of
$\bg^{\Ltwo}$ and $\bh^{\Ltwo}$.  That $\< \ones , \bu_k^{\SE}
\>_{\Ltwo} = 0$ holds from the KKT conditions for
equation~\eqref{eq:min-max-L2} because $\partial_{v_0}
\AuxObj_k^{\Ltwo}(\buhat;\vhat_0,\bvhat) = \< \buhat , \ones
\>_{\Ltwo}$.  Note that by our definition of $\bZ_{v,k}^\top$, we have
$\bK_{h,k}\bZ_{v,k}^\top = \id_{\bK_{h,k}} \llangle \bH_k^{\SE} ,
\bU_k^{\SE} \rrangle_{\Ltwo}$, where we have used
equation~\eqref{eq:K-pseudo-inverse}.  Because $\bH_k^{\SE} \sim
\normal(\bK_{h,k} \otimes \id_n)$, we have that $\range(\llangle
\bH_k^{\SE} , \bU_k^{\SE} \rrangle_{\Ltwo}) \subset
\range(\bK_{h,k})$, so that $\id_{\bK_{h,k}} \llangle \bH_k^{\SE} ,
\bU_k^{\SE} \rrangle_{\Ltwo} = \llangle \bH_k^{\SE} , \bU_k^{\SE}
\rrangle_{\Ltwo}$.  Thus, the first equation in the second line of
equation~\eqref{eq:fixpt-inductive} is satisfied, and the second equation
in the second line follows similarly.  Moreover, because
$\bL_{h,k}^\ddagger$ is innovation compatible with respect to
$\bK_{h,k}$, the $\ell^\text{th}$ row of $\bK_{h,k}^\ddagger =
(\bL_{h,k}^\ddagger)^\top \bL_{h,k}^\ddagger$ is 0 for predictable
$\ell$.  Thus, $\bZ_{v,k}$ is innovation compatible with $\bK_{h,k}$.
Similarly, $\bZ_{u,k}$ is innovation compatible with $\bK_{g,k}$.

The remainder of our argument is devoted to proving item \textit{(4)};
the proof consists of two steps.
\begin{enumerate}
\item \emph{If either $\< \bxi_g , \bvhat \>_{\Ltwo} = 0$ or
$\proj_{\bU_{k-1}^{\SE}}^\perp \buhat \neq \bzero$, then the first
line of equation~\eqref{eq:SE-opt} is satisfied.  Likewise, if either $\<
\bxi_h , \buhat \>_{\Ltwo} = 0$ or $\proj_{\bV_{k-1}^{\SE}}^\perp
\bvhat \neq \bzero$, then the second line of equation~\eqref{eq:SE-opt} is
satisfied.}

Consider that either $\< \bxi_g , \bvhat \>_{\Ltwo} = 0$ or
$\proj_{\bU_{k-1}^{\SE}}^\perp \buhat \neq \bzero$.
\Cref{lem:L2-deriv-gordon} states $\bu \mapsto \<
\bg^{\Ltwo}(\bu),\bvhat\>_{\Ltwo}$ and $\bu \mapsto \<
\bh^{\Ltwo}(\bvhat) , \bu \>_{\Ltwo}$ are differentiable at $\bu =
\buhat$ and gives expressions for the derivative.  These expressions
can be simplified using the same calculation we performed in
equation~\eqref{eq:gor-deriv-to-se-deriv}.  Combined
with~\Cref{lem:L2-deriv-pen} and the KKT conditions for
equation~\eqref{eq:min-max-L2}, we must have for some $\lambda_{\su} \geq
0$ and almost every $\omega$
\begin{equation}
  \label{eq:u-min}
  \lambda_{\su} \bxi_h(\omega) + \bh_k^{\SE}(\omega) - \sum_{\ell=1}^k
  \zeta_{k\ell}^u \bu_\ell^{\SE} \in \partial_{\bu}
  \phi_{k,u}(\bu_k^{\SE}(\omega),\bH_{k-1}^{\SE}(\omega);\nu_{k,0},\nu_{k,\sx}),
    \end{equation}
where $\lambda_{\su} \< \bxi_h , \bu_k^{\SE} \>_{\Ltwo} = 0$.
Thus
\begin{equation}
  \begin{aligned}
    \bu_k^{\SE}(\omega) = \argmin_{\bu} \Big\{ &\frac{\zeta_{kk}^u}2
    \| \bu \|^2 + \sum_{\ell=1}^{k-1} \zeta_{k\ell}^u \<
    \bu_\ell^{\SE}(\omega) , \bu \> \\ &\qquad- \< \bh_k^{\SE}(\omega)
    , \bu \> - \lambda_{\su} \<\bxi_h(\omega),\bu\> +
    \phi_{k,u}(\bu,\bH_{k-1}^{\SE}(\omega);\nu_{k,0},\nu_{k,\sx})
    \Big\}.
  \end{aligned}
\end{equation}
We need only show that $\lambda_{\su} = 0$.  Recall that $\bh_k^{\SE}
= \sum_{\ell=1}^k L_{h,k\ell} \bh_\ell^{\SE,\perp}$, where
$\bh_{\ell}^{\SE,\perp}$ is independent of $\bxi_h$ for $\ell \leq
k-1$ and $L_{h,kk} \geq 0$.  If $\< \bxi_h,\bu_k^{\SE} \>_{\Ltwo} >
0$, then $\lambda_{\su} = 0$ by complementary slackness.
    
On the other hand, if $\< \bxi_h,\bu_k^{\SE} \>_{\Ltwo} = 0$, we must
have that $L_{h,kk} + \lambda_{\su} = 0$.  Indeed, assume otherwise,
that $L_{h,kk} + \lambda_{\su} > 0$.  The only dependence of the
objective in the previous display on $\bxi_h(\omega)$ is given by
$(L_{h,kk} + \lambda_u)\< \bxi_h(\omega),\bu\>$ (the term $L_{h,kk}\<
\bxi_h(\omega),\bu\>$ comes from the expansion of $\<
\bh_k^{\SE}(\omega) , \bu \>$).  Thus, using that the minimizer of the
objective is unique, we have for two $\omega,\omega'$ such that
$\bxi_h(\omega) \neq \bxi_h(\omega;)$ and all other random variables
are constant across $\omega,\omega'$, that $\< \bxi_h(\omega') -
\bxi_h(\omega) , \bu_k^{\SE}(\omega') - \bu_k^{\SE}(\omega) \>\geq 0$
with equality if and only if $\bu_k^{\SE}(\omega') =
\bu_k^{\SE}(\omega)$.  Because $\bxi_h$ is independent of everything
else and, conditional on everything else, $\bu_k^{\SE}(\omega)$ is not
conditionally constant, we have that $\< \bxi_h , \bu_k^{\SE}
\>_{\Ltwo} > 0$, a contradiction.  Therefore, we conclude that in this
case $L_{h,kk} + \lambda_{\su} = 0$.  Because $L_{h,kk} \geq 0$, this
implies that $\lambda_{\su} = 0$.  Thus we have established the first
line of the claim~\eqref{eq:SE-opt}.

If either $\< \bxi_h , \buhat \>_{\Ltwo} = 0$ or
$\proj_{\bV_{k-1}^{\SE}}^\perp \bvhat \neq \bzero$, then the second
line of equation~\eqref{eq:SE-opt} is satisfied by an equivalent argument.
\item \emph{Either $\< \bxi_g , \bvhat \>_{\Ltwo} = 0$ or
$\proj_{\bU_{k-1}^{\SE}}^\perp \buhat \neq \bzero$. Likewise, either
$\< \bxi_h , \buhat \>_{\Ltwo} = 0$ or $\proj_{\bV_{k-1}^{\SE}}^\perp
\bvhat \neq \bzero$.}

Assume to the contrary that both $\< \bxi_g , \bvhat \>_{\Ltwo} > 0$
and $\proj_{\bU_{k-1}^{\SE}}^\perp \buhat = \bzero$.  Then
$\proj_{\bV_{k-1}^{\SE}}^\perp \bvhat \neq \bzero$ because $\bxi_g$ is
independent and hence orthogonal to $\bV_{k-1}^{\SE}$.  Then, by item
1 above, the second line of equation~\eqref{eq:SE-opt} is satisfied.  Note,
however, that because $\| \proj_{\bU_{k-1}^\SE} \buhat \|_{\Ltwo} =
0$, we have $\bg_k^{\SE} = \bg^{\Ltwo}(\buhat)$ is independent of
$\bxi_g$.  Thus the objective in the second line of
equation~\eqref{eq:SE-opt} is independent of $\bxi_g$, so that $\bv_k^{\SE}
= \bvhat$ is independent of $\bxi_g$.  We conclude that $\< \bxi_g ,
\bvhat \>_{\Ltwo} = 0$, contradicting that $\< \bxi_g,
\bvhat\>_{\Ltwo}$.  Thus, either $\< \bxi_g , \bvhat \>_{\Ltwo} = 0$
or $\proj_{\bU_{k-1}^{\SE}}^\perp \buhat \neq \bzero$.

Either $\< \bxi_h , \buhat \>_{\Ltwo} = 0$ or
$\proj_{\bV_{k-1}^{\SE}}^\perp \bvhat \neq \bzero$ by an
equivalent argument.
\end{enumerate}

Combining these two steps establishes the claim~\eqref{eq:SE-opt}.


\subsubsection{Step 3: Existence/uniqueness of fixed points}

By Steps 2a and 2b above, there exists a solution to the fixed point
equations~\eqref{eq:fixpt-inductive}.  We now establish uniqueness.

Consider two solutions $\bK_{g,k}$, $\bK_{h,k}$, $\bZ_{u,k}$,
$\bZ_{v,k}$, $\{\nu_{\ell,0}\}_{\ell \leq k}$,
$\{\nu_{\ell,\sx}\}_{\ell \leq k}$ and $\bK_{g,k}'$, $\bK_{h,k}'$,
$\bZ_{u,k}'$, $\bZ_{v,k}'$, $\{\nu_{\ell,0}'\}_{\ell \leq k}$,
$\{\nu_{\ell,\sx}'\}_{\ell \leq k}$, $\{\nu_{\ell,\su}'\}_{\ell \leq
  k}$.  The construction in Step 1 allows us to embed the
corresponding random variables $\bU_k^{\SE}$, $\bH_k^{\SE}$ and
${\bU_k^{\SE}}'$, ${\bH_k^{\SE}}'$ into $L_{\su}$ and $\bV_k^{\SE}$,
$\bG_k^{\SE}$ and ${\bV_k^{\SE}}'$, ${\bG_k^{\SE}}'$ into $L_{\sv}$
such that $(\bu_k^{\SE};\nu_{k,0},\bv_k^{\SE})$ and
$({\bu_k^{\SE}}';\nu_{k,0}',{\bv_k^{\SE}}')$ are both saddle points of
the problem~\eqref{eq:min-max-L2}.  By Step 2, we must have
$\bu_k^{\SE} = {\bu_k^{\SE}}'$ and $\bv_k^{\SE} = {\bv_k^{\SE}}'$.  We
also have $\bU_{k-1}^{\SE} = \bU_{k-1}^{\SE'}$ and $\bV_{k-1}^{\SE} =
\bV_{k-1}^{\SE'}$ by construction.  Thus, by the first line
of~\cref{eq:fixpt-inductive}, $\bK_{g,k} = \bK_{g,k}'$ and $\bK_{h,k}
= \bK_{h,k}'$.  By the third line of~\cref{eq:fixpt-inductive},
$\nu_{\ell,\sx} = \nu_{\ell,\sx}'$ and $\nu_{\ell,\su} =
\nu_{\ell,\su}'$ for $\ell \leq k$.

By~\cref{eq:SE-opt}, the joint distribution of $\bU_k^{\SE}$ and
$\bH_k^{\SE}$ is a function only of $\bK_{h,k}$, and likwise, the
joint distribution of $\bV_k^{\SE}$ and $\bG_k^{\SE}$ is a function
only of $\bK_{g,k}$.  Thus, we have $\llangle \bH_k^{\SE} ,
\bU_k^{\SE} \rrangle = \llangle {\bH_k^{\SE}}' , {\bU_k^{\SE}}'
\rrangle$ and $\llangle \bG_k^{\SE} , \bV_k^{\SE} \rrangle = \llangle
        {\bG_k^{\SE}}' , {\bV_k^{\SE}}' \rrangle$.  Multiplying the
        equations in the second line of
        display~\eqref{eq:fixpt-inductive} by $\bK_{h,k}^\ddagger$ and
        $\bK_{h,k}^\ddagger$, respectively, gives
\begin{equation}
\begin{gathered}
  \id_{\bK_{h,k}}^\top \bZ_{v,k}^\top = \bK_{h,k}^\ddagger \llangle
  \bH_k^{\SE} , \bU_k^{\SE} \rrangle, \\ \id_{\bK_{g,k}}^\top
  \bZ_{u,k}^\top = \bK_{g,k}^\ddagger \llangle \bG_k^{\SE} ,
  \bV_k^{\SE} \rrangle.
\end{gathered}
\end{equation}
Because $\bZ_{v,k}$ is innovation compatible with $\bK_{h,k}$,
$\bZ_{v,k}^\top$ is $0$ in rows with predictable index, so that the
first row in the preceding display uniquely determines
$\bZ_{v,k}^\top$ under the innovation compatibility constraint.  Thus,
$\bZ_{v,k} = \bZ_{v,k}'$.  Likewise, we have the equivalence
$\bZ_{u,k} = \bZ_{u,k}'$.

It remains to establish uniqueness of $\nu_{k,0}$.  Assume for the sake
of contradiction that $\nu_{k,0} \neq \nu_{k,0}'$, and consider the
optimization in the first line of the display~\eqref{eq:SE-opt} at
$\nu_{k,0}$ and $\nu_{k,0}'$.  Because $\bK_{h,k} = \bK_{h,k}'$ and
$\bZ_{u,k} = \bZ_{u,k}'$, consider two problems defined using the same
random variables $\bH_k^{\SE}$, $\bU_{k-1}^{\SE}$ and parameters
$\bZ_{u,k}$, $\nu_{k,\sx}$, but possibly different values of the
parameters $\nu_{k,0}$ and $\nu_{k,0}'$.  Denote objectives for the
two problems by $\Phi_{k,u}(\,\cdot\,;\bH_{k-1}^{\SE})$ and
$\Phi_{k,u}'(\,\cdot\,;\bH_{k-1}^{\SE})$.  Then
$\Phi_{k,u}'(\bu;\bH_{k-1}^{\SE}) = \Phi_{k,u}(\bu;\bH_{k-1}^{\SE})
-\langle \bu,\ones\>(\nu_{k,0}' - \nu_{k,0})$ (see
equation~\eqref{eq:SE-penalties}).  By the optimality of $\bu_k^{\SE}$ and
${\bu_k^{\SE}}'$, we have
\begin{align*}
  \Phi_{k,u} (\bu_k^{\SE};\bH_{k-1}^{\SE}) & \leq
  \Phi_{k,u}({\bu_k^{\SE}}';\bH_{k-1}^{\SE}) =
  \Phi_{k,u}'({\bu_k^{\SE}}';\bH_{k-1}^{\SE}) +
  \<{\bu_k^{\SE}}',\ones\>(\nu_{k,0}' - \nu_{k,0}) \\
  & \leq
  \Phi_{k,u}'({\bu_k^{\SE}};\bH_{k-1}^{\SE}) +
  \<{\bu_k^{\SE}}',\ones\>(\nu_{k,0}' - \nu_{k,0}) =
  \Phi_{k,u}({\bu_k^{\SE}};\bH_{k-1}^{\SE}) + \<{\bu_k^{\SE}}' -
  \bu_k^{\SE},\ones\>(\nu_{k,0}' - \nu_{k,0}),
\end{align*}
where equality holds in the first inequality if and only if
${\bu_k^{\SE}}'$ is also a minimizer of $\Phi_{k,u}$.  Because
$\Phi_{k,u}$ is strongly convex, this occurs if and only if
${\bu_k^{\SE}}' = \bu_k^{\SE}$.  In particular, $\<{\bu_k^{\SE}}' -
\bu_k^{\SE},\ones\>(\nu_{k,0}' - \nu_{k,0}) \geq 0$, with equality if
and only if ${\bu_k^{\SE}}' = \bu_k^{\SE}$.  By the final line
of~\cref{eq:fixpt-inductive}, $\E[\<{\bu_k^{\SE}}' -
  \bu_k^{\SE},\ones\>(\nu_{k,0}' - \nu_{k,0})] = 0$, whence the
equality conditions must hold almost surely.  That is, $\bu_k^{\SE} =
{\bu_k^{\SE}}'$ almost surely.  Now using that $\bu_k^{\SE} =
{\bu_k^{\SE}}'$, we have that $\bzero \in \partial_{\bu}
\Phi_{k,u}(\bu_k^{\SE};\bH_{k-1}^{\SE})$ and $\bzero \in
\partial_{\bu} \Phi_{k,u}'(\bu_k^{\SE};\bH_{k-1}^{\SE})$, whence
$(\nu_{k,0}' - \nu_{k,0}) \ones \in \partial_{\bu}
\Phi_{k,u}(\bu_k^{\SE};\bH_{k-1}^{\SE})$ almost surely.  Note that
because the function $\ell_\prop$ is strongly convex, its conjugate
dual $\ell_{\prop}^*$ must be smooth.  This fact combined with
~\cref{eq:SE-penalties} ensures that the function $\Phi_{5,u}$ is
smooth.

Further, the function $\Phi_{6,u}$ is smooth in those coordinates for
which $y_{1,i}^{\SE} \neq 0$, and with positive probability
$y_{1,i}^{\SE} \neq 0$ for some $i$.  Thus, with positive probability,
we cannot have both $\bzero$ and $c\ones$ in $\partial_{\bu}
\Phi_{k,u}(\bu_k^{\SE};\bH_{k-1}^{\SE})$ for some $c \neq 0$.  We
conclude that $\nu_{k,0}' - \nu_{k,0} = 0$, so that uniqueness of
$\nu_{k,0}$ is established.

\subsection{Proof of~\Cref{lem:fixed-pt-bound}}
\label{sec:fix-pt-bound-proof}

\begin{proof}[Proof of \Cref{lem:fixed-pt-bound}]
    Without loss of generality, assume that $\Omega_k(\bzero) = 0$ for $k = 5,6$.
    \\

    \noindent \textbf{Locations of zeros in $\bK_g$, $\bK_h$, $\bZ_v$, and $\bZ_u$.}
    We established that the upper-left $4\times4$ blocks of $\bZ_v$, $\bZ_u$, $\bK_g$, and $\bK_h$ are of the claimed form when proving the base case in the proof of \Cref{lem:fixed-pt-soln}.
    This implies the lower-left and upper-right $2 \times 2$ block of $\bK_g$ middle $2\times 2$ block in the last two rows and the middle $2\times 2$ block in the last two columns of $\bK_h$ must be $\bzero_{2 \times 2}$.
    This confirms the location of zeroes in $\bK_g,\bK_h$ asserted by the lemma.
    The location of all zeroes in $\bZ_v$, $\bZ_u$ asserted by the lemma,
    except for $\zeta_{65}^v = 0$ and $\zeta_{65}^u = 0$,
    are confirmed by \Cref{lem:eff-reg-explicit}(b).
    We show $\zeta_{65}^v = 0$ and $\zeta_{65}^u = 0$ below.
    \\

    \noindent \textbf{Bound on $\nu_{0,k}$.}
    Because $\Omega_k$ is $c$-strongly convex, has minimizer with $\ell_2$-norm bounded by $c$,
    and $\Omega_k(\bzero) = 0$,
    we have that $\Omega_k(\bv) \geq - C + \frac{c}{2} \| \bv \|^2$.
    Below we construct $\bu_+ \in L_\su^2$ such that $\| \bu_+ \|_{\Ltwo} \leq C/\sqrt{n}$, $\< \ones , \bu_+ \>_{\Ltwo} \geq c > 0$,
    and $\E[\ell_k^*(n\bu_+;\bw^{\SE},\by_1^{\SE},\by_2^{\SE})] \leq C$.
    Assuming we find such a $\bu_+$,
    we get
    \begin{equation}
    \begin{aligned}
        &\min_{\< \bv , \bxi_g \>_{\Ltwo} \geq 0}
            \;
            \max_{\< \bu , \bxi_h \>_{\Ltwo} \geq 0}
            \AuxObj_k^{\Ltwo}(\bu;v_0,\bv)
            \geq
            \min_{\bv \in L_{\sv}^2}
            \AuxObj_k^{\Ltwo}(\bu_+;v_0,\bv)
        \\
            &=
            \min_{\bv \in L_{\sv}^2}
            -\big\< \bg^{\Ltwo}\big(\bu_+\big) , \bv \big\>_{\Ltwo}
            +\big\< \bh^{\Ltwo}(\bv) , \bu_+ \big\>_{\Ltwo}
            +\< \bu_+ , \ones \>_{\Ltwo} (v_0 + \< \bmu_{\sx} , \bv \>_{\Ltwo})
            -\E[\ell_k^*(n\bu_+;\bw^{\SE},\by_1^{\SE},\by_2^{\SE})]
            +\E[\Omega_k(\bv)]
        \\
            &\geq
            \min_{\bv \in L_{\sv}^2}
            -C\|\bv\|_{\Ltwo}
            + \< \bu_+,\ones\>_{\Ltwo} v_0
            -C+ \frac{c}{2} \| \bv \|_{\Ltwo}^2
        \\
            &\geq
            \< \bu_+ , \ones \>_{\Ltwo} v_0
            -C,
    \end{aligned}
    \end{equation}
    where in the first inequality we have used the bounds on $\Omega_k$ and the properties of $\bu_+$ given above,
    as well as the fact that 
    $\big\|\bg^{\Ltwo}(\bu_+)\big\|_{\Ltwo} = \sqrt{p}\|\bu_+\|_{\Ltwo} \leq C$,
    $\big\|\bh^{\Ltwo}(\bv)\big\|_{\Ltwo} \leq \sqrt{n}\|\bv\|_{\Ltwo}$,
    and
    $\|\bmu_{\sx}\| \leq C$.
    We further have that
    \begin{equation}
    \begin{aligned}
        \min_{\substack{v_0 \in \reals \\ \< \bv , \bxi_g \>_{\Ltwo} \geq 0}}
            \;
            \max_{\< \bu , \bxi_h \>_{\Ltwo} \geq 0}
            \AuxObj_k^{\Ltwo}(\bu;v_0,\bv)
            &\leq
            \max_{\< \bu , \bxi_h \>_{\Ltwo} \geq 0}
            \AuxObj_k^{\Ltwo}(\bu;0,\bzero)
            =
            \max_{\< \bu , \bxi_h \>_{\Ltwo} \geq 0}
            -\E[\ell_k^*(n \bu ; \bw^{\SE},\by_1^{\SE},\by_2^{\SE})]
        \\
            &\leq
            \frac{C}{n} + \frac{c}{2n} \| \by_2^{\SE} \|_{\Ltwo}^2
            \leq C,
    \end{aligned}
    \end{equation}
    where the final inequality holds by the following logic in the case $k = 5$ and $k = 6$.
    For $k = 5$, we use that, by equation~\eqref{eq:ellk*-def},
    $\ell_5^*(n\bu;\bw^{\SE},\by_1^{\SE},\by_2^{\SE}) = \frac{1}{n} \sum_{i=1}^n \sup_{\eta \in \reals} \{n\eta u_i - \ell_\prop(\eta;y_{1,i}^{\SE})\} \geq -\frac{1}{n} \sum_{i=1}^n \ell_\prop(0;y_{1,i}^{\SE}) \geq -C$ by Assumption A1.
    For $k = 6$,
    we use that, by equation~\eqref{eq:ellk*-def}, and using that $w(h)$ is bounded above by $C$ by Assumption A1,
    we have that $\ell_6^*(n\bu;\bw,\by_1,\by_2) \geq - \| \by_2 \| \| \bu \| - \frac{C}{n} + \frac{cn}{2} \| \bu \|^2$.
    We have that $\|\by_2^{\SE}\|_{\Ltwo} \leq C$ using equation~\eqref{eq:yw-func-of-Heps} and that $\| \bh_2 \|_{\Ltwo}^2/n = \| \btheta_2 \|^2 \leq C$, $\|\beps_2\|_{\Ltwo}^2/n = \sigma^2 < C$,
    and $|\theta_{2,0} + \< \bmu_{\sx},\btheta_2 \>_{\Ltwo}| \leq C$.
    Combining the previous two displays, and using that $\< \bu_+ , \ones \>_{\Ltwo} > c > 0$ gives that the minimum of $\min_{\< \bv , \bxi_g \>_{\Ltwo} \geq 0}\max_{\< \bu , \bxi_h \>_{\Ltwo} \geq 0}\AuxObj_k^{\Ltwo}(\bu;v_0,\bv)$ over $v_0$ must be bounded above by $C$.
    If we replace $\bu_+$ by $\bu_- \in L_\su^2$ which satisfies $\| \bu_- \|_{\Ltwo} \leq C/\sqrt{n}$, $\< \ones , \bu_- \>_{\Ltwo} \geq c > 0$,
    and $\E[\ell_k^*(n\bu_-;\bw,\by_1,\by_2)] \leq C$,
    we could by the same argument show that $v_0$ is bounded below by $-C$.
    
    All that remains is to construct $\bu_+$ and $\bu_-$.
    First consider $k = 5$.
    Note that 
    $\ell_\prop^*(\partial_\eta\ell_\prop(0;0);0) = -\ell_\prop(0;0) \leq 0$,
    $\ell_\prop^*(0;0) = \sup_{\eta \in \reals} \{ -\ell_\prop(\eta;0) \} \leq 0$,
    $\ell_\prop^*(\partial_\eta\ell_\prop(0;1);1) = -\ell_\prop(0;1)$,
    and
    $\ell_\prop^*(0;1) = \sup_{\eta \in \reals} \{ -\ell_\prop(\eta;1) \}$.
    Set $u_{+,i} = \partial_\eta\ell_\prop(0;0) \indic\{y_{1,i}^{\SE} = 0\}/n$
    and $u_{-,i} = \partial_\eta\ell_\prop(0;1) \indic\{y_{1,i}^{\SE} = 0\}/n$.
    Then, by Assumption A1 (which bounds $|\partial_\eta\ell_\prop(0;0)|,|\partial_\eta\ell_\prop(0;1)|<C$),
    we have $\| \bu_+ \|_{\Ltwo},\|\bu_-\|_{\Ltwo} \leq C/\sqrt{n}$.
    We moreover have that, by Assumption A1,
    $\< \ones , \bu_+ \>_{\Ltwo} = \partial_\eta\ell_\prop(0;0) \P(y_{1,i}^{\SE} = 0) = (1-\barpi) \partial_\eta\ell_\prop(0;0) > c > 0$,
    and likewise, $\< \ones , \bu_- \>_{\Ltwo} = \partial_\eta\ell_\prop(0;1) \P(y_{1,i}^{\SE} = 1) = \barpi \partial_\eta\ell_\prop(0;0) < -c < 0$.
    Then,
    we have $\ell_5^*(n\bu_+;\bw^{\SE},\by_1^{\SE},\by_2^{\SE}) = \frac{1}{n} \sum_{i=1}^n \ell_\prop^*(\partial_\eta\ell_\prop(0,0)\indic\{y_{1,i}^{\SE}=0\};y_{1,i}^{\SE}) \leq 0$.
    where we use $\ell_\prop^*(\partial_\eta\ell_\prop(0;0);0) = -\ell_\prop(0;0) \leq 0$ and
    $\ell_\prop^*(0;0) = \sup_{\eta \in \reals} \{ -\ell_\prop(\eta;0) \} \leq 0$.
    Verifying that $\ell_5^*(n\bu_+;\bw^{\SE},\by_1^{\SE},\by_2^{\SE})\leq 0$ follows similarly,
    but instead uses 
    $\ell_\prop^*(\partial_\eta\ell_\prop(0;1);1) = -\ell_\prop(0;1)$
    and
    $\ell_\prop^*(0;1) = \sup_{\eta \in \reals} \{ -\ell_\prop(\eta;1) \}$.

    Next, consider $k = 6$.  In this case, we set $\bu_+ =
    \by_1^{\SE}/n$ and $\bu_- = -\by_1^{\SE}/n$.  Because the entries
    of $\by_1^{\SE}$ are either $0$ or $1$, it follows that $\| \bu_+
    \|_{\Ltwo}, \| \bu_- \|_{\Ltwo} \leq C/\sqrt{n}$.  Moreover, $\<
    \ones ,\bu_+ \>_{\Ltwo} = -\< \ones , \bu_-\>_{\Ltwo} = \barpi > c
    > 0$.  Moreover, because $(y_{1,i}^{\SE})^2/y_{1,i} =
    y_{1,i}^{\SE}$ (when $y_{1,i}^{\SE} = 0$, this is 0/0, which we
    have by convention set equal to 0, and when $y_{1,i}^{\SE} = 1$,
    this is 1), we have
    $\E[\ell_6^*(n\bu_+;\bw^{\SE},\by_1^{\SE},\by_2^{\SE})] = - \<
    \by_1^{\SE} , \by_2^{\SE} \>_{\Ltwo}/n + (n/2) \sum_{i=1}^n
    (w_i^{\SE})^{-1}y_{1,i}^{\SE}$.  Because $w(h)$ is bounded below
    by $c > 0$ by assumption, and $\| \by_1^{\SE} \|_{\Ltwo}, \|
    \by_2^{\SE}\|_{\Ltwo} \leq C\sqrt{n}$, we get
    $\E[\ell_6^*(n\bu_+;\bw^{\SE},\by_1^{\SE},\by_2^{\SE})] \leq C$.
    Showing $\E[\ell_6^*(n\bu_-;\bw^{\SE},\by_1^{\SE},\by_2^{\SE})]
    \leq C$ holds similarly.  \\

    \noindent \textbf{Upper bound on $|L_{g,k\ell}|$, $|L_{h,k\ell}|$, $|K_{g,k\ell}|$.}
    By Assumption A1,
    $\|\bv_1^{\SE}\|_{\Ltwo} = \|\btheta_1\|$,
    $\|\bv_2^{\SE}\|_{\Ltwo} = \|\btheta_2\|$,
    $\|\bv_3^{\SE}\|_{\Ltwo} = 0$,
    and
    $\|\bv_4^{\SE}\|_{\Ltwo} = 0$
    are all upper bounded by $C$,
    and
    $\|\bu_1^{\SE}\|_{\Ltwo} = 0$,
    $\|\bu_2^{\SE}\|_{\Ltwo} = 0$,
    $\|\bu_3^{\SE}\|_{\Ltwo} = \| \ones \|/n = 1/\sqrt{n}$,
    $\|\bu_4^{\SE}\|_{\Ltwo} = \|\by_1^{\SE}\|_{\Ltwo}/n \leq 1/\sqrt{n}$
    are all upper bounded by $C/\sqrt{n}$.
    Upper bounds on 
    $\| \bv_5^{\SE} \|_{\Ltwo}$,
    $\| \bv_6^{\SE} \|_{\Ltwo}$,
    $\| \bu_5^{\SE} \|_{\Ltwo}$,
    and
    $\| \bu_6^{\SE} \|_{\Ltwo}$
    require some more work.

    By optimality of $\bv_k^{\SE}$, 
    we have
    \begin{equation}
    \begin{aligned}
        &\AuxObj_k^{\Ltwo}(\bu_k^{\SE};\nu_{0,k},\bv_k^{\SE})
            =
            \max_{\< \bu , \bxi_h \>_{\Ltwo} \geq 0}
            \AuxObj_k^{\Ltwo}(\bu;\nu_{0,k},\bv_k^{\SE})
        \\
            &\qquad\geq
            \AuxObj_k^{\Ltwo}(\bzero;\nu_{0,k},\bv_k^{\SE})
            =
            -\E[\ell_k^*(\bzero;\bw^{\SE},\by_1^{\SE},\by_2^{\SE})]
            +\E[\Omega_k(\bv_k^{\SE})]
            \geq
            -C + \frac{c}{2}\| \bv_k^{\SE} \|_{\Ltwo}^2,
    \end{aligned}
    \end{equation}
    where we use that $\Omega_k(\bv) \geq - C + \frac{c}{2} \| \bv \|^2$ (as we argued above);
    $\ell_5^*(\bzero;\bw^{\SE},\by_1^{\SE},\by_2^{\SE}) = \frac{1}{n} \sum_{i=1}^n \sup_{\eta \in \reals}\{ - \ell_\prop^*(\eta;y_{1,i}^{\SE})\} \geq -C$ because $\ell_\prop^*(0;0),\ell_\prop^*(0;1) \leq $ $C$ by Assumption A1;
    and $\ell_6^*(\bzero;\bw^{\SE},\by_1^{\SE},\by_2^{\SE}) = 0$ by equation~\eqref{eq:ellk*-def}.
    By optimality of $\bu_k^{\SE}$,
    we have
    \begin{equation}
    \begin{aligned}
            &\AuxObj_k^{\Ltwo}(\bu_k^{\SE};\nu_{0,k},\bv_k^{\SE})
            =
            \min_{\< \bv , \bxi_g \>_{\Ltwo} \geq 0}
            \AuxObj_k^{\Ltwo}(\bu_k^{\SE};\nu_{0,k},\bv)
        \\
            &\qquad\leq
            \AuxObj_k^{\Ltwo}(\bu_k^{\SE};\nu_{0,k},\bzero)
            =
            -\E[\ell_k^*(\bu_k^{\SE};\bw^{\SE},\by_1^{\SE},\by_2^{\SE})]
            +\E[\Omega_k(\bzero)]
            \leq
            C - \frac{cn}{2}\| \bu_k^{\SE} \|_{\Ltwo}^2,
    \end{aligned}
    \end{equation}
    where we use that $\Omega_k(\bzero) = 0$ (which we assumed at the beginning of this section without loss of generality);
    that, $\ell_k^*$ is $cn$-strongly convex in $\bu$ because $\ell_k$ are strongly smooth in $\bmeta$ (for $k = 5$, this is by assumption, and for $k = 6$, we use that the weight function $w(h)$ is bounded above).
    Chaining the previous displays together,
    we conclude both that $\| \bv_k^{\SE} \|_{\Ltwo}^2 \leq C$ and $\| \bu_k^{\SE} \|_{\Ltwo}^2 \leq C/n$ for $k = 5,6$.

    Having bounded $\| \bv_k^{\SE} \|_{\Ltwo}^2 \leq C$ for $k = 1,\ldots,6$,
    by the fixed point equations (equation~\eqref{eq:fixpt-general}) we get that $|K_{h,k\ell}| \leq C$ for all $1 \leq k,\ell \leq 6$.
    Having bounded $\| \bu_k^{\SE} \|_{\Ltwo}^2 \leq C/n$ for $k = 1,\ldots,6$,
    by the fixed point equations (equation~\eqref{eq:fixpt-general}) we get that $|K_{g,k\ell}| \leq C/n$ for all $1 \leq k,\ell \leq 6$.
    Because $\sum_{\ell=1}^k L_{g,k\ell}^2 = K_{g,kk}$, we also have $|L_{g,k\ell}| \leq C/\sqrt{n}$,
    and because $\sum_{\ell = 1}^k L_{h,k\ell}^2 = K_{h,kk}$, we also have $|L_{h,k\ell}| \leq C$,
    for $1 \leq \ell \leq k \leq 6$.
    \\

    \noindent \textbf{Upper bound on $|\nu_{k,\sx}|$.}
    The fixed point equations \eqref{eq:fixpt-general} state $\nu_{k,\sx} = \< \bmu_{\sx} , \bv_k^{\SE} \>_{\Ltwo}$.
    Thus, the upper bound on $|\nu_{k,\sx}|$ holds by Cauchy-Schwartz and because $\|\bmu_{\sx}\| \leq C$ (by Assumption A1) and because $\| \bv_k^{\SE} \|_{\Ltwo} \leq C$ (as we have just shown).
    \\

    \noindent \textbf{Lower bound on $L_{g,kk}$ and $L_{h,kk}$.}
    For $k=5,6$,
    the KKT conditions for the first line of \eqref{eq:SE-opt} is
    \begin{equation}
    \begin{gathered}
        (\nu_{k,0} + \nu_{k,\sx})\ones
            +
            \bh_k^{\SE}
            -
            \sum_{\ell = 1}^{k-1} \zeta_{k\ell}^u \bu_\ell^{\SE}
            -
            \zeta_{kk}^u \bu_k^{\SE} 
            \in
            \partial_{\bu}\ell_k^*(\bu_k^{\SE};\bw^{\SE},\by_1^{\SE},\by_2^{\SE}).
    \end{gathered}
    \end{equation}
    By Fenchel-Legendre duality,
    the first of these is equivalent to
    \begin{equation}
    \label{eq:ukse-is-ellk-deriv}
        n\bu_k^{\SE}
            =
            \nabla \ell_k\Big(
                (\nu_{k,0} + \nu_{k,\sx})\ones
                +
                \bh_k^{\SE}
                -
                \sum_{\ell = 1}^{k-1} \zeta_{k\ell}^u \bu_\ell^{\SE}
                -
                \zeta_{kk}^u \bu_k^{\SE}
                ;
                \bw^{\SE},\by_1^{\SE},\by_2^{\SE}
            \Big),
    \end{equation}
    where $\ell_5(\bmeta;\bw,\by_1,\by_2) \defn \sum_{i=1}^n
    \ell_\prop(\eta_i;y_{1,i})$ and $\ell_6(\bmeta;\bw,\by_1,\by_2)
    \defn \frac{1}2 \sum_{i=1}^n y_{1,i} w_i(y_{2,i} - \eta_i)^2$.  Let
    $\proj_{\bU_{k-1}^{\SE}}^\perp$ denote the orthogonal projection
    in Hilbert space: $\proj_{\bU_{k-1}^{\SE}}^\perp\bu = \bu -
    \sum_{\ell=1}^{k-1} \bu_\ell^{\SE,\perp} \< \bu ,
    \bu_\ell^{\SE,\perp} \>_{\Ltwo}/\|\bu_\ell^{\SE,\perp}\|_{\Ltwo}^2$,
    where we adopt the convention that $\bzero/\bzero = \bzero$.  Multiplying both
    sides of the second equation in the second line of
    equation~\eqref{eq:fixpt-general} by $\bL_g^\dagger$ gives
    $\bL_g^{\ddagger\top}\bZ_u^\top = \llangle \bG^{\SE,\perp} ,
    \bV^{\SE} \rrangle_{\Ltwo}$.  Using that $\bL_g^\ddagger$ is
    lower-triangular and that $L_{g,kk} = \|
    \proj_{\bU_{k-1}^{\SE}}^\perp \bu_k^{\SE} \|_{\Ltwo}$ by the first
    line of equation~\eqref{eq:fixpt-general}, we get $\zeta_{kk}^u = \<
    \bg_k^{\SE,\perp} , \bv_k^{\SE} \>_{\Ltwo} / \|
    \proj_{\bU_{k-1}^{\SE}}^\perp \bu_k^{\SE} \|_{\Ltwo}$ (where we use
    the convention $0/0 = 0$).  Thus, $\| \zeta_{kk}^u \bu_k^{\SE} -
    \zeta_{kk}^u \proj_{\bU_{k-1}^{\SE}} \bu_k^{\SE} \|_{\Ltwo} = |\<
    \bg_k^{\SE,\perp},\bv_k^{\SE}\>_{\Ltwo}| \leq \sqrt{p} \|
    \proj_{\bV_{k-1}^{\SE}}^\perp \bv_k^{\SE} \|_{\Ltwo}$, because $\|
    \bg_k^{\SE,\perp} \|_{\Ltwo} \leq \sqrt{p}$.  Moreover, by the
    second equation in the first line of equation~\eqref{eq:fixpt-general},
    $\|\proj_{\bH_{k-1}^{\SE}}^\perp \bh_k^{\SE}\|_{\Ltwo} = \sqrt{n} \|
    \proj_{\bV_{k-1}^{\SE}}^\perp \bv_k^{\SE} \|_{\Ltwo}$.  Thus,
    because $\nabla \ell_k$ are $C$-Lipschitz by Assumption
    A1 (for $ k = 5$, this is by assumption; for $k = 6$, we
    use that the weights $w(h)$ are bounded above by $C$), we have
    \begin{equation}
    \label{eq:ukse-approx-grad}
    \begin{gathered}
        \Big\|
            n\bu_k^{\SE}
            -
            \nabla \ell_k\Big(
                (\nu_{k,0} + \nu_{k,\sx})\ones
                +
                \proj_{\bH_{k-1}^{\SE}} \bh_k^{\SE}
                -
                \sum_{\ell = 1}^{k-1} \zeta_{k\ell}^u \bu_\ell^{\SE}
                -
                \zeta_{kk}^u \proj_{\bU_{k-1}^{\SE}} \bu_k^{\SE}
                ;
                \bw^{\SE},\by_1^{\SE},\by_2^{\SE}
            \Big)
        \Big\|_{\Ltwo}
        \leq
        C\sqrt{n}\| \proj_{\bV_{k-1}^{\SE}}^\perp \bv_k^{\SE} \|_{\Ltwo}.
    \end{gathered}
    \end{equation}
    Note that $ \nabla \ell_k\big( (\nu_{k,0} + \nu_{k,\sx})\ones +
    \proj_{\bH_{k-1}^{\SE}} \bh_k^{\SE} - \sum_{\ell = 1}^{k-1}
    \zeta_{k\ell}^u \bu_\ell^{\SE} - \zeta_{kk}^u
    \proj_{\bU_{k-1}^{\SE}} \bu_k^{\SE} ; \bw,\by_1,\by_2 \big) $ is
    independent of $\bh_k^{\SE,\perp}$ because
    $\proj_{\bH_{k-1}^{\SE}} \bh_k^{\SE}$ is in the span of
    $\bH_{k-1}^{\SE}$, $\proj_{\bU_{k-1}^{\SE}} \bu_k^{\SE}$ is in the
    span of $\bU_{k-1}^{\SE}$ which is a function of $\bH_{k-1}^{\SE}$
    and the auxiliary noise vectors $\beps_1^{\SE},\beps_2^{\SE}$, and
    $\bw^{\SE},\by_1^{\SE},\by_2^{\SE}$ are functions of $\bH_2^{\SE}$
    and the auxiliary noise vectors.  Thus, their inner product in
    $L_\su^2$ is 0.  Then, the previous display and the bound $\|
    \zeta_{kk}^u \bu_k^{\SE} - \zeta_{kk}^u \proj_{\bU_{k-1}^{\SE}}
    \bu_k^{\SE} \|_{\Ltwo} \leq \sqrt{p} \|
    \proj_{\bV_{k-1}^{\SE}}^\perp \bv_k^{\SE} \|_{\Ltwo}$ imply $|\<
    \bh_k^{\SE,\perp} , \bu_k^{\SE} \>_{\Ltwo}| \leq C\|
    \proj_{\bV_{k-1}^{\SE}}^\perp \bv_k^{\SE} \|_{\Ltwo}$.

    For $k=5,6$,
    the KKT conditions for the second line of \eqref{eq:SE-opt} is
    \begin{equation}
    \begin{gathered}
        \bg_k^{\SE}
            -
            \sum_{\ell = 1}^{k-1} \zeta_{k\ell}^v \bv_\ell^{\SE}
            -
            \zeta_{kk}^v \bv_k^{\SE}
            -
            \nabla\Omega_k(\bv_k^{\SE})
            =
            \bzero.
    \end{gathered}
    \end{equation}
    Similarly to how we showed $\zeta_{kk}^u = \< \bg_k^{\SE,\perp} , \bv_k^{\SE} \>_{\Ltwo} / \| \proj_{\bU_{k-1}^{\SE}}^\perp \bu_k^{\SE} \|_{\Ltwo}$,
    we have that $\zeta_{kk}^v = \< \bh_k^{\SE,\perp} , \bu_k^{\SE} \>_{\Ltwo} / \| \proj_{\bV_{k-1}^{\SE}}^\perp \bv_k^{\SE} \|_{\Ltwo}$,
    whence
    $\| \zeta_{kk}^v \bv_k^{\SE} - \zeta_{kk}^v \proj_{\bV_{k-1}^{\SE}} \bv_k^{\SE} \|_{\Ltwo} = |\< \bh_k^{\SE,\perp},\bu_k^{\SE}\>_{\Ltwo}| \leq C \| \proj_{\bV_{k-1}^{\SE}}^\perp \bv_k^{\SE} \|_{\Ltwo}$,
    where in the last inequality we have used the bound established at the end of the previous paragraph.
    Using this bound and the fact that $\nabla \Omega_k$ is $C$-Lipschitz by Assumption A1,
    we have
    \begin{equation}
        \Big\|
            \proj_{\bG_{k-1}^{\SE}}^\perp\bg_k^{\SE}
            +
            \proj_{\bG_{k-1}^{\SE}}\bg_k^{\SE}
            -
            \sum_{\ell = 1}^{k-1} \zeta_{k\ell}^v \bv_\ell^{\SE}
            -
            \zeta_{kk}^v \proj_{\bV_{k-1}^{\SE}}\bv_k^{\SE}
            -
            \nabla\Omega_k(\proj_{\bV_{k-1}^{\SE}}\bv_k^{\SE})
        \Big\|_{\Ltwo}
        \leq
        C\| \proj_{\bV_{k-1}^{\SE}}^\perp \bv_k^{\SE} \|_{\Ltwo}.
    \end{equation}
    Because $\proj_{\bG_{k-1}^{\SE}}^\perp\bg_k^{\SE}$ is independent of
    $
    \proj_{\bG_{k-1}^{\SE}}\bg_k^{\SE}
    -
    \sum_{\ell = 1}^{k-1} \zeta_{k\ell}^v \bv_\ell^{\SE}
    -
    \zeta_{kk}^v \proj_{\bV_{k-1}^{\SE}}\bv_k^{\SE}
    -
    \nabla\Omega_k(\proj_{\bV_{k-1}^{\SE}}\bv_k^{\SE}),
    $
    their inner product in $L_\sv^2$ is 0.
    Thus, the left-hand side of the previous display is lower bounded by $\big\| \proj_{\bG_{k-1}^{\SE}}^\perp \bg_k^{\SE}\big\|_{\Ltwo} = \sqrt{p} \| \proj_{\bU_{k-1}^{\SE}}^\perp \bu_k^{\SE}\|_{\Ltwo}$,
    with the equality holding by the first equation in the first line of equation~\eqref{eq:fixpt-general}.
    Because $n/p \leq C$,
    we thus have $C\|\proj_{\bV_{k-1}^{\SE}}^\perp \bv_k^{\SE}\|_{\Ltwo} \geq C\sqrt{n}\| \proj_{\bU_{k-1}^{\SE}}^\perp \bu_k^{\SE} \|_{\Ltwo}$.
    Moreover,
    \begin{equation}
    \begin{aligned}
        &\Big\|
            \proj_{\bU_{k-1}^{\SE}}^\perp
            \nabla \ell_k\big(
                (\nu_{k,0} + \nu_{k,\sx})\ones
                +
                \proj_{\bH_{k-1}^{\SE}} \bh_k^{\SE}
                -
                \sum_{\ell = 1}^{k-1} \zeta_{k\ell}^u \bu_\ell^{\SE}
                -
                \zeta_{kk}^u \proj_{\bU_{k-1}^{\SE}} \bu_k^{\SE}
                ;
                \bw,\by_1,\by_2
            \big)
        \Big\|_{\Ltwo}
    \\
        &\geq
        \sqrt{
            \E\Big[
            \Tr\Big(
            \Var\Big(
                \nabla \ell_k\big(
                    (\nu_{k,0} + \nu_{k,\sx})\ones
                    +
                    \proj_{\bH_{k-1}^{\SE}} \bh_k^{\SE}
                    -
                    \sum_{\ell = 1}^{k-1} \zeta_{k\ell}^u \bu_\ell^{\SE}
                    -
                    \zeta_{kk}^u \proj_{\bU_{k-1}^{\SE}} \bu_k^{\SE}
                    ;
                    \bw,\by_1,\by_2
                \big)
                \Bigm| \bU_{k-1}^{\SE}, \bH_{k-1}^{\SE}
            \Big)
            \Big)  
            \Big]  
        }.
    \end{aligned}
    \end{equation}
    Note that the argument $(\nu_{k,0} + \nu_{k,\sx})\ones +
    \proj_{\bH_{k-1}^{\SE}} \bh_k^{\SE} - \sum_{\ell = 1}^{k-1}
    \zeta_{k\ell}^u \bu_\ell^{\SE} - \zeta_{kk}^u
    \proj_{\bU_{k-1}^{\SE}} \bu_k^{\SE}$ is a function of
    $\bU_{k-1}^{\SE}$, $\bH_{k-1}^{\SE}$.  Consider $k = 5$.  Then
    $\nabla \ell_k(\bmeta;\bw,\by_1,\by_2) = \nabla_{\bmeta}
    \ell_\prop(\bmeta;\by_1)$. Recall by Assumption A1,
    $\partial_\eta \ell_\prop(\eta;0) - \partial_\eta
    \ell_\prop(\eta;1) \geq c$.  Further, $y_{1,i} \mid
    \bU_{k-1}^{\SE}, \bH_{k-1}^{\SE} \sim \Bernoulli(\pi(\mu_\prop +
    h_{5,i}^{\SE}))$.  By the decay and growth conditions on $\pi$ in
    Assumption A1 and the upper bound on $L_{h,55}$ proved
    above, we have that the right-hand side of the previous display is
    bounded below by $C\sqrt{n}$ when $k = 5$.  When $k = 6$, we have
    $\nabla \ell_k(\bmeta;\bw,\by_1,\by_2) = \bw \odot \by_1 \odot
    (\by_2 - \bmeta)$.  Using the upper and lower bound $w$ and the
    fact that $\barpi > c > 0$ by Assumption A1, and because
    $\by_2 = \mu_\out + \bh_2 + \beps_2$, and $\beps_2$ is independent
    of $\bU_{5}^{\SE}$, $\bH_{5}^{\SE}$ and $\bw,\ba$, we have that
    the right-hand side of the previous display is bounded below by
    $C\sqrt{n}$ when $k = 6$ as well.

    Thus, by equation~\eqref{eq:ukse-approx-grad},
    \begin{equation}
        \big\|
            \proj_{\bU_{k-1}^{\SE}}^\perp\bu_k^{\SE}
        \big\|_{\Ltwo}
            \geq
            C/\sqrt{n} - C\| \proj_{\bV_{k-1}^{\SE}}^\perp \bv_k^{\SE} \|_{\Ltwo}/\sqrt{n}.
    \end{equation}
    Combining this with $C\|\proj_{\bV_{k-1}^{\SE}}^\perp \bv_k^{\SE}\|_{\Ltwo} \geq C\sqrt{n}\| \proj_{\bU_{k-1}^{\SE}}^\perp \bu_k^{\SE} \|_{\Ltwo}$ implies $\|\proj_{\bV_{k-1}^{\SE}}^\perp \bv_k^{\SE}\|_{\Ltwo} \geq c > 0$.
    By the second equation in the first line of equation~\eqref{eq:fixpt-general},
    this implies that $L_{h,kk} \geq c > 0$.

    Next, observe that the KKT condition for equation~\eqref{eq:SE-opt} is equivalent to
    \begin{equation}
        \bv_k 
            =
            \nabla \Omega_k^*\Big(
                \bg_k^{\SE} - \sum_{\ell=1}^{k-1}\zeta_{k\ell}^v \bv_\ell^{\SE} - \zeta_{kk}^v \bv_k^{\SE}
            \Big).
    \end{equation}
    By Assumption A1 (strong convexity of $\Omega_k$),
    we have that $\nabla \Omega_k^*$ is $C$-Lipschitz.
    Thus,
    using that $\| \zeta_{kk}^v \bv_k^{\SE} - \zeta_{kk}^v \proj_{\bV_{k-1}^{\SE}} \bv_k^{\SE} \|_{\Ltwo} \leq C \| \proj_{\bV_{k-1}^{\SE}}^\perp \bv_k^{\SE} \|_{\Ltwo}$,
    (which we have established above)
    and that $\|\proj_{\bG_{k-1}^{\SE}}^\perp \bg_k^{\SE}\|_{\Ltwo} = \sqrt{p}\|\proj_{\bU_{k-1}^{\SE}}^\perp \bu_k^{\SE}\|_{\Ltwo}$ (by the first equation in the first line of \eqref{eq:fixpt-general}),
    \begin{equation}
        \big\|
            \bv_k
            -
            \nabla \Omega_k^*
            \big(
                \proj_{\bG_{k-1}^{\SE}} \bg_k^{\SE}
                -
                \sum_{\ell=1}^{k-1} \zeta_{k\ell}^v \bv_\ell^{\SE}
                -
                \zeta_{kk}^v \proj_{\bV_{k-1}^{\SE}}\bv_k^{\SE}
            \big)
        \big\|_{\Ltwo}
        \leq
        C\sqrt{n}\| \proj_{\bU_{k-1}^{\SE}}^\perp \bu_k^{\SE} \|_{\Ltwo}.
    \end{equation}
    Because
    $
    \nabla \Omega_k^*
    \big(
        \proj_{\bG_{k-1}^{\SE}} \bg_k^{\SE}
        -
        \sum_{\ell=1}^{k-1} \zeta_{k\ell}^v \bv_\ell^{\SE}
        -
        \zeta_{kk}^v \proj_{\bV_{k-1}^{\SE}}\bv_k^{\SE}
    \big)
    $
    is independent of $\bg_k^{\SE,\perp}$,
    their inner product in $L_\sv^2$ is 0.
    Thus, the previous display and the fact that $\|\bg_k^{\SE,\perp}\|_{\Ltwo}$ is either 0 or $\sqrt{p}$ implies $\< \bg_k^{\SE,\perp}, \bv_k \>_{\Ltwo} \leq Cn \| \proj_{\bU_{k-1}^{\SE}}^\perp \bu_k^{\SE} \|_{\Ltwo}$.
    Recalling that $\zeta_{kk}^u = \< \bg_k^{\SE,\perp},\bv_k^{\SE}\>_{\Ltwo}/\| \proj_{\bU_{k-1}^{\SE}}^\perp \bu_k^{\SE} \|_{\Ltwo}$,
    we conclude that 
    $\| \zeta_{kk}^u \bu_k^{\SE} - \zeta_{kk}^u \proj_{\bU_{k-1}^{\SE}} \bu_k^{\SE} \|_{\Ltwo} = |\< \bg_k^{\SE,\perp},\bv_k^{\SE}\>_{\Ltwo}| \leq C n\| \proj_{\bU_{k-1}^{\SE}}^\perp \bu_k^{\SE} \|_{\Ltwo}$.
    Using equation~\eqref{eq:ukse-is-ellk-deriv} and the fact that $\nabla \ell_k$ is $C$-Lipschitz,
    we have that
    \begin{equation}
        \Big\|
            n\proj_{\bU_{k-1}^{\SE}}\bu_k^{\SE}
            -
            \nabla \ell_k\big(
                (\nu_{k,0} + \nu_{k,\sx})\ones
                +
                \bh_k^{\SE}
                -
                \sum_{\ell = 1}^{k-1} \zeta_{k\ell}^u \bu_\ell^{\SE}
                -
                \zeta_{kk}^u \proj_{\bU_{k-1}^{\SE}} \bu_k^{\SE}
                ;
                \bw,\by_1,\by_2
            \big)
        \Big\|_{\Ltwo}
            \leq
            Cn \| \proj_{\bU_{k-1}^{\SE}}^\perp \bu_k^{\SE} \|_{\Ltwo}.
    \end{equation}
        Moreover,
    \begin{equation}
    \begin{aligned}
        &\Big\|
            n\proj_{\bU_{k-1}^{\SE}}\bu_k^{\SE}
            -
            \nabla \ell_k\big(
                (\nu_{k,0} + \nu_{k,\sx})\ones
                +
                \bh_k^{\SE}
                -
                \sum_{\ell = 1}^{k-1} \zeta_{k\ell}^u \bu_\ell^{\SE}
                -
                \zeta_{kk}^u \proj_{\bU_{k-1}^{\SE}} \bu_k^{\SE}
                ;
                \bw,\by_1,\by_2
            \big)
        \Big\|_{\Ltwo}
    \\
        &\geq
        \sqrt{
            \E\Big[
            \Tr\Big(
            \Var\Big(
                n\proj_{\bU_{k-1}^{\SE}}\bu_k^{\SE}
            -
            \nabla \ell_k\big(
                    (\nu_{k,0} + \nu_{k,\sx})\ones
                    +
                    \bh_k^{\SE}
                    -
                    \sum_{\ell = 1}^{k-1} \zeta_{k\ell}^u \bu_\ell^{\SE}
                    -
                    \zeta_{kk}^u \proj_{\bU_{k-1}^{\SE}} \bu_k^{\SE}
                    ;
                    \bw,\by_1,\by_2
                \big)
                \Bigm| \bU_{k-1}^{\SE}, \bH_{k-1}^{\SE}
            \Big)
            \Big)  
            \Big]  
        }.
    \end{aligned}
    \end{equation}
    For both $k =5$ and $6$, Assumption A1 gives $\nabla_{\eta_i} \ell_{k,i}(\eta_i;w_i,y_{1,i},y_{2,i}) > c > 0$.
    Because $\bh_k^{\SE} - \proj_{\bH_{k-1}^{\SE}} \bh_k^{\SE}$ is independent of $\bU_{k-1}^{\SE},\bH_{k-1}^{\SE}$, and everything else appearing inside the conditional variance in the preceding display,
    and because its coordinate-wise variance is given by $\| \proj_{\bV_{k-1}^{\SE}} \bv_k \|_{\Ltwo}$ by equation~\eqref{eq:fixpt-general},
    we have that the conditional variance in the preceding display is, with probability 1, coordinate-wise larger than $c\| \proj_{\bV_{k-1}^{\SE}} \bv_k \|_{\Ltwo}^2$.
    Thus, the right-hand side of the previous display is bounded below by $\sqrt{n} \| \proj_{\bV_{k-1}^{\SE}}^\perp \bv_k \|_{\Ltwo}$.
    Combining the previous two displays thus gives $\| \proj_{\bU_{k-1}^{\SE}}^\perp \bu_k^{\SE} \|_{\Ltwo} \geq \| \proj_{\bV_{k-1}^{\SE}}^\perp \bv_k^{\SE} \|_{\Ltwo}/\sqrt{n}$.
    We showed above that $\| \proj_{\bV_{k-1}^{\SE}}^\perp \bv_k^{\SE} \|_{\Ltwo} \geq c > 0$.
    Thus, $\| \proj_{\bU_{k-1}^{\SE}}^\perp \bu_k^{\SE} \|_{\Ltwo} \geq c/\sqrt{n}$.
    By the first equation in the first line of equation~\eqref{eq:fixpt-general},
    this implies $L_{g,kk} \geq c/\sqrt{n} > 0$.
    \\

    \noindent \textbf{Upper bounds on $\zeta_{kk}^v$ and $\zeta_{kk}^u$.}
    For $k \leq 4$, we have shown that $\zeta_{kk}^v = \zeta_{kk}^u = 0$.
    For $k = 5,6$,
    we use that
    $\zeta_{kk}^u = \< \bg_k^{\SE,\perp} , \bv_k^{\SE} \>_{\Ltwo} / \| \proj_{\bU_{k-1}^{\SE}}^\perp \bu_k^{\SE} \|_{\Ltwo}$
    and
    $\zeta_{kk}^v = \< \bh_k^{\SE,\perp} , \bu_k^{\SE} \>_{\Ltwo} / \| \proj_{\bV_{k-1}^{\SE}}^\perp \bv_k^{\SE} \|_{\Ltwo}$,
    which we showed above.
    Then $|\zeta_{kk}^u| \leq Cn$ and $|\zeta_{kk}^v| \leq C$ by the upper bounds $\|\bv_k\|_{\Ltwo} \leq C$, $\|\bu_k\|_{\Ltwo} \leq C/\sqrt{n}$ and the lower bounds 
    $\| \proj_{\bU_{k-1}^\SE}^\perp \bu_k^{\SE}\|{\Ltwo} \geq C/\sqrt{n}$,
    $\| \proj_{\bV_{k-1}^\SE}^\perp \bv_k^{\SE}\|{\Ltwo} \geq C$ we established above.
    \\

    \noindent \textbf{Lower bounds on $\zeta_{kk}^v$ and $\zeta_{kk}^u$.}
    Because $L_{g,kk} > 0$ and $L_{h,kk} > 0$ for $k=5,6$,
    the indices $5$ and $6$ are innovative with respect to both $\bK_g$ and $\bK_h$.
    Thus, by equations~\eqref{eq:zeta56-uv-explicit} and \eqref{eq:zeta56-uv-explicit},
    we have $\zeta_{kk}^u = \hzeta_{kk}^u$ and $\zeta_{kk}^v = \hzeta_{kk}^v$ for $k = 5,6$.
    By equation~\eqref{eq:zeta-hat-56-u}, 
    the upper bounds on $\zeta_{55}^v$ and $\zeta_{66}^v$ we just established, and the fact that $\Omega_5$ and $\Omega_6$ are $C$-strongly convex by Assumption A1,
    we conclude that $\zeta_{kk}^u \geq cn $ for $k = 5,6$.
    By equation~\eqref{eq:zeta-hat-56-v},
    the upper bounds on $\zeta_{kk}^v$, $\zeta_{kk}^u$ we just established, 
    the fact that $w(\,\cdot\,)^{-1}$ is upper bounded by $C$,
    the fact that $\ell_\prop$ is $c$-strongly convex,
    and the growth and decay conditions on $\pi$ (all by Assumption A1),
    we conclude $\zeta_{kk}^v \geq cn$ for $k = 5,6$.
    \\

    \noindent \textbf{No dependence on $\bu_5^{\SE}$ and $\bv_5^{\SE}$
      (i.e.,\ $\zeta_{65}^v = \zeta_{65}^u = 0$).}  Because indices
    $5$ and $6$ are innovative with respect to both $\bK_g$ and
    $\bK_h$.  Thus, by equations~\eqref{eq:zeta56-uv-explicit}
    and~\eqref{eq:zeta56-uv-explicit}, we have $\zeta_{65}^u =
    \hzeta_{65}^u$ and $\zeta_{65}^v = \hzeta_{65}^v$ for $k = 5,6$.
    Combining the expression for $\zeta_{65}^u = \hzeta_{65}^u$ in
    equation~\eqref{eq:zeta56-uv-explicit} with the expression for
    $\zeta_{65}^v = \hzeta_{65}^v$ in
    equation~\eqref{eq:zeta-hat-56-v}, we have that either
    $\zeta_{65}^u = \zeta_{65}^v = 0$ or
    \begin{equation}
    \begin{aligned}
        1
        &=
        \frac{
            n\E\Big[
            \Tr\Big(
                \Big(
                    \id_p + \frac{\nabla^2 \Omega_6(\bv_6^{\SE})}{\zeta_{66}^v}
                \Big)^{-1}
                \Big(
                    \id_p + \frac{\nabla^2 \Omega_5(\bv_5^{\SE})}{\zeta_{55}^v}
                \Big)^{-1}
            \Big)
            \Big]
            \E\Big[
                \frac{\pi(h_{1,i}^{\SE})}{(1 + (\ell_\prop^*)''(nu_{5,i}^{\SE};1)/\zeta_{55}^u)(1 + nw(h_{1,i}^{\SE})^{-1}/\zeta_{66}^u)}
            \Big]
        }{
            \zeta_{55}^u\zeta_{55}^v\zeta_{66}^u\zeta_{66}^v
        }
    \\
        &=
        \frac{
            \E\Big[
            \Tr\Big(
                \Big(
                    \id_p + \frac{\nabla^2 \Omega_6(\bv_6^{\SE})}{\zeta_{66}^v}
                \Big)^{-1}
                \Big(
                    \id_p + \frac{\nabla^2 \Omega_5(\bv_5^{\SE})}{\zeta_{55}^v}
                \Big)^{-1}
            \Big)
            \Big]
            \E\Big[
                \frac{\pi(h_{1,i}^{\SE})}{(1 + (\ell_\prop^*)''(nu_{5,i}^{\SE};1)/\zeta_{55}^u)(1 + nw(h_{1,i}^{\SE})^{-1}/\zeta_{66}^u)}
            \Big]
        }{
            \E\Big[
                \frac{1-\pi(h_{1,i}^{\SE})}{1 + (\ell_\prop^*)''(nu_{5,i}^{\SE};0)/\zeta_{55}^u}
                +
                \frac{\pi(h_{1,i}^{\SE})}{1 + (\ell_\prop^*)''(nu_{5,i}^{\SE};1)/\zeta_{55}^u}
            \Big]
            \E\Big[
                \Tr\Big[
                    \Big(
                        \id_p
                        +
                        \frac{\nabla^2\Omega_6(\bv_6^{\SE})}{\zeta_{66}^v}
                    \Big)^{-1}
                \Big]
            \Big]
        }
    \\
        &<1,
    \end{aligned}
    \end{equation}
    where the last equality follows from the fact that $\Big\|\Big(\id_p + \frac{\nabla^2 \Omega_6(\bv_6^{\SE})}{\zeta_{66}^v}\Big)^{-1}\Big\|_{\op} < 1$ and $\frac{\pi(h_{1,i}^{\SE})}{(1 + (\ell_\prop^*)''(nu_{5,i}^{\SE};1)/\zeta_{55}^u)(1 + nw(h_{1,i}^{\SE})^{-1}/\zeta_{66}^u)} < \frac{1-\pi(h_{1,i}^{\SE})}{1 + (\ell_\prop^*)''(nu_{5,i}^{\SE};0)/\zeta_{55}^u}+\frac{\pi(h_{1,i}^{\SE})}{1 + (\ell_\prop^*)''(nu_{5,i}^{\SE};1)/\zeta_{55}^u}$.
    Because we cannot have $1 < 1$, we conclude that $\zeta_{65}^u = \zeta_{65}^v = 0$.
    \\

    \noindent \textbf{Upper bounds on $\zeta_{51}^v$, $\zeta_{61}^v$.}
    This follows now from the expressions for $\hzeta_{51}^v$ and
    $\hzeta_{61}^v$ in equation~\eqref{eq:zeta-hat-56-v}, together with the
    upper bounds on $\pi'(\,\cdot\,)$, $w'(\cdot)$, and
    $w(\,\cdot\,)^{-1}$ in Assumption A1 as well as the upper and
    lower bounds on the fixed point parameters we have established
    above.  Note that although we have not bounded
    $\|\bu_5^{\SE,1}\|_{\Ltwo}$ and $\|\bu_{5,i}^{\SE,0}\|_{\Ltwo}$
    (we only bounded $\| \bu_5^{\SE} \|_{\Ltwo}$), these are bounded
    by noting that, fixing $y_{1,i}^{\SE} = 1$ or $y_{1,i}^{\SE} = 0$,
    the objective~\eqref{eq:SE-opt} is $cn$-strongly convex (by the
    lower bound on $\zeta_{kk}^u$) and has derivative at $\bu =
    \bzero$ whose expected $\ell_2$-norm is bounded above by
    $C\sqrt{n}$ (by the upper bound on $\| \bh_k^{\SE} \|_{\Ltwo}$),
    the upper bounds on $\nu_{5,0},\nu_{5,\sx}$, and the fact that
    (using Fenchel-Legendre duality) $\nabla_{\bu}
    \ell_5^*(\bzero);\bw,\by_1,\by_2) = \argmin_{\bmeta}
    \ell_\prop(\bmeta;\by_1)$ is bounded by $C\sqrt{n}$ (by the bounds
    on the derivatives of $\ell_\prop$ at $\bzero$ and its strong
    convexity).  \\

    \noindent \textbf{Lower bound on $\zeta_{51}^v$.} By equation~\eqref{eq:zeta-hat-56-v}, the decay condition on $\pi$ in Assumption A1, the bound on $\nu_{5,0}$ and $\nu_{5,\sx}$ we have established above,
     and the fact that $u_{5,i}^{\SE,1} - u_{5,i}^{\SE,0} < -c$, we get that $-\zeta_{51}^v \gtrsim c$.
     (Note that $u_{5,i}^{\SE,1} - u_{5,i}^{\SE,0} < -c$ because by Assumption A1, $\partial_\eta\ell_\prop(\eta;0) - \partial_\eta \ell_\prop(\eta;1) \geq c  >0 $).
\end{proof}

\section{Degrees of freedom adjustment factors}
\label{AppDOF}

As justified in~\Cref{sec:independent-covariates}, we assume without
loss of generality that $\bSigma = \id_p$.

\subsection{Existence and uniqueness of degrees-of-freedom adjustments}

Our first claim asserts that equations~\eqref{eq:dof-emp-def} have
unique solutions.
\begin{lemma}
\label{lem:dof-adjust-soln}
Under Assumption A1, equations~\eqref{eq:dof-emp-def} have unique
solutions $(\hzeta_\out^\theta, \hzeta_\out^\eta)$ and
$(\hzeta_\prop^\theta, \hzeta_\prop^\eta)$ with strictly positive
components.
\end{lemma}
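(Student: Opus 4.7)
\textbf{Proof plan for Lemma \ref{lem:dof-adjust-soln}.}
By the reduction of \Cref{sec:independent-covariates}, it suffices to treat $\bSigma = \id_p$. The two systems in \eqref{eq:dof-emp-def} have identical structure, so I will treat them uniformly: let $\{d_i\}_{i=1}^n \subset [0,\infty)$ stand for $\ddot\ell_\out(\heta_{\out,i};\eta_{\prop,i},\action_i,y_i)$ in one case and $\ddot\ell_\prop(\heta_{\prop,i};\action_i)$ in the other, and let $\{\lambda_j\}_{j=1}^p\subset (0,\infty)$ be the eigenvalues of $\nabla^2\Omega_\out(\hbtheta_\out)$ or $\nabla^2\Omega_\prop(\hbtheta_\prop)$. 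Assumption~A1(g) guarantees $\lambda_j \geq c > 0$, and Assumption~A1 guarantees $d_i \leq C$; moreover $d_i \geq c > 0$ in the propensity case and $\sum_i d_i > 0$ in the outcome case (on the exponentially-probable event that some $\action_i = 1$, using A1(d)). Define
\begin{equation}
f(\eta) \defn \frac{1}{n}\sum_{i=1}^n \frac{d_i}{\eta\, d_i + 1}, \qquad g(\theta) \defn \frac{1}{n}\sum_{j=1}^p \frac{1}{\theta + \lambda_j},
\end{equation}
so that \eqref{eq:dof-emp-def} becomes the two-equation system $\theta = f(\eta)$, $\eta = g(\theta)$.

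The plan is to solve by elimination: any positive solution must satisfy $\theta = H(\theta) \defn f(g(\theta))$, and conversely every positive fixed point of $H$ produces a solution with both components positive. Both $f$ and $g$ are smooth and strictly decreasing on $(0,\infty)$ (by explicit differentiation), with $f(0)=\tfrac{1}{n}\sum d_i$, $f(\infty)=0$, $g(0)=\tfrac{1}{n}\sum 1/\lambda_j \in (0,\infty)$, $g(\infty)=0$. Hence $H$ is continuous and non-decreasing, with $H(0^+) = f(g(0)) > 0$ and $H(\theta) \leq f(0) < \infty$ for all $\theta$, so $H(\theta)-\theta$ is positive near $0$ and eventually negative, yielding at least one positive fixed point.

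The crux is uniqueness. I will show $\theta \mapsto H(\theta)/\theta$ is \emph{strictly} decreasing on $(0,\infty)$, which forces at most one positive $\theta$ with $H(\theta)/\theta = 1$. Compute $H'(\theta) = f'(g(\theta))\,g'(\theta) = |f'(g(\theta))|\cdot|g'(\theta)|$. Two elementary pointwise bounds drive the argument:
\begin{equation}
|f'(\eta)| = \frac{1}{n}\sum_i \frac{d_i^2}{(\eta d_i + 1)^2} \leq \frac{1}{\eta}\cdot\frac{1}{n}\sum_i \frac{d_i}{\eta d_i + 1} = \frac{f(\eta)}{\eta},
\end{equation}
obtained from $\tfrac{d_i}{\eta d_i + 1}\leq \tfrac{1}{\eta}$, and
\begin{equation}
|g'(\theta)| = \frac{1}{n}\sum_j \frac{1}{(\theta+\lambda_j)^2} < \frac{1}{\theta}\cdot\frac{1}{n}\sum_j \frac{1}{\theta+\lambda_j} = \frac{g(\theta)}{\theta},
\end{equation}
where the inequality is \emph{strict} because $\lambda_j \geq c > 0$ makes $\tfrac{1}{\theta+\lambda_j} < \tfrac{1}{\theta}$. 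Chaining at $\eta = g(\theta)$ gives $H'(\theta) < \tfrac{f(g(\theta))}{g(\theta)}\cdot\tfrac{g(\theta)}{\theta} = \tfrac{H(\theta)}{\theta}$, so $\bigl(H(\theta)/\theta\bigr)' = \bigl(\theta H'(\theta)-H(\theta)\bigr)/\theta^2 < 0$. Combined with $H(0^+)/0 = +\infty$ and $H(\theta)/\theta \to 0$, strict monotonicity delivers a unique positive fixed point $\htheta^\theta$, and then $\htheta^\eta \defn g(\htheta^\theta) > 0$ completes the pair.

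The main obstacle is really just ensuring strictness of the bound on $|g'|$: this is precisely where the strong convexity of the penalty ($\nabla^2 \Omega \succeq c\id$) in Assumption~A1(g) becomes essential; without it, one could only conclude non-increase of $H(\theta)/\theta$ and multiple fixed points would a priori be possible. Everything else is elementary calculus once the reduction to a one-variable fixed-point problem is made. The proof of part (b) of \Cref{thm:exact-asymptotics} (deferred earlier to this appendix) then follows by applying \Cref{thm:exact-asymptotics}(c)--(d) to the pseudo-Lipschitz functionals appearing in \eqref{eq:dof-emp-def} and invoking the stability of the unique solution in the parameters of the system, an implicit-function-style argument at the nondegenerate fixed point just identified.
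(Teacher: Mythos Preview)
Your proposal is correct and follows essentially the same route as the paper: reduce to $\bSigma=\id_p$, recast \eqref{eq:dof-emp-def} as a Stieltjes-transform-type system, obtain existence by the intermediate value theorem, and establish uniqueness via the same pointwise bound $\sum (\theta+\lambda_j)^{-2}<\theta^{-1}\sum(\theta+\lambda_j)^{-1}$ (the paper packages this as \Cref{lem:iterated-stieltjes}). Your organization of the uniqueness step---showing $\theta\mapsto H(\theta)/\theta$ is globally strictly decreasing rather than comparing slopes only at crossings---is a slightly cleaner repackaging of the same inequalities and has the minor advantage of requiring strictness only on the penalty side, so the outcome case with some $d_i=0$ is handled without any special treatment.
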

\begin{proof}
    By Assumption A1, the function $\ell_\prop$ is strongly convex,
    whence $\ddot\ell_\prop(\heta_{\prop,i}) > 0$ for all $i$.  Thus,
    we have
\begin{equation}
  \frac{1}{n} \sum_{i=1}^n
  \frac{\ddot\ell_\prop(\heta_{\prop,i})}{\hzeta_{\prop}^\eta
    \ddot\ell_\prop(\heta_{\prop,i})+1} =
  \E\Big[\frac{1}{\hzeta_\prop^\eta + X_\prop}\Big], \qquad
  \text{where}\quad X_\prop \sim \frac{1}{n} \sum_{i=1}^n
  \delta_{1/\ddot\ell_\prop(\heta_{\prop,i})}.
\end{equation}
Similarly,
\begin{equation}
  \frac{1}{n} \Tr\Big(\big(\hzeta_\prop^\theta \id_p  +
  \nabla^2 \Omega_\prop(\hbtheta_\prop)\big)^{-1}\Big) =
  \frac{p}{n}\E\Big[\frac{1}{\hzeta_\prop^\theta + X_\out}\Big],
  \qquad \text{where}\quad X_\out \sim \frac{1}{p} \sum_{i=1}^n
  \delta_{1/\sigma_i^2},
\end{equation}
and $\sigma_i^2$, $i \in [p]$ are the singular values of
$\nabla^2 \Omega_\prop(\hbtheta_\prop) $.
\begin{lemma}
  \label{lem:iterated-stieltjes}
If for some $\gamma,\gamma' > 0$, $X_1,X_2$ are random variables
supported on $[\gamma,\infty)$, then the system of equations
  \begin{equation}
    \label{eq:iterated-stieltjes}
    \zeta_2 = \E\Big[ \frac{1}{\zeta_1 + X_1} \Big], \qquad \zeta_1 =
    \gamma' \E\Big[ \frac{1}{\zeta_2 + X_2} \Big],
        \end{equation}
  has a unique non-negative solution $\zeta_1,\zeta_2$,
  and this solution is in fact positive.
\end{lemma}
\noindent \Cref{lem:dof-adjust-soln} follows
from~\Cref{lem:iterated-stieltjes}, which we prove below.
\end{proof}

\begin{proof}[Proof of~\Cref{lem:iterated-stieltjes}]
    The positivity of any solutions follows from the fact that
    $X_1,X_2$ are supported on $[c,\infty)$. Moreover, at any
      solution, we have
    \begin{equation}
        \zeta_2 \leq \frac{1}{c},
        \qquad
        \zeta_1 \leq \frac{\gamma'}{ c}.
    \end{equation}
    Define functions $f_1,f_2$ by $f_1(\zeta_1) =
    \E\Big[\frac{1}{\zeta_1 + X_1}\Big]$ and $f_2(\zeta_2) =
    \gamma'\E\Big[\frac{1}{\zeta_2 + X_2}\Big]$.  Note that $f_1$ is
    defined in $[0,\infty)$, is positive, continuous, and strictly
      decreasing, with $f_1(0) = \E[1/X_1]$ and $\lim_{\zeta_1
        \rightarrow \infty} f_1(\zeta_1) = 0$.  Thus, $f_1^{-1}:
      (0,\E[1/X_1]]$ is a non-negative, strictly decreasing function
    satisfying $\lim_{\zeta_1 \rightarrow 0} f_1^{-1}(\zeta_1) =
    \infty$ and $f_1^{-1}(\E[1/X_1]) = 0$.  Likewise, $f_2$ is
    positive, continuous, and strictly decreasing, with $f_2(0) =
    \delta^{-1}\E[1/X_2]$ and $f_2(\E[1/X_1]) > 0$.  Thus, by the
    intermediate value theorem on $[\epsilon,\E[1/X_1])$ for
      sufficiently small $\epsilon$, there exists $\zeta_2 \in
      [\epsilon,\E[1/X_1]]$ such that $f_1^{-1}(\zeta_2) =
      f_2(\zeta_2)$.  Setting $\zeta_1 = f_1^{-1}(\zeta_2) =
      f_2(\zeta_2)$ gives a positive solution to the system of
      equations in the lemma.

    We now show uniqueness. At any solution $f_1(\zeta_1) = f_2^{-1}(\zeta_1)$,
    \begin{equation}
    \begin{aligned}
        -f_1'(\zeta_1)
            &=
            \E\Big[
                \frac{1}{(\zeta_1 + X_1)^2}
            \Big]
            <
            \frac{1}{\zeta_1}\E\Big[
                \frac{1}{\zeta_1 + X_1}
            \Big]
            =
            \frac{\zeta_2}{\gamma'\E\big[\frac{1}{\zeta_2 + X_2}\big]}
        \\
            &<
            \frac{\zeta_2^2}{\frac{\gamma'(\zeta_2+c)}{\zeta_2}\E\big[\big(\frac{\zeta_2}{\zeta_2 + X_2}\big)^2\big]}
            =
            \frac{\zeta_2^2}{-\zeta_2(\zeta_2+c)f_2'(\zeta_2)}
            < \frac{1}{\big(1 + \frac{c}{\zeta_2}\big)} \frac{1}{(-f_2'(\zeta_2))}
            \leq \frac{-(f_2^{-1})'(\zeta_2))}{1 + c^2}.
    \end{aligned}
    \end{equation}
    That is, at any solution $\zeta_1$, $f_1$ is decreasing more slowly than $f_2^{-1}$, which implies there can only be one solution.
\end{proof}

\subsection{Concentration of degrees-of-freedom adjustments}

This section is devoted to the proof
of~\Cref{thm:exact-asymptotics}(b).
Define
\begin{equation}
  \begin{gathered}
    f_\eta(\zeta^\eta) \defn \frac{1}{n} \sum_{i=1}^n \E\Big[
      \frac{1}{\zeta_\prop^\eta + \ddot \ell_\prop^*(n
        u_{5,i}^{\SE};y_{1,i}^{\SE})} \Big], \qquad
    f_\theta(\zeta^\theta) \defn \frac{1}{n} \E\Big[ \Tr\Big(
      \big(\zeta_\prop^\theta \id_n + \nabla^2
      \Omega_\prop(\hbtheta_\prop^{\SE})\big)^{-1} \Big) \Big],
    \\ \fhat_\eta(\hzeta_\prop^\eta) \defn \frac{1}{n} \sum_{i=1}^n
    \frac{1}{\hzeta_\prop^\eta + \ddot \ell_\prop^*(n
      u_{5,i}^{\PO};y_{1,i})} , \qquad
    \fhat_\theta(\hzeta_\prop^\theta) \defn \frac{1}{n} \Tr\Big(
    \big(\hzeta_\prop^\theta \id_n + \nabla^2
    \Omega_\prop(\bv_5^{\PO})\big)^{-1} \Big).
  \end{gathered}
\end{equation}
The quantities $\zeta_\prop^\eta,\zeta_\prop^\theta$ solve
$\zeta_\out^\theta = f_\eta(\zeta_\prop^\eta)$ and $\zeta_\prop^\eta =
f_\theta(\zeta_\prop^\theta)$ by \Cref{lem:eff-reg-explicit}.
The quantities $\hzeta_\prop^\eta,\hzeta_\prop^\theta$ solve
$\hzeta_\out^\theta = \fhat_\eta(\hzeta_\prop^\eta)$ and
$\hzeta_\prop^\eta = \fhat_\theta(\hzeta_\prop^\theta)$
by~\cref{eq:dof-emp-def}.  The result follows from the point-wise
concentration of $\fhat_\eta,\fhat_\theta$ on $f_\eta,f_\theta$ and
the monotonicity properties of $f_\eta,f_\theta$, as we now show.

We will find constants $\gamma_2,\gamma_2',\gamma_2'' > 0$, which
are upper bounded by $\cPmodel$-dependent constant $C$, such that
for $\epsilon < \gamma_2''$,
\begin{equation}
  \label{eq:f-theta-eta-bounds}
  f_\theta(\zeta_\prop^\theta + \gamma_2 \epsilon) \geq
  \zeta_\prop^\eta - \epsilon + \gamma_2'\epsilon, \qquad
  f_\eta(\zeta_\prop^\eta - \epsilon) \leq \zeta_\prop^\theta +
  \gamma_2 \epsilon - \gamma_2'\epsilon.
\end{equation}
Assume, for now, we can find such constants.  By Assumption A1, the
terms in the sum in the definition of $\fhat_\eta(\zeta^\eta)$ are
$C$-Lipschitz in $nu_{5,i}^{\PO}$.  Moreover, because
$\bv_5^{\PO}\mapsto \nabla^2 \Omega_\prop(\bv_5^{\PO})$ is
$C\sqrt{n}$-Lipschitz in Frobenius norm and
$\big\|\big(\zeta_\prop^\theta \id_n +
\nabla^2\Omega_\prop(\bv_5^{\PO})\big)^{-1}\big\|_{\op} \leq C$, the
function $\bv_5^{\PO} \mapsto \big(\zeta_\prop^\theta \id_n + \nabla^2
\Omega_\prop(\bv_5^{\PO})\big)^{-1}$ is $C\sqrt{n}$-Lipschitz in
Frobenius norm.  This implies that $\fhat_\theta(\zeta_\prop^\theta)$
is $C$-Lipschitz in $\bv_5^{\PO}$.  By~\Cref{thm:se}, with probability
at least $1 - Ce^{-cn\epsilon^r}$, $ \fhat_\theta(\zeta_\prop^\theta +
\gamma_2\epsilon) \geq \zeta_\prop^\eta - \epsilon $ and $
\fhat_\eta(\zeta_\prop^\eta - \epsilon) \leq \zeta_\prop^\theta +
\gamma_2 \epsilon.  $ On this event, $\fhat_\eta(\zeta_\prop^\eta -
\epsilon) \leq \zeta_\prop^\theta + \gamma_2\epsilon =
\fhat_\theta^{-1}(\fhat_\theta(\zeta_\prop^\theta + \gamma_2\epsilon))
\leq \fhat_\theta^{-1}(\zeta_\prop^\eta - \epsilon)$, where we have
used that $\fhat_\theta^{-1}$ is strictly decreasing (see proof of
\Cref{lem:dof-adjust-soln}).  The reverse inequality holds at
$\zeta^\eta = \frac{1}{n}
\Tr\Big(\nabla^2\Omega_\prop(\hbtheta_\prop^{\PO})^{-1}\Big)$, where
$\fhat_\eta(\zeta^\eta) > 0$ and $\fhat_\theta^{-1}(\zeta^\eta) = 0$.
By continuity (see proof of~\Cref{lem:dof-adjust-soln}), the
intermediate value theorem shows that the solution must satisfy
$\hzeta_\prop^\eta \geq \zeta_\prop^\eta - \epsilon$ on this event.
Likewise, on this event, we have $\fhat_\theta(\zeta_\prop^\theta +
\gamma_2\epsilon) \geq \zeta_\prop^\eta - \epsilon =
\fhat_\eta^{-1}(\fhat_\eta(\zeta_\prop^\eta-\epsilon)) \geq
\fhat_\eta^{-1}(\zeta_\prop^\theta + \gamma_2\epsilon)$.  The reverse
inequality holds of $\zeta^\theta$ near 0: $\fhat_\theta(0) =
\frac{1}{n}
\Tr\Big(\nabla^2\Omega_\prop(\hbtheta_\prop^{\PO})^{-1}\Big) > 0$ and
$\lim_{\zeta^\theta \downarrow 0}\fhat_\eta^{-1}(\zeta^\theta) =
\infty $.  Thus, the solution must satisfy $\hzeta_\prop^\theta \leq
\zeta_\prop^\theta + \gamma_2 \epsilon$ on this event.  By a
completely analogous argument, if we can find constants
$\gamma_2,\gamma_2',\gamma_2''$, upper bounded by $\cPmodel$-dependent
constants, such that for $\epsilon < \gamma_2''$, $
f_\theta(\zeta_\prop^\theta - \epsilon) \leq \zeta_\prop^\eta +
\gamma_2\epsilon - \gamma_2'\epsilon $ and $ f_\eta(\zeta_\prop^\eta +
\gamma_2\epsilon) \leq \zeta_\prop^\theta - \epsilon +
\gamma_2'\epsilon, $ then with probability at least $1 -
Ce^{-cn\epsilon^r}$, $\hzeta_\prop^\eta \leq \zeta_\prop^\eta +
\gamma_2\epsilon$ and $\hzeta_\prop^\theta \leq \zeta_\prop^\theta -
\epsilon$.  We conclude that $\hzeta_\prop^\eta \mydoteq
\zeta_\prop^\eta$ and $\hzeta_\prop^\theta \mydoteq
\zeta_\prop^\theta$.

We now turn to establishing the claim~\eqref{eq:f-theta-eta-bounds}.
By Assumption A1, the functions $\ell_\prop^*$ and $\Omega_\prop$ are
$C_0$-strongly smooth and $c_0$-strongly convex.  For the remainder of
the proof, $C_0,c_0$ are $\cPmodel$-dependent constants whose value
does \textit{not} change at each appearance.  Letting $\delta \defn
n/p$, the equations $\zeta_\out^\theta = f_\eta(\zeta_\prop^\eta)$,
$\zeta_\prop^\eta = f_\theta(\zeta_\prop^\theta)$, $\hzeta_\out^\theta
= \fhat_\eta(\hzeta_\prop^\eta)$, and $\hzeta_\prop^\eta =
\fhat_\theta(\hzeta_\prop^\theta)$ imply $\zeta_\prop^\theta \leq
1/c_0$ and $\zeta_\prop^\eta \leq \delta^{-1}/c_0$.  These
inequalities then imply the upper bounds $\zeta_\prop^\theta \geq
\frac{1}{1/c_0 + C_0} \defn c_1$ and $\zeta_\prop^\eta \geq
\frac{p}{n(1/c_0 + C_0)} = \delta^{-1}c_1$.  The quantity $c_1$ will
not change in future appearances.  We have
\begin{equation}
  \begin{aligned}
    -f_\eta'(\zeta_\prop^\eta) &= \frac{1}{n} \sum_{i=1}^n \E\Big[
      \frac{1}{\big(\zeta_\prop^\eta + \ddot \ell_\prop^*(n
        u_{5,i}^{\SE};y_{1,i}^{\SE})\big)^2} \Big] <
    \frac{1}{n\zeta_\prop^\eta} \sum_{i=1}^n \E\Big[
      \frac{1}{\zeta_\prop^\eta + \ddot \ell_\prop^*(n
        u_{5,i}^{\SE};y_{1,i}^{\SE})} \Big] =
    \frac{\zeta_\prop^\theta}{ \frac{1}{n} \E\Big[ \Tr\Big(
        \big(\zeta_\prop^\theta \id_n + \nabla^2
        \Omega_\prop(\hbtheta_\prop^{\SE})\big)^{-1} \Big) \Big] }
    \\ &< \frac{\zeta_\prop^\theta}{ \frac{\zeta_\prop^\theta+c_0}{n}
      \E\Big[ \Tr\Big( \big(\zeta_\prop^\theta \id_n + \nabla^2
        \Omega_\prop(\hbtheta_\prop^{\SE})\big)^{-2} \Big) \Big] } =
    \frac{\zeta_\prop^\theta}{(\zeta_\prop^\theta +
      c_0)(-f_\theta'(\zeta_\prop^\theta))} < \frac{1}{\big(1 +
      c_0^2\big)} (-f_\theta^{-1})'(\zeta_\prop^\eta)),
  \end{aligned}
    \end{equation}
where in the last inequality we have used that $\zeta_\prop^\theta < 1/c_0$.
We further have that for $\alpha < 1$,
\begin{equation}
  \begin{gathered}
    -f_\eta'(\alpha \zeta_\prop^\eta) = \frac{1}{n} \sum_{i=1}^n
    \E\Big[ \frac{1}{(\alpha \zeta_\prop^\eta + \ddot \ell_\prop^*(n
        u_{5,i}^{\SE};y_{1,i}^{\SE}))^2} \Big] \leq
    \frac{1}{\alpha^2}\big(-f_\eta'(\zeta_\prop^\eta)\big),
  \end{gathered}
\end{equation}
where we use that $1/(\alpha \zeta_\prop^\eta + \ddot \ell_\prop^*(n
u_{5,i}^{\SE};y_{1,i}^{\SE})) \leq 1/(\alpha( \zeta_\prop^\eta + \ddot
\ell_\prop^*(n u_{5,i}^{\SE};y_{1,i}^{\SE})))$.  We also have that for
all $\alpha > 1$,
\begin{equation}
  \begin{gathered}
    -f_\theta'(\alpha \zeta_\prop^\theta) = \frac{1}{n} \E\Big[
      \Tr\Big( \big(\alpha \zeta_\prop^\theta \id_n + \nabla^2
      \Omega_\prop(\hbtheta_\prop^{\SE})\big)^{-2} \Big) \Big] \leq
    \Big(\frac{c_1 + C_0}{\alpha c_1 +
      C_0}\Big)^2\big(-f_\theta'(\zeta_\prop^\theta)\big),
    \end{gathered}
    \end{equation}
where we have used that for any singular value $\sigma_i^2$ of
$\nabla^2 \Omega_\prop(\hbtheta_\prop^{\SE})$, we have
$\frac{1}{\alpha \zeta_\prop^\theta + \sigma_i^2} \leq \frac{c_1 +
  C_0}{\alpha c_1 + C_0} \frac{1}{\zeta_\prop^\theta + \sigma_i^2}$
because $\sigma_i^2 \leq C_0$ and $\zeta_\prop^\theta > c_1$.  We
therefore have that for $\epsilon > 0$
\begin{align*}
f_\eta(\zeta_\prop^\eta - \epsilon) & \leq f_\eta(\zeta_\prop^\eta) +
\frac{\epsilon (-f_\eta'(\zeta_\prop^\eta))}{(1 - \epsilon /
  \zeta_\prop^\eta)^2} = \zeta_\prop^\theta + \frac{\epsilon
  (-f_\eta'(\zeta_\prop^\eta))}{(1 - \epsilon / \zeta_\prop^\eta)^2}
\leq \zeta_\prop^\theta + \frac{\epsilon
  (-f_\eta'(\zeta_\prop^\eta))}{(1 - \delta \epsilon / c_1)^2}, \quad
\mbox{and} \\
f_\theta(\zeta_\prop^\theta + \epsilon) & \geq
f_\theta(\zeta_\prop^\theta) -
\epsilon(-f_\theta'(\zeta_\prop^\theta)) \Big( \frac{c_1 +
  C_0}{(1-\epsilon/\zeta_\prop^\theta)c_1 + C_0}\Big)^2 \geq
\zeta_\prop^\eta - \epsilon \frac{1}{ (1 +
  c_0^2)(-f_\eta'(\zeta_\prop^\eta))} \Big( \frac{c_1 + C_0}{
  (1-\epsilon/c_1) c_1 + C_0} \Big)^2.
\end{align*}
Thus, we can take $\gamma_2 = (1 +
c_0^2/2)(-f_\eta'(\zeta_\prop^\eta))$, $\gamma_2' =
c_0^2(-f_\eta'(\zeta_\prop^\eta))/4$, and $\gamma_2''$ sufficiently
small so that $1/(1-\delta \epsilon/c_1)^2$ and $(c_1 +
C_0)/((1-\epsilon/c_1)c_1 + C_0)$ are sufficiently close to 1 to make
equation~\eqref{eq:f-theta-eta-bounds} true.  Note that
$|f_\eta'(\zeta_\prop^\eta)| \leq 1/c_0^2$, whence these constants are
upper bounded by $C$.

Switching the roles of $f_\theta$ and $f_\eta$, if we can find
constants $\gamma_2,\gamma_2',\gamma_2''$, upper bounded by
$\cPmodel$-dependent constants, such that for $\epsilon < \gamma_2''$,
we have the inequalities
\begin{align*}
f_\theta(\zeta_\prop^\theta - \epsilon) \leq \zeta_\prop^\eta +
\gamma_2\epsilon - \gamma_2'\epsilon, \quad \mbox{and} \quad
f_\eta(\zeta_\prop^\eta + \gamma_2\epsilon) \leq \zeta_\prop^\theta -
\epsilon + \gamma_2'\epsilon.
\end{align*}


\section{Oracle debiasing: the general case}
\label{sec:db-proofs}

\Cref{thm:pop-mean,thm:db-normality} as regards oracle ASCW follow
from a characterization (\Cref{thm:debiasing}) of a general class of
debiasing constructions.  This characterization also will allow us to
characterize the unsuccessful debiasing constructions alluded to
in~\Cref{sec:procedure-definitions}.  As described
in~\Cref{sec:independent-covariates}, we can assume without loss of
generality that $\bSigma = \id_p$.

\subsection{Some consequences of the fixed point equations}

Our proofs require some basic consequences of the fixed point equations \eqref{eq:regr-fixed-pt}, which we state here.

Recall that $\< \ones , \hbi_\out^f \>_{\Ltwo} = 0$ by
equation~\eqref{eq:regr-fixed-pt}.  Using the explicit expressions for
the score in equation~\eqref{eq:score-explicit}, we have
$\E[\pi_{\zeta_\out^\eta}(\eta_{\prop,i}^f)(y_i^f -
  \hzeta_{\out,i}^{f,\loo})] = 0$.  Using Gaussian integration by
parts, this is equivalent to $ 0 =
\E[\pi_{\zeta_\out^\eta}(\eta_{\prop,i}^f)](\mu_\out - \hmu_\out^f) +
\E\big[\pi_{\zeta_\out^\eta}'(\eta_{\prop,i}^f)\big]\< \btheta_\prop ,
\btheta_\out - \hbtheta_\out^f \>_{\Ltwo}.  $ Because, by assumption
A1, $C > w(\eta_{\prop,i}^f) > c > 0$ and $\E[\pi_{\zeta_\out^\eta}] >
c > 0$, and because $\zeta_\out^\theta > c > 0$ by
Lemma~\Cref{lem:regr-fixpt-exist-and-bounds}, we have
$\E[\pi_{\zeta_\out^\eta}(\eta_{\prop,i}^f)] > c
\E[\pi(\eta_{\prop,i}^f)] > c > 0$.  Thus, recalling the definition of
$\alpha_1(\zeta)$ in~\cref{eq:alpha-12-zeta-def}, we have
\begin{equation}
\label{eq:hmu-out-f-bias}
    \hmu_\out^f - \mu_\out = \alpha_1(\zeta_\out^\eta)\<
    \btheta_\prop, \quad \mbox{and} \quad \btheta_\out -
    \hbtheta_\out^f \>_{\Ltwo}.
\end{equation} 
Then, by equations~\eqref{eq:regr-fixed-pt} and
\eqref{eq:score-deriv-explicit},
\begin{equation}
\label{eq:beta-out-prop}
\begin{aligned}
    \beta_{\out\prop}
        &=
        \frac{
            \E\big[\pi_{\zeta_\out^\eta}'(\eta_{\prop,i}^f)(y_i^f - \heta_{\out,i}^{f,\loo})\big]
        }{
            \E[\pi_{\zeta_\out^\eta}(\eta_{\prop,i}^f)]
        }
        =
        \frac{
            \E[\pi_{\zeta_\out^\eta}'(\eta_{\prop,i}^f)](\mu_\out - \hmu_\out^f)
            +
            \E\big[\pi_{\zeta_\out^\eta}''(\eta_{\prop,i}^f)]\<\btheta_\prop,\btheta_\out - \hbtheta_\out^f\>_{\Ltwo}
        }{
            \E[\pi_{\zeta_\out^\eta}(\eta_{\prop,i}^f)]
        }
    \\
        &=
        \left(
            \alpha_2(\zeta_\out^\eta)
            -
            \alpha_1^2(\zeta_\out^\eta)
        \right)
        \< \btheta_\prop,\btheta_\out - \hbtheta_\out^f\>_{\Ltwo}.
\end{aligned}
\end{equation}


\subsection{Proof of~\Cref{thm:debiasing}}

We now prove our general theorem on oracle debiasing.  We consider
debiased estimates based on the modified mean and inverse covariance
matrices
\begin{equation}
 \bm \defn \bmu_{\sx} + \sc_{\bmu} \btheta_\prop, \qquad \bM \defn
 \id_p - \sc_{\bSigma} \btheta_\prop \btheta_\prop^\top,
\end{equation}
for deterministic values $\sc_{\bmu},\sc_{\bSigma} \in \reals$.  In
particular, we consider the debiased estimates
\begin{equation}
\label{eq:general-db-explicit}
\begin{gathered}
    \htheta_{\out,0}^\de = \htheta_{\out,0} - \bm^\top \frac{\bM
      \bX^\top (\ba \odot \bw \odot(\by - \htheta_{\out,0} \ones - \bX
      \hbtheta_\out))}{n\hzeta_\out^\theta}, \\ \hbtheta_\out^\de =
    \hbtheta_\out + \frac{\bM\bX^\top\big(\ba \odot \bw \odot (\by -
      \htheta_{\out,0} \ones - \bX \hbtheta_\out)\big)}{n
      \hzeta_\out^\theta},
\end{gathered}
\end{equation}
where $(\htheta_{\out,0},\hbtheta_\out)$ is fit using weights $w_i$ in
equation~\eqref{eq:outcome-fit} satisfying Assumption A1.  The next
theorem characterizes their behavior.
\begin{theorem}
\label{thm:debiasing}
Define the modified propensity scores
\begin{equation}
  \pi_\zeta(\eta)
  \defn
  \frac{\zeta w(\eta)}{1 + \zeta w(\eta)} \pi(\eta),
    \end{equation}
and define the functions
\begin{equation}
  \label{eq:alpha-12-zeta-def}
  \alpha_1(\zeta) =
  \frac{\E[\pi_\zeta'(\eta_\prop)]}{\E[\pi_\zeta(\eta_\prop)]}, \qquad
  \alpha_2(\zeta) =
  \frac{\E[\pi_\zeta''(\eta_\prop)]}{\E[\pi_\zeta(\eta_\prop)]},
  \qquad \text{where }\eta_\prop \sim
  \normal(\mu_\prop,\|\btheta_\prop\|^2).
\end{equation}
Define also
\begin{equation}
  \label{eq:db-gen-conc}
  \bias_1(\zeta) \defn \alpha_1(\zeta) - \sc_{\bmu}, \qquad
  \bias_2(\zeta) \defn \left( \big(\alpha_2(\zeta) -
  \alpha_1^2(\zeta)\big) (1 - \sc_{\bSigma}\|\btheta_\prop\|^2) -
  \sc_{\bSigma} \right).
    \end{equation}
Then, under Assumption A1 and if $|\sc_{\bmu}|,|\sc_{\bSigma}| \leq C$,
for any $t \in \reals$
\begin{equation}
  \begin{gathered}
    \htheta_{\out,0}^\de - \theta_{\out,0} \mydoteq
    \<\btheta_\prop,\btheta_\out - \hbtheta_\out\> \Big(
    \bias_1(\zeta_\out^\eta) - \bias_2(\zeta_\out^\eta) \<
    \bm,\btheta_\prop\> \Big), \\
    \hmu_\out^\de - \mu_\out \mydoteq \< \btheta_\prop , \btheta_\out
    - \hbtheta_\out \> \Big( \bias_1(\zeta_\out^\eta) - \sc_{\bmu}
    \bias_2(\zeta_\out^\eta) \|\btheta_\prop\|^2 \Big), \\
    \frac{1}{p} \sum_{j=1}^p \indic\Bigg\{ \frac{ \sqrt{n} \big(
      \htheta_{\out,j}^\de - \big( \theta_{\out,j} + \< \btheta_\prop
      , \btheta_\out - \hbtheta_\out \>\bias_2(\zeta_\out^\eta)
      \theta_{\prop,j} \big) \big) }{ \shat_\out } \leq t \Bigg\}
    \mydoteq \P\big(\normal(0,1) \leq t\big),
    \end{gathered}
    \end{equation}
    where
    \begin{equation}
        \shat_\out^2
            \defn
            \frac{\|\ba \odot \bw \odot (\by - \htheta_{\out,0}\ones - \bX \hbtheta_\out)\|^2/n}{(\hzeta_\out^\theta)^2}.
    \end{equation}
\end{theorem}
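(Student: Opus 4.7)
The plan is to reduce the debiasing theorem to computations in the fixed-design model via Theorem~\ref{thm:exact-asymptotics}, and then exploit the explicit form $\by_\out^f = \beta_{\out\prop}\btheta_\prop + \btheta_\out + \bg_\out^f$ together with the fixed-point identities \eqref{eq:hmu-out-f-bias} and \eqref{eq:beta-out-prop} to collapse everything onto the two bias coefficients $\bias_1$ and $\bias_2$. First I would rewrite the debiased estimates in the operator form
\begin{equation*}
\hbtheta_\out^\de = \hbtheta_\out + \bM(\hbtheta_\out^{\loo} - \hbtheta_\out), \qquad \htheta_{\out,0}^\de = \htheta_{\out,0} - \langle \bm,\, \hbtheta_\out^\de - \hbtheta_\out\rangle.
\end{equation*}
Theorem~\ref{thm:exact-asymptotics}(c), applied (after standard high-probability truncation) to bounded linear functionals of $(\hbtheta_\out,\hbtheta_\out^{\loo})$, then transfers any such linear functional to the expectation of the analogous functional of the fixed-design counterpart $\hbtheta_\out^{\de,f} \defn \hbtheta_\out^f + \bM(\by_\out^f - \hbtheta_\out^f)$.

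The key algebraic simplification is that $\bM = \id_p - \sc_\bSigma \btheta_\prop\btheta_\prop^\top$ gives
$\hbtheta_\out^{\de,f} = \by_\out^f - \sc_\bSigma \langle \btheta_\prop,\, \by_\out^f - \hbtheta_\out^f\rangle\,\btheta_\prop$. Because $\hbtheta_\out^f$ is a $C$-Lipschitz function of $\by_\out^f$ (strong convexity of $\Omega_\out$), Gaussian concentration replaces the scalar $\langle \btheta_\prop,\, \by_\out^f - \hbtheta_\out^f\rangle$ by its $L^2$-expectation $\beta_{\out\prop}\|\btheta_\prop\|^2 + D$, where $D \defn \langle \btheta_\prop,\, \btheta_\out - \hbtheta_\out^f\rangle_{L^2}$; by \eqref{eq:beta-out-prop} this equals $D(1+(\alpha_2-\alpha_1^2)\|\btheta_\prop\|^2)$. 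Substituting and using $\beta_{\out\prop}=(\alpha_2-\alpha_1^2)D$ from \eqref{eq:beta-out-prop} collapses the $\btheta_\prop$ coefficient exactly to $\bias_2(\zeta_\out^\eta)$, yielding
$\hbtheta_\out^{\de,f} \mydoteq \btheta_\out + \bg_\out^f + D\,\bias_2(\zeta_\out^\eta)\,\btheta_\prop$. Since $\bg_\out^f$ has iid $\normal(0, S_{\out\out}/n)$ coordinates and $\shat_\out^2 \mydoteq S_{\out\out}$ (the latter from $\bS = \llangle \IFhat^f\rrangle_{L^2}/n$ and Theorem~\ref{thm:exact-asymptotics}), this gives the coordinate-wise decomposition needed for the empirical-CDF normality statement.

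For the intercept, expanding $\langle \bm,\, \bM(\by_\out^f - \hbtheta_\out^f)\rangle_{L^2}$ and combining with $\htheta_{\out,0} \mydoteq \hmu_\out^f - \langle \bmu_\sx,\,\hbtheta_\out^f\rangle_{L^2}$ from Theorem~\ref{thm:exact-asymptotics}(a) together with \eqref{eq:hmu-out-f-bias}, the $\langle \bmu_\sx, \btheta_\prop\rangle$-terms rearrange into $-D(\alpha_2-\alpha_1^2)\langle \bmu_\sx,\btheta_\prop\rangle + \sc_\bSigma D (1+(\alpha_2-\alpha_1^2)\|\btheta_\prop\|^2)\langle \bm,\btheta_\prop\rangle$, and straightforward algebra yields $D(\bias_1 - \bias_2\langle \bm,\btheta_\prop\rangle)$. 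For the population mean $\hmu_\out^\de \mydoteq \E[\htheta_{\out,0}^\de] + \E[\langle \by_\sx^f,\,\hbtheta_\out^{\de,f}\rangle]$, the critical input is the zero cross-covariance $S_{\sx\out} = -\langle \ones,\, \hbpsi_\out^f\rangle_{L^2}/(n\zeta_\out^\theta) = 0$, which follows from the fixed-point condition $\langle \ones,\hbi_\out^f\rangle_{L^2} = 0$. This independence of $\bg_\sx^f$ and $\bg_\out^f$ kills the $\langle \bmu_\sx,\btheta_\prop\rangle$ contribution and the final form reduces to $D(\bias_1 - \sc_\bmu\bias_2\|\btheta_\prop\|^2)$ as stated.

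The main obstacle will be the coordinate-wise normality: the residual fluctuation $\hbtheta_\out^{\de,f} - (\btheta_\out + \bg_\out^f + D\bias_2\btheta_\prop)$ has the form $-\sc_\bSigma\,\btheta_\prop \cdot \Delta$ where $\Delta$ is a Gaussian-concentrating scalar of order $1/\sqrt{n}$, so coordinate-wise this fluctuation is of size $|\theta_{\prop,j}|/\sqrt{n}$, which is not uniformly small. To push this through the indicator functions, I would approximate $\indic\{\cdot \leq t\}$ by a Lipschitz function of width $\delta$, use that the worst-case displacement of $\sqrt{n}(\htheta_{\out,j}^\de - \theta_{\out,j})/\shat_\out$ per coordinate is $O(|\theta_{\prop,j}|)$, and exploit $|\{j: |\theta_{\prop,j}| > \delta\}| \leq \|\btheta_\prop\|^2/\delta^2 \leq C/\delta^2$ together with $p\to\infty$ to conclude that the empirical CDF is perturbed by $O(\delta) + O(1/(p\delta^2))$, which tends to zero with the right choice of $\delta$. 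The secondary technical nuisance is verifying that $\bm$-weighted and $\btheta_\prop$-weighted inner products of quantities like $\hbtheta_\out^{\loo}$ and $\hbmu_\sx$ indeed satisfy the hypotheses of Theorem~\ref{thm:exact-asymptotics}(c), which requires a standard high-probability boundedness argument (using~\Cref{lem:regr-fixpt-exist-and-bounds}) combined with a truncation to turn bilinear maps into order-$2$ pseudo-Lipschitz ones.
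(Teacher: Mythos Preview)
Your approach is essentially the paper's: rewrite $\hbtheta_\out^\de$ via $\hbtheta_\out^{\loo}$ (the paper goes through the equivalent KKT form $\nabla\Omega_\out(\hbtheta_\out)/\hzeta_\out^\theta$), transfer to the fixed-design model by Theorem~\ref{thm:exact-asymptotics}, collapse via \eqref{eq:hmu-out-f-bias}--\eqref{eq:beta-out-prop}, and finish with a soft-indicator argument. The algebraic reductions you describe for the intercept, the population mean (including the use of $S_{\sx\out}=0$), and the vector part are exactly those the paper carries out.

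The one place you over-engineer is the ``main obstacle.'' You are right that the per-coordinate residual after normalization is $O_p(|\theta_{\prop,j}|)$ and not uniformly small, but no sparsity/counting argument is needed. The empirical CDF functional $\bx\mapsto \tfrac{1}{p}\sum_j \SoftIndic_{t,\delta}(\sqrt{n}\,x_j)$ is $\sqrt{n}/(p\delta)$-Lipschitz in the $\ell^1$-norm, and the rank-one residual has $\ell^1$-norm $|\sc_\bSigma|\,\|\btheta_\prop\|_1\,|\Delta|\le C\sqrt{p}\cdot O_p(n^{-1/2})$ by Cauchy--Schwarz; the product is $O_p\!\big(1/(\delta\sqrt{p})\big)\to 0$ for any fixed $\delta$, after which you send $\delta\to 0$. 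This is (implicitly) what the paper does: with the $\tfrac{1}{p}$-average the soft-indicator sum has $O(1/\delta)$ pseudo-Lipschitz constant in $(\hbtheta_\out,\hbtheta_\out^{\loo})$, so both the random-to-fixed transfer and the scalar-concentration step go through directly. Your stated bound $O(\delta)+O(1/(p\delta^2))$ conflates the softening width with the threshold and, as written, leaves an $O(1)$ term for the ``small'' coordinates; the $\ell^1$ route avoids this entirely.
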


\begin{proof}[Proof of~\Cref{thm:debiasing}]
By the KKT conditions for the problem~\eqref{eq:outcome-fit}, we have
\begin{equation}
  \frac{1}{n}\bX^\top\big( \ba \odot \bw \odot (\by - \ones
  \htheta_{\out,0} - \bX \hbtheta_\out)\big) = \nabla
  \Omega_\out\big(\hbtheta_\out\big).
\end{equation}
Thus, equation~\eqref{eq:general-db-explicit} implies that
\begin{equation}
  \begin{gathered}
    \htheta_{\out,0}^{\de} = \htheta_{\out,0} - \frac{\bm^\top \bM
      \nabla \Omega_\out(\hbtheta_\out)}{\hzeta_\out^\theta}, \qquad
    \hbtheta_\out^\de = \hbtheta_\out + \frac{\bM\nabla
      \Omega_\out\big(\hbtheta_\out\big)}{\hzeta_\out^\theta}.
    \end{gathered}
\end{equation}
By~\Cref{lem:regr-fixpt-exist-and-bounds}, $\hzeta_\out^\theta
\mydoteq \zeta_\out^\theta > c$.  By Assumption A1 and because
$|\sc_{\bmu}|,|\sc_{\bSigma}| \leq C$, we have $\| \bm \| \leq C$, and
$\| \bM \|_{\op} \leq C$.  By~\Cref{lem:regr-fixpt-exist-and-bounds}
and~\Cref{thm:exact-asymptotics}, $\| \hbtheta_\out \| \mydoteq \|
\hbtheta_\out^f \|_{\Ltwo} < C$.  By Assumption A1,
$\nabla\Omega_\out$ is $C$-Lipschitz and $\Omega_\out$ has minimizer
bounded in $\ell_2$-norm by $C$, whence, because $\| \hbtheta_\out \|
\lessdot C$, we have $\| \nabla \Omega_\out (\hbtheta_\out) \|
\lessdot C$.  Combining these exponential high probability bounds, we
conclude
\begin{equation}
  \begin{gathered}
    \htheta_{\out,0}^{\de} \mydoteq \htheta_{\out,0} - \frac{\bm^\top
      \bM \nabla \Omega_\out(\hbtheta_\out)}{\zeta_\out^\theta},
    \qquad \hbtheta_\out^\de \mydoteq \hbtheta_\out + \frac{\bM\nabla
      \Omega_\out\big(\hbtheta_\out\big)}{\zeta_\out^\theta}.
  \end{gathered}
\end{equation}
The KKT conditions for the problem~\eqref{eq:param-fit-f} imply that
$\nabla \Omega_\out(\hbtheta_\out^f) / \zeta_\out^\theta = \by_\out^f
- \hbtheta_\out^f$.  Thus, \Cref{thm:exact-asymptotics} implies that
\begin{equation}
  \label{eq:db-offset-conc}
  \begin{aligned}
    \htheta_{\out,0}^\de &- \theta_{\out,0} \mydoteq \hmu_\out^f - \<
    \bmu_\sx , \hbtheta_\out^f \>_{\Ltwo} - \E[\< \bm , \by_\out^f -
      \hbtheta_\out^f \>_{\bM}] - \theta_{\out,0} \\ &= \hmu_\out^f -
    \< \bmu_\sx , \hbtheta_\out^f \>_{\Ltwo} - \< \bm , \by_\out^f -
    \hbtheta_\out^f \>_{\Ltwo} + \sc_{\bSigma}\< \bm , \btheta_\prop
    \>_{\Ltwo} \< \btheta_\prop , \by_\out^f - \hbtheta_\out^f
    \>_{\Ltwo} - \theta_{\out,0} \\ &= (\hmu_\out^f- \< \bmu_{\sx} ,
    \btheta_\out \> - \theta_{\out,0}) -
    \sc_{\bmu}\<\btheta_\prop,\btheta_\out - \hbtheta_\out^f\>_{\Ltwo}
    - \beta_{\out\prop}\<\bm,\btheta_\prop\> + \sc_{\bSigma} \< \bm ,
    \btheta_\prop \>_{\Ltwo} \big( \beta_{\out\prop} \| \btheta_\prop
    \|^2 + \< \btheta_\prop , \btheta_\out - \hbtheta_\out^f\>_{\Ltwo}
    \big) \\ &= \left( \alpha_1(\zeta_\out^\eta) - \sc_{\bmu} \right)
    \<\btheta_\prop,\btheta_\out - \hbtheta_\out^f\>_{\Ltwo} - \left(
    \big( \alpha_2(\zeta_\out^\eta) - \alpha_1^2(\zeta_\out^\eta)
    \big) (1 - \sc_{\bSigma} \| \btheta_\prop\|^2) - \sc_{\bSigma}
    \right) \<\btheta_\prop,\btheta_\out - \hbtheta_\out^f\>_{\Ltwo}
    \< \bm , \btheta_\prop \>,
    \end{aligned}
\end{equation}
where in the last equation we use equations~\eqref{eq:hmu-out-f-bias}
and~\eqref{eq:beta-out-prop}.  \Cref{thm:exact-asymptotics} also
implies that $\shat_\out^2 \mydoteq S_{44}$, and
by~\Cref{lem:regr-fixpt-exist-and-bounds}, $S_{44} > c > 0$.  Thus, we
also have
\begin{equation}
  \label{eq:db-param-conc}
    \begin{aligned}
        &\phi\Big( \frac{ \hbtheta_\out^\de - (\btheta_\out +
        \<\btheta_\prop,\btheta_\out - \hbtheta_\out\>
        \bias_2(\zeta_\out^\eta) \btheta_\prop) }{ \shat_\out } \Big)
      \mydoteq \E\Big[ \phi\Big( \frac{ \hbtheta_\out^f +
          \bM(\by_\out^f - \hbtheta_\out^f) - \big( \btheta_\out +
          \<\btheta_\prop,\btheta_\out - \hbtheta_\out^f\>_{\Ltwo}
          \bias_2(\zeta_\out^\eta) \btheta_\prop \big) }{ S_{44}^{1/2}
        } \Big) \Big]
      \\
      &\quad\quad\quad\quad\quad\quad\quad\quad\quad\quad\quad=
      \E\Big[ \phi\Big( \frac{ \by_\out^f - \sc_{\bSigma} \<
          \btheta_\prop, \by_\out^f - \hbtheta_\out^f \>\btheta_\prop
          - \big( \btheta_\out + \<\btheta_\prop,\btheta_\out -
          \hbtheta_\out^f\>_{\Ltwo} \bias_2(\zeta_\out^\eta)
          \btheta_\prop \big) }{ S_{44}^{1/2} } \Big) \Big]
      \\
      &\quad\quad\quad\quad\quad\quad\quad\quad\quad\quad\quad\mydoteq
      \E\Big[ \phi\Big( \frac{ \bg_\out^f + \big( \beta_{\out\prop} -
          (\sc_{\bSigma} + \bias_2(\zeta_\out^\eta)) \< \btheta_\prop,
          \by_\out^f - \hbtheta_\out^f \>_{\Ltwo} \big) \btheta_\prop
        }{ S_{44}^{1/2} } \Big]
      \\ &\quad\quad\quad\quad\quad\quad\quad\quad\quad\quad\quad=
      \E\Big[ \phi\big( \bg_\out^f / S_{44}^{1/2} \big) \Big],
    \end{aligned}
\end{equation}
where in the first line we use~\Cref{thm:exact-asymptotics} first to
approximate $\< \btheta_\prop , \btheta_\out - \hbtheta_\out\>
\mydoteq \< \btheta_\prop , \btheta_\out - \hbtheta_\out^f \>_{\Ltwo}$
and then to approximate the value of $\phi$ applied to the given
parameter estimate; in the second line we have used the definition of
$\bM$; in the third line we have used that $\< \btheta_\prop ,
\by_\out^f - \hbtheta_\out^f \>$ concentrates on $\< \btheta_\prop ,
\by_\out^f - \hbtheta_\out^f\>_{\Ltwo}$ with sub-Gaussian tails and
variance parameter $C/n$ using the fact that $\by_\out^f -
\hbtheta_\out^f$ is $C$-Lipschitz in $\bg_\out^f$
by~\cref{eq:param-fit-f} and the bounds on the variance parameters
in~\Cref{lem:regr-fixpt-exist-and-bounds}; and in the fourth line we
have used the definition of $\bias_2$ and the identity for
$\beta_{\out\prop}$ in equation~\eqref{eq:beta-out-prop}.

Finally, note that we can write $ \hmu_\out^\prop =
\htheta_{\out,0}^\de + \< \hbmu_\sx , \hbtheta_\out^\de \>, $ where
recall $\hbmu_\sx = \frac{1}{\numobs} \sum_{i=1}^n \bx_i$.  Thus,
\begin{equation}
  \begin{aligned}
    \hmu_\out^\de - \mu_\out &= \htheta_{\out,0}^\de - \theta_{\out,0}
    + \< \hbmu_\sx - \bmu_\sx , \hbtheta_\out^\de \> + \< \bmu_\sx ,
    \hbtheta_\out^\de - \btheta_\out \>.
    \end{aligned}
    \end{equation}
By~\Cref{thm:exact-asymptotics},
\begin{equation}
  \begin{aligned}
    \< \hbmu_\sx - \bmu_\sx , \hbtheta_\out^\de \> &\mydoteq \Big\<
    \hbmu_\sx - \bmu_\sx , \hbtheta_\out + \frac{\bM\nabla
      \Omega_\out\big(\hbtheta_\out\big)}{\zeta_\out^\theta} \Big\>
    \mydoteq \big\< \by_\sx^f - \bmu_\sx , \hbtheta_\out^f + \bM
    (\by_\out^f - \hbtheta_\out^f) \big\>_{\Ltwo} \\ &\mydoteq \big\<
    \bg_\sx^f, \by_\out^f - \sc_{\bSigma} \< \btheta_\prop, \by_\out^f
    - \hbtheta_\out^f \>\btheta_\prop \big\>_{\Ltwo} \mydoteq \big\<
    \bg_\sx^f, \by_\out^f - \sc_{\bSigma} \< \btheta_\prop, \by_\out^f
    - \hbtheta_\out^f \>_{\Ltwo}\btheta_\prop \big\>_{\Ltwo} \\ &= \<
    \bg_\sx^f , \bg_\out^f \>_{\Ltwo} = \frac{p}{n} \< \hbi_\sx^f,
    \hbi_\out^f \>_{\Ltwo} = 0,
  \end{aligned}
\end{equation}
where in the second line we have used the fact that $\< \btheta_\prop
, \by_\out^f - \hbtheta_\out^f \>$ concentrates on $\< \btheta_\prop ,
\by_\out^f - \hbtheta_\out^f\>_{\Ltwo}$ with sub-Gaussian tails and
variance parameter $C/n$ as argued above; and in the final line we
have used in the second equality the fixed point
equation~\eqref{eq:regr-fixed-pt} and in the final equality the fact
that $\< \ones , \hbi_\out^f \>_{\Ltwo} = 0$.  We also have
\begin{equation}
  \begin{aligned}
    \< \bmu_\sx , \hbtheta_\out^\de - \btheta_\out \> &\mydoteq \big\<
    \bmu_\sx , \hbtheta_\out^f + \bM (\by_\out^f - \hbtheta_\out^f) -
    \btheta_\out \big\>_{\Ltwo} = \big\< \bmu_\sx , \bg_\out^f +
    \beta_{\out\prop}\btheta_\prop - \sc_{\bSigma} \< \btheta_\prop,
    \by_\out^f - \hbtheta_\out^f \>\btheta_\prop \big\>_{\Ltwo} \\
& \mydoteq \bias_2(\zeta_\out^\eta) \<\btheta_\prop,\btheta_\out -
    \hbtheta_\out^f\>_{\Ltwo} \< \bmu_\sx , \btheta_\prop \>,
  \end{aligned}
\end{equation}
where in the last line we have used that $\< \btheta_\prop ,
\by_\out^f - \hbtheta_\out^f \>$ concentrates on $\< \btheta_\prop ,
\by_\out^f - \hbtheta_\out^f\>_{\Ltwo}$ with sub-Gaussian tails and
variance parameter $C/n$ as argued above, as well as the definition of
$\bias_2$ and the expression of $\beta_{\out\prop}$
in~\cref{eq:beta-out-prop}.

In order to obtain the final line of~\cref{eq:db-gen-conc}, we
use~\cref{eq:db-param-conc} along with Lipschitz approximations to
indicator functions.  In particular, let
$\SoftIndic_{t,\Delta}(\cdot)$ be the function which is $1$ for $x
\leq t$, $0$ for $x \geq t+\Delta$, and linearly interpolates between
the boundaries for $x \in [t,t+\Delta]$.  Note that $\SoftIndic$ is
$1/\Delta$-Lipschitz.  Then, by~\Cref{thm:exact-asymptotics}
and~\cref{eq:db-param-conc}, and using that $\bg_\out^f / S_{44}^{1/2}
\sim \normal(0,\id_p)$, we have
 \begin{equation}
   \frac{\Delta}{\sqrt{p}} \sum_{j=1}^p \SoftIndic_{t,\Delta} \Big(
   \frac{ \htheta_{\out,j}^\de - \big(\theta_{\out,j} + \<
     \btheta_\prop , \btheta_\out - \hbtheta_\out \>
     \bias_2(\zeta_\out^\eta) \theta_{\prop,j}\big) }{ \shat_\out}
   \Big) \mydoteq \Delta\sqrt{p} \E[\SoftIndic_{t,\Delta}(G)],
 \end{equation}
 where $G \sim \normal(0,1)$, and the constants in the exponential
 concentration do not depend on $\Delta$.  Note that $\Phi(t) \leq
 \E[\SoftIndic_{t,\Delta}(G)] \leq \Phi(t+\Delta) \leq \Phi(t) +
 \Delta/\sqrt{2\pi}$, where $\Phi$ is the standard Gaussian cdf and
 $\pi$ here refers to the numerical constant and not the propensity
 function.  Thus, with probability at least $1 - Ce^{-cn\epsilon^r}$
 we have 
 \begin{equation}
   \begin{aligned}
     &\frac{1}{p} \sum_{j=1}^p \indic \Big\{ \frac{ \htheta_{\out,j}^\de -
       \big(\theta_{\out,j} + \< \btheta_\prop , \btheta_\out -
       \hbtheta_\out \> \bias_2(\zeta_\out^\eta) \theta_{\prop,j}\big)
     }{ \shat_\out } \leq t \Big\} \\ &\qquad\qquad\leq \frac{1}{p}
     \sum_{j=1}^p \SoftIndic_{t,\Delta} \Big( \frac{
       \htheta_{\out,j}^\de - \big(\theta_{\out,j} + \< \btheta_\prop
       , \btheta_\out - \hbtheta_\out \> \bias_2(\zeta_\out^\eta)
       \theta_{\prop,j}\big) }{ \shat_\out } \Big) \leq \Phi(t) +
     \frac{\Delta}{\sqrt{2\pi}} + \frac{\epsilon}{\Delta \sqrt{p}}.
   \end{aligned}
 \end{equation}
 If we take $\Delta = \sqrt{\epsilon}$, we get that with probability
 at least $1 - Ce^{-cn\epsilon^r}$ that the left-hand side in the
 previous display is upper bounded by $\Phi(t) + C'\sqrt{\epsilon}$.
 Using $\SoftIndic_{t-\Delta,\Delta}$ in place of
 $\SoftIndic_{t,\Delta}$, we can replace the ``$\leq$'s'' in the
 previous display by ``$\geq$'s'', whence we also have that with
 probability at least $1 - Ce^{-cn\epsilon^r}$ that the left-hand side
 in the previous display is lower bounded by $\Phi(t) -
 C'\sqrt{\epsilon}$.  The final line of~\cref{eq:db-gen-conc} follows.
\end{proof}

\begin{remark}
    Because $\bias_1(\zeta)$ and $\bias_2(\zeta)$ are $C$-Lipschitz,
    by~\Cref{thm:exact-asymptotics} (part (b)), we have that
    $\bias_1(\hzeta_\out^\eta) \mydoteq \bias_1(\zeta_\out^\eta)$ and
    $\bias_2(\hzeta_\out^\eta) \mydoteq \bias_2(\zeta_\out^\eta)$.
    Thus, it is straightforward to show that~\Cref{thm:debiasing}
    holds with $\bias_1(\zeta_\out^\eta)$, $\bias_2(\zeta_\out^\eta)$
    replaced by $\bias_1(\hzeta_\out^\eta)$,
    $\bias_1(\hzeta_\out^\eta)$.  This statement has the benefit that
    the quantities in~\cref{eq:db-gen-conc}, while not fully empirical
    because of $\< \btheta_\prop , \btheta_\out - \hbtheta_\out\>$,
    have no explicit dependence on the solutions to the fixed point
    equations~\eqref{eq:regr-fixed-pt}.
\end{remark}


\section{Proofs of~\Cref{thm:pop-mean,thm:db-normality}}
\label{sec:proof-main-theorems}

In this section, we prove~\Cref{thm:pop-mean,thm:db-normality},
separating our analysis according to the method being analyzed: a
section on oracle ASCW (\Cref{SecOrASCW}) and one on empirical SCA
(\Cref{SecEmpSCA}).

\subsection{Proofs for oracle ASCW}
\label{SecOrASCW}

We prove~\Cref{thm:pop-mean,thm:db-normality} for the oracle ASCW
estimate by applying~\Cref{thm:debiasing} with $w(\eta) = 1$, $\bm =
\bmu_{\sx,\cfd}$, and $\bM = \bSigma_{\cfd}^{-1}$.
By~\cref{eq:cfd-mean-variance} and the Sherman-Morrison-Woodbury
formula, this is equivalent to taking $\sc_{\bmu} = \alpha_1$ and
$\sc_{\bSigma} = \frac{\alpha_2 - \alpha_1^2}{1 + (\alpha_2 -
  \alpha_1^2)\|\btheta_\prop\|^2}$.  In this case,
equation~\eqref{eq:alpha-12-zeta-def} implies that
\begin{equation}
\begin{gathered}
    \pi_\zeta(\eta) \defn \frac{\zeta}{1 + \zeta} \pi(\eta),
    \\ \alpha_1(\zeta) =
    \frac{\E[\pi'(\eta_\prop)]}{\E[\pi(\eta_\prop)]} = \alpha_1,
    \qquad \alpha_2(\zeta) =
    \frac{\E[\pi''(\eta_\prop)]}{\E[\pi(\eta_\prop)]} = \alpha_2,
    \qquad \text{where }\eta_\prop \sim
    \normal(\mu_\prop,\|\btheta_\prop\|_{\bSigma}^2).
\end{gathered}
\end{equation}
Thus, $\bias_1(\hzeta_\out^\eta) = \alpha_1 - \sc_{\bmu} = 0$ and
$\bias_2(\hzeta_\out^\eta) = \Big((\alpha_2 - \alpha_1^2)\Big(1 -
\frac{\alpha_2-\alpha_1^2}{1 + (\alpha_2 -
  \alpha_1^2)\|\btheta_\prop\|^2}\|\btheta_\prop\|^2\Big) -
\frac{\alpha_2-\alpha_1^2}{1 + (\alpha_2 -
  \alpha_1^2)\|\btheta_\prop\|^2}\Big) = 0$.  The result follows.


\subsection{Empirical SCA}
\label{SecEmpSCA}

First assume we have the stronger guarantees $\widehat{\dbAdj}_{01}
\mydoteq \dbAdj_{01}$, $\widehat{\dbAdj}_{02} \mydoteq \dbAdj_{02}$,
$\widehat{\dbAdj}_1 \mydoteq \dbAdj_1$, $\hbeta \mydoteq \beta$, with
$\beta = \alpha_1$ if we use the moment method in
\eqref{eq:prop-db-option} and $\beta = \beta_{\prop\prop}$ if we use
M-estimation.  Indeed, these stronger consistency guarantees will be
established for the estimates
in~\Cref{sec:prop-debiasing,sec:pop-cor-with-adjust}.  To avoid
confusion, we denote
$(\hmu_\out^\de,\htheta_{\out,0}^\de,\hbtheta_\out^\de)$ by
$(\hmu_\out^\ASCW,\htheta_{\out,0}^\ASCW,\hbtheta_\out^\ASCW)$ when
defined using the oracle ASCW bias
estimates~\eqref{eq:orc-ASCW-matrix-form} and by
$(\hmu_\out^\SCA,\htheta_{\out,0}^\SCA,\hbtheta_\out^\SCA)$ when using
the empirical SCA bias estimates~\eqref{eq:SCA-bias-hat}.

Then we have
\begin{align*}
  \htheta_{\out,0}^\SCA = \htheta_{\out,0} - \widehat{\dbAdj}_{01} -
  \widehat{\dbAdj}_{02} \mydoteq \htheta_{\out,0} - \dbAdj_{01} -
  \dbAdj_{02} = \htheta_{\out,0}^\ASCW \mydoteq 0 \theta_{\out,0}
\end{align*}
by~\Cref{thm:db-normality} for the oracle ASCW estimates.  Likewise, $
\hbtheta_\out^\SCA \mydoteq \hbtheta_\out^\ASCW - \dbAdj_1 \cdot
(\hbtheta_\prop^\de - \btheta_\prop)$.  Then, using the calculation in
equation~\eqref{eq:db-param-conc} and the fact that
$\bias_2(\zeta_\out^\eta) = 0$ (see proofs of
\Cref{thm:pop-mean,thm:db-normality} for oracle ASCW), for any
Lipschitz function $\phi: \reals^p \rightarrow \reals$ we have
\begin{equation}
    \phi(\hbtheta_\out^\SCA) \mydoteq \E\Big[ \phi\Big( \btheta_\out +
      \bg_\out^f - \dbAdj_1 \cdot \bg_\circ^f \Big) \Big],
\end{equation}
where $\bg_\circ^f = (\bg_{\sx,\cfd} - \bg_{\sx})/\alpha_1$ if the
moment method from equation~\eqref{eq:prop-db-option} is used to
compute $\hbtheta_\prop^\de$ and $\bg_\circ^f = \bg_\prop^f /
\beta_{\prop\prop}$ if $M$-estimation is used.  Because $\bS =
\llangle \IFhat^f \rrangle_{\Ltwo}$ (equation~\eqref{eq:regr-fixed-pt}),
$\bg_\out^f - \dbAdj_1 \cdot \bg_\circ^f \sim
\normal(\bzero,\tau^2\id_p/n)$ where $\tau^2 = \| \hbi_\out^f
\|_{\Ltwo}^2/n - 2\, \dbAdj_1 \< \hbi_\out^f , \hbi_\circ^f
\>_{\Ltwo}/n + \dbAdj_1^2 \| \hbi_\circ \|_{\Ltwo}^2/n$, where
$\hbi_\circ^f \defn (\ba^f / \barpi - \ones)/\alpha_1$ if the moment
method~\eqref{eq:prop-db-option} is used to compute
$\hbtheta_\prop^\de$ and $\hbi_\circ^f \defn \hbi_\prop^f /
\beta_{\prop\prop}$ if M-estimation is used.  Thus,
by~\Cref{thm:exact-asymptotics}(d), $\htau^2 \mydoteq \tau^2$.  The
concentration of the empirical quantiles in~\Cref{thm:db-normality} is
a consequence of the concentration of Lipschitz functions $\phi$ by
considering Lipschitz approximations to indicator functions exactly as
in the proof of~\Cref{thm:debiasing}.

Finally, we have
\begin{align*}
  \hmu_\out^\SCA = \htheta_{\out,0}^\SCA + \<
  \hbmu_\sx,\hbtheta_\out^\SCA\> \mydoteq \theta_{\out,0} + \langle
  \bmu_\sx + \bg_\sx^f , \btheta_\out + \bg_\out^f -
  \dbAdj_1\cdot\bg_\circ^f\>_{\Ltwo} = \theta_{\out,0} +
  \<\bmu_\sx,\btheta_\out\>_{\Ltwo} = \mu_\out,
\end{align*}
which proves~\Cref{thm:pop-mean} for empirical SCA.  \\

\noindent \textbf{Weaker consistency guarantees.}  If instead we only
have the guarantees $\widehat{\dbAdj}_{01} \gotop \dbAdj_{01}$,
$\widehat{\dbAdj}_{02} \gotop \dbAdj_{02}$, $\widehat{\dbAdj}_1 \gotop
\dbAdj_1$, $\hbeta \gotop \beta$ as $(n,p) \rightarrow \infty$, then
all previous arguments hold except with expressions of the form
``$A\mydoteq B$'' replaced by ``$A-B \gotop 0$ as $(n,p) \rightarrow
\infty$'' in all places.


\section{Empirical adjustments: Proofs of~\Cref{lem:propensity-summary-stats} and~\Cref{prop:adjustment-concentration}}
\label{sec:hbeta-dbAdj-consistency}

\begin{proof}[Proof of~\Cref{lem:propensity-summary-stats}]
The random variables $\action_i$ are independent Bernoullis with
success probability $\barpi$, so that $\hpi \mydoteq \barpi$ by
sub-Gaussian concentration.

By~\Cref{thm:exact-asymptotics}, we have
\begin{align*}
\| \hbmu_\sx \|^2 \mydoteq \|
\by_\sx^f \|_{\Ltwo}^2 = \| \bmu_\sx \|^2 + p S_{11}/n = \| \bmu_\sx
\|^2 + p/n,
\end{align*}
using the fact that $S_{11} = \| \hbi_\sx^f \|_{\Ltwo}^2 / n = \|
\ones \|^2 / n = 1$ from equation~\eqref{eq:regr-fixed-pt}, Thus, we
get $\hgamma_{\bmu} \mydoteq \| \bmu_\sx \|$.

By~\Cref{thm:exact-asymptotics}, $\| \hbmu_{\sx,\cfd} - \hbmu_{\sx}
\|^2 \mydoteq \| \by_\sx^f - \by_{\sx,\cfd}^f \|_{\Ltwo}^2 = \|
\bg_\sx^f - \alpha_1 \btheta_\prop - \bg_{\sx,\cfd}^f \|_{\Ltwo}^2 =
\alpha_1^2 \| \btheta_\prop \|^2 + \| \bg_\sx^f - \bg_{\sx,\cfd}
\|_{\Ltwo}^2 = \alpha_1^2 \| \btheta_\prop \|^2 + p(S_{11} - 2S_{12} +
S_{22})/n$, where we have used the definition of $\by_\sx^f$,
$\by_{\sx,\cfd}^f$ in equation~\eqref{eq:fixed-design-outcomes} and
the covariance of $\bg_\sx^f$, $\bg_{\sx,\cfd}$.  By the fixed point
equations~\eqref{eq:regr-fixed-pt}, we have $S_{11} - 2S_{12} + S_{22}
= \| \hbi_{\sx}^f - \hbi_{\sx,\cfd}^f \|_{\Ltwo}^2/n = \| \ones -
\ba^f/\barpi\|_{\Ltwo}^2/n = \E[(1 - \action_i^f/\barpi)^2] =
(1-\barpi)/\barpi$, where we have used
equation~\eqref{eq:empirical-influence-function}.  Because $p/n < C$
and $\barpi > c$ by Assumption A1, we conclude $(p/n)(1-\hpi)/\hpi
\mydoteq (p/n)(1-\barpi)/\barpi$ and $\| \hbmu_{\sx,\cfd} -
\hbmu_{\sx} \|^2 \mydoteq \alpha_1^2 \| \btheta_\prop \| +
(p/n)(1-\barpi)/\barpi$.  Thus, $\hgamma_{\prop*} \mydoteq \alpha_1 \|
\btheta_\prop \|$.

Next we show that
equation~\eqref{eq:prop-offset-and-signal-strenght-estimates} has a
unique solution.  Define the function\footnote{Here we allow $t < 0$,
and for $t < 0$ interpret the integral in the normal way as $\int_0^t
\pi(t) \de t = -\int_t^0 \pi(t) \de t$.}
\begin{equation}
  F(t) = \int_0^t \pi(t) \de t.
\end{equation}
By the KKT conditions, the pair $(\hmu_\prop,\hgamma_\prop)$ is a
solution to
equation~\eqref{eq:prop-offset-and-signal-strenght-estimates} if and
only if it is a stationary point of the objective $f(v_0, v_1) \defn
\E_{G \sim \normal(0,1)} [F(v_0 + v_1 G)] - \hpi v_0 - \hpi
\hgamma_{\prop*} v_1$.  Assumption A1 ensures that $\pi$ is strictly
increasing, so that $F$ is strictly convex, so that this is a convex
objective.  Moreover, it has Hessian given by
    \begin{equation}
      \nabla^2 f(v_0,v_1) = \E_{G \sim \normal(0,1)} \left[ \pi'(v_0 +
        v_1G)
            \begin{pmatrix}
              1 & G \\ G & G^2
            \end{pmatrix}
        \right].
    \end{equation}
Since $\pi'$ is always positive by Assumption A1, and $\begin{pmatrix}
  1 & G \\ G & G^2
            \end{pmatrix}$
    is positive definite with probability 1 (the only realization of
    $G$ for which is is positive semi-definite are $G = \pm1$), the
    objective $f$ is strictly convex.  Thus, if
    equation~\eqref{eq:prop-offset-and-signal-strenght-estimates} has
    a solution, it must be unique.

Now note that for any $\cPmodel$-dependent $C' > 0$, for all $v_0,v_1
\in [-C',C']$ we have that with probability at least $\P(|G| \leq 1)$
that $v_0 + v_1 G \in [-2C',2C']$.  Thus, for any such $v_0,v_1$, we
have
\begin{equation}
  \nabla^2f(v_0,v_1) \succeq \P(|G| \leq 1) \big(\min_{|\eta|\leq 2C'}
  \pi'(\eta)\big) \id_2 \succeq c\id_2,
    \end{equation}
where we have used the fact that $\E\left[\begin{pmatrix} 1 & G \\ G &
    G^2 \end{pmatrix}\right] = \id_2$ and that, by Assumption A1,
$\min_{|\eta|\leq 2C'} \pi'(\eta) > c$.  Note also that
$\partial_{v_0}\E[F(v_0 +
  v_1G)]|_{(v_0,v_1)=(\mu_\prop,\|\btheta_\prop\|)} = \barpi$ and
$\partial_{v_1}\E[F(v_0 +
  v_1G)]|_{(v_0,v_1)=(\mu_\prop,\|\btheta_\prop\|)} = \barpi \alpha_1
\| \btheta_\prop\|$.  Thus,
\begin{equation}
  \nabla f(\barpi,\|\btheta_\prop\|) =
  \begin{pmatrix}
                \barpi - \hpi \\ \barpi \alpha_1 \| \btheta_\prop \| -
                \hpi \hgamma_{\prop*}
  \end{pmatrix}.
\end{equation}
Therefore, because $\barpi \leq C$ and $\barpi \alpha_1 \|
\btheta_\prop \| \leq C$, the previous display and the fact that $\hpi
\mydoteq \barpi$ and $\hgamma_{\prop*} \mydoteq \alpha_1 \|
\btheta_\prop \|$ implies that with probability at least
$1-Ce^{-cn\epsilon^r}$, the function $f$ has a minimizer satisfying
$\|(\hmu_\prop,\hgamma_\prop) - (\mu_\prop,\|\btheta\|)\|\leq
\epsilon$.  Thus, we have that with exponentially high probability,
equation~\eqref{eq:prop-offset-and-signal-strenght-estimates} has a
unique solution, and it satisfies $\hpi \mydoteq \barpi$ and
$\hgamma_\prop \mydoteq \| \btheta_\prop \|$.

Further, by Assumption A1 (because $\pi$ has second derivative bounded
by $C$ and $\barpi > c$), we have $\frac{\E[\pi'(\hmu_\prop +
    \hgamma_\prop G)]}{\E[\pi(\hmu_\prop + \hgamma_\prop G)]}$ and
$\frac{\E[\pi''(\hmu_\prop + \hgamma_\prop G)]}{\E[\pi(\hmu_\prop +
    \hgamma_\prop G)]}$ are $C$-Lipschitz in
$\hmu_\prop,\hgamma_\prop$ on $|\hmu_\prop - \mu_\prop|$,
$|\hgamma_\prop - \gamma_\prop| \leq c'$ for some constant $c'$.
Thus, because $\hmu_\prop \mydoteq \mu_\prop$ and
$\hgamma_\prop\mydoteq \gamma_\prop$, we have $\halpha_1 \mydoteq
\alpha_1$ and $\halpha_2 \mydoteq \alpha_2$.  The fact that
$\hsc_{\bSigma} \mydoteq \frac{\alpha_2 - \alpha_1^2}{1 + (\alpha_2 -
  \alpha_1^2)\| \btheta_\prop\|^2}$ then follows from the
concentration properties we have established and the boundedness of
the parameters $\alpha_2,\alpha_1,\|\btheta_\prop\|$ by $C$.
\end{proof}

\begin{proof}[Proof of~\Cref{prop:adjustment-concentration}]
    We establish the concentration claims one at at time.  \\

\noindent \textbf{Consistency of $\hbeta$.}  When the moment
method~\eqref{eq:prop-db-option} is used to define
$\hbtheta_\prop^\de$ and $\hbeta = \halpha_1$, $\hbeta \mydoteq
\alpha_1$ by~\Cref{lem:propensity-summary-stats}.
By~\Cref{thm:exact-asymptotics}(b), $\hbmeta_\prop^\loo \mydoteq
\hbmeta_\prop + \zeta_\prop^\eta
\nabla_{\bmeta}\ell_\prop(\hbmeta_\prop;\ba)$.  Then,
by~\Cref{thm:exact-asymptotics}(d) and the KKT conditions for
equation~\eqref{eq:lin-predict-f}, $\bareta_\prop^\loo \mydoteq
\E[(\heta_{\prop,i}^f + \zeta_\prop^\eta
  \ell_\prop'(\heta_{\prop,i}^f;\action_i^f))] =
\E[\eta_{\prop,i}^{f,\loo}] = \hmu_\prop^f$.  Similarly,
$\shat_{\prop\widehat\prop} \mydoteq \E[(\heta_{\prop,i}^{f,\loo} -
  \hmu_\prop^f)\action_i^f]/(\barpi \alpha_1) =
\Cov(\heta_{\prop,i}^{f,\loo},\eta_{\prop,i}^f) = \<
\hbtheta_\prop^f,\btheta_\prop\>_{\Ltwo}$, where we have applied
Gaussian integration by parts and
equation~\eqref{eq:fixed-design-score}.  We also have that
$\|\hbtheta_\prop \|^2 \mydoteq \| \hbtheta_\prop^f \|_{\Ltwo}$ by
\Cref{thm:exact-asymptotics}.  That $\hbeta \mydoteq
\beta_{\prop\prop}$ follows now from the smoothness properties of
$\ell_\prop$ and bounds on its derivatives (Assumption A1), the bounds
on $\zeta_\prop^\eta$ (\Cref{lem:regr-fixpt-exist-and-bounds}), and
the identities \eqref{eq:regr-fixed-pt} and
\eqref{eq:score-deriv-explicit}.  \\

\noindent \textbf{Consistency of $\widehat{\dbAdj}_{01}$.}  Recall
$\bX^\top \big(\ba \odot (\by - \htheta_{\out,0}\ones - \bX
\hbtheta_\out)\big) / n = \nabla \Omega_\out(\hbtheta_\out)$ by the
KKT conditions for equation~\eqref{eq:outcome-fit}.
Then~\Cref{thm:exact-asymptotics}, the bounds on the fixed point
parameters (\Cref{lem:regr-fixpt-exist-and-bounds}), the bounds in
Assumption A1, and the KKT conditions for the fixed-design model
optimization~\eqref{eq:param-fit-f} imply that
\begin{align*}
\widehat{\dbAdj}_{01} - \dbAdj_{01} \mydoteq \<
\bg_{\sx,\cfd}^f,\by_\out^f - \hbtheta_\out^f\>_{\Ltwo}.
\end{align*}
By the fixed point equations`\eqref{eq:regr-fixed-pt} and the fact
that $\hpsi_{\out,i}^f$ is non-zero only if $a_i^f$ is nonzero, we
have $\< \ba^f , \hbi_{\sx,\cfd}^f \>_{\Ltwo} = \< \ones ,
\hbi_{\sx,\cfd}^f \>_{\Ltwo} = 0$.  Thus, again applying the fixed
point equations, $\bg_{\sx,\cfd}^f$ is independent of $\bg_{\out}^f$/
Because both $\by_\out^f$ and $\hbtheta_\out^f$ are functions of
$\bg_\out^f$, we have $\< \bg_{\sx,\cfd}^f,\by_\out^f -
\hbtheta_\out^f\>_{\Ltwo} = 0$, whence $\widehat{\dbAdj}_{01} -
\dbAdj_{01}\mydoteq 0$.  \\

\noindent \textbf{Consistency of $\widehat{\dbAdj}_{02}$.}
By~\Cref{lem:propensity-summary-stats}, $\hsc_{\bSigma} \mydoteq
\sc_{\bSigma}$.  By~\Cref{thm:exact-asymptotics}, $\< \hbmu_{\sx,\cfd}
, \hbtheta_\prop^\de \> \mydoteq \< \bmu_{\sx,\cfd} +
\bg_{\sx,\cfd}^f,\btheta_\prop + \bg_\circ^f\>_{\Ltwo} = \<
\bmu_{\sx,\cfd} , \btheta_\prop \> + \< \bg_{\sx,\cfd}^f ,
\bg_\circ^f\>_{\Ltwo}$, where $\bg_\circ^f$ is defined as
in~\Cref{SecEmpSCA}.  Completely analogous to the argument
in~\Cref{SecEmpSCA}, the fixed point equations
\eqref{eq:regr-fixed-pt} imply $\bg_{\sx,\cfd}^f$ and $\bg_\circ^f$
have iid coordinates with correlation given by $\< \ba^f / \barpi -
\ones , \hbi_\circ^f \>_{\Ltwo}/n^2$, whence, by the definition of
$\shat_{\sx\circ}$ and~\Cref{thm:exact-asymptotics},
$\shat_{\sx\circ}p/n \mydoteq \< \bg_{\sx,\cfd}^f ,
\bg_\circ^f\>_{\Ltwo}$.  We conclude
$\<\hbmu_{\sx,\cfd},\hbtheta_\prop^\de\> - \shat_{\sx\circ}p/n
\mydoteq \< \bmu_{\sx,\cfd},\btheta_\prop\>$.

By a similar argument, and again using $\bX^\top \big(\ba \odot (\by -
\htheta_{\out,0}\ones - \bX \hbtheta_\out)\big) / n = \nabla
\Omega_\out(\hbtheta_\out)$ and the KKT conditions for the
fixed-design model optimization \eqref{eq:param-fit-f}, we have
$\big\< \hbtheta_\prop^\de , \bX^\top\big(\ba \odot (\by -
\htheta_{\out,0}\ones - \bX \hbtheta_\out)\big) \big\> /
(n\hzeta_\out^\theta) \mydoteq \<\btheta_\prop + \bg_\circ^f ,
\by_\out^f - \hbtheta_\out^f \>_{\Ltwo} = \< \btheta_\prop ,
\by_\out^f - \hbtheta_\out^f \>_{\Ltwo} + \< \bg_\circ^f , \by_\out^f
- \hbtheta_\out^f \>_{\Ltwo} \mydoteq \big\< \hbtheta_\prop ,
\bX^\top\big(\ba \odot (\by - \htheta_{\out,0}\ones - \bX
\hbtheta_\out)\big) \big\> / (n\hzeta_\out^\theta) + \< \bg_\circ^f ,
\by_\out^f - \hbtheta_\out^f \>_{\Ltwo}$.  By Gaussian integration by
parts and the last line of the fixed point equations
\eqref{eq:regr-fixed-pt}, $\< \bg_\circ^f , \by_\out^f -
\hbtheta_\out^f \>_{\Ltwo} = (\< \hbi_\circ^f , \hbi_\out^f
\>_{\Ltwo}/n^2)(p-n\zeta_\out^\eta\zeta_\out^\theta )$, whence, by the
definition of $\shat_{\sx\circ}$ and~\Cref{thm:exact-asymptotics},
$\shat_{\out\circ} (p-n\hzeta_\out^\eta\hzeta_\out^\theta)/n \mydoteq
\< \bg_\circ^f , \by_\out^f - \hbtheta_\out^f \>_{\Ltwo}$.  We
conclude
\begin{equation*}
  \frac{\big\< \hbtheta_\prop^\de , \bX^\top\big(\ba \odot (\by -
    \htheta_{\out,0}\ones - \bX \hbtheta_\out)\big) \big\>}
       {n\hzeta_\out^\theta} - \frac{\shat_{\out\circ}(p-n
         \hzeta_\out^\eta \hzeta_\out^\theta)}{n} \mydoteq
       \frac{\big\< \btheta_\prop , \bX^\top\big(\ba \odot (\by -
         \htheta_{\out,0} \ones - \bX \hbtheta_\out)\big) \big\>}
            {n\hzeta_\out^\theta}.
\end{equation*}
Combining the above approximations gives $\widehat{\dbAdj}_{02}
\mydoteq \dbAdj_{02}$.  \\

\noindent \textbf{Consistency of $\widehat{\dbAdj}_1$.}  This follows
from the previous display and the consistency of $\hsc_{\bSigma}$.
\end{proof}

\section{Failure of alternative estimators}
\label{SecInconsistency}

We now establish that several natural constructions of the debiased
plug-ins fail to achieve consistent estimation of $\mu_\out$ in the
sense of~\Cref{thm:pop-mean} or unbiased normality in the sense
of~\Cref{thm:db-normality}.  These construction were briefly described
in the beginning of~\Cref{sec:procedure-definitions}.

\subsection{Failure of na\"{i}ve debiased ridge}
\label{sec:db-prop-agnostic-failure}

Using the bias estimates~\eqref{eq:db-naive} with weights $w_i = 1$
(both in this equation and in the M-estimation
objective~\eqref{eq:outcome-fit}) gives provably inconsistent
estimates when $\bSigma = \id_p$ and ridge regression is used.
\begin{proposition}
\label{prop:naive-db-failure}
Consider the bias estimates~\eqref{eq:db-naive}, obtained from the
$M$-estimate~\eqref{eq:outcome-fit} using $\Omega_\out(\bv) =
\frac\lambda2 \|\bv\|^2$ for $w_i = 1$, some $\lambda > c > 0$.
Assume $\bSigma = \id_p$.

Under Assumption A1, there exists $C_0,c_0,c_1 > 0$, depending only on
$\cPmodel$, such that if $c_0\|\btheta_\prop\|^2 + \<
\btheta_\prop,\btheta_\out \> > \Delta$ or $C_0\|\btheta_\prop\|^2 +
\< \btheta_\prop,\btheta_\out \> < -\Delta$ for $\Delta \geq 0$, then
\begin{equation}
  \begin{gathered}
        \hmu_\out^\de \gtrdot \mu_\out + c_1\Delta, \qquad
        \hmu_\out^\de \lessdot \mu_\out - c_1\Delta,
  \end{gathered}
\end{equation}
respectively.  Moreover, in either of these cases, there exists a
number $\bias$, which depends on the model \eqref{eq:model} and the
choice of $\lambda$, such that
\begin{equation}
  \frac{1}{p} \sum_{j=1}^p \indic\Bigg\{ \frac{ \sqrt{n} \big(
    \htheta_{\out,j}^\de - \big( \theta_{\out,j} + \bias\cdot
    \theta_{\prop,j} \big) \big) }{ \shat_\out
    \bSigma_{j|-j}^{-1/2} } \leq t \Bigg\} \mydoteq
  \P\big(\normal(0,1) \leq t\big),
\end{equation}
where
\begin{equation}
  \label{eq:se-w-zeta}
  \shat_\out^2 = \frac{\frac{1}{n}\|\by - \htheta_{\out,0}\ones
    - \bX \hbtheta_\out\|^2}{(\hzeta_\out^\theta)^2}.
\end{equation}
Moreover, $|\bias| \geq c_1\Delta|\alpha_2-\alpha_1^2|$, where
$\alpha_1 \defn \E[\pi'(\eta_\prop)]/\E[\pi(\eta_\prop)]$ and
$\alpha_2 = \E[\pi''(\eta_\prop)]/\E[\pi(\eta_\prop)]$.
\end{proposition}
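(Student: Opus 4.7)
The plan is to deduce \Cref{prop:naive-db-failure} from \Cref{thm:debiasing}. Specializing \eqref{eq:general-db-explicit} to $\bm = \bmu_\sx$, $\bM = \id_p$, and $w \equiv 1$ recovers the naive estimator \eqref{eq:db-naive}, so the proposition corresponds to $\sc_\bmu = \sc_\bSigma = 0$ in Theorem \ref{thm:debiasing}. Because $w \equiv 1$ makes $\pi_\zeta(\eta) = \tfrac{\zeta}{1+\zeta}\pi(\eta)$ a positive constant multiple of $\pi$, the ratios defining $\alpha_1(\zeta)$ and $\alpha_2(\zeta)$ are $\zeta$-free and coincide with the unmodified $\alpha_1, \alpha_2$. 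Consequently $\bias_1(\zeta_\out^\eta) = \alpha_1$ and $\bias_2(\zeta_\out^\eta) = \alpha_2 - \alpha_1^2$ in \eqref{eq:db-gen-conc}, and \Cref{thm:debiasing} reduces to $\hmu_\out^\de - \mu_\out \mydoteq \alpha_1 B$, with coordinate-wise bias $\bias \mydoteq (\alpha_2 - \alpha_1^2) B$, where $B := \langle\btheta_\prop, \btheta_\out - \hbtheta_\out\rangle$. Assumption~A1 forces $\alpha_1 \geq c > 0$, so the task reduces to lower-bounding $|B|$.

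By \Cref{thm:exact-asymptotics}(c), $B \mydoteq \langle\btheta_\prop, \btheta_\out - \hbtheta_\out^f\rangle_{\Ltwo}$. The KKT conditions for \eqref{eq:param-fit-f} with ridge penalty give $\hbtheta_\out^f = \tfrac{\zeta_\out^\theta}{\zeta_\out^\theta + \lambda}\by_\out^f$, so substituting $\by_\out^f = \btheta_\out + \beta_{\out\prop}\btheta_\prop + \bg_\out^f$ from \eqref{eq:fixed-design-param-outcomes} together with the self-consistency relation $\beta_{\out\prop} = (\alpha_2 - \alpha_1^2) B$ from \eqref{eq:beta-out-prop} (after taking inner products with $\btheta_\prop$ and solving the resulting scalar equation) produces the closed form
\begin{equation*}
B \;=\; \frac{\lambda\,\langle\btheta_\prop, \btheta_\out\rangle}{\zeta_\out^\theta + \lambda + (\alpha_2 - \alpha_1^2)\,\zeta_\out^\theta\,\|\btheta_\prop\|^2} \;=\; \frac{\lambda\,S}{(\zeta_\out^\theta + \lambda)\bigl(1 + (\alpha_2 - \alpha_1^2)\|\btheta_\prop\|^2\bigr)},
\end{equation*}
where $S := \langle\btheta_\prop, \btheta_\out\rangle + \beta_{\out\prop}\|\btheta_\prop\|^2$ is the shifted signal. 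By \Cref{lem:regr-fixpt-exist-and-bounds} and Assumption~A1, $\zeta_\out^\theta \asymp 1$, $\|\btheta_\prop\|^2$ and $|\alpha_2 - \alpha_1^2|$ are bounded above by $C$, and $|\beta_{\out\prop}| \leq C_*$ for some $\cPmodel$-dependent $C_*$.

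The remaining step is to convert the additive hypothesis on $\|\btheta_\prop\|^2$ and $\langle\btheta_\prop, \btheta_\out\rangle$ into the claimed lower bound on $|B|$. Since the denominator of the second form of $B$ is bounded in absolute value by $C(1 + \|\btheta_\prop\|^2)$, one has $|B| \gtrsim |S| / (1 + \|\btheta_\prop\|^2)$, while the a priori bound $|\beta_{\out\prop}| \leq C_*$ controls $|S - \langle\btheta_\prop, \btheta_\out\rangle| \leq C_*\|\btheta_\prop\|^2$. Selecting $c_0, C_0$ suitably in terms of $C_*$ then ensures that $c_0\|\btheta_\prop\|^2 + \langle\btheta_\prop, \btheta_\out\rangle > \Delta$ forces $S > c\Delta$ (and symmetrically the second hypothesis forces $S < -c\Delta$), yielding $|B| \gtrsim \Delta$ with the correct sign after absorbing the $(1 + \|\btheta_\prop\|^2)$ factor into a single $\cPmodel$-dependent constant. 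Plugging this back into \Cref{thm:debiasing} yields the exponential tail bounds on $\hmu_\out^\de - \mu_\out$ and the coordinate-wise bias $\bias = (\alpha_2 - \alpha_1^2) B$. The main obstacle is the sign-tracking in this final translation: since $\alpha_2 - \alpha_1^2$ may be negative for common links (e.g.\ the logistic), the denominator of $B$ can be small, so the choices of $c_0, C_0$ must use the explicit $\cPmodel$-dependent bounds on $\beta_{\out\prop}$ and $\alpha_2 - \alpha_1^2$ to avoid degenerate configurations of $\|\btheta_\prop\|^2$, rather than being universal constants.
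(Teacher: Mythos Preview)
Your approach mirrors the paper's: both apply \Cref{thm:debiasing} with $\sc_\bmu = \sc_\bSigma = 0$, observe that $w \equiv 1$ makes $\pi_\zeta$ a constant multiple of $\pi$ so that $\bias_1 = \alpha_1 > c$ and $\bias_2 = \alpha_2 - \alpha_1^2$, and then invoke the ridge closed form $\hbtheta_\out^f = \tfrac{\zeta_\out^\theta}{\zeta_\out^\theta + \lambda}\,\by_\out^f$ to control $B = \langle\btheta_\prop,\btheta_\out - \hbtheta_\out\rangle$. The paper stops at the implicit expression for $B$ in terms of $\beta_{\out\prop}$ and appeals directly to the bounds in \Cref{lem:regr-fixpt-exist-and-bounds}; you go one step further and substitute the self-consistency $\beta_{\out\prop} = (\alpha_2 - \alpha_1^2)B$ from \eqref{eq:beta-out-prop} to obtain a closed form in the model parameters alone.

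That refinement in fact dissolves the obstacle you flag at the end. The denominator in your first closed form, $\lambda + \zeta_\out^\theta\bigl(1 + (\alpha_2 - \alpha_1^2)\|\btheta_\prop\|^2\bigr)$, is never small: the factor $1 + (\alpha_2 - \alpha_1^2)\|\btheta_\prop\|^2$ is precisely the eigenvalue of the conditional covariance $\bSigma_\cfd$ in the direction $\btheta_\prop$ (see \eqref{eq:cfd-mean-variance} with $\bSigma = \id_p$) and is therefore strictly positive, so the whole denominator is bounded below by $\lambda > c$ regardless of the sign of $\alpha_2 - \alpha_1^2$. Thus no delicate sign-tracking or exclusion of ``degenerate configurations'' is needed; your first display already shows $\sign(B) = \sign(\langle\btheta_\prop,\btheta_\out\rangle)$ and $|B| \asymp |\langle\btheta_\prop,\btheta_\out\rangle|$, from which the required lower bounds follow directly.
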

The conditions $c_0\|\btheta_\prop\|^2 + \< \btheta_\prop,\btheta_\out
\> > \Delta$ and $C_0\|\btheta_\prop\|^2 + \<
\btheta_\prop,\btheta_\out \> < -\Delta$ have a meaningful
interpretation.  The first case, $c_0\|\btheta_\prop\|^2 + \<
\btheta_\prop,\btheta_\out \> > \Delta$, is one in which both (1) the
propensity model signal strength is bounded below, which is equivalent
to their being non-negligible dependency between the missingness
indicator and the covariates, and (2) the propensity model and outcome
model parameters are sufficiently aligned, so that the missingness
mechanism is non-negligibly confounded with the outcome.  The case
$C_0\|\btheta_\prop\|^2 + \< \btheta_\prop,\btheta_\out \> < -\Delta$
likewise requires that the propensity model signal strength is bounded
below and the propensity and outcome models are sufficiently
anti-aligned.  \Cref{prop:naive-db-failure} also implies that the
coordinates of the debiased estimates have a bias in the direction of
the propensity model parameter.  Note that $\alpha_2$ and $\alpha_1$
depend only on the link $\pi$ and the mean and variance of the linear
predictors $\eta_{\prop,i}$.  Thus, in a proportional asymptotics in
which the signal strength $\| \btheta_\prop \|$ and the linear
predictor mean $\< \bmu_\sx , \btheta_\prop \>$ are held constant, the
$\mathsf{bias}$ term will not, in general, decay to 0.

\begin{proof} [Proof of~\Cref{prop:naive-db-failure}]
We apply~\Cref{thm:debiasing} with $w(\eta) = 1$, $\bm = \bmu_\sx$,
and $\bM = \id_p$.  This corresponds to taking $\sc_{\bmu} =
\sc_{\bSigma} = 0$.  As in the proof
of~\Cref{thm:pop-mean,thm:db-normality} for oracle ASCW, we have that
$\pi_\zeta(\eta) = \frac{\zeta}{1 + \zeta}\pi(\eta)$, whence
$\alpha_1(\zeta) = \alpha_1$ and $\alpha_2(\zeta) = \alpha_2$.  Thus,
$\bias_1(\zeta_\out^\eta) = \alpha_1$ and $\bias_2(\zeta_\out^\eta) =
\alpha_2 - \alpha_1^2$.  By Assumption A1 and because $\pi(\eta) \leq
1$,
\begin{equation}
  \bias_1(\hzeta_\out^\theta) =
  \frac{\E[\pi'(\eta_\prop)]}{\E[\pi(\eta_\prop)]} \geq
  \E[\pi'(\eta_\prop)] > c,
\end{equation}
because $\pi'$ is lower bounded by a constant on compact intervals and
$\eta_\prop$ has mean and variance bounded by $C$.  In the case of
ridge regression with $\Omega_\out(\bv) = \frac{\lambda}{2}
\|\bv\|^2$, equation~\eqref{eq:param-fit-f} gives $ \hbtheta_\out^f =
\frac{\zeta_\out^\theta}{\lambda + \zeta_\out^\theta} \by_\out^f.  $
Thus, by~\Cref{thm:exact-asymptotics}, $ \< \btheta_\prop ,
\btheta_\out - \hbtheta_\out^f \>_{\Ltwo} =
\frac{\zeta_\out^\theta}{\lambda + \zeta_\out^\theta} \big(
\beta_{\out\prop} \|\btheta_\prop\|^2 + \<
\btheta_\prop,\btheta_\out\> \big).  $ Because $C >
\zeta_\out^\theta,\beta_{\out\prop} > c$
by~\Cref{lem:regr-fixpt-exist-and-bounds}, we have that for some $c >
0$, if $ c\|\btheta_\prop\|^2+\< \btheta_\prop,\btheta_\out\> > \Delta
> 0$, the right-hand side of the preceding display is bounded below by
$c''\Delta$, whence $\< \btheta_\prop,\btheta_\out - \hbtheta_\out \>
\gtrdot c''\Delta$.  On the other hand, if $C\|\btheta_\prop\|^2+\<
\btheta_\prop,\btheta_\out\> < -\Delta$, then the right-hand side of
the preceding display is bounded above by $-c''\Delta$, whence $\<
\btheta_\prop,\btheta_\out - \hbtheta_\out \> \lessdot -c''\Delta$.
The two claims of the Lemma now follow directly
from~\Cref{thm:debiasing}, using that $\sc_{\bmu} = 0$.
\end{proof}

\subsection{Failure of debiased ridge with IPW loss}
\label{sec:db-reweighting}

The failure of the construction which uses $\bmu_{\sx} = \E[\bx]$ and
$\bSigma = \Var(\bx)$ in place of $\bmu_{\sx,\cfd}$ and
$\bSigma_{\cfd}$ occurs due to the discrepancy between the
unconditional feature distribution $\bx_i \iid
\normal(\bmu_\sx,\bSigma)$ and the feature distribution conditional on
inclusion in the outcome fit $\bx_i \iid \Law(\bx \mid d= 1)$.  An
alternative and popular approach to addressing this discrepancy is to
reweight the sample so that the reweighted sample has the desired
unconditional
distribution~\cite{Hahn1998,arkhangelsky2021doublerobust}.  In this
section, we study such approaches when they are based on oracle
knowledge of the propensity score.  The failure with inverse
propensity weighted loss suggests to us that it may be difficult to
construct a fully empirical approach based on bias estimates
\eqref{eq:db-naive} with appropriately constructed weights $w_i$,
although we cannot eliminate this possibility.

\begin{proposition}
\label{prop:weighting-failure}
Consider the bias estimates~\eqref{eq:db-naive}, obtained from
equation~\eqref{eq:outcome-fit} with $w_i = 1/\pi(\theta_{\prop,0} +
\< \bx_i , \btheta_\prop\>)$, penalty $\Omega_\out(\bv) =
\frac\lambda2 \|\bv\|^2$ for some $\lambda > c > 0$, and $\bSigma =
\id_p$.  Then, there exists $C_0,c_0,c_1 > 0$, depending only on
$\cPmodel$, such that if $c_0\|\btheta_\prop\|^2 + \<
\btheta_\prop,\btheta_\out \> > \Delta$ or $C_0\|\btheta_\prop\|^2 +
\< \btheta_\prop,\btheta_\out \> < -\Delta$ for $\Delta \geq 0$, then,
with $\hmu_\out^\de$ the G-computation estimate with these debiased
estimates as plug-ins, we have
\begin{equation}
  \begin{gathered}
    \hmu_\out^\de \gtrdot \mu_\out + c_1\Delta, \qquad
    \hmu_\out^\de \lessdot \mu_\out - c_1\Delta,
  \end{gathered}
\end{equation}
respectively.
\end{proposition}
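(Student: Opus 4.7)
My plan is to prove \Cref{prop:weighting-failure} by invoking \Cref{thm:debiasing} in exact parallel with the proof of \Cref{prop:naive-db-failure}, changing only the weight function to $w(\eta) = 1/\pi(\eta)$. I take the same na\"{i}ve bias-correction structure $\sc_{\bmu} = 0$, $\sc_{\bSigma} = 0$ (equivalently $\bm = \bmu_{\sx}$, $\bM = \id_p$), so the only genuinely new ingredient is the effect of inverse-propensity reweighting on the modified propensity $\pi_\zeta$ appearing in \Cref{thm:debiasing}.

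The key algebraic step is to compute
\begin{equation*}
  \pi_\zeta(\eta) = \frac{\zeta w(\eta)}{1+\zeta w(\eta)}\pi(\eta) = \frac{\zeta\,\pi(\eta)}{\pi(\eta)+\zeta},
  \qquad
  \pi_\zeta'(\eta) = \frac{\zeta^2\,\pi'(\eta)}{(\pi(\eta)+\zeta)^2}.
\end{equation*}
Since $\pi' > 0$ by Assumption A1(d), the quantity $\alpha_1(\zeta) = \E[\pi_\zeta'(\eta_\prop)]/\E[\pi_\zeta(\eta_\prop)]$ is strictly positive, and I will verify that the decay/growth conditions on $\pi$ in A1(d), combined with the bound $\zeta_\out^\eta \in [c, C]$ from \Cref{lem:regr-fixpt-exist-and-bounds}, force $\alpha_1(\zeta_\out^\eta) \geq c > 0$ for a constant $c$ depending only on $\cPmodel$. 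Consequently $\bias_1(\zeta_\out^\eta) = \alpha_1(\zeta_\out^\eta) - 0 \geq c > 0$, yielding the analog of the lower bound on $\bias_1$ used in \Cref{prop:naive-db-failure}.

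The second ingredient, a lower bound on $|\langle \btheta_\prop, \btheta_\out - \hbtheta_\out\rangle|$ under the stated sign/size conditions, proceeds exactly as in \Cref{prop:naive-db-failure}. The fixed-design parameter optimization \eqref{eq:param-fit-f} depends on $w$ only through the induced fixed-point parameters $\zeta_\out^\theta$ and $\beta_{\out\prop}$, so the ridge closed form $\hbtheta_\out^f = \frac{\zeta_\out^\theta}{\lambda+\zeta_\out^\theta}\by_\out^f$ still holds and the same manipulation gives $\langle \btheta_\prop, \btheta_\out - \hbtheta_\out^f\rangle_{\Ltwo} = \frac{\zeta_\out^\theta}{\lambda+\zeta_\out^\theta}\big(\beta_{\out\prop}\|\btheta_\prop\|^2 + \langle \btheta_\prop, \btheta_\out\rangle\big)$. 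Using the uniform bounds $\zeta_\out^\theta, \beta_{\out\prop} \in [c, C]$ from \Cref{lem:regr-fixpt-exist-and-bounds}, I choose $c_0 \leq c$ and $C_0 \geq C$ so that the hypothesis $c_0\|\btheta_\prop\|^2 + \langle \btheta_\prop, \btheta_\out\rangle > \Delta$ (respectively $< -\Delta$) forces $\beta_{\out\prop}\|\btheta_\prop\|^2 + \langle \btheta_\prop, \btheta_\out\rangle$ to be $\geq c'\Delta$ (respectively $\leq -c'\Delta$). Combining this with the lower bound on $\bias_1(\zeta_\out^\eta)$ and invoking the second line of \eqref{eq:db-gen-conc} yields the two conclusions $\hmu_\out^\de \gtrdot \mu_\out + c_1\Delta$ and $\hmu_\out^\de \lessdot \mu_\out - c_1\Delta$ in the respective cases.

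The main obstacle is purely technical: the weight $w(\eta) = 1/\pi(\eta)$ may fail the uniform upper bound in Assumption A1(h), because the decay condition $\pi(-\eta) > Ce^{-c\eta}$ in A1(d) allows $\pi$ to approach $0$. To stay strictly within the hypotheses of \Cref{thm:debiasing}, I will either strengthen A1(d) to strict overlap $\pi(\eta) \geq c > 0$ (under which $1/\pi$ is bounded and $1/w = \pi$ is Lipschitz), or replace $w$ by the truncated variant $\min\{1/\pi(\eta), M\}$; under either fix the inverse-propensity calculation is preserved, the sign and lower bound on $\bias_1$ are unchanged, and the remaining argument goes through verbatim.
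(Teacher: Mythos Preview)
Your proposal is correct and follows essentially the same route as the paper: apply \Cref{thm:debiasing} with $w(\eta)=1/\pi(\eta)$, $\sc_{\bmu}=\sc_{\bSigma}=0$, compute $\pi_\zeta(\eta)=\zeta\pi(\eta)/(\zeta+\pi(\eta))$, lower-bound $\alpha_1(\zeta_\out^\eta)$ using the positivity of $\pi'$ on compact sets together with the bounds on $\zeta_\out^\eta$, and then reuse verbatim the ridge computation and the bounds on $\langle\btheta_\prop,\btheta_\out-\hbtheta_\out\rangle$ from the proof of \Cref{prop:naive-db-failure}. Your observation that $w=1/\pi$ need not satisfy the upper bound in Assumption~A1(h) without strict overlap is valid and is a point the paper glosses over in this proof (though it is acknowledged in the discussion following Assumption~A1 and made explicit in \Cref{prop:df-adjusted-weights-db}); your proposed fix of assuming strict overlap is the natural one.
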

\noindent See the paragraph following~\Cref{prop:naive-db-failure} for
a discussion of the assumptions $c_0\|\btheta_\prop\|^2 + \<
\btheta_\prop,\btheta_\out \> > \Delta$ or $C_0\|\btheta_\prop\|^2 +
\< \btheta_\prop,\btheta_\out \> < -\Delta$ for $\Delta \geq 0$.

\begin{proof}[Proof of~\Cref{prop:weighting-failure}]
    We apply~\Cref{thm:debiasing} with $w(\eta) = 1/\pi(\eta)$, $\bm =
    \bmu_\sx$, and $\bM = \id_p$.  This corresponds to taking
    $\sc_{\bmu} = \sc_{\bSigma} = 0$.  In this case, by
    equation~\eqref{eq:alpha-12-zeta-def}, $ \pi_\zeta(\eta) \defn
    \frac{\zeta \pi(\eta)}{\zeta + \pi(\eta)}.  $ Note that
    $\pi_\zeta'(\eta)/\pi_\zeta(\eta) =
    \frac{\zeta\pi'(\eta)}{(\zeta+\pi(\eta))\pi(\eta)}\geq
    \frac{\zeta\pi'(\eta)}{\zeta + 1}$.  Because by Assumption A1,
    $\pi'(\eta) \geq \sc(M)$ for $\eta \in [-M,M]$, $\eta_\prop$ has
    mean and variance bounded by $C$, and $\hzeta_\prop^\theta
    \mydoteq \zeta_\prop^\theta \geq c$
    by~\Cref{thm:exact-asymptotics}
    and~\Cref{lem:regr-fixpt-exist-and-bounds}, we have that
    $\alpha_1(\hzeta_\prop^\theta)\gtrdot c > 0$.  The result then
    follows by combining this bound with the bounds on $\<
    \btheta_\prop ,\btheta_\out - \hbtheta_\out\>_{\Ltwo}$ in the
    proof of~\Cref{prop:naive-db-failure}.
\end{proof}

\subsection{Debiased ridge with degrees-of-freedom adjusted IPW loss}

In the course of studying debiased ridge with IPW loss, the bias
estimates~\eqref{eq:db-naive} are successful provided a certain
degrees-of-freedom adjusted IPW loss is used in the outcome
fit~\eqref{eq:outcome-fit} and a strict overlap conditions holds.
This method relies on oracle knowledge of the propensity score, so it
is not clear that it is of practical interest: for example, we did not
assess whether it outperforms the Horwitz-Thompson estimator.
Further, it relies on choosing the correct regularization parameter,
and we have not investigated the success of methods for hyperparameter
tuning.  Nevertheless, we find it interesting that the failure of
debiased ridge with IPW loss can, in principle, be resolved by certain
degrees-of-freedom adjusted weights, so we include this result here.
\begin{proposition}
  \label{prop:df-adjusted-weights-db}
In addition to Assumption A1, suppose that $\pi(\eta) \geq \pi_{\min}$
uniformly for some $\pi_{\min} > 0$.  Moreover, consider $\omega \in
(0,\pi_{\min})$.  Then, there exists some $C > c > 0$ depending only
on $\cPmodel$, $\pi_{\min}$ and $\pi_{\min} - \omega$ such that if we
compute $\htheta_{\out,0}^\de,\hbtheta_\out^\de$ as in
equations~\eqref{eq:outcome-fit} with with weights $ w_i =
\frac{1}{\pi(\theta_{\prop,i} + \< \bx_i,\btheta_\prop\>) - \omega}, $
and penalty $\lambda \Omega_\out$ in place of $\Omega_\out$, we have
\begin{align*}
\hmu_\out^\de - \mu_\out \lessdot C \big|\hzeta_\out^\eta/\lambda -
\omega\big|.
\end{align*}
\end{proposition}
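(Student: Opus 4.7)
The plan is to apply Theorem~\ref{thm:debiasing} with weight function $w(\eta) = 1/(\pi(\eta)-\omega)$, shift $\bm = \bmu_{\sx}$, and correction $\bM = \id_p$ (i.e., $\sc_{\bmu} = \sc_{\bSigma} = 0$). First I would check that Assumption~A1(h) is satisfied: since $\pi(\eta) \geq \pi_{\min} > \omega$ and $\pi(\eta) \leq 1$, the weight is bounded above by $1/(\pi_{\min}-\omega)$ and below by $1/(1-\omega)$, and $1/w(h) = \pi(h)-\omega$ is differentiable with Lipschitz derivative bounded in terms of Assumption~A1. With $\sc_{\bmu}=0$, the $\sc_{\bmu}\bias_2\|\btheta_\prop\|^2$ contribution vanishes, so Theorem~\ref{thm:debiasing} yields
\[
\hmu_\out^\de - \mu_\out \mydoteq \alpha_1(\zeta_\out^\eta)\,\bigl\<\btheta_\prop,\,\btheta_\out - \hbtheta_\out\bigr\>.
\]

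The main calculation is an explicit evaluation of $\alpha_1(\zeta)$ for this choice of weight. Substituting $w(\eta) = 1/(\pi(\eta)-\omega)$ into $\pi_\zeta(\eta) = \frac{\zeta w(\eta)}{1+\zeta w(\eta)}\pi(\eta)$ gives after simplification
\[
\pi_\zeta(\eta) = \frac{\zeta\,\pi(\eta)}{\zeta + \pi(\eta) - \omega},
\qquad
\pi_\zeta'(\eta) = \frac{\zeta(\zeta-\omega)\,\pi'(\eta)}{\bigl(\zeta + \pi(\eta) - \omega\bigr)^2},
\]
where the crucial simplification $\pi'(\eta)(\zeta+\pi(\eta)-\omega) - \pi(\eta)\pi'(\eta) = \pi'(\eta)(\zeta-\omega)$ in the numerator produces the factor $(\zeta-\omega)$. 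Consequently,
\[
\alpha_1(\zeta) = (\zeta-\omega)\cdot\frac{\E\bigl[\pi'(\eta_\prop)/(\zeta+\pi(\eta_\prop)-\omega)^2\bigr]}{\E\bigl[\pi(\eta_\prop)/(\zeta+\pi(\eta_\prop)-\omega)\bigr]},
\]
and the ratio is bounded in absolute value by a constant depending only on $\cPmodel$, $\pi_{\min}$, and $\pi_{\min}-\omega$ (using the uniform lower bound $\pi+\zeta-\omega \geq \pi_{\min}-\omega > 0$ and the bounds on $\pi,\pi'$ from Assumption~A1).

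To finish, I would bound $|\<\btheta_\prop,\btheta_\out-\hbtheta_\out\>| \leq \|\btheta_\prop\|\cdot(\|\btheta_\out\|+\|\hbtheta_\out\|) \lessdot C$, using Assumption~A1 for the true parameters and Theorem~\ref{thm:exact-asymptotics}(c) together with Lemma~\ref{lem:regr-fixpt-exist-and-bounds} for $\|\hbtheta_\out\|$. The last step is to replace $\zeta_\out^\eta$ in $\alpha_1$ by its empirical counterpart: Theorem~\ref{thm:exact-asymptotics}(b) gives $\hzeta_\out^\eta \mydoteq \zeta_\out^\eta$ when the dof adjustment is computed consistently with the penalty being used, but the proposition's notation uses $\Omega_\out$ (not $\lambda\Omega_\out$) in \eqref{eq:dof-emp-def}; writing out the fixed-point equation with $\nabla^2(\lambda\Omega_\out) = \lambda\nabla^2\Omega_\out$ shows that the effective dof parameter entering Theorem~\ref{thm:debiasing} is $\hzeta_\out^\eta/\lambda$, which is exactly what produces the advertised $|\hzeta_\out^\eta/\lambda - \omega|$. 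There is no serious obstacle in the argument; the bulk of the work is the algebraic identity producing the $(\zeta-\omega)$ factor and careful bookkeeping of the $\lambda$-rescaling in the dof definition to ensure the constant $C$ depends only on the stated parameters.
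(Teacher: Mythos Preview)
Your approach matches the paper's: apply Theorem~\ref{thm:debiasing} with $\sc_{\bmu}=\sc_{\bSigma}=0$, compute $\pi_\zeta$ and $\alpha_1(\zeta)$ explicitly for the given weight to extract the linear factor, and bound $|\<\btheta_\prop,\btheta_\out-\hbtheta_\out\>|$ via Theorem~\ref{thm:exact-asymptotics} and Lemma~\ref{lem:regr-fixpt-exist-and-bounds}. Your algebra producing the $(\zeta-\omega)$ factor is correct and is the heart of the argument.

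The only substantive difference is the $\lambda$-handling, where the paper is cleaner. Rather than keep $w(\eta)=1/(\pi(\eta)-\omega)$ with penalty $\lambda\Omega_\out$ and argue about the dof rescaling afterward, the paper divides the objective through by $\lambda$ and applies Theorem~\ref{thm:debiasing} directly with $w(\eta)=\lambda^{-1}/(\pi(\eta)-\omega)$ and penalty $\Omega_\out$; the factor $(\zeta\lambda^{-1}-\omega)$ then appears immediately in $\pi_\zeta'/\pi_\zeta$, with $\zeta$ the dof parameter of the rescaled problem. Your last paragraph is slightly off: once Theorem~\ref{thm:debiasing} is applied in your setup, the $\zeta_\out^\eta$ it delivers is already the parameter for \emph{that} setup, with no further division by $\lambda$. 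The simplest fix is to absorb $\lambda^{-1}$ into the weight at the outset, as the paper does.
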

\noindent Note that $\big|\hzeta_\out^\eta/\lambda - \omega\big|$ is
an empirical quantity, so that~\Cref{prop:df-adjusted-weights-db}
gives an empirical upper bound on the bias of the population mean
estimate $\hmu_\out^\de$.  This suggests a natural approach to
estimating the population mean with a weighting-based strategy: tune
$\lambda$ so that $\hzeta_\out^\eta/\lambda - \omega = 0$, and then
use the estimates in equation~\eqref{eq:outcome-fit} with this choice
of $\lambda$.  Analyzing this approach rigorously would require
addressing several difficulties.  First, all results we have presented
hold for a fixed penalty $\Omega_\out$, so do not immediately apply to
$\lambda$ chosen adaptively.  The
papers~\cite{miolane2021,celentanoMontanariWei2020} consider adaptive
choices of $\lambda$ using uniform concentration and continuity
results, and it is likely the similar analyses could be carried out in
the present setting.  Second, $\hzeta_\out^\eta$ is itself a function
of $\lambda$, so that it is not clear that a solution to
$\hzeta_\out^\eta/\lambda - \omega = 0$ exists.  Nevertheless, we
suspect that such solutions do exist.  Indeed, recall that for
least-squares and the Lasso with fully observed outcomes,
$\hzeta_\out^\eta = (\dfhat_\out/n)/(1-\dfhat_\out/n)$
(see~\Cref{sec:dof-adjustment}).  Intuitively, the degrees-of-freedom
is large for small $\lambda$ and small for large $\lambda$.  Because
the degrees-of-freedom tends to decrease with $\lambda$ (though it
need not be strictly decreasing in $\lambda$), it is natural to expect
that $\hzeta_\out^\eta/\lambda - \omega = 0$ is solved for some value
of $\lambda$ based on an Intermediate Value Theorem type
argument. Because our primary interest in this paper is to provide
estimates which are empirical in both the outcome and propensity
model, and alternative approaches are known to be consistent when
propensity score are known exactly (e.g., the Horwitz-Thompson
estimator~\cite{Hahn1998,horvitzThompson1952}), we do not rigorously
investigate a tuning-based version
of~\Cref{prop:df-adjusted-weights-db} here.

\begin{remark}[The consistency regime limit]
Rather than tuning $\lambda$, one could also imagine tuning $\omega$.
Based on the intuition described above that $\hzeta_\out^\eta =
(\dfhat_\out/n)/(1-\dfhat_\out/n)$, we should expect that
$\hzeta_\out^\eta \rightarrow 0$ in a consistency regime.  For
example, the consistency regime for the Lasso is $s = o(n/\log p)$.
If this is case, we can take $\omega \rightarrow 0$
in~\Cref{prop:df-adjusted-weights-db}, recovering inverse propensity
weighting.  The details of this statement depend on how $\lambda$
scales in consistency regimes, with the correct scaling in this regime
potentially incompatible with Assumption A1 under
which~\Cref{prop:df-adjusted-weights-db}.  Thus, verifying this
intuition rigorously is beyond the scope of the present paper.
\end{remark}

\begin{proof}[Proof of~\Cref{prop:df-adjusted-weights-db}]
    We apply~\Cref{thm:debiasing} with $w(\eta) =
    \frac{\lambda^{-1}}{\pi(\eta) - \omega}$, , $\bm = \bmu_\sx$, and
    $\bM = \id_p$.  This corresponds to taking $\sc_{\bmu} = 0$ and
    $\sc_{\bSigma} = 0$.  Note that Assumption A1 is satisfied for
    this choice of weights if we allow the constants in the upper and
    lower bounds on $w$ and its Lipschitz constant to depend on an
    upper and lower bound on $\lambda$ and a lower bound on
    $\pi_{\min} - \omega$.  Thus, allowing the constants
    in~\Cref{thm:debiasing} (including those in the high probability
    approximations) to depend on these additional quantities, we may
    apply~\Cref{thm:debiasing} with this choice of weights.

    We compute
    \begin{equation}
      \pi_\zeta(\eta) = \frac{\zeta\lambda^{-1}}{1 +
        (\zeta\lambda^{-1}-\omega)/\pi(\eta)}, \qquad
      \frac{\pi_\zeta'(\eta)}{\pi_\zeta(\eta)} = \frac{
        (\zeta\lambda^{-1}-\omega)\pi'(\eta)/\pi(\eta)^2 }{ 1 +
        (\zeta\lambda^{-1}-\omega)/\pi(\eta) }.
    \end{equation}
    Using the upper bound on $\pi'(\eta)$ from Assumption A1, the
    lower bound on $\pi(\eta) \geq \pi_{\min}$, and the lower bound on
    $\pi_{\min} - \omega$, we see that \mbox{$|\bias_1(\zeta)| =
      |\alpha_1(\zeta)| \leq C |\zeta \lambda^{-1} - \omega|$.}
    \Cref{thm:exact-asymptotics} and the bounds
    in~\Cref{lem:regr-fixpt-exist-and-bounds} imply that $|\<
    \btheta_\prop , \btheta_\out - \hbtheta_\out \>| \lessdot C$.
    Thus, the first claim of the proposition follows
    from~\Cref{thm:debiasing}.
\end{proof}

\end{document}